\documentclass[a4paper,12pt,twoside,openright]{report}
\pdfoutput=1

\newif\ifprint

\usepackage[
  top=2cm,
  bottom=4cm,
  heightrounded,
\ifprint
  left=2.5cm,
  right=2.5cm,
  bindingoffset=1.0cm
\else
  left=3.0cm,
  right=3.0cm,
  bindingoffset=0.0cm
\fi
]{geometry}

\usepackage{emptypage}

\let\subsubsection\subsection
\let\subsection\section
\let\section\chapter

\usepackage{mystyle}

\ifprint
\hypersetup{hidelinks}
\fi

\makeindex[intoc]
\makeglossaries
\shorthandon{"} %
\newglossarystyle{myglossarystyle}{%
  \setglossarystyle{long-name-desc-loc}%
  
  \settowidth{\glspagelistwidth}{99, 99, 99} %
  \glslongextraSetWidest{\ensuremath{\CComon(\Cvar)}} %
}

\makeatletter
\newcounter{@glossaryGroupCounter}
\def\makeGlossaryGroup#1#2{%
  \stepcounter{@glossaryGroupCounter}%
  \expandafter\edef\csname #1\endcsname{%
    \Alph{@glossaryGroupCounter}%
  }%
  \@namedef{\Alph{@glossaryGroupCounter}groupname}{#2}%
}
\makeatother

\makeGlossaryGroup{CategoryGroup}{Categories}
\makeGlossaryGroup{FunctorGroup}{Functors}
\makeGlossaryGroup{MapGroup}{Maps}
\makeGlossaryGroup{ObjectGroup}{Objects}
\makeGlossaryGroup{RelationGroup}{Relations}
\makeGlossaryGroup{OtherGroup}{Miscellaneous}

\newglossaryentry{Set}{type=symbols,sort={\CategoryGroup Set},name={\ensuremath{\Set}},description={Sets and functions}}
\newglossaryentry{Pos}{type=symbols,sort={\CategoryGroup Pos},name={\ensuremath{\Pos}},description={Posets and monotone functions}}
\newglossaryentry{Rng}{type=symbols,sort={\CategoryGroup CRing},name={\ensuremath{\Rng}},description={(Commutative) rings and ring homomorphisms}}
\newglossaryentry{ModR}{type=symbols,sort={\CategoryGroup Mod R},name={\ensuremath{\Mod{R}}},description={Modules over the ring $R$ and module homomorphisms}} %
\newglossaryentry{JSLat}{type=symbols,sort={\CategoryGroup SLat J},name={\ensuremath{\SLat{\vee}}},description={Join-semilattices and semilattice homomorphisms}} %
\newglossaryentry{DLat}{type=symbols,sort={\CategoryGroup DLat},name={\ensuremath{\DLat}},description={Distributive lattices and lattice homomorphisms}}
\newglossaryentry{Dcpo}{type=symbols,sort={\CategoryGroup DCPO},name={\ensuremath{\Dcpo}},description={Dcpos and dcpo momorphisms}}
\newglossaryentry{Frm}{type=symbols,sort={\CategoryGroup Frm},name={\ensuremath{\Frm}},description={Frames and frame homomorphisms}}
\newglossaryentry{Loc}{type=symbols,sort={\CategoryGroup Loc},name={\ensuremath{\Loc}},description={Locales --- the opposite of the category of frames}}
\newglossaryentry{OLoc}{type=symbols,sort={\CategoryGroup OLoc},name={\ensuremath{\OLoc}},description={Overt locales and locale morphisms}}
\newglossaryentry{Top}{type=symbols,sort={\CategoryGroup Top},name={\ensuremath{\Top}},description={Topological spaces and continuous functions}}
\newglossaryentry{Sup}{type=symbols,sort={\CategoryGroup Sup},name={\ensuremath{\Sup}},description={Suplattices and suplattice homomorphisms}}
\newglossaryentry{Inf}{type=symbols,sort={\CategoryGroup Inf},name={\ensuremath{\Inf}},description={Inf"|lattices and inf"|lattice homomorphisms}}
\newglossaryentry{QuantTwoSided}{type=symbols,sort={\CategoryGroup Quant Top},name={\ensuremath{\Quant}},description={Two-sided quantales and quantale homomorphisms}}
\newglossaryentry{QuantGeneral}{type=symbols,sort={\CategoryGroup Quant},name={\ensuremath{\QuantGeneral}},description={(Commutative) quantales and quantale homomorphisms}}
\newglossaryentry{ModQ}{type=symbols,sort={\CategoryGroup Mod Q},name={\ensuremath{\Mod{Q}}},description={Quantale modules over the quantale $Q$ and quantale module homomorphisms}} %
\newglossaryentry{CComonC}{type=symbols,sort={\CategoryGroup CComon C},name={\ensuremath{\CComon(\Cvar)}},description={Cocommutative comonoids in a monoidal category $\Cvar$}}

\glsxtrnewsymbol[sort={\FunctorGroup 1 18 Sigma},description={Sends a frame to its topological space of points}]{points}{\ensuremath{\points}}
\glsxtrnewsymbol[sort={\FunctorGroup S},description={Sends a locale to its lattice of sublocales. The map $\Sloc f$ takes preimages and $(\Sloc f)_!$ takes images under $f$.}]{Sloc}{\ensuremath{\Sloc}}
\glsxtrnewsymbol[sort={\FunctorGroup SubOW},description={Sends a locale to its suplattice of overt weakly closed sublocales. The map $\SubOW(f)$ takes the weak closure of the image of a sublocale under $f$.}]{SubOW}{\ensuremath{\SubOW}}
\glsxtrnewsymbol[sort={\FunctorGroup Idl},description={Sends a (localic) semiring to its quantale of (overt, weakly closed) ideals}]{Idl}{\ensuremath{\Idl}}
\glsxtrnewsymbol[sort={\FunctorGroup Rad},description={Sends a (localic) semiring to its quantale of (overt, weakly closed) radical ideals}]{Rad}{\ensuremath{\Rad}}
\glsxtrnewsymbol[sort={\FunctorGroup M},description={Sends a (localic) monoid or semiring to its quantale of (overt, weakly closed) monoid ideals}]{MonIdl}{\ensuremath{\MonIdl}}
\glsxtrnewsymbol[sort={\FunctorGroup S 2},description={Sends a localic monoid or semiring to its frame of saturated opens}]{Sats}{\ensuremath{\Sats}} %
\glsxtrnewsymbol[sort={\FunctorGroup OPAIR},description={Sends a locale to the set of open prime anti-ideals of the localic semiring $R$ fibred over it}]{OPAIR}{\ensuremath{\OPAI_R}}
\glsxtrnewsymbol[sort={\FunctorGroup OPAIR 2},description={Sends a quantale to the set of (quantic) open prime anti-ideals of the localic semiring $R$ fibred over it}]{OPAIRoverline}{\ensuremath{\overline{\OPAI}_R}}
\glsxtrnewsymbol[sort={\FunctorGroup OPMAI},description={Sends a quantale to the set of open prime monoid anti-ideals of the localic monoid $M$ fibred over it}]{OPMAIRoverline}{\ensuremath{\overline{\OPMAI}_M}}
\glsxtrnewsymbol[sort={\FunctorGroup Spec},description={A partial functor which gives the (localic) spectrum of a localic semiring when it exists}]{SpecR}{\ensuremath{\Spec}}
\glsxtrnewsymbol[sort={\FunctorGroup I},description={Sends a distributive lattice, join-semilattice or two-sided idempotent semiring to its frame, suplattice or quantale of order ideals}]{I}{\ensuremath{\I}}
\glsxtrnewsymbol[sort={\FunctorGroup D},description={Sends a poset to its suplattice of downsets}]{D}{\ensuremath{\D}}
\glsxtrnewsymbol[sort={\FunctorGroup O},description={Sends a space or locale to its frame of opens}]{O}{\ensuremath{\O}} %
\glsxtrnewsymbol[sort={\FunctorGroup L},description={Sends a quantale to its universal localic quotient}]{locrefl}{\ensuremath{\locrefl}}
\glsxtrnewsymbol[sort={\FunctorGroup hom},description={Internal hom in the category of suplattices}]{hom}{\ensuremath{\hom}}

\glsxtrnewsymbol[sort={\ObjectGroup 1 24 Omega},description={Frame of truth values}]{Omega}{\ensuremath{\Omega}}
\glsxtrnewsymbol[sort={\ObjectGroup S},description={Sierpiński space}]{Srpnsk}{\ensuremath{\Srpnsk}}
\glsxtrnewsymbol[sort={\ObjectGroup N},description={Free two-sided quantale on one generator}]{Nvar}{\ensuremath{\Nvar}}
\glsxtrnewsymbol[sort={\ObjectGroup D},description={Quantalic first-order infinitesimal interval --- the two-sided quantale presented by $\langle \epsilon \mid \epsilon^2 = 0\rangle$}]{Dvar}{\ensuremath{\Dvar}} %

\glsxtrnewsymbol[sort={\MapGroup 0 100 exclamation},description={Unique morphism from an initial object or to a terminal object; especially, the unique frame map from $\Omega$ to a given frame}]{exclamation}{\ensuremath{!}}
\glsxtrnewsymbol[sort={\MapGroup 0 150 exists},description={Left adjoint to the frame map $!\colon \Omega \to L$ for an overt frame $L$}]{exists}{\ensuremath{\exists}}
\glsxtrnewsymbol[sort={\MapGroup 0 150 existsW},description={Suplattice homomorphism into $\Omega$ associated with the overt weakly closed sublocale $W$}]{existsW}{\ensuremath{\exists_W}}
\glsxtrnewsymbol[sort={\MapGroup 0 400 nabla},description={Codiagonal map on the frame $\Omega$ --- sends $p \oplus q$ to $p \wedge q$}]{nabla}{\ensuremath{\nabla}}
\glsxtrnewsymbol[sort={\MapGroup 1 16 pi},description={Sends an overt weakly closed sublocale $V$ to the largest open contained in all saturated opens which meet $V$}]{stabpos}{\ensuremath{\stabpos}} %
\glsxtrnewsymbol[sort={\MapGroup 1 03 gamma},description={Sends an open $a$ to the largest overt weakly closed sublocale contained in all monoid ideals which meet $a$}]{jointcoz}{\ensuremath{\jointcoz}} %
\glsxtrnewsymbol[sort={\MapGroup 1 16 pi12},description={Product projections}]{pi12}{\ensuremath{\pi_1,\, \pi_2}}
\glsxtrnewsymbol[sort={\MapGroup 1 09 iota12},description={Coproduct injections}]{iota_12}{\ensuremath{\iota_1,\, \iota_2}}
\glsxtrnewsymbol[sort={\MapGroup 1 09 iota01},description={Canonical maps from a quantale $Q$ into ${}^\Srpnsk Q$}]{iota_01}{\ensuremath{\iota_0,\, \iota_1}}
\glsxtrnewsymbol[sort={\MapGroup 1 09 iota},description={Canonical quantale map from a quantale $Q$ to ${}^\Dvar Q$}]{iota}{\ensuremath{\iota}}
\glsxtrnewsymbol[sort={\MapGroup d},description={Canonical $\iota$-derivation from a quantale $Q$ into ${}^\Dvar Q$}]{dx}{\ensuremath{\d}}
\glsxtrnewsymbol[sort={\MapGroup 1 05 epsilon0},description={Frame map corresponding to the additive identity of a localic semiring}]{epsilon0}{\ensuremath{\epsilon_0}}
\glsxtrnewsymbol[sort={\MapGroup 1 05 epsilon1},description={Frame map corresponding to the multiplicative identity of a localic semiring or monoid}]{epsilon1}{\ensuremath{\epsilon_1}}
\glsxtrnewsymbol[sort={\MapGroup 1 12 mu 1},description={Frame map corresponding to the multiplication in a localic semiring or monoid}]{mutimes}{\ensuremath{\mu_\times}}
\glsxtrnewsymbol[sort={\MapGroup 1 12 mu 0},description={Frame map corresponding to the addition in a localic semiring}]{muplus}{\ensuremath{\mu_+}}
\glsxtrnewsymbol[sort={\MapGroup 1 17 rho},description={Unit of the localic reflection of a quantale --- the universal localic quotient map}]{rad}{\ensuremath{\rad}}
\glsxtrnewsymbol[sort={\MapGroup 1 07 eta},description={Unit for a dualisable object or coexponential adjunction}]{eta}{\ensuremath{\eta}}
\glsxtrnewsymbol[sort={\MapGroup 1 05 epsilon},description={Counit for a dualisable object. Also used for the generator of $\Dvar$.}]{epsilon}{\ensuremath{\epsilon}}
\glsxtrnewsymbol[sort={\MapGroup s},description={Natural quotient map from $\MonIdl R$ to $\Idl(R)$ for a localic semiring $R$}]{addquot}{\ensuremath{\addquot}}
\glsxtrnewsymbol[sort={\MapGroup 1 10 kappa},description={Inclusion of $\Sats M$ into $\O M$ for a commutative localic monoid $M$}]{inclSat}{\ensuremath{\inclSat}}
\glsxtrnewsymbol[sort={\MapGroup ev},description={Evaluation map for an exponential object}]{ev}{\ensuremath{\ev}} %
\glsxtrnewsymbol[sort={\MapGroup osat},description={Open saturation operator in a topological semiring}]{osat}{\ensuremath{\osat}}
\glsxtrnewsymbol[sort={\MapGroup dQ},description={Canonical comparison map from $\locrefl({}^\Dvar Q)$ to $\locrefl({}^\Srpnsk Q)$ for the two-sided quantale $Q$}]{dfrak}{\ensuremath{\dfrak_Q}}
\glsxtrnewsymbol[sort={\MapGroup j},description={Sends downsets in a suplattice to their joins}]{joinmap}{\ensuremath{\joinmap}}

\glsxtrnewsymbol[sort={\RelationGroup 0 100},description={Indicates that two subsets meet each other or that an overt sublocale meets an open}]{between}{\ensuremath{\between}}
\glsxtrnewsymbol[sort={\RelationGroup 0 201},description={Way-below relation}]{ll}{\ensuremath{\ll}}
\glsxtrnewsymbol[sort={\RelationGroup 0 202},description={Totally-below relation}]{lll}{\ensuremath{\lll}}
\glsxtrnewsymbol[sort={\RelationGroup 0 203},description={A certain transitive relation on the frame of saturated elements of a localic semiring}]{blacktriangleleft}{\ensuremath{\blacktriangleleft}} %
\glsxtrnewsymbol[sort={\RelationGroup 0 204},description={A certain transitive relation on the quantale of monoid ideals of a localic semiring}]{triangleleft}{\ensuremath{\vartriangleleft}} %

\glsxtrnewsymbol[sort={\OtherGroup 0 0600},description={Indicates the open $a$ is positive}]{agt0}{\ensuremath{a > 0}} %

\glsxtrnewsymbol[sort={\OtherGroup 0 1001},description={Principal downset/upset generated by $a$}]{downarrow}{\ensuremath{{\downarrow} a,\, {\uparrow} a}}
\glsxtrnewsymbol[sort={\OtherGroup 0 1003},description={Set of elements way below/above $a$}]{twoheaddownarrow}{\ensuremath{\twoheaddownarrow a,\, \twoheaduparrow a}}
\glsxtrnewsymbol[sort={\OtherGroup 0 1005},description={Set of elements totally below $a$}]{Downarrow}{\ensuremath{{\Downarrow} a}}
\glsxtrnewsymbol[sort={\OtherGroup 0 1010},description={Implication operation in a quantale or Heyting algebra}]{heyting}{\ensuremath{\heyting}}

\glsxtrnewsymbol[sort={\OtherGroup 0 1201},description={Supremum of the directed set $D$}]{dirsupD}{\ensuremath{\dirsup D}}

\glsxtrnewsymbol[sort={\OtherGroup 0 1601},description={Sequent indicating that the formulae $\chi$ and $\psi$ together entail the formula $\phi$ in the context $\vec{x}$}]{sequent}{\ensuremath{\chi,\psi \vdash_{\vec{x}} \phi}} %

\glsxtrnewsymbol[sort={\OtherGroup 0 3101},description={Pairing of the morphisms $f$ and $g$. Also used for the ideal generated by the semiring elements $f$ and $g$.}]{bracketsfg}{\ensuremath{(f,g)}} %
\glsxtrnewsymbol[sort={\OtherGroup 0 3103},description={Set of natural numbers strictly less than $n$}]{sqbracketsN}{\ensuremath{[n]}}
\glsxtrnewsymbol[sort={\OtherGroup 0 3105},description={Truth value of the formula $\phi$}]{llrrbrackets}{\ensuremath{\llbracket \phi \rrbracket}}
\glsxtrnewsymbol[sort={\OtherGroup 0 3111},description={The subalgebra, congruence or ideal generated by the subset $S$}]{langleSrangle}{\ensuremath{\langle S \rangle}}
\glsxtrnewsymbol[sort={\OtherGroup 0 3112},description={Free algebra (of some understood kind) on the generators $g, h$ subject to the relation $t = s$}]{langleSmidRrangle}{\ensuremath{\langle g,h \mid t = s \rangle}} %

\glsxtrnewsymbol[sort={\OtherGroup 0 3501},description={The least radical ideal containing the ideal $I$}]{sqrtI}{\ensuremath{\sqrt{I}}}

\glsxtrnewsymbol[sort={\OtherGroup 0 5111},description={Coproduct of frames $L$ and $M$ or quantales $L$ and $M$}]{LoplusM}{\ensuremath{L \oplus M}} %
\glsxtrnewsymbol[sort={\OtherGroup 0 5112},description={An element in a binary coproduct of quantales equal to $\iota_1(\ell) \cdot \iota_2(m)$
and corresponding to $\ell \otimes m$ under the isomorphism $L \oplus M \cong L \otimes M$}]{elloplusm}{\ensuremath{\ell \oplus m}}

\glsxtrnewsymbol[sort={\OtherGroup 0 5201},description={A generator in a quantale coexponential ${}^A Q$ where $g$ is a generator in $Q$ and $\sigma$ is part of a dual basis for $A$}]{oslash}{\ensuremath{g \oslash \sigma}}

\glsxtrnewsymbol[sort={\OtherGroup 0 5301},description={Denotes the action of a semiring on a semimodule; in particular, the product of quantale elements, repeated addition (for $p \in \N$),
or the action of a quantale on a quantale module (especially for $p \in \Omega$)}]{cdot}{\ensuremath{p \cdot a}} %

\glsxtrnewsymbol[sort={\OtherGroup 0 7101},description={Dual of a dualisable object $A$}]{dual}{\ensuremath{A^*}}
\glsxtrnewsymbol[sort={\OtherGroup 0 7101},description={Frame homomorphism associated to the locale map $f$ or $P(f)$ for a background hyperdoctrine $P$}]{upperStar}{\ensuremath{f^*}}
\glsxtrnewsymbol[sort={\OtherGroup 0 7111},description={Right adjoint of the monotone map $f$}]{rightadjoint}{\ensuremath{f_*}}
\glsxtrnewsymbol[sort={\OtherGroup 0 7112},description={Left adjoint of the (left adjoint) monotone map $f$}]{leftadjoint}{\ensuremath{f_!}} %

\glsxtrnewsymbol[sort={\OtherGroup 0 7201},description={Quantic pseudocomplement of $a$ --- the largest element $b$ in a quantale such that $ab = 0$}]{abullet}{\ensuremath{a^\bullet}}

\glsxtrnewsymbol[sort={\OtherGroup 0 7401},description={Coexponential with base $Q$ and co-exponent $A$ --- the exponential $Q^A$ in the opposite category}]{AQ}{\ensuremath{{}^A Q}} %
\glsxtrnewsymbol[sort={\OtherGroup 0 7421},description={Localisation of the ring $R$ at the prime ideal $P$}]{RP}{\ensuremath{R_P}}

\glsxtrnewsymbol[sort={\OtherGroup 0 7501},description={Localisation of the ring $R$ with respect to the multiplicative submonoid $S$}]{Sneg1R}{\ensuremath{S^{-1}R}} %

\glsxtrnewsymbol[sort={\OtherGroup 0 9101},description={True or the top element of a suplattice or quantale}]{top}{\ensuremath{\top}}
\glsxtrnewsymbol[sort={\OtherGroup 0 9102},description={False}]{bot}{\ensuremath{\bot}} %

\glsxtrnewsymbol[sort={\OtherGroup 0 9901},description={Closed congruence $\langle(0,a)\rangle$ on a frame or two-sided quantale}]{nablaa}{\ensuremath{\nabla_a}}

\glsxtrnewsymbol[sort={\OtherGroup 1 04},description={Open congruence $\langle(a,1)\rangle$ on a frame}]{Deltaa}{\ensuremath{\Delta_a}}
\glsxtrnewsymbol[sort={\OtherGroup 1 20 upsilon},description={Universal (quantic) open prime anti-ideal}]{upsilon}{\ensuremath{\upsilon}} %

\glsxtrnewsymbol[sort={\OtherGroup dim},description={Krull dimension of the ring $A$ or dimension of the vector space $A$}]{dim}{\ensuremath{\dim A}}
\glsxtrnewsymbol[sort={\OtherGroup BeF},description={Sublocale of `points' within $\epsilon$ of the overt sublocale $F$}]{BepsilonF}{\ensuremath{B_\epsilon(F)}} %
\glsxtrnewsymbol[sort={\OtherGroup m},description={Maximal ideal of a local ring}]{m}{\ensuremath{\m}}
\glsxtrnewsymbol[sort={\OtherGroup TPR},description={Tangent space of the ring $R$ at the prime ideal $P$}]{TPR}{\ensuremath{T_P R}}

\shorthandoff{"} %

\title{Quantalic spectra of semirings}
\author{Graham Manuell}
\date{2019}

\begin{document}

\pagenumbering{Alph}

\begin{titlepage}
\begin{center}
\vspace*{-0.5cm}

\makeatletter
\newcommand\HUGE{\@setfontsize\Huge{36}{45}}
\makeatother

\HUGE
\textbf{\MyTitle}

\vspace{2.0cm}

\huge
\MyAuthor

\vfill

\LARGE
Doctor of Philosophy

University of Edinburgh

\MyDate
\end{center}
\end{titlepage}

\cleardoublepage

\subsection*{Declaration}
\thispagestyle{empty}

I declare that I have composed this thesis myself and that it has not been submitted for any previous degree.
The work presented is entirely my own, except where stated otherwise.

\medskip

\begin{flushright}
 Graham Manuell
\end{flushright}
\cleardoublepage %
\pagenumbering{roman}

\subsection*{Acknowledgements}

I would like to thank my supervisor Antony Maciocia for all his support, his generosity of time and for allowing me the freedom to work on a topic outside his area of research.
I thank my second supervisor Arend Bayer for meeting with me on a number of occasions.
I also appreciate the helpful comments of my thesis examiners, Chris Heunen and Steve Vickers. 

I am grateful to the Skye Foundation for providing me with funding.

I would like to thank Peter Faul for discussions about how to present these results, for proofreading this thesis and above all for being a great friend.

Finally, I thank my parents for their constant support and encouragement.

\subsection*{Lay summary}

A \emph{semiring} is a collection of things which can be added and multiplied, just like the natural numbers $\{0,1,2,\dots\}$ or the so-called real numbers (all the numbers on the number line).
The study of such structures forms part of a branch of mathematics called \emph{algebra}.

On the other hand, a \emph{topological space} is a collection of things, states or positions, together with a collection of yes-no questions we can ask about these things.
The questions should be chosen so that if the answer to a question is yes for a certain thing, then we can demonstrate that it is true by `finite means'. The real numbers are
an example of a topological space and one appropriate question we could ask about a real number is ``Is this number greater than 1?''. Notice that if a number is greater than
one, then this will be clear after looking at some (possibly large, but finite) number of decimal places. For example, suppose we want to know if $\pi/3$ is greater than one.
If we compute $\pi/3$ to the first decimal place, we get $1.0$ and so at this point we cannot tell if it is greater than one or not (it might be rounded up from $0.98$). But
by the second decimal place we have $1.05$, which is enough to convince us. However, it is not possible to demonstrate that a given number is \emph{equal} to one. Even if we
know the number to 5 decimal places, we cannot rule out the possibility that the number might be $1.00000003$. The study of topological spaces is called \emph{topology}.

A \emph{spectrum} is a way to take a semiring and construct an associated topological space. In this way spectra provide a bridge between algebra and topology.
There are a number of well-known spectrum constructions in mathematics, each appropriate for different kinds of semiring.
We introduce a single construction which applies to all semirings and reduces to four different previously known constructions in special cases.

\subsection*{Abstract} %

Spectrum constructions appear throughout mathematics as a way of constructing topological spaces from algebraic data.
Given a localic semiring $R$ (the pointfree analogue of a topological semiring), we define a spectrum of $R$ which
generalises the Stone spectrum of a distributive lattice, the Zariski spectrum of a commutative ring, the Gelfand spectrum
of a commutative unital C*-algebra and the Hofmann--Lawson spectrum of a continuous frame.
We then provide an explicit construction of this spectrum under conditions on $R$ which are satisfied by our main examples.

Our results are constructively valid and hence admit interpretation in any elementary topos with natural number object.
For this reason the spectrum we construct should actually be a locale instead of a topological space.

A simple modification to our construction gives rise to a \emph{quantic} spectrum in the form of a commutative quantale.
Such a quantale contains `differential' information in addition to the purely topological information of the localic spectrum.
In the case of a discrete ring, our construction produces the quantale of ideals.

This prompts us to study the quantale of ideals in more detail. We discuss some results from abstract ideal theory in the setting of quantales
and provide a tentative definition for what it might mean for a quantale to be nonsingular by analogy to commutative ring theory.

\tableofcontents
\cleardoublepage %
\pagenumbering{arabic}
\setcounter{chapter}{-1}

\section{Introduction}\label{section:introduction}

Spectrum\index{spectrum} constructions show up in a number of different contexts in mathematics as a way to associate a topological space to some kind of algebra structure.
These constructions tend to all proceed in a similar manner and they have led to great insight into the algebraic structures involved.
This suggests that we should try to understand the notion of spectrum more generally.

In order to understand how one might associate a space to an algebraic structure, it is helpful to consider the following special case. Suppose we are given a ring $R$ of some class of functions
on a topological space $X$ (or more generally, sections of a bundle over $X$ so that the functions may have variable codomain).
In the best cases we might hope that we could use a spectrum construction to recover this space $X$ from $R$.

Every point $x \in X$ yields a ring homomorphism $e_x$ from $R$ corresponding to evaluation at $x$.
In general we cannot say much about the codomain of this function, other than that it is a ring. We could nonetheless ask whether $e_x(f)$ is equal to $0$ or perhaps whether it is equal to $1$.
Since $e_x(f) = 1 \iff e_x(f-1) = 0$, it is enough to consider only the former case. %
In this way, each $f \in R$ defines a zero set in $X$.

We can use these zero sets to recover information about the topology of $X$. We expect the evaluations $e_x(f)$ to be `continuous in $x$'
and so if the functions on $X$ take values in discrete rings, then these zero sets should be clopen.
However, in many important examples the codomain will not be discrete: for example, if $R$ is the ring of continuous real-valued functions on $X$.
So we will assume that these zero sets are merely closed. These can then be used to generate a topology.

Of course, in general we cannot expect to be able to recover the topology of $X$ from $R$.
For example, the ring of $S$-valued constant functions contains no information about $X$ other than whether it is empty.
In this case, the zero sets are either empty or all of $X$. So the points of $X$ are topologically indistinguishable and it seems natural to identify them in the spectrum.

The ring $R = S_1 \times S_2$ can be obtained as either the ring of $R$-valued functions on the one-point space or the ring of sections of the bundle $S_1 \sqcup S_2 \to \{1,2\}$.
The latter fact is more informative --- it encodes the fact that $R$ can be decomposed into a product of two smaller rings. We would like the spectrum of a ring to be as informative as possible
and so we should distinguish zero sets which are distinguished for any choice of substrate $X$.

\paragraph{The Zariski spectrum}

For a (discrete) commutative ring $R$, the \emph{Zariski spectrum}\index{spectrum!Zariski|(} of $R$ gives an associated topological space $\Spec R$ and
the elements of $R$ can then be viewed as (polynomial) functions on this space.

Unlike the discussion above, we do not start with a space $X$. When $X$ is given, its points give rise to ring homomorphisms and
so it stands to reason that we can associate the points of the spectrum with certain ring homomorphisms out of $R$.
Not every ring homomorphism is appropriate for this purpose. Firstly, these ring homomorphisms should be distinguished by which elements of $R$ they send to zero.
This suggests that they should be associated to quotients of $R$. Secondly, we should not map into a ring such as $S_1 \times S_2$, when more fine grained information
can be obtained by into smaller rings such as $S_1$ and $S_2$.

In the Zariski spectrum, the points of the spectrum $\Spec R$ correspond to prime ideals of $R$. The value of a `function' $f \in R$ at the point $P$ is given by the image of $f$ in the field of fractions of
the integral domain $R/P$.
We can then equip $\Spec R$ with a topology by demanding that the zero sets of these functions are closed. %

Note that the zero set of $fg$ is the union of the zero sets of $f$ and $g$. Thus, the subbasic closed sets are closed under finite unions and hence
general closed sets will be given by simultaneous zero sets of functions.
Now if $f$ and $g$ are zero at some point, then $f + g$ and $fh$ are zero at that same point for any $h \in R$. Furthermore, $f$ is zero wherever $f^2$ is.
Therefore closed sets of $\Spec R$ correspond to radical ideals\index{ideal (discrete algebraic)!semiring!radical} of the ring $R$. %

In this way the open sets of the Zariski topology correspond to unions of cozero\index{cozero} sets. That is, regions on which some function $f \in I$ is nonzero (or equivalently, invertible) for radical ideals $I$.

\paragraph{Constructive considerations}

This approach works well in classical mathematics with the axiom of choice. However, without choice axioms rings might fail to have sufficiently many prime ideals for the spectrum to behave appropriately.
For example, we cannot show that a nontrivial ring always has a nonempty spectrum.
It is desirable for mathematical theories to be \emph{constructive}\index{constructive mathematics}. A constructive result has much greater applicability than a classical one, since it holds in any topos.
For instance, interpreting a result in the effective topos can give a computer program which computes a witness for the result.
The extra constraints involved in working constructively can also be useful in narrowing the possibilities when defining concepts and constructions.

The Zariski spectrum can be defined constructively if we replace the classical notion of a topological space with the pointfree notion of a frame.
A frame is an algebraic formulation of a lattice of open sets. From this points can be reconstituted, but they are not fundamental; nontrivial frames might even have no points.

As a frame, the Zariski spectrum of a ring $R$ is given by its frame $\Rad(R)$ of radical ideals.
This frame can also be defined by generators and relations. One generator $\overline{f}$ is given for each element $f \in R$.
This generator is thought of as the cozero\index{cozero|(} set of $f$. The relations correspond to intuitive properties about cozero sets and are as given as follows:
\begin{align*}
 \overline{0}    &\mathmakebox[4.5ex][c]{=} 0 \\
 \overline{f+g}  &\mathmakebox[4.5ex][c]{\le} \overline{f} \vee \overline{g} \\
 \overline{1} &\mathmakebox[4.5ex][c]{=} 1 \\
 \overline{fg}   &\mathmakebox[4.5ex][c]{=} \overline{f} \wedge \overline{g}.
\end{align*}
Without excluded middle, the points of the spectrum correspond to prime \emph{anti-ideals}.
These should be thought of as the functions which are invertible or `cozero' at that point.
In this way, the Zariski spectrum classifies the `places where the elements of $R$ (viewed as functions) can be cozero'.\index{cozero|)}

\paragraph{The quantale of ideals\index{quantale of ideals}}

We have discussed how the frame of radical ideals $\Rad(R)$ is important for the study of a ring $R$. It is natural to ask what would happen if we instead considered the lattice $\Idl(R)$ of \emph{all} ideals.
This is no longer a frame, but it does come with a natural operation of \emph{multiplication of ideals} which makes it a commutative \emph{quantale}. 

Commutative quantales are generalisations of frames where the operation corresponding to `meet' need not be idempotent.
While a frame contains purely `topological' information, this failure of idempotence allows quantales to additionally encode some kind of `differential'\index{spectrum!quantic|(} information.
This will be seen explicitly when we discuss the tangent bundle of a quantale.

The quantale of ideals $\Idl(R)$ can be presented in a similar way to the frame of radical ideals. We have generators $\overline{f}$ for each $f \in R$ subject to the relations
\begin{align*}
 \overline{0}    &\mathmakebox[4.5ex][c]{=} 0 \\
 \overline{f+g}  &\mathmakebox[4.5ex][c]{\le} \overline{f} \vee \overline{g} \\
 \overline{1} &\mathmakebox[4.5ex][c]{=} 1 \ge \overline{f} \\
 \overline{fg}   &\mathmakebox[4.5ex][c]{=} \overline{f} \cdot \overline{g}.
\end{align*}

Here the generators corresponding to $f$ and $f^2$ are different. This should be thought of as arising from the fact that while $f$ and $f^2$ vanish in the same locations, if $f$ vanishes to first order,
then $f^2$ will vanish to second order --- that is, both $f^2$ and its first derivatives will be zero.
The difference with $\Rad(R)$ is particularly stark when $R$ has nontrivial nilpotent elements. When viewed as a function, such an element $\epsilon$ is zero everywhere on $\Spec R$, but it still
gives a nontrivial `cozero' element in $\Idl(R)$. This behaviour of nilpotents is understood in algebraic geometry to be related to `infinitesimal tangent vectors' and `repeated roots'.
We might consider $\Idl(R)$ to be a kind of \emph{quantic spectrum}\index{spectrum!quantic|)} of $R$.

The idea to study rings through their quantales of ideals is not new. Ideal theory can be traced back to work by Dedekind, Krull and Noether, while the abstract study of ideals was initiated by
Dilworth and Ward \cite{DilworthWard1939}. This research has shown that for a number of purposes $\Idl(R)$ contains a rich amount of information about $R$.
There are also a number of streams of research in algebraic geometry, such as the study of Hilbert--Samuel functions, which can be interpreted in purely ideal theoretic terms.\footnote{
Besides intrinsically ideal-theoretic concepts, it seems to me that quantales easily describe algebro-geometric notions in low dimension (for instance, the intersection theory of curves or
first-order differential information), but fail in higher dimensions when higher homology groups become relevant (though perhaps more progress could be made with a more sophisticated approach).}
Of course, a large amount of information is also lost. For example, every field has the same quantale of ideals.

Still, while this prior work is able to extract information from the quantale of ideals, it does not always do so in a natural way. In particular, while quantale-like structures have been used
to study individual rings, the morphisms have been somewhat neglected and the functoriality of $\Idl\colon \Rng \to \Quant$ remains unexploited.
Perhaps much of the information so extracted from $\Idl(R)$ appears only `incidentally' and more categorically minded approaches will not be as successful.
With this in mind, we do not attempt here to conduct a thorough examination of $\Idl(R)$ in service of commutative ring theory.
Instead, we make a start at exploring a few natural constructions involving quantales and then consider what if anything they might tell us.
If this were to be followed through in future, the most promising direction might be to see what the tangent bundle of a quantale has to say about singularity theory.

\paragraph{Other kinds of spectra}

The Zariski spectrum is not the only, nor the first kind of spectrum to be defined.
The idea of associating a topological space to an algebraic structure, in this case Boolean algebras\index{spectrum!Stone|(}, was first proposed in \cite{StoneBoolAlgs} by Stone\footnote{
In the same paper Stone also constructed the spectrum of the (topological) ring of real-valued functions on a compact Hausdorff space.}. %
This was quickly extended to distributive lattices in \cite{StoneDistLattices}.
Shortly thereafter, Gelfand developed a spectrum\index{spectrum!Gelfand|(} for commutative Banach algebras \cite{GelfandNormedSpectrum} and Jacobson found a spectrum for (discrete, possibly noncommutative) rings \cite{JacobsonPrimitive}. %
These spectra were very similar to Stone's --- the main difference being that while Stone thought of his spectrum as a space of prime ideals, Gelfand and Jacobson
both used maximal ideals (or noncommutative variants). Grothendieck observed that maximal spectra are not functorial for general commutative rings, so at least in the discrete case,
it is better to consider prime ideals, as in Stone's original definition. This is the so-called Zariski spectrum described above. %

On the other hand, Gelfand's duality between commutative C*-algebras and compact Hausdorff spaces\footnote{The so-called `real Gelfand duality' is actually due to Stone \cite{StoneBoolAlgs,Stone1940real}
and was proved prior to the complex version described in \cite{GelfandNormedSpectrum,GelfandNaimark}.} is still usually presented as being in terms of maximal ideals. Notwithstanding the satisfactory theory of the
representation of Gelfand rings (see \cite{MulveyGelfandRings} and \cite{banaschewski2002maximal}), it seems to me that the importance here of maximal ideals per se is sometimes overstated.
As normed rings, C*-algebras have a natural topology, which we dare not ignore. It is not maximal ideals that are of importance so much as \emph{closed prime ideals\index{anti-ideal!open prime|(}},
which happen to coincide in this example.

Moreover, in \cite{hofmann1978spectral} Hofmann and Lawson describe a spectrum\index{spectrum!Hofmann-Lawson@Hofmann--Lawson|(} of continuous distributive lattices, whose points are described as certain elements,
which we may interpret as closed prime ideals with respect to the Scott topology.

The striking similarities between these different notions of spectrum suggest we look for a common generalisation.
Prompted by the case of C*-algebras and encouraged by Stone's maxim \emph{``Always topologise''}, we develop a notion of spectrum of a commutative topological semiring.
More accurately, we work with \emph{localic semirings}\index{semiring!localic}, which are the pointfree analogue of topological semirings.
(Using classical logic and topological semirings, a similar spectrum construction is mentioned in \cite{kerkhoff2012tropical}, but this was left largely unexplored.) %

When we work constructively, the picture becomes even clearer. We described the constructive theory of the Zariski spectrum above; it has been understood for a long time with the essential ideas
appearing in \cite{Joyal1975supportOfRing}. A constructive account of Gelfand duality was developed more recently \cite{Henry2016}.
We obtain the following table.

\begin{table}[H]
\begin{tabular}{>{\raggedright\arraybackslash}m{38mm}>{\raggedright\arraybackslash}m{23mm}>{\raggedright\arraybackslash}m{35mm}>{\raggedright\arraybackslash}m{35mm}}
  \toprule
  \textbf{Class of semiring} & \textbf{Spectrum}\index{spectrum} & \textbf{Opens} & \textbf{Points} \\
  \midrule
  Commutative rings & Zariski\index{spectrum!Zariski|)} & Radical ideals & Prime anti-ideals \\
  \rowcolor{gray!20}
  Distributive lattices & Stone\index{spectrum!Stone|)} & Ideals & Prime filters \\
  Comm.\ C*-algebras & Gelfand\index{spectrum!Gelfand|)} & Overt weakly closed ideals \cite{ConstructiveGelfandNonunital} & Closed prime ideals \\ %
  \rowcolor{gray!20}
  Continuous frames & Hofmann--Lawson\index{spectrum!Hofmann-Lawson@Hofmann--Lawson|)} & Scott-closed ideals & Scott-open prime filters \\ %
  \bottomrule
\end{tabular}
\end{table}

This suggests that a general notion of spectrum for a localic semiring might be given by a frame of `overt weakly closed radical ideals'\index{ideal (localic algebraic)!overt weakly closed} and the corresponding points will be closed prime ideals\index{ideal (localic algebraic)!closed prime|see {anti-ideal, open prime}}
(or equivalently open prime anti-ideals\index{anti-ideal!open prime|)}).

We have seen how the points of the Zariski spectrum corresponded to regions where `functions can be cozero'.
Phrasing this as a geometric theory leads to the previously described presentation.
In this thesis, we attempt to generalise this approach to give a kind of presentation for the spectrum of a general localic semiring.
In a wide class of localic semirings, including all the examples mentioned above, this successfully leads to a spectrum which coincides with an appropriate definition of
the frame of overt weakly closed radical ideals discussed above.

By analogy to the Zariski spectrum, we might also hope that each point corresponds to a semiring homomorphism into some notion of `localic semi-field' (with an open sublocale of invertible elements).
Regrettably, this does not work without additional assumptions.
The reason is that quotients and localisations of localic semirings can be badly behaved due to the failure of products of locales to commute with coequalisers. %
The Zariski spectrum can be equipped with a `structure sheaf' from whose global sections we can recover the original ring. Any attempt to equip our generalised spectrum with
a `structure bundle'\index{structure bundle} runs into the same problems. In the absence of a developed theory of localic algebra, we have not explored this further in this thesis. %
Perhaps some progress could be made by placing restrictions on the class of localic semirings or by only taking the localisation `formally'. This is an opportunity for future work.

\paragraph{Quantic spectra}

We can also extend the notion of quantic spectrum to this more general situation.
This extension is more audacious, since for most of our examples this quantale is already a frame and coincides with the localic spectrum.
Nonetheless it appears that there might be some interesting examples where it could lead to more interesting structure: for instance, it could perhaps be
applied to the ring of smooth functions on a differentiable manifold, equipped with the Whitney topology. %

We can recover the localic spectrum as the localic reflection of the quantic spectrum.
But this is not the only frame which we can derive from the quantic spectrum. We can also use the quantic spectrum to construct what we call the \emph{tangent bundle}.
This contains first-order differential information about the quantale, and hence the original localic semiring.
For a discrete ring, this can be used to determine if the ring is \emph{nonsingular} in the sense of algebraic geometry. %

\subsection*{Outline}

We now give a brief summary of the contents of the thesis.

\newcommand*{\fullref}[1]{{\hypersetup{linkcolor=.}\hyperref[{#1}]{\Cref*{#1}: \nameref*{#1}}}} %

\subsubsection*{\fullref{section:background}}

In this chapter we describe the necessary background for understanding the rest of the thesis.
We pay particular attention to constructive frame theory, often including complete proofs, since it is rare to find this information in a single place.
The results are not new, but many of the proofs differ from those found in the literature (if explicit published proofs exist at all).

\subsubsection*{\fullref{section:quantales_and_AIT}}

This chapter presents some links between classical abstract ideal theory and the theories of quantales and suplattices.
We start by characterising the `principal elements' of Dilworth \cite{Dilworth1962} in categorical terms in the category of suplattices (\cref{prop:characterisation_of_principal_elts}).
After finding this characterisation I discovered that similar work had been done in a slightly different context in \cite{blyth1972residuation,nai2014principalMappingsPosets},
but hopefully our approach is still of interest. %

Next with \cref{prop:principal_elements_of_Idl_R} we provide a constructive proof of the characterisation of principal elements in $\Idl(R)$.
This is a well-known result in abstract ideal theory, but the standard proof \cite{McCarthy1971} uses the axiom of choice.
Some partial results can be found in \cite{LombardiQuitte}, but a full constructive proof does not seem to have appeared before.
Along the way, we develop some of the theory of localisations of coherent quantales.
These results are folklore and are stated in \cite{Anderson1976} without proof. We provide constructive proofs.

Finally, under the assumption of excluded middle, we observe that the notion of a discrete valuation can be recovered from natural quantale-theoretic considerations.
We also show how a certain class of primary ideals arises naturally from maps from $\Idl(R)$ into the two-sided quantale $\langle \epsilon \mid \epsilon^2 = 0\rangle$
and relate this to subspaces of the Zariski tangent space at a prime in $R$.

\subsubsection*{\fullref{section:spectrum}}

In this chapter we define the spectrum of a localic semiring (\cref{def:localic_spectrum,def:quantic_spectrum_as_OPAI}) and the quantale of ideals (\cref{section:quantale_of_overt_weakly_closed_ideals})
and prove our main result (\cref{prop:quantic_spectrum_of_approximable_semiring}), which provides a sufficient condition for the quantic spectrum to exist and coincide with the quantale of ideals.
These sufficient conditions are satisfied by all our examples and the spectrum construction gives the expected results in each case.

We start by describing a generalised notion of presentation for frames,
which will allow us to mimic the presentation of the Zariski spectrum of a discrete ring in our more general situation.
In this way we define the localic spectrum of a localic semiring to be the representing object of a certain functor.
This definition is easily extended to give a definition for the quantic spectrum.

Next we construct the quantale of ideals $\Idl(R)$ of a localic semiring $R$ as a quotient of the suplattice $\SubOW(R)$ of overt weakly closed sublocales of $R$.
We also define the quantale of monoid ideals and the frame of radical ideals.

In \cref{section:sufficient_conditions_for_spectrum} we define \emph{approximable} localic semirings. We provide a candidate for the universal element of the spectrum functor
and prove various properties about it under the assumption that the ring is overt and approximable. Finally, we prove that under these assumptions it is indeed the universal element
and hence $\Idl(R)$ is the quantic spectrum. The localic result is obtained as a corollary.

We conclude the chapter by proving that some large classes of localic semirings are approximable and thus have a spectrum. In particular, we prove that the localic ring of real-valued functions
on a compact regular locale is approximable (\cref{prop:gelfand_approximable}) and hence the Gelfand spectrum of a C*-algebra is encompassed by our construction. %

\subsubsection*{\fullref{section:coexponential}}

This chapter can be divided into two parts. In the first part, we discuss the notion of approximable semirings and the spectrum construction from the perspective of dualisable suplattices.
We note that a localic semiring $R$ is approximable if and only if the underlying suplattice of an associated localic monoid $\Sats R$ is a dualisable object in $\Sup$.
This localic monoid can be viewed as a localic incarnation of the `holoid quotient' of a monoid. We prove a number of results about this construction and show that, when it exists,
the dual of $\Sats R$ is the quantale $\MonIdl R$ of monoid ideals (\cref{prop:dual_of_SatsR}). A simple proof then shows that under these assumptions, $\MonIdl R$ gives a kind of
`monoid spectrum' of the multiplicative monoid of $R$ (\cref{prop:spectrum_for_localic_monoids}). This helps to clarify a number of aspects of the spectrum construction for semirings,
though I have not yet extended this approach to give the semiring spectrum in a similar way.

The second part involves coexponentials of quantales. A result of Niefield \cite{Niefield2016} implies that a quantale is coexponentiable if and only if its underlying suplattice is dualisable.
While it is possible to extract an explicit description of the coexponential from her characterisation, the high level of abstraction makes this difficult.
\Cref{prop:presentation_for_coexponential} provides an explicit description of the coexponential in terms of generators and relations.
In \cref{section:tangent_bundle} we then apply this result to construct the tangent bundle of a quantale. We discuss links to algebraic geometry and propose a definition for what it might mean
for a quantale to be nonsingular.

\section{Background}\label{section:background}

In this chapter we summarise the necessary background for understanding the thesis.
Let us start by fixing basic terminology and notation.

Given a set $A$, a subset $S \subseteq A$ and some notion of distinguished subsets of $A$ closed under arbitrary meets (for instance, subalgebras, ideals or congruences),
we will use the notation $\langle S\rangle$\glsadd{langleSrangle} to denote the distinguished subset generated by $S$.

We denote the category of sets by $\Set$\glsadd{Set} and the category of posets by $\Pos$\glsadd{Pos}.

\begin{definition}
 A \emph{semiring}\index{semiring|(textbf} (also called a \emph{rig}\index{rig|see {semiring}}) is a set $R$ equipped with an `additive' commutative monoid $(R, +, 0)$ structure and a `multiplicative' monoid structure $(R, \times, 1)$
 such that $\times$ is bilinear with respect to the additive structure --- that is, $\times$ distributes over $+$ (on both sides) and $0$ is an absorbing element for $\times$.
\end{definition}

A semiring is \emph{commutative} if its multiplicative monoid is commutative. In this thesis, we will implicitly assume that every semiring is commutative.\index{semiring|)}

We regard $0$ as a natural number and so the natural numbers $\N$ with their usual operations form the initial semiring.

A \emph{ring\index{ring|textbf}} is a semiring where the additive monoid is a group.
In particular, note that for us rings are commutative and unital.

A semiring $R$ is said to be \emph{idempotent}\index{idempotent semiring|textbf}\index{semiring!idempotent|see {idempotent semiring}} if its additive monoid is idempotent --- that is, if $x + x = x$ for all $x \in R$.
This makes the additive monoid a semilattice and by convention we order the semiring so that addition corresponds to join.

\begin{definition}
 A subset $S$ of a commutative monoid $M$ is said to be \emph{saturated}\index{saturated!set|(textbf} if $xy \in S \implies (x \in S) \land (y \in S)$.
 By commutativity, it is enough to require that $xy \in S \implies x \in S$.
\end{definition}

\begin{definition}
 We say a subset of a semiring is \emph{saturated}\index{saturated!set|)} if it is saturated with respect to its multiplicative structure.
\end{definition}

\begin{definition}
 A subset $I$ of a commutative monoid $M$ is called a \emph{monoid ideal}\index{ideal (discrete algebraic)!monoid|textbf} if $(x \in I) \land (y \in M) \implies xy \in I$.
 Note in particular that the empty set is a monoid ideal.
\end{definition}

\begin{definition}
 An \emph{ideal}\index{ideal (discrete algebraic)!semiring|textbf} in a semiring $R$ is a subset which is both a submonoid for the additive structure and a monoid ideal for the multiplicative structure.
 We write $\Idl(R)$ for the poset of ideals in $R$.
 
 An ideal $I$ is \emph{radical}\index{ideal (discrete algebraic)!semiring!radical|textbf} if $x^n \in I \implies x \in I$ for all $n \in \N$. Every ideal $J$ is contained in a least radical ideal $\sqrt{J}$\glsadd{sqrtI}.
 We write $\Rad(R)$\glsadd{Rad} for the poset of radical ideals in $R$.
 
 An ideal $I$ is \emph{prime}\index{ideal (discrete algebraic)!semiring!prime|textbf} if whenever a finite product lies in $I$, so does one of the factors. Explicitly, this means $1 \notin I$ and $xy \in I \iff (x \in I) \lor (y \in I)$.
\end{definition}
Of course, these definitions reduce to the more familiar ones in the case that $R$ is a ring\index{ideal (discrete algebraic)!ring}.

We will make use of a number of basic concepts and results in category theory. Much, but not all, of the necessary background can be found in \cite{MacLane1998categories}.
Most of the remainder is covered informally in \cite{lack20102cats} where further references can be found. Unfortunately there does not seem to be single comprehensive source.

Given objects $X$ and $Y$ in a category $\Cvar$, if the categorical product $X \times Y$ exists, then we will often write $\pi_1\colon X \times Y \to X$\glsadd{pi12} and $\pi_2\colon X \times Y \to Y$ for the
corresponding product projections without further comment. Similarly, if the coproduct $X \oplus Y$ exists, we will write $\iota_1\colon X \to X \oplus Y$\glsadd{iota_12} and $\iota_2\colon Y \to X \oplus Y$
for the coproduct injections.
We write $!$\glsadd{exclamation} for the unique morphism from an initial object or to a terminal object.

If $f\colon Z \to X$ and $g\colon Z \to Y$, we use $(f,g)$\glsadd{bracketsfg} to denote the map from $Z$ to $X \times Y$ induced by the universal property of the product.

\subsection{Constructive mathematics}

For the most part, in this thesis we will work constructively\index{constructive mathematics} in the sense that the arguments will hold internal to any topos with natural numbers object.
Any use of nonconstructive principles will be clearly indicated.

Constructive logic is characterised by the failure to assert the law of the excluded middle\index{excluded middle}.\footnote{While the law of the excluded middle is not asserted, neither is its negation.
Thus, classical mathematics just a special case of constructive mathematics. Of course, some logics are more general still. Quantales have strong links to substructural logics such as linear logic.}
See \cite{VanDalenIntuitionisticLogic} for the precise details.
We do not strive to avoid classical axioms for philosophical reasons, but because of the great utility in being able to interpret the results in a number of different settings. For example, constructive proofs are
intrinsically effective (in the sense of computability) and also immediately give proofs of analogous theorems in the setting of fibrewise topology.
It can be useful to keep one of these models in mind for intuition.

Constructive mathematics is not difficult, but understandably classical mathematicians can be unsure exactly which results are constructively valid.
We attempt to briefly describe some of the important ideas which are invisible in the classical theory.

We denote \emph{lattice of truth values} by $\Omega$\glsadd{Omega}. If $\phi$ is a closed logical formula, we will use the notation $\llbracket \phi \rrbracket$\glsadd{llrrbrackets} to denote the corresponding truth value in $\Omega$.
In constructive logic, we cannot prove $\Omega$ is isomorphic to the two-element lattice $\{\bot, \top\}$ (as this would imply excluded middle).
Nonetheless, maps into $\Omega$ are completely determined by what they send to $\top$\glsadd{top} (i.e.\ true), since every $p \in \Omega$ satisfies $p = \llbracket p = \top \rrbracket$.
The lattice $\Omega$ may be identified with the powerset of the singleton by associating each truth value $p$ with the subset $\{ x \in \{*\} \mid p \}$.

Notice in particular that subsets of sets can fail to have complements.
\begin{definition}
 A subset $A \subseteq X$ is said to be \emph{decidable}\index{decidable|(textbf} if $A$ has a complement in the powerset lattice $\powerset{X}$.
\end{definition}

When we restrict our attention to decidable subsets, constructive mathematics begins to look much more like the familiar classical situation.
For example, the \emph{decidable} subsets of $\{*\}$ are just $\emptyset$ and $\{*\}$.\footnote{
While the subobject classifier in a topos might have many complemented subobjects externally, there are nonetheless always exactly two according to the internal logic.
}
Nonetheless, this is not always possible (or even desirable).

A general subset of $\{*\}$ is called a \emph{subsingleton\index{subsingleton|textbf}}. Since a subsingleton cannot be shown to be equal to either $\emptyset$ or $\{*\}$, nonemptiness is a little more subtle constructively.
A set $X$ is \emph{nonempty} if it is not equal to the empty set.
But this is a weaker condition than the set being \emph{inhabited\index{inhabited|textbf}}, which means that there is an element $x \in X$.
Inhabitedness is the more useful property. If a set is inhabited, we are provided with an element which we can then use to do other things; if we only know a set is nonempty,
there is very little more we can say. This is a common theme in constructive mathematics: conditions are better phrased in a \emph{positive} manner. By this we mean that the
condition should avoid using implication (and negation in particular).

Sometimes in classical mathematics, it is important to consider if two subsets $U$ and $V$ are disjoint --- that is, when their intersection is empty. In the constructive setting, it is often better to
consider when the intersection is inhabited. In this case we say the subsets \emph{meet} each other and write $U \between V$\glsadd{between}.

Another case in which constructive mathematics behaves more classically is when equality is decidable.
\begin{definition}
 We say a set $X$ has \emph{decidable equality}\index{decidable|)} if the diagonal is a decidable subset of $X \times X$.
\end{definition}
An alternative way of describing decidable equality is that excluded middle holds for equality statements: $\forall x, y \in X.\ (x = y) \lor (x \ne y)$.
Functions can be defined piecewise when their domain has decidable equality, while this is usually not constructively valid.
Decidable equality holds for many familiar sets, such as the natural numbers and the rationals, but if $\Omega$ has decidable equality, then excluded middle holds.
It is also traditional to mention here that decidable equality can fail for the set of real numbers, but we believe that the real numbers are best viewed as a locale (see \cref{section:frame_propositional_theory})
and so thinking of them as a set discards too much information.\footnote{In topological terms, the diagonal of a discrete space is always open and decidable equality says that it is also closed.
Since the reals are Hausdorff, their diagonal is indeed closed. However, the diagonal fails to be open. So the reals also fail to have a clopen diagonal, but for a different reason to a set such as $\Omega$.}

In constructive mathematics, finiteness splinters into a number of different concepts. We discuss two of the most important ones.

\begin{definition}
 A set is called \emph{finite\index{finite|(textbf}} if it is isomorphic to $[n] \coloneqq \{x \in \N \mid x < n\}$\glsadd{sqbracketsN} for some $n \in \N$.\footnote{
 For us finiteness and other conditions are properties, not structures. When we say there is an isomorphism, we are existentially quantifying over morphisms.
 See \cite{shulman2010stack} for how this is interpreted in a topos.
 In particular, this does \emph{not} necessarily mean that there is an isomorphism externally.
 }
 A set is \emph{finitely indexed} or \emph{Kuratowski-finite} if it is the quotient of a finite set.
\end{definition}
\begin{remark}
One should be aware that the terminology for constructive finiteness in the literature is not completely consistent with some authors using `finite' to mean what we call `Kuratowski-finite'.
\end{remark}

A subset of a finite set need not be finite! In fact, the following lemma shows that if every subsingleton is Kuratowski-finite, then excluded middle holds.
\begin{lemma}
 If $X = \{x \in \{*\} \mid p\}$ is Kuratowski-finite, then $p \vee \neg p$.
\end{lemma}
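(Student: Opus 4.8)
The plan is to unwind what it means for $X = \{x \in \{*\} \mid p\}$ to be Kuratowski-finite and extract a case distinction on $p$. By definition, $X$ being Kuratowski-finite means there is a surjection $q\colon [n] \twoheadrightarrow X$ for some $n \in \N$. The natural numbers have decidable equality, so we can decide whether $n = 0$ or $n > 0$ (i.e.\ $n = m+1$ for some $m$). This disjunction is the engine of the proof: one branch will witness $\neg p$ and the other will witness $p$.

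First I would handle the case $n = 0$. Then $[0] = \emptyset$, and a surjection $\emptyset \twoheadrightarrow X$ forces $X = \emptyset$. But $X = \emptyset$ says exactly that there is no $x \in \{*\}$ with $p$; since $* \in \{*\}$ always, this means $\neg p$ (if $p$ held, then $* \in X$, contradicting emptiness). So in this case we get the right disjunct $\neg p$, hence $p \vee \neg p$.

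Next I would handle the case $n = m + 1$. Then $[n]$ is inhabited (it contains $0$), so its image under the surjection $q$ is an inhabited subset of $X$; in particular $X$ is inhabited, say by some element $x \in X$. But an element of $X$ is by definition an element of $\{*\}$ together with a proof of $p$, so from $x \in X$ we read off a proof of $p$. Hence in this case we get the left disjunct $p$, and again $p \vee \neg p$.

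Combining the two cases via the disjunction $n = 0 \lor n > 0$ gives $p \vee \neg p$ in all cases. The only thing to be slightly careful about — and the closest thing to an obstacle — is making sure the reasoning stays constructive: we never case-split on $p$ itself (which would be circular), only on the natural number $n$, whose decidable equality is unproblematic; and we use only that a surjection out of $\emptyset$ has empty image and a surjection out of an inhabited set has inhabited image, both of which are constructively valid. Everything else is routine unwinding of definitions.
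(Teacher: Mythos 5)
Your proof is correct and follows exactly the paper's argument: case-split on the decidable disjunction $n=0 \lor n\ge 1$ for the domain of the surjection, concluding $\neg p$ from emptiness in the first case and $p$ from inhabitedness in the second. No issues.
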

\begin{proof}
 There is an $n \in \N$ and a surjection $e\colon [n] \twoheadrightarrow X$.
 We have $(n = 0) \lor (n \ge 1)$. In the case that $n = 0$, we have a surjection from $\emptyset$ to $X$. By the definition of surjectivity, we find $X = \emptyset$ and so $\neg p$.
 In the case that $n \ge 1$, we have $e(0) \in X$ and hence $p$.
\end{proof}

On the other hand, decidable subsets are much better behaved.

\begin{lemma}
 A decidable subset of a finite set is finite and a decidable subset of a Kuratowski-finite set is Kuratowski-finite.
\end{lemma}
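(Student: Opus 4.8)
The plan is to handle the two assertions in turn, reducing the second to the first. For the first, I would argue by induction on the natural number $n$ with $X \cong [n]$. Since being finite is a property, I may assume $X = [n]$ and let $A \subseteq [n]$ be decidable. The base case $n = 0$ is immediate, since the only subset of $[0] = \emptyset$ is $\emptyset = [0]$. For the inductive step, write $[n+1] = [n] \cup \{n\}$ and set $B \coloneqq A \cap [n]$. Restricting the decidability of $A$ to $[n]$ shows $B$ is a decidable subset of $[n]$, so the induction hypothesis supplies an isomorphism $B \cong [m]$ for some $m$. Decidability of $A$ also gives $(n \in A) \lor (n \notin A)$: in the first case $A = B \cup \{n\}$ (a disjoint union, as $n \notin [n]$), so $A \cong [m] \sqcup \{n\} \cong [m+1]$; in the second case $A = B \cong [m]$. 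Either way $A$ is finite. Transporting along $X \cong [n]$, which preserves decidability of subsets, then gives the statement for arbitrary finite $X$.

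For the second assertion, suppose $X$ is Kuratowski-finite, so there is a surjection $e\colon [n] \twoheadrightarrow X$, and let $A \subseteq X$ be decidable. Then $e^{-1}(A)$ is a decidable subset of $[n]$ — for each $i \in [n]$ we have $(e(i) \in A) \lor (e(i) \notin A)$ — hence finite by the first part. Moreover $e$ restricts to a surjection $e^{-1}(A) \twoheadrightarrow A$: given $a \in A$, surjectivity of $e$ provides $i \in [n]$ with $e(i) = a$, and necessarily $i \in e^{-1}(A)$. Thus $A$ is a quotient of a finite set, i.e. Kuratowski-finite.

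The argument is essentially routine; the only point needing genuine care is the induction in the first part, and specifically the case split on whether $n \in A$, which is exactly where decidability is used and where a non-decidable subset would fail, combined correctly with the inductive isomorphism for $B$. Everything else — that preimages of decidable subsets are decidable, that isomorphisms transport decidability, and that a quotient of a quotient is again a quotient — is straightforward.
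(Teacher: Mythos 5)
Your proof is correct and follows essentially the same route as the paper's: induction on $n$ with a case split on membership of the top element for the finite case, then pulling a decidable subset back along the surjection $e$ and restricting $e$ to a surjection onto it for the Kuratowski-finite case. No gaps.
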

\begin{proof} %
 We show that decidable subsets of $[n]$ are finite by induction on $n$.
 This is trivial for $n = 0$. Suppose $n = k+1$ and that $S$ is a decidable subset of $[n]$. Then either $k \in S$ or $k \notin S$. In the latter case, $S$ is a decidable subset of $[k]$
 and is therefore finite by the inductive hypothesis. In the former case, $S\setminus \{k\}$ is a decidable subset of $[k]$. Thus, we have an isomorphism $\phi\colon [m] \to S \setminus \{k\}$.
 We can then construct a map $\phi'\colon [m+1] \to S$ by
 \[\phi'(x) = \begin{cases}
               \phi(x) & \text{if $x < m$} \\
               k       & \text{if $x = m$}
              \end{cases},
 \]
 which is easily seen to be an isomorphism.
 
 Now given an arbitrary set $X$, a surjection $e\colon [n] \twoheadrightarrow X$ and a decidable subset $S \subseteq X$, we have that $e^{-1}(S)$ is decidable and hence finite by the above.
 Thus, $S = e(e^{-1}(S))$ is Kuratowski-finite (via the map $e\vert_{e^{-1}(S)}$) and even finite if $e$ is a bijection.
\end{proof}

We can now show the relationship between the two notions of finiteness we defined above.

\begin{lemma}
 A set $X$ is finite\index{finite|)} if and only if it is Kuratowski-finite and has decidable equality. %
\end{lemma}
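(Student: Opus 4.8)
The plan is to prove both implications, with essentially all of the content in the ``if'' direction. For ``only if'', I would observe that $[n]$ is trivially Kuratowski-finite (it is a quotient of itself via the identity) and has decidable equality, this last point being inherited from $\N$, whose equality is decidable by a routine double induction. Both properties are invariant under isomorphism, so any set isomorphic to some $[n]$ has them.

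For the converse, suppose $e\colon [n] \twoheadrightarrow X$ witnesses Kuratowski-finiteness and $X$ has decidable equality. The idea is to carve a bijection out of $e$ by retaining only the ``first occurrence'' of each element: set
\[ S = \{\, i \in [n] \mid \forall j < i.\ e(j) \neq e(i) \,\}. \]
Since $X$ has decidable equality, each proposition $e(j) = e(i)$ is decidable, hence so is its negation, and a bounded conjunction of decidable propositions is decidable; therefore $S$ is a decidable subset of $[n]$. By the preceding lemma, a decidable subset of a finite set is finite, so $S \cong [m]$ for some $m \in \N$.

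It then remains to check that the restriction $e|_S\colon S \to X$ is a bijection. Injectivity is immediate from trichotomy on $\N$: if $i, i' \in S$ and $e(i) = e(i')$, then one of $i < i'$, $i = i'$, $i' < i$ holds, and the first and last contradict $i' \in S$ and $i \in S$ respectively. For surjectivity, since $e$ is already onto it suffices to show $e(i) \in e(S)$ for every $i \in [n]$, which I would prove by strong induction on $i$: either $i \in S$, and we are done, or $i \notin S$, so that $\neg\,\forall j < i.\ e(j) \neq e(i)$; as this is a bounded quantifier over a finite index set with decidable predicate, its failure yields an actual $j < i$ with $e(j) = e(i)$, and the inductive hypothesis applied to $j$ produces the required element of $S$. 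Composing $e|_S$ with the isomorphism $S \cong [m]$ gives $X \cong [m]$, so $X$ is finite.

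The only real subtlety — the step I would be most careful about — is the constructive handling of the predicate defining $S$: that decidable equality on $X$ propagates to decidability of $S$, and that the failure of the bounded universal statement ``$\forall j < i.\ e(j) \neq e(i)$'' genuinely produces a witness $j$ (using, for decidable $e(j)=e(i)$, that $\neg\neg$ collapses). Both reduce to finite case analyses, but it is precisely here that both hypotheses get used, and neither can be dropped: decidable equality alone does not force finiteness (witness $\N$), and Kuratowski-finiteness alone does not either (witness a subsingleton, by an earlier lemma in this section).
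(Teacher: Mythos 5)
Your proof is correct, and it is constructively careful in exactly the places where care is needed. It does, however, take a genuinely different route from the paper's. The paper argues by strong induction on $n$ (on ``$n$-listability''): it peels off the last listed element by forming the decidable subset $X' = \{x \in X \mid x \ne e(n-1)\}$, notes that $e^{-1}(X')$ is decidable and hence finite with $n-1$ excluded, applies the inductive hypothesis to $X'$, and then extends the resulting bijection piecewise by one element --- mirroring the piecewise-extension step already used in the preceding lemma. You instead make a single global selection of ``first occurrences'', $S = \{i \in [n] \mid \forall j < i.\ e(j) \ne e(i)\}$, invoke the same key prior lemma (decidable subsets of finite sets are finite) to get $S \cong [m]$, and verify directly that $e\vert_S$ is a bijection. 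Both proofs lean on the same two constructive pillars: decidability of the relevant subset (via decidable equality on $X$ and finiteness of the index set) and the prior lemma. What yours buys is the elimination of the outer induction on $n$ in favour of an explicit witness set, at the cost of needing the surjectivity argument's strong induction together with the observation that the failure of a bounded universal over a decidable predicate yields an actual counterexample $j$ --- a point you rightly flag as the crux. The paper's version avoids that last principle entirely, since at each stage it only ever tests membership of the single element $e(n-1)$. Either argument is a perfectly good constructive proof.
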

\begin{proof}
 If $X$ is finite, it is clearly Kuratowski-finite and inherits decidable equality from $\N$.
 
 For the converse, call a set $n$-listable if it admits a surjection from $[n]$. We show every decidable $n$-listable set $X$ is finite by strong induction on $n$. Assume this holds for all $k < n$.
 The result is clear for $n = 0$, since the supplied surjection is automatically a bijection. So we may assume $n \ge 1$.
 Let $X$ be a set with decidable equality and $e\colon [n] \twoheadrightarrow X$ a surjection.
 Since $X$ has decidable equality, the subset $X' = \{x \in X \mid x \ne e(n-1)\}$ is decidable. Thus, $e^{-1}(X')$ is decidable and hence finite. Furthermore, $n-1 \notin e^{-1}(X')$
 and so $X'$ is $k$-listable for some $k < n$. Therefore, $X'$ is finite by the inductive hypothesis. Now let $\phi\colon [m] \to X'$ be a bijection. As before, we may use this to define an
 isomorphism from $[m+1]$ to $X$ in a piecewise manner, sending $m$ to $e(n-1)$.
\end{proof}

Finally, to give an impression of the situation with constructive algebra, we briefly discuss ideals of rings.
Let us consider the ring of integers $\Z$ in particular. Even though the integers have decidable equality, general ideals still behave in an unfamiliar way.

First note that $\Z$ need not be a principal ideal domain in the usual sense. Let $p$ be a truth value and suppose the ideal $\{a \in \Z \mid a = 0 \text{ or } p = \top\}$ is principal with generator $x$.
Then we have $p \iff \abs{x} = 1$ and hence $p \lor \neg p$, since $\Z$ has decidable equality. On the other hand, every finitely generated ideal of $\Z$ is principal
and so $\Z$ is what is known as a \emph{Bézout domain}.

Sometimes where ideals are used classically, constructive mathematicians need to use \emph{anti-ideals} instead.
\begin{definition}
 Let $R$ be a semiring. An \emph{anti-ideal}\index{anti-ideal|textbf} of $R$ is a subset $A \subseteq R$ such that $0 \notin A$, $x + y \in A \implies (x \in A) \lor (y \in A)$
 and $xy \in A \implies (x \in A) \land (y \in A)$. We say $A$ is \emph{prime}\index{anti-ideal!prime|textbf} if $1 \in A$ and $xy \in A \iff (x \in A) \land (y \in A)$.
\end{definition}
Classically, an anti-ideal is just the complement of an ideal. It is the prime anti-ideals, not the prime ideals, which occur as the points of the Zariski spectrum.\footnote{
Of course, as we discuss below, the Zariski spectrum should be thought of as a locale, and it might not have `enough points'.
Nonetheless, having an idea of what the points are is important when viewing the spectrum as a classifying locale.}

\subsection{Basic notions in order theory}

We will need a number of basic notions from order theory which we now recall.

For us lattices\index{lattice} and semilattices\index{semilattice|(} are always bounded (only in one direction in the latter case), so that semilattices are simply idempotent commutative monoids and distributive lattices are a kind of
idempotent semirings. We will move between the algebraic and order-theoretic descriptions of these structures without comment.

The definition of a join-semilattice need only require the existence of nullary and binary joins and we may then inductively define finite joins in the usual way.
But we can even prove the existence of joins of general Kuratowski-finite subsets.
\begin{lemma}
 Let $X$ be a join-semilattice\index{semilattice|)} and let $S \subseteq X$ be a Kuratowski-finite subset. Then $\bigvee S$ exists and is equal to $\bigvee_{i = 0}^{n-1} e(i)$ for any surjection $e\colon [n] \twoheadrightarrow S$.
\end{lemma}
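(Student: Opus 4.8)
The plan is to fix an arbitrary surjection $e\colon [n] \twoheadrightarrow S$, form the finite join $a \coloneqq \bigvee_{i=0}^{n-1} e(i)$ (which exists because a join-semilattice has all finite joins, as just noted), and prove directly that $a$ is the least upper bound of $S$. Since least upper bounds are unique when they exist, this yields both halves of the statement simultaneously: $\bigvee S$ exists, and its value $a$ does not depend on the choice of surjection $e$ (nor on $n$).

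First I would verify that $a$ is an upper bound for $S$. Given $s \in S$, surjectivity of $e$ means $\exists i \in [n].\ e(i) = s$. Because $s \le a$ is a proposition — a truth value in $\Omega$ — we may eliminate this existential: from $e(i) = s$ for some particular $i$ we get $s = e(i) \le \bigvee_{j=0}^{n-1} e(j) = a$. Next I would show $a$ is least among upper bounds: if $u \in X$ is any upper bound for $S$, then $e(i) \in S$ and hence $e(i) \le u$ for every $i \in [n]$, so $a = \bigvee_{i=0}^{n-1} e(i) \le u$. Combining the two steps, $a$ is the least upper bound of $S$, i.e.\ $\bigvee S$ exists and equals $\bigvee_{i=0}^{n-1} e(i)$; as the argument is insensitive to the choice of $e$, the displayed value is well-defined. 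The edge case $n = 0$ needs no separate treatment, since then a surjection $[0] \twoheadrightarrow S$ forces $S = \emptyset$ and $a = \bot = \bigvee \emptyset$.

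There is no genuine obstacle here; the one point requiring care constructively is that ``surjection'' conceals an existential quantifier, so the upper-bound step must be phrased as proving the \emph{proposition} $s \le a$ rather than as extracting a chosen preimage of $s$ under $e$. Everything else is the routine verification of a universal property.
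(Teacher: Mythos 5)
Your proof is correct and follows essentially the same route as the paper's: show $\bigvee_{i=0}^{n-1} e(i)$ is an upper bound of $S$ via surjectivity of $e$, then show it is below any other upper bound. The extra remarks about eliminating the existential into the proposition $s \le a$ and the $n=0$ case are fine but not needed beyond what the paper already does implicitly.
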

\begin{proof}
 For each $s \in S$ there is an $i \in [n]$ such that $s = e(i) \le \bigvee_{i = 0}^{n-1} e(i)$ and hence $\bigvee_{i = 0}^{n-1} e(i)$ is an upper bound of $S$.
 Now suppose $u \in X$ is an upper bound of $S$. Then for each $i \in [n]$, we have $e(i) \le u$ and hence $\bigvee_{i = 0}^{n-1} e(i) \le u$. Thus, this is the least upper bound of $S$ as required.
\end{proof}

Most, but not all, of our lattices will be distributive. On occasion we will also use some weaker conditions.
Suppose $a$, $b$ and $x$ are elements of a lattice $L$ and $a \le b$. There are two natural ways to restrict $x$ to the interval $[a,b]$:
$(x \wedge b) \vee a$ and $(x \vee a) \wedge b$.
If these always coincide, the lattice is \emph{modular}\index{modular|(textbf}\index{lattice!modular|textbf}. %
An important example of a modular lattice is given by the lattice of submodules of a module over a ring.
We will also have use for some more fine-grained notions of modularity.
\begin{definition}
 A pair of elements $(x,b)$ in a lattice $L$ is said to be a \emph{modular pair} if $(x \wedge b) \vee a = (x \vee a) \wedge b$ for all $a \le b$. %
 We say $x$ is a \emph{left modular element}\index{modular|)} if $(x,b)$ is a modular pair for all $b \in L$ and we say $b$ is a \emph{(right) modular element} if $(x,b)$ is a modular pair for all $x \in L$.
 A pair $(x,a)$ is a called \emph{dual modular pair} if it is a modular pair in $L\op$ and $a$ is called a \emph{dual modular element} if $a$ is a modular element in $L\op$.
 The notion of left modular element is self-dual.
\end{definition}
A lattice is modular if and only if every element is modular if and only if every element is dual modular if and only if every element is left modular.

\begin{definition}
 Let $X$ be a poset. A subset of $S$ of $X$ is a \emph{downset\index{downset|textbf}} if it satisfies $(x \in S) \land (y \le x) \implies y \in S$.
 It is an \emph{upset\index{upset|textbf}} if $(x \in S) \land (y \ge x) \implies y \in S$.
\end{definition}

The principal downset generated by $a$ is ${\downarrow} a = \{x \in X \mid x \le a\}$\glsadd{downarrow} and the corresponding principal upset is ${\uparrow} a = \{x \in X \mid x \ge a\}$.

\begin{definition}
 A poset $D$ is \emph{directed}\index{directed set|textbf} if every Kuratowski-finite subset $K \subseteq D$ has an upper bound $u \in D$. (Note that, in particular, directed sets are inhabited.)
\end{definition}

\begin{definition}
 Let $X$ be a poset. An \emph{order ideal}\index{ideal (order)|textbf} in $X$ is a directed downset. A \emph{filter}\index{filter|textbf} is an ideal in the opposite order.
\end{definition}
If $X$ admits finite joins, then a downset $I$ is an ideal if and only if it is closed under finite joins.
In particular, for a distributive lattice this coincides with the notion of a semiring ideal.

\begin{definition}
 Let $X$ be a join-semilattice. We say that a filter $F$ is \emph{prime}\index{filter!prime|textbf} if $0 \notin F$ and $x \vee y \in F \implies (x \in F) \lor (y \in F)$.
\end{definition}
Observe that a filter $F$ is prime if and only if the associated predicate $x \mapsto \llbracket x \in F\rrbracket$ preserves finite joins.
Thus, if $X$ is a lattice, the prime filters on $X$ are in bijection with lattice homomorphisms from $X$ to $\Omega$.
Also note that in a distributive lattice, prime filters coincide with prime anti-ideals.

\begin{definition}\label{def:dcpo}
 We say a poset $X$ is a \emph{dcpo\index{dcpo|textbf}} (directed-complete partially ordered set) if it admits joins of all directed subsets.
 We will often write $\dirsup D$\glsadd{dirsupD} for the join of a directed subset $D$ when we want to emphasise that $D$ is directed.
 A morphism in the category $\Dcpo$\glsadd{Dcpo} of dcpos is a monotone map which preserves directed joins.
\end{definition}

Directed joins and finite joins are complementary in the sense that every poset admits joins of the subsets which are simultaneously finite and directed, but if a poset admits both kinds of joins,
then it admits all joins.
\begin{lemma}
 If $X$ is both a dcpo and join-semilattice, then $X$ admits arbitrary joins. Explicitly, an arbitrary join $\bigvee S$ can be decomposed as directed join of the Kuratowski-finite subsets of $S$.
\end{lemma}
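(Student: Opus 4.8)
The plan is to realise $\bigvee S$ as a directed join of joins of Kuratowski-finite pieces of $S$. Write $\mathcal{K}(S)$ for the set of Kuratowski-finite subsets of $S$, ordered by inclusion. First I would note that $\mathcal{K}(S)$ is a join-semilattice: the empty set is least, and the union of two Kuratowski-finite sets is Kuratowski-finite (since $[m]\sqcup[m']\cong[m+m']$, a pair of surjections $[m]\twoheadrightarrow K_1$ and $[m']\twoheadrightarrow K_2$ combines into a surjection onto $K_1\cup K_2$), so binary union serves as binary join. By the lemma on Kuratowski-finite joins in a join-semilattice, every Kuratowski-finite subset of $\mathcal{K}(S)$ then has a join and hence an upper bound, so $\mathcal{K}(S)$ is directed.

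Since $X$ is a join-semilattice, the same lemma provides a join $\bigvee K$ in $X$ for each $K\in\mathcal{K}(S)$, and $K\mapsto\bigvee K$ is monotone; let $D=\{\bigvee K\mid K\in\mathcal{K}(S)\}$ be its image. Next I would check that $D$ is directed: it contains $0=\bigvee\emptyset$, and given a Kuratowski-finite $F\subseteq D$ with a surjection $e\colon[n]\twoheadrightarrow F$, finite choice over $[n]$ yields $K_0,\dots,K_{n-1}\in\mathcal{K}(S)$ with $\bigvee K_i=e(i)$; then $K^*=K_0\cup\dots\cup K_{n-1}$ lies in $\mathcal{K}(S)$, and $\bigvee K^*\in D$ dominates each $e(i)$ and hence all of $F$. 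As $X$ is a dcpo, $\bigvee D$ exists.

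It then remains to identify $\bigvee D$ with $\bigvee S$. It is an upper bound of $S$, because for each $s\in S$ the singleton $\{s\}$ belongs to $\mathcal{K}(S)$, so $s=\bigvee\{s\}\in D$ and therefore $s\le\bigvee D$. It is the least one: if $u$ is any upper bound of $S$, then $K\subseteq{\downarrow}u$ for every $K\in\mathcal{K}(S)$, so $\bigvee K\le u$; thus $u$ is an upper bound of $D$ and $\bigvee D\le u$. Hence $\bigvee S$ exists and equals $\bigvee D=\bigvee_{K\in\mathcal{K}(S)}\bigvee K$, the asserted decomposition as a directed join.

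The only delicate point is constructive rather than conceptual: correctly handling Kuratowski-finiteness. Concretely, the claim that the monotone image of a directed poset is directed --- equivalently, that a Kuratowski-finite union of Kuratowski-finite sets is Kuratowski-finite --- uses choice over $[n]$, which is harmless since finite choice is provable by induction in any topos. Everything else is routine manipulation of the universal property of suprema.
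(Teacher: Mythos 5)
Your proof is correct and follows exactly the decomposition indicated in the statement itself, namely $\bigvee S = \dirsup_{K \in \mathcal{K}(S)} \bigvee K$; the paper states this lemma without proof, and yours is the intended argument. Your one worry about finite choice is legitimate but also avoidable: since a poset is directed iff it is inhabited and every \emph{pair} has an upper bound (by induction on the size of a listing), checking directedness of $D$ only ever requires instantiating two existentials at a time.
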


It is notable that a function on a product of two dcpos is \emph{bilinear} with respect to directed joins if and only if it is \emph{linear} with respect to directed joins.
\begin{lemma}
 If $X$, $Y$ and $Z$ are dcpos, then $X \times Y$ is a dcpo and if $f\colon X \times Y \to Z$ is a monotone map, then $f$ preserves directed joins if and only if
 it preserves directed joins in each variable separately (with the other variable held fixed).
\end{lemma}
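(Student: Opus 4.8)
The plan is to establish the two assertions in turn. For the first, I would begin by recalling (or quickly reproving) that the image of a directed set under a monotone map is directed: if $g \colon P \to Q$ is monotone and $D \subseteq P$ is directed, then $g(D)$ is inhabited, and given a Kuratowski-finite $K \subseteq g(D)$ with a surjection $e \colon [n] \twoheadrightarrow K$, finite choice over $[n]$ (constructively valid by induction on $n$) yields $d_0, \dots, d_{n-1} \in D$ with $g(d_i) = e(i)$; an upper bound $u \in D$ of the Kuratowski-finite set $\{d_i\}$ then has $g(u) \in g(D)$ an upper bound of $K$. Applying this to $\pi_1$ and $\pi_2$, for every directed $D \subseteq X \times Y$ the joins $\bigvee \pi_1(D)$ and $\bigvee \pi_2(D)$ exist, and I would check directly that $(\bigvee \pi_1(D), \bigvee \pi_2(D))$ is the least upper bound of $D$ --- it dominates each $(x,y) \in D$ coordinatewise, and any upper bound $(u,v)$ of $D$ bounds $\pi_1(D)$ above in the first coordinate and $\pi_2(D)$ in the second. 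Hence $X \times Y$ is a dcpo, with directed joins formed coordinatewise; in particular, for directed $D \subseteq X$ and any $y \in Y$, the slice $D \times \{y\}$ is directed with join $(\bigvee D, y)$, and symmetrically for slices in the second variable.

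The forward implication of the equivalence is then immediate: if $f$ preserves all directed joins, then for fixed $y$ and directed $D \subseteq X$ we get $f(\bigvee D, y) = f\bigl(\bigvee(D \times \{y\})\bigr) = \bigvee_{x \in D} f(x,y)$, so $f(-,y)$ preserves directed joins, and likewise each $f(x,-)$.

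For the converse --- the substantive direction --- suppose $f$ is monotone and preserves directed joins in each variable separately, and let $D \subseteq X \times Y$ be directed. Monotonicity gives $\bigvee_{d \in D} f(d) \le f(\bigvee D)$ at once. For the reverse inequality I would collapse the two variables one at a time: using that $\pi_1(D)$ is directed and $f(-, \bigvee \pi_2(D))$ preserves directed joins, then that each $\pi_2(D)$-slice is handled by preservation of directed joins in the second variable (all the intermediate joins exist because monotone images of directed sets are directed), one obtains $f(\bigvee D) = f\bigl(\bigvee \pi_1(D), \bigvee \pi_2(D)\bigr) = \bigvee_{x \in \pi_1(D)} \bigvee_{y \in \pi_2(D)} f(x,y)$. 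It therefore suffices to show $f(x,y) \le \bigvee_{d \in D} f(d)$ whenever $x \in \pi_1(D)$ and $y \in \pi_2(D)$: choosing witnesses $(x,y') \in D$ and $(x',y) \in D$ and then a common upper bound $(u,v) \in D$ by directedness of $D$, we have $(x,y) \le (u,v)$ and hence $f(x,y) \le f(u,v) \le \bigvee_{d \in D} f(d)$ by monotonicity.

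There is no conceptual obstacle here; the only things to watch are constructive bookkeeping --- making sure the joins $\bigvee \pi_i(D)$ and the intermediate slice-joins genuinely exist, which is precisely where directedness of monotone images (hence finite choice) enters --- and not conflating ``preserves directed joins separately'' with ``preserves directed joins on the product'' without actually producing the common upper bound $(u,v)$ in $D$.
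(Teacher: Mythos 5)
Your proof is correct, and since the paper states this lemma without giving a proof, your argument supplies exactly the standard reasoning it leaves implicit: directed joins in $X \times Y$ are computed coordinatewise, and the crux of the converse is using directedness of $D$ to produce a common upper bound $(u,v) \in D$ dominating $(x,y)$ for $x \in \pi_1(D)$ and $y \in \pi_2(D)$, so that the double join collapses back onto $\bigvee_{d \in D} f(d)$. Your constructive bookkeeping (finite choice over $[n]$ to see that monotone images of directed sets are directed) is appropriate and valid in the paper's setting.
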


We now define two important classes of subsets of dcpos.

\begin{definition}
 Let $X$ be a dcpo.
 A subset $U \subseteq X$ is \emph{Scott-open}\index{Scott topology|(textbf} if it is an upset and for all directed sets $D \subseteq X$, $\dirsup D \in U \implies D \between U$.
 A subset $C \subseteq X$ is \emph{Scott-closed}\index{Scott closed set@Scott-closed set|(textbf} if it is a downset which is closed under directed suprema.
\end{definition}

The Scott-open sets on a dcpo are indeed the open sets of a topology --- called the \emph{Scott topology\index{Scott topology|)}}.
\begin{lemma}
 The Scott-open sets on a dcpo $X$ are the open sets of a topology on $X$.
\end{lemma}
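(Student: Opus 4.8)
The plan is to verify the three closure conditions defining a topology: that $\emptyset$ and $X$ are Scott-open, that the Scott-open sets are closed under finite intersections, and that they are closed under arbitrary unions. Each check amounts to combining the two defining clauses (being an upset, and the directed-join condition) in the obvious way.

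First I would dispatch the easy cases. The empty set is an upset, and the implication ``$\dirsup D \in \emptyset \implies D \between \emptyset$'' holds vacuously since $\dirsup D \in \emptyset$ is never true. The set $X$ is an upset, and if $\dirsup D \in X$ then $D \between X$, because $D$ is directed, hence inhabited, and contained in $X$. For an arbitrary union $\bigcup_{i \in I} U_i$ of Scott-open sets, it is again an upset; and if $D$ is directed with $\dirsup D \in \bigcup_i U_i$, then $\dirsup D \in U_i$ for some $i$, so $D \between U_i$ and a fortiori $D \between \bigcup_i U_i$.

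The one case that needs an actual argument is binary intersection (the nullary case being $X$, already handled), from which closure under all finite intersections follows by induction. Given Scott-open $U$ and $V$, the set $U \cap V$ is an upset. Suppose $D$ is directed with $\dirsup D \in U \cap V$. From $\dirsup D \in U$ we get some $d_1 \in D \cap U$, and from $\dirsup D \in V$ some $d_2 \in D \cap V$. The image of the map $[2] \to D$ picking out $d_1$ and $d_2$ is a Kuratowski-finite subset of $D$, so by directedness it has an upper bound $d \in D$; since $d \ge d_1$, $d \ge d_2$ and both $U$, $V$ are upsets, we get $d \in D \cap (U \cap V)$, i.e.\ $D \between U \cap V$.

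There is no real obstacle here; the only points requiring care are constructive hygiene — phrasing ``meets'' positively through $\between$ rather than via emptiness of intersections, and, in the binary-intersection step, invoking Kuratowski-finiteness of the image of $[2]$ under a two-element listing rather than forming the subset $\{d_1, d_2\}$, whose membership predicate would require decidable equality on $X$.
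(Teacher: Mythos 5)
Your proof is correct and follows the same route as the paper: the only nontrivial case is binary intersection, handled by taking witnesses $d_1 \in D \cap U$, $d_2 \in D \cap V$ and using directedness to find a common upper bound in $D \cap U \cap V$. Your extra remarks on constructive hygiene (using $\between$ and Kuratowski-finiteness of the two-element image) are sound but not needed beyond what the paper's directedness definition already supplies.
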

\begin{proof}
 The only nontrivial axiom to check is closure under binary intersections.
 Suppose $U$ and $V$ are Scott-open,
 $D$ is directed and $\dirsup D \in U \cap V$. Then there are elements $u \in D \cap U$ and $v \in D \cap V$.
 Since $D$ is directed, we have a $w \in D$ with $w \ge u, v$ and then $w \in D \cap U \cap V$, since $U$ and $V$ are upsets.
\end{proof}

However, the nature of Scott-closed sets\index{Scott closed set@Scott-closed set|)} is more subtle. Classically, these are the closed sets of the Scott topology. But since subsets need not have complements constructively,
there is not a good constructive notion of closed subset (in the sense of being the complement of an open set). Later we will introduce the notion of \emph{weak closedness}
and show that Scott-closedness fits nicely into this picture, at least under certain assumptions.

Finally, recall that Galois connections\index{Galois connection|(textbf} are the order-theoretic manifestation of categorical adjunctions. We define them here to fix terminology and notation.
\begin{definition}
 A (monotone) \emph{Galois connection} is a pair of order-preserving maps between posets $f\colon X \to Y$ and  $g\colon Y \to X$ such that $f(x) \le y \iff x \le g(y)$.
 We call $f$ the left adjoint and $g$ the right adjoint and we write $g = f_*$\glsadd{rightadjoint}. If $f$ has a further left adjoint, we will write it as $f_!$\glsadd{leftadjoint}.
\end{definition}
An order-preserving map between complete lattices is a left adjoint if and only if it preserves arbitrary joins and a right adjoint if and only if it preserves arbitrary meets.
Furthermore, $fgf = f$ and $gfg = g$. Hence $f$ is injective if and only if $g$ is surjective and visa versa. In this case, the one adjoint is a one-sided inverse for the other.

\begin{definition}
 An \emph{order-reversing Galois connection} between $X$ and $Y$ is monotone Galois connection between $X$ and $Y\op$ (where the \emph{left} adjoint maps from $X$ to $Y\op$).
\end{definition}

Note that $f\colon X \to Y$ and $g\colon Y \to X$ form an order-reversing Galois connection if and only if $gf(x) \ge x$ and $fg(y) \ge y$.
Both members of an order-reversing Galois connection\index{Galois connection|)} send joins to meets.

\begin{definition}
 A \emph{Heyting algebra\index{Heyting algebra|textbf}} is a lattice $L$ equipped with a \emph{Heyting implication} operation $\heyting$\glsadd{heyting} which satisfies $a \wedge b \le c \iff a \le b \heyting c$.
 In order words, $a \wedge (-)$ is left adjoint to $a \heyting (-)$.
 A Heyting algebra homomorphism preserves finite meets, finite joins and implication.
\end{definition}

The lattice of truth values $\Omega$ is a Heyting algebra, where $\heyting$ is the usual logical implication.

\subsection{Frames}\label{section:frames}

The classical approach to topology equips a topological structure to a prespecified set of points. In pointfree topology it is the lattice of open sets which is of fundamental importance
and the points are derived from this. This shift of focus from the points to the topology has a number of pleasant consequences. Of particular importance for us is that pointfree topology has a good
constructive formulation, whereas the constructive theory of topological spaces is badly behaved.

\begin{definition}
 A \emph{frame\index{frame|textbf}} is a complete lattice satisfying the frame distributivity condition \[x \wedge \bigvee_{\alpha \in I} x_\alpha = \bigvee_{\alpha \in I} (x \wedge x_\alpha)\] for arbitrary families
 $(x_\alpha)_{\alpha \in I}$.
 We denote the smallest element of a frame by $0$ and the largest element by $1$. A frame homomorphism is a function which preserves finite meets and arbitrary joins (and thus in particular $0$ and $1$).
\end{definition}

The distributivity condition gives that $x \wedge (-)$ preserves arbitrary joins and thus has a right adjoint. So a frame can be given the structure of a complete Heyting algebra\index{Heyting algebra!complete}, though the morphisms are different.

The prototypical example of a frame is the lattice of open sets of a topological space. In fact, a topological space can be defined as a set $X$ equipped with a subframe of the powerset of $X$.
Note that a continuous map of topological spaces induces a frame homomorphism \emph{in the opposite direction}. So it is not the category $\Frm$\glsadd{Frm} of frames, but its opposite category which is analogous
to the category of topological spaces. We call this opposite category $\Loc$\glsadd{Loc}, the category of \emph{locales}\index{locale|(textbf}.

The lattice of truth values $\Omega$\glsadd{Omega} is a frame. In fact, it is the initial frame. The unique frame homomorphism from $\Omega$ to a frame $L$ is given by $t \mapsto \bigvee\{1 \mid t \}$.

We discuss some relevant aspects of frame theory in the sections below.
For further background on frames, see \cite{PicadoPultr} for a comprehensive account of the classical theory and \cite[Part C]{Elephant2} for the constructive theory.

\subsubsection{Frames and topological spaces}

Let us describe the relationship between frames and spaces in more detail.
There is an obvious functor $\O\colon \Top \to \Frm\op$\glsadd[format=(]{O} sending spaces to their frames of open sets. %
This has a right adjoint $\points\colon \Frm\op \to \Top$\glsadd{points} which constructs a topological space out of a frame as follows.
The underlying set of $\points L$ is given by $\Hom_\Loc(1, L) = \Hom_\Frm(L, \Omega)$, where $1$ is the terminal locale, and the open sets are of the form $\{p\colon L \to \Omega \mid p(a) = \top\}$ for $a \in L$.
The action of $\Sigma$ on morphisms is given by composition in the obvious way.

Since maps from $L$ to $\Omega$ are determined by the preimage of $\top$, we can identify points of $L$ with certain subsets of $L$: the \emph{completely prime filters}\index{filter!completely prime|textbf}.
A filter $F$ is completely prime if $\bigvee A \in F \implies A \between F$ for all $A \subseteq L$, or equivalently, if $F$ is prime and Scott-open.
Assuming excluded middle, a completely prime filter is the complement of certain principal downsets --- those generated by what are called the \emph{prime elements} of $L$ --- but this often fails constructively.

The adjunction is idempotent and restricts to an equivalence between the so-called \emph{sober} spaces\index{sober space|textbf} and the \emph{spatial} locales\index{locale!spatial|textbf}\index{spatial locale|textbf}\index{enough points|see {spatial locale}}. %
The restriction to sober spaces is not severe, since almost all spaces encountered in practice are sober.
Of particular interest are the locales corresponding to discrete topological spaces. These are called \emph{discrete locales}\index{locale!discrete|textbf}\index{discrete locale|textbf} and their frames are given by powerset lattices.

\begin{remark}
 In this thesis we will treat sober spaces as locales without comment. In order to avoid confusion between the points of a space and elements of its frame of open sets,
 we will extend the notation $\O X$\glsadd[format=)]{O} to refer to the underlying frame of a locale $X$, even when it is not necessarily spatial. If $f\colon X \to Y$ is a morphism
 of locales, we will write $f^*$\glsadd{upperStar} for the corresponding frame homomorphism.\index{locale|)}
\end{remark}

Just as `inhabited' is a positive formulation of a set being nonempty, there is a positive version of a locale being nontrivial. We do not simply ask for the locale to possess a point, since
even classically there are nontrivial locales with no points.
\begin{definition}
 We say an element $a$ of a frame $L$ is \emph{positive} and write $a > 0$\glsadd{agt0} if every open cover of $a$ is inhabited --- that is, $a \le \bigvee S$ only if $S$ is inhabited. %
 We say $L$ is positive\index{positive locale|textbf} if $1 \in L$ is a positive element.
\end{definition}

\begin{lemma}\label{prop:positive_if_epic}
 A locale $X$ is positive if and only if the unique locale map $!$ from $X$ to the terminal locale is epic. %
\end{lemma}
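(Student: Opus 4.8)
I would dualise at the outset. Since $\Loc = \Frm\op$, the locale morphism $!\colon X \to 1$ is epic exactly when the associated frame homomorphism $!^*\colon \Omega \to \O X$ is a monomorphism in $\Frm$. Now monomorphisms of frames are precisely the injective frame homomorphisms: injective homomorphisms are monic because the underlying-set functor $\Frm \to \Set$ is faithful; conversely a monic $h$ is injective because, given $h(a) = h(b)$, the two frame maps out of the free frame on one generator sending the generator to $a$ and to $b$ become equal after post-composing with $h$, hence are already equal. So it suffices to prove that $X$ is positive if and only if $!^*$ is injective. Throughout I will use the formula $!^*(p) = \bigvee\{1 \mid p\}$ recalled above, so that in particular $!^*(\top) = 1$.

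Next I would rephrase injectivity of $!^*$ as the statement that $!^*$ \emph{reflects the top element}: $!^*(p) = 1 \implies p = \top$ for all $p \in \Omega$. This is clearly implied by injectivity, since $!^*(\top) = 1$. Conversely, suppose $!^*$ reflects the top and $!^*(p) = !^*(q)$. Assuming $p$ we get $!^*(p) = 1$, hence $!^*(q) = 1$, hence $q$; so $p \le q$ in $\Omega$, and symmetrically $q \le p$, giving $p = q$.

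It then remains to show that $!^*$ reflects the top element if and only if $1 \in \O X$ is positive, i.e.\ if and only if $X$ is positive. If $X$ is positive and $!^*(p) = \bigvee\{1 \mid p\} = 1$, then the cover $\{1 \mid p\}$ of $1$ is inhabited, which forces $p$; so $!^*$ reflects the top. For the converse, suppose $!^*$ reflects the top and let $S \subseteq \O X$ satisfy $1 \le \bigvee S$; put $p \coloneqq \llbracket S \text{ is inhabited}\rrbracket$. For every $s \in S$ the membership $s \in S$ makes $p = \top$, whence $s \le \bigvee\{1 \mid p\} = !^*(p)$; taking the join over $s \in S$ gives $1 \le \bigvee S \le !^*(p)$, so $!^*(p) = 1$ and therefore $p = \top$, i.e.\ $S$ is inhabited. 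Thus $1$ is positive. Chaining the three equivalences proves the lemma.

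I do not expect a serious obstacle: the argument is essentially an unwinding of the definition of positivity. The one external input is the identification of monomorphisms in $\Frm$ with injective frame homomorphisms, which can simply be cited if it has already been established. The only point requiring care is to stay constructive --- working with ``$S$ is inhabited'' rather than ``$S \neq \emptyset$'', and never assuming a truth value equals $\top$ or $\bot$ --- but the argument above already respects this.
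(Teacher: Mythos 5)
Your proof is correct and follows essentially the same route as the paper's: both reduce epicness of $!$ to injectivity of the frame map $\Omega \to \O X$, prove the forward direction by using an inhabited cover $\bigvee\{1\mid q\}$ to force $q = \top$, and prove the converse by bounding $\bigvee S$ above by $!^*(\llbracket S \text{ is inhabited}\rrbracket)$. Your only additions are making explicit the identification of monomorphisms in $\Frm$ with injections and factoring the argument through the intermediate condition that $!^*$ reflects the top element, both of which the paper leaves implicit.
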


\begin{proof}
 Suppose $X$ is positive. We must show that the corresponding frame map $!\colon \Omega \to \O X$ sending $p$ to $\bigvee\{1 \mid p\}$ is injective.
 Suppose $\bigvee\{1 \mid p\} = \bigvee\{1 \mid q\}$. Assume $p$ holds. Then $\bigvee\{1 \mid q\}$ is a cover which is inhabited by positivity
 and so $q = \top$. Thus, we have shown $p \implies q$. Symmetrically, we also have $q \implies p$ and hence $p = q$, as required.
 
 Conversely, suppose the map $!: \Omega \to \O X$ is injective and let $\bigvee S = 1 \in \O X$. Now notice that $1 \le \bigvee\{s \mid s \in S\} \le \bigvee \{1 \mid s \in S\}$.
 But the set $\{1 \mid s \in S\}$ is a subsingleton and thus equal to $\{1 \mid p\}$ for some $p \in \Omega$. We now have $\bigvee\{1 \mid p\} = 1 = \bigvee\{1 \mid \top\}$
 and hence $p = \top$ by injectivity. We may conclude that $S$ is inhabited and consequently $X$ is positive.
\end{proof}

Note that a set $X$ is inhabited if and only if the unique map $X \to 1$ is epic.
Furthermore, we have that a set is inhabited if and only if the corresponding discrete locale is positive.

\begin{lemma}\label{prop:positive_spaces}
 If $X$ is a topological space, then $\O X$ is positive if and only if $X$ is inhabited.
\end{lemma}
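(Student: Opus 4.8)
The plan is to work straight from the definitions. Since a topological space is a set $X$ together with a subframe of $\powerset{X}$, the top element of $\O X$ is the open set $X$ itself and joins in $\O X$ are computed as unions. By definition $\O X$ is a positive frame exactly when this top element is a positive element, i.e.\ exactly when every family $S \subseteq \O X$ with $\bigcup S = X$ is inhabited; so the task is to show this property is equivalent to $X$ being inhabited as a set.

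For the direction ``$X$ inhabited $\Rightarrow \O X$ positive'', I would fix a point $x \in X$ and observe that any family $S$ of open sets with $\bigcup S = X$ contains, since $x \in \bigcup S$, some member $U$ with $x \in U$; hence $S$ is inhabited. For the converse I would apply positivity of $\O X$ to a carefully chosen cover, namely the ``constant'' subsingleton family $S = \{\, X \mid x \in X \,\} \subseteq \O X$ consisting of the open set $X$ indexed over the points of $X$. One checks $\bigcup S = X$ unconditionally: the inclusion $\subseteq$ is immediate, and any $y \in X$ both witnesses that $X \in S$ and lies in that member. Positivity then forces $S$ to be inhabited, and $S$ is inhabited precisely when there exists a point $x \in X$, i.e.\ when $X$ is inhabited.

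The only genuine obstacle is the constructive one: we may not case-split on whether $X$ is empty or inhabited, so in the converse direction we must produce an actual element of $X$ out of the positivity hypothesis rather than argue by cases. The choice of cover above is exactly what achieves this --- it covers $X$ regardless, while its indexing family is inhabited if and only if $X$ is. (One could instead route through \cref{prop:positive_if_epic}, but this direct argument seems the cleanest.)
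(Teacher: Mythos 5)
Your proof is correct, and the converse direction (positivity implies inhabitedness) is exactly the paper's argument, using the same subsingleton cover $\{X \mid x \in X\}$. Where you diverge is the forward direction: you argue directly that a point $x \in X$ witnesses the inhabitedness of any open cover of $X$, since $x$ must lie in some member of the cover. This is constructively sound (membership in a union is precisely an existential over the covering family) and has the virtue of being elementary and self-contained. The paper instead takes the categorical route: $X$ inhabited means $X \to 1$ is surjective, hence epic in $\Top$; since $\O$ is a left adjoint it preserves epimorphisms; and then \cref{prop:positive_if_epic} identifies positivity of $\O X$ with $!\colon X \to 1$ being epic in $\Loc$. The paper's version buys a conceptual link to the epimorphism characterisation it has just established and reuses machinery; yours avoids any dependence on \cref{prop:positive_if_epic} and on the adjunction $\O \dashv \points$. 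Both are valid; yours is arguably the more transparent of the two.
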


\begin{proof}
 Suppose $X$ is inhabited. Then $X \to 1$ is surjective and hence epic in $\Top$. But since $\O$ is a left adjoint, it preserves epimorphisms and hence $\O X$ is positive. %
 
 Now suppose $\O X$ is positive. Consider the open cover $X = \bigcup \{X \mid x \in X\}$. Since $\O X$ is positive, this join is inhabited and hence so is $X$.
\end{proof}

\subsubsection{Frames as algebraic structures}

Frames\index{frame|(} are best viewed as algebraic structures, with a nullary operation for $1$, a binary operation for binary meet and one operation for joins of each cardinality. %
Despite them having a proper class of operations, they are monadic over $\Set$ and most of the usual notions from universal algebra are available.
Thus, we can talk about \emph{subframes} and \emph{quotient frames}, which by duality correspond loosely to quotient spaces (as well as subtopologies) and subspaces respectively.
Notably frames may be presented\index{presentation|(} by generators and relations.

The free frame on a set $X$ can be constructed in two steps. First form the free meet-semilattice: the opposite of the poset of Kuratowski-finite subsets of $X$. %
Then the free frame is the lattice of downsets on this semilattice.
A presentation yields a frame by quotienting the free frame on the generators by the congruence generated by the relations.

Limits and colimits of frames can now be constructed as for any algebraic structure.
In particular, the coproduct\index{coproduct!of frames|(} $L \oplus M$ is given by the free frame on the set of symbols $\{\iota_1(\ell) \mid \ell \in L\} \sqcup \{\iota_2(m) \mid m \in M\}$ subject to relations ensuring that the maps
$\iota_1$ and $\iota_2$ are frame homomorphisms. The elements $\iota_1(\ell) \wedge \iota_2(m)$ are the pointfree analogues of the basic open rectangles in the Tychonoff topology
and are denoted by $\ell \oplus m$\glsadd{elloplusm}.

It should be mentioned that the coproduct of the frame of open sets of two topological spaces sometimes differs from the frame of open sets of the product topology.\index{coproduct!of frames|)}
Consequently, a monoid structure on a topological space might fail to lift to a localic monoid\index{monoid!localic|textbf}\footnote{By \emph{localic monoid} we mean a monoid object in
the (cartesian) category of locales or equivalently a comonoid in the monoidal category of frames with coproduct as the monoidal product.} structure on its corresponding locale.
(However, in the reverse direction, we do have that every spatial localic monoid is a topological monoid.)
It is true that finite products of discrete locales are discrete and hence discrete algebraic structures are always localic algebras.

It can also be useful to consider the forgetful functor from $\Frm$ to the category $\DLat$ of distributive lattices.
This has a left adjoint $\I$\glsadd{I} sending a distributive lattice to its frame of ideals.
The image of $\I$ is the category of so-called \emph{coherent} frames\index{coherent!frame}\index{frame!coherent} (see \cref{section:local_compactness}).
This is the pointfree version of Stone duality\index{spectrum!Stone} for distributive lattices (see \cite{StoneDistLattices} and \cite{Johnstone1982stone}).

The free frame on a single generator\glsadd[format=(]{Srpnsk} can be constructed as the frame of ideals of the free distributive lattice, which is the three element chain $\{0 \le m \le 1\}$.
It is also isomorphic to the frame of downsets on the two-element meet-semilattice $\{0 \le 1\}$. Finally, this frame can be viewed as the frame of opens of \emph{Sierpiński space}, which
has $\Omega$ as its underlying set and a topology generated by the subbasic open set $\{\top\}$. We denote the corresponding locale by $\Srpnsk$\glsadd[format=)]{Srpnsk}.

\subsubsection{Frames as propositional theories}\label{section:frame_propositional_theory}

Frames can also be viewed as Lindenbaum--Tarski algebras\index{Lindenbaum-Tarski algebra@Lindenbaum--Tarski algebra} (i.e.\ algebras of logical formulae considered up to logical equivalence) of a \emph{propositional geometric theory}\index{logic!geometric|(textbf}.
Points (i.e.\ completely prime filters) of the frame correspond to models of the propositional theory, with the completely prime filter\index{filter!completely prime} giving the true propositions in the model.
A propositional \emph{coherent} theory\index{logic!coherent}\index{coherent!logic} is described by sequents\index{sequent|(} involving formulae that are built up from $\top$, $\bot$, $\wedge$ and $\vee$.
These are described in detail in \cref{section:hyperdoctrine}. A propositional \emph{geometric} theory is similar, but the disjunctions are allowed to be infinitary. 

Geometric logic is the logic of \emph{verifiability}. When we say a proposition $p$ is verifiable\index{verifiable property|(}, we mean that if $p$ is true, there is some `proof' or `certificate' of this fact which we could confirm the
validity of in finite time.\footnote{Compare the concept of a semidecidable subset in computability theory or the class $\mathsf{NP}$ in computational complexity theory.}
Note that if $p$ is verifiable, it might not be the case that $\neg p$ is too. If $p$ and $q$ are verifiable, we can also verify $p \wedge q$ first verifying $p$ and then verifying $q$.
However, there is no obvious way to verify an infinitary conjunction. On the other hand, suppose $p_\alpha$ is verifiable for all $\alpha$. If $\bigvee_\alpha p_\alpha$ holds,
then $p_\alpha$ holds for some $\alpha$ and so the combination of $\alpha$ and a certificate for $p_\alpha$ gives a certificate for the disjunction. Thus, verifiable properties\index{verifiable property|)} are closed under infinite disjunction.
Further discussion can be found in \cite{Vickers1989topology,vickers2014geometric}.

The basic propositions and axioms of a geometric theory\index{logic!geometric|)} lead directly to a presentation of a frame.
As an example we consider the theory of Dedekind cuts on the rationals. Recall that a Dedekind cut
is a pair $(L,U)$ of subsets of $\Q$ satisfying certain conditions. For each $q \in \Q$, we have a
basic proposition $\ell_q$, which we interpret as meaning $q \in L$, and a basic proposition $u_q$,
which we interpret as meaning $q \in U$. These satisfy the following axioms. \pagebreak[0] %
\begin{displaymath}
\begin{array}{l@{\qquad\quad}r@{\hspace{1.5ex}}c@{\hspace{1.5ex}}l@{\quad}@{}l@{\qquad\quad}r@{}}
 (1) & \ell_q &\vdash& \ell_p & \text{ for $p \le q$} & \text{($L$ downward closed)} \\
 (2) & \ell_q &\vdash& \bigvee_{p > q} \ell_p & \text{ for $q \in \Q$} & \text{($L$ rounded)} \\
 (3) & \top &\vdash& \bigvee_{q \in \Q} \ell_q && \text{($L$ inhabited)} \\
 (4) & u_p &\vdash& u_q & \text{ for $p \le q$} & \text{($U$ upward closed)} \\
 (5) & u_q &\vdash& \bigvee_{p < q} u_p & \text{ for $q \in \Q$} & \text{($U$ rounded)} \\
 (6) & \top &\vdash& \bigvee_{q \in \Q} u_q && \text{($U$ inhabited)} \\
 (7) & \ell_p \land u_q &\vdash& \bot & \text{ for $p \ge q$} & \text{($L$ and $U$ disjoint)} \\
 (8) & \top &\vdash& \ell_p \lor u_q & \text{ for $p < q$} & \text{(locatedness)}
\end{array}
\end{displaymath}
In order to turn this axiomatisation into a presentation, we simply take one generator for each elementary proposition and one relation for each axiom, with a sequent\index{sequent|)} $\phi \vdash \psi$ corresponding to setting
$\phi \le \psi$ in the presented frame. Explicitly, we could write this as the relation $\phi \vee \psi = \psi$. However, we will often write $\le$ in a presentation for clarity.\index{presentation|)}

The frame obtained from the theory of Dedekind cuts is the frame of reals $\O \R$. The points of the locale $\R$ correspond to the usual constructive notion of Dedekind reals.
However, the locale $\R$ need not be spatial constructively.\index{frame|)}

\begin{remark}
 Taking only the basic propositions $\ell_p$ and axioms 1--3 we obtain the theory of lower one-sided Dedekind cuts. The corresponding locale is the locale of lower reals.
 Classically, this has the same points as $\R$ together with $\infty$, while its topology is the topology of lower semicontinuity. Similarly, taking the propositions $u_p$ and axioms 4--6
 we obtain the theory of upper one-sided Dedekind cuts and the locale of upper reals. The norm of a localic C*-algebra takes values in the (nonnegative, extended) upper reals (see \cite{Henry2016} for details).
\end{remark}

Another example, which is central to our theme of spectra, is given by the theory of prime anti-ideals\index{anti-ideal!prime} of a commutative ring $R$. Each element $f \in R$ gives a proposition $\overline{f}$
which we interpret as meaning that $f$ lies in the prime anti-ideal under consideration.
Expressing the axioms of a prime anti-ideal as a geometric theory, we have the following.
\begin{align*}
 \overline{0}    &\mathmakebox[4.5ex][c]{\vdash} \bot \\
 \overline{f+g}  &\mathmakebox[4.5ex][c]{\vdash} \overline{f} \lor \overline{g} \\
 \top &\mathmakebox[4.5ex][c]{\vdash} \overline{1} \\
 \overline{fg}   &\mathmakebox[4.5ex][c]{\dashv\vdash} \overline{f} \land \overline{g}
\end{align*}
Not coincidentally, this is precisely the same geometric theory we would obtain by attempting to axiomatise the theory of `places at which elements of $R$ can be cozero'.
This is a presentation of the \emph{Zariski spectrum\index{spectrum!Zariski|textbf}} of $R$.

Explicitly, the frame obtained from this propositional theory has the following presentation.
\[\langle \overline{f} : f \in R \mid \overline{0} = 0,\, \overline{f+g} \le \overline{f} \vee \overline{g},\,
\overline{1} = 1,\, \overline{fg} = \overline{f} \wedge \overline{g} \rangle\]
We will see later that this frame corresponds to the lattice $\Rad(R)$ of radical ideals of $R$.

\subsubsection{Sublocales} \label{section:sublocales}

Let us discuss frame quotients in more detail. Taking a quotient of a frame corresponds to adding axioms to any associated geometric theory.
We will often call quotient frames \emph{sublocales}\index{sublocale|(textbf} when we wish to emphasise the analogy with subspaces.
The lattice of sublocales $\Sloc L$\glsadd{Sloc} of a frame $L$ is distributive (in fact, it is a coframe) and pullbacks of sublocales in $\Loc$ respect these lattice (coframe) operations.
This gives rise to a functor $\Sloc\colon \Loc\op \to \DLat$, where $\DLat$ is the category of distributive lattices.

If $f\colon X \to Y$ is a morphism of locales, then $\Sloc f$ can be thought of mapping sublocales of $Y$ to their preimages under $f$. The map $\Sloc f$ has a left adjoint $(\Sloc f)_!$
which we interpret as sending sublocales of $X$ to their images under $f$.

Sublocales can be specified by congruences\index{congruence on a frame}. Each equivalence class of a frame congruence $C$ is inhabited and closed under inhabited joins; therefore, it has a largest element.
This gives a canonical representative of each equivalence class. The resulting map $a \mapsto \bigvee [a]_C$ is called the \emph{nucleus}\index{nucleus|(textbf} associated with the congruence.
\begin{definition}
 A \emph{nucleus}\index{nucleus|)} $\nu$ on a frame is a closure operator which preserves finite meets.
\end{definition}
Nuclei then provide an alternative approach to constructing frame quotients. A nucleus $\nu$ induces a congruence $\{(a,b) \mid \nu(a) = \nu(b)\}$ and the resulting quotient frame is order isomorphic to
the poset of fixed points of the nucleus (or equivalently, the image of the nucleus). Given a surjective frame homomorphism $q\colon L \twoheadrightarrow M$, we can recover the nucleus directly as $q_*q$.

We now describe some important classes of sublocales.
If $a$ is an element of a frame $L$, the principal downset ${\downarrow} a$ is also a frame and there is a natural surjective frame homomorphism onto ${\downarrow} a$ sending $x$ to $x \wedge a$.
We call this an \emph{open quotient}\index{sublocale!open|textbf}, since if $L$ is the frame of a topological space and $U$ is an open set, then ${\downarrow} U$ is the frame of $U$ viewed as an open subspace.
The corresponding congruence is $\Delta_a = \langle (a,1) \rangle$\glsadd{Deltaa} and can be given explicitly as $\Delta_a = \{(x,y) \mid x \wedge a = y \wedge a\}$.
As we might expect, open sublocales are closed under finite meets and arbitrary joins in the lattice of sublocales.

While open subsets of a topological space might not have complements constructively, open sublocales always have complements in the lattice of sublocales. We call these \emph{closed sublocales}\index{sublocale!closed|textbf}\index{closed quotient!of a frame|textbf}.
They can be given by the surjection from $L$ to ${\uparrow} a$ sending $x$ to $x \vee a$ and the corresponding congruence is $\nabla_a = \langle (0,a) \rangle = \{(x,y) \mid x \vee a = y \vee a\}$\glsadd{nablaa}.
It is not hard to see that every closed sublocale of $\Omega$ is open only if excluded middle holds.

The principal congruence\index{congruence on a frame!principal|textbf} generated by a pair $(a,b)$ can be expressed in terms of open and closed congruences as
$\langle (a,b)\rangle = \langle (0, a \vee b) \rangle \cap \langle (a \wedge b,1) \rangle = \nabla_{a \vee b} \cap \Delta_{a \wedge b}$.
In this way every congruence can be expressed as a join of finite meets of open and closed congruences.\index{sublocale|)}

Open sublocales of discrete locales correspond to subsets. This provides an avenue to generalise certain relations on sets to general locales.
For example, ideals of a semiring $R$ are subsets $I$ satisfying $0 \in I$, $(x \in I) \land (y \in I) \implies x + y \in I$ and $x \in I \implies xy \in I$.
We can then interpret this in the internal logic of the indexed distributive lattice of open sublocales\index{hyperdoctrine!of open sublocales}
(see \cref{def:indexed_dist_lattice}), as in \cref{section:hyperdoctrine}, to describe open ideals of localic semirings.
We can also interpret the same axioms in the indexed distributive lattice $\Sloc$ of all sublocales to define a general notion of ideal, though this will be strictly more general than usual notion in the discrete case.
Finally, it can be useful to interpret ideals in the indexed distributive lattice of \emph{closed} sublocales, yielding \emph{closed ideals} in our example. Closed ideals of a discrete ring are precisely the
closed complements of the \emph{anti-ideals}\index{anti-ideal} of constructive algebra. The following lemma describes how to interpret more general closed substructures in discrete locales.
This lemma uses concepts we will only introduce in \cref{section:hyperdoctrine}, but we state it here since it gives some understanding of the behaviour of closed sublocales in a constructive setting.
\begin{proposition}
Consider the theory $T$ of $n$-ary relations on a set $X$ satisfying axioms given as sequents $\phi \vdash \psi$ in the fragment of coherent logic without quantification.
We define the \emph{dual theory} $T\op$ to be the theory of relations satisfying the dual axioms, $\psi\op \vdash \phi\op$, where $\phi\op$ is obtained from $\phi$ by exchanging
$\wedge$ and $\vee$ (and $\top$ and $\bot$). Then there is a correspondence between the models of $T$ in the indexed distributive lattice of closed sublocales and models of $T\op$
in the hyperdoctrine of subsets.
\end{proposition}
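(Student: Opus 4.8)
The plan is to make the correspondence explicit: it sends a subset $a \subseteq X^n$, viewed as a model of $T\op$ in the hyperdoctrine of subsets, to the corresponding closed sublocale of the discrete locale on $X^n$, and the content is that this closed sublocale is a model of $T$. First I would recall that the closed sublocales of the discrete locale on a set $Y$ are in bijection with $\powerset Y$: write $c_a$ for the closed sublocale complementary to the open sublocale determined by $a \in \powerset Y$. This bijection is order-reversing --- a larger $a$ gives a larger open sublocale and hence a smaller closed complement --- so it interchanges finite meets with finite joins and $\top$ with $\bot$; concretely $c_a \wedge c_b = c_{a \cup b}$ and $c_a \vee c_b = c_{a \cap b}$, the top closed sublocale is $c_\emptyset$ and the empty sublocale is $c_Y$. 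Moreover, for a function $f\colon Y \to Z$ one has $\Sloc f(c_b) = c_{f^{-1}(b)}$, since preimages of closed sublocales are computed on the defining opens. As finite products of discrete locales are discrete, it follows that the indexed distributive lattice of closed sublocales, restricted to discrete locales, is fibrewise anti-isomorphic to the hyperdoctrine of subsets, the anti-isomorphism swapping the lattice operations and commuting with reindexing; in particular a quantifier-free coherent formula always interprets to a closed sublocale.

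The heart of the argument is a de Morgan lemma, which I would prove by structural induction. Fix the $n$-ary relation symbol $R$ and a subset $a \subseteq X^n$; interpret $R$ as $c_a$ in the closed-sublocale hyperdoctrine and as $a$ in the subset hyperdoctrine, and write $\llbracket\phi\rrbracket^{\mathrm{cl}}$ and $\llbracket\phi\rrbracket^{\mathrm{set}}$ for the resulting interpretations of a quantifier-free coherent formula $\phi$ in a context $\vec x$ of length $k$. The lemma states $\llbracket\phi\rrbracket^{\mathrm{cl}} = c_{\llbracket\phi\op\rrbracket^{\mathrm{set}}}$, where $\phi\op$ is the de Morgan dual. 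For an atomic formula $R(t_1,\dots,t_n)$ both sides equal $c_{\langle t_1,\dots,t_n\rangle^{-1}(a)}$ by the reindexing formula above, and atoms are self-dual; for $\phi\wedge\psi$ and $\phi\vee\psi$ the identities $c_b \wedge c_{b'} = c_{b\cup b'}$ and $c_b \vee c_{b'} = c_{b\cap b'}$ mirror $(\phi\wedge\psi)\op = \phi\op\vee\psi\op$ and $(\phi\vee\psi)\op = \phi\op\wedge\psi\op$; and $\top,\bot$ match since $\llbracket\top\rrbracket^{\mathrm{cl}} = c_\emptyset$ and $\llbracket\bot\rrbracket^{\mathrm{cl}} = c_{X^k}$ while $\top\op=\bot$, $\bot\op=\top$. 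Because the fragment is quantifier-free, the interpretation only uses the fibrewise lattice structure and reindexing --- no adjoints to reindexing are invoked --- so these are all the cases.

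The proposition then follows at once. A closed sublocale $c_a$ of $X^n$ is a model of $T$ iff for every axiom $\phi\vdash_{\vec x}\psi$ of $T$ we have $\llbracket\phi\rrbracket^{\mathrm{cl}} \le \llbracket\psi\rrbracket^{\mathrm{cl}}$; by the lemma this says $c_{\llbracket\phi\op\rrbracket^{\mathrm{set}}} \le c_{\llbracket\psi\op\rrbracket^{\mathrm{set}}}$, and by the order-reversal of $a\mapsto c_a$ this is equivalent to $\llbracket\psi\op\rrbracket^{\mathrm{set}} \subseteq \llbracket\phi\op\rrbracket^{\mathrm{set}}$, i.e.\ to $a$ validating $\psi\op\vdash_{\vec x}\phi\op$. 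As those sequents are exactly the axioms of $T\op$, we obtain $c_a \models T$ iff $a\models T\op$, so $a\mapsto c_a$ restricts to the desired bijection between models of $T\op$ in the hyperdoctrine of subsets and models of $T$ in the indexed distributive lattice of closed sublocales.

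The main point to get right is bookkeeping rather than any genuine obstacle: contravariance enters in two places --- once converting each axiom $\phi\vdash\psi$ into $\psi\op\vdash\phi\op$ (order-reversal of sublocale inclusion against subset inclusion) and once de Morgan-dualising the formulas (the swap of $\wedge$ and $\vee$) --- and the argument works precisely because these are the same operation. A secondary caveat is equality: the equalizer of two maps of discrete locales is an open, not in general a closed, sublocale, so to include ``$=$'' in the coherent fragment one should either omit it --- as in the ideal/anti-ideal example, which uses only relation atoms --- or assume $X$ has decidable equality, whereupon diagonals are clopen and equality atoms behave self-dually just like relation atoms. For the precise definition of a model in an indexed distributive lattice and the fact that the quantifier-free fragment uses only fibrewise operations and reindexing, I would appeal to the treatment of hyperdoctrine semantics in \cref{section:hyperdoctrine}.
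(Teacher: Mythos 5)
Your proposal is correct and follows essentially the same route as the paper's (much terser) proof: identify subsets with open sublocales of discrete locales, pass to closed sublocales by the order-reversing complementation bijection, and observe that this reversal exchanges the lattice operations and hence dualises both the formulas and the direction of each sequent. Your explicit de Morgan induction and the caveat about equality atoms are just careful elaborations of what the paper compresses into ``by the nature of how the theory is interpreted in these structures''.
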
 %
\begin{proof}
 The hyperdoctrine of subsets is equivalent to the hyperdoctrine of open sublocales of discrete locales, which we can then view as an indexed distributive sublattice of $\Sloc$.
 Taking complements provides a bijection between open sublocales and closed sublocales which reverses order and sends joins to meets and visa versa. By the nature of how the theory is interpreted
 in these structures, this order reversal corresponds to moving to the dual theory as described above.
\end{proof}

With the notion of closed sublocale comes the notion of \emph{density}.
\begin{definition}
 A frame homomorphism $f$ is \emph{dense}\index{dense|textbf} if $f(x) = 0 \implies x = 0$.
\end{definition}
Every frame homomorphism $f\colon L \to M$ factors as $L \twoheadrightarrow {\uparrow} f_*(0) \to M$ to give a closed quotient followed by a dense frame map.

\subsubsection{Compactness}\label{section:local_compactness} %

It is easy to define compactness for frames.
\begin{definition}
 An element $a$ of a frame is \emph{compact}\index{compact|textbf} if whenever $a \le \dirsup D$ for a directed set $D$, there exists an element $d \in D$ such that $a \le d$.
 We say a frame $L$ is compact if $1 \in L$ is a compact element.
\end{definition}

It is straightforward to see that compact elements are closed under finite joins.

The definition of local compactness might be slightly less familiar.
\begin{definition}
 We say $b$ is \emph{way below} $a$ and write $b \ll a$\glsadd{ll} if whenever $\dirsup D \ge a$ there exists an element $d \in D$ such that $d \ge b$.
 In particular, $a \ll a$ if and only if $a$ is compact.
 We define $\twoheaddownarrow a = \{b \mid b \ll a \}$\glsadd{twoheaddownarrow} and $\twoheaduparrow b = \{a \mid b \ll a \}$. %
\end{definition}
\begin{definition}
 A frame $L$ is \emph{locally compact}\index{locale!locally compact|textbf}\index{locally compact locale|textbf} or \emph{continuous}\index{continuous frame|textbf}\index{frame!continuous|textbf} if every $a \in L$ can be expressed as a join
 \[a = \bigvee_{b \ll a} b.\]
\end{definition}

Compactness, the way-below relation and continuity make sense for any dcpo. Though for a dcpo to be continuous, we must additionally require that $\twoheaddownarrow a$ is directed for all $a$.\index{continuous dcpo|textbf}\index{dcpo!continuous|textbf}
This condition is automatic for a complete lattice, since $b \ll a$ and $b' \ll a$ easily gives $b \vee b' \ll a$.
If a complete lattice is distributive and continuous, it is automatically a frame.

\begin{lemma}\label{prop:way_below_interpolates}
 If $X$ is a continuous dcpo, the way below relation on $X$ interpolates --- that is, if $c \ll a$ there is an element $b \in X$ such that $c \ll b \ll a$.
\end{lemma}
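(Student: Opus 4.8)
The plan is to prove the interpolation property for the way-below relation on a continuous dcpo $X$ by exhibiting the interpolant as a suitable directed join. Suppose $c \ll a$. Since $X$ is continuous, every element is the directed join of the elements way below it, so in particular $a = \dirsup\{b \mid b \ll a\}$, where the indexing set $\twoheaddownarrow a$ is directed by hypothesis. The idea is to replace each such $b$ in turn by $\dirsup(\twoheaddownarrow b)$ and show that the resulting ``doubly way-below'' set still joins up to $a$, so that $c$, being way below $a$, must already sit below one of its members.

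Concretely, first I would consider the set $S = \bigcup_{b \ll a} \twoheaddownarrow b = \{ b' \mid \exists b.\ b' \ll b \ll a \}$. The key step is to verify that $S$ is directed and that $\dirsup S = a$. Directedness: given $b_1' \ll b_1 \ll a$ and $b_2' \ll b_2 \ll a$, use directedness of $\twoheaddownarrow a$ to find $b \ll a$ with $b_1, b_2 \le b$; then $b_1', b_2' \ll b$ (monotonicity of $\ll$ in the upper argument), and since $\twoheaddownarrow b$ is directed (continuity again, or the complete-lattice remark) there is $b' \in \twoheaddownarrow b \subseteq S$ above both. For $\dirsup S = a$: by continuity each $b \ll a$ satisfies $b = \dirsup(\twoheaddownarrow b) \le \dirsup S$, and since $a = \dirsup\{b \mid b \ll a\}$ we get $a \le \dirsup S$; the reverse inequality is immediate since every element of $S$ is below $a$. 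Now apply the definition of $c \ll a$ to the directed set $S$: from $a \le \dirsup S$ we obtain some $b' \in S$ with $c \le b'$, i.e.\ there is $b$ with $c \le b' \ll b \ll a$, and hence $c \ll b \ll a$ as required (using that $c \le b' \ll b$ implies $c \ll b$).

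I do not expect a serious obstacle here; the argument is a routine manipulation of directed joins. The one point demanding a little care in the constructive setting is the directedness of the various sets of elements way below a given element — but the excerpt already records that in a complete lattice $\twoheaddownarrow x$ is directed because $b \ll x$ and $b' \ll x$ give $b \vee b' \ll x$, and $X$ being a continuous \emph{dcpo} we may invoke the directedness of $\twoheaddownarrow x$ as part of the standing hypothesis. A second small point is the monotonicity facts $c \le b' \ll b \implies c \ll b$ and $b' \ll b \le d \implies b' \ll d$, which follow directly from unwinding the definition of $\ll$ against an arbitrary directed set, so I would simply note them in passing rather than belabour them.
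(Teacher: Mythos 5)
Your argument is exactly the paper's: form the set of elements $d$ with $d \ll b \ll a$ for some $b$, check it is directed with join $a$, and apply the definition of $c \ll a$ to extract the interpolant. The only difference is that you spell out the directedness and the equality $\dirsup S = a$ explicitly, which the paper leaves implicit in its $\dirsup$ notation; the proof is correct.
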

\begin{proof}
 Suppose $c \ll a$ and write $a = \dirsup[b \ll a] b$. Now for each $b \ll a$, we have $b = \dirsup[d \ll b] d$
 and hence $a = \dirsup[d \ll b \ll a] d$. Then since $c \ll a$, there are some $d$ and $b$ with $d \ll b \ll a$ such that $c \le d$
 and thus $c \ll b \ll a$ as required.
\end{proof}

\begin{lemma}\label{prop:base_for_Scott_topology}
 If $X$ is a continuous dcpo, then subsets of the form $\twoheaduparrow x$ are Scott-open and such open sets form a base for the Scott topology\index{Scott topology|(} on $X$.
\end{lemma}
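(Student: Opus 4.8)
The plan is to verify the two assertions in turn: that each set $\twoheaduparrow x$ is Scott-open, and that the Scott-open sets are precisely the unions of such sets.

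For the first assertion I would begin with the easy observation that $\twoheaduparrow x$ is an upset: if $x \ll a$ and $a \le a'$, then any directed $D$ with $\dirsup D \ge a'$ satisfies $\dirsup D \ge a$ and so contains a $d \ge x$, giving $x \ll a'$. The substantive point is the directed-join condition. Given a directed $D$ with $\dirsup D \in \twoheaduparrow x$, I must produce a $d \in D$ with $x \ll d$, whereas the definition of $x \ll \dirsup D$ yields only some $d \in D$ with $x \le d$. This gap is bridged by \cref{prop:way_below_interpolates}: interpolate to find $b$ with $x \ll b \ll \dirsup D$, apply $b \ll \dirsup D$ to obtain $d \in D$ with $b \le d$, and then use that $\twoheaduparrow x$ is an upset to conclude $x \ll d$. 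Thus $d \in D \cap \twoheaduparrow x$ and $\twoheaduparrow x$ is Scott-open. I expect this interpolation step to be the only real obstacle — indeed it is exactly why the preceding lemma was proved.

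For the second assertion it suffices to show that for every Scott-open $U$ and every $a \in U$ there is an $x$ with $a \in \twoheaduparrow x \subseteq U$; then $U$ is the union of these basic opens over all $a \in U$. Since $X$ is a continuous dcpo, $\twoheaddownarrow a$ is directed and $a = \dirsup \twoheaddownarrow a$. As $a \in U$ and $U$ is Scott-open, directedness forces some $x \in \twoheaddownarrow a \cap U$, so $x \ll a$ (hence $a \in \twoheaduparrow x$) and $x \in U$. To see $\twoheaduparrow x \subseteq U$, I note that $x \ll y$ implies $x \le y$ — apply the definition of $\ll$ to the directed set $\{y\}$ — so $\twoheaduparrow x \subseteq {\uparrow} x \subseteq U$ because $U$ is an upset containing $x$. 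This produces, for each point of $U$, a single basic open containing it and contained in $U$, so the sets $\twoheaduparrow x$ form a base and not merely a subbase. Everything used — interpolation, directedness of $\twoheaddownarrow a$, and the upset manipulations — is constructively valid, so no appeal to excluded middle is needed.
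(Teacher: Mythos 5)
Your proof is correct and follows essentially the same route as the paper's: interpolation via \cref{prop:way_below_interpolates} to get Scott-openness of $\twoheaduparrow x$, and directedness of $\twoheaddownarrow a$ together with $a = \dirsup \twoheaddownarrow a$ to extract a basic open around each point of a Scott-open set. The extra details you supply (the upset check, $x \ll y \Rightarrow x \le y$) are accurate and left implicit in the paper.
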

\begin{proof}
 We first show $\twoheaduparrow x$ is Scott-open. Clearly it is an upset. Now suppose $\dirsup D \in \twoheaduparrow x$.
 This means $x \ll \dirsup D$ and interpolating we have $x \ll y \ll \dirsup D$ for some $y \in X$. Now $x \ll y \le d$ for some $d \in D$ by the definition of the way below relation
 and hence $d \in \twoheaduparrow x$ as required.
 
 Now suppose $U$ is a general Scott-open set and consider $x \in U$. Then $\dirsup \twoheaddownarrow x \in U$
 and thus there is a $y \in U$ with $y \ll x$. Therefore, $x \in \twoheaduparrow y \subseteq U$ as required.
\end{proof}

\begin{remark}\label{rem:continuous_dpco_map_into_omega}
 We can also interpret the fact that $\twoheaduparrow x$ is Scott-open as saying that $\llbracket x \ll (-)\rrbracket\colon X \to \Omega$ is a dcpo morphism. %
\end{remark}

It is now not difficult to show that continuous dcpos equipped with the Scott topology are sober topological spaces
and that a function between continuous dcpos is continuous with respect to the Scott topology if and only if it is monotone and preserves directed joins.
However, we will not prove this here as these results are somewhat tangential to our core interests.

We now return our focus to continuous \emph{frames}. Continuous frames are precisely the \emph{coexponentiable} frames ---
equivalently, locally compact locales are the exponentiable locales\index{exponential of locales|(}.
For more details see \cite{hyland1981exponentials} where an explicit description of the coexponential\index{coexponential!of frames} is given in terms of generators and relations.

Of particular interest are exponentials of the form $\Srpnsk^X$. If $X$ is a locally compact locale, then $\Srpnsk^X$ is isomorphic to a sober space with underlying set $\O X$ equipped with the Scott topology\index{Scott topology|)}.
See \cite[Section C4.1]{Elephant2} for details.\index{exponential of locales|)}

Note that $\Srpnsk^X$ is a localic distributive lattice with lattice operations inherited from $\Srpnsk$. These are simply the usual meet and join operations on the underlying set $\O X$.

\begin{definition}
 A frame is \emph{coherent}\index{coherent!frame|textbf}\index{frame!coherent|textbf} if it has a base of compact elements which is closed under finite meets.
\end{definition}
It is easy to see that a coherent frame is continuous.
The compact elements of a coherent frame form a distributive lattice and the poset of ideals of a distributive lattice is a frame.
These are inverse operations and yield an equivalence between the category of distributive lattices and the category of coherent frames and frame homomorphisms which preserve compact elements.
Coherent frames are precisely the frames which can be obtained as the Lindenbaum--Tarski algebras of coherent theories.

An example of a coherent frame is the frame of radical ideals of a semiring. The compact elements are given by the finitely generated radical ideals.

We are also interested in a somewhat less well-known variant of local compactness.
\begin{definition}
 We say $b$ is \emph{totally below} $a$ and write $b \lll a$\glsadd{lll} if whenever $\bigvee S \ge a$ there exists an element $d \in S$ such that $d \ge b$.
 We define ${\Downarrow} a = \{b \mid b \lll a\}$\glsadd{Downarrow}.
\end{definition}
\begin{definition}\label{def:supercontinuous}
 A frame $L$ is \emph{supercontinuous}\index{supercontinuous|(textbf}\index{frame!supercontinuous|(textbf}\index{suplattice!supercontinuous|(textbf} if every $a \in L$ can be expressed as a join
 \[a = \bigvee {\Downarrow} a = \bigvee_{b \lll a} b.\]
\end{definition}
Supercontinuity can be defined for general complete lattices, but any supercontinuous complete lattice is automatically a frame.
In fact, supercontinuous lattices are called `constructively completely distributive' in \cite{Fawcett1990completeDistributivity}, since under the assumption of the axiom of choice it
is equivalent to complete distributivity of the lattice.\index{supercontinuous|)}\index{frame!supercontinuous|)}\index{suplattice!supercontinuous|)}

We will find the following results about the totally below relation to be useful.
The proofs proceed as in \cref{prop:way_below_interpolates} and the first half of \cref{prop:base_for_Scott_topology} respectively.

\begin{lemma}
 In a supercontinuous frame the totally below relation interpolates.
\end{lemma}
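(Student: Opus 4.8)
The plan is to adapt the proof of \cref{prop:way_below_interpolates}, simply replacing directed joins by arbitrary joins throughout; since there is no directedness condition to preserve, the argument is in fact slightly cleaner. Suppose $c \lll a$. By supercontinuity, $a = \bigvee {\Downarrow} a = \bigvee_{b \lll a} b$, and applying supercontinuity again to each $b \lll a$ gives $b = \bigvee_{d \lll b} d$. Substituting, $a = \bigvee_{b \lll a} \bigvee_{d \lll b} d = \bigvee\bigl(\bigcup_{b \lll a} {\Downarrow} b\bigr)$, so $a$ is the join of the single set $S = \{\, d \mid \exists b.\ d \lll b \lll a \,\}$.

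Next I would use the hypothesis $c \lll a$: since $\bigvee S \ge a$, the definition of the totally-below relation yields some $d \in S$ with $d \ge c$, i.e.\ there is a $b$ with $c \le d \lll b \lll a$. Finally I would observe that $c \le d \lll b$ implies $c \lll b$ --- indeed, if $\bigvee T \ge b$ then $d \lll b$ produces an $e \in T$ with $e \ge d \ge c$. Hence $c \lll b \lll a$, as required.

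The only point needing a moment's care is the reindexing $\bigvee_{b \lll a} \bigvee_{d \lll b} d = \bigvee S$, which holds because the join of an arbitrary family of joins is the join of the union of the indexing sets; no "take an upper bound" manoeuvre of the kind used in the way-below case is needed here. I do not expect any genuine obstacle.
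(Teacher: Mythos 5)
Your proof is correct and is exactly the adaptation the paper intends: it states that the proof "proceeds as in" the way-below interpolation lemma, which is the argument you reproduce (expand $a$ twice via supercontinuity, apply $c \lll a$ to the resulting join, and use that $c \le d \lll b$ implies $c \lll b$). Your observation that the reindexing needs no directedness check is also right and is the only simplification over the way-below case.
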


\begin{corollary}
 Let $L$ be a supercontinuous frame and take $b \in L$. Then the map $\llbracket b \lll (-) \rrbracket \colon L \to \Omega$ preserves arbitrary joins.
\end{corollary}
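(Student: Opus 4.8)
The plan is to unwind what it means for a monotone map into $\Omega$ to preserve arbitrary joins and then reduce the statement to the interpolation lemma just proved. Recall (as in the remark following the definition of prime filters) that a map into $\Omega$ is completely determined by, and its join-preservation may be checked via, what it sends to $\top$; so no excluded middle is needed for this reformulation. Since $\llbracket b \lll (-) \rrbracket$ is monotone, it automatically satisfies $\bigvee_{s \in S} \llbracket b \lll s \rrbracket \le \llbracket b \lll \bigvee S \rrbracket$, and preservation of joins amounts to the single implication: whenever $b \lll \bigvee S$, there is some $s \in S$ with $b \lll s$.

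To establish this implication, first I would apply the interpolation lemma for the totally below relation: from $b \lll \bigvee S$ we obtain $c \in L$ with $b \lll c \lll \bigvee S$. Next, from $c \lll \bigvee S$ and the very definition of $\lll$ (taking the family $S$ itself as a cover of $\bigvee S$), there is some $s \in S$ with $c \le s$. Finally I would invoke the routine observation that $\lll$ is absorbed by $\le$ on the right — if $b \lll c$ and $c \le s$ then $b \lll s$, since any cover of $s$ is also a cover of $c$ — to conclude $b \lll s$. This gives $\llbracket b \lll \bigvee S \rrbracket = \bigvee_{s \in S} \llbracket b \lll s \rrbracket$; the nullary case $S = \emptyset$ is just the resulting special instance, which says $b \lll 0$ is false.

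I do not expect a genuine obstacle: the corollary is the totally-below analogue of \cref{rem:continuous_dpco_map_into_omega} together with the first half of \cref{prop:base_for_Scott_topology}, and the only nonroutine ingredient is interpolation, which is available for supercontinuous frames by the preceding lemma. The one point requiring slight care constructively is the reformulation of ``preserves arbitrary joins'' for maps into $\Omega$, but this is handled exactly as remarked earlier in the chapter.
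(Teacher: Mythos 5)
Your proof is correct and is exactly the argument the paper intends: the paper states that the proof "proceeds as in the first half of \cref{prop:base_for_Scott_topology}", i.e.\ interpolate $b \lll c \lll \bigvee S$, extract $s \in S$ with $c \le s$ from the definition of $\lll$, and absorb to get $b \lll s$. Your constructive reformulation of join-preservation for maps into $\Omega$ is also the right way to handle that step.
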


\subsection{Suplattices and quantales}

\subsubsection{Suplattices}

A \emph{suplattice}\index{suplattice|textbf} is a poset admitting arbitrary joins. As objects suplattices are the same as complete lattices, but suplattice homomorphisms need only preserve joins. We call the category of suplattices $\Sup$\glsadd{Sup}.

We will write $0$ for the least element of a suplattice and $\top$\glsadd{top} for the greatest element.

Like $\Frm$, the category $\Sup$ is monadic over $\Set$ and hence complete and cocomplete. In fact, the underlying functor of the monad in question is the covariant powerset functor,
which sends a set $X$ to $\powerset{X}$ and a function $f$ to its direct image function $(\Omega^f)_!$.

An \emph{inf"|lattice\index{inflattice@inf{\kern.03em}lattice|textbf}} is a poset admitting arbitrary meets. The category $\Sup$ is equivalent to the category of inf"|lattices $\Inf$\glsadd{Inf} by simply reversing the order of each poset.
But $\Sup$ is also equivalent to $\Inf\op$ by leaving the objects alone and sending each morphism to its right adjoint. Composing these gives a self-duality $\Sup \cong \Sup\op$.

This duality makes colimits particularly easy to compute in $\Sup$. It is easy to see that the category has arbitrary (not just finite) biproducts. Coequalisers can be computed as quotients
in the usual way, but we can use the duality to instead compute them as a sub-inf"|lattice. In particular, if $q\colon L \to M$ is a suplattice quotient, it is often convenient to view $M$ as
a subset of $L$ given by the image of $q_*$. Alternatively, we can describe this subset as the subset fixed by the closure operator $q_*q$.

Since $\Sup$ is monadic over $\Set$, the monomorphisms in $\Sup$ are precisely the injections and the regular epimorphisms are precisely the surjections.
But then by duality all epimorphisms are surjections and all injections are regular monomorphisms. Hence every mono/epi is regular and $\Sup$ has (epi,mono)-factorisations.

The forgetful functor from $\Sup$ to the category $\SLat{\vee}$ of join-semilattices is also occasionally of interest.
It has a left adjoint $\I$\glsadd{I} which sends a join-semilattice to its suplattice of order ideals.
Thus, both the squares in the diagram below commute, where the unmarked arrows are the obvious forgetful functors.
\begin{center}
\begin{tikzpicture}[node distance=2.5cm, auto]
  \node (A) {$\Frm$};
  \node (B) [below of=A] {$\DLat$};
  \node (C) [right of=A] {$\Sup$};
  \node (D) [right of=B] {$\SLat{\vee}$}; %
  \draw[->] (A) to node {} (C);
  \draw[->] (B) to node [swap] {} (D);
  \draw[transform canvas={xshift=1.0ex},->] (A) to node {} (B);
  \draw[transform canvas={xshift=-1.0ex},<-] (A) to node [swap] {$\I$} (B);
  \path (A) to node [auto=false] {\scriptsize$\dashv$} (B);
  \draw[transform canvas={xshift=1.0ex},->] (C) to node {} (D);
  \draw[transform canvas={xshift=-1.0ex},<-] (C) to node [swap] {$\I$} (D);
  \path (C) to node [auto=false] {\scriptsize$\dashv$} (D);
\end{tikzpicture}
\end{center}

Another relevant forgetful functor is the one from $\Sup$ into the category $\Pos$ of posets. This has a left adjoint $\D$\glsadd{D} which sends a poset to its suplattice of downsets.
The counit $\joinmap_L\colon \D L \twoheadrightarrow L$\glsadd{joinmap} of this adjunction sends a downset $D$ to its join $\bigvee D$ in $L$.

The category of suplattices has the structure of a symmetric monoidal category.
The tensor product is defined in a very similar way to that of abelian groups or commutative monoids.
\begin{definition}
 Suppose $L, M$ and $N$ are suplattices. A function $f\colon L \times M \to N$ is said to be \emph{bilinear} if $f(-,y)\colon L \to N$ and $f(x,-)\colon M \to N$ are suplattice homomorphisms
 for all $x \in L$ and $y \in M$.
 
 The \emph{tensor product} $L \otimes M$ of $L$ and $M$ is given by `initial bilinear map' in the sense that it is the representing object %
 of the obvious functor $\mathrm{Bilin_{L,M}}\colon \Sup \to \Set$ which sends an object to the set of bilinear maps from $L \times M$ into it.
\end{definition}

The tensor product $L \otimes M$ can be given explicitly as the free suplattice on the generators $\ell \otimes m$ for $\ell \in L$ and $m \in M$ subject to %
the obvious relations making the map $(\ell,m) \mapsto \ell \otimes m$ bilinear.

This tensor product has a right adjoint, making $\Sup$ symmetric monoidal closed. The internal hom object $\hom(L,M)$\glsadd{hom} is given by the set of suplattice homomorphisms from $L$ to $M$ equipped with the pointwise
suplattice structure. This also means $\Sup$ is enriched over itself.

\subsubsection{Quantales}

We are now in a position to define quantales\index{quantale|textbf}. See \cite{galoisTheoryGrothendieck} and \cite{RosenthalQuantales} for further details.

\begin{definition}
 A \emph{quantale} is a monoid in $\Sup$ --- that is a suplattice equipped with a bilinear associative multiplication operation and a unit for this multiplication.
 A quantale homomorphism is then a suplattice homomorphism which preserves the monoid structure. %
\end{definition}
Every frame is an example of a quantale with meet as the multiplication. Quantales are sometimes described as ``frames with noncommutative meet operations'',
but it is equally important that the multiplication is \emph{non-idempotent}. Indeed, in this thesis all quantales will be \emph{commutative}. %
The category of (commutative) quantales with quantale homomorphisms will be denoted by $\QuantGeneral$\glsadd{QuantGeneral}.

As with frames, we may define an implication operation $\heyting$\glsadd{heyting} on a quantale $Q$, where $a \heyting (-)$ is right adjoint to multiplication by $a$.
We will write $x^\bullet$\glsadd{abullet} for $x \heyting 0$. When $Q$ is a frame, this is known as the \emph{pseudocomplement} of $x$.

Quantales provide models for a fragment of intuitionistic linear logic. Linear logic is a resource-sensitive logic where we keep track of the number of times we must
assume a variable in order to derive a particular conclusion. We will see that this extra structure afforded to us by using quantales will be able to capture some measure
of `differential' information, in addition to the topological information that would be given by a frame.

An important class of quantales is given by the free quantale on a commutative monoid. An analogue of this construction will be important in \cref{section:spectrum}.
\begin{example}\label{ex:quantale_on_a_monoid}
 Let $M$ be a commutative monoid. The free quantale over $M$ is given by the quantale of subsets of $M$ where multiplication is defined by setting $ST = \{st \mid s \in S, t \in T\}$ for $S, T \subseteq M$.
\end{example}

We write $1$ for the unit of a quantale and $\top$ for its largest element.
\begin{definition}
 We say a quantale is \emph{two-sided}\index{quantale!two-sided|(textbf}\index{two-sided|(textbf} if $1 = \top$.
\end{definition}
Most quantales we deal with will be two-sided. The category of \emph{two-sided} quantales will be denoted by $\Quant$\glsadd{QuantTwoSided}.
In logical terms two-sided quantales correspond to \emph{affine} logic, which permits the disposal, but not the duplication, of resources.

\begin{example}\label{ex:quantale_of_monoid_ideals}
 Let $M$ be a commutative monoid. The complete lattice of monoid ideals of $M$ forms a two-sided quantale\index{quantale of monoid ideals} where multiplication is given elementwise. Explicitly, $IJ = \{ ij \mid i \in I, j \in J\}$.
 In fact, this is the free two-sided quantale on the commutative monoid $M$.
\end{example}

\begin{example}\label{ex:quantale_of_ideals}
 Let $R$ be a semiring. The complete lattice of semiring ideals\index{quantale of ideals|(textbf} of $R$ forms a two-sided quantale $\Idl(R)$\glsadd{Idl} where the multiplication is given by $IJ = \langle ij \mid i \in I, j \in J\rangle$.
\end{example}

Like frames and suplattices, quantales can be viewed as algebraic structures and so we can describe quotients of quantales by congruences.
But as with frames, we will also sometimes want to use nuclei.
\begin{definition}
 A \emph{nucleus}\index{nucleus|textbf} $\nu$ on a quantale is a closure operator which satisfies $\nu(a)\nu(b) \le \nu(ab)$.
\end{definition}
The quotient given by a nucleus $\nu$ has the fixed points of $\nu$ as its underlying sublattice and a multiplication defined by $ab = \nu(a \circ b)$ where $\circ$ is the multiplication in the parent quantale.
On the other hand, given a surjective quantale homomorphism $q\colon Q \to S$ we can recover the corresponding nucleus as $q_*q$.

If $Q$ is a two-sided quantale and $c \in Q$, it is easy to show that the congruence $\langle(0,c)\rangle$ corresponds to the nucleus $a \mapsto a \vee c$
and the underlying suplattice of the quotient is isomorphic to ${\uparrow} c$.
By analogy to frames we write $\nabla_c = \langle(0,c)\rangle$\glsadd{nablaa} and call the quotients induced by such congruences \emph{closed quotients}\index{closed quotient!of a quantale|textbf}.

Two-sided quantales\index{quantale!two-sided|)} form a subvariety of the class of all quantales defined by the equation $1 \vee x = 1$.
Consequently, the category $\Quant$ is a reflective subcategory of $\QuantGeneral$. The unit of the reflection is given by quotient determined by the nucleus $a \mapsto a \top$.
(In fact, $\Quant$ is also \emph{coreflective} in the category of quantales, where the counit is given by the inclusion of the subquantale ${\downarrow} 1$.)

Similarly, the category of frames forms a full subcategory of $\Quant$ (and hence of the category of all quantales) which is both reflective and coreflective.
We call the reflector $\locrefl$\glsadd{locrefl}. The unit of reflection $\rad_Q\colon Q \twoheadrightarrow \locrefl Q$\glsadd{rad} is given by the quotient
obtained from setting $a \sim a^2$. The fixed points of the corresponding nucleus are the so-called \emph{semiprime} elements: the elements $a$ such that $b^2 \le a \implies b \le a$.
In \cref{ex:quantale_of_ideals} these elements are the radical ideals\index{ideal (discrete algebraic)!semiring!radical} of $R$
and so the localic reflection of $\Idl(R)$ is the frame of radical ideals $\Rad(R)$\glsadd{Rad} (with the map $\rad_{\Idl(R)}$ sending an ideal $I$ to $\sqrt{I}$).

As algebraic structures, we can present quantales by generators and relations.
In \cref{section:frame_propositional_theory} we gave a presentation of the Zariski spectrum of a ring $R$.
Replacing meets with products gives a presentation for a two-sided quantale which is the subject of the following lemma.
\begin{lemma}\label{prop:presentation_for_Idl_R}
 If $R$ is a semiring, then the two-sided quantale
\[\langle \overline{f} : f \in R \mid \overline{0} = 0,\, \overline{f+g} \le \overline{f} \vee \overline{g},\,
\overline{1} = 1,\, \overline{fg} = \overline{f} \cdot \overline{g} \rangle\]
 is isomorphic to the quantale of ideals $\Idl(R)$.\index{quantale of ideals|)}
\end{lemma}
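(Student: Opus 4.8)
\subsection*{Proof proposal}

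The plan is to identify $\Idl(R)$ with the quantale $P$ defined by the given presentation (taken in $\Quant$, so that $P$ is two-sided and $1 = \top$ holds in it) by constructing mutually inverse quantale homomorphisms between them; equivalently, one checks that $\Idl(R)$ has the universal property of the presentation.

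First I would send each generator $\overline{f}$ to the principal ideal $(f) = Rf$ and verify the four relations in $\Idl(R)$. Here it is worth recalling that join in $\Idl(R)$ is the sum of ideals, that multiplication is the ideal product of \cref{ex:quantale_of_ideals}, and that the multiplicative unit is $R$, which is also the top element, so $\Idl(R)$ is two-sided. Then $(0)$ is the zero ideal; $(f+g) \subseteq (f) + (g)$ since $f + g \in (f) + (g)$; $(1) = R = 1$; and $(f)(g) = (fg)$ by commutativity. By the universal property of the presentation this yields a quantale homomorphism $\Phi \colon P \to \Idl(R)$ with $\Phi(\overline{f}) = (f)$.

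Conversely I would define $\Psi \colon \Idl(R) \to P$ by $\Psi(I) = \bigvee_{f \in I} \overline{f}$ and check it is a quantale homomorphism. Preservation of joins (with the nullary case covered by $\overline{0} = 0$) follows from $\overline{f+g} \le \overline{f} \vee \overline{g}$, since every element of a join $\bigvee_\alpha I_\alpha$ of ideals is a finite sum of elements drawn from finitely many of the $I_\alpha$. Preservation of the unit holds because $\Psi(R) \ge \overline{1} = 1 = \top$ by two-sidedness. For multiplication, one inequality is immediate: since quantale multiplication preserves joins in each argument, $\Psi(I)\Psi(J) = \bigvee_{i \in I,\, j \in J} \overline{i}\cdot\overline{j} = \bigvee_{i \in I,\, j \in J} \overline{ij} \le \Psi(IJ)$, as each $ij \in IJ$; for the reverse, every $h \in IJ = \langle ij \mid i \in I,\, j \in J\rangle$ is a finite sum $h = \sum_k i_k j_k$, so by the sum-absorbing relation $\overline{h} \le \bigvee_k \overline{i_k}\cdot\overline{j_k} \le \Psi(I)\Psi(J)$.

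Finally I would check the two maps are mutually inverse. Since every ideal is the join of the principal ideals it contains, $\Phi\Psi(I) = \bigvee_{f \in I}(f) = I$. For $\Psi\Phi = \mathrm{id}_P$ it suffices, both sides being quantale homomorphisms, to check on generators: $\Psi\Phi(\overline{f}) = \bigvee_{g \in Rf} \overline{g}$, and for $g = rf$ we have $\overline{g} = \overline{r}\cdot\overline{f} \le \top\cdot\overline{f} = 1\cdot\overline{f} = \overline{f}$ by two-sidedness, while $f = 1 \cdot f \in Rf$, so this join is exactly $\overline{f}$. I expect the multiplication clause for $\Psi$ to be the main obstacle: it is where the explicit description of $IJ$ as finite sums of products must be combined with the relation $\overline{f+g}\le\overline{f}\vee\overline{g}$ and with bilinearity of quantale multiplication with respect to arbitrary joins.
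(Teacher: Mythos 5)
Your proof is correct, but it takes a genuinely different route from the paper's. The paper never writes down an explicit map out of $\Idl(R)$: it computes the presented quantale directly, by peeling off the relations one at a time. First it identifies $\langle \overline{f} \mid \overline{1}=1,\ \overline{fg}=\overline{f}\cdot\overline{g}\rangle$ as the free quantale on the multiplicative monoid of $R$, namely $\Omega^R$; then it applies the two-sided reflection (the nucleus $a \mapsto a\top$), whose fixed points are the monoid ideals; then the relation $\overline{0}=0$ cuts down to monoid ideals containing $0$, and the relations $\overline{f+g}\le\overline{f}\vee\overline{g}$ to those closed under finite sums, i.e.\ semiring ideals. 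You instead verify the universal property by exhibiting mutually inverse quantale homomorphisms $\Phi\colon \overline{f}\mapsto (f)$ and $\Psi\colon I \mapsto \bigvee_{f\in I}\overline{f}$. Both arguments are sound and the computational core is the same (the finite-sum description of generated ideals, and the absorption $\overline{rf}=\overline{r}\cdot\overline{f}\le\overline{f}$ coming from two-sidedness, which you use to show $\Psi\Phi(\overline{f})=\overline{f}$). The paper's approach buys economy --- it reuses the nucleus/quotient machinery and \cref{ex:quantale_on_a_monoid}, and makes it transparent that the product in the quotient is the ideal generated by the elementwise product --- whereas yours is more self-contained and makes the isomorphism explicit on both sides, at the cost of the somewhat longer verification that $\Psi$ preserves multiplication. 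One small point worth making explicit in your write-up: the coefficients in a finite sum $\sum_k r_k(i_k j_k)$ generating $IJ$ are absorbed as $(r_k i_k)j_k$ with $r_k i_k \in I$, which is what justifies writing a general element of $IJ$ as $\sum_k i_k j_k$.
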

\begin{proof}
 The not-necessarily-two-sided quantale $\langle \overline{f} : f \in R \mid \overline{1} = 1,\, \overline{fg} = \overline{f} \cdot \overline{g} \rangle$
 is simply the free quantale on the multiplicative monoid of $R$ and is hence isomorphic to $\Omega^R$ with the obvious multiplication.
 Applying the reflection into two-sided quantales gives the quotient quantale corresponding to the nucleus $a \mapsto a \top$. The fixed points of this nucleus are the
 subsets $S \subseteq R$ satisfying $S = S \cdot \top = \{sr \mid s \in S, r \in R\}$, which are precisely the monoid ideals.
 Now add back the additive conditions. The condition $\overline{0} = 0$ corresponds to the nucleus $a \mapsto a \vee \overline{0}$, the fixed points of which are monoid ideals
 which contain $0$. Finally, the conditions $\overline{f+g} \le \overline{f} \vee \overline{g}$ then pick out the monoid ideals which are closed under finite sums.
 A product in the quotient is given by the ideal generated by the product in $\Omega^R$, as expected.
\end{proof}
\begin{corollary}
 The Zariski spectrum\index{spectrum!Zariski} of $R$ is isomorphic to $\Rad(R)$.\index{frame of radical ideals}
\end{corollary}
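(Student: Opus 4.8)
The plan is to read off the result by comparing presentations. The Zariski spectrum of $R$ was defined in \cref{section:frame_propositional_theory} as the frame $\langle \overline{f} : f \in R \mid \overline{0} = 0,\ \overline{f+g} \le \overline{f} \vee \overline{g},\ \overline{1} = 1,\ \overline{fg} = \overline{f} \wedge \overline{g}\rangle$, while \cref{prop:presentation_for_Idl_R} identifies $\Idl(R)$ with the two-sided quantale presented by the \emph{same} data except that $\overline{fg} = \overline{f} \wedge \overline{g}$ is replaced by $\overline{fg} = \overline{f} \cdot \overline{g}$. Since frames are exactly the commutative two-sided quantales whose multiplication is the meet, I would argue that applying the localic reflection $\locrefl$ to the quantale presentation of $\Idl(R)$ yields precisely the frame presentation of the Zariski spectrum; the corollary then follows because $\locrefl(\Idl(R))$ was already observed to be $\Rad(R)$ (with $\rad_{\Idl(R)}(I) = \sqrt{I}$).

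So first I would establish the general fact that $\locrefl$ commutes with presentations, in the sense that $\locrefl\langle G \mid \rho\rangle_{\Quant}$ is the frame $\langle G \mid \rho^{\wedge}\rangle_{\Frm}$, where $\rho^{\wedge}$ is obtained from $\rho$ by reinterpreting every occurrence of the quantale product as a meet. This is a routine consequence of $\locrefl$ being a left adjoint: a presentation exhibits $\langle G \mid \rho\rangle_{\Quant}$ as a coequaliser built from free two-sided quantales, $\locrefl$ preserves this colimit, $\locrefl$ of a free two-sided quantale is the free frame on the same generators, and the reflection unit $\rad$, being a quantale homomorphism into a frame, carries the quantale product of the free quantale to the meet of the free frame (and fixes $0$, $1$, and arbitrary joins). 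Hence each relation $t_1 = t_2$ becomes $t_1^{\wedge} = t_2^{\wedge}$ after reflection. Applying this with $G = \{\overline{f} : f \in R\}$ and $\rho$ the relations of \cref{prop:presentation_for_Idl_R} turns $\overline{fg} = \overline{f} \cdot \overline{g}$ into $\overline{fg} = \overline{f} \wedge \overline{g}$ and leaves $\overline{0} = 0$, $\overline{f+g} \le \overline{f} \vee \overline{g}$, $\overline{1} = 1$ untouched, which is exactly the presentation of the Zariski spectrum.

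Combining the two steps, the Zariski spectrum is $\locrefl(\Idl(R))$, which is $\Rad(R)$. An essentially equivalent route avoids presentations altogether: by \cref{prop:presentation_for_Idl_R} a quantale homomorphism from $\Idl(R)$ to a frame $L$ is the same as a family $(a_f)_{f\in R}$ in $L$ with $a_0 = 0$, $a_{f+g} \le a_f \vee a_g$, $a_1 = 1$ and $a_{fg} = a_f \cdot a_g = a_f \wedge a_g$, i.e.\ the same as a frame homomorphism out of the Zariski spectrum; since $\locrefl$ is left adjoint to the inclusion $\Frm \hookrightarrow \Quant$, both frames corepresent the same functor on $\Frm$ and are therefore isomorphic. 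The only real content in either version is the bookkeeping that $\locrefl$ is compatible with presentations, and this is purely formal given that it is a left adjoint; there is no genuine obstacle.
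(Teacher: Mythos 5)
Your proposal is correct and is exactly the argument the paper intends (the corollary is stated without proof, immediately after \cref{prop:presentation_for_Idl_R} and after the observation that $\locrefl(\Idl(R)) \cong \Rad(R)$ via the semiprime/radical nucleus): the localic reflection, being a left adjoint, carries the quantale presentation of $\Idl(R)$ to the frame presentation of the Zariski spectrum, turning $\overline{f}\cdot\overline{g}$ into $\overline{f}\wedge\overline{g}$ and fixing the join-theoretic relations. Nothing is missing.
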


Binary coproducts of quantales are given by tensor products\index{coproduct!of quantales|textbf} of their underlying suplattices with multiplication induced by $(a \otimes b)(a' \otimes b') = aa' \otimes bb'$,
similarly to how coproducts of rings are given by tensor products of their underlying abelian groups. (Note that these results make essential use of commutativity.)
The initial quantale is the frame of truth values $\Omega$.

The free two-sided quantale on a single generator --- denoted by $\Nvar$\glsadd{Nvar} --- is more interesting. It is the quantale of monoid ideals on $\N$ under addition.
Classically, this is isomorphic to $\N \cup \{\infty\}$ with the reverse order and with addition as the monoid operation.
Constructively, this still gives the free two-sided\index{idempotent semiring!two-sided|textbf}\index{two-sided|)}\footnote{As with quantales, we will define an idempotent semiring to be two-sided if $1$ is the largest element in the natural order structure.} idempotent semiring
on one generator, but it cannot be proved to be complete. We obtain a quantale by applying the functor $\I$ as described below.
(Compare this to how the corresponding free frame $\O \Srpnsk$ is given by applying $\I$ to the 3 element chain.) %

Just as there is a forgetful functor from $\Frm$ to $\DLat$,  there is a forgetful functor from $\Quant$ to the category of two-sided idempotent semirings.
This has a left adjoint, which is obtained from applying the functor $\I$\glsadd{I} to the underlying suplattices. Multiplication is defined in the obvious way by generating an order ideal from the elementwise product.
\begin{remark}
 Note that this functor is given by taking \emph{order ideals}\index{ideal (order)}. This is not the same as the functor $\Idl$, which takes \emph{semiring ideals}.
 Every order ideal of a two-sided idempotent semiring is a semiring ideal, but they do not coincide in general!
 For example, in the free two-sided idempotent semiring on $\{x,y\}$ the principal semiring ideal $(x \vee y)$ fails to contain $x$.
\end{remark}

\begin{definition}
 A two-sided quantale is \emph{coherent}\index{coherent!quantale|textbf}\index{quantale!coherent|textbf} if it has a base of compact elements which is closed under finite products.
\end{definition}
Our main example of a coherent quantale is the quantale of (semiring) ideals $\Idl(R)$ of a semiring $R$. Since directed joins of ideals are given by set-theoretic union,
it is not hard to show that the compact elements of $\Idl(R)$ are precisely the finitely generated ideals.

Coherent quantales behave in a very similar way to coherent frames. They have a two-sided idempotent semiring $K$ of compact elements and can be recovered as $\I K$.
Furthermore, a two-sided idempotent semiring $D$ can be recovered from the compact elements of $\I D$, which are the principal order ideals.

\subsubsection{Quantale modules}

There is a clear analogy in the above sections between suplattices and abelian groups and between quantales and rings. As with rings, it is possible to define `modules' over quantales.
See \cite{galoisTheoryGrothendieck} for further details.

\begin{definition}
 Let $Q$ be a quantale. A \emph{$Q$-module}\index{quantale module|(textbf}\index{Q module@$Q$-module|see {quantale module}} is a suplattice $M$ equipped with a quantale homomorphism $\alpha$ from $Q$ to the noncommutative quantale of endomorphisms $\End(M)$.\footnote{
 We take multiplication in $\End(M)$ to be function composition in the usual order. Our definition is then of a \emph{left} module. However, since $Q$ is assumed to be commutative, we will not be
 careful to distinguish between left and right modules.}
 We will write $q \cdot m$\glsadd[format=(]{cdot} (or $m \cdot q$) for $\alpha(q)(m)$ when the choice of structure map is obvious from context.
 
 A $Q$-module homomorphism between $Q$-modules $M$ and $N$ is a suplattice homomorphism $f\colon M \to N$ satisfying $f(q \cdot m) = q \cdot f(m)$.
\end{definition}

The category $\Mod{Q}$\glsadd{ModQ} of $Q$-modules behaves similarly to that of suplattices. It too is symmetric monoidal closed and its tensor product may be constructed in a similar way.
A monoid in $\Mod{Q}$ is called a \emph{$Q$-algebra} and can be thought of as quantale with a compatible $Q$-module structure. %

Notice that any quantale homomorphism $f\colon Q \to P$ induces a $Q$-module structure on $P$ by $q \cdot p = f(q)p$. This gives $P$ the structure of a $Q$-algebra. In fact, this gives an equivalence between
the category of (commutative) $Q$-algebras and the coslice category $Q/\QuantGeneral$. Note in particular that any quantale $Q$ has a unique $\Omega$-algebra\footnote{This should not be confused with $\Omega$-algebras
in the sense of universal algebra, despite the unfortunate clash of terminology.} structure (and all quantale homomorphisms preserve this structure). %
We will occasionally use this to write $t \cdot a$\glsadd[format=)]{cdot} for $\bigvee \{ a \mid t \}$ where $a \in Q$ and $t \in \Omega$.

Coproducts in the category of $Q$-algebras correspond to pushouts in $\QuantGeneral$ and their underlying suplattices are given by these tensor products in $\Mod{Q}$.

Quantale modules were first introduced to help with the study \emph{open} morphisms of frames.

\begin{definition}
 A frame homomorphism $f\colon L \to M$ is \emph{open}\index{open map|(textbf} if it is a complete Heyting algebra\index{Heyting algebra!complete} homomorphism --- that is, if it preserves implication and arbitrary meets.
\end{definition}

Recall that if $f\colon M \to L$ is a morphism of locales, then $(\Sloc f)_!$ sends sublocales of $M$ to their images under $f$. The above definition is equivalent
to $(\Sloc f)_!$ sending open sublocales to open sublocales. In particular, it is a conservative extension of the notion of open quotient from \cref{section:sublocales}.

\begin{lemma}\label{prop:open_map_module_adjoint}
 A frame homomorphism $f\colon L \to M$ is open if and only if it admits a left adjoint in $\Mod{L}$\index{quantale module|)}.
\end{lemma}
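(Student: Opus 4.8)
The plan is to reduce the whole statement to the classical \emph{Frobenius reciprocity} law and then verify two short adjunction computations. Since $M$ is a frame and $f\colon L\to M$ a frame homomorphism, $M$ carries an $L$-module structure with action $\ell\cdot m = f(\ell)\wedge m$: frame distributivity makes $f(\ell)\wedge(-)$ join-preserving, and $\ell\mapsto f(\ell)\wedge(-)$ is a quantale homomorphism $L\to\End(M)$ because $f$ preserves finite meets and $1$. Likewise $L$ is a module over itself by meet, and with these structures $f$ itself is an $L$-module homomorphism, so the statement makes sense. The key observation is that a monotone map $g\colon M\to L$ is an $L$-module homomorphism exactly when $g\bigl(f(\ell)\wedge m\bigr) = \ell\wedge g(m)$ for all $\ell\in L$, $m\in M$. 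Hence the proposition says: $f$ is open if and only if $f$ has a left adjoint $f_!$ (as a map of posets) which, in addition, satisfies this Frobenius law.

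For the forward direction, suppose $f$ is open. Then $f$ preserves arbitrary meets, so it has a left adjoint $f_!$; being a left adjoint, $f_!$ preserves joins, and the unit and counit give $m\le f f_!(m)$ and $f_! f(\ell)\le\ell$. The inequality $f_!\bigl(f(\ell)\wedge m\bigr)\le\ell\wedge f_!(m)$ is immediate: $f_!$ is monotone, so $f_!(f(\ell)\wedge m)\le f_!(m)$, and also $f_!(f(\ell)\wedge m)\le f_! f(\ell)\le\ell$. For the reverse inequality I would strip off the meet using $\heyting$ and the adjunction: $\ell\wedge f_!(m)\le f_!(m\wedge f(\ell))$ is equivalent to $f_!(m)\le\ell\heyting f_!(m\wedge f(\ell))$, hence to $m\le f\bigl(\ell\heyting f_!(m\wedge f(\ell))\bigr)$; now using that $f$ preserves implication the right-hand side is $f(\ell)\heyting f f_!(m\wedge f(\ell))$, and since $m\wedge f(\ell)\le f f_!(m\wedge f(\ell))$ this is $\ge f(\ell)\heyting\bigl(m\wedge f(\ell)\bigr) = f(\ell)\heyting m \ge m$. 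So Frobenius holds and $f_!$ is the sought left adjoint in $\Mod{L}$.

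For the converse, suppose $f$ has a left adjoint $g$ in $\Mod{L}$. Then $g\dashv f$ as posets, so $f$ preserves arbitrary meets, and $g$ satisfies the Frobenius law. It remains to check $f$ preserves implication. The inequality $f(\ell_1\heyting\ell_2)\le f(\ell_1)\heyting f(\ell_2)$ holds for any frame homomorphism, since $f(\ell_1\heyting\ell_2)\wedge f(\ell_1) = f\bigl((\ell_1\heyting\ell_2)\wedge\ell_1\bigr)\le f(\ell_2)$. For the reverse, $f(\ell_1)\heyting f(\ell_2)\le f(\ell_1\heyting\ell_2)$ is equivalent via the adjunction to $g\bigl(f(\ell_1)\heyting f(\ell_2)\bigr)\wedge\ell_1\le\ell_2$; by Frobenius the left side equals $g\bigl(f(\ell_1)\wedge(f(\ell_1)\heyting f(\ell_2))\bigr)$, and since $f(\ell_1)\wedge(f(\ell_1)\heyting f(\ell_2))\le f(\ell_2)$ this is $\le g f(\ell_2)\le\ell_2$ by the counit. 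Hence $f$ preserves implication and is open.

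All the manipulations here are routine Heyting-algebra and adjunction bookkeeping; the only conceptual point, and the thing I would be careful to state cleanly, is the identification of the $L$-module-homomorphism condition with Frobenius reciprocity. The one place where openness is used beyond mere preservation of arbitrary meets is the reverse inequality of the Frobenius law in the forward direction (and symmetrically the reverse inequality of implication-preservation in the converse), so that is where I expect the real work to lie.
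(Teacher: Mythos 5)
Your proof is correct and follows essentially the same route as the paper: both reduce the statement to the observation that, once $f$ preserves arbitrary meets (so that the order-theoretic left adjoint $f_!$ exists), preservation of Heyting implication is equivalent to the Frobenius identity $f_!(m \wedge f(\ell)) = f_!(m) \wedge \ell$, i.e.\ to $f_!$ being an $L$-module homomorphism. The only difference is presentational: the paper establishes the equivalence in one stroke by comparing the elements $p$ lying below $f(q)\heyting f(r)$ and below $f(q\heyting r)$, whereas you verify the four inequalities separately; your computations are all sound.
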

\begin{proof}
 Certainly, the map $f$ has an order-theoretic left adjoint $f_!$ if and only if $f$ preserves arbitrary meets.
 Suppose this is the case. We now show that $f$ preserves implication if and only if $f_!$ is an $L$-module homomorphism. This latter condition is $f_!(q \cdot p) = q \cdot f_!(p)$,
 or more explicitly, $f_!(p \wedge f(q)) = f_!(p) \wedge q$.
 
 Observe the following sequence of equivalences:
 \begin{align*}
       & p \le f(q) \Rightarrow f(r) \\
  \iff & p \wedge f(q) \le f(r) \\
  \iff & f_!(p \wedge f(q)) \le r.
 \end{align*}
 Additionally, we have
 \begin{align*}
       & p \le f(q \Rightarrow r) \\
  \iff & f_!(p) \le q \Rightarrow r \\
  \iff & f_!(p) \wedge q \le r.
 \end{align*}
 
 The desired result now follows easily.
\end{proof}

We may now show open locale maps are stable under pullback.

\begin{lemma}\label{prop:open_pullback_stable}
 Suppose $f\colon L \to M$ and $g\colon L \to N$ are frame homomorphisms. If $f$ is open, then so is the map $f'$ in the pushout diagram below.\index{open map|)}
 Furthermore, we have the so-called Beck--Chevalley condition\index{Beck-Chevalley condition@Beck--Chevalley condition|textbf} $gf_! = f'_! g'$.
 \begin{center}
  \begin{tikzpicture}[node distance=2.5cm, auto]
    \node (A) {$L$};
    \node (B) [below of=A] {$M$};
    \node (C) [right of=A] {$N$};
    \node (D) [right of=B] {$M \otimes_L N$};
    \draw[->] (A) to node [swap] {$f$} (B);
    \draw[->] (A) to node {$g$} (C);
    \draw[->] (B) to node [swap] {$g'$} (D);
    \draw[->] (C) to node {$f'$} (D);
    \begin{scope}[shift=(D)] %
        \draw +(-0.4,0.8) -- +(-0.8,0.8)  -- +(-0.8,0.4);
    \end{scope}
  \end{tikzpicture}
 \end{center}
\end{lemma}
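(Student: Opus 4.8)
The plan is to deduce both conclusions from \cref{prop:open_map_module_adjoint} by base-changing along $g$ the adjunction that witnesses the openness of $f$. I would first recall the needed facts about the pushout: the underlying suplattice of $M \otimes_L N$ is the tensor product of $M$ and $N$ in $\Mod{L}$, the coprojections are the evident maps $g'(m) = m \otimes 1_N$ and $f'(n) = 1_M \otimes n$, and $M \otimes_L N$ becomes an $N$-algebra (hence an $N$-module) via $f'$. Under the canonical isomorphism $L \otimes_L N \cong N$ the coprojection $f'$ is identified with the base-change $f \otimes_L N\colon L \otimes_L N \to M \otimes_L N$; indeed $n \mapsto 1_L \otimes n \mapsto f(1_L) \otimes n = 1_M \otimes n$.

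Next I would invoke \cref{prop:open_map_module_adjoint}: since $f$ is open it has a left adjoint $f_!\colon M \to L$ which is a morphism of $L$-modules, so $f_! \dashv f$ in $\Mod{L}$. Applying the base-change functor $(-) \otimes_L N\colon \Mod{L} \to \Mod{N}$ — which is locally monotone on hom-suplattices, as one checks on elementary tensors, and therefore preserves adjunctions — yields $f_! \otimes_L N \dashv f \otimes_L N$ in $\Mod{N}$. Composing with the isomorphism $L \otimes_L N \cong N$ turns this into an adjunction $f'_! \dashv f'$ in $\Mod{N}$, where $f'_!(m \otimes n) = g(f_!(m)) \wedge n$. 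So $f'$ has a left adjoint in $\Mod{N}$, and \cref{prop:open_map_module_adjoint} gives that $f'$ is open.

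For the Beck--Chevalley identity I would just compute: for $m \in M$,
\[ f'_!\bigl(g'(m)\bigr) = f'_!(m \otimes 1_N) = g\bigl(f_!(m)\bigr) \wedge 1_N = g\bigl(f_!(m)\bigr), \]
so that $f'_! \circ g' = g \circ f_!$.

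The only real work is bookkeeping rather than a genuine obstacle: one must be careful that openness of $f$ is exactly what makes $f_!$ an $L$-module map (so that $f_! \otimes_L N$ is defined) and that the coprojection $f'$ matches $f \otimes_L N$ through the unit isomorphism $L \otimes_L N \cong N$. If one would rather not appeal to preservation of adjunctions by $(-) \otimes_L N$, the adjunction $f'_! \dashv f'$ can be checked by hand: the counit $f'_!(f'(n)) = g(f_!(1_M)) \wedge n \le n$ since $f_!(1_M) \le 1_L$, and the unit $x \le f'(f'_!(x))$ is verified on generators $m \otimes n = g'(m) \wedge f'(n)$ using $m \le f(f_!(m))$ and the fact that the frame homomorphism $f'$ preserves finite meets.
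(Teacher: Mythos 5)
Your proposal is correct and follows essentially the same route as the paper: identify $f'$ with $f \otimes_L N$ via $L \otimes_L N \cong N$, base-change the $\Mod{L}$-adjunction $f_! \dashv f$ along the order-enriched functor $(-) \otimes_L N$ to get a left adjoint for $f'$ in $\Mod{N}$, and read off Beck--Chevalley from $(f_! \otimes_L N)g'(m) = g(f_!(m))$. The paper makes one point slightly more explicit — that $f_! \otimes_L N$ respects the $N$-action induced by $f'$, via $f_!(m)\cdot n'n = n'(f_!(m)\cdot n)$ — which your phrasing absorbs into the claim that base change lands in $\Mod{N}$.
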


\begin{proof}
 Note that $f'(n) = 1 \otimes n$ and $g'(m) = m \otimes 1$. Then $(f_! \otimes_L N)g'(m) = f_!(m) \cdot 1 = g(f_!(m))$, where the final equality comes from the definition of the $L$-module structure on $N$.
 Thus, we have the following commutative diagram in $\Mod{L}$.
 \begin{center}
  \begin{tikzpicture}[node distance=2.5cm, auto]
    \node (A) {$L$};
    \node (B) [below of=A] {$M$};
    \node (C) [right of=A] {$N$};
    \node (D) [right of=B] {$M \otimes_L N$};
    \draw[<-] (A) to node [swap] {$f_!$} (B);
    \draw[->] (A) to node {$g$} (C);
    \draw[->] (B) to node [swap] {$g'$} (D);
    \draw[<-] (C) to node {$\mathrlap{f_! \otimes_L N}$} (D); %
  \end{tikzpicture}
 \end{center}
 Also note that identifying $N$ with $L \otimes_L N$, we have $f' = f \otimes_L N$. So $f_! \otimes_L N$ is left adjoint to $f' = f \otimes_L N$, since $(-) \otimes_L N$ preserves the order enrichment.
 
 It remains to show that $f_! \otimes_L N$ preserves the action of $N$ on $M \otimes_L N$ induced by $f'$,
 but this is just the equality $f_!(m) \cdot n'n = n'(f_!(m) \cdot  n)$, which comes from the $L$-algebra structure of $N$.
\end{proof}

\subsection{Overtness and weakly closed sublocales}

\subsubsection{Overtness}

Compactness plays an important role in both classical and constructive topology. But there is an equally important `dual' notion, which is classically invisible.
\begin{definition}
 A frame $L$ said to be \emph{overt}\index{overt|(textbf}\index{locale!overt|(textbf}\index{frame!overt|(textbf} (or \emph{locally positive}\index{locally positive|see {overt}}) if the unique frame homomorphism $!\colon \Omega \to L$ is open. Equivalently, $L$ is overt if $!$ has a left adjoint, which we denote by $\exists\colon L \to \Omega$\glsadd{exists}.
\end{definition}

As suggested by the notation, the map $\exists$ measures the degree to which elements of $L$ are positive --- think ``$\exists x \in a$''.
We call it the \emph{positivity predicate}\index{positivity predicate|textbf} on $L$.
\begin{lemma}\label{prop:positivity_predicate}
 If $L$ is overt, then $\exists(a) = \llbracket a > 0 \rrbracket$.
\end{lemma}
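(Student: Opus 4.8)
The plan is to prove the equality in $\Omega$ by establishing the two inequalities $\exists(a) \le \llbracket a > 0 \rrbracket$ and $\llbracket a > 0 \rrbracket \le \exists(a)$; since an inequality of truth values is the same thing as an implication, these say respectively that $\exists(a) \implies (a > 0)$ and $(a > 0) \implies \exists(a)$, and together they give $\exists(a) = \llbracket a > 0 \rrbracket$. Throughout I will use that, by the definition of overtness, $\exists$ is left adjoint to the unique frame map $!\colon \Omega \to L$, so $\exists(a) \le t \iff a \le {!}(t)$ where ${!}(t) = \bigvee\{1 \mid t\}$; in particular $\exists$ preserves arbitrary joins (being a left adjoint between complete lattices) and the unit of the adjunction gives $a \le {!}(\exists(a)) = \bigvee\{1 \mid \exists(a)\}$.

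For $\exists(a) \le \llbracket a > 0 \rrbracket$, I would assume $\exists(a)$ holds and show that every cover of $a$ is inhabited. So let $S \subseteq L$ with $a \le \bigvee S$. Applying the monotone, join-preserving map $\exists$ yields $\top = \exists(a) \le \exists(\bigvee S) = \bigvee_{s \in S} \exists(s)$. Since joins in the frame of truth values $\Omega$ compute existential quantification, $\bigvee_{s \in S} \exists(s) = \top$ means there is some $s \in S$ with $\exists(s)$ true; in particular $S$ is inhabited. As $S$ was an arbitrary cover of $a$, this shows $a > 0$.

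For $\llbracket a > 0 \rrbracket \le \exists(a)$, I would assume $a > 0$. The unit inequality $a \le \bigvee\{1 \mid \exists(a)\}$ exhibits the subsingleton $\{1 \mid \exists(a)\} \subseteq L$ as a cover of $a$, so by positivity of $a$ this set is inhabited; but $\{1 \mid \exists(a)\}$ is inhabited precisely when $\exists(a)$ is true. Hence $\exists(a) = \top$, as required.

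Combining the two inequalities gives $\exists(a) = \llbracket a > 0 \rrbracket$. There is no serious obstacle here — the lemma is short — and the only point needing a little care is the step reading a join of truth values indexed by $S$ being $\top$ as forcing $S$ to be inhabited (and, dually, recognising $\{1 \mid \exists(a)\}$-inhabitedness as the assertion $\exists(a)$); both are routine in the constructive setting in which we work, where $\Omega$ is identified with the powerset of the singleton.
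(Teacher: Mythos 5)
Your proof is correct and follows essentially the same route as the paper: both directions rest on the adjunction $\exists \dashv {!}$ and the characterisation of positivity via inhabited covers (the paper phrases this by computing $\exists(a)$ as a meet and appealing to the argument in the lemma on positivity and epimorphisms, while you split it into two inequalities, using join-preservation of $\exists$ for one and the subsingleton cover $\{1 \mid \exists(a)\}$ from the adjunction unit for the other). No gaps.
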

\begin{proof}
 Since $\exists$ is left adjoint to $!\colon \Omega \to L$, we have $\exists(a) = \bigwedge \{p \in \Omega \mid a \le p\cdot 1\}$ and so $\exists(a) = \top$ if and only if $a \le \bigvee\{1 \mid p\} \implies p=\top$.
 This in turn is seen to be equivalent to $a$ being positive as in the proof of \cref{prop:positive_if_epic}.
\end{proof}

\begin{lemma}
 A frame $L$ is overt if and only if it admits a base of positive elements.
\end{lemma}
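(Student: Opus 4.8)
The plan is to prove the two implications separately; throughout, a \emph{base of positive elements} means a subset $B \subseteq L$ all of whose members are positive and such that $a = \bigvee\{b \in B \mid b \le a\}$ for every $a \in L$.

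For the forward implication, suppose $L$ is overt, so that $!\colon \Omega \to L$ has a left adjoint $\exists$, and take $B$ to be the set of \emph{all} positive elements of $L$. The key point is the identity $b = \bigvee\{b \mid b > 0\}$, valid for every $b \in L$. Indeed, applying the adjunction $\exists \dashv\, !$ to $\exists(b) \le \exists(b)$ gives $b \le\, !(\exists(b))$, and $!(\exists(b)) = \bigvee\{1 \mid \exists(b)\} = \bigvee\{1 \mid b > 0\}$ by the formula for $!$ together with \cref{prop:positivity_predicate}; hence $b = b \wedge\, !(\exists(b)) = \bigvee\{1 \wedge b \mid b > 0\} = \bigvee\{b \mid b > 0\}$ by frame distributivity. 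Now any element witnessing that $\{b \mid b > 0\}$ is inhabited equals $b$ and, in that circumstance, is positive, so $\{b \mid b > 0\} \subseteq B \cap {\downarrow}b$ and therefore $b \le \bigvee(B \cap {\downarrow}b) \le b$. Thus $B$ is a base of positive elements.

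For the converse, suppose $L$ has a base $B$ of positive elements. I would define $\exists \colon L \to \Omega$ by $\exists(a) = \bigvee\{\, \top \mid b \in B,\; b \le a \,\}$ and verify directly that $\exists \dashv\, !$, that is, $\exists(a) \le p \iff a \le\, !(p)$ for all $a \in L$ and $p \in \Omega$. Suppose first that $\exists(a) \le p$; writing $a = \bigvee\{b \in B \mid b \le a\}$, it suffices to show $b \le\, !(p) = \bigvee\{1 \mid p\}$ for each base element $b \le a$, and for such a $b$ the set defining $\exists(a)$ is inhabited, so $\exists(a) = \top$, whence $p = \top$ and $b \le 1 = \bigvee\{1 \mid p\}$. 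Conversely, suppose $a \le\, !(p)$; then any base element $b \le a$ satisfies $b \le \bigvee\{1 \mid p\}$, so $\{1 \mid p\}$ is an open cover of the positive element $b$ and is hence inhabited, forcing $p = \top$, and taking the join over all base elements below $a$ gives $\exists(a) \le p$. (Alternatively: the same positivity argument applied to a base element below $\bigwedge_i\, !(p_i)$ shows that $!$ preserves arbitrary meets, and a meet-preserving map between complete lattices is a right adjoint, so it has a left adjoint.)

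This is a soft lemma and no single step is a real obstacle; the only thing requiring care is to remain constructive. In the forward direction one must not case-split on whether $b > 0$, but use the identity $b = \bigvee\{b \mid b > 0\}$ as it stands; in the converse one must invoke positivity only for \emph{base} elements (never for $a$ itself) when drawing conclusions about $p$. Both arguments use only what has already been established: the description of $\exists$ as $\llbracket (-) > 0 \rrbracket$ in \cref{prop:positivity_predicate}, frame distributivity, and the adjoint-functor criterion for complete lattices recalled earlier.
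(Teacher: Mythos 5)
Your proof is correct and follows essentially the same route as the paper: the forward direction rests on the identity $a = \bigvee\{a \mid a > 0\}$ obtained from the unit of the adjunction plus frame distributivity, and the converse exhibits an explicit left adjoint to $!$ and checks the adjunction, taking care to invoke positivity only for base elements. The only (immaterial) difference is that the paper's candidate left adjoint in the converse is $\llbracket (-) > 0\rrbracket$ rather than your $\llbracket \exists b \in B.\ b \le a\rrbracket$; these necessarily agree once the adjunction is established.
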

\begin{proof}
 Suppose $L$ is overt and take $a \in L$. Then $a \le {!}\exists(a) = \exists(a)\cdot 1$ and hence $a = \exists(a)\cdot 1 \wedge a = \exists(a)\cdot a = \bigvee\{a \mid \exists(a)\}$.
 This represents $a$ as a join of positive elements, since every element of $\{a \mid \exists(a)\}$ is positive by \cref{prop:positivity_predicate}.
 
 Now suppose $L$ has a base of positive elements. We will show that $\llbracket (-) > 0\rrbracket$ is left adjoint to ${!}\colon \Omega \to L$.
 
 We first note that $\llbracket (-) > 0\rrbracket\circ{!} \le \id_\Omega$. It is enough to compare the preimages of $\top$ and the condition then follows as in the proof of \cref{prop:positivity_predicate}.
 
 Next we show $\id_L \le {!}\circ\llbracket (-) > 0\rrbracket$. Take $a \in L$ and suppose $a = \bigvee_\alpha a_\alpha$ for some family of positive elements $(a_\alpha)_\alpha$.
 Then $a = \bigvee_\alpha \llbracket a_\alpha > 0\rrbracket \cdot a_\alpha \le \bigvee_\alpha \llbracket a_\alpha > 0\rrbracket \cdot 1 \le \llbracket a > 0\rrbracket \cdot 1$ as required.
\end{proof}

Assuming excluded middle, every element is either positive or zero and hence every locale has a base of positive elements and is therefore overt. This explains why overtness is not discussed in
nonconstructive approaches to topology. Even constructively, many locales of interest are overt.

\begin{lemma}\label{prop:discete_is_overt}
 Every discrete locale is overt.
\end{lemma}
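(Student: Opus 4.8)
The plan is to invoke the characterisation of overtness by the existence of a base of positive elements proved above. Recall first that the frame of opens of a discrete locale is a powerset lattice $\powerset{X}$ for some set $X$, with arbitrary joins computed as unions and with top element $X$.

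First I would check that each singleton $\{x\}$, for $x \in X$, is a positive element of $\powerset{X}$: if $\{x\} \le \bigvee S$ for a family $S \subseteq \powerset{X}$, then $x \in \bigcup S$, so $x \in s$ for some $s \in S$, and hence $S$ is inhabited. Next I would note that the singletons form a base, since every subset $A \subseteq X$ satisfies $A = \bigvee_{x \in A}\{x\}$ (the empty subset being the empty join). Thus $\powerset{X}$ has a base of positive elements and is therefore overt. Equivalently --- and this is the description one would want to record for later use --- the left adjoint of $!\colon \Omega \to \powerset{X}$ is the map $\exists$ sending $A$ to $\llbracket A \text{ is inhabited}\rrbracket$: since ${!}(p) = \{x \in X \mid p\}$, the adjunction $\exists(A) \le p \iff A \le {!}(p)$ unwinds to ``$A$ inhabited $\Rightarrow p$'' iff ``$A \subseteq \{x \mid p\}$'', both directions of which are immediate from chasing an element of $A$. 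This agrees, as expected, with $\exists(A) = \llbracket A > 0 \rrbracket$ from \cref{prop:positivity_predicate}, and with the fact (noted earlier in the excerpt) that a discrete locale is positive exactly when the underlying set is inhabited.

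There is no real obstacle here; the proof is a one-line verification either way. The only point requiring constructive care is to phrase positivity in terms of \emph{inhabitedness} of covers rather than nonemptiness --- it is precisely inhabitedness that the singletons witness --- and to remember that the empty subset is covered by the empty join of singletons, so that no positivity condition is imposed on it.
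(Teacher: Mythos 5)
Your proof is correct and follows exactly the paper's argument: the singletons of $X$ form a base of positive elements for $\powerset{X}$, so the frame is overt by the base-of-positive-elements characterisation. The extra identification of $\exists$ with the inhabitedness predicate is a harmless and accurate elaboration.
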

\begin{proof}
 If $X$ is a set, then the set of singletons of $X$ form a base of positive elements for the frame $\powerset{X}$.
\end{proof}

Furthermore, the reals are overt. In fact, this is true of every metric locale \cite{Henry2016}.
However, if every locale is overt then we can deduce excluded middle: recall that excluded middle holds if every sublocale of $\Omega$ is open and
simply note that if $S$ is any overt sublocale of $\Omega$, then the frame map $\Omega \twoheadrightarrow S$ is open by definition.

The relationship between overtness and open maps allows for simple proofs of some basic closure properties.

\begin{lemma}\label{prop:open_subloc_overt}
 Open sublocales of overt frames are overt.
\end{lemma}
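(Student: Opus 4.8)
The plan is to realise the open sublocale as a composite of open frame homomorphisms and then invoke the (trivial) fact that openness of frame maps is closed under composition. Recall from \cref{section:sublocales} that an open sublocale of a frame $L$ is, up to isomorphism, a principal downset ${\downarrow} a$ for some $a \in L$, presented by the open quotient $q\colon L \twoheadrightarrow {\downarrow} a$ given by $x \mapsto x \wedge a$. The unique frame homomorphism $!\colon \Omega \to {\downarrow} a$ sends $p$ to $\bigvee\{a \mid p\} = q\bigl(\bigvee\{1 \mid p\}\bigr)$, so it factors as the initial-frame map $!\colon \Omega \to L$ of $L$ followed by $q\colon L \to {\downarrow} a$.

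First I would note that $!\colon \Omega \to L$ is open, which is exactly the hypothesis that $L$ is overt. Next I would check that the open quotient $q$ is genuinely open as a frame homomorphism: its order-theoretic left adjoint is the inclusion $j\colon {\downarrow} a \hookrightarrow L$, and for $s \in L$ and $p \le a$ the $L$-module action on ${\downarrow} a$ induced by $q$ satisfies $s \cdot p = q(s) \wedge p = (s \wedge a) \wedge p = s \wedge p = s \cdot j(p)$, so $j$ is an $L$-module homomorphism and hence $q$ is open by \cref{prop:open_map_module_adjoint}. (This is just the remark, recorded after that lemma, that openness of frame maps conservatively extends the notion of open quotient.) Finally, a composite of open frame homomorphisms is open, since each of the defining properties — preservation of finite meets, arbitrary joins, arbitrary meets and Heyting implication — passes to composites. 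Hence $!\colon \Omega \to {\downarrow} a$ is open; that is, ${\downarrow} a$ is overt.

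There is no real obstacle here; the only point requiring a moment's care is the verification that $q$ is open, and even this can be bypassed. One can instead restrict the positivity predicate of $L$ directly, checking that $\exists\vert_{{\downarrow} a}\colon {\downarrow} a \to \Omega$ is left adjoint to the map $p \mapsto p \cdot a$, using that for $x \le a$ one has $x \le p \cdot 1 \iff x \le p \cdot a$. Either route identifies the positivity predicate of ${\downarrow} a$ with the restriction of that of $L$, which is the expected behaviour.
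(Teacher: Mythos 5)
Your proof is correct and is essentially the paper's argument: the paper's proof is the one-liner ``use that the composition of two open maps is open'', applied to the factorisation $\Omega \to L \to {\downarrow}a$, with the openness of the open quotient $q$ taken as already established. Your extra verification that $q$ is open via \cref{prop:open_map_module_adjoint} just spells out what the paper records as a remark after that lemma.
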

\begin{proof}
 Simply use the fact that the composition of two open maps is open.
\end{proof}

\begin{lemma}\label{prop:overt_projection}
 A locale $V$ is overt if and only if the product projection $\pi_1\colon X \times V \to X$ is open for all locales $X$.
\end{lemma}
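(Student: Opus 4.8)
The plan is to reduce both directions to facts already established about open maps, principally the pullback-stability of open maps (\cref{prop:open_pullback_stable}) together with the defining property of overtness: that the frame map $!\colon \Omega \to \O V$ is open.

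For the ``only if'' direction, recall that the product $X \times V$ in $\Loc$ corresponds to the coproduct $\O X \oplus \O V$ in $\Frm$, and since $\Omega$ is the initial frame, this coproduct is exactly the pushout of the span $\O X \gets \Omega \to \O V$, the two legs being the unique frame maps $!$. Under this identification the frame homomorphism $\pi_1^*$ is the coproduct injection $\iota_1\colon \O X \to \O X \oplus \O V$. Hence, if $V$ is overt then $!\colon \Omega \to \O V$ is open by definition, and applying \cref{prop:open_pullback_stable} with $f = {!}\colon \Omega \to \O V$ and $g = {!}\colon \Omega \to \O X$ shows that the other leg of the pushout square, namely $f' = \iota_1 = \pi_1^*$, is again open; that is, $\pi_1\colon X \times V \to X$ is an open map of locales.

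For the ``if'' direction it suffices to specialise the hypothesis to $X = 1$, the terminal locale. Then $1 \times V \cong V$, and under this isomorphism the projection $\pi_1\colon 1 \times V \to 1$ becomes the unique locale map $V \to 1$, whose associated frame homomorphism is precisely $!\colon \Omega \to \O V$. By assumption this map is open, which is exactly the statement that $V$ is overt.

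I do not anticipate a real obstacle: the argument is short once pullback-stability is available. The only points requiring a moment's attention are the identification of $\pi_1^*$ with the coproduct injection $\iota_1$ and the (essentially formal) observation that a coproduct of frames is a pushout under the initial frame $\Omega$. One could alternatively treat the ``only if'' direction more explicitly, constructing the left adjoint $(\pi_1)_!$ and verifying the Beck--Chevalley identity from \cref{prop:open_pullback_stable} directly, but invoking pullback-stability is cleaner and sidesteps the bookkeeping.
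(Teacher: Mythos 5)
Your proof is correct and is essentially the paper's own argument: the paper takes the pullback of ${!}\colon X \to 1$ and ${!}\colon V \to 1$ in $\Loc$ (equivalently, your pushout of $\Omega \to \O X$ and $\Omega \to \O V$ in $\Frm$) and applies \cref{prop:open_pullback_stable}, with the backward direction likewise obtained by setting $X = 1$. You merely spell out the frame-theoretic identification of $\pi_1^*$ with the coproduct injection, which the paper leaves implicit.
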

\begin{proof}
 The backward direction is immediate by taking $X = 1$.
 For the forward direction we take the pullback of ${!}\colon X \to 1$ and ${!}\colon V \to 1$.
 \begin{center}
  \begin{tikzpicture}[node distance=2.5cm, auto]
    \node (A) {$X \times V$};
    \node (B) [below of=A] {$X$};
    \node (C) [right of=A] {$V$};
    \node (D) [right of=B] {$1$};
    \draw[->] (A) to node [swap] {$\pi_1$} (B);
    \draw[->] (A) to node {$\pi_2$} (C);
    \draw[->] (B) to node [swap] {${!}$} (D);
    \draw[->] (C) to node {${!}$} (D);
    \begin{scope}[shift=(A)] %
        \draw +(0.4,-0.8) -- +(0.8,-0.8)  -- +(0.8,-0.4);
    \end{scope}
  \end{tikzpicture}
 \end{center}
 The openness of $\pi_1$ follows from that of ${!}\colon V \to 1$ by \cref{prop:open_pullback_stable}.
\end{proof}

\begin{corollary}\label{prop:product_of_overt_locales}
 Finite products of overt locales are overt.
\end{corollary}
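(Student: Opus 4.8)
The plan is to reduce the general finite case to the case of a binary product and then settle the binary case using \cref{prop:overt_projection} together with the fact that open locale maps are closed under composition. First I would dispose of the nullary product: the empty product is the terminal locale $1$, and $1$ is overt because the unique frame homomorphism $!\colon \Omega \to \O 1 = \Omega$ is the identity, which is trivially a complete Heyting algebra homomorphism. Since any finite product can be built by iterating binary products starting from $1$ — using associativity and symmetry of the product to regroup an $n$-fold product as $(V_1 \times \cdots \times V_{n-1}) \times V_n$ — it then suffices to show that if $V$ and $W$ are overt locales, so is $V \times W$.

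For the binary step I would factor the unique locale map $V \times W \to 1$ as the composite $V \times W \xrightarrow{\pi_1} V \xrightarrow{\,!\,} 1$. The projection $\pi_1$ is open by \cref{prop:overt_projection}, applied to the overt locale $W$ with $X \coloneqq V$; and the map $!\colon V \to 1$ is open precisely by the definition of overtness of $V$. It remains only to observe that the composite of two open locale maps is open — on the level of frames this just says that a composite of complete Heyting algebra homomorphisms again preserves finite meets, arbitrary joins and implication, which is immediate. (This closure under composition was already used implicitly in the proof of \cref{prop:open_subloc_overt}.) Hence $!\colon V \times W \to 1$ is open, so $V \times W$ is overt, and the general finite case follows by induction on the number of factors.

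I do not expect a real obstacle here: the substantive work has already been done in \cref{prop:open_pullback_stable} and \cref{prop:overt_projection}. The only points needing a little care are making the reduction from arbitrary finite products to binary products precise — in particular remembering to include the nullary product $1$ as the base case — and checking that the two slots line up correctly when invoking \cref{prop:overt_projection} (the overt factor $W$ must sit in the position matching the statement of that lemma, which the factorisation above arranges automatically).
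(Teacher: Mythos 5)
Your proof is correct and follows exactly the paper's argument: reduce to the binary case and factor $!\colon V \times W \to 1$ as the open projection $\pi_1$ (via \cref{prop:overt_projection}) followed by the open map $!\colon V \to 1$. The extra care you take over the nullary case and the induction is sound but not a different method.
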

\begin{proof}
 It is enough to show this for binary products. Simply express ${!}\colon X \times Y \to 1$ as the composite of the open maps $X \times Y \xrightarrow{\,\pi_1\,} X$ and $X \xrightarrow{\;{!}\;} 1$.
\end{proof}

For discrete locales we can prove a converse to \cref{prop:open_subloc_overt}. This is the `dual' of the result that a sublocale of a compact Hausdorff locale is closed if and only if it is compact.
\begin{lemma}
 A sublocale of a discrete locale is overt if and only if it is open.
\end{lemma}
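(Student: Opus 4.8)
The plan is to dispose of the easy implication at once and, for the hard direction, to manufacture the subset of $X$ that witnesses openness. If $S$ is an open sublocale of a discrete locale, then $S$ is an open sublocale of an overt locale by \cref{prop:discete_is_overt}, hence overt by \cref{prop:open_subloc_overt}. So the content is the converse.

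Suppose $S$ is an overt sublocale of a discrete locale $X$, so $\O X = \powerset X$ and we are given a surjective frame homomorphism $q\colon \powerset X \twoheadrightarrow \O S$ together with a positivity predicate $\exists_S\colon \O S \to \Omega$ left adjoint to $!\colon \Omega \to \O S$. I would set
\[ U \coloneqq \{\, x \in X \mid q(\{x\}) > 0 \,\}, \]
a subset of $X$ with $\llbracket x \in U\rrbracket = \exists_S(q(\{x\}))$ by \cref{prop:positivity_predicate}, and first record the identity $q(\{x\}) = \exists_S(q(\{x\}))\cdot q(\{x\}) = \llbracket x \in U\rrbracket\cdot q(\{x\})$, an instance of the formula $a = \exists_S(a)\cdot a$ valid in every overt frame. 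Since the singletons join-generate $\powerset X$ and $q$ preserves joins, this yields $q(A) = q(A\cap U)$ for all $A \subseteq X$. Hence $q$ factors through the open quotient $q_U\colon \powerset X \twoheadrightarrow {\downarrow} U$, $A \mapsto A\cap U$, say as $q = \bar q \circ q_U$ with $\bar q\colon {\downarrow} U \to \O S$ a surjective frame homomorphism. This step must be run through the displayed identity rather than by splitting $A$ as a union of its parts inside and outside $U$, which is not constructively available.

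The crux is to show $\bar q$ is injective, so that $S$ coincides, as a sublocale of $X$, with the open sublocale ${\downarrow} U$. Here I would exploit that the atoms of $\powerset X$ are ``internally disjoint'': $\{x\}\cap\{y\} = \llbracket x = y\rrbracket\cdot\{x\}$. Given $C, D \subseteq U$ with $\bar q(C) \le \bar q(D)$ and $x \in C$, we have $q(\{x\}) \le \bar q(D) = \bigvee_{y\in D} q(\{y\})$, so by frame distributivity in $\O S$ and the fact that $q$ preserves binary meets,
\[ q(\{x\}) = q(\{x\}) \wedge \bigvee_{y\in D} q(\{y\}) = \bigvee_{y\in D} q(\{x\}\cap\{y\}) = \bigvee_{y\in D}\llbracket x = y\rrbracket\cdot q(\{x\}) = \llbracket x \in D\rrbracket\cdot q(\{x\}). \]
Applying $\exists_S$, which is a suplattice homomorphism and so commutes with $\llbracket \phi\rrbracket\cdot(-)$, and using $\exists_S(q(\{x\})) = \top$ since $x \in C \subseteq U$, we obtain $\top = \llbracket x \in D\rrbracket \wedge \top$, i.e.\ $x \in D$. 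Thus $C \subseteq D$, and by symmetry $C = D$. So $\bar q$ is an isomorphism compatible with the quotient maps out of $\powerset X$, whence $S = {\downarrow} U$ is open.

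The main obstacle is the constructive bookkeeping rather than any deep idea: one cannot case-split on $x \in U$, so the reduction ``$q$ sees only $U$'' has to be routed through $q(\{x\}) = \llbracket x\in U\rrbracket\cdot q(\{x\})$; and, in the injectivity step, mere positivity of $q(\{x\})$ for $x \in C$ would only tell us that $D$ is inhabited — pinning down $x \in D$ genuinely requires the internal disjointness of the singletons together with the interaction of $\exists_S$ with $\Omega$-scalar multiplication.
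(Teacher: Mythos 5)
Your proof is correct, but it takes a genuinely different route from the paper's. The paper argues structurally: it writes the inclusion $i\colon V \hookrightarrow X$ as the composite $\pi_1 \circ (i,\id)$, observes that $(i,\id)$ is a pullback of the diagonal $X \to X \times X$ (which is open because $X$ is discrete) and is therefore open by \cref{prop:open_pullback_stable}, and that $\pi_1\colon X \times V \to X$ is open by \cref{prop:overt_projection} since $V$ is overt; composing gives that $i$ is open. Your argument instead works directly in the frame $\powerset{X}$: you exhibit the witnessing open $U = \{x \in X \mid q(\{x\}) > 0\}$, use $a = \exists(a)\cdot a$ to factor $q$ through the open quotient onto ${\downarrow}U$, and prove injectivity of the induced map via the internal disjointness $\{x\}\cap\{y\} = \llbracket x = y\rrbracket\cdot\{x\}$ together with the fact that $\exists_S$ commutes with $\Omega$-scalar multiplication. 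All of these steps are constructively sound, and your cautionary remarks (no case split on $x \in U$; positivity of $q(\{x\})$ alone only gives inhabitedness of $D$) are exactly the right ones. What the paper's route buys is brevity and conceptual scope — it is really the statement that an overt sublocale of any locale with open diagonal is open, and it reuses the open-map machinery already developed. What your route buys is an explicit description of the open set realising the sublocale, at the cost of a longer hands-on computation and without appeal to pullback stability.
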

\begin{proof}
 The backward direction is immediate from \cref{prop:open_subloc_overt}. We prove the forward direction.
 Let $X$ be a discrete locale and let $i\colon V \hookrightarrow X$ be the inclusion of an overt sublocale. Observe that the following square is a pullback.
  \begin{center}
  \begin{tikzpicture}[node distance=2.7cm, auto]
    \node (A) {$V$};
    \node (B) [below of=A] {$X$};
    \node (C) [right of=A] {$X \times V$};
    \node (D) [right of=B] {$X \times X$};
    \draw[right hook->] (A) to node [swap] {$i$} (B);
    \draw[right hook->] (A) to node {$(i,\id)$} (C);
    \draw[right hook->] (B) to node [swap] {$(\id,\id)$} (D);
    \draw[right hook->] (C) to node {$X \times i$} (D);
    \begin{scope}[shift=(A)] %
        \draw +(0.4,-0.8) -- +(0.8,-0.8)  -- +(0.8,-0.4);
    \end{scope}
  \end{tikzpicture}
 \end{center}
 Now the diagonal inclusion $(\id,\id)\colon X \to X \times X$ is given by the usual inclusion of subsets and is therefore open.
 Thus, $(i,\id)$ is open by \cref{prop:open_pullback_stable} and since $V$ is overt, the projection $\pi_1\colon X \times V \to X$ is open by \cref{prop:overt_projection}.
 Composing these gives that $i = \pi_1 (i, \id)$ is open, as required.
\end{proof}

Overtness is best understood as the property that allows us to existentially quantify an open predicate over a locale.
Such an open predicate would be given by an open in a product locale $X \times V$. The image of this open under the projection $\pi_1 \colon X \times V \to X$
intuitively selects those `points' $x \in X$ for which there is some $v \in V$ such that $(x,v)$ satisfies the predicate. 
\Cref{prop:overt_projection} shows that we can be sure that the resulting predicate on $X$ is open when $V$ is overt\index{overt|)}\index{locale!overt|)}\index{frame!overt|)}.
This idea is explored further in \cref{section:hyperdoctrine}.

\subsubsection{Strong density and weak closedness}

Classically, a closed set can either be defined as the complement of an open set or as a set which contains all of its limit points.
Constructively, these two notions are no longer equivalent. Our definition of closed sublocale is analogous to the former definition,
but an analogue of the latter notion --- called \emph{weak closedness} --- is also of interest. (See \cite{JohnstoneClosedSubgroup} and \cite{JibladzeFibrewiseClosed} for further details on weak closedness.)

Let us first take a closer look the notion of density. Recall that a frame homomorphism $f\colon L \to M$ is dense if $f(x) = 0 \implies x = 0$.
But if $L$ and $M$ are overt, it is perhaps even more natural to ask when $x > 0 \implies f(x) > 0$.\index{strongly dense|(textbf}
(This is analogous to the limit-point definition of density, since every point of $B \supseteq A$ is an adherent point of $A$ if and only if $U \between B \implies U \between A$ for all open $U$.)

\begin{lemma}
 Let $f\colon L \to M$ be a frame homomorphism between overt frames. Then $x > 0 \implies f(x) > 0$ if and only if $f(a) \le {!}(p) \implies a \le {!}(p)$.
\end{lemma}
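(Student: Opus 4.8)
The plan is to reduce both conditions to a single inequality between positivity predicates and then read off the equivalence. Write $\exists_L$ and $\exists_M$ for the positivity predicates of $L$ and $M$, so that $\exists_L \dashv {!}_L$ and $\exists_M \dashv {!}_M$, and recall from \cref{prop:positivity_predicate} that $\exists_L(x) = \llbracket x > 0 \rrbracket$; hence the left-hand condition, asserted for all $x \in L$, is precisely $\exists_L(x) \le \exists_M(f(x))$ for all $x \in L$. Note also that $f \circ {!}_L = {!}_M$ since $\Omega$ is the initial frame, so the two occurrences of ${!}$ in the right-hand condition are ${!}_M$ (applied to $f(a) \in M$) and ${!}_L$ (applied so that $a \in L$).

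First I would use the two Galois connections to rewrite the right-hand condition: $a \le {!}_L(p) \iff \exists_L(a) \le p$ and $f(a) \le {!}_M(p) \iff \exists_M(f(a)) \le p$, so ``for all $a, p$: $f(a) \le {!}_M(p) \implies a \le {!}_L(p)$'' becomes ``for all $a, p$: $\exists_M(f(a)) \le p \implies \exists_L(a) \le p$''. Then I would observe that, quantified over $p \in \Omega$, this last statement is equivalent to $\exists_L(a) \le \exists_M(f(a))$ for all $a$: one direction is transitivity of $\le$, and for the other one instantiates $p \coloneqq \exists_M(f(a))$. Matching this against the first paragraph's reformulation of the left-hand condition yields the desired equivalence.

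There is no serious obstacle here; the only points requiring a little care are the overloading of ${!}$ (together with the harmless fact $f \circ {!}_L = {!}_M$) and checking that the quantifier manipulation — instantiating $p$ at the specific value $\exists_M(f(a))$ — is constructively legitimate, which it is, since it is just plugging in a definite element of $\Omega$.
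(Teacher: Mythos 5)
Your proof is correct and is essentially the paper's own argument: both reduce the statement to the single inequality $\exists_L(x) \le \exists_M(f(x))$ and then unwind it through the adjunctions $\exists \dashv {!}$, just chaining the equivalences in opposite directions. The extra care you take over the overloading of ${!}$ and the instantiation $p \coloneqq \exists_M(f(a))$ is fine but not a substantive difference.
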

\begin{proof}
 The condition $x > 0 \implies f(x) > 0$ simply means that $\exists(x) \le \exists f(x)$. Rephrasing this condition gives $\exists f(x) \le p \implies \exists(x) \le p$ for all $p \in \Omega$
 and the result follows immediately by using the adjunction between $\exists$ and ${!}$.
\end{proof}

This second condition gives a reasonable notion even if $L$ and $M$ are not overt.

\begin{definition}
 A frame map $f\colon L \to M$ is called \emph{strongly} (or \emph{fibrewise}) \emph{dense}\index{fibrewise dense|see {strongly dense}} if $f(a) \le {!}(p) \implies a \le {!}(p)$.
\end{definition}

Note that a strongly dense frame homomorphism is in particular dense (by taking $p = 0$).
Furthermore, writing the condition as $f(a) = f(p\cdot a) \implies a = p \cdot a$ shows that every injective frame homomorphism is strongly dense.\index{strongly dense|)}
It is also easy to see that strongly dense frame homomorphisms are closed under composition.

Suppose $f\colon L \to M$ is a frame map. We have $f(a) \le {!}(p) \iff f(a) = f(p\cdot a)$ %
and hence $f$ factors through the quotient $q\colon L \twoheadrightarrow L/C$ where $C$ is the congruence $\langle (a, p \cdot a) \mid f(a) \le {!}(p) \rangle$.
The resulting map $f'\colon L/C \to M'$ is strongly dense.

\begin{definition}
 A frame quotient is \emph{weakly closed}\index{sublocale!weakly closed|(textbf} (or \emph{fibrewise closed}\index{sublocale!fibrewise closed|see {sublocale, weakly closed}}) if its associated congruence is generated by pairs of the form $(a,p \cdot a)$. %
\end{definition}
We have shown that every frame homomorphism $f$ factors into a weakly closed quotient followed by a strongly dense map.
Note that if $f = gh$ where $g$ is strongly dense, then the weakly closed quotient obtained from $f$ coincides with that obtained from $h$. %
It follows easily that the (weakly closed, strongly dense)-factorisation is unique. %
If the map $f$ is surjective, we call $L/C$ the \emph{weak closure} of the sublocale $M$.

By replacing closedness with weak closedness we can prove an analogue of the classical result that every subspace of a discrete space is closed,
which can otherwise fail badly without excluded middle.

\begin{lemma}\label{prop:sublocales_of_discrete_weakly_closed}
 Every sublocale of a discrete locale is weakly closed.\index{sublocale!weakly closed|)}
\end{lemma}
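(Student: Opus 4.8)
The plan is to show directly that the congruence $C$ associated with an arbitrary sublocale of a discrete locale $X$ is generated by pairs of the form $(c, p \cdot c)$ with $c \in \O X = \powerset X$ and $p \in \Omega$; the quotient is then weakly closed by definition.

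I would begin with two elementary observations about the powerset frame. First, every subset $a \subseteq X$ satisfies $a = \bigvee_{x \in X}(a \wedge \{x\})$, and each meet $a \wedge \{x\}$ is a subset of the singleton $\{x\}$, hence equal to $p \cdot \{x\}$ where $p = \llbracket x \in a \rrbracket$ (recall $t \cdot c = \bigvee\{c \mid t\}$, so a subset $S \subseteq \{x\}$ is $\llbracket x \in S\rrbracket \cdot \{x\}$). Second, whenever $p \le q$ in $\Omega$, the pair $(q \cdot \{x\},\, p \cdot \{x\})$ has the admissible form $(c, r \cdot c)$: take $c = q \cdot \{x\}$ and $r = p$, using $p \cdot (q \cdot \{x\}) = (p \wedge q) \cdot \{x\} = p \cdot \{x\}$.

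Now let $C$ be a frame congruence on $\powerset X$. As with any algebraic structure, $C$ is a subalgebra of $\powerset X \times \powerset X$ and hence closed under arbitrary joins, and $C$ is generated by those of its pairs $(a,b)$ with $a \le b$ (since $(u,v) \in C$ forces $(u, u \vee v), (v, u \vee v) \in C$, so $(u,v)$ follows from these by transitivity). For such a pair $(a,b)$ and each $x \in X$, meeting with $\{x\}$ gives $(b \wedge \{x\},\, a \wedge \{x\}) \in C$, and by the first observation this is $(q \cdot \{x\},\, p \cdot \{x\})$ with $p = \llbracket x \in a \rrbracket \le q = \llbracket x \in b \rrbracket$, which by the second observation is of the form $(c, r \cdot c)$. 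Let $S$ be the set of all such pairs, as $(a,b)$ ranges over the $\le$-pairs of $C$ and $x$ over $X$; then $S \subseteq C$ and every element of $S$ has the form $(c, r \cdot c)$. Finally, for any $\le$-pair $(a,b) \in C$ we have $(a,b) = \bigl(\bigvee_x a \wedge \{x\},\ \bigvee_x b \wedge \{x\}\bigr) = \bigvee_x (a \wedge \{x\},\, b \wedge \{x\})$, a join of members of $\langle S \rangle$, so $(a,b) \in \langle S \rangle$; since such pairs generate $C$ we conclude $C = \langle S \rangle$, and $C$ is weakly closed.

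The routine ingredients are the closure of frame congruences under joins and the decomposition $a = \bigvee_x a \wedge \{x\}$; the point to handle with a little care is the arithmetic of subsets of singletons inside $\powerset X$ — identifying $a \wedge \{x\}$ with $\llbracket x \in a \rrbracket \cdot \{x\}$ and checking $p \cdot (q \cdot \{x\}) = (p \wedge q) \cdot \{x\}$ from $t \cdot c = \bigvee\{c \mid t\}$. (One could instead invoke the (weakly closed, strongly dense)-factorisation and argue that a strongly dense surjection out of such a sublocale must be an isomorphism, but the direct argument above seems more transparent.)
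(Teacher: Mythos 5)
Your proof is correct and follows essentially the same route as the paper's: both reduce an arbitrary congruence on $\powerset{X}$ to generators supported on singletons and use the observation that a subset of $\{x\}$ is exactly $\llbracket x \in a \rrbracket \cdot \{x\}$. The only difference is bookkeeping --- the paper starts from a presentation of the congruence by relations $\{x\} \le \bigvee_{\alpha} \{y_\alpha\}$ and meets both sides with $\{x\}$, whereas you decompose arbitrary $\le$-pairs of the congruence by meeting with each singleton and then reassemble them as a join.
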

\begin{proof}
 Let $X$ be a discrete locale. Since singletons form a base, any congruence on $\O X$ can be generated by relations of the form $\{x\} \le \bigvee_{\alpha \in A} \{y_\alpha\}$
 and taking the meet of both sides of this inequality with $\{x\}$ we see that we may take $y_\alpha = x$ for each $\alpha \in A$.
 We can rewrite this as $x \le p \cdot x$ where $p = \llbracket \exists \alpha \in A \rrbracket$ and hence the quotient by any such congruence is weakly closed.
\end{proof}

We end with a useful lemma for showing proving that certain locales are overt.

\begin{lemma}\label{prop:weak_closure_overt}
 Suppose $f\colon L \to M$ is a strongly dense frame homomorphism. If $M$ is overt, then so is $L$. If $f$ is additionally assumed to be surjective, the converse also holds.
\end{lemma}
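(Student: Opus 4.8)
The plan is to exhibit the required left adjoints of the canonical frame maps ${!}\colon \Omega \to L$ and ${!}\colon \Omega \to M$ explicitly, transporting the positivity predicate along $f$ (resp.\ along the right adjoint of $f$). Two small facts will be used repeatedly. First, since $\Omega$ is the initial frame, every frame homomorphism $f$ satisfies $f \circ {!}_L = {!}_M$, so in particular $f({!}_L(p)) = {!}_M(p)$ for all $p \in \Omega$. Second, if a pair of maps $g\colon X \to Y$ and $h\colon Y \to X$ between posets satisfies $g(x) \le y \iff x \le h(y)$ for all $x, y$, then $g$ and $h$ are automatically monotone, so no separate monotonicity or join-preservation check is needed once such an equivalence is established.

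For the first assertion, suppose $M$ is overt with $\exists_M \dashv {!}_M$, and set $\exists_L \coloneqq \exists_M \circ f \colon L \to \Omega$. I would verify the equivalence $\exists_L(a) \le p \iff a \le {!}_L(p)$ directly for all $a \in L$ and $p \in \Omega$. The implication $(\Leftarrow)$ is routine: applying the monotone map $f$ to $a \le {!}_L(p)$ and using $f({!}_L(p)) = {!}_M(p)$ gives $f(a) \le {!}_M(p)$, whence $\exists_M(f(a)) \le p$ by the adjunction in $M$. For $(\Rightarrow)$, $\exists_M(f(a)) \le p$ gives $f(a) \le {!}_M(p) = f({!}_L(p))$ by the adjunction in $M$, and now strong density of $f$ --- which is exactly the implication $f(x) \le {!}(p) \implies x \le {!}(p)$ --- yields $a \le {!}_L(p)$. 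Hence ${!}_L$ has a left adjoint and $L$ is overt.

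For the converse, assume in addition that $f$ is surjective, so that it has a right adjoint $f_*$ with $f f_* = \id_M$, and suppose $L$ is overt with $\exists_L \dashv {!}_L$; set $\exists_M \coloneqq \exists_L \circ f_* \colon M \to \Omega$. For $(\Rightarrow)$, $\exists_L(f_*(b)) \le p$ gives $f_*(b) \le {!}_L(p)$ by the adjunction in $L$, and applying $f$ and using $f f_* = \id_M$ together with $f({!}_L(p)) = {!}_M(p)$ gives $b \le {!}_M(p)$. For $(\Leftarrow)$, from $b \le {!}_M(p)$ we get $f(f_*(b)) = b \le {!}_M(p) = f({!}_L(p))$, and strong density applied to $f_*(b)$ gives $f_*(b) \le {!}_L(p)$, hence $\exists_L(f_*(b)) \le p$ by the adjunction in $L$. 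Thus $M$ is overt.

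I do not anticipate a genuine obstacle; the content lies entirely in guessing the correct formulas $\exists_M \circ f$ and $\exists_L \circ f_*$. The one point worth flagging is the division of labour between the two hypotheses: strong density is precisely what allows one to reflect inequalities of the form $x \le {!}(p)$ back across $f$, and it is invoked in the $(\Rightarrow)$ half of the first claim and the $(\Leftarrow)$ half of the converse, whereas surjectivity enters only in the converse, where it supplies the section $f_*$ with $f f_* = \id_M$.
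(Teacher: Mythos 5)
Your proof is correct and follows essentially the same route as the paper: both define the candidate positivity predicates as $\exists_M \circ f$ and $\exists_L \circ f_*$ and verify the adjunction biconditional, with strong density used exactly where you flag it (to reflect $f(x) \le {!}(p)$ back to $x \le {!}(p)$) and surjectivity used only to get $ff_* = \id_M$. The paper's version is just a more compressed chain of equivalences.
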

\begin{proof}
We have $a \le {!}(p) \iff f(a) \le {!}(p)$, since $f$ is strongly dense, and $f(a) \le {!}(p) \iff \exists f(a) \le p$, since $M$ is overt.
Thus, $\exists f$ is left adjoint to ${!}\colon L \to \Omega$ and hence $L$ is overt.

For the converse, we have $b = ff_*(b) \le {!}(p) \iff f_*(b) \le {!}(p) \iff \exists f_*(b) \le p$. So $\exists f_*$ is left adjoint to ${!}\colon M \to \Omega$ and $M$ is overt.
\end{proof}

\begin{corollary}
 The weak closure of an overt sublocale is overt.
\end{corollary}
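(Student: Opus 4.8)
The plan is to read this off from the (weakly closed, strongly dense)-factorisation established just above, together with \cref{prop:weak_closure_overt}. Let $S$ be an overt sublocale of a locale $X$, so that $S$ is presented by a surjective frame homomorphism $q\colon \O X \twoheadrightarrow \O S$ with $\O S$ overt. First I would factor $q$ as $\O X \overset{p}{\twoheadrightarrow} \O \overline{S} \overset{g}{\longrightarrow} \O S$, where $p$ is the weakly closed quotient determined by $q$ and $g$ is strongly dense; by definition the quotient $p$ presents the weak closure $\overline{S}$ of $S$.

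Now the frame $\O S$ at the codomain of $g$ is overt, so the first half of \cref{prop:weak_closure_overt} applies directly to the strongly dense map $g$ and yields that its domain $\O \overline{S}$ is overt. Hence $\overline{S}$ is an overt locale, as required.

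There is essentially no obstacle here: the corollary is a one-line consequence of the factorisation result and the lemma. The only point worth noting is that we invoke only the half of \cref{prop:weak_closure_overt} that does not assume surjectivity of the strongly dense map, although in this situation $g$ is in fact surjective, since the composite $g \circ p = q$ is.
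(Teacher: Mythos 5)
Your proof is correct and is exactly the argument the paper intends: the corollary is stated immediately after \cref{prop:weak_closure_overt} precisely so that it follows by applying the first half of that lemma to the strongly dense factor of the (weakly closed, strongly dense)-factorisation of the quotient presenting $S$. Your remark that only the non-surjective direction of the lemma is needed (even though $g$ happens to be surjective here) is accurate and matches the paper's setup.
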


\begin{corollary}
 Spatial locales are overt.
\end{corollary}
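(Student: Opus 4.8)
The plan is to realise any spatial frame as a subframe of a powerset frame and then appeal to \cref{prop:weak_closure_overt}. Recall that a frame $L$ is spatial exactly when the canonical comparison map $L \to \O(\points L)$ is an isomorphism. Since $\O(\points L)$ is, by the very definition of the topology on $\points L$, a subframe of the powerset frame $\powerset{\points L}$, composing this isomorphism with the subframe inclusion produces an injective frame homomorphism $L \hookrightarrow \powerset{\points L}$.

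Now $\powerset{\points L}$ is precisely the frame of opens of the discrete locale on the point set $\points L$, and hence is overt by \cref{prop:discete_is_overt}. We already observed that every injective frame homomorphism is strongly dense, so $L \hookrightarrow \powerset{\points L}$ is a strongly dense map into an overt frame. \Cref{prop:weak_closure_overt} then immediately gives that $L$ is overt, which is what we wanted.

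There is essentially no obstacle here: the entire content is the observation that spatiality provides an injection into a powerset and that such injections are strongly dense. The one thing worth double-checking is the direction of the hypothesis in \cref{prop:weak_closure_overt} --- the strongly dense map must emanate \emph{from} the frame whose overtness we are establishing, and this is indeed the direction of the subframe inclusion above. If one prefers to avoid naming $\points L$ explicitly, the same argument runs by noting that the collection of all frame homomorphisms $p\colon L \to \Omega$ assembles into a single map $L \to \prod_{p} \Omega$, whose codomain is the powerset frame on the set of such $p$, which is injective precisely because $L$ is spatial, so that \cref{prop:discete_is_overt} and \cref{prop:weak_closure_overt} finish the job exactly as before.
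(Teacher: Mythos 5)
Your proof is correct and follows essentially the same route as the paper: embed the spatial frame injectively into the powerset frame on its set of points, note that injective frame homomorphisms are strongly dense and that discrete locales are overt, and conclude via \cref{prop:weak_closure_overt}. The only cosmetic difference is that you phrase the embedding via the comparison map $L \to \O(\points L)$ rather than starting directly from a topological space, which changes nothing of substance.
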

\begin{proof}
 For a topological space $X$ with underlying set $|X|$, we have an injective frame homomorphism $\O X \hookrightarrow \Omega^{|X|}$.
 The discrete locale given by $\Omega^{|X|}$ is overt by \cref{prop:discete_is_overt} and injective frame homomorphisms are strongly dense.
 Thus, the result follows by \cref{prop:weak_closure_overt}.
\end{proof}

\subsubsection{Overt weakly closed sublocales}

Overt weakly closed sublocales\index{sublocale!overt weakly closed} have a description in terms of suplattice homomorphisms, which will be very important for us going forward.
Further details can be found in \cite{BungeFunkLowerPowerlocale} and \cite{Vickers1997PowerlocalePoints}.

We will use the following lemma, which is of interest in itself.

\begin{lemma}\label{prop:join_frame_congruence}
 Suppose $L$ is a frame and $(C_\alpha)_{\alpha \in X}$ is a family of frame congruences on $L$. Then the join $\bigvee_{\alpha \in X} C_\alpha$ in the lattice of frame congruences coincides with
 the join $\bigvee_{\alpha \in X} C_\alpha$ in the lattice of suplattice congruences.
\end{lemma}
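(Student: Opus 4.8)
The plan is to reduce the statement to showing that the join of the $C_\alpha$ formed among suplattice congruences is automatically a frame congruence. One inclusion is free: every frame congruence is in particular a suplattice congruence, and an intersection of frame congruences is again one, so the join $\bigvee_\alpha C_\alpha$ formed in the lattice of frame congruences is a suplattice congruence containing each $C_\alpha$, and hence contains the join $D \coloneqq \bigvee_\alpha C_\alpha$ formed in the lattice of suplattice congruences. For the reverse inclusion it then suffices to prove that $D$ itself is a frame congruence, since $D$ will then be a frame congruence containing every $C_\alpha$ and hence contain the frame-congruence join.

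To prove $D$ is a frame congruence I would work with closure operators. Write $q_\alpha\colon L \twoheadrightarrow L/C_\alpha$ for the quotient frame maps and $j_\alpha = (q_\alpha)_* q_\alpha$ for the corresponding nuclei, so that $\mathrm{Fix}(j_\alpha)$ is the image of $j_\alpha$ and $x \mathrel{C_\alpha} y$ iff $j_\alpha(x) = j_\alpha(y)$. More generally, a suplattice congruence $C$ on $L$ is determined by its closure operator $a \mapsto \bigvee [a]_C$, and closure operators on $L$ biject, in an order-reversing way, with the subsets of $L$ closed under arbitrary meets, via $c \mapsto \mathrm{Fix}(c)$; under this correspondence the join of a family of closure operators has as its fixed-point set the intersection of the individual fixed-point sets. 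Hence the closure operator $d$ of $D$ satisfies $\mathrm{Fix}(d) = \bigcap_\alpha \mathrm{Fix}(j_\alpha)$. Since a suplattice congruence is a frame congruence precisely when its closure operator is a nucleus (the fixed points of a nucleus forming a frame, with the nucleus as the frame quotient map), the whole problem reduces to showing that $d$ is a nucleus.

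For the final step I would use the characterisation that a closure operator $c$ on a frame is a nucleus if and only if $\mathrm{Fix}(c)$ is closed under the operation $a \heyting (-)$ for every $a \in L$ (closure under arbitrary meets being automatic). Each $\mathrm{Fix}(j_\alpha)$ has this stability property, since for $s = j_\alpha(s)$ one has
\[ a \wedge j_\alpha(a \heyting s) \le j_\alpha(a) \wedge j_\alpha(a \heyting s) = j_\alpha\bigl(a \wedge (a \heyting s)\bigr) \le j_\alpha(s) = s, \]
whence $j_\alpha(a \heyting s) \le a \heyting s$; and stability under $a \heyting (-)$ is plainly inherited by the intersection $\bigcap_\alpha \mathrm{Fix}(j_\alpha) = \mathrm{Fix}(d)$. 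The converse half of the characterisation then shows $d$ is a nucleus: from $a \wedge b \le c(a \wedge b)$ one gets $b \le a \heyting c(a \wedge b) \in \mathrm{Fix}(c)$, hence $c(b) \le a \heyting c(a \wedge b)$, i.e.\ $a \wedge c(b) \le c(a \wedge b)$, and running the same argument once more gives $c(a) \wedge c(b) \le c(a \wedge b)$, so $c$ preserves binary meets. I expect the real work to be bookkeeping rather than difficulty of ideas: pinning down the order-reversing correspondence between suplattice congruences and meet-closed subsets precisely enough to identify $\mathrm{Fix}(d)$ with $\bigcap_\alpha \mathrm{Fix}(j_\alpha)$, and verifying both halves of the nucleus-via-fixed-points characterisation.
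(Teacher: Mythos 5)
Your proposal is correct and takes essentially the same route as the paper: both identify the closure operator of the suplattice-congruence join via its fixed-point set $\bigcap_\alpha \mathrm{Fix}(\nu_\alpha)$, and both establish that it is a nucleus through the same key computation $\nu_\alpha(y \heyting s) \wedge y \le \nu_\alpha((y \heyting s)\wedge y) \le \nu_\alpha(s) \le s$ showing stability of the fixed points under $a \heyting (-)$, followed by the same two-step bootstrap to full binary meet preservation.
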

\begin{proof}
 Recall that quotient suplattices correspond to sub-inf"|lattices under the duality between $\Sup$ and $\Inf$. The join of suplattice congruences then corresponds to the intersection of the corresponding
 sub-inf"|lattices. We must show that if all the input congruences are frame congruences, then the inclusion of the resulting inf"|lattice is right adjoint to a frame homomorphism. %
 Equivalently, we can show that the associated closure operator $\nu'\colon x \mapsto \bigwedge \{ s \ge x \mid \forall \alpha \in X.\ \nu_\alpha(s) \le s\}$ is a nucleus, where $\nu_\alpha$ is
 the nucleus associated to $C_\alpha$.
 
 It is enough to prove that $\nu'(x) \wedge y \le \nu'(x \wedge y)$, since then $\nu'(x \wedge y) \le \nu'(x) \wedge \nu'(y) \le \nu'(x \wedge \nu'(y)) \le \nu'(\nu'(x \wedge y)) = \nu'(x \wedge y)$.
 So given $s \ge x \wedge y$ such that $\nu_\alpha(s) \le s$ for all $\alpha \in X$, we require $\nu'(x) \wedge y \le s$, or equivalently $\nu'(x) \le y \heyting s$.
 It suffices to show that $y \heyting s$ occurs in the meet defining $\nu'(x)$. We have $y \heyting s \ge x$ by the assumption that $s \ge x \wedge y$.
 But now $\nu_\alpha(y \heyting s) \wedge y \le \nu_\alpha((y \heyting s) \wedge y) \le \nu_\alpha(s) \le s$ and thus $\nu_\alpha(y \heyting s) \le y \heyting s$, as required.
\end{proof}
\begin{remark}
 This result can also be derived from the `coverage theorem' of \cite{Abramsky1993quantales}, which shows how certain frame presentations can be described in terms of corresponding suplattice presentations. %
\end{remark}

We can now prove the correspondence between overt weakly closed sublocales\index{sublocale!overt weakly closed|(} and suplattice homomorphisms into $\Omega$. The intuition behind this link is that the suplattice homomorphism indicates when a given open
meets the sublocale in question and factors through the quotient to give a positivity predicate.

\begin{proposition}
 There is a bijection between the overt weakly closed sublocales\index{sublocale!overt weakly closed|)} of a frame $L$ and the suplattice homomorphisms from $L$ to $\Omega$. %
\end{proposition}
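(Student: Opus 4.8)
The plan is to exhibit explicit maps in both directions and check they are mutually inverse. Given a suplattice homomorphism $h\colon L\to\Omega$, I would associate the frame quotient $q_h\colon L\twoheadrightarrow L/C_h$, where $C_h$ is the frame congruence generated by the pairs $(a,\ h(a)\cdot a)$ for $a\in L$ (recall $p\cdot a=\bigvee\{a\mid p\}$). This quotient is weakly closed by construction, so the content is to show $L/C_h$ is overt. Conversely, given an overt weakly closed sublocale, presented as a quotient $q\colon L\twoheadrightarrow M$ with $M$ overt, I would send it to the suplattice homomorphism $\exists_M\circ q\colon L\to M\to\Omega$, a composite of join-preserving maps; this depends only on the sublocale, since $M$, $q$ and the positivity predicate $\exists_M$ are all determined by it.

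The key technical step is to understand $C_h$ well enough to see that $h$ descends along $q_h$ and that $L/C_h$ is overt. For the descent, I would check directly that $E\coloneqq\{(x,y)\mid h(x\vee z)=h(y\vee z)\text{ and }h(x\wedge z)=h(y\wedge z)\text{ for all }z\in L\}$ is a frame congruence: closure under joins is immediate from $h$ preserving joins, and closure under binary meets uses the frame distributivity of $L$ together with join-preservation of $h$ applied a couple of times. Each generating pair lies in $E$, using $h(h(a)\cdot a)=h(a)$ and $(h(a)\cdot a)\wedge z=h(a)\cdot(a\wedge z)$; since $E\subseteq\ker h$ (take $z=0$), we get $C_h\subseteq\ker h$, so $h$ factors as $h=\overline h\circ q_h$ for a suplattice homomorphism $\overline h\colon L/C_h\to\Omega$. (Alternatively one can invoke \cref{prop:join_frame_congruence}, or the coverage theorem, to reduce $C_h$ to the level of suplattices.) I would then show $\overline h$ is left adjoint to $!\colon\Omega\to L/C_h$: the inequality $\overline h\,{!}(p)\le p$ is a one-line join computation, and conversely, if $\overline h(u)\le p$ then, writing $u=q_h(a)$, we have $q_h(a)=q_h(h(a)\cdot a)=h(a)\cdot q_h(a)=\overline h(q_h(a))\cdot q_h(a)\le p\cdot 1={!}(p)$. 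Hence $L/C_h$ is overt with $\exists_{L/C_h}=\overline h$, so the round trip $h\mapsto L/C_h\mapsto\exists_{L/C_h}\circ q_h=\overline h\circ q_h=h$ is the identity.

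For the other round trip, start from a weakly closed congruence $C=\ker q$ with $M=L/C$ overt, and put $\psi=\exists_M\circ q$. To see $C_\psi=C$ I would argue by double inclusion. The inclusion $C_\psi\subseteq C$ holds because each generator $(a,\psi(a)\cdot a)$ of $C_\psi$ satisfies $q(a)=\exists_M(q(a))\cdot q(a)=\psi(a)\cdot q(a)$, using the identity $m=\exists_M(m)\cdot m$ valid in any overt frame. For $C\subseteq C_\psi$, write $C$ as generated by pairs $(a,\ p\cdot a)$; from $q(a)=p\cdot q(a)$ and join-preservation of $\exists_M$ one gets $\psi(a)=\exists_M(p\cdot q(a))=p\wedge\psi(a)\le p$, and then in the overt frame $L/C_\psi$ (overt by the previous paragraph, with $\exists_{L/C_\psi}=\overline\psi$) we have $q_\psi(a)=\psi(a)\cdot q_\psi(a)\le p\cdot q_\psi(a)\le q_\psi(a)$, so $q_\psi(a)=q_\psi(p\cdot a)$ and the generator lies in $C_\psi$. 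Thus the two assignments are mutually inverse, giving the claimed bijection.

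The main obstacle, and the only place where genuine care is needed, is obtaining a usable handle on $C_h$: one must confirm that the frame congruence generated by the relations $a=h(a)\cdot a$ is no larger than hoped, so that $h$ survives the quotient and the quotient remains overt. Everything after that is routine bookkeeping with the adjunction $\exists\dashv{!}$ and the identity $m=\exists(m)\cdot m$.
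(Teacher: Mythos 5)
Your proposal is correct and follows essentially the same route as the paper: the same two assignments, the same congruence generated by the pairs $(a,\,h(a)\cdot a)$, the same verification that the quotient is overt via the factored map being left adjoint to $!$, and the same two round-trip checks. The only variation is in the descent step: where the paper invokes \cref{prop:join_frame_congruence} to reduce to the principal congruences $\nabla_a \cap \Delta_{h(a)\cdot a}$ and then uses cancellation in the distributive lattice $\Omega$, you instead exhibit directly a frame congruence $E$ (equivalently $\{(x,y) \mid \forall z.\ h(x\wedge z)=h(y\wedge z)\}$) containing the generators and contained in $\ker h$ — a valid and if anything slightly more self-contained argument, though it forgoes the reusable lemma about joins of frame congruences.
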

\begin{proof}
 Let $q\colon L \twoheadrightarrow L/C$ be a quotient where $L/C$ is overt. Then the unique map ${!}\colon \Omega \to L/C$ has a left adjoint $\exists\colon L/C \to \Omega$
 and so the composite $\exists \circ q$ gives a corresponding suplattice homomorphism.
 
 On the other hand, given a suplattice homomorphism $h\colon L \to \Omega$ we can construct a weakly closed congruence $W_h = \langle (a, h(a)\cdot a) \mid a \in L\rangle$ on $L$.
 In fact, the quotient frame $L/W_h$ is overt. Let us prove this.
 
 The positivity predicate is obtained by factoring $h$ through the quotient $q\colon L \twoheadrightarrow L/W_h$.
 To see that this is possible first note that $W = \bigvee_{a \in L} \langle (a, h(a)\cdot a)\rangle$. By \cref{prop:join_frame_congruence} this join may be taken in the lattice of suplattice congruences
 and so we need only show that $h$ factors through $L/\langle (a, h(a)\cdot a)\rangle$ for each $a \in L$.
 We have $\langle (a, h(a)\cdot a)\rangle = \nabla_a \wedge \Delta_{h(a) \cdot a} = \{(x,y) \in L \times L \mid x \vee a = y \vee a \text{ and } h(a)\cdot a \wedge x = h(a)\cdot a \wedge y \}$.
 So if $(x,y) \in \langle (a, h(a)\cdot a)\rangle$, then $x \vee a = y \vee a$ and so $h(x) \vee h(a) = h(y) \vee h(a)$. But also, $h(a) \cdot a \wedge x = h(a) \cdot a \wedge y$ and hence we have
 $h(x) \wedge h(a) = h(h(a) \cdot a \wedge x) = h(h(a) \cdot a \wedge y) = h(y) \wedge h(a)$.
 Now recall that in a distributive lattice, if $u \vee w = v \vee w$ and $u \wedge w = v \wedge w$ for some $w$, then $u = v$.
 Thus, we have shown that $h(x) = h(y)$ and so $h$ factors through $L/\langle (a, h(a)\cdot a)\rangle$. Consequently, $h$ factors through $q$ to give a map $\overline{h}\colon L/W_h \to \Omega$.
 
 We now show $\overline{h}$ is left adjoint to ${!}\colon \Omega \to L/W_h$. We must show $x \le ({!} \circ \overline{h})(x)$ and $(\overline{h}\circ{!})(p) \le p$.
 For the former, take $a$ such that $x = q(a)$. Then we need $q(a) \le ({!} \circ h)(a)$. But $q(a) = q(h(a) \cdot a) = h(a) \cdot q(a) \le h(a) \cdot 1 = ({!} \circ h)(a)$ and so we are done.
 For the other inequality, we have $(\overline{h}\circ {!})(p) = \overline{h}(p \cdot 1) = p \cdot \overline{h}(1) \le p$.
 
 Finally, we show the above assignments are inverse operations when restricted to weakly closed overt sublocales. Starting with $h \colon L \to \Omega$ we construct $q\colon L \twoheadrightarrow L/W_h$, but then we immediately see that $\exists \circ q = \overline{h} q = h$, as required.
 
 In the other direction, given an overt quotient $q\colon L \twoheadrightarrow L/C$ we form the suplattice homomorphism $\exists \circ q$. Then we obtain the congruence
 $W_{\exists\circ q} = \langle (a, (\exists \circ q)(a) \cdot a) \mid a \in L\rangle$. Note that $q((\exists \circ q)(a) \cdot a) = (\exists \circ q)(a) \cdot q(a) = ({!}\circ\exists\circ q)(a) \wedge q(a) = q(a)$,
 since ${!}\circ\exists \ge \mathrm{id}$. Thus, $W_{\exists\circ q} \subseteq C$.
 
 Now suppose $(a, p\cdot a) \in C$. Then $q(a) = p \cdot q(a)$ and so $(\exists\circ q)(a) \cdot a = (p \cdot (\exists \circ q)(a)) \cdot a \le p \cdot a$.
 But $((\exists\circ q)(a) \cdot a, a) \in W_{\exists\circ q}$ and $(\exists\circ q)(a) \cdot a \le p \cdot a \le a$ so that $(a, p\cdot a) \in C$.
 Therefore, the weak closure of $C$ is contained in $W_{\exists\circ q}$ and hence, $C = W_{\exists\circ q}$ in the case that $C$ is weakly closed.
\end{proof}

\begin{definition}
 If $W$ is an overt weakly closed sublocale, we will write $\exists_W$\glsadd{existsW} for the corresponding suplattice homomorphism.
 If $W$ is the weak closure of an overt sublocale $V$, we will also write $V \between a$\glsadd{between} to mean $\exists_W(a) = \top$.
\end{definition}
This definition of the $\between$ relation agrees with our previous definition in the case that the parent locale is discrete.

\begin{definition}
 Let $X$ be a locale. We denote the suplattice of overt weakly closed sublocales of $X$ by $\SubOW(X)$\glsadd{SubOW}.
 This extends to a functor $\SubOW\colon \Loc \to \Sup$ where the action on morphisms is induced by
 the suplattice hom functor $\hom(-,\Omega)$.
\end{definition}
If $f\colon X \to Y$ is a locale map, then $\SubOW(f)(W) \between a \iff W \between f^*(a)$.
Furthermore, we note that $\SubOW(f)(W)$ coincides with the weak closure of $(\Sloc f)_!(W)$.

\begin{remark}
 Overt weakly closed sublocales do not give rise to an indexed distributive lattice like open and closed sublocales do.
 Thus in \cref{section:quantale_of_overt_weakly_closed_ideals} we will take a somewhat different approach when defining overt weakly closed ideals
 compared to that used in \cref{section:sublocales}.
\end{remark}

We can now return to the matter of Scott-closed sets. The following result is surely known, but I have been unable to find a reference for it.
\begin{proposition}\label{prop:scott_closed_weakly_closed}
 Let $X$ be a continuous dcpo. The Scott-closed subsets\index{Scott closed set@Scott-closed set} of $X$ correspond to overt weakly closed sublocales in the Scott topology.
\end{proposition}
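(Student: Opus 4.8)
The plan is to reduce to the bijection established just above between overt weakly closed sublocales of a frame and suplattice homomorphisms into $\Omega$, applied to the frame $\O X$ of Scott-open subsets of $X$. It then suffices to produce a bijection between the Scott-closed subsets of $X$ and the suplattice homomorphisms $\O X \to \Omega$. In one direction I send a Scott-closed set $C \subseteq X$ to $h_C \colon \O X \to \Omega$, $h_C(U) = \llbracket U \between C \rrbracket$; this preserves $\emptyset$ (it sends it to $\bot$) and arbitrary joins, since $(\bigcup_\alpha U_\alpha) \between C$ holds exactly when some $U_\alpha \between C$ holds, so it is a suplattice homomorphism (no hypothesis on $X$ is needed here). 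In the other direction I send a suplattice homomorphism $h \colon \O X \to \Omega$ to $C_h = \{ x \in X \mid h(U) = \top \text{ for every Scott-open } U \ni x \}$.

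The first routine step is to check $C_h$ is Scott-closed. It is a downset because Scott-opens are upsets: if $x' \le x \in C_h$ and $x' \in U$ then $x \in U$, so $h(U) = \top$. It is closed under directed joins because if $D \subseteq C_h$ is directed and $\dirsup D \in U$ with $U$ Scott-open, then $D$ meets $U$ by Scott-openness, so $h(U) = \top$. Next I verify the two assignments are mutually inverse. For $C_{h_C} = C$: if $x \in C$ then each Scott-open $U \ni x$ meets $C$ (at $x$), so $x \in C_{h_C}$; conversely if $x \in C_{h_C}$, then for every $y \ll x$ the Scott-open set $\twoheaduparrow y$ (open by \cref{prop:base_for_Scott_topology}) contains $x$, hence meets $C$, and since $y \ll z$ implies $y \le z$ and $C$ is a downset we get $y \in C$; thus $\twoheaddownarrow x \subseteq C$, and as $\twoheaddownarrow x$ is directed with $\dirsup \twoheaddownarrow x = x$ by continuity and $C$ is closed under directed joins, $x \in C$. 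For $h_{C_h} = h$: the inequality $h_{C_h} \le h$ is easy — if $U$ meets $C_h$ at a point $x$, then by the base property $\twoheaduparrow y \subseteq U$ for some $y \ll x$, and since $x \in \twoheaduparrow y$ and $x \in C_h$ we get $h(U) \ge h(\twoheaduparrow y) = \top$.

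The reverse inequality $h \le h_{C_h}$ is the main obstacle, and is where continuity and interpolation really enter: from $h(U) = \top$ I must produce an \emph{actual point} of $X$ lying in $U \cap C_h$, whereas a point $y_0$ with $\twoheaduparrow y_0 \subseteq U$ need not itself lie in $U$. I would argue: writing $U = \bigcup\{ \twoheaduparrow y \mid \twoheaduparrow y \subseteq U \}$ (base property) and applying the suplattice homomorphism $h$ gives some $y_0$ with $\twoheaduparrow y_0 \subseteq U$ and $h(\twoheaduparrow y_0) = \top$; then, using \cref{prop:way_below_interpolates}, $\twoheaduparrow y_0$ is the union of the $\twoheaduparrow w$ over $w$ with $y_0 \ll w$, so applying $h$ again yields $w$ with $y_0 \ll w$ and $h(\twoheaduparrow w) = \top$. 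This $w$ lies in $U$ (since $y_0 \ll w$), and $w \in C_h$: any Scott-open $V \ni w$ contains some $\twoheaduparrow y$ with $y \ll w$, and then $\twoheaduparrow w \subseteq \twoheaduparrow y$, so $h(V) \ge h(\twoheaduparrow y) \ge h(\twoheaduparrow w) = \top$. Hence $U$ meets $C_h$ and $h_{C_h}(U) = \top$. Composing this bijection with the one between suplattice homomorphisms into $\Omega$ and overt weakly closed sublocales gives the statement.
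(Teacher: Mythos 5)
Your proof is correct and follows the same overall strategy as the paper's: reduce to the bijection with suplattice homomorphisms $\O X \to \Omega$, use the identical forward map $C \mapsto \llbracket (-) \between C\rrbracket$, and build the inverse out of the basic opens $\twoheaduparrow y$. The one genuine difference is the definition of the inverse: the paper takes the Scott-closure of $\{x \mid h(\twoheaduparrow x) = \top\}$, whereas you take $C_h = \{x \mid h(U) = \top \text{ for every Scott-open } U \ni x\}$. This just relocates the work — the paper must explicitly close up under directed joins but then gets $h \le h_{S_h}$ for free (any $x$ with $h(\twoheaduparrow x) = \top$ already witnesses $S_h \between U$), while your $C_h$ is automatically Scott-closed but forces you to invoke interpolation to manufacture an actual point of $U \cap C_h$. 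Both routes are sound, and your interpolation step (producing $w$ with $y_0 \ll w$, $h(\twoheaduparrow w) = \top$, and checking $w \in C_h$) is carried out correctly; the two constructions necessarily yield the same subset, as the remark following the paper's proof about Scott-closed sets being the largest subspaces of their weak closures suggests.
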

\begin{proof}
 Given a Scott-closed set $S \subseteq X$, we can define a suplattice homomorphism $h_S\colon \O X \to \Omega$ as above: $h_S(U) = \llbracket S \between U \rrbracket$.
 
 Conversely, given a suplattice homomorphism $h\colon \O X \to \Omega$, we define a set $S_h \subseteq X$ as the Scott-closure of $\{ x \in X \mid h(\twoheaduparrow x) = 1\}$.
 (This set is already downwards closed, but needs to be closed under directed suprema.)
 
 We now show these are inverse operations. Take $x \in S$. If $y \ll x$, then $S \between \twoheaduparrow y$ and so $y \in S_{h_S}$.
 But $x = \dirsup \twoheaddownarrow x$ by continuity and hence $x \in S_{h_S}$ by Scott-closedness. Thus, $S \subseteq S_{h_S}$.
 Conversely, if $S \between \twoheaduparrow x$ then $x \in S$ by downward closure and hence $S_{h_S} \subseteq S$, since $S$ is Scott-closed.
 
 Suppose $h(U) = 1$. By \cref{prop:base_for_Scott_topology} we may write $U = \bigcup_{x \in U} \twoheaduparrow x$ %
 and so $h(\twoheaduparrow x) = 1$ for some $x \in U$. Thus, $S_h \between U$ and hence $h_{S_h}(U) = 1$. Consequently, $h \le h_{S_h}$.
 On the other hand, if $S_h \between U$, then $\{ x \in X \mid h(\twoheaduparrow x) = 1\} \between U$ by the Scott-openness of $U$.
 Thus, $h(\twoheaduparrow x) = 1$ for some $x \in U$ so that $h(U) = 1$. Thus, $h_{S_h} \le h$ and the assignments are inverse operations.
\end{proof}
\begin{remark}
 Note that we have not quite shown that (the sublocales induced by) Scott-closed subsets are weakly closed sublocales, but instead that Scott-closed subsets are in bijection with their weak closures.
 It is easy to see that a Scott-closed set is the largest subspace contained in its weak closure, but I do not know if this weak closure is necessarily spatial. Nonetheless, we
 will not need this for what follows.
\end{remark}

\begin{remark}
 Using classical logic the above result is true without the assumption that $X$ is continuous. I do not know whether this more general result is true in the constructive setting.
\end{remark}

\subsection{Dualisable suplattices and string diagrams} \label{section:dualisable}

In \cite{Niefield1982} Niefield gives a number of equivalent descriptions of an important class of suplattices:
projective suplattices, completely distributive suplattices, supercontinuous suplattices (as in \cref{def:supercontinuous})
and suplattices $L$ for which $L \otimes (-)$ has a left adjoint.
The proofs are nonconstructive, but constructive versions can be found in~\cite{galoisTheoryGrothendieck,Fawcett1990completeDistributivity,Niefield2016}. %

Another description of this class is as the \emph{dualisable} suplattices\index{dualisable!suplattice}\index{suplattice!dualisable}. %
\begin{definition}
 An object $A$ in a monoidal category $\Cvar$ is \emph{(left) dualisable\index{dualisable|(textbf}} if when viewing $\Cvar$ as one-object bicategory, the $1$-morphism corresponding to $A$ is a (left) adjoint.
 Its (right) adjoint is denoted by $A^*$\glsadd{dual} and is called the \emph{(right) dual} of $A$.
 
 Explicitly, $A$ has a (right) dual $A^*$ if there exist maps $\eta\colon I \to A^* \otimes A$\glsadd[format=(]{eta} and $\epsilon\colon A \otimes A^* \to I$\glsadd[format=(]{epsilon}
 such that the following diagrams commute.
 \begin{center}
 \begin{minipage}{.45\textwidth}
  \centering
  \begin{tikzpicture}[node distance=3.5cm, auto]
    \node (A) {$A \otimes (A^* \otimes A)$};
    \node (B) [right of=A] {$(A \otimes A^*) \otimes A$};
    \node (C) [below of=A] {$A \otimes I$};
    \node (D) [right of=C] {$I \otimes A$};
    \path (C) -- node[anchor=center] (mid) {$A$} (D);
    \draw[->] (A) to node {$\sim$} (B); %
    \draw[->] (C) to node {$A \otimes \eta$} (A);
    \draw[->] (C) to node {$\sim$} (mid); %
    \draw[<-] (mid) to node {$\sim$} (D); %
    \draw[->] (B) to node {$\epsilon \otimes A$} (D);
  \end{tikzpicture}
 \end{minipage}
 \begin{minipage}{.45\textwidth}
  \centering
  \begin{tikzpicture}[node distance=3.5cm, auto]
    \node (A) {$(A^* \otimes A) \otimes A^*$};
    \node (B) [right of=A] {$A^* \otimes (A \otimes A^*)$};
    \node (C) [below of=A] {$I \otimes A^*$};
    \node (D) [right of=C] {$A^* \otimes I$};
    \path (C) -- node[anchor=center] (mid) {$A^*$} (D);
    \draw[->] (A) to node {$\sim$} (B); %
    \draw[->] (C) to node {$\eta \otimes A^*$} (A);
    \draw[->] (C) to node {$\sim$} (mid); %
    \draw[<-] (mid) to node {$\sim$} (D); %
    \draw[->] (B) to node {$A^* \otimes \epsilon$} (D);
  \end{tikzpicture}
 \end{minipage}\glsadd[format=)]{eta}\glsadd[format=)]{epsilon}
 \end{center}
 Here $I$ is the unit of the monoidal category and the unnamed isomorphisms are the associator (or its inverse) and the left and right unitors as appropriate.
\end{definition}

In a symmetric monoidal category left and right dualisability are equivalent and so we simple call such an object \emph{dualisable}. %
This also gives that $A^{**} \cong A$.

Note that if $A$ is left dualisable, then the functor $A \otimes (-)$ has $A^* \otimes (-)$ as a right adjoint. %
The unit of the adjunction is given by $(\eta \otimes B)_B$ and the counit is given by $(\epsilon \otimes B)_B$ up to composition with the appropriate unitors.

If the category $\Cvar$ is monoidal closed, then $A \otimes (-) \dashv \hom(A, -)$. So by uniqueness of adjoints, $A^* \otimes (-) \cong \hom(A,-)$ whenever $A^*$ exists.
In particular, $A^* \cong A^* \otimes I \cong \hom(A,I)$. Moreover, if we take $A^* = \hom(A,I)$, the counit $\epsilon\colon A \otimes A^* \to I$ is given by the (flipped)
evaluation map $A \otimes \hom(A,I) \to I$.\index{dualisable|)}

When doing calculations involving duals in a monoidal category it can be very helpful to use \emph{string diagrams}\index{string diagram|(textbf}.
We will describe string diagrams very briefly; more information can be found in \cite{baez2010rosetta,selinger2010survey,marsden2014category}.

In a string diagram morphisms are represented by vertices and objects are represented by wires. Our diagrams will be read vertically, from bottom to top.
For instance, if $f\colon A \to B$ and $g\colon B \to C$, we can write the composite $gf$ as the following string diagram.
\begin{center}
\vspace{-3pt}
\begin{tikzpicture}[scale=0.5]
\path coordinate[label=below:$A$] (b) ++(0,4) coordinate[label=above:$C$] (t);
\coordinate[dot, label=right:$f$] (alpha) at ($(b)!0.333!(t)$);
\coordinate[dot, label=right:$g$] (alpha') at ($(b)!0.666!(t)$);
\draw (b) -- (alpha) to node[left]{$B$} (alpha') -- (t);
\fillbackground{$(t) + (-2,0)$}{$(b) + (2,0)$};
\end{tikzpicture}
\end{center}

Identity morphisms are suppressed in string diagrams so that $\id_A\colon A \to A$ is simply represented by a vertical line.
\begin{center}
\vspace{-3pt}
\begin{tikzpicture}[scale=0.75]
\path coordinate[label=below:$A$] (b) ++(0,2) coordinate[label=above:$A$] (t);
\draw (b) -- (t);
\fillbackground{$(t) + (-1,0)$}{$(b) + (1,0)$};
\end{tikzpicture}
\end{center}

The unit of the monoidal category is also suppressed. A map $e\colon I \to A$ will be written:
\begin{center}
\vspace{-3pt}
\begin{tikzpicture}[scale=0.75]
\path coordinate[dot, label=below:$e$] (eta) ++(0,1) coordinate[label=above:$A$] (t);
\draw (eta) -- (t);
\fillbackground{$(t) + (-1,0)$}{$(eta) + (1,-1)$};
\end{tikzpicture}
\end{center}

Parallel wires in a string diagram will denote tensor product. So if $f\colon A \to B$ and $f'\colon A' \to B'$, then $f \otimes f'\colon A \otimes A' \to B \otimes B'$ is represented as follows.
\begin{center}
\vspace{-3pt}
\begin{tikzpicture}[scale=0.5]
\path coordinate[label=below:$A$] (bl) ++(0,2) coordinate[dot, label=left:$f$] (alpha) ++(0,2) coordinate[label=above:$B$] (tl)
 (bl) ++(2,0) coordinate[label=below:$A'$] (br) ++(0,2) coordinate[dot, label=right:$f'$] (beta) ++(0,2) coordinate[label=above:$B'$] (tr);
\draw (bl) -- (alpha) -- (tl)
      (br) -- (beta) -- (tr);
\fillbackground{$(tl) + (-2,0)$}{$(br) + (2,0)$};
\end{tikzpicture}
\end{center}
Note that the associativity and unit laws are implicit in this notation.

String diagrams can also accommodate morphisms with `multiple inputs or outputs'. For example, a map $m\colon A \otimes A \to A$ is shown in the following diagram.
\begin{center}
\vspace{-3pt}
\begin{tikzpicture}[scale=0.75]
\path coordinate[dot, label=below:$m$] (mu)
 +(0,1) coordinate[label=above:$A$] (t)
 +(-1,-1) coordinate[label=below:$A$] (bl)
 +(1,-1) coordinate[label=below:$A$] (br);
\draw (bl) to[out=90, in=180] (mu.west) -- (mu.east) to[out=0, in=90] (br)
 (mu) -- (t);
\fillbackground{$(bl) + (-0.5,0)$}{$(t) + (1.5,0)$};
\end{tikzpicture}
\end{center}

Finally, in a symmetric monoidal category the symmetry map $A \otimes A' \cong A' \otimes A$ is represented as:
\begin{center}
\vspace{-3pt}
\begin{tikzpicture}[scale=0.5]
\path coordinate[label=below:$A$] (bl) ++(0,3) coordinate[label=above:$A'$] (tl)
 (bl) ++(2,0) coordinate[label=below:$A'$] (br) ++(0,3) coordinate[label=above:$A$] (tr);
\draw (bl) to[out=90, in=270]  (tr)
      (br) to[out=90, in=270] (tl);
\fillbackground{$(tl) + (-2,0)$}{$(br) + (2,0)$};
\end{tikzpicture}
\end{center}

The advantage of string diagrams is that two string diagrams are isomorphic in the obvious sense if and only if the morphisms they represent can be proved equal from the axioms of symmetric monoidal categories. %
A similar result holds for general monoidal categories if we replace diagram isomorphism with planar isotopy, since in that case we cannot allow lines to cross (see \cite{selinger2010survey} for further details).
This allows for intuitive proofs of otherwise complicated identities.

The conditions for $A$ to have a dual $A^*$ can now be expressed in a particularly evocative way.\index{dualisable|(}
\begin{center} %
\vspace{-3pt}
\begin{minipage}{0.49\textwidth}
\begin{equation*}
\begin{tikzpicture}[scale=0.5,baseline={([yshift=-0.5ex]current bounding box.center)}]
\path coordinate[dot, label=below:$\epsilon$] (epsilon) ++(1,-1) coordinate (a) ++(1,-1) coordinate[dot, label=above:$\eta$] (eta)
 ++(1,1) coordinate (b) ++(0,2) coordinate[label=above:$A$] (tr)
 (epsilon) ++(-1,-1) coordinate (c) ++(0,-2) coordinate[label=below:$A$] (bl);
\draw (bl) -- (c) to[out=90, in=180] (epsilon) to[out=0, in=90] (a) to[out=-90, in=180] (eta) to[out=0, in=-90] (b) -- (tr);
\fillbackground{$(bl) + (-0.5,0)$}{$(tr) + (0.5,0)$};
\end{tikzpicture}
\enspace=\enspace
\begin{tikzpicture}[scale=0.5,baseline={([yshift=-0.5ex]current bounding box.center)}]
\path coordinate[label=below:$A$] (b) ++(0,4) coordinate[label=above:$A$] (t);
\draw (b) -- (t);
\fillbackground{$(t) + (-1,0)$}{$(b) + (1,0)$};
\end{tikzpicture}
\end{equation*}
\end{minipage}
\begin{minipage}{0.49\textwidth}
\begin{equation*}
\begin{tikzpicture}[scale=0.5,baseline={([yshift=-0.5ex]current bounding box.center)}]
\path coordinate[dot, label=above:$\eta$] (eta) ++(1,1) coordinate (a) ++(1,1) coordinate[dot, label=below:$\epsilon$] (epsilon)
 ++(1,-1) coordinate (b) ++(0,-2) coordinate[label=below:$A^*$] (br)
 (eta) ++(-1,1) coordinate (c) ++(0,2) coordinate[label=above:$A^*$] (tl);
\draw (tl) -- (c) to[out=-90, in=180] (eta) to[out=0, in=-90] (a) to[out=90, in=180] (epsilon) to[out=0, in=90] (b) -- (br);
\fillbackground{$(tl) + (-0.5,0)$}{$(br) + (0.5,0)$};
\end{tikzpicture}
\enspace=\enspace
\begin{tikzpicture}[scale=0.5,baseline={([yshift=-0.5ex]current bounding box.center)}]
\path coordinate[label=below:$A^*$] (b) ++(0,4) coordinate[label=above:$A^*$] (t);
\draw (b) -- (t);
\fillbackground{$(t) + (-1,0)$}{$(b) + (1,0)$};
\end{tikzpicture}
\end{equation*}
\end{minipage}
\end{center}

So the maps $\epsilon$ and $\eta$ allow us to `turn corners' in string diagrams and the identities can be thought of as saying we can `pull the wires straight'.\index{dualisable|)}
We will usually suppress the dots and labels for $\epsilon$ and $\eta$ in these situations as they can be readily understood from context.\index{string diagram|)}

Let us now return to the case of dualisable objects\index{dualisable!suplattice|(textbf}\index{suplattice!dualisable|(textbf} in $\Sup$.
\begin{definition}
 Let $L$ be a suplattice. We say that a family $(r_x)_{x \in X}$ of elements of $L$ and a family $(\sigma_x)_{x \in X}$ of elements of $\hom(L, \Omega)$ form a \emph{dual basis\index{dual basis|(textbf}} for $L$
 if $a = \bigvee_{x \in X} \sigma_x(a) \cdot r_x$ for all $a \in L$.
\end{definition}

\begin{lemma}\label{prop:dual_basis}
 A suplattice $L$ is dualisable if and only if it admits a dual basis\index{dual basis|)}.
\end{lemma}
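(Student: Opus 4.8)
The plan is to extract a dual basis directly from the unit and counit witnessing dualisability, and conversely to build these maps from a dual basis, with the triangle identities translating \emph{exactly} into the dual basis equation $a=\bigvee_x\sigma_x(a)\cdot r_x$.

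For the forward direction, suppose $L$ is dualisable with dual $A^*$, unit $\eta\colon\Omega\to A^*\otimes L$ and counit $\epsilon\colon L\otimes A^*\to\Omega$ (using the symmetry to put $L$ on the left). Since $\Omega$ is the monoidal unit of $\Sup$ and a morphism out of it is determined by the image of $\top$, the element $\eta(\top)\in A^*\otimes L$ can be written as a join of pure tensors, say $\eta(\top)=\bigvee_{x\in X}\tau_x\otimes r_x$ with $\tau_x\in A^*$ and $r_x\in L$. For each $x$ set $\sigma_x\colon L\to\Omega$, $\sigma_x(a)=\epsilon(a\otimes\tau_x)$; this is a suplattice homomorphism because $(-)\otimes\tau_x$ is one (bilinearity of $\otimes$) and $\epsilon$ is one, so $\sigma_x\in\hom(L,\Omega)$. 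Tracing $a\in L\cong L\otimes\Omega$ through the first triangle identity $(\epsilon\otimes L)\circ(L\otimes\eta)=\id_L$ gives $a=\bigvee_{x}\epsilon(a\otimes\tau_x)\cdot r_x=\bigvee_{x}\sigma_x(a)\cdot r_x$, so $(r_x)_{x\in X}$ and $(\sigma_x)_{x\in X}$ form a dual basis. This should be essentially a one-line string-diagram computation with no real obstacle.

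For the converse, given a dual basis $(r_x)_{x\in X}$, $(\sigma_x)_{x\in X}$, take $A^*=\hom(L,\Omega)$, let $\epsilon\colon L\otimes\hom(L,\Omega)\to\Omega$ be the (flipped) evaluation map --- the unique suplattice homomorphism induced by the bilinear map $(a,\phi)\mapsto\phi(a)$ --- and define $\eta\colon\Omega\to\hom(L,\Omega)\otimes L$ as the unique suplattice homomorphism with $\eta(\top)=\bigvee_{x\in X}\sigma_x\otimes r_x$ (equivalently $\eta(p)=p\cdot\bigvee_x\sigma_x\otimes r_x$). The first triangle identity applied to $a$ unwinds, exactly as above, to $\bigvee_x\sigma_x(a)\cdot r_x$, which is $a$ by the dual basis equation. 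The second triangle identity $(\hom(L,\Omega)\otimes\epsilon)\circ(\eta\otimes\hom(L,\Omega))=\id$ applied to $\phi$ unwinds to $\bigvee_x\phi(r_x)\cdot\sigma_x$, and I would check this equals $\phi$ by evaluating at an arbitrary $a\in L$: using that $\phi$ preserves joins and satisfies $\phi(t\cdot r)=t\cdot\phi(r)$ for $t\in\Omega$, we get $\bigvee_x\phi(r_x)\cdot\sigma_x(a)=\phi\bigl(\bigvee_x\sigma_x(a)\cdot r_x\bigr)=\phi(a)$. Hence both triangle identities hold and $L$ is dualisable.

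The only point requiring care --- the closest thing to an obstacle --- is bookkeeping: checking that $\eta$ and $\epsilon$ are genuine $\Sup$-morphisms (bilinearity plus the universal property of $\otimes$) and that the unitor and associator coherences are handled correctly when unwinding the triangle identities. Using string diagrams, as set up in \cref{section:dualisable}, makes both unwindings transparent and keeps these coherences implicit.
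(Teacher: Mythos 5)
Your proposal is correct and follows essentially the same route as the paper: express $\eta(\top)$ as a join of pure tensors, read the first triangle identity as the dual basis equation, and in the converse direction verify the second triangle identity by evaluating $\bigvee_x\phi(r_x)\cdot\sigma_x$ at an arbitrary $a$ and using that $\phi$ preserves joins. The only (harmless) difference is that in the forward direction you work with a general dual $A^*$ and define $\sigma_x$ via the counit, whereas the paper assumes outright that $L^*=\hom(L,\Omega)$ with $\epsilon$ the evaluation map.
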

\begin{proof}
 Suppose $L$ is dualisable. We may assume $L^* = \hom(L, \Omega)$ and that the counit $\epsilon\colon L \otimes \hom(L,\Omega) \to \Omega$ is given by evaluation.
 Now express $\eta(\top)$ as a join of basic elements $\eta(\top) = \bigvee_{x \in X} \sigma_x \otimes r_x$. The first `triangle' identity for the adjunction is precisely the
 fact that $(r_x)_{x \in X}$ and $(\sigma_x)_{x \in X}$ give a dual basis.
 
 Now suppose $(r_x)_{x \in X}$ and $(\sigma_x)_{x \in X}$ are a dual basis for $L$. Taking $L^* = \hom(L, \Omega)$ and $\epsilon$ to be evaluation,
 we have the first triangle identity as above. Now note that
 $\alpha(a) = \alpha(\bigvee_{x \in X} \sigma_x(a) \cdot r_x) = \bigvee_{x \in X} \sigma_x(a) \cdot \alpha(r_x)$
 so that $\alpha = \bigvee_{x \in X} \sigma_x \cdot \alpha(r_x)$ for all $\alpha \in \hom(L, \Omega)$, which gives the second identity.
\end{proof}

We will be interested in the following equivalent formulations of dualisability.

\begin{proposition}\label{prop:supercontinuous_vs_dualisable}
 Let $L$ be a suplattice. The following are equivalent:
 \begin{enumerate}
  \item $L$ is dualisable\index{dualisable!suplattice|)}\index{suplattice!dualisable|)}
  \item $L$ is supercontinuous\index{supercontinuous}\index{frame!supercontinuous}\index{suplattice!supercontinuous}
  \item The join map $\joinmap\colon \D L \to L$ has a left adjoint.
 \end{enumerate}
\end{proposition}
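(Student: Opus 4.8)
The plan is to route all three conditions through supercontinuity, establishing the equivalences $(2)\Leftrightarrow(3)$ and $(1)\Leftrightarrow(2)$.

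For $(2)\Rightarrow(3)$, assuming $L$ supercontinuous I would check that $a\mapsto{\Downarrow}a$ defines a map $L\to\D L$ which is left adjoint to $\joinmap$: for a downset $E\subseteq L$ the equivalence ${\Downarrow}a\subseteq E\iff a\le\bigvee E$ holds, the forward direction being $a=\bigvee{\Downarrow}a\le\bigvee E$ and the backward direction being immediate from the definition of $\lll$ (if $a\le\bigvee E$ and $b\lll a$ then $b\le e$ for some $e\in E$, so $b\in E$). Since an order-theoretic left adjoint between complete lattices automatically preserves joins, this is a morphism in $\Sup$. Conversely, if $\joinmap$ has a left adjoint $\ell\colon L\to\D L$, then $\ell(a)$ is a downset and I would identify it with ${\Downarrow}a$: if $b\in\ell(a)$ then for any $S$ with $\bigvee S\ge a$ the downset ${\downarrow}S=\{x\mid\exists s\in S.\ x\le s\}$ satisfies $\bigvee{\downarrow}S\ge a$, hence $\ell(a)\subseteq{\downarrow}S$ and so $b\le s$ for some $s\in S$, i.e.\ $b\lll a$; conversely $a\le\bigvee\ell(a)$ (from $\ell(a)\subseteq\ell(a)$), so $b\lll a$ forces $b\in\ell(a)$. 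Then $a\le\bigvee\ell(a)=\bigvee{\Downarrow}a\le a$, so $L$ is supercontinuous.

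For $(2)\Rightarrow(1)$, I would exhibit a dual basis indexed by $X=L$ with $r_x=x$ and $\sigma_x=\llbracket x\lll(-)\rrbracket$. Since a supercontinuous complete lattice is a frame, the earlier corollary that $\llbracket b\lll(-)\rrbracket\colon L\to\Omega$ preserves arbitrary joins in a supercontinuous frame gives $\sigma_x\in\hom(L,\Omega)$; and $\bigvee_{x}\sigma_x(a)\cdot r_x=\bigvee\{x\mid x\lll a\}=\bigvee{\Downarrow}a=a$, so $(r_x),(\sigma_x)$ is a dual basis and $L$ is dualisable by \cref{prop:dual_basis}. For the converse $(1)\Rightarrow(2)$, given a dual basis $(r_x)_{x\in X}$, $(\sigma_x)_{x\in X}$, the key step is that $\sigma_x(a)=\top$ implies $r_x\lll a$: if $\bigvee S\ge a$ then applying $\sigma_x$ yields $\top=\sigma_x(a)\le\bigvee_{s\in S}\sigma_x(s)$, so $\bigvee_{s\in S}\sigma_x(s)=\top$; as joins in $\Omega$ compute existential quantification there is $s\in S$ with $\sigma_x(s)=\top$, whence $r_x=\sigma_x(s)\cdot r_x\le\bigvee_y\sigma_y(s)\cdot r_y=s$. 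Consequently each $\sigma_x(a)\cdot r_x\le\bigvee{\Downarrow}a$, so $a=\bigvee_x\sigma_x(a)\cdot r_x\le\bigvee{\Downarrow}a\le a$, giving supercontinuity.

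Most of this is routine adjunction-chasing; the one place demanding care is the key step of $(1)\Rightarrow(2)$, where I must take seriously that $\bigvee_{s\in S}\sigma_x(s)=\top$ genuinely produces a witness $s\in S$. This is legitimate — it is exactly the content of suprema in $\Omega$ being existential quantification — but it is the step a careless classical transcription would gloss over, and it is the reason the dual basis must be handled internally rather than elementwise in a fixed model.
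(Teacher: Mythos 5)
Your proof is correct. It differs from the paper's only in how the equivalences are decomposed: the paper runs the cycle $(i)\Rightarrow(ii)\Rightarrow(iii)\Rightarrow(i)$, whereas you prove $(2)\Leftrightarrow(3)$ and $(1)\Leftrightarrow(2)$ as two biconditionals. Your $(2)\Rightarrow(3)$ and $(1)\Rightarrow(2)$ are word-for-word the paper's $(ii)\Rightarrow(iii)$ and $(i)\Rightarrow(ii)$ (including the point you flag, that $\bigvee_{s\in S}\sigma_x(s)=\top$ constructively yields a witness $s$ with $\sigma_x(s)=\top$ because joins in $\Omega$ are existential quantification --- the paper uses exactly this step). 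The genuinely new content is in the other two directions. For $(3)\Rightarrow(2)$ you identify the left adjoint $\ell$ of $\joinmap$ with $a\mapsto{\Downarrow}a$ directly; the paper never proves this implication, closing the cycle instead via $(iii)\Rightarrow(i)$. For $(2)\Rightarrow(1)$ you build the dual basis from $r_x=x$ and $\sigma_x=\llbracket x\lll(-)\rrbracket$, which requires invoking the earlier corollary that $\llbracket b\lll(-)\rrbracket$ preserves arbitrary joins in a supercontinuous frame (together with the fact that a supercontinuous complete lattice is a frame); the paper's route $(iii)\Rightarrow(i)$ instead sets $\sigma_x(a)=\llbracket x\in\joinmap_!(a)\rrbracket$ and gets join-preservation for free from $\joinmap_!$ being a left adjoint, so it avoids that corollary entirely. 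The trade-off: the paper's cycle is slightly more economical in its dependencies, while your version makes explicit the identifications $\joinmap_!={\Downarrow}(-)$ and $\sigma_x=\llbracket x\lll(-)\rrbracket$, which is more informative about what the dual basis actually is.
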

\begin{proof}
 $(i) \Rightarrow (ii)$ Suppose $L$ has a dual basis consisting of $(r_x)_{x \in X}$ and $(\sigma_x)_{x \in X}$. Consider $a \in L$ such that $\sigma_x(a) = 1$. We will show that $r_x \lll a$.
 Suppose $a \le \bigvee B$. Then $\bigvee_{b \in B} \sigma_x(b) = 1$ and hence $\sigma_x(b) = 1$ for some $b \in B$. But $b = \bigvee_{y \in X} \sigma_y(b) \cdot r_y$ and
 this join includes $r_x = \sigma_x(b) \cdot r_x$. So $r_x \le b$ and hence $r_x \lll a$ as claimed. Finally, we have $a = \bigvee_x \sigma_x(a) \cdot r_x \le \bigvee_{r \lll a} r$ and so $L$ is supercontinuous.
 
 $(ii) \Rightarrow (iii)$ Now suppose $L$ is supercontinuous. We will show that the map $a \mapsto {\Downarrow} a$ is left adjoint to $\joinmap\colon \D L \to L$.
 Take $a \in L$ and $D \in \D L$.
 If ${\Downarrow} a \subseteq D$, then taking joins gives $a \le \bigvee D$. On the other hand, if $a \le \bigvee D$ and $b \lll a$, then $b \le d$ for some $d \in D$. Hence $b \in D$
 and so ${\Downarrow} a \subseteq D$.
 
 $(iii) \Rightarrow (i)$ Suppose $\joinmap$ has a left adjoint $\joinmap_!\colon L \to \D L$. We construct a dual basis by taking $X = L$ setting $r_x = x$ and
 $\sigma_x(a) = \llbracket x \in \joinmap_!(a)\rrbracket$ for each $x, a \in L$. Note that that maps $\sigma_x$ so defined do indeed preserve joins.
 Then $\bigvee_x \sigma_x(a) \cdot r_x = \bigvee_{x \in \joinmap_!(a)} x = \bigvee \joinmap_!(a) = \joinmap\joinmap_!(a) = a$
 as required, where the final equality holds since $\joinmap$ is surjective.
\end{proof}

The third equivalent condition above is what is called `constructive complete distributivity' in \cite{Fawcett1990completeDistributivity}.
In particular, it easily implies the frame distributivity law.

\begin{corollary}
 A dualisable suplattice $L$ is a frame.
\end{corollary}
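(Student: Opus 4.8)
The plan is to use the second characterisation in \cref{prop:supercontinuous_vs_dualisable}: since $L$ is dualisable it is supercontinuous, and supercontinuity alone already forces the frame distributivity law. (Completeness of $L$ is immediate, being part of the definition of a suplattice, so only distributivity needs checking.)

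So fix $x \in L$ and a family $(y_\alpha)_{\alpha \in I}$; I must show $x \wedge \bigvee_\alpha y_\alpha = \bigvee_\alpha (x \wedge y_\alpha)$. The inequality $\ge$ holds in any complete lattice, so only $\le$ requires work. First I would record the elementary fact that the totally-below relation is upward closed in its second argument: if $b \lll c$ and $c \le d$, then $b \lll d$, directly from the definition. Now let $b \lll x \wedge \bigvee_\alpha y_\alpha$. Then $b \le x \wedge \bigvee_\alpha y_\alpha \le x$, and also $b \lll \bigvee_\alpha y_\alpha$ by the fact just noted; applying the definition of $\lll$ to the cover $\{ y_\alpha \mid \alpha \in I\}$ yields some $\alpha$ with $b \le y_\alpha$, whence $b \le x \wedge y_\alpha \le \bigvee_\alpha (x \wedge y_\alpha)$.

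Finally I would invoke supercontinuity of $L$ to write $x \wedge \bigvee_\alpha y_\alpha = \bigvee \{ b \mid b \lll x \wedge \bigvee_\alpha y_\alpha \}$; since each such $b$ is bounded above by $\bigvee_\alpha (x \wedge y_\alpha)$, so is the join, giving the remaining inequality and hence the desired equality. There is no genuine obstacle here beyond bookkeeping; the only point to watch is that supercontinuity is being applied to the element $x \wedge \bigvee_\alpha y_\alpha$ itself, which is legitimate since it holds for every element of $L$. (One could alternatively note that $L$ is a retract in $\Sup$ of the frame $\D L$ via $\joinmap_! \dashv \joinmap$, but extracting distributivity from that would require control over how $\joinmap_!$ interacts with finite meets, which is not evident, so the direct argument above is the one I would give.)
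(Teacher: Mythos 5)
Your argument is correct, and it is essentially the standard proof that a supercontinuous complete lattice satisfies the frame distributivity law (a fact the paper itself asserts without proof immediately after \cref{def:supercontinuous}). The paper's own proof of the corollary takes a different and shorter route, via condition (iii) of \cref{prop:supercontinuous_vs_dualisable} rather than condition (ii): since $\joinmap\colon \D L \to L$ has a left adjoint, it is itself a right adjoint and therefore preserves arbitrary meets; it also preserves arbitrary joins, and $\D L$ is a frame (a complete sublattice of $\Omega^L$ closed under the relevant operations), so $L$ is exhibited as a frame quotient of a frame and inherits distributivity. Note that your parenthetical dismissal of this route rests on a misidentification: what is needed is not control over how $\joinmap_!$ interacts with finite meets, but the observation that $\joinmap$ \emph{itself} preserves arbitrary meets by virtue of having a left adjoint. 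With that correction the retract argument goes through immediately and is arguably cleaner; your direct verification buys a self-contained, element-level proof that makes the role of the totally-below relation explicit, at the cost of a little bookkeeping.
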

\begin{proof}
 Note that $\D L$ is a frame, since it is a complete sublattice of $\Omega^L$. But as a right adjoint, $\joinmap$ preserves arbitrary meets and so $L$ is a frame quotient of $\D L$.
\end{proof}

Finally, we note a link between supercontinuity and overtness.
\begin{lemma}
 A supercontinuous frame is overt.
\end{lemma}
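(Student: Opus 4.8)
The plan is to deduce overtness from the existence of a base of positive elements, using the earlier lemma that a frame is overt if and only if it admits such a base. So it suffices to show that in a supercontinuous frame $L$, every element is a join of positive elements. The natural candidates are the elements totally below a given element: for $a \in L$ we have $a = \bigvee {\Downarrow} a$ by supercontinuity, so the task reduces to showing that if $b \lll a$ then $b$ is positive.

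First I would recall the characterisation of positivity: $b$ is positive means that $b \le \bigvee S$ implies $S$ is inhabited. Now if $b \lll a$, then in particular whenever $\bigvee S \ge b$ there is a $d \in S$ with $d \ge b$ --- applying the definition of the totally-below relation with $b$ in place of $a$ (note $b \lll a$ implies $b \lll b$ is \emph{not} automatic, so I must be careful here). The cleaner route: suppose $b \lll a$ and $b \le \bigvee S$. Since $a = \bigvee {\Downarrow} a \le \bigvee S$ is not quite what we want either. Instead, observe directly that $b \lll a$ together with $b \le \bigvee S$ does not immediately give inhabitedness of $S$ unless $b \le a$ (which holds) and we can feed $a \le \bigvee(S \cup \{b\}) $... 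Let me take the direct approach: if $b \lll a$ and $b \le \bigvee S$, I want $S$ inhabited. Consider that $\bigvee S \ge b$; but the totally-below relation $b \lll a$ tells us about covers of $a$, not of $b$. The fix is interpolation: by the interpolation lemma for supercontinuous frames (stated earlier as a consequence of \cref{prop:way_below_interpolates}'s analogue), $b \lll a$ gives some $c$ with $b \lll c \lll a$. This still concerns covers of $c$ and $a$.

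The genuinely correct observation is simpler: $b \lll a$ means precisely that $\bigvee S \ge a \implies \exists d \in S.\, d \ge b$. Take any cover $b \le \bigvee S$. Then $S \cup \{b\}$ is a cover of $b$, hence (since $b \le a$... no). I think the intended argument is: to show $b$ is positive it suffices to show $b \le \bigvee S \implies S \between \{x : x > 0\}$ is not needed; rather we show the \emph{frame} is overt by exhibiting the positivity predicate $\exists = \llbracket (-) > 0 \rrbracket$ directly. For $b \lll a$: if $b \le \bigvee S$, rewrite $\bigvee S = \bigvee_{s \in S} s$ and note each $s = \bigvee {\Downarrow} s$, so $b \le \bigvee_{s \in S}\bigvee{\Downarrow}s$, a join over $\{t : t \lll s, s \in S\}$. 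Hmm --- the clean statement is: if $b \lll a$ then $b$ is positive because $b \le \bigvee S$ forces, by $b \lll a \le \bigvee(S)$...

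The main obstacle, and the crux of the whole proof, is exactly this subtlety: relating a cover of $b$ to the totally-below relation $b \lll a$. I expect the resolution is that one does \emph{not} need $b \lll b$; instead one shows that the elements of the form $b$ with $b \lll a$ for \emph{some} $a$ are positive by the following: given $b \le \bigvee S$, we have $a = \bigvee {\Downarrow} a \le \bigvee\{\bigvee T_s : s \in S\}$ where $a \le \bigvee S$ fails in general, so actually the argument must be that $b \lll a$ and we separately know $a \le \bigvee S'$ for the cover $S'$ witnessing $a = \bigvee {\Downarrow} a$; combining, if $b \le \bigvee S$ then consider $S \cup ({\Downarrow} a \setminus {\uparrow} b)$ --- this is getting complicated, so in the write-up I would simply invoke that $\llbracket b \lll (-)\rrbracket$ preserves joins (the Corollary after the interpolation lemma), apply it to $a = \bigvee S$ to get $\llbracket b \lll a \rrbracket = \bigvee_{s\in S}\llbracket b \lll s\rrbracket$, conclude this join is $\top$, hence some $\llbracket b \lll s \rrbracket = \top$, hence $S$ is inhabited --- so $b$ is positive. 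Then $a = \bigvee {\Downarrow} a$ exhibits a base of positive elements and the earlier lemma finishes it.
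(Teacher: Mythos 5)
Your overall strategy --- exhibit a base of positive elements and invoke the earlier equivalence --- is the same as the paper's, which simply notes that $\llbracket x > 0 \rrbracket = \llbracket 0 \lll x \rrbracket$ and that such elements form a base by supercontinuity. But the step you finally commit to is wrong, and the intermediate claim it is meant to establish is false. You assert that $b \lll a$ implies $b$ is positive; taking $b = 0$ and $a$ any positive element (e.g.\ $a = \top$ in the supercontinuous frame $\Omega$) gives $0 \lll a$, yet $0$ is never positive since it is covered by the empty set. Correspondingly, your derivation silently replaces the given cover $b \le \bigvee S$ by ``$a = \bigvee S$'': the relation $b \lll a$ only speaks about covers of $a$, so applying $\llbracket b \lll (-)\rrbracket$ to $\bigvee S$ yields nothing unless $S$ covers $a$. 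This is precisely the subtlety you yourself flagged as the crux, and the write-up does not resolve it.

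The fix is one of the routes you considered and discarded. Interpolation does the job: given $c \lll a$, choose $b$ with $c \lll b \lll a$; then every cover of $b$ contains an element above $c$ and is in particular inhabited, so $b$ is positive, and $c \le b \le a$. Hence $a = \bigvee {\Downarrow} a \le \bigvee\{b \le a \mid b > 0\}$, which gives the base. (What is true is not that the elements totally below $a$ are positive, but that any element with something totally below \emph{it} is positive.) Alternatively one can avoid interpolation by verifying directly that $\llbracket 0 \lll (-) \rrbracket$ is left adjoint to ${!}\colon \Omega \to L$: it preserves arbitrary joins by the corollary you cite (with $b = 0$), whence $\llbracket 0 \lll {!}(p)\rrbracket \le p$; and the inequality $a \le {!}\llbracket 0 \lll a \rrbracket$ follows from $a = \bigvee_{b \lll a} b$ together with the observation that $b \lll a$ for \emph{any} $b$ already implies $0 \lll a$.
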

\begin{proof}
 Simply note that $\llbracket x > 0\rrbracket = \llbracket 0 \lll x\rrbracket$. The elements $x$ for which $0 \lll x$ then clearly form a base by supercontinuity.
\end{proof}

\subsection{Categorical logic} \label{section:hyperdoctrine}

A number of our proofs will be more perspicuous when written using internal logic.
In order to make this precise we will need to make use of the theory of hyperdoctrines.
We give a brief introduction to the concepts involved. A more detailed account of this material can be found in \cite{PittsLogic}.

\begin{definition}
 A \emph{signature} is given by a set of \emph{sorts}, a set of \emph{function symbols}, a set of \emph{relation symbols} and an assignment of function and relations symbols to their `types'.
 The type of a function is given by a finite list of input sorts and an output sort. We write $f\colon A_1 \times \dots \times A_n \to B$ to mean
 the type of $f$ has $A_1,\dots,A_n$ as the input sorts and $B$ as the output sort. In the case that $n = 0$ we write $f\colon B$ and call $f$ a \emph{constant}.
 The type of a relation $R$ is given by a finite list of sorts. We write $R \subseteq A_1 \times \dots \times A_n$. In the case that $n = 0$ we write $R \subseteq 1$ %
 and call $R$ an \emph{atomic proposition}.
\end{definition}

\begin{definition}
 Given a signature $\Sigma$, a \emph{context\index{context|textbf}} $\vec{x}$ is a finite list\footnote{In practice we will treat the context more like a set. We should describe rules which allow for the reordering of
 variables in the context wherever they appear, but we omit them for simplicity.} of distinct symbols $x_1,\dots,x_m$, called \emph{variables}, where each variable is equipped with a sort $X_i$.
\end{definition}
\begin{definition}
 A \emph{term} $t$ of type $B$ in the context $\vec{x}$ is described by the grammar\footnote{We express the grammar in a semi-informal version of Backus--Naur form (BNF).}
 \[t \Coloneqq x \mid f(t_1,\dots,t_n)\]
 where $x$ stands for a variable, $f$ is a function symbol of type $A_1 \times \dots \times A_n \to B$ and each $t_i$ stands for a term of type $A_i$ in the context.
 In practice, we simply write the term as $f$ in the case that $n = 0$.
\end{definition}

\begin{definition}
 A \emph{(coherent) formula}\index{logic!coherent|(textbf}\index{coherent!logic|(textbf} $\phi$ in the context $\vec{x}$ is defined by the grammar
 \[\phi \Coloneqq R(t_1,\dots,t_n) \mid \top \mid \bot \mid \phi \wedge \phi \mid \phi \vee \phi \mid \exists y\colon\! A.\; \psi \] %
 where $y$ is a symbol not in $\vec{x}$, $\psi$ is a formula in the context $\vec{x} \cup \{y\}$ (where $y$ has sort $A$),
 $R$ stands for a relation symbol of type $A_1 \times \dots \times A_n$ and each $t_i$ stands for a term of type $A_i$ in the context $\vec{x}$.
 In practice, we simply write the formula as $R$ in the case that $n=0$.
\end{definition}

We use terms and formulae to construct \emph{judgements}\index{judgement|(textbf}. Judgements allow us to make claims about the syntactic notions we have described above.
In our setup there are two kinds judgements: equations and sequents.

\begin{definition}
 An \emph{equation}\index{equation|(textbf} in the context $\vec{x}$ is simply an expression of the form $t =_{\vec{x}} s$ where $t$ and $s$ are terms in $\vec{x}$ of the same sort.
\end{definition}
An equation should, of course, be thought of as a claim that the interpretations of $t$ and $s$ are the same in any model.

\begin{definition}
 A \emph{sequent}\index{sequent|textbf} in the context $\vec{x}$ is an expression of the form $\Gamma \vdash_{\vec{x}} \phi$\glsadd{sequent},
 where $\Gamma$ is a finite set of formulae in the context $\vec{x}$ and $\phi$ is a single formula in the context $\vec{x}$.
 In practice it is common to use a number of notational shorthands. For instance, we might write $\Gamma, \Delta, \psi \vdash_{\vec{x}} \phi$ instead of $\Gamma \cup \Delta \cup \{\psi\} \vdash_{\vec{x}} \phi$
 or $\vdash_{\vec{x}} \phi$ instead of $\emptyset \vdash_{\vec{x}} \phi$.
\end{definition}
A sequent should be thought of as a restricted form of logical implication. (Recall that we do not have an implication connective in our logic.)

\begin{remark}
 Note that equations\index{equation|)} are judgements, not propositions, and so cannot occur in sequents nor be combined with connectives. This is what is meant when this logic is
 described as coherent logic `without equality'\index{logic!without equality}. It is more common to consider logics `with equality' --- where there is an equality \emph{relation} --- but that would
 rule out our main example.
\end{remark}

Coherent logic has inference rules by which we can deduce judgements\index{judgement|)} from others. Such a rule is of the form
\[
\Axiom$J_1 \ J_2\ \fCenter \cdots\ J_n$
\UnaryInf$\fCenter K$
\DisplayProof\;,
\]
where $K$ and each $J_i$ are judgements. This means that if $J_i$ is a theorem for all $1 \le i \le n$, then so is $K$.
In this way an inference rule is like a metalogical sequent.

As a shorthand we sometimes write
\[
\Axiom$J_1 \ J_2\ \fCenter \cdots\ J_n$
\doubleLine
\UnaryInf$\fCenter K$
\DisplayProof\;
\]
to mean that the rule can both be read forwards in the normal way and also backwards to give \AxiomC{$K$} \UnaryInfC{$J_i$}\DisplayProof for each $i$.

We can now give a complete set of inference rules for coherent logic.

\begin{tcolorbox}[title={Inference rules for coherent logic}, %
                  breakable, before skip=\baselineskip] %

\hspace{.05\textwidth}
\begin{minipage}{.45\textwidth}
\AxiomC{$\Gamma \vdash_{\vec{x}} \phi$}
\LeftLabel{(Weak)} %
\UnaryInfC{$\Gamma \vdash_{\vec{x},y} \phi$}
\DisplayProof
\end{minipage}
\begin{minipage}{.45\textwidth}
\AxiomC{\vphantom{$\Gamma$} $s =_{\vec{x}} t$}
\LeftLabel{\phantom{(Trans)}\llap{(Weak)}} %
\UnaryInfC{$s =_{\vec{x},y} t$}
\DisplayProof
\end{minipage}
\bigskip

\hspace{.05\textwidth}
\begin{minipage}{.45\textwidth} %
\AxiomC{$s =_{\vec{x}} s'$}
\AxiomC{$\Gamma \vdash_{\vec{x}, y} \phi$}
\LeftLabel{\phantom{(Weak)}\llap{(Sub)}} %
\BinaryInfC{$\Gamma[s/y] \vdash_{\vec{x}} \phi[s'/y]$}
\DisplayProof
\end{minipage}
\begin{minipage}{.45\textwidth}
\AxiomC{\vphantom{$\Gamma$} $s =_{\vec{x}} s'$}
\AxiomC{$t =_{\vec{x},y} t'$}
\LeftLabel{\phantom{(Trans)}\llap{(Sub)}} %
\BinaryInfC{$t[s/y] =_{\vec{x}} t'[s'/y]$}
\DisplayProof
\end{minipage}
\bigskip

\hspace{.05\textwidth}
\begin{minipage}{.45\textwidth}
\AxiomC{\vphantom{$\Gamma$}}
\doubleLine %
\LeftLabel{\phantom{(Weak)}\llap{($\top$)}} %
\UnaryInfC{$\Gamma \vdash_{\vec{x}} \top$}
\DisplayProof
\end{minipage}
\begin{minipage}{.45\textwidth}
\AxiomC{$\Gamma \vdash_{\vec{x}} \phi$}
\AxiomC{$\Gamma \vdash_{\vec{x}} \psi$}
\doubleLine
\LeftLabel{\phantom{(Trans)}\llap{($\wedge$)}} %
\BinaryInfC{$\Gamma \vdash_{\vec{x}} \phi \wedge \psi$}
\DisplayProof
\end{minipage}
\bigskip

\hspace{.05\textwidth}
\begin{minipage}{.45\textwidth}
\AxiomC{\vphantom{$\Gamma$}}
\doubleLine %
\LeftLabel{\phantom{(Weak)}\llap{($\bot$)}} %
\UnaryInfC{$\Gamma, \bot \vdash_{\vec{x}} \phi$}
\DisplayProof
\end{minipage}
\begin{minipage}{.45\textwidth}
\AxiomC{$\Gamma, \phi \vdash_{\vec{x}} \chi$}
\AxiomC{$\Gamma, \psi \vdash_{\vec{x}} \chi$}
\doubleLine
\LeftLabel{\phantom{(Trans)}\llap{($\vee$)}} %
\BinaryInfC{$\Gamma, \phi \vee \psi \vdash_{\vec{x}} \chi$}
\DisplayProof
\end{minipage}
\bigskip

\hspace{.05\textwidth}
\begin{minipage}{.45\textwidth}
\AxiomC{$\Gamma, \phi \vdash_{\vec{x}, y} \psi$} %
\doubleLine
\LeftLabel{\phantom{(Weak)}\llap{($\exists$)}} %
\UnaryInfC{$\Gamma, \exists y\colon\! A.\; \phi \vdash_{\vec{x}} \psi$}
\DisplayProof
\end{minipage}
\begin{minipage}{.45\textwidth}
\AxiomC{\vphantom{$\Gamma$} $s =_{\vec{x}} t$}
\LeftLabel{\phantom{(Trans)}\llap{(Sym)}} %
\UnaryInfC{$t =_{\vec{x}} s$}
\DisplayProof
\end{minipage}
\bigskip

\hspace{.05\textwidth}
\begin{minipage}{.45\textwidth}
\AxiomC{}
\LeftLabel{\phantom{(Weak)}\llap{(Id)}} %
\UnaryInfC{$\Gamma, \phi \vdash_{\vec{x}} \phi$}
\DisplayProof
\end{minipage}
\begin{minipage}{.45\textwidth}
\AxiomC{}
\LeftLabel{\phantom{(Trans)}\llap{(Refl)}} %
\UnaryInfC{$t =_{\vec{x}} t$}
\DisplayProof
\end{minipage}
\bigskip

\hspace{.05\textwidth}
\begin{minipage}{.45\textwidth}
\AxiomC{$\Gamma \vdash_{\vec{x}} \phi$}
\AxiomC{$\Delta, \phi \vdash_{\vec{x}} \psi$}
\LeftLabel{\phantom{(Weak)}\llap{(Cut)}} %
\BinaryInfC{$\Gamma, \Delta \vdash_{\vec{x}} \psi$}
\DisplayProof
\end{minipage}
\begin{minipage}{.45\textwidth}
\AxiomC{\vphantom{$\Gamma$} $r =_{\vec{x}} s$}
\AxiomC{$s =_{\vec{x}} t$}
\LeftLabel{(Trans)} %
\BinaryInfC{$r =_{\vec{x}} t$}
\DisplayProof
\end{minipage}

\end{tcolorbox}
In the above there are a number of implicit constraints on the types of variables and whether they occur in the context, which are necessary for the rules to be well-formed.
See \cite[pp. B18--B19]{PittsLogic} for details.
The notation $\phi[s/y]$ denotes substitution of the term $s$ for the variable $y$ (of the same type) in the formula $\phi$. Variables that are bound by quantifies should be renamed %
if necessary so as to not shadow free variables in the context lest they be captured during substitution.

We can use the above inference rules to give all the rules that one might expect to hold for such logic.
Let us prove some such rules as examples.
\begin{lemma}\label{prop:right_existential_rule}
 Suppose $y$ is of sort $A$, $s$ is a term of type $A$ in context $\vec{x}$ and $\phi$ is a formula in the context $\vec{x} \cup \{y\}$.
 If we are given $\Gamma \vdash_{\vec{x}} \phi[s/y]$, we can deduce $\Gamma \vdash_{\vec{x}} \exists y\colon A.\ \phi$ by the rules of coherent logic.
\end{lemma}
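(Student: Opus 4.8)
The plan is to derive the sequent $\Gamma \vdash_{\vec{x}} \exists y\colon A.\ \phi$ from the hypothesis $\Gamma \vdash_{\vec{x}} \phi[s/y]$ in three moves: first establish a ``generic'' instance of the introduction rule, then specialise it along the substitution $s/y$, and finally cut against the hypothesis.

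First I would start from the instance $\exists y\colon A.\ \phi \vdash_{\vec{x}} \exists y\colon A.\ \phi$ of (Id). The rule $(\exists)$ is stated with a double line, so it may be read backwards; applying it backwards with empty left context and with the succedent $\psi$ taken to be $\exists y\colon A.\ \phi$ (which is indeed a formula in the context $\vec{x}$, as the side condition demands) yields $\phi \vdash_{\vec{x},y} \exists y\colon A.\ \phi$.

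Next I would substitute $s$ for $y$. Using (Refl) to produce $s =_{\vec{x}} s$ and feeding this together with the sequent $\phi \vdash_{\vec{x},y} \exists y\colon A.\ \phi$ into (Sub) gives $\phi[s/y] \vdash_{\vec{x}} (\exists y\colon A.\ \phi)[s/y]$. Since $y$ is bound in $\exists y\colon A.\ \phi$ and $s$, being a term in context $\vec{x}$, contains no occurrence of $y$ (so no capture occurs and no renaming is needed), the right-hand side is just $\exists y\colon A.\ \phi$; hence $\phi[s/y] \vdash_{\vec{x}} \exists y\colon A.\ \phi$.

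Finally I would apply (Cut) to the hypothesis $\Gamma \vdash_{\vec{x}} \phi[s/y]$ and to $\phi[s/y] \vdash_{\vec{x}} \exists y\colon A.\ \phi$ (taking the auxiliary context set $\Delta$ to be empty), which produces exactly $\Gamma \vdash_{\vec{x}} \exists y\colon A.\ \phi$. The only points that genuinely require care are justifying the backward use of the $(\exists)$ rule and the bookkeeping of substitution into a formula with a bound occurrence of $y$; everything else is a routine chaining of the inference rules.
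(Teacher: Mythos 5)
Your proof is correct and is exactly the argument the paper gives: identity on the existential, backward application of the double-lined $(\exists)$ rule, substitution via (Refl) and (Sub) noting that $y$ is bound so the succedent is unchanged, and a final (Cut) against the hypothesis. Nothing to add.
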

\begin{proof}
Simply consider the following proof tree built from the inference rules.
\begin{prooftree}
\AxiomC{$\Gamma \vdash_{\vec{x}} \phi[s/y]$}
\AxiomC{}
\UnaryInfC{$s =_{\vec{x}} s$}
\AxiomC{}
\UnaryInfC{$\exists y\colon A.\ \phi \vdash_{\vec{x}} \exists y\colon A.\ \phi$}
\UnaryInfC{$\phi \vdash_{\vec{x},y} \exists y\colon A.\ \phi$}
\BinaryInfC{$\phi[s/y] \vdash_{\vec{x}} \exists y\colon A.\ \phi$}
\BinaryInfC{$\Gamma \vdash_{\vec{x}} \exists y\colon A.\ \phi$}
\end{prooftree}
The result follows.
\end{proof}

\begin{corollary}
 If we have $\Gamma, \phi \vdash_{\vec{x},y} \psi$, then we can deduce $\Gamma, \exists y\colon A.\ \phi \vdash_{\vec{x}} \exists y\colon A.\ \psi$.
\end{corollary}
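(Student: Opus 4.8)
The plan is to chain Lemma~\ref{prop:right_existential_rule} (the right-introduction rule for $\exists$) with the $(\exists)$ inference rule. Both ingredients are already in hand, so the argument is short; the only care needed is in handling the bound variable $y$ so as to avoid variable capture. One could alternatively present the whole deduction as a single proof tree, grafting the proof tree of \cref{prop:right_existential_rule} onto an application of $(\exists)$.

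First I would apply \cref{prop:right_existential_rule} in the enlarged context $\vec{x}, y$. To do this cleanly, rename the bound occurrence of $y$ in $\exists y\colon\! A.\ \psi$ to a fresh variable $z$, so that $\exists y\colon\! A.\ \psi$ is, up to $\alpha$-equivalence, the formula $\exists z\colon\! A.\ \psi[z/y]$ in context $\vec{x}$. Now $\psi[z/y]$ is a formula in context $\vec{x}, y, z$, and $y$ is a term of type $A$ in context $\vec{x}, y$ with $\psi[z/y][y/z] = \psi$. Applying \cref{prop:right_existential_rule} with these data to the hypothesis $\Gamma, \phi \vdash_{\vec{x},y} \psi$ yields $\Gamma, \phi \vdash_{\vec{x},y} \exists z\colon\! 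A.\ \psi[z/y]$, that is, $\Gamma, \phi \vdash_{\vec{x},y} \exists y\colon\! A.\ \psi$.

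Next I would apply the $(\exists)$ rule directly. Its premise is exactly $\Gamma, \phi \vdash_{\vec{x},y} \exists y\colon\! A.\ \psi$; the target formula $\exists y\colon\! A.\ \psi$ lives in the context $\vec{x}$ (it has no free $y$); and $y$ does not occur free in any formula of $\Gamma$, since these are formulae in context $\vec{x}$ to begin with (weakened into $\vec{x},y$ for the hypothesis). Hence the side conditions of $(\exists)$ are met, and its conclusion is precisely $\Gamma, \exists y\colon\! A.\ \phi \vdash_{\vec{x}} \exists y\colon\! A.\ \psi$, as desired.

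I expect the main obstacle to be purely bureaucratic: keeping straight the side conditions on the inference rules (which variables occur in which context) and performing the $\alpha$-renaming carefully so that the substitution in \cref{prop:right_existential_rule} does not capture $y$. There is no genuine mathematical difficulty here.
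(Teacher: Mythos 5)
Your proof is correct and is exactly the intended derivation (the paper states this as an unproved corollary of \cref{prop:right_existential_rule}): use the lemma with the term $y$ itself to introduce the existential on the right in the context $\vec{x},y$, then discharge $y$ on the left with the $(\exists)$ rule, whose side conditions hold since $\Gamma$ and $\exists y\colon A.\ \psi$ contain no free $y$. The $\alpha$-renaming you perform is the right way to make the substitution step in the lemma legitimate.
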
 %

\begin{lemma}\label{prop:sub_out_equal_terms}
 Suppose that $y$ is of sort $A$, that $s$, $s'$ and $s''$ are terms of type $A$ in context $\vec{x}$ and that $\phi$ and $\psi$ are formulae in the context $\vec{x} \cup \{y\}$.
 If we are given $s =_{\vec{x}} s'$, $s =_{\vec{x}} s''$ and $\Gamma, \phi[s/y] \vdash_{\vec{x}} \psi[s/y]$, we can deduce $\Gamma, \phi[s'/y] \vdash_{\vec{x}} \psi[s''/y]$.
\end{lemma}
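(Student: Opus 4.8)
The plan is to reduce the statement to two applications of a ``substitution congruence'' for formulae, combined with the (Cut) rule. First I would prove the auxiliary fact that for any terms $a,b$ of type $A$ in context $\vec{x}$ with $a =_{\vec{x}} b$, and any formula $\theta$ in context $\vec{x}\cup\{y\}$, both $\theta[a/y] \vdash_{\vec{x}} \theta[b/y]$ and $\theta[b/y] \vdash_{\vec{x}} \theta[a/y]$ are derivable. This is immediate: the axiom (Id) in the extended context gives $\theta \vdash_{\vec{x},y} \theta$, and applying (Sub) with the equation $a =_{\vec{x}} b$ yields $\theta[a/y] \vdash_{\vec{x}} \theta[b/y]$; applying (Sub) instead with the equation $b =_{\vec{x}} a$ (obtained from (Sym)) yields the reverse sequent.

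With this auxiliary fact established, the lemma follows by chaining. From $s =_{\vec{x}} s'$ and (Sym) we obtain $s' =_{\vec{x}} s$, so the auxiliary fact applied to $\phi$ gives $\phi[s'/y] \vdash_{\vec{x}} \phi[s/y]$; and from $s =_{\vec{x}} s''$ the auxiliary fact applied to $\psi$ gives $\psi[s/y] \vdash_{\vec{x}} \psi[s''/y]$. Now (Cut) applied to $\phi[s'/y] \vdash_{\vec{x}} \phi[s/y]$ and the hypothesis $\Gamma, \phi[s/y] \vdash_{\vec{x}} \psi[s/y]$ produces $\Gamma, \phi[s'/y] \vdash_{\vec{x}} \psi[s/y]$, and a second application of (Cut) against $\psi[s/y] \vdash_{\vec{x}} \psi[s''/y]$ produces the desired $\Gamma, \phi[s'/y] \vdash_{\vec{x}} \psi[s''/y]$.

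\textbf{Main obstacle.} There is no deep difficulty here — this is a derived-rule lemma — so the only thing requiring care is the bookkeeping about contexts and variable occurrences: one should check that $y$ does not occur in $\vec{x}$ (implicit in ``$\phi$ is a formula in context $\vec{x}\cup\{y\}$''), that $s, s', s''$ genuinely live in context $\vec{x}$ so that $\phi[s/y]$ and the like are genuine $\vec{x}$-formulae, and that each use of (Cut) is instantiated with the correct context fragments $\Gamma$, $\{\phi[s'/y]\}$, $\emptyset$ (recalling that contexts are treated as sets, so a repeated hypothesis formula is merged). The presence of \emph{two} distinct terms $s'$ and $s''$ in the conclusion, rather than one, is precisely what the asymmetry of the (Sub) rule forces, and it comes out for free from the two independent applications of the auxiliary fact; no use of the double-line rules or of any connective-specific rule is needed.
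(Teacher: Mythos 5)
Your proof is correct and takes essentially the same route as the paper's own proof tree: the same combination of (Sym), (Id) and (Sub) yields $\phi[s'/y] \vdash_{\vec{x}} \phi[s/y]$ and $\psi[s/y] \vdash_{\vec{x}} \psi[s''/y]$, and two applications of (Cut) then finish the argument. The only cosmetic differences are that you cut on $\phi$ before cutting on $\psi$ (the paper does the reverse) and that your auxiliary fact states both directions of the substitution congruence when only one direction is used for each of $\phi$ and $\psi$.
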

\begin{proof}
 Consider the following proof tree.
\begin{prooftree}
\AxiomC{$s =_{\vec{x}} s'$}
\UnaryInfC{$s' =_{\vec{x}} s$}
\AxiomC{}
\UnaryInfC{$\phi \vdash_{\vec{x},y} \phi$}
\BinaryInfC{$\phi[s'/y] \vdash_{\vec{x}} \phi[s/y]$}
\AxiomC{$\Gamma, \phi[s/y] \vdash_{\vec{x}} \psi[s/y]$}
\AxiomC{$s =_{\vec{x}} s''$}
\AxiomC{}
\UnaryInfC{$\psi \vdash_{\vec{x},y} \psi$}
\BinaryInfC{$\psi[s/y] \vdash_{\vec{x}} \psi[s''/y]$}
\BinaryInfC{$\Gamma, \phi[s/y] \vdash_{\vec{x}} \psi[s''/y]$}
\BinaryInfC{$\Gamma, \phi[s'/y] \vdash_{\vec{x}} \psi[s''/y]$}
\end{prooftree}
The result follows.
\end{proof}

We can now use these rules in order to deduce judgements, just as with the primitive inference rules.

\begin{definition}
 A \emph{coherent theory} $T$ may be defined from a signature $\Sigma$ and a set of judgements (called axioms). %
 A judgement $J$ is a \emph{theorem} of $T$ if it can be derived from the axioms by repeated application of the inference rules of coherent logic.
\end{definition}

We now describe models of coherent logic. A model provides a concrete instantiation of what were hitherto purely syntactic notions.
Propositional coherent logic can be interpreted in a distributive lattice, while sorts and functions can be interpreted in a category.
To interpret coherent logic these must be combined into a \emph{coherent hyperdoctrine}.

\begin{definition}\label{def:indexed_dist_lattice}
 Let $\Cvar$ be a category with finite products. A \emph{$\Cvar$-indexed distributive lattice\index{indexed distributive lattice|textbf}} is a functor $P\colon \Cvar\op \to \DLat$.
\end{definition}

\begin{definition}\label{def:hyperdoctrine}
 Let $\Cvar$ be a category with finite products. A \emph{coherent hyperdoctrine\index{hyperdoctrine|(textbf} on $\Cvar$ (without equality)} is a $\Cvar$-indexed distributive lattice $P$ such that:
 \begin{enumerate}
  \item for every product projection $\pi_1\colon X \times Y \to X$, the lattice homomorphism $\pi_1^* = P(\pi_1)$ has a left adjoint $\exists_Y$ satisfying $\exists_Y(p \wedge \pi^*_1(q)) = \exists_Y(p) \wedge q$,
  \item the $X$-indexed family of maps $(\exists_Y)_X \colon P( X \times Y) \to P(X)$ is natural in $X$.
 \end{enumerate}
\end{definition}

We think of $P(X)$ as giving the set of predicates on $X$ and the lattice structure allows for the interpretation of conjunctions and disjunctions.
This is most obvious for the coherent hyperdoctrine given by powerset on the category of sets.
The action of $P$ on morphisms corresponds to variable substitution and we write $P(f)$ as $f^*$\glsadd{upperStar}.

The two conditions making an indexed distributive lattice into a coherent hyperdoctrine are necessary to handle existential quantification.
The left adjoint of $\pi^*_1$ allows us to define existential quantification. The rest of condition (i) is known as \emph{Frobenius reciprocity}\index{Frobenius reciprocity|textbf}
and is needed to show $(\exists y \in Y.\ p(x,y) \wedge q(x)) \iff (\exists y \in Y.\ p(x,y)) \wedge q(x)$.
Condition (ii) is called the \emph{Beck--Chevalley condition}\index{Beck-Chevalley condition@Beck--Chevalley condition|textbf} and is necessary for existential quantification to respect substitution.

We can now describe how to interpret coherent logic\index{logic!coherent|)}\index{coherent!logic|)} in a hyperdoctrine.

\begin{definition}
 Let $P\colon \Cvar\op \to \DLat$ be a coherent hyperdoctrine\index{hyperdoctrine|)}. A \emph{structure} $M$ in $(\Cvar,P)$ for a signature $\Sigma$ is given by
 \begin{itemize}
  \item an object $M(A)$ in $\Cvar$ for each sort $A$ in $\Sigma$,
  \item a morphism $M(f)\colon M(A_1)\times\dots\times M(A_n) \to M(B)$ in $\Cvar$ for each function symbol $f\colon A_1\times\dots\times A_n \to B$ in $\Sigma$,
  \item an element $M(R) \in P(M(A_1)\times\dots\times M(A_n))$ for each relation symbol $R \subseteq A_1\times\dots\times A_n$ in $\Sigma$.
 \end{itemize}
\end{definition}
Given a structure $M$, we may assign a denotation to a term $t\colon B$ in the context $\vec{x}$ in the form of a morphism $\llbracket t\rrbracket \colon \prod_{i = 1}^m M(X_i) \to M(B)$ %
by recursively applying the following rules:
\begin{itemize}
 \item $\llbracket x_i\rrbracket = \pi_i$ for a variable $x_i$, where $\pi_i$ is the $i^\text{th}$ product projection,
 \item $\llbracket f(t_1,\dots,t_n) \rrbracket = M(f) \circ (\llbracket t_1\rrbracket, \dots, \llbracket t_n\rrbracket)$.
\end{itemize}
We say that $M$ \emph{satisfies} an equation $t =_{\vec{x}} s$ if $\llbracket t\rrbracket = \llbracket s\rrbracket$ as morphisms.

We may assign a denotation $\llbracket \phi \rrbracket \in P(\prod_{i = 1}^m M(X_i))$ to a formula $\phi$ in the context $\vec{x}$ by recursively applying the rules:
\begin{itemize}
 \item $\llbracket R(t_1,\dots,t_n) \rrbracket = (\llbracket t_1\rrbracket, \dots, \llbracket t_n\rrbracket)^*(M(R))$,
 \item $\llbracket \bot \rrbracket = 1$,
 \item $\llbracket \top \rrbracket = 0$,
 \item $\llbracket \phi \wedge \psi \rrbracket = \llbracket \phi \rrbracket \wedge \llbracket \psi \rrbracket$,
 \item $\llbracket \phi \vee \psi \rrbracket = \llbracket \phi \rrbracket \vee \llbracket \psi \rrbracket$,
 \item $\llbracket \exists y\colon A.\ \phi \rrbracket = \exists_A(\llbracket \phi \rrbracket)$, where $\exists_A$ is left adjoint to $P(\pi_{\widehat A})$ and $\pi_{\widehat A}$ is the product projection from 
       $(\prod_{i = 1}^m M(X_i)) \times A$ to $\prod_{i = 1}^m M(X_i)$.
\end{itemize}
We say $M$ \emph{satisfies} a sequent\index{sequent} $\Gamma \vdash_{\vec{x}} \phi$ if $\bigwedge_{\gamma \in \Gamma} \llbracket \gamma \rrbracket \le \llbracket \phi \rrbracket$.

A structure $M$ with signature $\Sigma$ is a \emph{model}\index{model|textbf} of such a theory $T$ if it satisfies every axiom of $T$.

\begin{lemma}[Substitution]
 Suppose $t$ is a term and $\phi$ is a formula in the same context $\vec{x}$ consisting of variables $x_1\colon A_1,\dots,x_n\colon A_n$.
 Furthermore, for each $1 \le i \le n$ suppose $s_i \colon A_i$ is a term in the context $\vec{y}$.
 Then for any structure we have \[\llbracket t[s_1/x_1,\dots,t_n/x_n] \rrbracket = \llbracket t \rrbracket \circ (\llbracket s_1\rrbracket, \dots, \llbracket s_n\rrbracket)\]
 and \[\llbracket \phi[s_1/x_1,\dots,t_n/x_n]\rrbracket = (\llbracket s_1\rrbracket, \dots, \llbracket s_n\rrbracket)^*(\llbracket \phi \rrbracket).\]
\end{lemma}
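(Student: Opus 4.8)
The plan is to prove the two identities together by structural induction, first on terms and then on formulae, the formula induction depending on the already-established term identity. Write $\langle \vec{s} \rangle$ for the morphism $(\llbracket s_1 \rrbracket, \dots, \llbracket s_n \rrbracket)\colon \prod_{j} M(Y_j) \to \prod_{i} M(X_i)$ induced by the universal property of the product, and abbreviate the simultaneous substitution $[s_1/x_1, \dots, s_n/x_n]$ as $[\vec{s}/\vec{x}]$. I would record once the elementary fact that pairing commutes with precomposition: $\langle u_1, \dots, u_k \rangle \circ g = \langle u_1 g, \dots, u_k g \rangle$.

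For the term identity, induct on the grammar of $t$. If $t = x_i$, then $t[\vec{s}/\vec{x}] = s_i$ while $\llbracket x_i \rrbracket \circ \langle \vec{s} \rangle = \pi_i \circ \langle \vec{s} \rangle = \llbracket s_i \rrbracket$. If $t = f(t_1, \dots, t_k)$, then $t[\vec{s}/\vec{x}] = f(t_1[\vec{s}/\vec{x}], \dots, t_k[\vec{s}/\vec{x}])$, so using the definition of the denotation, the inductive hypothesis on each $t_\ell$, and the pairing fact,
\[ \llbracket t[\vec{s}/\vec{x}] \rrbracket = M(f) \circ \langle \llbracket t_1 \rrbracket \langle \vec{s} \rangle, \dots, \llbracket t_k \rrbracket \langle \vec{s} \rangle \rangle = M(f) \circ \langle \llbracket t_1 \rrbracket, \dots, \llbracket t_k \rrbracket \rangle \circ \langle \vec{s} \rangle = \llbracket t \rrbracket \circ \langle \vec{s} \rangle. \]

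For the formula identity, induct on the grammar of $\phi$. The atomic case $\phi = R(t_1, \dots, t_k)$ follows from the term identity and the contravariant functoriality of $P$, since $\langle \vec{s} \rangle^* \circ \langle \llbracket t_1 \rrbracket, \dots, \llbracket t_k \rrbracket \rangle^* = (\langle \llbracket t_1 \rrbracket, \dots, \llbracket t_k \rrbracket \rangle \circ \langle \vec{s} \rangle)^* = \langle \llbracket t_1[\vec{s}/\vec{x}] \rrbracket, \dots, \llbracket t_k[\vec{s}/\vec{x}] \rrbracket \rangle^*$, and applying this to $M(R)$ yields exactly $\llbracket R(\vec{t})[\vec{s}/\vec{x}] \rrbracket$. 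The cases $\top$, $\bot$, $\phi_1 \wedge \phi_2$, $\phi_1 \vee \phi_2$ are immediate: $\langle \vec{s} \rangle^* = P(\langle \vec{s} \rangle)$ is a distributive lattice homomorphism, and the denotations of these are built using exactly the corresponding bounded-lattice operations, so applying $\langle \vec{s} \rangle^*$ and using the inductive hypothesis on subformulae gives the claim.

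The case $\phi = \exists y\colon A.\ \psi$ is where the hyperdoctrine structure is genuinely used and is, I expect, the main obstacle. By the capture-avoidance convention I would first assume the bound variable $y$ does not occur in $\vec{y}$, so that $\phi[\vec{s}/\vec{x}] = \exists y\colon A.\ (\psi[\vec{s}/\vec{x}])$, the substitution into $\psi$ (a formula in context $\vec{x}, y$) keeping $y$ fixed. Applying the inductive hypothesis to $\psi$ with the substitution $[s_1/x_1, \dots, s_n/x_n, y/y]$ into the context $\vec{y}, y$, and observing that the induced tuple $(\llbracket s_1 \rrbracket, \dots, \llbracket s_n \rrbracket, \llbracket y \rrbracket)$ in that context is precisely $\langle \vec{s} \rangle \times \id_A$ (its first $n$ components factoring through the projection away from $A$, the last being the projection onto $A$), we get $\llbracket \psi[\vec{s}/\vec{x}] \rrbracket = (\langle \vec{s} \rangle \times \id_A)^*(\llbracket \psi \rrbracket)$. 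Hence $\llbracket \phi[\vec{s}/\vec{x}] \rrbracket = \exists_A\big((\langle \vec{s} \rangle \times \id_A)^*(\llbracket \psi \rrbracket)\big)$, and the Beck--Chevalley condition --- condition (ii) of \cref{def:hyperdoctrine}, i.e.\ naturality in $X$ of $(\exists_A)_X$ applied along the morphism $\langle \vec{s} \rangle$ --- rewrites this as $\langle \vec{s} \rangle^*\big(\exists_A(\llbracket \psi \rrbracket)\big) = \langle \vec{s} \rangle^*(\llbracket \phi \rrbracket)$, as required. The work in this last case is entirely bookkeeping: renaming the bound variable, verifying that the relevant pairing really is $\langle \vec{s} \rangle \times \id_A$, and invoking Beck--Chevalley for the correct naturality square --- but it is the only place where anything beyond ``indexed distributive lattice'' is needed.
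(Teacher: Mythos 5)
Your proof is correct and follows exactly the route the paper indicates: the paper's own ``proof'' is just the one-line remark that the result follows by structural induction (on terms and then on formulae) with all details omitted, and your write-up supplies those details accurately, including the two points where something beyond routine bookkeeping is needed (that $P(\langle\vec{s}\rangle)$ is a lattice homomorphism for the propositional connectives, and the Beck--Chevalley condition for the existential case). Nothing to correct.
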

\begin{proof}
 The proof is by structural induction on $\phi$. We omit the details. %
\end{proof}

\begin{theorem}[Soundness]
 If $M$ is a model of a theory $T$, then $M$ satisfies every theorem of $T$.
\end{theorem}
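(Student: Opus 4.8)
The plan is to prove this by induction on the structure of the derivation witnessing that a judgement $J$ is a theorem of $T$. Equivalently, I would show that the class of judgements satisfied by $M$ contains every axiom of $T$ and is closed under each inference rule of coherent logic. The first part is immediate, since $M$ being a model of $T$ means exactly that it satisfies every axiom, so the entire content is the \emph{soundness of each rule}: if $M$ satisfies all the premises of a rule, it satisfies the conclusion. For the rules displayed with a double line I would check both directions of the implication. Throughout, the two standing facts I would lean on are that each reindexing map $f^{*} = P(f)$ is a lattice homomorphism (hence monotone), and the Substitution Lemma proved just above.

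For the rules that do not touch the context, quantifiers or substitution, soundness is an immediate consequence of the lattice structure of $P(\prod_i M(X_i))$ together with the semantic clauses for the connectives: (Id) is $\bigwedge_{\gamma\in\Gamma}\llbracket\gamma\rrbracket\wedge\llbracket\phi\rrbracket\le\llbracket\phi\rrbracket$; (Cut) follows from $\bigwedge\llbracket\Gamma\rrbracket\wedge\bigwedge\llbracket\Delta\rrbracket\le\bigwedge\llbracket\Delta\rrbracket\wedge\llbracket\phi\rrbracket\le\llbracket\psi\rrbracket$; the $(\wedge)$ rule is the equivalence $\bigwedge\llbracket\Gamma\rrbracket\le\llbracket\phi\rrbracket\wedge\llbracket\psi\rrbracket \iff \bigwedge\llbracket\Gamma\rrbracket\le\llbracket\phi\rrbracket$ and $\bigwedge\llbracket\Gamma\rrbracket\le\llbracket\psi\rrbracket$; the $(\top)$ and $(\bot)$ rules hold because $\llbracket\top\rrbracket$ and $\llbracket\bot\rrbracket$ are the extreme elements of the lattice in the order with respect to which satisfaction is defined; and the $(\vee)$ rule uses distributivity of $P(X)$ to rewrite $\bigwedge\llbracket\Gamma\rrbracket\wedge(\llbracket\phi\rrbracket\vee\llbracket\psi\rrbracket)$ as the join of $\bigwedge\llbracket\Gamma\rrbracket\wedge\llbracket\phi\rrbracket$ and $\bigwedge\llbracket\Gamma\rrbracket\wedge\llbracket\psi\rrbracket$, both of which are $\le\llbracket\chi\rrbracket$. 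The equational rules (Refl), (Sym), (Trans) are just reflexivity, symmetry and transitivity of equality of morphisms in $\Cvar$.

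For the rules that change the context or substitute a term, I would invoke the Substitution Lemma to express the denotations of the new judgements as reindexings of the old ones, and then use monotonicity of $f^{*}$. For (Weak), moving from context $\vec x$ to $\vec x,y$ is reindexing along the projection $\pi\colon(\prod_i M(X_i))\times M(A)\to\prod_i M(X_i)$ (this is the Substitution Lemma applied to the weakening substitution $x_i \mapsto x_i$), and $\pi^{*}$ preserves $\le$; the version for equations is the corresponding statement for term denotations. For (Sub), if $M$ satisfies $s =_{\vec x} s'$ then $\llbracket s\rrbracket = \llbracket s'\rrbracket$, so the reindexing morphisms $(\mathrm{id},\llbracket s\rrbracket)$ and $(\mathrm{id},\llbracket s'\rrbracket)$ coincide; applying the Substitution Lemma and then this common homomorphism to $\bigwedge\llbracket\Gamma\rrbracket\le\llbracket\phi\rrbracket$ yields $\bigwedge\llbracket\Gamma[s/y]\rrbracket\le\llbracket\phi[s'/y]\rrbracket$, and again the equational case is analogous.

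The substantive case is the existential rule $(\exists)$, and this is where I expect the only real care to be needed — everything else is bookkeeping. Since $y$ (of sort $A$) is not free in $\Gamma$ or $\psi$, the Substitution Lemma (weakening) gives that their denotations in context $\vec x,y$ are $\pi^{*}$ of their denotations in context $\vec x$, where $\pi\colon(\prod_i M(X_i))\times M(A)\to\prod_i M(X_i)$. Writing $G=\bigwedge\llbracket\Gamma\rrbracket$, $\Phi=\llbracket\phi\rrbracket$ and $\Psi=\llbracket\psi\rrbracket$, satisfaction of the premise $\Gamma,\phi\vdash_{\vec x,y}\psi$ reads $\pi^{*}(G)\wedge\Phi\le\pi^{*}(\Psi)$, while satisfaction of the conclusion $\Gamma,\exists y\colon\! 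A.\,\phi\vdash_{\vec x}\psi$ reads $G\wedge\exists_A(\Phi)\le\Psi$. By Frobenius reciprocity, $G\wedge\exists_A(\Phi)=\exists_A(\pi^{*}(G)\wedge\Phi)$, and then the adjunction $\exists_A\dashv\pi^{*}$ converts $\exists_A(\pi^{*}(G)\wedge\Phi)\le\Psi$ into $\pi^{*}(G)\wedge\Phi\le\pi^{*}(\Psi)$; so the two statements are literally equivalent, which is precisely what the double line asserts. The Beck--Chevalley condition is not used directly here, but it enters behind the scenes in the proof of the Substitution Lemma (the quantifier case), which the weakening step above relies on. The hard part, such as it is, is simply keeping the two adjoints and the Frobenius law pointing in the right directions and matching them against the exact shape of the rule; once that is done, soundness is complete by induction.
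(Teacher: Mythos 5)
Your proposal is exactly the argument the paper intends: the paper's proof reads ``We must check that the judgements satisfied by $M$ are closed under each of the inference rules. We omit the details,'' and you have correctly supplied those details, including the one genuinely non-trivial case (the $(\exists)$ rule via Frobenius reciprocity and the adjunction $\exists_A \dashv \pi^*$) and the correct observation that Beck--Chevalley enters only through the Substitution Lemma.
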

\begin{proof}
 We must check that the judgements satisfied by $M$ are closed under each of the inference rules. We omit the details.
\end{proof}

In practice we will usually start with a hyperdoctrine and then define an associated theory which we can use to prove results about it.
We define a sort for each object of $\Cvar$ under consideration and a function symbol for each morphism in question. We then define a relation symbol for given elements of
$P(X_1 \times \dots \times X_n)$ for appropriate choices of objects $X_1,\dots,X_n$. Finally, relevant equalities between morphisms and inequalities in between elements
in the distributive lattices are interpreted as logical judgements and become the axioms of the coherent theory. Thus, this theory has a tautologous interpretation in $P$
and results proved in the logic of this theory can then be translated back into results about the original hyperdoctrine.
If $\Cvar$ is small, then the \emph{internal logic}\index{logic!internal|textbf} of the hyperdoctrine $P$ is the theory given by taking \emph{all} objects, morphisms and relations and for which all judgements satisfied by $P$ are axioms,
but we will abuse language and refer to our more restricted theories as internal logics.

In practice, we will often take a more informal approach to using the internal logic than what is demonstrated in \cref{prop:right_existential_rule}.
Just like how we never use completely formal sequent calculus for intuitionistic or classical logic,
we will often leave the context implicit and use natural language instead of proof trees even when dealing with internal logic. It should be straightforward to translate the
informal descriptions into the formal proofs. Occasionally, we will omit a proof in the internal logic when it is identical to a well-known result from standard mathematics.

Our main example of a coherent hyperdoctrine will be given by taking $\Cvar$ to be the category $\OLoc$\glsadd{OLoc} of overt\index{overt}\index{locale!overt}\index{frame!overt} locales (with general locale maps)
and $P$ to be the obvious forgetful functor from its opposite category into the category of distributive lattices\index{hyperdoctrine!of open sublocales|textbf}.
The Frobenius reciprocity condition comes from \cref{prop:overt_projection} and \cref{prop:open_map_module_adjoint},
while the Beck--Chevalley condition follows from \cref{prop:overt_projection} and applying \cref{prop:open_pullback_stable} to $\pi_1\colon X \times Y \to X$ and an arbitrary map $g\colon X' \to X$.

In this example, given an open $a$ we will often write $t \in a$ for the formula $R(t)$ where $R$ is the unary relation defined by $a$.
We will also occasionally write $\{x \colon X \mid \phi(x)\}$ to represent the open in $X$ described by the formula $\phi$ (in the context $X$).

When dealing with exponential objects we will write $f(x)$ to denote $\ev(f,x)$ where $\ev\colon Y^X \times X \to Y$\glsadd{ev} is the evaluation map.
In such a situation, we are permitted to use the following additional inference rule
\[
\AxiomC{$t(y) =_{\vec{x},y} t'(y)$}
\UnaryInfC{$t =_{\vec{x}} t'$}
\DisplayProof,
\]
which follows from the universal property of the exponential.

Finally, in the internal logic we will write the operations of an internal semiring using infix notation in the usual way, with $+$ for addition and juxtaposition for multiplication,
instead of using the formal names of the morphisms involved. Similar notation might also be used for other operations.

\subsection{Results from commutative ring theory}

In \cref{section:quantales_and_AIT,section:tangent_bundle} we will need a few results about commutative rings. %
We do not give full proofs here. See \cite{Matsumura1989rings} for a comprehensive account of commutative ring theory and \cite{VakilRisingSea} for a gentler introduction to algebraic geometry.

\begin{definition}
 Let $R$ be a ring and $S$ a multiplicative submonoid of $R$. The \emph{localisation\index{localisation!of rings|(textbf}} of $R$ with respect to $S$ is the initial ring homomorphism $\ell\colon R \to S^{-1}R$\glsadd{Sneg1R}
 sending every element of $S$ to an invertible element in the codomain. %
 
 Explicitly, for every map $f\colon R \to R'$ such that $f(s)$ is invertible for all $s \in S$, there is a unique map $\overline{f}\colon S^{-1}R \to R'$ making the following diagram commute.
 \begin{center}
  \begin{tikzpicture}[node distance=2.5cm, auto]
    \node (RS) {$S^{-1}R$};
    \node (R) [below of=RS] {$R$};
    \node (T) [right of=R] {$R'$};
    \draw[->] (R) to node [swap] {$f$} (T);
    \draw[->] (R) to node {$\ell$} (RS);
    \draw[dashed,->] (RS) to node {$\overline{f}$} (T);
  \end{tikzpicture}
 \end{center}
\end{definition}

The localisation can be constructed using `fractions' with denominators in $S$. The underlying set of $S^{-1}R$ is given by the quotient of $R \times S$ by the equivalence relation $(r,s) \sim (r',s') \iff \exists t \in S.\ rs't = r'st$.
We write $x/y$ for the equivalence class $[(x,y)]$. The ring operations are given by the usual rules for adding and multiplying fractions and
$\ell$ sends $x \in R$ to $x/1$.\index{localisation!of rings|)}

For the rest of this section we will need to \emph{assume excluded middle and even the occasional choice principle --- namely, dependent choice and the Boolean prime ideal theorem}.
This is so that we can make contact with classical constructions in algebraic geometry. We will be sure to indicate this when we use any of these results.

In classical mathematics with the Boolean prime ideal theorem, the frame $\Rad(R)$ of radical ideals of a ring $R$ is spatial and the points are given by the prime ideals of $R$.
The corresponding sober topological space is the classical description of the (Zariski) spectrum of $R$ and is used extensively in algebraic geometry.

The closure of a point in the spectrum is the simultaneous zero-set for every element of the corresponding prime ideal. These closed sets are called \emph{irreducible closed sets}\index{irreducible closed set|textbf}
as they cannot be expressed as finite unions of closed proper subsets. Closed points in the spectrum correspond to maximal ideals in $R$. 

The spectrum of $R$ is equipped with a sheaf of rings. We will not cover the details, since we use this purely for intuition.
From this we obtain a ring for each point of the spectrum: the \emph{stalk} at that point.
The elements of the stalk should be thought of as functions that are defined in a small neighbourhood of the point.

The stalks can be constructed by localisation\index{localisation!of rings|(textbf}.
Suppose $P$ is a prime ideal. Its complementary anti-ideal $P\comp$ is a submonoid of $R$.
The stalk at $P$ is given by localisation of $R$ with respect to this anti-ideal. We call this the \emph{localisation of $R$ at $P$} and denote the ring by $R_P$\glsadd{RP}.

\begin{definition}
 A \emph{local ring}\index{local ring|(textbf}\index{ring!local|see {local ring}} $(R, \m)$ is a ring $R$ equipped with a unique maximal ideal $\m$\glsadd{m}.
\end{definition}
\begin{remark}
 The elements lying outside of $\m$ in a local ring are precisely the invertible elements.
 Without excluded middle, local rings should instead be defined as rings for which the invertible elements form an anti-ideal\index{local ring|)},
 but we will not need this.
\end{remark}

The localisation of a ring at a prime $P$ is always a local ring where the maximal ideal is given by the image of $P$.
The localisation\index{localisation!of rings|)} map induces a bijection between the prime ideals of $R_P$ and the prime ideals of $R$ which are contained in $P$.

Recall that an ideal $I$ is prime if and only if $R/I$ is an integral domain and maximal if and only if $R/I$ is a field.
The quotient of $R_P$ by its maximal ideal is isomorphic to the field of fractions of the domain $R/P$ and the quotient map
is thought of as taking a `function' $f \in R_P$ defined around $P$ to its value at $P$. %

A function $f \in \m$ is zero at $P$. Conceptually, its Taylor series at $P$ has zero constant term. The linear term --- its first derivative at $P$ ---
is then given by quotienting out by $\m^2$.

\begin{definition}
 The \emph{Zariski cotangent space\index{cotangent space|see {tangent space}}} of a local ring $(R, \m)$ is the module $\m/\m^2$ considered as a vector space over the field $R/\m$.
 For a general ring $R$, the \emph{cotangent space at a prime $P$} %
 refers to Zariski cotangent space of $R_P$.
 
 We can then formally define the \emph{tangent space\index{tangent space|textbf}} to be the linear dual of the cotangent space: $\hom(\m/\m^2, R/\m)$.
 The tangent space of a ring $R$ at a prime $P$ is denoted by $T_P R$\glsadd{TPR}.
 This is the algebraic analogue of the tangent space in differential geometry.
\end{definition}

\begin{definition}
 A ring $R$ is said to be \emph{Noetherian}\index{Noetherian ring|textbf}\index{ring!Noetherian|textbf} if all of its ideals are finitely generated.
\end{definition}
\begin{remark}
 Noetherian rings are only really well-behaved under the assumption of the axiom of dependent choice (in addition to excluded middle),
 since many of the usual definitions become inequivalent without this assumption.
\end{remark}

\begin{proposition}
 Let $(R,\m)$ be a Noetherian local ring. Then $x_1, \dots, x_n$ is a minimal generating set for $\m$ if and only if the images of $x_1,\dots,x_n$ in $\m/\m^2$ form a basis.
\end{proposition}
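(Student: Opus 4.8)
The plan is to derive both implications from Nakayama's lemma. Concretely, I would use the following standard consequence of it (see \cite{Matsumura1989rings}): if $M$ is a finitely generated $R$-module and $N \subseteq M$ is a submodule with $N + \m M = M$, then $N = M$ (apply Nakayama to $M/N$). Since $R$ is Noetherian, $\m$ is finitely generated as an $R$-module, so this applies with $M = \m$; this is the only place Noetherianness enters.

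The next step is a purely formal observation about the residue field $k \coloneqq R/\m$: a family $(y_j)$ of elements of $\m$ generates $\m$ as an ideal if and only if the images $(\bar y_j)$ span $\m/\m^2$ over $k$. The forward direction is immediate, since writing an arbitrary element of $\m$ as $\sum r_j y_j$ with $r_j \in R$ and reducing modulo $\m^2$ yields $\sum \bar r_j \bar y_j$ with $\bar r_j \in k$. The converse is exactly the Nakayama consequence above: if the $\bar y_j$ span $\m/\m^2$, then $(y_j) + \m^2 = \m$, i.e.\ $(y_j) + \m \cdot \m = \m$, hence $(y_j) = \m$.

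With this in hand the backward implication is quick. If $\bar x_1, \dots, \bar x_n$ is a basis of $\m/\m^2$, then these images span, so $x_1, \dots, x_n$ generate $\m$; and no proper subset can generate $\m$, since otherwise its at most $n-1$ images would span the $n$-dimensional space $\m/\m^2$. Hence $x_1, \dots, x_n$ is a minimal generating set.

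For the forward implication, suppose $x_1, \dots, x_n$ is a minimal generating set. By the observation the images $\bar x_i$ span $\m/\m^2$, so it remains to prove linear independence. If this fails, there is a relation $\sum_{i=1}^n a_i \bar x_i = 0$ in $\m/\m^2$ with $a_i \in k$ not all zero; after relabelling, assume $a_1 \neq 0$ and lift the $a_i$ to $R$. Then $a_1 \notin \m$ is a unit and $\sum_i a_i x_i \in \m^2$, so $x_1 = a_1^{-1}\big(\sum_i a_i x_i\big) - \sum_{i \ge 2} a_1^{-1} a_i x_i$ lies in $\m^2 + (x_2, \dots, x_n)$. Since $x_2, \dots, x_n$ trivially lie there too, $\m = (x_1, \dots, x_n) \subseteq \m \cdot \m + (x_2, \dots, x_n)$, and the Nakayama consequence forces $(x_2, \dots, x_n) = \m$, contradicting minimality of the generating set. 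The only moving part of the argument is Nakayama's lemma; I do not expect a real obstacle beyond remembering to invoke Noetherianness so that $\m$ is finitely generated and the lemma applies.
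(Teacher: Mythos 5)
Your proof is correct and follows the standard Nakayama-based argument; the paper itself gives no argument here but simply cites \cite[Theorem 2.3]{Matsumura1989rings}, which is exactly the consequence of Nakayama's lemma you invoke, so your write-up supplies the details the paper omits. There are no gaps: Noetherianness enters only where you say it does, namely to guarantee that $\m$ is finitely generated so that Nakayama's lemma applies.
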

\begin{proof}
 This follows immediately from \cite[Theorem 2.3]{Matsumura1989rings}. %
\end{proof}

\begin{definition}
 Let $R$ be a ring and $K$ the set of finite chains of prime ideals in $R$.
 Then the \emph{Krull dimension\index{Krull dimension|textbf}\index{dimension of a ring|textbf}} of $R$ is given by $\dim R = \bigvee_{C \in K} (|C|-1) \in \N \cup \{-1,\infty\}$\glsadd{dim}.
\end{definition}

Under this definition a polynomial ring $k[x_1,\dots,x_n]$ over a field $k$ has dimension $n$.
The intuition behind this definition is that a point will be contained in a 1-dimensional curve, which will be contained in a 2-dimensional surface and so on,
giving a chain of length $n+1$ in the spectrum of an $n$-dimensional ring.

\begin{proposition}[Krull's height theorem]
 If $(R, \m)$ is a Noetherian local ring and $\m = \sqrt{(x_1,\dots,x_n)}$, then $\dim R \le n$.
\end{proposition}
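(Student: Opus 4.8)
The plan is to derive this from Krull's generalized principal ideal theorem, proved in the classical fashion. Since $R$ is local, every chain of primes extends to one ending at $\m$, so $\dim R$ equals the height of $\m$ --- the length of the longest strictly descending chain of primes starting at $\m$ --- and the hypothesis $\m = \sqrt{(x_1,\dots,x_n)}$ says precisely that $\m$ is the unique prime minimal over $(x_1,\dots,x_n)$. It therefore suffices to prove the following, for which (as the paper already warns) I would use excluded middle and, for the Noetherian bookkeeping, dependent choice: \emph{if $P$ is a prime in a Noetherian ring $A$ that is minimal over an ideal generated by $r$ elements, then the height of $P$ is at most $r$}. Applying this with $A = R$, $P = \m$ and $r = n$ gives the theorem; the degenerate case $n = 0$ merely records that $\m = \sqrt{(0)}$ is nilpotent, so $R$ is Artinian and $\dim R = 0$.

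The crux is the case $r = 1$, the principal ideal theorem. Here I would localise at $P$, so that $(A, P)$ is local with $P$ the only prime over a principal ideal $(x)$, whence $A/(x)$ is a Noetherian ring with nilpotent maximal ideal, i.e.\ Artinian. Given any prime $Q \subsetneq P$, consider the symbolic powers $Q^{(k)} = Q^{k}A_{Q}\cap A$. Their images in the Artinian ring $A/(x)$ form a descending chain, so $Q^{(k)} + (x) = Q^{(k+1)} + (x)$ for $k\gg 0$; since $x\notin Q$ is a unit in $A_{Q}$ one may cancel the $x$ to obtain $Q^{(k)} = Q^{(k+1)} + x\,Q^{(k)}$. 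As $x\in P$ lies in the Jacobson radical of the local ring $A$, Nakayama's lemma applied to the finitely generated module $Q^{(k)}/Q^{(k+1)}$ forces $Q^{(k)} = Q^{(k+1)}$, and a second application of Nakayama inside $A_{Q}$ gives $Q^{k}A_{Q} = 0$. Thus $A_{Q}$ is Artinian, so $Q$ is a minimal prime, and hence $P$ has height at most $1$. (One may instead route the stabilisation step through the Artin--Rees lemma or the Krull intersection theorem, which I would cite rather than reprove.)

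For the inductive step from $r-1$ to $r$, take a chain $P = P_0 \supsetneq P_1 \supsetneq \cdots \supsetneq P_s$, refined so that no prime lies strictly between $P_1$ and $P$. Since $P$ is minimal over $(a_1,\dots,a_r)$ while $P_1 \subsetneq P$, some $a_i\notin P_1$, say $a_r$; then $P$ is the only prime over $P_1 + (a_r)$, so for $i<r$ we may write $a_i^{N_i} = b_i + c_i a_r$ with $b_i\in P_1$. A short computation then shows $P$ is still minimal over $(b_1,\dots,b_{r-1},a_r)$, so in $A/(b_1,\dots,b_{r-1})$ the prime $P$ is minimal over the \emph{principal} ideal generated by the image of $a_r$; by the case $r = 1$ its height there is at most $1$, which forces the image of $P_1$ to be a minimal prime --- that is, $P_1$ is minimal over $(b_1,\dots,b_{r-1})$. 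The inductive hypothesis then gives that $P_1$ has height at most $r-1$, and since $P_1 \supsetneq \cdots \supsetneq P_s$ witnesses height $s-1$ we conclude $s\le r$.

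The genuine obstacle is the principal ideal theorem; the passage to $r$ generators, the reduction to the local case, and the identification of $\dim R$ with the height of $\m$ are all routine. Since the paper uses this statement only for intuition, in the text it is simply quoted from \cite{Matsumura1989rings}; the above is the plan one would follow to supply a self-contained proof.
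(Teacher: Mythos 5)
Your outline is the standard and correct proof of Krull's height theorem: reduction to the height statement for primes minimal over $r$-generated ideals, the principal ideal theorem via symbolic powers $Q^{(k)}$, stabilisation in the Artinian quotient $A/(x)$, two applications of Nakayama, and the induction on $r$ using the refinement of the chain so that nothing lies strictly between $P_1$ and $P$ (which exists by the ascending chain condition). The one delicate cancellation step --- passing from $Q^{(k)} + (x) = Q^{(k+1)} + (x)$ to $Q^{(k)} = Q^{(k+1)} + xQ^{(k)}$ --- is justified exactly as you say, either by $Q$-primariness of $Q^{(k)}$ or by inverting $x$ in $A_Q$ and intersecting back with $A$, and your observation that $x \notin Q$ follows from minimality of $P$ over $(x)$ is the point that makes this work. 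For comparison: the paper does not prove this statement at all; it is quoted from \cite{Matsumura1989rings} (Theorem 13.5), whose proof is essentially the argument you have reconstructed, so there is no divergence in method --- you have simply supplied the proof that the paper outsources, together with an accurate accounting of where excluded middle and dependent choice enter.
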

\begin{proof}
 See \cite[Theorem 13.5]{Matsumura1989rings}. %
\end{proof}

\begin{corollary}
 The dimension of a Noetherian local ring is finite and bounded by the dimension of its cotangent space.
\end{corollary}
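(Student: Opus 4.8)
The plan is to read this off by combining the preceding proposition on minimal generating sets with Krull's height theorem. Let $(R,\m)$ be a Noetherian local ring and let $P$ denote its maximal ideal $\m$ (so that $T_P R = \hom(\m/\m^2, R/\m)$ is what is meant by the tangent space). First I would observe that, since $R$ is Noetherian, the ideal $\m$ is finitely generated, and hence the quotient $\m/\m^2$ is a finitely generated module over the residue field $R/\m$ --- that is, a finite-dimensional vector space. Write $n = \dim_{R/\m} \m/\m^2$.

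Next, by the proposition relating minimal generating sets of $\m$ to bases of $\m/\m^2$, there is a generating set $x_1,\dots,x_n$ of $\m$ with exactly $n$ elements. Since $\m$ is maximal it is prime, and hence radical, so $\m = \sqrt{\m} = \sqrt{(x_1,\dots,x_n)}$. Krull's height theorem then applies and yields $\dim R \le n$; in particular the Krull dimension is finite (an element of $\N$, not $\infty$).

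Finally, the tangent space $T_P R = \hom(\m/\m^2, R/\m)$ is the linear dual of an $n$-dimensional vector space over the field $R/\m$, so $\dim T_P R = n = \dim \m/\m^2$. Combining this with the previous paragraph gives $\dim R \le n = \dim T_P R$, which is exactly the claimed bound by the dimension of the cotangent space.

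I do not expect any serious obstacle here: the statement is a formal consequence of the two cited classical results. The only points requiring a little care are (i) deducing the finite-dimensionality of $\m/\m^2$ from the Noetherian hypothesis, and (ii) using that the proposition on minimal generating sets (which tacitly invokes Nakayama's lemma) produces a generating set of $\m$ of size precisely $\dim \m/\m^2$, so that Krull's height theorem can be fed the sharpest possible $n$.
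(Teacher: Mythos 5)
Your argument is correct and is exactly the intended one: the paper states this as an immediate consequence of the preceding proposition on minimal generating sets together with Krull's height theorem, leaving the proof implicit. Your two points of care (finite generation of $\m$ giving finite-dimensionality of $\m/\m^2$, and feeding the sharpest $n$ into the height theorem) are precisely what makes the corollary go through.
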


\begin{definition}
 A Noetherian local ring $(R, \m)$ is said to be \emph{regular}\index{local ring!regular|textbf} if $\dim R = \dim \m/\m^2$.
 We say a ring is \emph{nonsingular}\index{nonsingular!ring|textbf}\index{ring!nonsingular|textbf} if all of its stalks are regular local rings.
\end{definition}
A Noetherian local ring is regular if its tangent space is `no larger than expected'.
Regular local rings correspond to the stalks of schemes at nonsingular points.
For instance, below is a graphical representation of the ring $\R[x,y]/(y^2-x^2(x+1))$.
This ring is 1-dimensional, but the cotangent space at the origin $(x,y)$ has dimension 2.

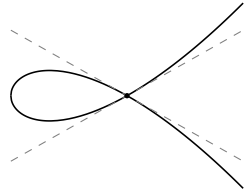
\begin{figure}[H]
\centering
\begin{tikzpicture}
\begin{axis}[hide axis,domain=-1:1,width=0.35\linewidth]
  \addplot[semithick,samples at={-1.0,-0.995,...,-0.65,-0.65,-0.6,...,1.0}] {+x*sqrt(x+1)};
  \addplot[semithick,samples at={-1.0,-0.995,...,-0.65,-0.65,-0.6,...,1.0}] {-x*sqrt(x+1)};
  \addplot[dashed,gray,samples=2] {x};
  \addplot[dashed,gray,samples=2] {-x};
  \fill[black] (0,0) circle(1pt);
\end{axis}
\end{tikzpicture}
\caption*{A nodal cubic curve with two tangent lines at the singularity.}
\end{figure}

If $R$ is a regular local ring of dimension $n$, then $\m$ has a generating set $x_1,\dots,x_n$.
Such a generating set is called a \emph{regular system of parameters}.

\begin{proposition}
 If $x_1,\dots,x_n$ is a regular system of parameters\index{regular system of parameters|textbf} for a regular local ring $R$, then for any $1 \le k \le n$, the quotient $R/(x_1,\dots,x_k)$ is also a regular local ring
 with regular system of parameters $[x_{k+1}], \dots, [x_n]$.
\end{proposition}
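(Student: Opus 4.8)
The plan is to argue directly for the ideal $I = (x_1,\dots,x_k)$ and the quotient $S = R/I$, without inducting on $k$, since the cotangent-space computation is uniform in $k$. First I would note that $S$ is again a Noetherian local ring: it is the quotient of the Noetherian local ring $R$ by the proper ideal $I \subseteq \m$, so it is Noetherian, local with maximal ideal $\m/I$, and has residue field $(R/I)/(\m/I) \cong R/\m$.

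The core of the proof is a computation of the cotangent space of $S$. There is a natural isomorphism $(\m/I)/(\m/I)^2 \cong \m/(\m^2 + I)$, and the surjection $\m/\m^2 \twoheadrightarrow \m/(\m^2 + I)$ has kernel $(\m^2 + I)/\m^2$. Reducing coefficients modulo $\m$ shows that this kernel is precisely the $R/\m$-span of the images $\bar x_1,\dots,\bar x_k$ in $\m/\m^2$. Now $x_1,\dots,x_n$ is a minimal generating set of $\m$ --- it generates $\m$ and consists of $n = \dim R = \dim_{R/\m}\m/\m^2$ elements --- so by the preceding proposition its image is a basis of $\m/\m^2$. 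Hence the kernel above has dimension $k$, so $(\m/I)/(\m/I)^2$ has dimension $n-k$ over $R/\m$, with basis the images of $x_{k+1},\dots,x_n$; in particular these are $n-k$ distinct elements. By Nakayama's lemma, $[x_{k+1}],\dots,[x_n]$ is a (minimal) generating set of $\m/I$.

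It now suffices to prove $\dim S = n-k$, for then $\dim S$ equals the dimension of its cotangent space, so $S$ is a regular local ring, and $[x_{k+1}],\dots,[x_n]$, being a generating set of $\m/I$ with $n-k = \dim S$ elements, is a regular system of parameters for it. The bound $\dim S \le n-k$ is immediate from the preceding corollary applied to $S$. For the reverse bound I would put $d_j = \dim R/(x_1,\dots,x_j)$ for $0 \le j \le n$, so $d_0 = \dim R = n$ and $d_n = \dim(R/\m) = 0$. For each $j$ we have $d_{j-1} - 1 \le d_j \le d_{j-1}$: the right inequality because $R/(x_1,\dots,x_j)$ is a quotient of $R/(x_1,\dots,x_{j-1})$, and the left because quotienting a Noetherian local ring by one element of its maximal ideal lowers the Krull dimension by at most one (a standard consequence of Krull's height theorem; see \cite{Matsumura1989rings}). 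Thus the $n$ nonnegative integers $d_{j-1} - d_j$ are each at most $1$ and sum to $d_0 - d_n = n$, so each equals $1$, giving $d_k = n-k$ as required.

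The only genuinely external input is the lower bound $\dim S \ge n-k$; everything else is the cotangent-space bookkeeping of the second paragraph together with Nakayama. If one preferred to avoid even that, one could instead use that a regular local ring is an integral domain and invoke Krull's principal ideal theorem to deduce $d_j \ge d_{j-1} - 1$, but citing the dimension inequality directly is the shortest route. I do not anticipate any subtlety beyond keeping careful track of which ring each maximal ideal belongs to.
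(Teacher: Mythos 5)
Your proof is correct. Note that the paper does not actually prove this proposition --- it simply defers to \cite[Theorem 14.2]{Matsumura1989rings} --- so there is nothing internal to compare against; what you have written is essentially the standard textbook argument behind that citation, with all details filled in. The two halves of your argument are both sound: the cotangent-space computation $(\m/I)/(\m/I)^2 \cong \m/(\m^2+I)$ correctly identifies the kernel of $\m/\m^2 \twoheadrightarrow \m/(\m^2+I)$ as the span of $\bar x_1,\dots,\bar x_k$, which is $k$-dimensional because the images of a regular system of parameters form a basis of $\m/\m^2$ (by the proposition on minimal generating sets quoted earlier in the paper); and the telescoping argument $d_0 - d_n = n$ with $0 \le d_{j-1}-d_j \le 1$ forces $\dim R/(x_1,\dots,x_k) = n-k$. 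The one external ingredient you rely on, namely that killing a single element of the maximal ideal drops the Krull dimension of a Noetherian local ring by at most one, is standard and correctly attributed; everything else follows from results already stated in the paper (Krull's height theorem for the upper bound, Nakayama for generation).
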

\begin{proof}
 See \cite[Theorem 14.2]{Matsumura1989rings}.
\end{proof}

\begin{proposition}
 Regular local rings are integral domains.
\end{proposition}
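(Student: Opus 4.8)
The plan is to argue by induction on the Krull dimension $n = \dim R$, which is finite by the corollary to Krull's height theorem above; throughout we work classically, as assumed in this section. The base case $n = 0$ is immediate: regularity then says $\dim_{R/\m} \m/\m^2 = 0$, so $\m = \m^2$, and since $\m$ is finitely generated (by Noetherianity), Nakayama's lemma forces $\m = 0$. Hence $R$ is a field, and in particular an integral domain.

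For the inductive step, suppose $n \ge 1$ and that every regular local ring of dimension less than $n$ is a domain. Since $R$ is Noetherian it has only finitely many minimal primes $\mathfrak{p}_1, \dots, \mathfrak{p}_k$. The first key step is to find an element $x \in \m$ with $x \notin \m^2$ and $x \notin \mathfrak{p}_i$ for all $i$: indeed $\m \not\subseteq \m^2$ because $\dim_{R/\m} \m/\m^2 = n \ge 1$, and $\m \not\subseteq \mathfrak{p}_i$ for each $i$ because $\mathfrak{p}_i \subseteq \m$ would then force $\m = \mathfrak{p}_i$ and hence $\dim R = 0$. Applying the prime avoidance lemma to $\m$ against the family $\m^2, \mathfrak{p}_1, \dots, \mathfrak{p}_k$ (all but one of which is prime) produces such an $x$. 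Its image in $\m/\m^2$ is nonzero, so by the characterisation of minimal generating sets via bases of $\m/\m^2$ we may extend $x$ to a regular system of parameters $x = x_1, x_2, \dots, x_n$ for $\m$; by the quoted proposition (taking $k=1$), $R/(x)$ is then a regular local ring of dimension $n-1$, hence a domain by the inductive hypothesis. In other words, $(x)$ is a prime ideal of $R$.

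It remains to conclude that $(0)$ is prime. The ideal $(x)$ contains some minimal prime $\mathfrak{p}$ of $R$, and $x \notin \mathfrak{p}$ by construction. For any $y \in \mathfrak{p} \subseteq (x)$, write $y = ax$; since $\mathfrak{p}$ is prime and $x \notin \mathfrak{p}$, we get $a \in \mathfrak{p}$, so $\mathfrak{p} = x\mathfrak{p} \subseteq \m\mathfrak{p}$. As $\mathfrak{p}$ is finitely generated, Nakayama's lemma gives $\mathfrak{p} = 0$, so $(0)$ is prime and $R$ is an integral domain. I expect the main obstacle to be the simultaneous choice of $x$ avoiding both $\m^2$ and every minimal prime — this is exactly what the prime avoidance lemma (with one non-prime ideal allowed) delivers — together with the closing Nakayama descent from $\mathfrak{p} = x\mathfrak{p}$ to $\mathfrak{p} = 0$; the rest is routine once the cited structural results about regular systems of parameters are invoked.
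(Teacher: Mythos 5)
Your proof is correct. The paper itself offers no argument here --- it simply cites \cite[Theorem 14.3]{Matsumura1989rings} --- and what you have written is essentially the classical proof from that reference: induction on $\dim R$, with Nakayama handling the base case, prime avoidance (in the version permitting one non-prime ideal, here $\m^2$) producing an $x \in \m$ outside $\m^2$ and outside every minimal prime, the quotient result giving that $R/(x)$ is regular of dimension $n-1$ and hence a domain, and the final Nakayama descent from $\mathfrak{p} = x\mathfrak{p}$ killing the minimal prime contained in $(x)$. One point worth making explicit: invoking the preceding proposition on $R/(x_1,\dots,x_k)$ is not circular, since its standard proof goes through Krull's height theorem and the count $\dim_{R/\m}\m/((x)+\m^2) = n-1$ rather than through the domain property. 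A minor slip in wording: the reason $\m \not\subseteq \mathfrak{p}_i$ is that $\m \subseteq \mathfrak{p}_i$ together with the automatic inclusion $\mathfrak{p}_i \subseteq \m$ would make $\m$ itself a minimal prime and force $\dim R = 0$; your sentence has the implication pointing the wrong way, but the conclusion you draw is the right one.
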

\begin{proof}
 See \cite[Theorem 14.3]{Matsumura1989rings}.
\end{proof}
\begin{corollary}\label{prop:regular_sequence_generates_prime_ideal}
 If $x_1,\dots,x_n$ is a regular system of parameters for a regular local ring $R$, then $(x_1,\dots,x_k)$ is prime for any $1 \le k \le n$.
\end{corollary}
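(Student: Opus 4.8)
The plan is to deduce this immediately from the two propositions that precede it, together with the elementary fact recalled earlier that an ideal $I$ of a ring is prime if and only if the quotient $R/I$ is an integral domain. So the work has really been done already; what remains is to chain the results together.

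First I would apply the proposition that for every $1 \le k \le n$ the quotient $R/(x_1,\dots,x_k)$ is again a regular local ring, with regular system of parameters given by the images $[x_{k+1}],\dots,[x_n]$ of the remaining parameters. This is the step that genuinely uses the hypothesis that $x_1,\dots,x_n$ is a \emph{regular} system of parameters rather than an arbitrary generating set of $\m$, and it also handles the boundary case $k = n$, where the quotient is simply the residue field $R/\m$. Next, invoking the proposition that every regular local ring is an integral domain, I conclude that $R/(x_1,\dots,x_k)$ is an integral domain; in particular it is nonzero, so the ideal $(x_1,\dots,x_k)$ is proper. Finally, by the characterisation of prime ideals as exactly those whose quotient ring is an integral domain, $(x_1,\dots,x_k)$ is prime.

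There is no real obstacle here: the two cited propositions carry the entire argument, and the only thing worth checking is that nothing degenerates at the extreme values $k = 1$ and $k = n$ — which it does not, since the quotient-parameter proposition is stated for all $1 \le k \le n$ and a regular local ring is nonzero by definition.
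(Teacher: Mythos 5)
Your proof is correct and is exactly the argument the paper intends: the corollary is stated without proof precisely because it follows by combining the two preceding propositions with the standard fact (recalled earlier in the same section) that an ideal is prime if and only if the quotient is an integral domain. Nothing to add.
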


\section{Quantales and abstract ideal theory} \label{section:quantales_and_AIT}

While ideals have been used to study rings for well over a century, the study of rings purely through their abstract \emph{residuated lattices}\footnote{A residuated lattice is a lattice %
equipped with an additional multiplicative monoid structure and an implication operation which gives a right adjoint to multiplication. As objects, complete residuated lattices are
the same thing as quantales.} of ideals was first initiated by Dilworth and Ward in \cite{DilworthWard1939}. However, progress was limited until Dilworth's publication of
\textit{Abstract Commutative Ideal Theory} \cite{Dilworth1962} in the early 1960s.

In this paper Dilworth defines a number of notions of `principal element'\index{principal element|textbf} of a residuated lattice, which play a similar role to principal ideals and allow for proofs of analogues of the
Lasker--Noether theorem and Krull's principal ideal theorem. %

Unfortunately, most of the literature on Abstract Ideal Theory is rather uncategorical, being focused only on objects with little regard for morphisms.
Indeed, with the most obvious choice of morphisms for residuated lattices, the assignment of a ring to its residuated lattice of ideals is not even functorial.
However, it \emph{is} functorial when the codomain is taken to be the category of (two-sided) quantales.

\begin{definition}
 We define a functor $\Idl\colon \Rng \to \Quant$\glsadd{Idl} which sends a ring to its quantale of ideals and a ring homomorphism $f\colon R \to S$
 to a quantale homomorphism defined by $\Idl(f)(I) = \langle f(I) \rangle$.
\end{definition}

In this chapter we aim to formulate some concepts in Ideal Theory from a more categorical perspective.
We do not attempt here to prove substantial ring-theoretic results in this framework.
Rather, we simply describe some concepts in a way which we hope might better lend themselves to future work
on Abstract Ideal Theory where morphisms play a larger role.

\subsection{Principal elements and strict morphisms}

In 1939 Dilworth and Ward \cite{DilworthWard1939} introduced a weak notion of principal element\index{principal element|(textbf} in a residuated lattice, but
it was another two decades before Dilworth \cite{Dilworth1962} formulated the stronger notion of principal element which was so influential in Abstract Ideal Theory.
As this delay suggests, the definition of principal element is not particularly obvious. The situation is further complicated by there being a number of weaker notions which are also of interest.
A thorough discussion of these different notions can be found in \cite{AndersonJohnson1996} whose terminology we adopt here.

\begin{definition}
 An element $k$ of a two-sided quantale $Q$ is said to be
 \begin{enumerate}
  \item \emph{weak meet principal} if $k(k\heyting x) = x \wedge k$ for all $x \in Q$,
  \item \emph{weak join principal} if $k \heyting kx = x \vee k^\bullet$ for all $x \in Q$ (recall that $k^\bullet = k \heyting 0$),
  \item \emph{meet principal} if $k((k\heyting x) \wedge b) = x \wedge kb$ for all $x,b \in Q$,
  \item \emph{join principal} if $k \heyting (kx \vee a) = x \vee (k\heyting a)$  for all $x,a \in Q$,
  \item \emph{weak principal} if it is weak meet principal and weak join principal,
  \item \emph{principal} if it is meet principal and join principal.
 \end{enumerate}
\end{definition}

It is easy to see that these have the following equivalent characterisations (see \cite{AndersonJohnson1996}). %
\begin{lemma}
 Consider $k \in Q$. We have that
 \begin{enumerate}
  \item $k$ is weak meet principal if and only if $x \le k \implies x = kd$ for some $d \in Q$,
  \item $k$ is weak join principal if and only if $kx \le ky \implies x \le y \vee k^\bullet$,
  \item $k$ is meet principal if and only if $x \le kb \implies x = kd$ for some $d \le b$,
  \item $k$ is join principal if and only if it is weak join principal in every closed quotient of $Q$.
 \end{enumerate}
\end{lemma}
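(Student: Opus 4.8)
The plan is to treat the four biconditionals in turn; parts (i)--(iii) are short manipulations with the residuation adjunction $k(-)\dashv(k\heyting-)$ together with the inequality $ka\le k$ (valid since $Q$ is two-sided), whereas part (iv) requires genuine work with closed quotients and is where I would concentrate.

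For (i): the forward direction is immediate, since $x\le k$ gives $x=x\wedge k=k(k\heyting x)$, so $d=k\heyting x$ works. For the converse I would first record the two standard inequalities $k(k\heyting x)\le x$ (the counit of the adjunction, using that multiplication by $k$ preserves joins) and $k(k\heyting x)\le k$ (two-sidedness), so $k(k\heyting x)\le x\wedge k$; then, applying the hypothesis to $x\wedge k\le k$ gives $x\wedge k=kd$ for some $d$, and $kd=x\wedge k\le x$ forces $d\le k\heyting x$, whence $x\wedge k=kd\le k(k\heyting x)$. Part (iii) is the same argument carried out with the auxiliary parameter $b$ present: forward by specialising to $x\le kb$, and backward using the same two inequalities for ``$\le$'' together with the observation that $x\wedge kb=kd$ with $d\le b$ forces $d\le(k\heyting x)\wedge b$. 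Part (ii) is the dual exercise for join principality: ``$\Rightarrow$'' by substituting into $k\heyting ky=y\vee k^\bullet$; ``$\Leftarrow$'' by noting $k(y\vee k^\bullet)=ky$ (so $\ge$ holds) and, for $\le$, setting $z=k\heyting ky$, whence $kz\le ky$ and the hypothesis gives $z\le y\vee k^\bullet$.

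The substance is (iv). Using the description of the closed quotient $Q/\nabla_c$ --- underlying poset ${\uparrow}c$ with the meets and joins of $Q$, multiplication $a\cdot_c b=ab\vee c$, unit $1$, and zero element $c$ --- I would first compute, via $cz\le c$ and $kc\le c$, that the residuation there is $a\heyting_c b=(a\heyting b)\vee c$, so that $(k\vee c)^{\bullet_c}=k\heyting c$, and that $(k\vee c)\cdot_c(x\vee c)=kx\vee c$. Feeding these into the characterisation from part (ii) applied \emph{inside} $Q/\nabla_c$ to $k\vee c$ (the image of $k$), and using the substitutions $u\mapsto u\vee c$, $v\mapsto v\vee c$ to convert the quotient-internal quantifiers into quantifiers over all of $Q$ (the spurious occurrences of $c$ being absorbed because $c\le k\heyting c$), I get that ``$k$ is weak join principal in every closed quotient'' is exactly the condition
\[
\forall c,u,v\in Q:\qquad ku\le kv\vee c \ \Longrightarrow\ u\le v\vee(k\heyting c).
\]
Finally I would check this is equivalent to join principality $k\heyting(kx\vee a)=x\vee(k\heyting a)$: the inequality $\ge$ is automatic from $k\bigl(x\vee(k\heyting a)\bigr)\le kx\vee a$; for $\le$, put $u=k\heyting(kx\vee a)$, so $ku\le kx\vee a$, and apply the condition with $v=x$ and $c=a$. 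Conversely, given join principality, $ku\le kv\vee c$ yields $u\le k\heyting(kv\vee c)=v\vee(k\heyting c)$ directly.

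The main obstacle is the closed-quotient arithmetic in (iv) --- pinning down $a\heyting_c b=(a\heyting b)\vee c$ and the bookkeeping that moves between quantifiers over ${\uparrow}c$ and quantifiers over $Q$ --- after which (iv) collapses to the two-line adjunction argument above; parts (i)--(iii), by contrast, I expect to be entirely mechanical.
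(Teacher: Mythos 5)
Your proof is correct. Note that the paper does not actually prove this lemma --- it states that the characterisations are ``easy to see'' and defers to the cited reference of Anderson and Johnson --- so there is no in-paper argument to compare against; your route via the residuation adjunction is the standard one. The only point of substance, the closed-quotient arithmetic in (iv), is handled correctly: your identities $a\heyting_c b=(a\heyting b)\vee c$, $(k\vee c)^{\bullet_c}=k\heyting c$ and $(k\vee c)\cdot_c(x\vee c)=kx\vee c$ all rely on two-sidedness (e.g.\ $kc\le c$), which holds here, and the resulting condition $ku\le kv\vee c\Rightarrow u\le v\vee(k\heyting c)$ is indeed equivalent to $k\heyting(kx\vee a)=x\vee(k\heyting a)$ by exactly the adjunction argument you give.
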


\begin{remark}
 If $R$ is a ring and $I \in \Idl(R)$ contains a non-zero-divisor (i.e.\ an element $r \in R$ such that $rx = 0 \implies x = 0$), then $I^\bullet = 0$ and so $I$ is weak join principal if and only
 if $IJ \le IK \implies J \le K$. This is precisely what is known as a \emph{cancellation ideal} in ring theory.
\end{remark}

So under mild conditions, weak join principal recovers a classical notion.
Furthermore, when trying to prove primary decomposition, it becomes apparent before too long that something like weak meet principal will be helpful.
Indeed, this is what led Dilworth and Ward to their original definition of this notion. The picture simplifies further once one notices that meet principal and join principal are not unrelated notions,
but are dual to each other in a certain sense (see \cite{Anderson1977}).

As one might hope, principal ideals are principal elements\index{principal element|)}.
\begin{lemma}
 If $R$ is a ring and $f \in R$, then $(f)$ is a principal element of $\Idl(R)$.
\end{lemma}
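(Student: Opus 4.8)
The plan is to verify the two halves of the definition of principality, using the equivalent characterisations in the preceding lemma. Write $(f) = \{rf \mid r \in R\}$ for the principal ideal. Two elementary observations make everything routine. First, for any ideal $D$ of $R$ the quantale product $(f)\cdot D$ equals $fD := \{fd \mid d \in D\}$: this set is already an ideal (closed under addition and under multiplication by elements of $R$, since $D$ is), it contains all products $(rf)\cdot d$, and it is contained in the ideal they generate; hence it is that ideal. Second, the residual $(f)\heyting X$ is the ideal quotient $(X:f) := \{r \in R \mid rf \in X\}$, since $(f)\cdot B \le X \iff fB \subseteq X \iff B \subseteq (X:f)$.

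For \emph{meet principal}, I use the characterisation that $(f)$ is meet principal iff $J \le (f)B$ implies $J = (f)D$ for some $D \le B$. Given ideals $J \subseteq (f)B = fB$, set $D := B \cap (J:f)$, an ideal with $D \le B$. Then $(f)\cdot D = fD \subseteq J$ by definition of $(J:f)$; conversely any $x \in J$ lies in $fB$, so $x = fy$ with $y \in B$, and $fy = x \in J$ forces $y \in (J:f)$, hence $y \in D$ and $x = fy \in fD = (f)\cdot D$. Thus $J = (f)\cdot D$ with $D \le B$, as required.

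For \emph{join principal}, I use the characterisation that $(f)$ is join principal iff it is weak join principal in every closed quotient of $\Idl(R)$. A closed quotient $\Idl(R)/\nabla_c$ has underlying suplattice $\{I \in \Idl(R) \mid c \subseteq I\}$ with the product of $I$ and $J$ given by $IJ + c$; identifying such an $I$ with $I/c$ exhibits this quotient as $\Idl(R/c)$, under which the image of $(f)$ is the principal ideal generated by the image of $f$. So it suffices to show that \emph{every} principal ideal of \emph{every} ring is weak join principal, i.e.\ (by the lemma) that $(f)J \le (f)K$ implies $J \le K \vee (f)^\bullet$. Here $(f)^\bullet = (f)\heyting 0$ is the annihilator $\{r \in R \mid rf = 0\}$. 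Given $fJ \subseteq fK$ and $x \in J$, we have $fx \in fK$, so $fx = fy$ for some $y \in K$; then $f(x-y) = 0$, so $x-y \in (f)^\bullet$ and $x = y + (x-y) \in K \vee (f)^\bullet$. Hence $J \subseteq K \vee (f)^\bullet$.

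The individual computations are just manipulations of ideal quotients; the only step requiring a little care is the reduction for join principal, namely recognising that the closed quotients of $\Idl(R)$ are precisely the quantales $\Idl(R/c)$ and that $\Idl$ carries the quotient $R \twoheadrightarrow R/c$ to the closed quotient map $\nabla_c$. One can instead bypass the closed-quotient characterisation and verify the defining identity $(f)\heyting\bigl((f)J \vee A\bigr) = J \vee \bigl((f)\heyting A\bigr)$ directly: using the two elementary observations, both sides equal $(fJ + A : f)$, the only nontrivial inclusion being that $fb \in fJ + A$ gives $f(b-j) \in A$ for a suitable $j \in J$, whence $b \in J + (A:f)$.
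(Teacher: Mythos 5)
Your proof is correct and follows essentially the same route as the paper's: for meet principality you exhibit the witness $D$ explicitly as $B\cap (J:f)$ where the paper takes the ideal generated by the witnesses $j$ (the same ideal), and for join principality you make the same reduction to weak join principality in the quotients $\Idl(R/c)\cong\Idl(R)/\nabla_c$ followed by the same subtraction argument $f(x-y)=0$. The closing direct verification of the join-principal identity is a nice bonus but not needed.
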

\begin{proof}
 Suppose $I \le f J$ for ideals $I$ and $J$. Then for each $i \in I$, there is a $j \in J$ such that $i = fj$. Set $D_i$ to be the ideal generated by all such $j$ %
 and $D = \bigvee_{i \in I} D_i$. Then $I = f D$ and so $(f)$ is meet principal.
 
 Now suppose $f I \le f J$. Then for each $i \in I$, there is a $j \in J$ such that $fi = fj$. Then $f(i-j) = 0$ and so $i-j \in (f)^\bullet$. Thus, $i = j + (i-j) \in J \vee (f)^\bullet$.
 So $I \le J \vee (f)^\bullet$ and $(f)$ is weak join principal. Then since the same argument holds in every quotient ring and $\Idl(R)/\nabla_I \cong \Idl(R / I)$, we can conclude that $(f)$ is join principal.
\end{proof}

We have tried to convey a sense of the motivation for these definitions, but it is perhaps still not completely clear why these are natural notions.
Our contribution is to show how all these notions of principal element fall out of a simple discussion of normal mono/epimorphisms in $\Sup$.

The category $\Sup$ is pointed with the zero morphisms given by the constant $0$ maps.
In a pointed category there is a notion of kernels and cokernels.
\begin{definition}
 The \emph{kernel}\index{kernel and cokernel|textbf} of a morphism $f\colon L \to M$ in a pointed category is the equaliser of $f$ and $0_{L,M}$.
 The \emph{cokernel} of $f$ is the coequaliser of $f$ and $0_{L,M}$.
 
 A morphism is called a \emph{normal monomorphism}\index{normal monomorphism|(textbf} if it occurs as the kernel of some morphism
 and a \emph{normal epimorphism} if it occurs as the cokernel of some morphism. %
 We will draw such morphisms with triangle tails ($\triangleTailArrow$) and tips ($\triangleHeadArrow$) respectively. %
\end{definition}

Every monomorphism or epimorphism in $\Sup$ is regular, but not every monomorphism or epimorphism is normal.

\begin{lemma}\label{prop:normal_monos_and_epis_in_sup}
 A monomorphism $f\colon L \to M$ in $\Sup$ is normal\index{normal monomorphism|)} if and only if the image of $f$ is downward closed.
 Dually, an epimorphism $g\colon L \to M$ in $\Sup$ is normal if and only if \[g(x) = g(y) \iff x \vee g_*(0) = y \vee g_*(0).\]
\end{lemma}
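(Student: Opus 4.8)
The plan is to reduce both statements to explicit descriptions of kernels and cokernels in $\Sup$, keeping in mind that the zero morphism $0_{L,M}$ is the constant map with value the bottom element $0 \in M$.

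For the monomorphism half, note that the kernel of a map $g\colon M \to N$, being the equaliser of $g$ and $0_{M,N}$, is the inclusion into $M$ of the sub-suplattice $\{y \in M \mid g(y) = 0\}$, which is downward closed since $g$ is monotone; and any downward-closed sub-suplattice $D \subseteq M$ is the principal downset ${\downarrow}k$ for $k = \bigvee D \in D$. Conversely, for any $k \in M$ the inclusion ${\downarrow}k \hookrightarrow M$ is itself a kernel: it is the kernel of the surjection $M \twoheadrightarrow {\uparrow}k$, $y \mapsto y \vee k$, whose bottom element is $k$, so whose kernel is $\{y \mid y \vee k = k\} = {\downarrow}k$. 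Thus a kernel is, up to isomorphism, exactly the inclusion of a principal downset. Now if $f\colon L \to M$ is a normal mono, say the kernel of some $g$, then the image of $f$ is the set $\{y \mid g(y) = 0\}$, which is downward closed. Conversely, since $f$ is mono its image $\{f(x) \mid x \in L\}$ is always a sub-suplattice of $M$ and $f$ factors as an isomorphism onto it followed by the inclusion; if this image is downward closed it is a principal downset, so $f$ is (isomorphic to) a kernel and hence normal.

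For the epimorphism half, a normal epi is a cokernel $g = \mathrm{coker}(h)$ for some $h\colon K \to L$, i.e.\ the quotient of $L$ by the smallest suplattice congruence $C$ containing the pairs $(h(k), 0)$ for $k \in K$. The crux is to show $C = \{(x,y) \mid x \vee a = y \vee a\}$ with $a = \bigvee_{k \in K} h(k)$. This relation is a suplattice congruence containing every $(h(k),0)$, so it contains $C$; conversely, for any congruence $C'$ containing all $(h(k),0)$ and any $x \in L$ we get $(x, x \vee h(k)) \in C'$ (adjoin the diagonal pair $(x,x)$), hence $(x, x \vee a) \in C'$ by taking the join over $k$ inside the sub-suplattice $C' \subseteq L \times L$, so $x \vee a = y \vee a$ forces $x \mathrel{C'} y$; thus the displayed relation lies in $C'$, and in particular in $C$. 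Finally $a = g_*(0)$, since $g_*(0) = \bigvee\{x \mid g(x) = 0\} = \bigvee\{x \mid x \vee a = a\} = a$. Hence the congruence of a normal epi $g$ is exactly $\{(x,y) \mid x \vee g_*(0) = y \vee g_*(0)\}$, giving the stated condition. Conversely, if an epimorphism $g$ satisfies this condition then $g$ and the surjection $x \mapsto x \vee g_*(0)$ onto ${\uparrow}g_*(0)$ are surjections out of $L$ with the same congruence, hence isomorphic quotients; and the latter is the cokernel of ${\downarrow}g_*(0) \hookrightarrow L$ by the crux computation applied to that inclusion, so $g$ is normal. (The implication $x \vee g_*(0) = y \vee g_*(0) \Rightarrow g(x) = g(y)$ holds for every suplattice homomorphism, since $g(g_*(0)) = 0$; only the reverse implication carries content.)

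The routine parts — verifying that the relevant subsets are sub-suplattices or congruences, and identities such as $a = g_*(0)$ — I will not belabour. The one step that genuinely needs care is the congruence computation in the cokernel case: showing that the congruence generated by the pairs $(h(k),0)$ collapses to the single closed congruence determined by $a = \bigvee_k h(k)$; everything else then follows formally. (Alternatively, since the self-duality $\Sup \cong \Sup\op$ exchanges monos with epis and normal monos with normal epis, the second statement is the formal dual of the first, but the direct argument above seems more transparent to present.)
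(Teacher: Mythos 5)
Your proof is correct. The monomorphism half is essentially the paper's argument: kernels are computed as principal downsets $\{y \mid h(y)=0\} = {\downarrow}h_*(0)$, and conversely ${\downarrow}a \hookrightarrow M$ is exhibited as the kernel of the closed quotient $y \mapsto y \vee a$ onto ${\uparrow}a$. Where you diverge is the epimorphism half: the paper first rephrases ``image downward closed'' as the equation $ff_*(x) = x \wedge f(\top)$ and then invokes the self-duality $\Sup \cong \Sup\op$ (which exchanges normal monos with normal epis and adjoints with adjoints) to read off the dual equation $g_*g(x) = x \vee g_*(0)$, whereas you compute the cokernel directly, showing that the congruence generated by the pairs $(h(k),0)$ collapses to the closed congruence $\{(x,y) \mid x \vee a = y \vee a\}$ with $a = \bigvee_k h(k) = g_*(0)$. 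Your congruence computation --- joining $(x,x)$ with the generators inside the sub-suplattice $C' \subseteq L \times L$ to get $(x, x\vee a) \in C'$ --- is sound, and the identification $a = g_*(0)$ is verified correctly. The duality route is shorter once the mono case is in equational form and fits the paper's repeated use of $\Sup \cong \Sup\op$ throughout that section; your direct route costs one explicit congruence calculation but makes the structure of cokernels in $\Sup$ (namely, that every cokernel is a closed quotient) completely explicit, which is arguably more transparent and is information the paper itself uses later when discussing closed quotients of quantales.
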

\begin{proof}
 Let $h\colon M \to N$ be a suplattice homomorphism. The kernel of $h$ is given by $\{x \in M \mid h(x) = 0\} = {\downarrow} h_*(0)$, which is clearly downward closed.
 On the other hand, it is easy to see that for any $a \in M$ the inclusion ${\downarrow} a \hookrightarrow M$ is the kernel of the map into ${\uparrow} a$ sending $x$ to $x \vee a$.
 
 Now in order to apply duality, we note that the image of $f$ is downward closed if and only if $ff_*(x) = x \wedge f(\top)$. %
 The dual condition then gives $g_*g(x) = x \vee g_*(0)$. If $g$ is epic, this is in turn equivalent to the stated condition for $g$ being normal.
\end{proof}
\begin{remark}
 Throughout this section we make extensive use of the autoduality $\Sup \cong \Sup\op$.
 This exchanges $0$ with $\top$, joins with meets, (normal) epis with (normal) monos and maps with their adjoints. %
\end{remark}

Recall that $\Sup$ has (epi,mono)-factorisations. So for any morphism we can ask if its monic/epic factor is normal.
\begin{definition}
 Let $f$ be a morphism in $\Sup$ and suppose $f = me$ where $m$ is monomorphism and $e$ is epimorphism. If $m$ is a normal mono, we say $f$
 is \emph{mono-normal}\index{mono-normal morphism|(textbf}. If $e$ is a normal epi, we call $f$ \emph{epi-normal}\index{epi-normal morphism|see {mono-normal morphism}}.
 If $f$ is both mono-normal and epi-normal, we say it is \emph{strict}\index{strict morphism|(textbf}.
\end{definition}

\begin{corollary}
 A suplattice map $f\colon L \to M$ is epi-normal if and only if its kernel congruence is generated by an element of the form $(0,a)$ and mono-normal\index{mono-normal morphism|)} %
 if and only if its image is downward closed.
\end{corollary}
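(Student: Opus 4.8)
The plan is to deduce both equivalences from \cref{prop:normal_monos_and_epis_in_sup} by passing through the $(\mathrm{epi},\mathrm{mono})$-factorisation of $f$. Write $f = me$, where $e\colon L \twoheadrightarrow I$ is an epimorphism onto the image $I$ of $f$ and $m\colon I \hookrightarrow M$ is a monomorphism. By definition $f$ is mono-normal exactly when $m$ is a normal mono and epi-normal exactly when $e$ is a normal epi, so it suffices to translate each half of \cref{prop:normal_monos_and_epis_in_sup} along this factorisation.

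The mono-normal case is immediate: the image of $m$ is precisely the image of $f$, and \cref{prop:normal_monos_and_epis_in_sup} says $m$ is a normal mono if and only if its image is downward closed. For the epi-normal case, first note that since $m$ is monic we have $f(x) = f(y) \iff e(x) = e(y)$, so $f$ and $e$ have the same kernel congruence; thus $f$ is epi-normal iff $e$ is a normal epi. Next I would record the explicit description of the suplattice congruence generated by a pair $(0,a)$: namely $\langle (0,a)\rangle = \{(x,y) \mid x \vee a = y \vee a\}$. Indeed this relation is the kernel congruence of the suplattice homomorphism $x \mapsto x \vee a$ onto ${\uparrow}a$, it contains $(0,a)$, and compatibility with binary joins forces $x \sim x \vee a$ in every congruence containing $(0,a)$, so it is the least such congruence. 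Now \cref{prop:normal_monos_and_epis_in_sup} states that $e$ is a normal epi precisely when $e(x) = e(y) \iff x \vee e_*(0) = y \vee e_*(0)$, i.e.\ precisely when the kernel congruence of $e$ equals $\langle (0, e_*(0)) \rangle$; this yields the forward implication, since $e_*(0)$ is then the required element. Conversely, if the kernel congruence of $f$ (equivalently of $e$) is $\langle (0,a)\rangle$ for some $a \in L$, then $e_*(0) = \bigvee\{x \mid e(x) = 0\} = \bigvee\{x \mid x \vee a = a\} = a$, so the kernel congruence is $\langle (0, e_*(0))\rangle$ and $e$ is a normal epi by the lemma.

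There is no genuine obstacle here; the corollary is essentially a restatement of \cref{prop:normal_monos_and_epis_in_sup} in terms of factorisations. The only mildly non-routine ingredients are the explicit identification of $\langle(0,a)\rangle$ with $\{(x,y)\mid x\vee a = y\vee a\}$ and the computation $e_*(0) = a$, both of which follow directly from the fact that $x \mapsto x\vee a$ is the quotient map associated with that congruence.
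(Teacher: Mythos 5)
Your proposal is correct and matches the intended argument: the paper states this as an immediate corollary of \cref{prop:normal_monos_and_epis_in_sup} together with the definition of mono-/epi-normality via the (epi,mono)-factorisation, which is exactly the route you take. The only details you add — the identification $\langle(0,a)\rangle = \{(x,y)\mid x\vee a = y\vee a\}$ and the computation $e_*(0)=a$ — are both correct and are the same facts the paper uses elsewhere for frame and quantale congruences of the form $\nabla_a$.
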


A reason why we might be interested in strict morphisms of suplattices in the context of abstract ideal theory is that the functor $\Sub\colon \Mod{R} \to \Sup$ %
sending $R$-modules to their suplattices of submodules sends module homomorphisms to strict suplattice homomorphisms\index{strict morphism|)}.
\begin{lemma}
 If $f\colon V \hookrightarrow W$ is a monomorphism of $R$-modules, then $\Sub f$ is a normal monomorphism.
 If $g\colon V \twoheadrightarrow W$ is an epimorphism of $R$-modules, then $\Sub f$ is a normal epimorphism.
\end{lemma}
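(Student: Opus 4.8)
The plan is to unwind what $\Sub$ does on morphisms and then apply \cref{prop:normal_monos_and_epis_in_sup} directly. For a module homomorphism $h\colon V \to W$, the suplattice map $\Sub h$ is the direct image, sending a submodule $U \subseteq V$ to $h(U)$, which is again a submodule since $h$ is $R$-linear; direct image preserves arbitrary joins because a join of submodules is generated by their union and $h$ carries an $R$-linear combination of elements to the corresponding combination of their images. Its right adjoint $(\Sub h)_*$ is the inverse-image map $U' \mapsto h^{-1}(U')$, since $h(U) \le U' \iff U \le h^{-1}(U')$; in particular $(\Sub h)_*(0) = h^{-1}(0) = \ker h$.

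For the first statement, when $f$ is injective we have $f^{-1}(f(U)) = U$ for every submodule $U$, so $\Sub f$ is injective and hence a monomorphism in $\Sup$. By \cref{prop:normal_monos_and_epis_in_sup} it is normal exactly when its image is downward closed, so I would take a submodule $U' \subseteq f(U)$ with $U \in \Sub V$; then $U' \subseteq f(V)$, so $U' = f(f^{-1}(U'))$ by injectivity of $f$, exhibiting $U'$ as $\Sub f$ applied to the submodule $f^{-1}(U')$. Thus the image of $\Sub f$ is downward closed and $\Sub f$ is a normal monomorphism.

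For the second statement, when $g$ is surjective we have $g(g^{-1}(U')) = U'$ for every submodule $U' \subseteq W$, so $\Sub g$ is surjective, hence an epimorphism in $\Sup$. Writing $K = \ker g = (\Sub g)_*(0)$, the dual half of \cref{prop:normal_monos_and_epis_in_sup} requires checking $g(U_1) = g(U_2) \iff U_1 \vee K = U_2 \vee K$, the join $\vee$ in $\Sub V$ being the submodule sum. The one identity needed is $g^{-1}(g(U)) = U + K$: the inclusion $\supseteq$ is immediate, and if $g(v) = g(u)$ with $u \in U$ then $v - u \in K$, so $v \in U + K$. Combined with the injectivity of $g^{-1}$ on submodules of $W$ (which follows from surjectivity of $g$), this gives $g(U_1) = g(U_2) \iff g^{-1}(g(U_1)) = g^{-1}(g(U_2)) \iff U_1 + K = U_2 + K$, as required, so $\Sub g$ is a normal epimorphism. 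The only real subtlety is pinning down that $\Sub$ is the direct-image functor and that $(\Sub g)_* = g^{-1}$; once those are fixed the remaining argument is a short diagram chase.
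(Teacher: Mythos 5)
Your proposal is correct and fills in, explicitly, exactly what the paper's two-line proof gestures at: identifying the image of $\Sub f$ with the downset of submodules of $W$ contained in $f(V)$, and identifying $\Sub g$ with the quotient by the closed congruence at $\ker g$ via the identity $g^{-1}(g(U)) = U + \ker g$. The use of additive inverses in that last identity is precisely the point flagged in the paper's remark following the lemma, so no gap.
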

\begin{proof}
 The first statement is clear if we think of $\Sub(W)$ as the subobjects of $W$. %
 The second statement is clear if we think of $\Sub(V)$ as the quotients of $V$.
\end{proof} %
\begin{remark}
 The fact that principal ideals are join principal, the modularity of $\Sup(V)$ and the above lemma all make essential use of the fact that the additive structure of a module permits inverses.
 (In the final case it is used to prove the bijection between subobjects and quotients.)
\end{remark}

\begin{lemma}\label{prop:normal_closed_under_composition}
 Normal monomorphisms and normal epimorphisms are closed under composition.
\end{lemma}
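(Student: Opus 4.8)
The plan is to avoid working directly with kernels and cokernels and instead use the concrete descriptions from \cref{prop:normal_monos_and_epis_in_sup}: a monomorphism in $\Sup$ is normal exactly when its image is downward closed, and the epimorphism case is obtained from this by the autoduality $\Sup \cong \Sup\op$. So it suffices to prove the statement for normal monomorphisms and then dualise.

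First I would record the elementary fact that monomorphisms in $\Sup$ are precisely the injective suplattice homomorphisms, and that any such map is automatically an order embedding: if $g$ preserves joins and is injective then $g(a) \le g(b) \iff g(a \vee b) = g(b) \iff a \vee b = b \iff a \le b$. In particular a composite of monomorphisms is again a monomorphism, so only normality is at stake.

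Next, given normal monomorphisms $f\colon L \hookrightarrow M$ and $g\colon M \hookrightarrow N$, the images $f(L)$ and $g(M)$ are downsets by \cref{prop:normal_monos_and_epis_in_sup}, and I would check that $g(f(L))$ is a downset in $N$. If $y \in g(f(L))$ and $z \le y$, then since $g(f(L)) \subseteq g(M)$ and $g(M)$ is downward closed we may write $z = g(m)$ for a (unique) $m \in M$; writing $y = g(m')$ with $m' \in f(L)$ and using that $g$ is an order embedding, $g(m) \le g(m')$ forces $m \le m'$, hence $m \in f(L)$ as $f(L)$ is a downset, so $z = g(m) \in g(f(L))$. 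By \cref{prop:normal_monos_and_epis_in_sup} again, $gf$ is a normal monomorphism. For normal epimorphisms I would then apply the autoduality $\Sup \cong \Sup\op$ (which interchanges normal epis and normal monos and reverses composition): the duals of two composable normal epis are composable normal monos, whose composite is a normal mono by the previous paragraph, so the original composite is a normal epi.

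There is no real obstacle here; it is a short verification. The only points that need a little care are remembering to note that monomorphisms in $\Sup$ are order embeddings before concluding downward-closure of $g(f(L))$, and keeping the composition-reversal of the duality straight in the epimorphism case.
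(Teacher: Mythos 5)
Your proof is correct and follows essentially the same route as the paper's: both use the characterisation of normal monomorphisms by downward-closed images, the fact that an injective suplattice homomorphism reflects order, and the autoduality of $\Sup$ to dispatch the epimorphism case. The only cosmetic difference is that the paper phrases the downward-closure argument directly in terms of elements below $gf(\top)$, whereas you work with general downsets; the content is identical.
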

\begin{proof}
 We show it for monomorphisms; it will then follow for epimorphisms by duality. Suppose $f\colon L \to M$ and $g\colon M \to N$ are normal monos.
 Suppose $x \le gf(\top)$. Then $x \le g(\top)$ and so $x = g(x')$ for some $x' \in M$. So $g(x') \le gf(\top)$. But $g$ is an injective suplattice homomorphism and hence reflects order.
 Thus, $x' \le f(\top)$ and so $x' = f(x'')$ for some $x'' \in L$. Therefore, $x = gf(x'')$ and $gf$ is a normal mono.
\end{proof}

\begin{remark}\label{rem:converse_to_composition_of_normal} %
 Also note that in any pointed category, if $f$ is monic and $fg$ is a normal monomorphism, then $g$ is a normal monomorphism. Dually, if $h$ is epic and $gh$ is a normal epimorphism, then so is $g$.
\end{remark} %

Mono-normal, epi-normal and strict morphisms are \emph{not} closed under composition in general.
If $f$ is epi-normal and $e$ is a normal epi, then $fe$ is epi-normal by \cref{prop:normal_closed_under_composition}, but $ef$ might not be. We
might ask under what conditions on $f$ we have that $ef$ is epi-normal for all normal epis $e$.

\begin{lemma}\label{prop:composing_epinormal_maps}
 Consider $f\colon L \to M$ and let $e\colon M \triangleHeadArrow N$ be a normal epimorphism. Set $a = e_*(0)$.
 Then $ef$ is epi-normal if and only if $x \vee f_*(a) \ge f_*(f(x)\vee a)$ for all $x \in L$.
\end{lemma}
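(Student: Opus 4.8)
The plan is to reduce epi-normality of $ef$ to an explicit condition on kernel congruences via the characterisation of normal epimorphisms in $\Sup$ (\cref{prop:normal_monos_and_epis_in_sup}), and then transform that condition into the stated inequality. Writing $g\colon L \twoheadrightarrow \operatorname{im}(ef)$ for the epic factor of $ef$, the first step is to identify $g_*(0)$: since passing to the image does not change the bottom element, $g_*(0) = (ef)_*(0) = f_*e_*(0) = f_*(a)$. By the dual half of \cref{prop:normal_monos_and_epis_in_sup}, $ef$ is therefore epi-normal if and only if
\[ ef(x) = ef(y) \iff x \vee f_*(a) = y \vee f_*(a) \qquad (x,y \in L). \]
Since $e$ is itself a normal epimorphism with $e_*(0) = a$, applying \cref{prop:normal_monos_and_epis_in_sup} to $e$ gives $ef(x) = ef(y) \iff f(x) \vee a = f(y) \vee a$, so epi-normality of $ef$ is equivalent to the biconditional
\[ f(x) \vee a = f(y) \vee a \iff x \vee f_*(a) = y \vee f_*(a) \qquad (x,y \in L). \]

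The second step is to observe that the implication "$\Leftarrow$'' of this biconditional is automatic: applying $f$ to $x \vee f_*(a) = y \vee f_*(a)$ and then joining both sides with $a$, using $f f_*(a) \le a$, yields $f(x) \vee a = f(y) \vee a$. Hence the entire content is the forward implication, which I will call $(\dagger)$. At the same time I would record the always-valid inequality $x \vee f_*(a) \le f_*(f(x) \vee a)$ (from $x \le f_*f(x)$ and monotonicity of $f_*$), so that the inequality $x \vee f_*(a) \ge f_*(f(x) \vee a)$ in the statement is in fact an equality; this is the form I will actually prove equivalent to $(\dagger)$.

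The third step is the equivalence $(\dagger) \iff \bigl[\forall x.\ x \vee f_*(a) = f_*(f(x) \vee a)\bigr]$. For "$\Leftarrow$'', if this equality holds then applying $f_*$ to $f(x)\vee a = f(y)\vee a$ gives $(\dagger)$ at once. For "$\Rightarrow$'', fix $x$ and set $z = f_*(f(x)\vee a)$; I claim $f(z)\vee a = f(x)\vee a$. Indeed $f(z) = f f_*(f(x)\vee a) \le f(x)\vee a$ gives $f(z)\vee a \le f(x)\vee a$, while $z \ge f_*f(x) \ge x$ gives $f(z) \ge f(x)$ and hence $f(z)\vee a \ge f(x)\vee a$. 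Now $(\dagger)$ applied to the pair $(x,z)$ gives $x \vee f_*(a) = z \vee f_*(a) = f_*(f(x)\vee a)$, the last equality because $z = f_*(f(x)\vee a) \ge f_*(a)$.

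I do not anticipate a serious obstacle. The two points requiring care are the bookkeeping that identifies the right adjoint of the epic factor of $ef$ with $f_*(a)$ (so that one can invoke \cref{prop:normal_monos_and_epis_in_sup} cleanly), and separating out the "free'' direction of the biconditional so that what remains is precisely the one-sided inequality asserted; once those are handled, the rest is a short adjunction computation.
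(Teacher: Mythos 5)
Your proof is correct and follows essentially the same route as the paper: compute $(ef)_*(0) = f_*(a)$, reduce epi-normality of $ef$ via the characterisation of normal epimorphisms and the normality of $e$ to the forward implication $f(x)\vee a = f(y)\vee a \implies x\vee f_*(a) = y\vee f_*(a)$, and convert that to the stated inequality by adjunction. The only cosmetic difference is that the paper finishes by rewriting the implication as $f(y)\le f(x)\vee a \implies y \le x\vee f_*(a)$ and transposing, whereas you instantiate at $z = f_*(f(x)\vee a)$; both are fine.
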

\begin{proof}
 Note that $(ef)_*(0) = f_*(e_*(0)) = f_*(a)$. Now $ef(x) = ef(y) \iff f(x) \vee a = f(y) \vee a$ by normality.
 So $ef$ is epi-normal if and only if $f(x) \vee a = f(y) \vee a \implies x \vee f_*(a) = y \vee f_*(a)$ for all $x,y \in L$.
 
 This is equivalent to $f(y) \le f(x) \vee a \implies y \le x \vee f_*(a)$. %
 But $f(y) \le f(x) \vee a \iff y \le f_*(f(x) \vee a)$ and so the result follows.
\end{proof}

\begin{corollary}\label{prop:composing_mononormal_maps}
 Consider $f\colon M \to L$ and let $m\colon N \triangleTailArrow M$ be a normal monomorphism. Set $b = m(\top)$.
 Then $fm$ is mono-normal if and only if $x \wedge f(b) \le f(f_*(x)\wedge b)$ for all $x \in L$.
\end{corollary}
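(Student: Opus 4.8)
The plan is to deduce this as the image of \cref{prop:composing_epinormal_maps} under the autoduality $\Sup \cong \Sup\op$. As recorded above, this duality exchanges $0$ with $\top$, joins with meets, normal epimorphisms with normal monomorphisms (hence epi-normal maps with mono-normal maps), and each morphism with its right adjoint. Applying it to the composite $fm\colon N \to L$ turns it into $(fm)_* = m_* f_*$, which is a composite of exactly the shape treated in \cref{prop:composing_epinormal_maps}: in the order-reversed lattices, $m_*$ plays the role of the normal epimorphism $e$ (being the dual of the normal monomorphism $m$) and $f_*$ plays the role of the arbitrary map $f$.

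First I would pin down the translation dictionary. Writing the conclusion of \cref{prop:composing_epinormal_maps} for the dualised data, the element ``$a = e_*(0)$'' must be evaluated in $M\op$: its bottom element is $\top_M$, and the right adjoint of $m_*$ in the reversed order is $m$ itself, so ``$a$'' becomes $m(\top_M) = b$. Likewise the right adjoint of $f_*$ in the reversed order is $f$, joins in $L\op$ and $M\op$ are meets in $L$ and $M$, and $\ge$ in $L\op$ is $\le$ in $L$. Under these substitutions the lemma's inequality $x \vee f_*(a) \ge f_*(f(x)\vee a)$ becomes precisely $x \wedge f(b) \le f(f_*(x)\wedge b)$, while its conclusion ``$ef$ is epi-normal'' becomes ``$(fm)_*$ is epi-normal'', i.e.\ ``$fm$ is mono-normal'', which is what we want.

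The only delicate point — and the sole real obstacle — is this interaction of the adjoints with the order reversal: one is tempted to read ``$e_*(0)$'' as $m_*$ applied to $0$ and obtain $a = 0$, whereas the duality forces one to take $0$ to be the bottom of the \emph{dualised} codomain, namely $\top_M$, and to replace $(m_*)_*$ by the right adjoint of $m_*$ \emph{in the reversed lattices}, which is $m$. Once the conventions are fixed the identification is immediate and no calculation remains.

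As a self-contained check one can also argue directly. By the characterisation of mono-normal maps (the corollary to \cref{prop:normal_monos_and_epis_in_sup}), $fm$ is mono-normal iff its image $f({\downarrow}b)$ is downward closed in $L$. For the forward direction, given $x \in L$ we have $x \wedge f(b) \le f(b) \in f({\downarrow}b)$, so downward closure yields $x \wedge f(b) = f(y)$ with $y \le b$; then $f(y) \le x$ gives $y \le f_*(x)$, hence $y \le f_*(x) \wedge b$ and $x \wedge f(b) = f(y) \le f(f_*(x)\wedge b)$. Conversely, assuming the inequality, if $z \le f(y)$ with $y \le b$ then $z = z \wedge f(b) \le f(f_*(z)\wedge b)$, while $f(f_*(z)\wedge b) \le f(f_*(z)) \le z$ by the counit inequality $ff_* \le \id$; so $z = f(f_*(z)\wedge b)$ with $f_*(z)\wedge b \le b$, showing $f({\downarrow}b)$ is downward closed.
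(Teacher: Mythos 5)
Your proposal is correct and matches the paper's proof, which is literally just ``by duality'' applied to \cref{prop:composing_epinormal_maps}; you have simply spelled out the translation dictionary (adjoints swapping under order reversal, $e_*(0)$ becoming $m(\top) = b$) that the paper leaves implicit. The additional direct verification via downward closure of $f({\downarrow}b)$ is sound and a nice independent check, though not needed.
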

\begin{proof}
 By duality. %
\end{proof}

\begin{definition}
 We call a morphism $f$ satisfying the conditions of \cref{prop:composing_epinormal_maps} for all normal epimorphisms $e$, a \emph{composably epi-normal morphism}. %
 A morphism satisfying the dual condition is called a \emph{composably mono-normal morphism}\index{mono-normal morphism!composably|textbf}.
 
 Note that each of these is closed under composition.
\end{definition}

Clearly every normal epimorphism is composably epi-normal. To determine if a more general morphism is composably epi-normal, we may consider its monic and epic factors separately.
\begin{lemma}\label{prop:composably_epinormal_depends_on_mono}
 Suppose $f\colon L \to M$ factors as $mn$ where $m$ is monic and $n$ is epic. Then $f$ is composably epi-normal if and only if $n$ is a normal epimorphism and $m$ is composably epi-normal.
\end{lemma}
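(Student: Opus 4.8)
The plan is to reduce the definition of composably epi-normal to a single universally quantified inequality and then do pure adjunction-chasing. First I would observe that the normal epimorphism $e\colon M \twoheadrightarrow {\uparrow}a$ sending $x$ to $x \vee a$ has $e_*(0) = a$, so \emph{every} element $a$ of the codomain of a map arises as $e_*(0)$ for some normal epi $e$; hence by \cref{prop:composing_epinormal_maps} a map $g\colon A \to B$ is composably epi-normal if and only if $g_*(g(x)\vee a) \le x \vee g_*(a)$ for all $a \in B$ and $x \in A$ (the reverse inequality being automatic, since $g_*(g(x)\vee a) \ge g_* g(x) \ge x$ and $g_*(g(x)\vee a) \ge g_*(a)$). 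I would also record the standard facts $f_* = n_* m_*$, $m_* m = \mathrm{id}_K$ and $m_*(0) = 0$ (as $m$ is monic), and $n n_* = \mathrm{id}_K$ (as $n$ is epic).

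For the forward direction, assume $f$ is composably epi-normal. Taking $a = 0$ gives $f_* f(x) \le x \vee f_*(0)$; but $f_* f(x) = n_* m_* m n(x) = n_* n(x)$ and $f_*(0) = n_* m_*(0) = n_*(0)$, so $n_* n(x) = x \vee n_*(0)$ for all $x \in L$, which by \cref{prop:normal_monos_and_epis_in_sup} (using that $n$ is epic) says exactly that $n$ is a normal epimorphism. To see $m$ is composably epi-normal, fix $a \in M$ and $k \in K$; since $n$ is surjective, pick $x \in L$ with $n(x) = k$ and apply the condition for $f$ to $(a,x)$: $n_* m_*(m(k)\vee a) = f_*(f(x)\vee a) \le x \vee f_*(a) = x \vee n_* m_*(a)$. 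Applying the join-preserving map $n$ and using $n n_* = \mathrm{id}_K$ then yields $m_*(m(k)\vee a) \le n(x)\vee m_*(a) = k \vee m_*(a)$.

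For the reverse direction, assume $n$ is a normal epimorphism and $m$ is composably epi-normal. Up to isomorphism $n$ is the quotient $L \twoheadrightarrow {\uparrow}c$, $x \mapsto x \vee c$, where $c = n_*(0)$, so $n_*$ is the inclusion ${\uparrow}c \hookrightarrow L$; in particular $n_*$ preserves joins and $n_* n(x) = x \vee c$. Given $a \in M$ and $x \in L$, apply the condition for $m$ with $k = n(x)$ and then $n_*$:
\[ f_*(f(x)\vee a) = n_* m_*\big(m(n(x))\vee a\big) \le n_*\big(n(x)\vee m_*(a)\big) = n_* n(x) \vee n_* m_*(a) = (x \vee c)\vee f_*(a). \]
Since $c = n_*(0) \le n_* m_*(a) = f_*(a)$, the right-hand side equals $x \vee f_*(a)$, so $f$ is composably epi-normal.

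The argument is essentially formal, so I do not expect a serious obstacle; the two points that need a moment's care are the preliminary reduction that the parameter $a$ ranges over \emph{all} of the codomain (which is what turns "composably epi-normal" into the clean inequality used throughout), and the fact that $n_*$ preserves joins whenever $n$ is a normal epi — this is exactly what makes the reverse direction work and is where normality of $n$ is genuinely used.
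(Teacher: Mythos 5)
Your proof is correct, but it takes a genuinely different route from the paper's. You first observe that every $a$ in the codomain arises as $e_*(0)$ for the closed quotient $x \mapsto x \vee a$, which converts ``composably epi-normal'' via \cref{prop:composing_epinormal_maps} into the single elementwise inequality $g_*(g(x)\vee a) \le x \vee g_*(a)$, and then you push this inequality back and forth along the factorisation by explicit adjoint calculus, using $m_*m = \id$, $nn_* = \id$, and (crucially, in the reverse direction) the fact that $n_*$ preserves binary joins exactly when $n$ is a normal epi. The paper instead argues diagrammatically: it gets normality of $n$ from $f$ being epi-normal (the case $e = \id$, matching your $a = 0$ step), then for an arbitrary normal epi $e$ it (epi,mono)-factorises $em$ and $ef$ and invokes \cref{prop:normal_closed_under_composition} together with \cref{rem:converse_to_composition_of_normal} to conclude that the epic factor of $ef$ is normal iff that of $em$ is. The paper's argument is shorter and stays at the level of morphisms, transferring directly to any setting where normal epis compose and cancel; your argument is more computational but makes visible exactly where each hypothesis enters --- in particular that normality of $n$ is used precisely through join-preservation of $n_*$ --- and your preliminary reduction (that the parameter ranges over \emph{all} elements of the codomain) is a clean reformulation the paper never states explicitly. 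Both proofs are sound.
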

\begin{proof}
 For $f$ to be composably epi-normal, it must in particular be epi-normal and hence $n$ must certainly be a normal epimorphism.
 Let us assume this is the case.
 
 Now let $e\colon M \triangleHeadArrow N$ be a normal epimorphism. We (epi,mono)-factorise $em$ and $ef$ to obtain the following commutative diagram.
 \begin{center}
 \begin{tikzpicture}[node distance=2.5cm, auto]
  \node (L) {$L$};
  \node (mTarget) [right of=L] {.};
  \node (M) [right of=mTarget] {$M$};
  \node (N) [below of=M] {$N$};
  \path (L) -- node[anchor=center] (efTarget) {.} (N);
  \draw[>=normalTip,->] (L) to node {$n$} (mTarget);
  \draw[right hook->] (mTarget) to node {$m$} (M);
  \draw[>=normalTip,->] (M) to node {$e$} (N);
  \draw[right hook->] (efTarget) to [swap] node {$m'$} (N);
  \draw[->>] (L) to [swap] node {$n'$} (efTarget);
  \draw[->>] (mTarget) to node {$e'$} (efTarget);
 \end{tikzpicture}
 \end{center}
 Then by \cref{prop:normal_closed_under_composition,rem:converse_to_composition_of_normal}, $n'$ is normal if and only if $e'$ is and hence $ef$ is epi-normal if and only if $em$ is.
\end{proof}

At this stage modularity becomes relevant. Many results in abstract ideal theory require the underlying lattice of the quantale in question to be modular.
We provide a possible explanation for this by exhibiting a strong link between modularity properties and the composability properties of epi/mono-normal morphisms.

Recall that a pair $(x,b)$ of elements in a lattice is a \emph{modular pair}\index{modular|(} if $(x \wedge b) \vee a = (x \vee a) \wedge b$ for all $a \le b$ %
and that $b$ is a \emph{modular element} if $(x,b)$ is a modular pair for all $x$.

\begin{lemma}\label{prop:normal_composition_and_modular_pairs}
 Let $m\colon L \triangleTailArrow M$ be a normal monomorphism and let $e\colon M \triangleHeadArrow N$ be a normal epimorphism.
 Then $em$ is epi-normal if and only if $(e_*(0), m(\top))$ is a modular pair and $em$ is mono-normal if and only if $(m(\top), e_*(0))$
 is a dual modular pair.
\end{lemma}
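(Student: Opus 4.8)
The plan is to reduce the statement to the explicit criteria already established in \cref{prop:composing_epinormal_maps} and \cref{prop:composing_mononormal_maps}, after making the maps $m$ and $e$ fully concrete. First I would invoke \cref{prop:normal_monos_and_epis_in_sup} to realise both maps as standard subobjects and quotients inside $M$: since $m$ is a normal monomorphism its image is a downset, so writing $b = m(\top)$ we may identify $L$ with ${\downarrow} b$ and $m$ with the inclusion, whence $m_*$ is the map $y \mapsto y \wedge b$; since $e$ is a normal epimorphism its kernel congruence is $\langle (0,a)\rangle$ for $a = e_*(0)$, so we may identify $N$ with ${\uparrow} a$, $e$ with the map $y \mapsto y \vee a$, and $e_*$ with the inclusion ${\uparrow} a \hookrightarrow M$.

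For the epi-normal half, \cref{prop:composing_epinormal_maps} applied with $f = m$ says that $em$ is epi-normal precisely when $x \vee m_*(a) \ge m_*(m(x) \vee a)$ for all $x \in L$. Translating through the identifications above, this becomes $x \vee (a \wedge b) \ge (x \vee a) \wedge b$ for every $x \le b$. The reverse inequality $x \vee (a \wedge b) \le (x \vee a) \wedge b$ holds automatically whenever $x \le b$, so the condition is equivalent to the equality $x \vee (a \wedge b) = (x \vee a) \wedge b$ for all $x \le b$, which is exactly the assertion that $(a, b) = (e_*(0), m(\top))$ is a modular pair.

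For the mono-normal half I would apply \cref{prop:composing_mononormal_maps} with $f = e$ and the normal monomorphism there taken to be our $m$: then $em$ is mono-normal precisely when $x \wedge e(b) \le e(e_*(x) \wedge b)$ for all $x \in N$. Under the identifications this reads $x \wedge (b \vee a) \le (x \wedge b) \vee a$ for every $x \ge a$; again the opposite inequality is automatic for $x \ge a$, so the condition amounts to $x \wedge (b \vee a) = (x \wedge b) \vee a$ for all $x \ge a$. Writing this out in $L\op$ — where meets and joins swap and $x \ge a$ becomes $x \le_{L\op} a$ — it is precisely the statement that $(b, a) = (m(\top), e_*(0))$ is a modular pair in $L\op$, i.e.\ a dual modular pair.

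I do not expect a genuine obstacle here: the whole argument is bookkeeping with the two adjunctions, and the only points requiring care are keeping track of which variable ranges over ${\downarrow} b$ versus ${\uparrow} a$ and checking that the two "free" inequalities really do hold on the relevant intervals. The mildly delicate step is the final translation in the mono-normal case: one must unwind the definition of a modular pair in $L\op$ in exactly the direction produced by \cref{prop:composing_mononormal_maps}, so I would spell that unwinding out explicitly rather than leaving it implicit.
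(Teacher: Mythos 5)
Your proof is correct and follows essentially the same route as the paper: identify $m$ with the inclusion of ${\downarrow}\, b$ and $e$ with the quotient onto ${\uparrow}\, a$ via \cref{prop:normal_monos_and_epis_in_sup}, then translate the criteria of \cref{prop:composing_epinormal_maps,prop:composing_mononormal_maps} into the (dual) modular pair conditions. The only cosmetic difference is that the paper dispatches the mono-normal half with ``by duality'' where you unwind it explicitly.
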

\begin{proof}
 Let $a = e_*(0)$ and $b = m(\top)$. We can view $L$ as the subset ${\downarrow} b \subseteq M$ and take $m$ to be the inclusion map.
 Then $m_*(z) = z \wedge b$. Thus by \cref{prop:composing_epinormal_maps} we find that $em$ is normal if and only if
 $x \vee (a \wedge b) = (x\vee a) \wedge b$ for all $x \le b$. This is precisely the definition of $(a,b)$ being a modular pair. %
 The dual result follows by duality.
\end{proof}

The essence of the following characterisation of complete modular lattices\index{lattice!modular|(} is essentially known, but it is perhaps notable that the characterisation can be stated in purely categorical terms
(and amongst algebraic structures that do not have meets in their signature).

\begin{corollary}\label{cor:strict_maps_compose_for_modular_suplattices}
 Let $M$ be a suplattice. Then $em$ is strict for every normal mono $m\colon L \triangleTailArrow M$ and normal epi $e\colon M \triangleHeadArrow N$
 if and only if it is epi-normal for every such $m$ and $e$ if and only if it is mono-normal for every such $m$ and $e$ if and only if $M$ is modular.\index{lattice!modular|)}
\end{corollary}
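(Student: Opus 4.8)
The plan is to read the corollary off \cref{prop:normal_composition_and_modular_pairs} together with the characterisation, recalled earlier, that a lattice is modular if and only if every element is a modular element, equivalently if and only if every element is a dual modular element. Write $(1)$, $(2)$, $(3)$, $(4)$ for the four conditions in the order stated: strictness of $em$, epi-normality of $em$, mono-normality of $em$, and modularity of $M$.

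First I would pin down the relevant normal maps. For each $b \in M$ the inclusion ${\downarrow} b \hookrightarrow M$ is a normal monomorphism with $m(\top) = b$: by the proof of \cref{prop:normal_monos_and_epis_in_sup} it is the kernel of the map $M \to {\uparrow} b$ sending $x$ to $x \vee b$. Dually, for each $a \in M$ the map $e_a \colon M \to {\uparrow} a$, $x \mapsto x \vee a$, is a normal epimorphism, and a short computation with its right adjoint gives $(e_a)_*(0) = \bigvee\{x \in M \mid x \vee a \le a\} = a$. Hence, as $a$ and $b$ range independently over $M$, the pair $((e_a)_*(0), m(\top))$ takes every value in $M \times M$.

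Next I would run the equivalences. $(4) \Rightarrow (1)$: if $M$ is modular then every pair in $M$ is both a modular pair and a dual modular pair, so for any normal mono $m$ into $M$ and normal epi $e$ out of $M$, \cref{prop:normal_composition_and_modular_pairs} makes $em$ simultaneously epi-normal and mono-normal, hence strict. The implications $(1) \Rightarrow (2)$ and $(1) \Rightarrow (3)$ are immediate, since a strict morphism is by definition epi-normal and mono-normal. $(2) \Rightarrow (4)$: applying $(2)$ to $m = ({\downarrow} b \hookrightarrow M)$ and $e = e_a$ and invoking \cref{prop:normal_composition_and_modular_pairs} shows that $(a, b)$ is a modular pair for all $a, b \in M$, so every element of $M$ is a modular element and $M$ is modular. $(3) \Rightarrow (4)$ is the same argument with \emph{mono-normal} and \emph{dual modular pair} in place of \emph{epi-normal} and \emph{modular pair}. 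This closes the cycles $(4) \Rightarrow (1) \Rightarrow (2) \Rightarrow (4)$ and $(4) \Rightarrow (1) \Rightarrow (3) \Rightarrow (4)$, giving the full equivalence.

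There is no genuinely hard step; the only point requiring care is the bookkeeping of the second paragraph — verifying that $m(\top)$ and $(e_a)_*(0)$ genuinely sweep out all of $M$ independently, so that the modular-pair condition extracted from \cref{prop:normal_composition_and_modular_pairs} really is quantified over the whole of $M \times M$. After that, the statement is a direct consequence of \cref{prop:normal_composition_and_modular_pairs} and the standard facts about modular lattices.
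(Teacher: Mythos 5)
Your proof is correct and is exactly the argument the paper intends: the corollary is left unproved there precisely because it follows from \cref{prop:normal_composition_and_modular_pairs} once one observes, as you do, that the pairs $(e_*(0), m(\top))$ range over all of $M \times M$ and that modularity of $M$ is equivalent to every pair being a (dual) modular pair. Your careful verification that normal monos into $M$ are the inclusions of principal downsets and normal epis out of $M$ are the maps $x \mapsto x \vee a$ is the right bookkeeping and matches the description given in the proof of \cref{prop:normal_monos_and_epis_in_sup}.
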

Consequently, $M$ is modular if and only if the composition of every strict map into $M$ and every strict map out of $M$ is strict.
Thus, the collections of modular suplattices and strict homomorphisms\index{strict morphism|(} form a category.
\begin{remark}
 The submodule functor $\Sub\colon \Mod{R} \to \Sup$ factors through this subcategory.
 A precise characterisation of the smallest subcategory containing the image of this functor is a very difficult problem,
 even on the level of objects. %
\end{remark}

\begin{corollary}\label{prop:composing_strict_maps_and_modular_elements}
 A strict morphism $k$ is composably epi-normal iff $k(\top)$ is modular and composably mono-normal iff $k_*(0)$ is dual modular\index{modular|)}.
\end{corollary}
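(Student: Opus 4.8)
The plan is to deduce the corollary directly from \cref{prop:composably_epinormal_depends_on_mono,prop:normal_composition_and_modular_pairs}, using two elementary facts about $\Sup$: that every element of a suplattice occurs both as $e_*(0)$ for a suitable normal epimorphism $e$ and as $m(\top)$ for a suitable normal monomorphism $m$, and that the epi (respectively mono) factor of a strict map $k$ records $k(\top)$ (respectively $k_*(0)$) on the nose.

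For the first equivalence I would take the (epi, mono)-factorisation $k = m \circ n$ of $k\colon L \to M$, with $n\colon L \to I$ epic and $m\colon I \to M$ monic; since $k$ is strict, $n$ is a normal epimorphism and $m$ a normal monomorphism. By \cref{prop:composably_epinormal_depends_on_mono}, $k$ is composably epi-normal if and only if $m$ is. Now for the normal mono $m$, being composably epi-normal means that $e \circ m$ is epi-normal for every normal epimorphism $e\colon M \triangleHeadArrow N$, and by \cref{prop:normal_composition_and_modular_pairs} this holds for a given $e$ exactly when $\bigl(e_*(0),\, m(\top)\bigr)$ is a modular pair. As $e$ ranges over the normal epimorphisms out of $M$ the element $e_*(0)$ ranges over all of $M$ --- for each $a \in M$, the quotient $x \mapsto x \vee a$ onto ${\uparrow}a$ is a normal epimorphism by \cref{prop:normal_monos_and_epis_in_sup} with $e_*(0) = a$ --- so $m$ is composably epi-normal if and only if $\bigl(x,\, m(\top)\bigr)$ is a modular pair for all $x \in M$, i.e.\ $m(\top)$ is a modular element of $M$. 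Finally $n$ is surjective, hence preserves the top element, so $m(\top) = m\bigl(n(\top)\bigr) = k(\top)$, which gives the first claim.

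For the second equivalence I would run the dual argument. By the dual of \cref{prop:composably_epinormal_depends_on_mono}, $k$ is composably mono-normal if and only if the epic factor $n$ is; $n$ is composably mono-normal if and only if $n \circ m'$ is mono-normal for every normal monomorphism $m'\colon N \triangleTailArrow L$, which by \cref{prop:normal_composition_and_modular_pairs} holds exactly when $\bigl(m'(\top),\, n_*(0)\bigr)$ is a dual modular pair; as $m'$ ranges over normal monomorphisms into $L$ (for instance the inclusions ${\downarrow}a \hookrightarrow L$) the element $m'(\top)$ ranges over all of $L$; and $k_*(0) = n_*\bigl(m_*(0)\bigr) = n_*(0)$ because $m$ is injective. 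Hence $k$ is composably mono-normal if and only if $k_*(0)$ is a dual modular element of $L$. Since the whole argument is a repackaging of the two cited results (one may also phrase the second half as an application of the self-duality $\Sup \cong \Sup\op$), I do not anticipate a genuine obstacle; the only points needing a moment's care are the two observations flagged at the outset --- which values $e_*(0)$ and $m'(\top)$ can take, and the bookkeeping identities $m(\top) = k(\top)$ and $m_*(0) = 0$.
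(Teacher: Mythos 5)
Your argument is correct and is exactly the paper's intended proof: the paper's own justification is the one-line ``use \cref{prop:composably_epinormal_depends_on_mono}, \cref{prop:normal_composition_and_modular_pairs} and the fact that surjections send $\top$ to $\top$'', which you have simply unpacked (including the worthwhile explicit checks that $e_*(0)$ and $m'(\top)$ range over all elements and that $m_*(0)=0$). No differences of substance.
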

\begin{proof}
 Simply use \cref{prop:composably_epinormal_depends_on_mono}, \cref{prop:normal_composition_and_modular_pairs} and the fact that surjections send $\top$ to $\top$.
\end{proof}

It is now apparent that we can rephrase the different notions of principal element\index{principal element|(} in terms of the classes of morphisms discussed above.
\begin{proposition}\label{prop:characterisation_of_principal_elts}
 Let $Q$ be a two-sided quantale and take $k \in Q$. We may view $k$ as suplattice endomorphism on $Q$ (multiplication by $k$). Then
 \begin{enumerate}
  \item $k$ is weak meet principal iff it is mono-normal\index{mono-normal morphism},
  \item $k$ is weak join principal iff it is epi-normal, %
  \item $k$ is meet principal iff it is composably mono-normal\index{mono-normal morphism!composably},
  \item $k$ is join principal iff it is composably epi-normal, %
  \item $k$ is weak principal iff it is strict\index{strict morphism|)},
  \item $k$ is principal\index{principal element|)} iff $k$ is a strict map, $k$ is a modular element and $k^\bullet$ is dual modular.
 \end{enumerate}
\end{proposition}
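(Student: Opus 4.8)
The plan is to prove the six equivalences one at a time by unwinding the definitions against the characterisations already recorded, viewing $k$ throughout as the join-preserving endomorphism $x \mapsto kx$ of $Q$ (join-preserving by bilinearity of the multiplication), with right adjoint $k \heyting (-)$, image $kQ$, and $k_*(0) = k \heyting 0 = k^\bullet$. I will use two consequences of two-sidedness repeatedly: $k(\top) = k \cdot 1 = k$ (since $\top = 1$), and $ka \le 1 \cdot a = a$ for every $a$ (since $k \le 1$). In the $(\mathrm{epi},\mathrm{mono})$-factorisation of $k$ the epic part is the corestriction $Q \twoheadrightarrow kQ$ and the monic part is the inclusion $kQ \hookrightarrow Q$; by \cref{prop:normal_monos_and_epis_in_sup} the normal monomorphisms into $Q$ are precisely the inclusions ${\downarrow} b \hookrightarrow Q$ (a downward-closed sub-suplattice having a top element, hence being principal), with $b = m(\top)$, and the normal epimorphisms out of $Q$ are precisely the maps $x \mapsto x \vee a$, with $a = e_*(0)$; the latter coincide, on underlying suplattices, with the closed quantale quotients $Q \twoheadrightarrow Q/\nabla_a$.

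Parts (1), (2) and (5) are then quick. For (1): $k$ is mono-normal iff its monic factor is a normal mono, iff $kQ$ is downward closed; since $kQ \subseteq {\downarrow} k(\top) = {\downarrow} k$ always and $k \in kQ$, this amounts to ${\downarrow} k \subseteq kQ$, i.e.\ $x \le k \implies x = kd$ for some $d$, which is weak meet principal. For (2): $k$ is epi-normal iff its epic factor $Q \twoheadrightarrow kQ$ is a normal epi, iff (by \cref{prop:normal_monos_and_epis_in_sup}, using $e_*(0) = k^\bullet$) $kx = ky \iff x \vee k^\bullet = y \vee k^\bullet$; the direction $\Leftarrow$ holds because $kk^\bullet \le 0$ gives $k(x \vee k^\bullet) = kx$, and the forward direction is easily seen to be equivalent to $kx \le ky \implies x \le y \vee k^\bullet$, i.e.\ to weak join principal. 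Part (5) is then just (1) and (2) against the definitions of weak principal and strict.

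For (3): $k$ is composably mono-normal iff, for every normal mono $m$ into $Q$ --- the inclusion of some ${\downarrow} b$, so $b = m(\top)$ --- the composite $km$ is mono-normal, which by \cref{prop:composing_mononormal_maps} is the inequality $x \wedge kb \le k((k \heyting x) \wedge b)$ for all $b, x \in Q$. Since $k((k \heyting x) \wedge b) \le k(k \heyting x) \wedge kb \le x \wedge kb$ always, this inequality is equivalent to the equality $k((k \heyting x) \wedge b) = x \wedge kb$ defining meet principality. For (4): $k$ is composably epi-normal iff, for every normal epi $e\colon x \mapsto x \vee a$ out of $Q$ (so $a = e_*(0)$), the composite $ek$ is epi-normal, which by \cref{prop:composing_epinormal_maps} is the inequality $k \heyting (kx \vee a) \le x \vee (k \heyting a)$ for all $a, x \in Q$. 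I will then show that, for each fixed $a$, this family of inequalities (over $x$) is precisely the statement that the image of $k$ is weak join principal in the closed quotient $Q/\nabla_a$: computing inside $Q/\nabla_a$ and using $ka \le a$, the pseudocomplement of $[k]$ is the class of $k \heyting a$ and $[k] \heyting ([k][x])$ is the class of $k \heyting (kx \vee a)$, while $a \le k \heyting a$ and $a \le k \heyting(kx \vee a)$; so the defining identity $[k] \heyting [k][x] = [x] \vee [k]^\bullet$ of weak join principality, read in $Q/\nabla_a$, collapses to $k \heyting(kx \vee a) = x \vee (k \heyting a)$, whose $\ge$ half is automatic, leaving exactly the composable-epi-normality inequality. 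Ranging over all $a$ and invoking the recorded characterisation that $k$ is join principal iff it is weak join principal in every closed quotient yields (4).

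Finally (6): by (3) and (4), $k$ is principal iff it is both composably mono-normal and composably epi-normal. Each of these implies $k$ is mono-normal, resp.\ epi-normal (take $b = \top$ in the inequality of (3); resp.\ note join principal $\implies$ weak join principal $\implies$ epi-normal by (2)), so $k$ principal forces $k$ strict. Given that $k$ is strict, \cref{prop:composing_strict_maps_and_modular_elements} applies and, with $k(\top) = k$ and $k_*(0) = k^\bullet$, identifies composable epi-normality of $k$ with $k$ being a modular element and composable mono-normality of $k$ with $k^\bullet$ being dual modular; conversely strictness together with these two modularity conditions recovers composable mono- and epi-normality via the same lemma, hence principality. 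The one step with any real content is (4) --- performing the residuation and pseudocomplement computations inside $Q/\nabla_a$ and tracking which inequalities hold automatically; everything else is bookkeeping with the $(\mathrm{epi},\mathrm{mono})$-factorisation and the lemmas already established.
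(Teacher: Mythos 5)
Your proof is correct and follows essentially the same route as the paper, which simply cites \cref{prop:normal_monos_and_epis_in_sup}, \cref{prop:composing_epinormal_maps,prop:composing_mononormal_maps} and \cref{prop:composing_strict_maps_and_modular_elements} for the respective parts; you have filled in the details of those citations accurately, including the key identifications $k(\top)=k$, $k_*(0)=k^\bullet$ and the fact that normal monos/epis on $Q$ are exactly the principal-downset inclusions and the closed quotients. The only stylistic divergence is in (4), where you detour through the ``weak join principal in every closed quotient'' characterisation rather than matching the inequality of \cref{prop:composing_epinormal_maps} directly against the definition of join principal (whose reverse inequality is automatic); both are valid.
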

\begin{proof}
 Claims (i) and (ii) come from \cref{prop:normal_monos_and_epis_in_sup}.
 Claims (iii) and (iv) come from \cref{prop:composing_epinormal_maps,prop:composing_mononormal_maps}.
 Then (v) is immediate and (vi) follows from \cref{prop:composing_strict_maps_and_modular_elements}.
\end{proof}

From this we can immediately conclude the following known results.
\begin{corollary}
 Meet principal, join principal and principal elements are closed under multiplication.
\end{corollary}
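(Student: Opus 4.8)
The plan is to read everything off the characterisation in \cref{prop:characterisation_of_principal_elts}, together with the fact --- noted immediately after the definition of composably epi-normal and composably mono-normal morphisms --- that each of those two classes of $\Sup$-morphisms is closed under composition. The only preliminary remark needed is that, under the identification of an element $k \in Q$ with the suplattice endomorphism ``multiplication by $k$'', the endomorphism attached to a product is the composite of the endomorphisms attached to the factors: for $k_1, k_2 \in Q$ we have $(k_1 k_2) x = k_1 (k_2 x)$ for all $x$ by associativity of the quantale multiplication, so $(k_1 k_2)\cdot(-) = k_1\cdot(k_2\cdot(-))$ as maps $Q \to Q$.

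First suppose $k_1$ and $k_2$ are meet principal. By \cref{prop:characterisation_of_principal_elts}~(iii), multiplication by $k_1$ and multiplication by $k_2$ are both composably mono-normal; composably mono-normal morphisms compose, so multiplication by $k_1 k_2$ is composably mono-normal, and hence $k_1 k_2$ is meet principal, again by~(iii). The argument for join principal elements is identical, replacing~(iii) by \cref{prop:characterisation_of_principal_elts}~(iv) and ``composably mono-normal'' by ``composably epi-normal''. Finally, an element is principal exactly when it is both meet principal and join principal, so the product of two principal elements is principal by the two cases just treated.

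I do not anticipate any real obstacle: the whole argument is bookkeeping once \cref{prop:characterisation_of_principal_elts} is in hand. The single point that needs a (trivial) check is the preliminary remark that the composite of the two multiplication maps is the multiplication map of the product, which is just associativity. Note in particular that, unlike some of the earlier results in this section, this corollary does not require modularity of $Q$, since it goes through the characterisations \cref{prop:characterisation_of_principal_elts}~(iii)--(iv) rather than the modular-element description in \cref{prop:characterisation_of_principal_elts}~(vi).
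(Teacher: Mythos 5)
Your proof is correct and is exactly the argument the paper intends when it says these results follow "immediately" from \cref{prop:characterisation_of_principal_elts}: identify the product with the composite of multiplication maps and use that composably mono-normal and composably epi-normal morphisms are closed under composition. Nothing to add.
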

\begin{corollary}
 The product of a weak meet principal element and a meet principal element is weak meet principal. The product of a weak join principal element and a join principal element is weak join principal.
\end{corollary}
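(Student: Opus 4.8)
\emph{Proof plan.} The plan is to read off both claims from the categorical characterisation of principal elements in \cref{prop:characterisation_of_principal_elts}: weak meet (resp.\ weak join) principal elements of $Q$ correspond to mono-normal (resp.\ epi-normal) endomorphisms of $Q$ given by multiplication, while meet (resp.\ join) principal elements correspond to composably mono-normal (resp.\ composably epi-normal) such endomorphisms. The only ingredient not already isolated in the excerpt is a short composition lemma about these classes of maps.

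First I would establish the lemma: if $g$ is composably mono-normal and $f$ is mono-normal with $\mathrm{cod}(f) = \mathrm{dom}(g)$, then $g f$ is mono-normal. Since $\Sup$ has (epi,mono)-factorisations and $f$ is mono-normal, write $f = m e$ with $e$ epic and $m$ a normal monomorphism. As $g$ is composably mono-normal and $m$ is a normal monomorphism into the domain of $g$, the map $gm$ is mono-normal by the definition following \cref{prop:composing_mononormal_maps}; factoring $gm = m' e'$ with $m'$ a normal mono and $e'$ epic we obtain $gf = m'(e'e)$, an (epi,mono)-factorisation whose monic part $m'$ is normal (composites of epimorphisms are epic). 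Hence $gf$ is mono-normal. Applying the autoduality $\Sup \cong \Sup\op$, which interchanges mono-normal with epi-normal and composably mono-normal with composably epi-normal and reverses composites, yields the dual statement: if $f$ is epi-normal and $g$ is composably epi-normal with $\mathrm{dom}(f) = \mathrm{cod}(g)$, then $fg$ is epi-normal.

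Now I would apply this to multiplication. For $k,\ell \in Q$, write $\mu_k$ and $\mu_\ell$ for the suplattice endomorphisms of $Q$ given by multiplication by $k$ and by $\ell$; associativity gives that multiplication by $k\ell$ equals $\mu_k \circ \mu_\ell$, and commutativity of $Q$ makes this equal to $\mu_\ell \circ \mu_k$ as well. If $k$ is weak meet principal and $\ell$ is meet principal, then by \cref{prop:characterisation_of_principal_elts} $\mu_k$ is mono-normal and $\mu_\ell$ is composably mono-normal, so the lemma shows $\mu_\ell \circ \mu_k$ — which is multiplication by $k\ell$ — is mono-normal, i.e.\ $k\ell$ is weak meet principal. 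Dually, if $k$ is weak join principal and $\ell$ is join principal, then $\mu_k$ is epi-normal and $\mu_\ell$ is composably epi-normal, so $\mu_k \circ \mu_\ell$ is epi-normal and $k\ell$ is weak join principal.

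The only thing requiring attention is the left/right bookkeeping in the composition lemma: ``composably mono-normal'' is stability under precomposition with normal monos whereas ``composably epi-normal'' is stability under postcomposition with normal epis, so the factor of the appropriate type must sit on the correct side of the composite. This is harmless in the application because $Q$ is commutative, so the two orders of composition coincide. Note also that, unlike the analogous statement for (fully) principal elements, no modularity hypothesis on $Q$ is needed here.
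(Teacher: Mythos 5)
Your proposal is correct and takes the same route the paper intends: the corollary is stated there as an immediate consequence of \cref{prop:characterisation_of_principal_elts}, and the composition lemma you supply (composably mono-normal after mono-normal is mono-normal, via the uniqueness of (epi,mono)-factorisations in $\Sup$, plus its dual) is exactly the missing justification, correctly proved. Your remark about the left/right bookkeeping and its harmlessness under commutativity is also accurate.
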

\begin{corollary}
 Weak principal elements in a modular quantale are principal.
\end{corollary}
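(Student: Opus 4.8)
The plan is to read off the result directly from the characterisation of principal elements in \cref{prop:characterisation_of_principal_elts} together with the fact, recalled just before the definition of modular pairs, that a lattice is modular if and only if every one of its elements is a modular element if and only if every one of its elements is a dual modular element. So suppose $Q$ is a modular quantale and $k \in Q$ is weak principal. By parts (i)--(v) of \cref{prop:characterisation_of_principal_elts}, weak principal means exactly that multiplication by $k$ is a strict suplattice endomorphism of $Q$.

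To upgrade strictness to principality we use part (vi) of \cref{prop:characterisation_of_principal_elts}: $k$ is principal if and only if $k$ is strict, $k(\top)$ is a modular element, and $k^\bullet$ is a dual modular element. The first condition we already have. For the other two, note that the underlying lattice of $Q$ is modular by hypothesis, so \emph{every} element of $Q$ is simultaneously a modular element and a dual modular element; in particular this applies to $k(\top)$ and to $k^\bullet = k \heyting 0 = k_*(0)$ (recall $k_*$, the right adjoint of multiplication by $k$, is $k \heyting (-)$). Hence all three conditions of \cref{prop:characterisation_of_principal_elts}(vi) are met and $k$ is principal.

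An equivalent route, which I would probably mention as a remark, is to combine parts (iii) and (iv) of \cref{prop:characterisation_of_principal_elts} with \cref{prop:composing_strict_maps_and_modular_elements}: a strict morphism $k$ is composably epi-normal precisely when $k(\top)$ is modular and composably mono-normal precisely when $k_*(0)$ is dual modular, and in a modular quantale both side-conditions hold automatically, so strict forces composably epi-normal and composably mono-normal, i.e.\ meet principal and join principal, i.e.\ principal.

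There is essentially no obstacle here: the only points requiring a moment's care are the bookkeeping identity $k^\bullet = k_*(0)$ and the observation that modularity of the lattice is exactly what supplies the two residual hypotheses of \cref{prop:characterisation_of_principal_elts}(vi). The corollary is really just a repackaging of the earlier translation between principality notions and normality/modularity properties.
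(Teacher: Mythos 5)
Your proof is correct and is exactly the argument the paper intends: the corollary is stated as an immediate consequence of \cref{prop:characterisation_of_principal_elts}, and you supply precisely the missing bookkeeping (every element of a modular lattice is both modular and dual modular, $k^\bullet = k_*(0)$, and $k(\top)=k$ in a two-sided quantale). Nothing further is needed.
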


\begin{corollary} %
 Weak meet principal elements (and hence principal elements) in a modular quantale are preserved by closed quotients.
\end{corollary}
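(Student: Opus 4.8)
The plan is to work directly with the elementary characterisation recalled above, that $k$ is weak meet principal in a two-sided quantale precisely when $x \le k$ implies $x = kd$ for some $d$, together with the explicit description of closed quotients. Recall that the closed quotient $Q/\nabla_c$ has underlying lattice ${\uparrow} c$ with binary meet and join inherited from $Q$ (so, being a complete sublattice of a modular lattice closed under the lattice operations, it is again modular) and with multiplication $x \cdot' y = (xy) \vee c$; the quotient map sends $a$ to $a \vee c$, and since $Q$ is two-sided so is $Q/\nabla_c$. Write $k' = k \vee c$ for the image of $k$.

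First I would prove that if $k$ is weak meet principal in a modular two-sided quantale $Q$, then $k'$ is weak meet principal in $Q/\nabla_c$. Suppose $x \in {\uparrow} c$ with $x \le k'$, i.e.\ $c \le x \le k \vee c$. Since $x \wedge k \le k$ and $k$ is weak meet principal, there is $e \in Q$ with $x \wedge k = ke$. Applying the modular law to $c \le x$ (with $k$ in the free slot) gives $(k \wedge x) \vee c = (k \vee c) \wedge x = x$, so $x = ke \vee c$. Now put $d = e \vee c \in {\uparrow} c$; expanding $k' \cdot' d = (k \vee c)(e \vee c) \vee c$ and using two-sidedness ($1 = \top$, so $kc, ce, c^2 \le c$) to discard the cross terms, we get $k' \cdot' d = ke \vee c = x$. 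Thus every $x \le k'$ in the quotient has the form $k' \cdot' d$, which is exactly weak meet principality of $k'$.

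For the parenthetical claim I would combine three preservation facts. If $k$ is principal then it is in particular weak meet principal and join principal. Weak meet principality of $k'$ was just established. Join principality of $k'$ follows because, by definition, a join principal element is weak join principal in every closed quotient, and a closed quotient of the closed quotient $Q/\nabla_c$, namely $(Q/\nabla_c)/\nabla_{c'}$ with $c' \ge c$, is itself a closed quotient $Q/\nabla_{c'}$ of $Q$ (the lattices are ${\uparrow} c'$ either way, and the multiplications both reduce to $(xy) \vee c'$ since $c \le c'$); hence $k'$ is weak join principal in every closed quotient of $Q/\nabla_c$, i.e.\ join principal, and in particular weak join principal. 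So $k'$ is weak meet principal and weak join principal, hence weak principal, in the modular quantale $Q/\nabla_c$, and therefore principal by the earlier corollary that weak principal elements of a modular quantale are principal.

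The only mildly delicate point — and where the hypotheses genuinely enter — is the bookkeeping inside $Q/\nabla_c$: one must check that the order, binary meets and binary joins there really are those of the sublattice ${\uparrow} c \subseteq Q$, so that modularity transfers to the quotient and the modular law used in the argument is the one holding in the ambient $Q$; and one must remember that the quotient product is $(xy) \vee c$, not $xy$, so that it is precisely two-sidedness ($kc, ce, c^2 \le c$) that collapses $k' \cdot' d$ to $ke \vee c$. Neither check is hard, but skipping them is the easy way to get the argument slightly wrong.
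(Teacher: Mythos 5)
Your proof is correct, but it takes a genuinely different route from the paper's. The paper argues entirely in the categorical framework it has just set up: weak meet principality of $k$ is, by \cref{prop:characterisation_of_principal_elts}, mono-normality of multiplication-by-$k$; the closed quotient $e\colon Q \twoheadrightarrow Q/\nabla_a$ is a normal epimorphism; \cref{cor:strict_maps_compose_for_modular_suplattices} (modularity) lets one commute $e$ past the normal-monic factor of $k$, and uniqueness of the (epi,mono)-factorisation of $ek$ identifies the monic part of multiplication by $e(k)$ on the quotient with a normal mono, so $e(k)$ is mono-normal, i.e.\ weak meet principal. You instead compute directly: using the elementary characterisation $x \le k \Rightarrow x = kd$, the explicit description of $Q/\nabla_c$ as ${\uparrow}c$ with product $(xy)\vee c$, one application of the modular law $(k\wedge x)\vee c = (k\vee c)\wedge x$ for $c \le x$, and two-sidedness to absorb the cross terms in $(k\vee c)(e\vee c)\vee c$. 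Both uses of modularity are, unsurprisingly, the same fact in different clothing. Your version is self-contained and makes visible exactly where modularity and two-sidedness enter, which the paper's diagram chase hides; the paper's version is the shorter one \emph{given} the machinery of the section and is the intended illustration of why that machinery was built. Your handling of the parenthetical claim (join principality passes to closed quotients because closed quotients of closed quotients are closed quotients, then invoke the corollary that weak principal elements of a modular quantale are principal) is also sound, and the two checks you flag at the end — that ${\uparrow}c$ is a sublattice of $Q$, hence modular, and that the quotient product is $(xy)\vee c$ rather than $xy$ — are indeed the places where a careless version of this argument would go wrong.
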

\begin{proof} %
 Let $k$ be a weak meet principal element of $Q$ and suppose $e\colon Q \twoheadrightarrow Q/\nabla_a$ is a closed quotient. Consider the following commutative diagram.
 \begin{center}
 \begin{tikzpicture}[node distance=2.5cm, auto]
  \node (Q) {$Q$};
  \node (target) [right of=Q] {.};
  \node (Q2) [right of=target] {$Q$};
  \node (Qquotient) [below of=Q] {$Q/\nabla_a$};
  \node (Qquotient2) [below of=Q2] {$Q/\nabla_a$};
  \node (target2) [below of=target] {.};
  \draw[->, bend left=20] (Q) to node {$k$} (Q2);
  \draw[->>] (Q) to node {} (target);
  \draw[normalTail->] (target) to node {} (Q2);
  \draw[>=normalTip,->] (Q) to [swap] node {$e$} (Qquotient);
  \draw[>=normalTip,->] (Q2) to node {$e$} (Qquotient2);
  \draw[->>] (Qquotient) to node {} (target2);
  \draw[left hook->] (target2) to node {} (Qquotient2);
  \draw[->, bend right=20] (Qquotient) to [swap] node {$e(k)$} (Qquotient2);
  \path (target) -- node[anchor=center] (target3) {.} (Qquotient2);
  \draw[>=normalTip,->] (target) to node {} (target3); %
  \draw[normalTail->] (target3) to node {} (Qquotient2);
  \draw[double equal sign distance] (target2) to node {} (target3);
 \end{tikzpicture}
 \end{center}
 Here we have used \cref{cor:strict_maps_compose_for_modular_suplattices} to commute $e$ past the monic part of $k$
 and the uniqueness of the (epi,mono)-factorisation of $ek$ to obtain the isomorphism.
\end{proof} %

Our approach actually suggests a further notion, which does not seem to have been defined previously. %
\begin{definition}
 We say a map of suplattices is \emph{composably strict}\index{strict morphism!composably|textbf} if the composition of it with every strict map is strict.
 An element $k \in Q$ is \emph{strong principal} if its corresponding suplattice homomorphism is composably strict.
\end{definition}

Note that a quantale element $k$ is strong principal iff $k$ is strict, $k$ is left and right modular and $k^\bullet$ is left and right dual modular (by \cref{prop:normal_composition_and_modular_pairs}).

\begin{remark} %
Let us note some similarities of the above approach to work in two other sources.
The first is \cite{nai2014principalMappingsPosets}, which discusses similar classes of `principal maps' between posets and discusses how they relate to principal elements.
However, the authors do not discuss composition or modularity in much detail.
The second is \cite{blyth1972residuation}, which discusses the notion of `range-closed maps' between posets (corresponding to our mono-normal maps) and proves a number of
similar results involving composition and modularity. However, the link with abstract ideal theory appears to have gone unnoticed.
Our more categorical descriptions place the focus more strongly on composability and hopefully lead to more understandable proofs.
\end{remark}

\subsection{Principal elements in quantales of ideals}

While principal elements were intended to capture the lattice-theoretic behaviour of principal ideals, principal ideals are not characterised by this condition.
For instance, (at least classically) every ideal of a Dedekind domain is a principal element, but not every Dedekind domain is a principal ideal domain.
Rather, the principal elements of $\Idl(R)$ correspond to what are known as the finitely generated \emph{locally principal ideals}\index{ideal (discrete algebraic)!ring!locally principal|(textbf} \cite{McCarthy1971}.
The proofs in the literature are nonconstructive and even require the axiom of choice (though a constructive ring-theoretic proof of the backward direction can be found in \cite{LombardiQuitte}).
We will provide a full constructive proof.

\begin{definition}
 A finitely generated ideal $I$ in a ring $R$ is \emph{locally principal}\index{ideal (discrete algebraic)!ring!locally principal|)} if there are elements $f_1,\dots,f_n \in R$ such that $(f_1,\dots,f_n) = 1$ and the image of $I$ in the localisation $R[f_i^{-1}]$
 is principal for each $1 \le i \le n$.
\end{definition}

To prove the characterisation we will need to a few results on localisation\index{localisation!of idempotent semirings|(textbf} of quantales. The necessary results are stated in \cite{Anderson1976} without proof.
We present them here for completeness.

We first consider a special class of quotients of idempotent semirings.
\begin{definition}
 Let $D$ be a two-sided idempotent semiring and let $S$ be a multiplicative submonoid of $D$.
 We define $D/S$ to be the quotient of $D$ by the congruence $\langle (s,1) \mid s \in S \rangle$
 and call such a quotient a \emph{localisation}.
\end{definition}

\begin{lemma}\label{prop:order_on_localisation}
 If $x, y \in D$, then $[x]_S \le [y]_S$ in $D/S$ if and only if $\exists s \in S.\ xs \le y$.
\end{lemma}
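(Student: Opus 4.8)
The plan is to introduce the auxiliary relation $x \preceq y \iff \exists s \in S.\ xs \le y$ on $D$ and prove the two implications separately: the direction ``$xs \le y$ for some $s\in S$ implies $[x]_S \le [y]_S$'' directly from the quotient map, and the harder converse by showing that the congruence $C = \langle(s,1)\mid s\in S\rangle$ defining $D/S = D/C$ is refined by the congruence canonically associated with $\preceq$.

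First I would record the elementary facts used throughout. Since $D$ is two-sided, $1 = \top$ is the largest element, so $s \le 1$ for every $s \in S$, and as multiplication is monotone this gives $as \le a$ for all $a \in D$; also $1 \in S$. Using these, $\preceq$ is reflexive ($x\cdot 1 = x$) and transitive (if $xs \le y$ and $yt \le z$ then $x(st) = (xs)t \le yt \le z$ with $st \in S$); it is compatible with $\vee$ (if $xs \le x'$ and $yt \le y'$ then, using commutativity, $(x\vee y)(st) = (xs)t \vee (yt)s \le x' \vee y'$ with $st\in S$) and with multiplication (if $xs \le y$ then $(xz)s = (xs)z \le yz$). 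Hence the symmetrisation $x \approx y \iff (x\preceq y \text{ and } y\preceq x)$ is a semiring congruence on $D$, and $s \approx 1$ for every $s\in S$ (as $s = s\cdot 1 \le 1$ and $1\cdot s = s \le s$). Therefore $C \subseteq {\approx}$, since $C$ is the least congruence containing every $(s,1)$.

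For the easy direction, let $q\colon D \twoheadrightarrow D/S$ be the quotient map. Given $s\in S$ with $xs \le y$, apply $q$: since $(s,1)\in C$ we have $q(s) = q(1) = 1_{D/S} = \top_{D/S}$ (using that $D/S$ is again two-sided), so $q(x) = q(x)\cdot q(s) = q(xs) \le q(y)$ by monotonicity of $q$, i.e.\ $[x]_S \le [y]_S$. For the converse, the inclusion $C \subseteq {\approx}$ induces a monotone surjection $\bar q\colon D/S = D/C \twoheadrightarrow D/{\approx}$, so $[x]_S \le [y]_S$ forces $[x]_\approx \le [y]_\approx$. It remains to identify this order: $[x]_\approx \le [y]_\approx$ iff $[x\vee y]_\approx = [y]_\approx$ iff $x\vee y \approx y$. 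Now $y \preceq x\vee y$ always ($y = y\cdot 1$), while $x\vee y \preceq y$ says $(x\vee y)s = xs\vee ys \le y$ for some $s\in S$; since $ys \le y$ automatically, this is equivalent to $xs \le y$. Hence $[x]_\approx \le [y]_\approx \iff \exists s\in S.\ xs\le y$, which yields the conclusion (and, as a byproduct, $C = {\approx}$, an explicit description of the localisation congruence).

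The main obstacle is the converse direction: one must isolate the right congruence $\approx$ and verify $C \subseteq {\approx}$, the rest being formal manipulation with the two-sidedness hypothesis. Everything here is constructively valid.
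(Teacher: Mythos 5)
Your proof is correct and follows essentially the same route as the paper's: both introduce the preorder $x \preceq y \iff \exists s \in S.\ xs \le y$, verify it is a sub-semiring of $D \times D$ so that its symmetrisation is a congruence containing $\langle(s,1)\mid s\in S\rangle$, and then read off the order on the quotient via $x\vee y \approx y$. The only cosmetic difference is that the paper concludes the easy direction inside the quotient by $[x]_S = [x]_S[s]_S \le [y]_S$ rather than splitting it off first.
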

\begin{proof}
 Define a relation ${\lesssim} \subseteq D \times D$ by $x \lesssim y \iff \exists s \in S.\ xs \le y$. This is a preorder.
 It is transitive, since if $s,s' \in S$, $xs \le y$ and $ys' \le z$, then $ss' \in S$ and $xss' \le ys' \le z$.
 It is reflexive, since $1 \in S$. In fact, it is clear that ${\le} \subseteq {\lesssim}$.
 
 Furthermore, $\lesssim$ is a sub-semiring of $D \times D$. If $s,s' \in S$, $xs \le y$ and $x's' \le y'$, then $ss'(x \vee x') = ss'x \vee ss'x' \le sx \vee s'x' \le y \vee y'$
 and $ss'xx' = (sx)(s'x') \le yy'$. %
 
 Thus, ${\sim} \coloneqq {\lesssim} \cap {\gtrsim}$ is a congruence. Notice that for any preorder satisfying the above properties, we have $x \lesssim y \iff (x \vee y) \sim y \iff [x]_\sim \le [y]_\sim$.
 
 Now it is clear that $s \sim 1$ for all $s \in S$. So $\sim$ contains the localisation congruence and hence $[x]_S \le [y]_S \implies x \lesssim y$.
 Finally, if $x \lesssim y$, then $xs \le y$ for some $s \in S$ and so $[x]_S = [x]_S[s]_S \le [y]_S$.
\end{proof}

\begin{lemma}\label{prop:equal_on_localisations_from_a_cover_dioid}
 Let $a_1,\dots,a_k \in D$ be such that $\bigvee_{i=1}^k a_i = 1$ and let $A_i$ be the multiplicative submonoid generated by $a_i$.
 Consider $x, y \in D$. If $[x]_{A_i} \le [y]_{A_i}$ for each $i$, then $x \le y$.
\end{lemma}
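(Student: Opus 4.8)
The plan is to reduce everything to the explicit description of the order on a localisation of an idempotent semiring given by \cref{prop:order_on_localisation}, and then to exploit the hypothesis $\bigvee_{i=1}^k a_i = 1$ together with two-sidedness (the fact that $a_i \le 1$ for all $i$) via a pigeonhole argument on words.

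First I would unfold the hypotheses. Since $A_i$ is the multiplicative submonoid generated by $a_i$, its elements are exactly the powers $a_i^n$ with $n \in \N$ (including $a_i^0 = 1$). Hence by \cref{prop:order_on_localisation}, the assumption $[x]_{A_i} \le [y]_{A_i}$ says precisely that there is some $m_i \in \N$ with $x a_i^{m_i} \le y$. Setting $m = \max_i m_i$ (we may assume $k \ge 1$, since $k = 0$ forces $1 = 0$ and then $D$ is trivial) and using $a_i \le 1$ to get $a_i^m = a_i^{m_i} a_i^{m - m_i} \le a_i^{m_i}$, multiplication by $x$ is monotone, so we obtain the uniform inequality $x a_i^m \le y$ for every $i$.

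Next I would prove $1 \le \bigvee_{i=1}^k a_i^m$. Since $1$ is the multiplicative unit, $1 = 1^N = \bigl(\bigvee_{i=1}^k a_i\bigr)^N$ for every $N \in \N$, and distributivity of products over finite joins expands this as $\bigvee_w a_{w_1} \cdots a_{w_N}$, where $w$ ranges over all words of length $N$ in the alphabet $\{1,\dots,k\}$. Choosing $N$ large enough that every such word must repeat some letter at least $m$ times (for instance $N = km$), commutativity of $D$ lets us group the repeated letter together, and $a_l \le 1$ lets us discard the remaining factors, so $a_{w_1}\cdots a_{w_N} \le a_i^m \le \bigvee_{j} a_j^m$ for the repeated letter $i$. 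Taking the join over all $w$ gives $1 \le \bigvee_{j} a_j^m$.

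Finally I would multiply through by $x$, using distributivity once more: $x = x \cdot 1 \le x \cdot \bigvee_{i} a_i^m = \bigvee_{i} x a_i^m \le \bigvee_{i} y = y$, which is the desired conclusion. I do not expect any genuine obstacle here; the only points needing care are correctly identifying $A_i$ with the set of powers of $a_i$ so that \cref{prop:order_on_localisation} applies, and the bookkeeping in the pigeonhole expansion of $\bigl(\bigvee_i a_i\bigr)^N$ — in particular noticing that two-sidedness is exactly what permits discarding the unwanted factors. Everything else uses only finite joins and ordinary semiring distributivity, so completeness of $D$ is never required.
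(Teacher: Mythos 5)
Your proposal is correct and is essentially the paper's own argument: both reduce to $xa_i^{n_i}\le y$ via \cref{prop:order_on_localisation}, expand a suitable power of $\bigvee_i a_i$ by distributivity, and apply the pigeonhole principle together with two-sidedness ($a_j \le 1$) to discard unwanted factors. The only cosmetic differences are that you first uniformise the exponents to $m=\max_i m_i$ and take $N=km$, whereas the paper keeps the individual exponents and takes $N = 1+\sum_i n_i$; you also make explicit the role of $a_j\le 1$, which the paper leaves implicit.
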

\begin{proof}
 By \cref{prop:order_on_localisation} we have $n_1,\dots,n_k \in \N$ such that $a_i^{n_i} x \le y$ for each $i$.
 But setting $N = 1 + \sum_{i=1}^k n_i$, the assumption gives %
 \begin{align*}
  1 = \left(\bigvee_{i = 1}^k a_i \right)^N = \ \smashoperator[l]{\bigvee_{\substack{m_1,\dots, m_k \in \N \\ m_1 + \cdots + m_k = N}}} \prod_{j = 1}^k a_j^{m_j} \le \bigvee_{i=1}^k a_i^{n_i},
 \end{align*}
 where the inequality comes from the fact that there is always some $i$ for which $m_i \ge n_i$ by the pigeonhole principle. %
 Hence $x = \bigvee_{i = 1}^k a_i^{n_i} x \le y$, as required.
\end{proof}

As a left adjoint, the functor $\I$ preserves quotients and so a localisation $D/S$ gives a quotient $\I(D) \twoheadrightarrow \I(D/S)$, which we may call a localisation of coherent two-sided quantales.
Such quantalic localisations\index{localisation!of idempotent semirings|)} have some desirable properties.

\begin{proposition}
 Let $q\colon \I(D) \twoheadrightarrow \I(D/S)$ be a localisation of quantales. Then
 \begin{itemize}
  \item $q$ preserves finite meets,
  \item $q({\downarrow}i \heyting J) = {\downarrow}[i] \heyting q(J)$.
 \end{itemize}
\end{proposition}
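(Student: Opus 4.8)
The plan is to reduce both identities to two explicit formulas — one describing $q$, one describing the implication $\heyting$ in a quantale of order ideals — and then unwind. First I would record that, for any order ideal $A \in \I(D)$,
\[ q(A) = \{\, [y]_S \in D/S : \exists s \in S,\ ys \in A \,\}. \]
Indeed, $q$ is $\I$ applied to the quotient map $D \twoheadrightarrow D/S$, so it preserves joins and sends each principal order ideal ${\downarrow}a$ to ${\downarrow}[a]_S$; writing $A = \bigvee_{a \in A}{\downarrow}a$ shows that $q(A)$ is the order ideal of $D/S$ generated by $\{[a]_S : a \in A\}$, and since that set is already closed under finite joins, $q(A)$ is just its downward closure. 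Now \cref{prop:order_on_localisation} says $[y]_S \le [a]_S$ iff $ys \le a$ for some $s \in S$, and since $A$ is a downset this collapses to $ys \in A$, giving the displayed formula. Second, I would note that in the quantale of order ideals of any two-sided idempotent semiring $E$ one has ${\downarrow}e \heyting B = \{x \in E : xe \in B\}$ for $e \in E$ and $B \in \I(E)$; this follows directly from the adjunction defining $\heyting$ together with the fact that $B$ is a downset closed under finite joins.

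For the first bullet point, $q$ preserves the empty meet because $q(\top) = q(D) = D/S = \top$. For binary meets, $q(I_1 \cap I_2) \le q(I_1) \wedge q(I_2)$ holds by monotonicity of $q$. Conversely, suppose $[y]_S$ lies in $q(I_1) \wedge q(I_2) = q(I_1) \cap q(I_2)$ (the meet of order ideals in $\I(D/S)$ being their intersection). By the formula above there are $s_1, s_2 \in S$ with $ys_1 \in I_1$ and $ys_2 \in I_2$; since $D$ is two-sided, $s_1, s_2 \le 1$, so $ys_1s_2 \le ys_1 \in I_1$ and $ys_1s_2 \le ys_2 \in I_2$, whence $ys_1s_2 \in I_1 \cap I_2$ and therefore $[y]_S \in q(I_1 \cap I_2)$. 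As $I_1 \wedge I_2 = I_1 \cap I_2$ in $\I(D)$, this proves $q(I_1 \wedge I_2) = q(I_1) \wedge q(I_2)$, and hence $q$ preserves all finite meets.

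For the second bullet point I would just compute both sides with the two formulas. On the left, ${\downarrow}i \heyting J = \{x \in D : xi \in J\}$, which is an order ideal, so
\[ q({\downarrow}i \heyting J) = \{\, [y]_S : \exists s \in S,\ yis \in J \,\}. \]
On the right, applying the implication formula inside $\I(D/S)$ and then the description of $q$,
\[ {\downarrow}[i]_S \heyting q(J) = \{\, [x]_S : [xi]_S \in q(J) \,\} = \{\, [x]_S : \exists s \in S,\ xis \in J \,\}. \]
These name the same subset of $D/S$, so the two sides agree. The computations are routine; the only steps needing any care are pinning down the explicit form of $q$ — where passing from "$\exists x \in A,\ \exists s \in S,\ ys \le x$" to "$\exists s \in S,\ ys \in A$" uses that $A$ is a downset — and making sure the residual $\heyting$ is applied on the correct side. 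Once those are in place, both identities drop out by merging denominators using two-sidedness ($s \le 1$).
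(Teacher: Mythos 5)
Your proposal is correct and follows essentially the same route as the paper: both arguments reduce everything to the order description of the localisation (\cref{prop:order_on_localisation}), handle the binary meet by merging the two witnesses $s_1, s_2 \in S$ into $s_1 s_2$ using two-sidedness, and handle the residual by observing that a single $s \in S$ suffices because the antecedent ${\downarrow}i$ is principal. The only difference is presentational — you extract explicit formulas for $q(A)$ and ${\downarrow}e \heyting B$ up front and compute both sides, whereas the paper gets the easy inclusion $q(I \heyting J) \le q(I) \heyting q(J)$ from the quantale-homomorphism property and then uses the adjunction — but the substance is identical.
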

\begin{proof}
 It trivially preserves the empty meet, since the quantale is two-sided, and we always have $q(I \cap J) \le q(I) \cap q(J)$.
 For the other direction, suppose $[k] \in q(I) \cap q(J)$. Then $[k] \le [i], [j]$ for some $i \in I$ and some $j \in J$ and
 by \cref{prop:order_on_localisation} there are $s, s' \in S$ such that $ks \le i \in I$ and $ks' \le j \in J$. But then $kss' \in I \cap J$
 and hence $[k] = [kss'] \in \overline{k}(I \cap J)$, as required.
 
 We always have $q(I \heyting J) \le q(I) \heyting q(J)$, since $q(I \heyting J)q(I) = q((I \heyting J)I) \le q(J)$.
 Now take $I = {\downarrow} i$. We must show $[k] \in q(I) \heyting q(J) \implies [k] \in q(I \heyting J)$. Using the adjunction, we suppose $[ik] \in q(J)$
 and find a $k' \gtrsim k$ such that $ik' \in J$. As above, we have $iks \in J$ for some $s \in S$. Then taking $k' = ks$ yields the result.
\end{proof}
\begin{remark}
 Note that $q$ need not preserve the residuation arrow in general, since different elements of $I$ will give different values for $s$ and we need a single $k'$ to work for all $i \in I$.
\end{remark}

\begin{corollary}
 Localisations preserve compact (weak) meet/join principal elements.
\end{corollary}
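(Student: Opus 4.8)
\emph{Proof plan.} The strategy is to push everything through the two identities for $q$ just established, using that $q$ is a surjective quantale homomorphism. The first observation is that the compact elements of the coherent two-sided quantale $\I(D)$ are exactly the principal order ideals ${\downarrow}d$ with $d\in D$, and that a localisation $q\colon\I(D)\twoheadrightarrow\I(D/S)$ carries ${\downarrow}d$ to the principal order ideal ${\downarrow}[d]$; so $q$ automatically preserves compactness, and it remains only to show that if $k={\downarrow}d$ is weak meet, weak join, meet, or join principal in $\I(D)$, then $q(k)$ has the same property in $\I(D/S)$. Since the combined notions ``principal'' and ``weak principal'' are conjunctions of these, they follow as well.

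The key device is that residuation \emph{out of a compact element} commutes with $q$: for $k={\downarrow}d$ and any $J\in\I(D)$ the preceding proposition gives $q(k\heyting J)=q({\downarrow}d\heyting J)={\downarrow}[d]\heyting q(J)=q(k)\heyting q(J)$. Taking $J=0$ and using $q(0)=0$ yields $q(k^\bullet)=q(k)^\bullet$. Together with the fact that $q$ preserves multiplication, joins and finite meets (the latter again by the proposition), and that every element of $\I(D/S)$ has the form $q(x)$ by surjectivity, each defining identity transports verbatim. For weak meet principality, writing $x'=q(x)$,
\[
q(k)\bigl(q(k)\heyting x'\bigr)=q(k)\,q(k\heyting x)=q\bigl(k(k\heyting x)\bigr)=q(x\wedge k)=x'\wedge q(k),
\]
which is exactly weak meet principality of $q(k)$ as $x'$ ranges over $\I(D/S)$. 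The meet principal case is the same computation with an extra meet by $b'=q(b)$; the weak join and join principal cases run identically, using $q(k)x'=q(kx)$, the join-preservation of $q$, and the identity $q(k^\bullet)=q(k)^\bullet$ to handle the terms $x\vee k^\bullet$ and $x\vee(k\heyting a)$.

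The only substantive point — not really an obstacle — is the appeal to compactness: it is precisely what makes $q(k\heyting J)=q(k)\heyting q(J)$ hold, since as noted in the remark after the proposition $q$ does not preserve residuation in general. Accordingly I would write out the weak meet principal computation in full and then simply record the three substitutions $x'=q(x)$, $a'=q(a)$, $b'=q(b)$ needed for the remaining cases, leaving the verbatim rewrites to the reader.
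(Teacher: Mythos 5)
Your proof is correct and follows essentially the same route as the paper's (much terser) argument: every variant of principality is defined by an identity built from multiplication, joins, finite meets and residuations of the form $k \heyting x$, all of which the localisation preserves — the residuation case being exactly where compactness of $k$ enters via the preceding proposition. Your additional observations (that compact elements are the principal order ideals and are sent to principal order ideals, that $q(k^\bullet) = q(k)^\bullet$, and that surjectivity of $q$ lets the identities transport) are the details the paper leaves implicit.
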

\begin{proof}
 Simply note that all our notions of principal element $k$ are defined by equations involving multiplication, meet, join and residuation of the form $k \heyting x$.
 These are all preserved by localisation. (For preservation of $k \heyting x$ we use that $k$ is compact.)
\end{proof} %

\begin{lemma}\label{prop:equal_on_localisations_from_a_cover_quantale}
 Let $a_1,\dots,a_k \in D$ be such that $\bigvee_{i=1}^k a_i = 1$ and consider $I, J \in \I(D)$.
 If $q_i(I) \le q_i(J)$ for each localisation $q_i\colon \I(D) \twoheadrightarrow \I(D/\langle a_i\rangle)$, then $I \le J$.
\end{lemma}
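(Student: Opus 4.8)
The plan is to reduce the claim to the analogous statement for the underlying idempotent semiring, namely \cref{prop:equal_on_localisations_from_a_cover_dioid}. Write $A_i = \langle a_i\rangle$ for the multiplicative submonoid generated by $a_i$ and $\ell_i\colon D \twoheadrightarrow D/A_i$ for the localisation map, so that $q_i = \I(\ell_i)$. Since $\I$ is a left adjoint, $\I(\ell_i)$ is the quantale homomorphism determined by ${\downarrow}x \mapsto {\downarrow}[x]_{A_i}$; as it also preserves arbitrary joins and every order ideal is the join of the principal ones it contains, $q_i(K)$ is the order ideal generated by $\ell_i(K) = \{[z]_{A_i} \mid z \in K\}$. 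But $\ell_i$, being a semiring homomorphism, preserves finite joins, so $\ell_i(K)$ is already directed and hence $q_i(K) = {\downarrow}\ell_i(K) = \{\,w \mid \exists z \in K.\ w \le [z]_{A_i}\,\}$.

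Now fix $x \in I$; since $I \le J$ in $\I(D)$ just means $I \subseteq J$, it suffices to show $x \in J$. From ${\downarrow}x \le I$ and monotonicity of $q_i$ we get ${\downarrow}[x]_{A_i} = q_i({\downarrow}x) \le q_i(I) \le q_i(J)$, whence $[x]_{A_i} \in q_i(J)$; so by the description of $q_i(J)$ above, for each $1 \le i \le k$ there is some $y_i \in J$ with $[x]_{A_i} \le [y_i]_{A_i}$ in $D/A_i$.

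Put $y = \bigvee_{i=1}^k y_i$. Being an order ideal in a join-semilattice, $J$ is closed under finite joins, so $y \in J$; and $y \ge y_i$ gives $[x]_{A_i} \le [y]_{A_i}$ for every $i$. Applying \cref{prop:equal_on_localisations_from_a_cover_dioid} to the cover $\bigvee_{i=1}^k a_i = 1$ and the elements $x$ and $y$ then yields $x \le y$, and hence $x \in J$ since $J$ is downward closed. Therefore $I \le J$.

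The only genuine obstacle is the bookkeeping in the first paragraph: one has to unwind the action of $\I$ on morphisms carefully enough to identify $q_i(K)$ with the downward closure of $\{[z]_{A_i} \mid z \in K\}$ — using surjectivity and join-preservation of $\ell_i$, so that no directedness is lost — and in particular to see $q_i({\downarrow}x) = {\downarrow}[x]_{A_i}$. Once the hypothesis $q_i(I) \le q_i(J)$ has been translated into the elementwise inequalities $[x]_{A_i} \le [y_i]_{A_i}$, the semiring-level lemma does the rest, with no need to repeat its pigeonhole computation.
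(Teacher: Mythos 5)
Your proof is correct and follows essentially the same route as the paper's: pick $x \in I$, find $y_i \in J$ with $[x]_{A_i} \le [y_i]_{A_i}$, take $y = \bigvee_i y_i \in J$, and conclude $x \le y$ by the semiring-level lemma. The only difference is that you spell out the description of $q_i(J)$ as the downward closure of the image of $J$, which the paper leaves implicit.
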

\begin{proof}
 Take $x \in I$. We know $[x]_i \in q_i(J)$, so that $[x]_i \le [y_i]_i$ for some $y_i \in J$.
 So setting $y = \bigvee_{i = 1}^k y_i$, we have $[x]_i \le [y]_i$ for all $i$ and thus $x \le y \in J$ by \cref{prop:equal_on_localisations_from_a_cover_dioid}.
\end{proof}

\begin{corollary}\label{prop:compact_locally_principal_is_principal}
 Let $a_1,\dots,a_n \in D$ be such that $\bigvee_{i=1}^n a_i = 1$. Consider compact element $k$ of $\I(D)$ and suppose the image of $k$ in the localisation $D/\langle a_i\rangle$
 is (weak) meet/join principal for each $1 \le i \le n$. Then $k$ is (weak) meet/join principal (as appropriate).
\end{corollary}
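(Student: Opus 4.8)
The plan is to observe that each of the six defining conditions for (weak) meet/join principality is an equation (equivalently, a pair of inequalities) between expressions built from $k$, two ``test'' elements $x,b \in \I(D)$, and the operations $\vee$, $\wedge$, $\cdot$ and the residual $k \heyting (-)$, and then to reduce such an equation to its localisations via \cref{prop:equal_on_localisations_from_a_cover_quantale}.

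First I would use the fact (recorded earlier in the discussion of coherent quantales) that the compact elements of $\I(D)$ are exactly the principal order ideals, so $k = {\downarrow} d$ for some $d \in D$. Then each localisation $q_i\colon \I(D) \twoheadrightarrow \I(D/\langle a_i\rangle)$ is a quantale homomorphism (hence preserves $0$, $1$, $\vee$ and $\cdot$), preserves binary meets (by the proposition on quantalic localisation proved above), and satisfies $q_i(k \heyting J) = q_i({\downarrow} d \heyting J) = {\downarrow}[d] \heyting q_i(J) = q_i(k) \heyting q_i(J)$ for every $J \in \I(D)$, again by that proposition. Consequently, for any expression $E(k,x,b)$ built from these operations with $k$ always occurring in the divisor slot of any residual, we have $q_i(E(k,x,b)) = E(q_i(k), q_i(x), q_i(b))$.

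Next, fix the relevant condition, say for concreteness meet principal, $k\bigl((k\heyting x)\wedge b\bigr) = x \wedge kb$; the other five cases are identical, and in each the residual does occur only in the form $k \heyting (\text{something})$ (namely $k\heyting x$, $k\heyting kx$, $k\heyting(kx\vee a)$, etc.). Let $x,b \in \I(D)$ be arbitrary, put $L = k\bigl((k\heyting x)\wedge b\bigr)$ and $R = x \wedge kb$, and note $L, R \in \I(D)$. To prove $L = R$ it suffices to prove $L \le R$ and $R \le L$ separately, and by \cref{prop:equal_on_localisations_from_a_cover_quantale} each of these follows once it is checked after applying every $q_i$. But $q_i(L) = q_i(k)\bigl((q_i(k)\heyting q_i(x)) \wedge q_i(b)\bigr)$ and $q_i(R) = q_i(x) \wedge q_i(k)q_i(b)$ by the previous paragraph, and these agree because $q_i(k)$ is meet principal in $\I(D/\langle a_i\rangle)$ by hypothesis. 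Hence $q_i(L) = q_i(R)$ for each $i$, so $L \le R$ and $R \le L$, i.e.\ $L = R$.

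The only points needing care are the bookkeeping just described: identifying $k$ with a principal order ideal so that $q_i$ commutes with the residual against $k$ — this is precisely where compactness of $k$ enters, exactly as in the preceding corollary — and noting that in every one of the defining conditions the residual has $k$ itself as its divisor, so that the single identity $q_i(k\heyting J) = q_i(k)\heyting q_i(J)$ suffices. Beyond this there is no genuine obstacle; the substance is carried entirely by \cref{prop:equal_on_localisations_from_a_cover_quantale} together with the preservation properties of quantalic localisation established above.
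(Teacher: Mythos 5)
Your proof is correct and is essentially the argument the paper intends (the corollary is stated without proof, following immediately from \cref{prop:equal_on_localisations_from_a_cover_quantale} and the preservation properties of quantalic localisations): each defining condition is an equation in $\vee$, $\wedge$, $\cdot$ and residuals of the form $k \heyting (-)$, all of which commute with the localisations $q_i$ once $k$ is identified with a principal order ideal, so the global identity follows from its localised instances. Your observation that compactness enters precisely to make $q_i$ commute with $k \heyting (-)$ is exactly the right point of care.
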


Recall that the quantale of ideals of a ring is coherent.
We now observe that localisation of rings induces a localisation of the corresponding quantales of ideals.

\begin{lemma}
 Let $R$ be a ring and $S$ a multiplicative submonoid of $R$. Then the localisation $\ell\colon R \to S^{-1}R$ induces a localisation of quantales $\Idl(\ell)\colon \Idl(R) \twoheadrightarrow \Idl(S^{-1}R)$
 determined by the multiplicative submonoid $\overline{S} = \{(s) \mid s \in S\}$.
\end{lemma}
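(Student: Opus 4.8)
The plan is to identify $\Idl(S^{-1}R)$ with the localisation of $\Idl(R)$ at $\overline S$ by comparing \emph{nuclei}. Since a surjective quantale homomorphism is, up to a unique isomorphism commuting with the quotient maps, the quotient of its domain by its nucleus $q_*q$, it suffices to check that $\Idl(\ell)$ is a surjective quantale homomorphism and that its nucleus agrees with the one defining $\I(K/\overline S)$, where $K$ is the two-sided idempotent semiring of finitely generated ideals of $R$ and $\Idl(R)\cong\I K$ by coherence.

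First I would dispose of the easy facts. The assignment $\Idl(\ell)(I)=\langle\ell(I)\rangle$ preserves joins, products and the unit because generation, ideal products and ideal joins all commute with taking images, so $\Idl(\ell)$ is a quantale homomorphism. It is surjective because every ideal $J$ of $S^{-1}R$ is the extension of its contraction: if $x/s\in J$ then $x/1=(s/1)(x/s)\in J$, hence $x\in\ell^{-1}(J)$ and $x/s=(1/s)(x/1)\in\langle\ell(\ell^{-1}(J))\rangle$. Moreover $\langle\ell(I)\rangle\subseteq J\iff\ell(I)\subseteq J\iff I\subseteq\ell^{-1}(J)$, so the right adjoint of $\Idl(\ell)$ is contraction and the nucleus of this quotient is $\nu(I)=\ell^{-1}\langle\ell(I)\rangle$. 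A short computation with fractions (using that $\langle\ell(I)\rangle=\{\,i/s\mid i\in I,\ s\in S\,\}$ since $I$ is an ideal) gives $\nu(I)=\{\,r\in R\mid\exists s\in S.\ sr\in I\,\}$.

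It then remains to compute the nucleus $\nu'$ of the localisation $q\colon\I K\twoheadrightarrow\I(K/\overline S)$ and to see $\nu=\nu'$. Under the coherence isomorphism the principal order ideal ${\downarrow}k$ of $K$ corresponds to the finitely generated ideal $k$, and $\overline S=\{(s)\mid s\in S\}$ is a multiplicative submonoid of $K$ (as $(s)(t)=(st)$ and $(1)=R$), so $\I(K/\overline S)$ really is the localisation of $\Idl(R)$ at $\overline S$. Writing $q_*$ for the right adjoint, $\nu'(J)=\{\,k\in K\mid[k]_{\overline S}\le[j]_{\overline S}\text{ for some }j\in J\,\}$ (using that $J$ is directed and $q$ preserves joins). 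By \cref{prop:order_on_localisation} applied to $(K,\overline S)$ this means $\exists s\in S.\ k\cdot(s)\subseteq j$, and since $k\cdot(s)=\{\,sa\mid a\in k\,\}$, passing to the directed union $\bigcup J$ and ranging over finitely generated sub-ideals turns this into: $\nu'(J)$ corresponds to the ideal $\{\,r\in R\mid\exists s\in S.\ sr\in\bigcup J\,\}=\nu(\bigcup J)$ (for the nontrivial inclusion, given $sr\in\bigcup J$ take $k=(r)$). Hence $\nu=\nu'$, so $q$ and $\Idl(\ell)$ are surjective quantale homomorphisms out of $\Idl(R)$ with the same nucleus, giving a unique quantale isomorphism $\Idl(S^{-1}R)\cong\I(K/\overline S)$ with $\theta\circ\Idl(\ell)=q$; this is exactly the assertion that $\Idl(\ell)$ is the localisation determined by $\overline S$.

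The main obstacle will be the bookkeeping in the last paragraph: matching the two descriptions of the nucleus requires moving carefully between $K$ and $\Idl(R)$ (finitely generated versus arbitrary ideals, directed unions, and the principal-order-ideal/principal-ideal dictionary) and reading off \cref{prop:order_on_localisation} for the semiring $K$ correctly. Everything else — the adjunction computing $\Idl(\ell)_*$, the surjectivity argument, and the fraction manipulations for $\nu$ — is routine.
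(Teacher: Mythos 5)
Your proposal is correct and is essentially the paper's own proof in different packaging: the paper establishes surjectivity, observes that $\Idl(\ell)$ kills $\overline{S}$ so factors through the localisation, and proves the induced map injective via the same fraction manipulation ($i/1 = j/s \Rightarrow iss' = js' \in J$) and the same appeal to \cref{prop:order_on_localisation}; your comparison of the two nuclei is exactly the conjunction of those two inclusions. The extra care you take in moving between $K$ and $\Idl(R)$ is sound and, if anything, slightly more explicit than the paper's treatment.
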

\begin{proof}
 First note that $\Idl(\ell)$ is surjective. It is enough to show every principal ideal is in the image of $\Idl(\ell)$ (since these form a base).
 Take $f/s \in S^{-1}R$. Since $1/s$ is invertible in $S^{-1}R$, we have $(f/s) = (f/1) = \Idl(\ell)((f))$ and hence $\Idl(\ell)$ is surjective.
 
 Now $\Idl(\ell)$ certainly sends $(s)$ to $1$ and hence factors through $\Idl(R)/\overline{S}$.
 We show the resulting map is injective.
 
 Suppose $\Idl(\ell)(I) \le \Idl(\ell)(J)$ and take $i \in I$. Then $i/1 \in \Idl(\ell)(J)$ and hence $i/1 = j/s$ for some $j \in J, s \in S$.
 This means $iss' = js' \in J$ for some $s' \in S$ and so $(i)(s'') \in J$ for $s'' = ss' \in S$. Therefore, $[(i)]_{\overline{S}} \le [J]_{\overline{S}}$
 and hence $[I]_{\overline{S}} \le [J]_{\overline{S}}$ as required.
\end{proof}

\begin{corollary}
 A finitely generated locally principal ideal of a ring $R$ is a principal element in $\Idl(R)$.
\end{corollary}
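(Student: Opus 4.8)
The plan is to deduce this immediately from \cref{prop:compact_locally_principal_is_principal}, whose hypotheses are tailor-made for the situation. First I would recall that $\Idl(R)$ is a coherent quantale and hence of the form $\I(K)$, where $K$ is the two-sided idempotent semiring of finitely generated ideals of $R$. Given a finitely generated locally principal ideal $I$, witnessed by elements $f_1,\dots,f_n \in R$ with $(f_1,\dots,f_n) = 1$ such that each extension of $I$ to $R[f_i^{-1}]$ is principal, I would observe that $I$ is a compact element of $\I(K)$ (finite generation is precisely compactness in $\Idl(R)$) and that, setting $a_i = (f_i) \in K$, the relation $(f_1,\dots,f_n) = 1$ reads $\bigvee_{i=1}^n a_i = 1$.

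Next I would check that the image of $I$ under the quotient $q_i\colon \I(K) \twoheadrightarrow \I(K/\langle a_i\rangle)$ is a principal element for each $i$. Here I would invoke the lemma that the ring localisation $\ell_i\colon R \to R[f_i^{-1}]$ induces a quantale localisation $\Idl(\ell_i)\colon \Idl(R) \twoheadrightarrow \Idl(R[f_i^{-1}])$ determined by the submonoid $\overline{S_i} = \{(s) \mid s \in S_i\}$ of $K$, where $S_i = \{f_i^k \mid k \in \N\}$. Since $(f_i^k) = (f_i)^k = a_i^k$, the submonoid $\overline{S_i}$ is exactly $\langle a_i\rangle$, so $q_i$ is, up to isomorphism, $\Idl(\ell_i)$ and $\I(K/\langle a_i\rangle) \cong \Idl(R[f_i^{-1}])$. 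Under this identification the image of $I$ is the extension ideal $I\,R[f_i^{-1}]$, which is principal by the definition of locally principal, and a principal ideal of a ring is a principal element of its quantale of ideals by the earlier lemma to that effect. In particular it is both meet principal and join principal.

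With these observations in hand, \cref{prop:compact_locally_principal_is_principal} applies verbatim (to both the meet principal and the join principal conditions) and yields that $I$ is a principal element of $\Idl(R)$. The only real care needed — and the step I expect to be fiddly rather than genuinely hard — is the identification of the quantale localisation $q_i$ at the compact element $a_i$ with the ring-induced localisation $\Idl(\ell_i)$: this amounts to checking that the multiplicative submonoid of $K$ generated by the principal ideal $(f_i)$ coincides with $\{(f_i^k) \mid k \in \N\}$, which is immediate from $(f_i)^k = (f_i^k)$, together with unwinding the definition of ``localisation of quantales'' so that $\I(K/\langle a_i\rangle)$ is literally the codomain of $\Idl(\ell_i)$. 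Everything else is a direct appeal to results already established in this section.
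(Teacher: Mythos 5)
Your proposal is correct and follows essentially the same route as the paper: compactness of finitely generated ideals, the lemma identifying the quantale localisation $\Idl(R)/\overline{S}$ with $\Idl(S^{-1}R)$, the fact that principal ideals are principal elements, and then \cref{prop:compact_locally_principal_is_principal}. The extra care you take in matching $\langle a_i\rangle$ with $\overline{S_i}$ via $(f_i)^k = (f_i^k)$ is exactly the content the paper delegates to the preceding lemma, so nothing is missing.
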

\begin{proof}
 Finitely generated ideals are compact elements in $\Idl(R)$ and so we may apply \cref{prop:compact_locally_principal_is_principal} and the fact that principal ideals are principal elements.
\end{proof}

It remains to show that principal elements of $\Idl(R)$ are finitely generated locally principal ideals.
This is consequence of the following proposition.

\begin{proposition}
 Suppose $Q$ is a coherent two-sided quantale with a base $B$ of compact elements which contains $0$. Let $x$ be a weak principal element of $Q$. Then there are elements $b_1,\dots,b_n \in B$
 such that $q_i(x) \in q_i(B)$ for each localisation $q_i\colon Q \twoheadrightarrow Q/\langle b_i\rangle$.
 Furthermore, $x$ is compact.
\end{proposition}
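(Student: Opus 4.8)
The plan is to use the two halves of weak principality in tandem to write the unit of $Q$ as a \emph{finite} join of base elements, after which both conclusions drop out immediately. The key observation is that weak meet principality lets us express each base element below $x$ as $x$ times a join of base elements, while weak join principality then converts the resulting equation $x = xd$ into $1 = d \vee x^\bullet$; coherence (compactness of $1$) does the rest.

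First I would unwind weak meet principality at base elements. Since $B$ is a base, $x = \bigvee\{b \in B \mid b \le x\}$, and weak meet principality gives $b = x(x \heyting b)$ for each such $b$. Writing $x \heyting b = \bigvee\{c \in B \mid c \le x \heyting b\}$ and using that multiplication by $x$ preserves joins while $b$ is compact, I would extract for each such $b$ a finite subset $F_b \subseteq \{c \in B \mid c \le x \heyting b\}$ with $b = \bigvee_{c \in F_b} xc$; note $xc \le x(x \heyting b) = b \le x$ for every $c \in F_b$. Setting $d = \bigvee \bigcup_{b \in B,\, b \le x} F_b$ then gives $x = xd$. Now I would invoke weak join principality: from $x \heyting xd = d \vee x^\bullet$, $xd = x$, and $x \heyting x = 1$ (the last because $Q$ is two-sided, so $x \cdot 1 = x$ and $x \heyting x \le \top = 1$), I obtain $1 = d \vee x^\bullet$. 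Since $d$ and $x^\bullet = \bigvee\{e \in B \mid e \le x^\bullet\}$ are both joins of elements of $B$, the element $1$ is a join of compact elements, and as the top element of a coherent quantale is compact, finitely many suffice: $1 = \gamma_1 \vee \dots \vee \gamma_p \vee e_1 \vee \dots \vee e_q$ with all $\gamma_k, e_l \in B$, where for each $\gamma_k$ I may choose $\beta_k \in B$ with $\beta_k \le x$ and $\gamma_k \in F_{\beta_k}$ (so $\gamma_k \le x \heyting \beta_k$ and $x\gamma_k \le \beta_k$), and each $e_l \le x^\bullet$ (so $xe_l = 0$). The required family is $b_1, \dots, b_n := \gamma_1, \dots, \gamma_p, e_1, \dots, e_q$, which moreover satisfies $\bigvee_i b_i = 1$.

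The verifications are then short. Multiplying $1 = \bigvee_k \gamma_k \vee \bigvee_l e_l$ by $x$ annihilates the $e_l$-terms and gives $x = \bigvee_k x\gamma_k$; since $x\gamma_k \le \beta_k \le x$, this forces $x = \beta_1 \vee \dots \vee \beta_p$, a finite join of compact elements, so $x$ is compact. For the localisation $q_i$ at $b_i = \gamma_k$ we have $q_i(\gamma_k) = 1$ by construction of the localisation; since $q_i(\gamma_k)$ is the top element and $\gamma_k \le x \heyting \beta_k$, we get $q_i(x \heyting \beta_k) = 1$, and hence $q_i(x) = q_i(x)\, q_i(x \heyting \beta_k) = q_i(x(x \heyting \beta_k)) = q_i(\beta_k) \in q_i(B)$, using $\beta_k = x(x \heyting \beta_k)$. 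For $b_i = e_l$ we have $q_i(e_l) = 1$ and $xe_l = 0$, hence $q_i(x) = q_i(x)\, q_i(e_l) = q_i(xe_l) = q_i(0) = 0 \in q_i(B)$, using $0 \in B$. The only point that needs care is the compactness bookkeeping: one must use compactness of the individual base elements to truncate the joins $b = \bigvee_{c \in F_b} xc$, and compactness of $1$ in $Q$ to truncate $1 = d \vee x^\bullet$; notably no closure of $B$ under finite joins or products is used, which is exactly what permits $B$ to be taken to be the set of principal ideals in the intended application to $\Idl(R)$.
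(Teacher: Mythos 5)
Your argument is correct and follows the same essential route as the paper's: weak meet principality yields $x = xj$ with $j$ a join of residuals $x \heyting b$ over base elements $b \le x$, weak join principality converts $x = xj$ into $1 = j \vee x^\bullet$, and compactness of $1$ extracts the finite family of base elements at which to localise; the localisation claims are then verified exactly as you do. (The paper absorbs the $x^\bullet$ part into $j$ by noting $x^\bullet = x \heyting 0$ with $0 \in B$, rather than treating the $e_l$ as a separate case, and it skips your intermediate truncation of each $x \heyting b$ by compactness of $b$, which is indeed unnecessary --- but both points are cosmetic.) The one genuine divergence is the proof that $x$ is compact: you multiply the finite cover of $1$ by $x$ to obtain $x = \bigvee_k x\gamma_k = \bigvee_k \beta_k$ directly, whereas the paper deduces $x = \bigvee_j a_{\alpha_j}$ from the fact that this equality holds in each localisation of a covering family, invoking \cref{prop:equal_on_localisations_from_a_cover_quantale}. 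Your version is more elementary, needing no gluing lemma, and is arguably the cleaner way to finish.
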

\begin{proof}
 Write $x = \bigvee \{a \in B \mid a \le x\}$.
 Since $x$ is weak meet principal, $a = x(x\heyting a)$ for each $a \le x$. Let $j = \bigvee_{a \in B \cap {\downarrow} x} (x\heyting a)$
 so that $x = xj$. Now since $x$ is weak join principal, $1 = j \vee x^\bullet = \bigvee_{a \in B \cap {\downarrow} x} (x\heyting a)$, where we have the final equality since $0 \in B$.
 
 Now express each $x\heyting a_\alpha$ as a join of elements of $B$. The unit $1$ is compact, since $Q$ is coherent,
 and so we have $1 = b_0 \vee b_1 \vee \dots \vee b_n$ where each $b_i$ is an element of $B$ lying below some $x\heyting a_{\alpha_i}$.
 
 In the quotient $Q/\langle b_i\rangle$, we have $1 = [b_i] \le [x\heyting a_{\alpha_i}] \le [x] \heyting [a_{\alpha_i}]$ and hence $[x] \le [a_{\alpha_i}]$.
 But we know $a_{\alpha_i} \le x$ and so $[x] = [a_{\alpha_i}] \in q_i(B)$ as required.
 
 To see that $x$ is compact, notice that $[x] = [\bigvee_{j=1}^n a_{\alpha_j}]$ in each localisation and thus $x = \bigvee_{j=1}^n a_{\alpha_j}$ by \cref{prop:equal_on_localisations_from_a_cover_quantale}.
 So $x$ is a finite join of compact elements and therefore compact.
\end{proof}

\begin{corollary}\label{prop:principal_elements_of_Idl_R}
 If $R$ is a ring, the principal elements of $\Idl(R)$ are precisely the finitely generated locally principal ideals.
\end{corollary}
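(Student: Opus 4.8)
The plan is to read the result off from the preceding proposition together with the corollary that every finitely generated locally principal ideal is a principal element of $\Idl(R)$. That corollary already supplies one of the two inclusions, so only the converse needs work: I must show that a principal element $x$ of $\Idl(R)$ is a finitely generated ideal which becomes principal on a suitable distinguished affine cover.

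First I would note that $\Idl(R)$ is a coherent two-sided quantale whose set $B$ of \emph{principal} ideals is a base of compact elements closed under finite products (since $(f)(g) = (fg)$ and the empty product is $(1) = R$) and which contains the zero ideal $(0) = 0$. A principal element is in particular weak principal — meet principal implies weak meet principal (take $b = \top = 1$, so $(k \heyting x) \wedge 1 = k \heyting x$) and join principal implies weak join principal (take $a = 0$) — so the preceding proposition applies to $x$. It yields that $x$ is compact, hence a finitely generated ideal since the compact elements of $\Idl(R)$ are exactly the finitely generated ideals, and that there are principal ideals $(g_1), \dots, (g_n) \in B$ — which its proof moreover exhibits as a covering family $\bigvee_{i=1}^{n}(g_i) = 1$, i.e.\ $(g_1, \dots, g_n) = R$ — such that the image $q_i(x)$ of $x$ in each localisation $q_i \colon \Idl(R) \twoheadrightarrow \Idl(R)/\langle (g_i)\rangle$ lies in $q_i(B)$.

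Next I would identify these quantale localisations with localisations of rings. By the lemma that the ring localisation $\ell_i \colon R \to R[g_i^{-1}]$ induces the localisation of $\Idl(R)$ at the multiplicative submonoid $\{(g_i^{k}) \mid k \in \N\}$, there is an isomorphism $\Idl(R)/\langle (g_i)\rangle \cong \Idl(R[g_i^{-1}])$ intertwining $q_i$ with $\Idl(\ell_i)$. Under it, $q_i(B)$ corresponds to the set of principal ideals of $R[g_i^{-1}]$ (every principal ideal $(h/g_i^{k})$ of $R[g_i^{-1}]$ equals $(h/1) = \Idl(\ell_i)((h))$), and $q_i(x)$ corresponds to $\Idl(\ell_i)(x) = \langle \ell_i(x)\rangle$, the image of $x$ in $R[g_i^{-1}]$. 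Hence the condition $q_i(x) \in q_i(B)$ says exactly that the image of $x$ in $R[g_i^{-1}]$ is principal. Together with $(g_1, \dots, g_n) = R$ and the finite generation of $x$, this is precisely the assertion that $x$ is a finitely generated locally principal ideal, completing the converse.

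I do not expect a genuine obstacle: the whole content is carried by the preceding proposition and by the identification of quantale localisations of $\Idl(R)$ with $\Idl$ applied to ring localisations, both already established. The one point to be careful about is the choice of base: taking $B$ to consist of the \emph{principal} ideals (rather than merely the finitely generated ones) is what forces the distinguished opens of the resulting cover to be single-element localisations $R[g_i^{-1}]$, matching the definition of locally principal, and one should state explicitly that the proof of the preceding proposition produces the $(g_i)$ as a covering family and not just individually.
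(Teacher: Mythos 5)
Your proof is correct and follows essentially the same route as the paper: the paper's own proof of this corollary is simply ``apply the preceding proposition with $B$ taken to be the set of principal ideals,'' which is exactly what you do. You merely spell out the details the paper leaves implicit (that principal implies weak principal, that the $b_i$ form a cover, and that the quantale localisations $\Idl(R)/\langle (g_i)\rangle$ match the ring localisations $\Idl(R[g_i^{-1}])$ via the earlier lemma), and these verifications are all accurate.
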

\begin{proof}
 We have already shown the backward direction. For the forward direction, simply apply the above result with $B$ taken to be the set of principal ideals.
\end{proof}

\subsection{Valuations}

We will now discuss two concepts from ring theory which arise naturally when considering quantales of ideals.
In order to make links with classical notions, \emph{we will assume excluded middle\index{excluded middle|(} for this section}.

Consider $\Nvar$ --- the free two-sided quantale on one generator $g$.
We will write the elements of $\Nvar$ as $g^n$ for $n \in \N \cup \{\infty\}$ to avoid confusion about what $0$ and $1$ mean.

\begin{definition}
 A \emph{valuation}\index{valuation|(textbf} on a two-sided quantale $Q$ is a quantale homomorphism $w\colon Q \to \Nvar$.
\end{definition}

\begin{definition}
 Let $R$ be a ring. We will call a map $v\colon R \to \N \cup \{\infty\}$ a \emph{positive prevaluation} if
 it satisfies
 \begin{itemize}
 \item $v(0) = \infty$,
 \item $v(a+b) \ge \min(v(a), v(b))$,
 \item $v(1) = 0$,
 \item $v(ab) = v(a) + v(b)$.
\end{itemize}
\end{definition}

\begin{lemma}
 There is a bijective correspondence between positive prevalutations $v$ on a ring $R$ and quantale valuations $\widetilde{v}\colon \Idl(R) \to \Nvar$.
 Here $\widetilde{v}(I) = \bigvee_{a \in I} g^{v(a)}$.
\end{lemma}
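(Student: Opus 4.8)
The plan is to exhibit the two maps $v \mapsto \widetilde v$ and $w \mapsto (\text{prevaluation})$ and check they are mutually inverse. First I would show that given a positive prevaluation $v$, the formula $\widetilde v(I) = \bigvee_{a \in I} g^{v(a)}$ really does define a quantale homomorphism $\Idl(R) \to \Nvar$. Using the presentation of $\Idl(R)$ from \cref{prop:presentation_for_Idl_R} (generators $\overline f$, relations $\overline 0 = 0$, $\overline{f+g} \le \overline f \vee \overline g$, $\overline 1 = 1$, $\overline{fg} = \overline f \cdot \overline g$), it suffices to define a map on generators by $\overline f \mapsto g^{v(f)}$ and check the four relations are respected. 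Here $g^{v(0)} = g^\infty = 0$ handles the first; $v(f+g') \ge \min(v(f), v(g'))$ gives $g^{v(f+g')} \le g^{v(f)} \vee g^{v(g')}$ (recall that in $\Nvar$ the order is reversed, so larger exponent means smaller element, and $g^{\min(m,n)} = g^m \vee g^n$) handles the second; $v(1) = 0$ gives $g^{v(1)} = g^0 = 1$ for the third (and $g^{v(f)} \le 1$ automatically since $\Nvar$ is two-sided); and $v(fg') = v(f) + v(g')$ gives $g^{v(fg')} = g^{v(f)} \cdot g^{v(g')}$ for the fourth, using that multiplication in $\Nvar$ is addition of exponents. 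Then the induced quantale homomorphism sends a general ideal $I = \bigvee_{a \in I} (a)$ to $\bigvee_{a \in I} g^{v(a)}$, which is exactly the stated formula; so $\widetilde v$ is well-defined and a quantale homomorphism, i.e.\ a valuation.

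Conversely, given a valuation $w \colon \Idl(R) \to \Nvar$, I would define $v_w(a)$ by $g^{v_w(a)} = w((a))$, i.e.\ $v_w(a)$ is the unique exponent (in $\N \cup \{\infty\}$) with $w$ of the principal ideal $(a)$ equal to $g^{v_w(a)}$ — this is legitimate since every element of $\Nvar$ is of the form $g^n$. I would then check the four prevaluation axioms: $v_w(0) = \infty$ because $(0) = 0$ in $\Idl(R)$ and $w(0) = 0 = g^\infty$; $v_w(1) = 0$ because $(1) = 1$ and $w(1) = 1 = g^0$; $v_w(ab) = v_w(a) + v_w(b)$ because $(ab) = (a)(b)$ as ideals and $w$ preserves products, so $g^{v_w(ab)} = w((a))w((b)) = g^{v_w(a)}g^{v_w(b)} = g^{v_w(a) + v_w(b)}$; and $v_w(a+b) \ge \min(v_w(a), v_w(b))$ because $(a+b) \subseteq (a) \vee (b)$ (as $a + b \in (a) + (b)$), so $w((a+b)) \le w((a) \vee (b)) = w((a)) \vee w((b)) = g^{v_w(a)} \vee g^{v_w(b)} = g^{\min(v_w(a), v_w(b))}$, which in the reversed order of $\Nvar$ is the desired inequality.

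Finally I would verify the two constructions are inverse. Starting from $v$, forming $\widetilde v$, then extracting $v_{\widetilde v}$: we have $g^{v_{\widetilde v}(a)} = \widetilde v((a)) = \bigvee_{b \in (a)} g^{v(b)}$; I need this to equal $g^{v(a)}$, i.e.\ $v(a) = \min_{b \in (a)} v(b)$. The inequality $v(a) \ge \min_{b \in (a)} v(b)$ is immediate since $a \in (a)$, and for the reverse, every $b \in (a)$ is of the form $b = \sum_i a r_i$ (a finite sum of multiples of $a$), so $v(b) \ge \min_i v(a r_i) = \min_i (v(a) + v(r_i)) \ge v(a)$ using the additivity and min-superadditivity axioms; hence $\min_{b \in (a)} v(b) = v(a)$. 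Starting from $w$, forming $v_w$, then $\widetilde{v_w}$: for an ideal $I$, $\widetilde{v_w}(I) = \bigvee_{a \in I} g^{v_w(a)} = \bigvee_{a \in I} w((a)) = w\bigl(\bigvee_{a \in I}(a)\bigr) = w(I)$, using that $w$ preserves joins and $I = \bigvee_{a \in I}(a)$. The main obstacle, such as it is, is bookkeeping around the order reversal in $\Nvar$ — one has to be consistent that joins in $\Nvar$ correspond to minima of exponents and that the multiplicative structure is addition — but there is no genuine difficulty; the proof is essentially a translation exercise through the presentation of $\Idl(R)$.
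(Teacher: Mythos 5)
Your proposal is correct and is exactly the argument the paper has in mind: the paper's proof is the one line ``immediate from the description of $\Idl(R)$ in terms of generators and relations,'' and your write-up simply spells out the details of matching the four relations of the presentation to the four prevaluation axioms and checking the two assignments are mutually inverse. (Your implicit use of the classical description of $\Nvar$ as $\{g^n \mid n \in \N \cup \{\infty\}\}$ is fine, since the section assumes excluded middle.)
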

\begin{proof}
 This is immediate from the description of $\Idl(R)$ in terms of generators and relations (\cref{prop:presentation_for_Idl_R}).
\end{proof}
\begin{remark}
 This result becomes constructively valid if we restrict to quantale valuations that preserve compact elements. %
\end{remark}

The notion of a positive prevaluation is very reminiscent of that of a \emph{discrete valuation}, which is important in algebraic geometry.
\begin{definition}
 A \emph{discrete valuation}\index{discrete valuation|textbf} on a field $K$ is function $v\colon K \to \Z \cup \{\infty\}$ satisfying
 \begin{itemize}
 \item $v(a) = \infty \iff a = 0$,
 \item $v(a+b) \ge \min(v(a), v(b))$,
 \item $v(1) = 0$,
 \item $v(ab) = v(a) + v(b)$.
 \end{itemize}
\end{definition}

We can make this relationship more precise.
Let $v$ be a positive prevaluation on a ring $R$. First notice that $P = v^{-1}(\infty)$ is a prime ideal of $R$. Indeed, composing $\widetilde{v}$ with the map from $\Nvar$ to $\Omega$ sending $g$ to $1$ gives
a point of $\Idl(R)$, which classically corresponds to a `prime element' in $\Idl(R)$. %
We can now factor $v$ through the quotient $R/P$ to give a positive prevaluation $v'$ on an integral domain satisfying $v'(a) = \infty \iff a = 0$.

Such a positive prevaluation on an integral domain $D$ can then be extended to give a valuation $v''$ on its field of fractions $K(D)$.
We set $v''(a/b) = v'(a) - v'(b)$. Let us show this gives a valuation. Only the additive condition is nontrivial. We have
\begin{align*}
 v''(a/b + c/d) &= v''((ad+cb)/bd) \\
                &= v'(ad+cb) - v'(b) - v'(d) \\
                &\ge \min(v'(a) + v'(d), v'(c) + v'(b)) - v'(b) - v'(d) \\
                &= \min(v'(a) - v'(b), v'(c) - v'(d)) \\
                &= \min(v''(a/b), v''(c/d)).
\end{align*}
In summary, we have the following lemma.

\begin{lemma}
 A positive prevaluation $v$ on a ring $R$ induces a discrete valuation\index{valuation|)} $v''$ on the field $K(R/v^{-1}(\infty))$
 satisfying $v''(a) \ge 0$ for all $a \in R$. \hfill$\Box$
\end{lemma}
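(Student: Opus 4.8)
The plan is to make explicit the three-step construction sketched in the discussion preceding the statement. First I would check that $P = v^{-1}(\infty)$ is a prime ideal of $R$. This is a direct consequence of the axioms: $v(0) = \infty$ gives $0 \in P$; if $v(a) = v(b) = \infty$ then $v(a+b) \ge \min(v(a), v(b)) = \infty$, so $P$ is closed under addition; $v(ar) = v(a) + v(r) = \infty$ whenever $v(a) = \infty$, so $P$ absorbs multiplication by $R$; $v(1) = 0 \ne \infty$, so $1 \notin P$; and since a sum in $\N \cup \{\infty\}$ is infinite precisely when one of its summands is, the identity $v(ab) = v(a) + v(b)$ shows $ab \in P \iff (a \in P) \lor (b \in P)$. (Alternatively, as noted in the text, this follows from the fact that composing $\widetilde{v}$ with the quantale homomorphism $\Nvar \to \Omega$ sending $g$ to $1$ yields a point of $\Idl(R)$.)

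Next I would show that $v$ factors through the quotient $\pi\colon R \twoheadrightarrow R/P$. If $a - b \in P$ then $v(a) = v(b + (a - b)) \ge \min(v(b), v(a-b)) = v(b)$, and symmetrically $v(b) \ge v(a)$, so $v$ is constant on cosets of $P$ and descends to a map $v'\colon R/P \to \N \cup \{\infty\}$ with $v' \pi = v$. The prevaluation axioms transfer from $v$ to $v'$ immediately, and by construction $v'(\bar a) = \infty$ iff $a \in P$ iff $\bar a = 0$ in the integral domain $R/P$.

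Then I would extend $v'$ to the field of fractions $K(R/P)$ by setting $v''(a/b) = v'(a) - v'(b)$ for $b \ne 0$ (computing in $\Z \cup \{\infty\}$, where subtraction is only ever applied to the finite value $v'(b)$). For well-definedness, note that $a/b = c/d$ in $K(R/P)$ means $ad = cb$ --- no auxiliary multiplier is needed, since $R/P$ has no zero-divisors --- whence $v'(a) + v'(d) = v'(c) + v'(b)$, and cancelling the finite values $v'(b), v'(d)$ gives $v'(a) - v'(b) = v'(c) - v'(d)$. The conditions $v''(x) = \infty \iff x = 0$, $v''(1) = 0$ and $v''(xy) = v''(x) + v''(y)$ are then routine, and the ultrametric inequality $v''(x+y) \ge \min(v''(x), v''(y))$ is exactly the computation already displayed above the statement. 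Finally, for $a \in R$ we have $v''(a/1) = v'(\bar a) - v'(1) = v'(\bar a) \ge 0$, giving the asserted nonnegativity.

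The only real subtlety --- hardly an obstacle --- is the bookkeeping around the symbol $\infty$: one must check that $\min$, $+$, and $-$ behave as expected on $\N \cup \{\infty\}$ and $\Z \cup \{\infty\}$, and in particular ensure that $-$ is only applied to the finite quantities $v'(b)$ arising from nonzero denominators. The use of integrality of $R/P$ to dispense with the extra multiplier in the fraction-equality relation is the one place where the hypotheses genuinely matter.
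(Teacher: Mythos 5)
Your proposal is correct and follows exactly the route the paper takes: form the prime ideal $P = v^{-1}(\infty)$, descend to the integral domain $R/P$, and extend to the field of fractions via $v''(a/b) = v'(a) - v'(b)$, with the ultrametric inequality verified by the same computation. The additional details you supply (primality of $P$, descent to the quotient, well-definedness on fractions using integrality) are precisely the routine verifications the paper leaves implicit.
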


However, an element $a \in K(R/P)$ for which $v''(a) \ge 0$ does not necessarily lie in $R$.
This can be understood by considering the other map from $\Nvar$ to $\Omega$, which sends $g$ to $0$.
Composing this with $\widetilde{v}$ gives another point of $\Idl(R)$. This corresponds to a prime ideal $Q \ge P$.
The complement of $Q$ is a multiplicative submonoid consisting of the elements $a$ of $R$ for which $v(a) = 0$.
If we both quotient $R$ by $P$ and localise at $Q$ we obtain a ring $R_Q/P$.
This is precisely the subring of $K(R/P)$ on which $v''$ is nonnegative.

A ring $R_Q/P$ which arises in this manner is known a \emph{discrete valuation ring} (unless it is a field).
It is well known that $\widetilde{v}$ factors through $\Idl(R_Q/P)$ to give an injection. %
However, this might fail for coherent quantales that do not come from rings. %

It can also be illuminating to consider maps from a quantale into a quotient of $\Nvar$.
These might be induced by valuations, but need not be. For instance, a map from a quantale $Q$ into $\O \Srpnsk$ %
corresponds to a pair of prime elements $p \le q$. However, not every such pair can be obtained from a valuation ---
in ring-theoretic terms, $R_Q/P$ might not be a discrete valuation ring. %

More interesting are maps from $Q$ to the two-sided quantale $\Dvar = \langle \epsilon \mid \epsilon^2 = 0 \rangle \cong \Nvar/\nabla_{g^2}$\glsadd{Dvar}. %
As a set $\Dvar = \{0, \epsilon, 1\}$.
This is the quantalic incarnation of an infinitesimal tangent vector at a point.

\begin{lemma}\label{prop:map_to_D_and_primary_elements}
 Let $Q$ be a two-sided quantale. A quantale homomorphism $f\colon Q \to \Dvar$ yields a prime element $p = f_*(\epsilon)$ and
 an element $j = f_*(0)$ with $p^2 \le j \le p$ such that $ab \le j \implies (a \le j) \lor (b \le p)$.
 Furthermore, every pair of elements $(p,j)$ satisfying these conditions arises in this way from a unique quantale map.\index{excluded middle|)}
\end{lemma}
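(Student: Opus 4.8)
The plan is to work directly with the correspondence between quantale homomorphisms out of $Q$ and their right adjoints, exploiting the fact that $\Dvar$ is a three-element chain $\{0 < \epsilon < 1\}$. Since any quantale homomorphism $f\colon Q \to \Dvar$ is a surjective suplattice homomorphism (it hits $1$ as the unit, $0$ as the bottom, and $\epsilon = f(f_*(\epsilon))$ once we know $f_*(\epsilon) \ne f_*(1)$, which holds because $f$ preserves the unit and $1 \ne \epsilon$ in $\Dvar$), it is determined by its right adjoint $f_*\colon \Dvar \to Q$, i.e.\ by the pair $(p, j) = (f_*(\epsilon), f_*(0))$ together with $f_*(1) = \top$ (as $Q$ is two-sided and $f$ preserves $\top$). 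So the first step is to translate the data of $f$ into the pair $(p,j)$ with $j \le p \le \top$ and then characterise exactly which pairs arise.

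First I would record the general fact (from the discussion of nuclei and closed quotients, plus $q_*q$ being the nucleus of a surjection $q$) that a surjective quantale homomorphism $q\colon Q \twoheadrightarrow S$ corresponds to a nucleus $\nu = q_*q$, and that $q$ is determined by the image of $q_*$, which is the set of $\nu$-fixed points. Here the fixed points are exactly $\{j, p, \top\}$, so I need necessary and sufficient conditions on $(j,p)$ for the closure operator $c$ fixing precisely $\{j,p,\top\}$ (explicitly $c(x) = j$ if $x \le j$, $c(x) = p$ if $x \le p$ but $x \not\le j$, $c(x) = \top$ otherwise — note this is only well-defined as stated classically, which is fine since we have assumed excluded middle) to be a quantale nucleus, i.e.\ to satisfy $c(a)c(b) \le c(ab)$. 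Unwinding $c(a)c(b) \le c(ab)$ by cases gives: taking $a = b = p$ forces $p^2 \le p$ (automatic) and, more importantly, $pp \le c(p^2)$; since $p^2 \le j$ is the condition under which $\Dvar$'s relation $\epsilon^2 = 0$ is respected, we get $p^2 \le j$. Taking $a$ with $c(a) = p$, $c(b) = \top$ forces nothing new beyond two-sidedness. Taking $a$ with $c(a) = \top = c(b)$ gives $\top\top = \top \le c(ab)$, i.e.\ $c(ab) = \top$; contrapositively, $ab \le p \implies (a \le p) \lor (b \le p)$, which says $p$ is a \emph{prime element} of $Q$ in the sense defined earlier (a point of the quantale). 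Taking $c(a) = p$, $c(b) = p$ gives $p^2 \le c(ab)$, automatic once $p^2 \le j$. The remaining case, $c(a) = \top$ (so $a \not\le p$) and $c(b) = p$ (so $b \le p$, $b \not\le j$): then $c(a)c(b) = \top p = p$ (two-sided) $\le c(ab)$, so we need $ab \le p$... but actually we need $p \le c(ab)$, i.e.\ $c(ab) \in \{p, \top\}$, i.e.\ $ab \not\le j$; contrapositively $ab \le j \implies (a \le p) \lor (b \le j)$, and since $a \not\le p$ was assumed, this reads $ab \le j \implies b \le j$ in that case — combining all the cases yields exactly the stated condition $ab \le j \implies (a \le j) \lor (b \le p)$.

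So the proof has two directions. Forward: given $f$, set $p = f_*(\epsilon)$, $j = f_*(0)$; then $p$ is prime because $f_*$ reflects the prime element $\epsilon$ of $\Dvar$ (the point of $\Dvar$ sending $\epsilon \mapsto \top$ composes with $f$ to give a point of $Q$ with associated prime $p$), $p^2 \le j$ because $f(p^2) = f(p)^2 = \epsilon^2 = 0$ so $p^2 \le f_*(0) = j$, $j \le p$ since $0 \le \epsilon$ and $f_*$ is monotone, and the implication follows from $ab \le j \iff f(ab) = 0 \iff f(a)f(b) = 0$ in $\Dvar$, where a short case-check on $\{0,\epsilon,1\}$ shows $xy = 0 \iff (x = 0) \lor (y \le \epsilon)$, translating to $(a \le j) \lor (b \le p)$. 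Backward: given $(p,j)$ with those properties, define $c$ as above, verify it is a nucleus using the case analysis sketched, take the induced quotient $Q \twoheadrightarrow Q/{\sim}$ whose underlying suplattice is $\{j, p, \top\} \cong \Dvar$, and check the induced multiplication matches that of $\Dvar$ (which is forced, since $c(p \cdot p) = j$, $c(p\cdot\top) = p$, $c(\top\cdot\top) = \top$). Uniqueness is immediate: $f$ is determined by $f_*$, which is determined by $(p,j)$.

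The main obstacle, and the place I would be most careful, is the case analysis verifying $c(a)c(b) \le c(ab)$ is both necessary and sufficient — in particular getting the mixed case $c(a) = \top$, $c(b) = p$ right and confirming it produces precisely the asymmetric implication $ab \le j \implies (a \le j) \lor (b \le p)$ rather than a symmetric one. A secondary point of care is the use of excluded middle to know that the three fixed points $\{j, p, \top\}$ are ``separated'' enough that $c$ is well-defined by cases and that $\{j,p,\top\} \cong \Dvar$ as a suplattice — but since the section explicitly assumes excluded middle, this is unproblematic; I would just flag it. Everything else (preservation of $\top$ by two-sidedness, $f_*$ monotone, adjunction bookkeeping) is routine.
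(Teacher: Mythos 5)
Your proof is correct in substance and follows essentially the same route as the paper: extract $(p,j)$ from the right adjoint, verify the forward conditions by computing in $\Dvar$, and for the converse reconstruct $f$ from the three-element chain $j \le p \le \top$ and check multiplicativity by cases. The paper packages the converse slightly differently — it defines the meet-preserving map $\Dvar \to Q$ sending $0, \epsilon, 1$ to $j, p, \top$ and verifies that its \emph{left adjoint} preserves products case by case, rather than verifying the nucleus inequality $c(a)c(b) \le c(ab)$ — but these are the same case analysis in two guises.

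One claim in your write-up is false and creates a small gap in your version of the converse: a quantale homomorphism $f\colon Q \to \Dvar$ need \emph{not} be surjective. Your argument that $f_*(\epsilon) \ne f_*(1)$ forces $f$ to hit $\epsilon$ does not work; what can fail is $f_*(0) \ne f_*(\epsilon)$. Indeed, for any prime $p$ the pair $(p,p)$ satisfies all the stated conditions (the third condition degenerates to primality), and the corresponding $f$ is the composite $Q \to \Omega \hookrightarrow \Dvar$, which never takes the value $\epsilon$. In that case your nucleus $c$ has only the two fixed points $\{p,\top\}$, so the quotient is $\Omega$ rather than $\Dvar$ and your assertion that $\{j,p,\top\} \cong \Dvar$ fails; you need to compose with the embedding $\Omega \hookrightarrow \Dvar$ to finish. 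The paper's formulation avoids this entirely because the meet-preserving map $\Dvar \to Q$ is well defined whether or not $j = p$. A second, harmless, slip: the necessity of $p^2 \le j$ does not come out of the nucleus inequality at $a=b=p$ (which only yields the tautology $p^2 \le c(p^2)$); it comes from the forward direction, where you do derive it correctly from $f(p)^2 \le \epsilon^2 = 0$.
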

\begin{proof}
 Note that $\epsilon$ is a prime element of $\Dvar$ and hence $p = f_*(\epsilon)$ is a prime element of $Q$.
 Then $f(p^2) = f(p)^2 \le 0$ and so $p^2 \le f_*(0) = j$. And certainly, $j \le p$.
 
 Now suppose $ab \le j$. Then $f(a)f(b) = 0$.
 It is easy to see that this can only happen if $f(a) \le 0$ or $f(b) \le \epsilon$, so that $a \le j$ or $b \le p$.
 
 On the other hand, suppose we are given $p$ and $j$ satisfying the properties above. These uniquely define a meet preserving map from $\Dvar$ to $Q$, which then has a left adjoint $f$.
 We can show $f$ preserves products by simply checking all the cases. We will prove just one case here, but the others are similar.
 
 Suppose $f(x) = \epsilon$ and $f(y) = 1$. Then $f(x) \not\le 0$ and $f(y) \not\le \epsilon$, so that $x \not\le j$ and $y \not\le p$.
 Hence, $xy \not\le j$ by the contraposition of the condition on $j$. This means $f(xy) \not\le 0$ --- that is, $f(xy) \ge \epsilon$.
 But we certainly have $f(xy) \le f(x) = \epsilon$ and hence $f(xy) = \epsilon = \epsilon \cdot 1$, as required.
\end{proof}

Recall the following definition from ring theory.
\begin{definition} %
 An ideal $J$ is \emph{primary}\index{ideal (discrete algebraic)!ring!primary|textbf}, if whenever $ab \in J$, we have $a \in J$ or $b^n \in J$ for some $n \in \N$.
 The radical ideal $\sqrt{J}$ is prime and if $\sqrt{J} = P$ we say $J$ is \emph{$P$-primary}.
\end{definition}
So \cref{prop:map_to_D_and_primary_elements} tells us that quantale homomorphisms from $\Idl(R)$ to $\Dvar$ correspond to a pair of ideals $(P,J)$ where $P$ is prime, $J$ is $P$-primary and $J \ge P^2$. %
Here $P$ represents a point of the spectrum of $R$ and $J$ gives some kind of information about first-order derivatives at that point.
This can be made more explicit by considering the local ring $(R_P, \m)$ obtained by localising $R$ at $P$.
A $P$-primary ideal $J \ge P^2$ corresponds bijectively to an ideal $\m^2 \le \overline{J} \le \m$ in the localisation.
Thus, $J$ corresponds to a subspace of the cotangent space\index{tangent space} $\m/\m^2$ at $P$. Actually, it is better to think of $J$ as defining a \emph{quotient} of the cotangent space %
and hence a subspace of the \emph{tangent space}. We will explore maps into $\Dvar$ in more detail in \cref{section:tangent_bundle}.

\section{The spectrum of localic semirings}\label{section:spectrum}

\index{spectrum|(}
As discussed in the introduction, Stone's spectra of Boolean algebras \cite{StoneBoolAlgs} and distributive lattices \cite{StoneDistLattices},
the Zariski spectrum of rings, Gelfand's spectrum (see \cite{GelfandNormedSpectrum,Henry2016}) of commutative C*-algebras (with their norm topology) and
the Hofmann--Lawson spectrum \cite{hofmann1978spectral} of continuous frames (thought of as topological distributive lattices by equipping them with the Scott topology)
can all be viewed as spectrum constructions of localic semirings where the opens are given by the overt weakly closed radical ideals and the points are given by closed prime ideals.
In this chapter we define the general spectrum which reduces to these as special cases.

The basic idea behind the spectrum of a ring $R$ is to provide a space $X$ such that the elements of $R$ can be thought of as certain continuous functions on $X$.
While the one-point space might technically satisfy this
condition, we would like this space to in some sense be `as general as possible', so that it elucidates the structure of $R$. %
We do not require these functions on $X$ to have a fixed target in some (topological) ring, but instead think of them in the sense of sections of a bundle, so that their codomains differ at different points of $X$.

See \cite{Johnstone1982stone} for a discussion a number of different examples of spectrum constructions, albeit from a classical perspective.

A point of the spectrum is characterised by the values each element is thought to take at that location. While we do not yet know the ring in which the functions take values, we at least know that it contains $0$.
So points will be characterised by the functions which vanish (or do not vanish) at them. We topologise the spectrum so that a function \emph{not} vanishing at a point is a verifiable property.
(This is by analogy to the Zariski topology and because in the Gelfand case $\C$ has open inequality.)

When working constructively we should interpret the notion of being nonzero at a point in a positive sense, not as the negation of being zero.
We will say a function is \emph{cozero}\index{cozero|(} at a point if it is nonzero in this sense. It is helpful to imagine being cozero as akin to being invertible\index{cozero|)}. %

There are a few different ways to describe the spectrum of a ring. Firstly, we can describe it by a geometric theory. The constant function $1$ is cozero everywhere, while zero is cozero nowhere.
If the sum of two functions is cozero at a point, then one of them must be cozero at that point. Finally, the product of two functions is cozero at a point if and only if both of them are.
So we have the following geometric theory (where a function $f$ stands for the predicate `$f$ is cozero').
\begin{align*} %
 0    &\mathmakebox[4.5ex][c]{\vdash} \bot \\
 f+g  &\mathmakebox[4.5ex][c]{\vdash} f \vee g \\
 \top &\mathmakebox[4.5ex][c]{\vdash} 1 \\
 fg   &\mathmakebox[4.5ex][c]{\dashv\vdash} f \wedge g
\end{align*}
The corresponding frame gives the usual Zariski topology.
Another description of the frame is obtained by thinking about the open sets directly. An open set in the spectrum should be a verifiable property and we can verify a point lies in an open set
if we can give a function which is cozero at that point. In this way open sets correspond to sets of functions.
But it isn't hard to see that ``some function in $S$ vanishes at $x$'' holds if and only if ``some function in $I$ vanishes at $x$'', where $I$ is the radical ideal generated by $S$.
So the frame of opens should correspond to the frame of radical ideals and this is indeed the case.

In the above we have restricted our consideration to discrete rings. More generally we will need to deal with localic rings.
This is in accordance with Stone's maxim of ``Always topologise''. In fact, we are taking this maxim to the extreme by topologising Stone duality itself! %
In the case of C*-algebras, ignoring the topology and taking the Zariski spectrum gives the wrong result.
In \cref{section:generalised_presentations,section:spectrum_universal_property} we attempt to mimic this presentation of the Zariski spectrum in the case that $R$ is a localic ring
where we cannot consider its points as generators in a naive manner. In later sections we relate this frame described by this presentation to a construction involving the overt weakly
closed ideals of $R$.

The generalisation from rings to semirings is necessary to handle the cases of discrete distributive lattices and continuous frames (with the Scott-topology).
In this more general setting, there is no reason to expect that the vanishing of functions gives the full story. There might be a finer spectrum which also
encodes information about where a function is equal to $1$ or even $1+1$ and so on. Nonetheless, the information about vanishing still provides the answers to an
important class of questions and it is this spectrum which gives the expected results for our examples.
\index{spectrum|)}

\subsection{A generalised notion of presentation}\label{section:generalised_presentations}

For the classical Zariski spectrum, we construct a presentation of the frame from the elements of the ring. However, this does not take the topology of the ring into account and it isn't even an option
for the spectrum of a localic ring that does not have enough points. We will generalise the notion of presentation to be able to handle this. (An unfortunate aspect of the generalisation is that there might not be
any frame satisfying a given presentation, but it seems that this is unavoidable.)

The classical notion of a presentation\index{presentation|(} of a frame can be viewed as expressing a frame $L$ as the coequaliser of maps between free frames $\langle R\rangle \rightrightarrows \langle G\rangle \twoheadrightarrow L$,
where $G$ is the set of generators and $R$ indexes the relations. From the localic perspective, the free frame on $G$ corresponds to the $G^\text{th}$ power of $\Srpnsk$ and so the presentation corresponds to an
equaliser $L \hookrightarrow \Srpnsk^G \rightrightarrows \Srpnsk^R$ (where we reuse the variable $L$ for the corresponding locale). %

We now reinterpret $G$ and $R$ as discrete locales and the powers of $\Srpnsk$ as exponentials. This allows us to replace $R$ and $G$ with any exponentiable locales. However, since our localic semirings might
not be locally compact, this is not yet sufficiently general for our purposes.

We can circumvent the nonexistence of exponentials by passing to the presheaf category $\Set^{\Loc\op}$. Here we consider the equaliser of a pair of natural transformations
$F \hookrightarrow \Hom_\Loc((-) \times G, \Srpnsk) \rightrightarrows \Hom_\Loc((-) \times R, \Srpnsk)$. The resulting functor is not always representable, but when it is we say the representing object $L$
is presented by the generalised presentation. It is shown in \cite{VickersTownsendDoublePowerlocale} that natural transformations from $\Hom_\Loc((-) \times G, \Srpnsk)$ to $\Hom_\Loc((-) \times R, \Srpnsk)$
are in bijection with dcpo morphisms from $\O G$ to $\O R$ (by taking the component of each natural transformation at $1$).
Thus, a generalised presentation may be described by a locale $G$ of generators and a pair of dcpo morphisms from $G$ into another locale $R$ describing the relations.\index{presentation|)}

\subsection{The universal property}\label{section:spectrum_universal_property}

Let $R$ be a localic semiring\index{semiring!localic}. We have frame homomorphisms $\epsilon_0\colon \O R \to \Omega$, $\epsilon_1\colon \O R \to \Omega$, $\mu_+\colon \O R \to \O R\oplus \O R$ and
$\mu_\times\colon \O R \to \O R\oplus \O R$\glsadd{epsilon0}\glsadd{epsilon1}\glsadd{muplus}\glsadd{mutimes} for the additive identity, the multiplicative identity, addition and multiplication, respectively.

Mimicking the Zariski spectrum, we will describe the spectrum $\Spec R$ by a generalised presentation where $R$ is the locale of generators.
In the Zariski case we have one relation involving $0$, one involving $1$, an $(R\times R)$-indexed family involving addition and an $(R\times R)$-indexed family involving multiplication, so the
relations are indexed by the set $1 \sqcup 1 \sqcup R\times R \sqcup R\times R$. Similarly, in our case the relations are indexed by the locale given by the frame
$\Omega \times \Omega \times (\O R \oplus \O R) \times (\O R \oplus \O R)$.
The relations are described by a parallel pair of dcpo morphisms, which decompose into four pairs of dcpo morphisms $\O R \rightrightarrows \Omega$, $\O R \rightrightarrows \Omega$,
$\O R \rightrightarrows \O R \oplus \O R$ and $\O R \rightrightarrows \O R \oplus \O R$. The first pair is $\epsilon_0$ and the constant function $0$. The second pair is $\epsilon_1$ and the constant function $1$.
The third is $\mu_+ \vee \iota_1 \vee \iota_2$ and $\iota_1 \vee \iota_2$, where $\iota_{1,2}\colon R \to R\oplus R$ are the coproduct injections. The forth pair is $\mu_\times$ and $\iota_1 \wedge \iota_2$.

The resulting equaliser can be explicitly described as a functor $\OPAI_R\colon \Loc\op \to \Set$\glsadd{OPAIR} given by
\begin{align*}
 \OPAI_R\colon X \mapsto \big\{ u \in \O X \oplus \O R \mid {}
 & (\O X \oplus \epsilon_0)(u) = 0,\ (\O X \oplus \epsilon_1)(u) =  1, \\ 
 & (\O X \oplus \mu_+)(u) \le (\O X \oplus \iota_1)(u) \vee (\O X \oplus\iota_2)(u), \\
 &  (\O X \oplus \mu_\times)(u) = (\O X \oplus\iota_1)(u) \wedge (\O X \oplus\iota_2)(u) \big\}
\end{align*}
where we use elements of the frame $\O X \oplus \O R$ in place of locale maps $X \times R \to \Srpnsk$.
If $f\colon Y \to X$ is a map of locales, then $\OPAI_R(f)$ sends $u \in \OPAI_R(X)$ to $(f^* \oplus \O R)(u)$.
We call this $\OPAI$ since it gives the open prime anti-ideals\index{anti-ideal!open prime|textbf} (or equivalently the closed prime ideals) of $R$ `fibred over $X$'. (For example, the first condition specifies that the putative anti-ideal does not
contain $0$ and the third condition is a pointfree formulation of the requirement that either $r \in u$ or $s \in u$ whenever $r + s \in u$.)

\begin{definition}\label{def:localic_spectrum}
 The \emph{(localic) spectrum}\index{spectrum!localic|textbf} of a localic semiring $R$ is the representing object $\Spec R$\glsadd{SpecR} of the presheaf $\OPAI_R$ (equipped with the universal element $\widetilde{\upsilon}_R \in \O \Spec R \oplus \O R$). %
\end{definition}

Note that $\OPAI_R$ is functorial in $R$ and so $\Spec$ is a (partial) functor by parametrised representability. %
Of course, we have no guarantee that $\OPAI_R$ is representable at all. %
We will describe sufficient conditions for the spectrum's existence in \cref{section:sufficient_conditions_for_spectrum}.

The universal element associated with the spectrum can be thought of as the uncurried form of a function that assigns functions to their cozero sets (were it to exist).

We can generalise the spectrum functor from locales to (two-sided) quantales by considering the following functor.
\begin{definition}\label{def:quantic_spectrum_as_OPAI}
 Let $R$ be a localic semiring. The functor $\overline{\OPAI}_R\colon \Quant \to \Set$\glsadd{OPAIRoverline} is given by
\begin{align*}
 \overline{\OPAI}_R\colon Q \mapsto \big\{ u \in Q \oplus \O R \mid {}
 & (Q \oplus \epsilon_0)(u) = 0,\ (Q \oplus \epsilon_1)(u) =  1, \\ 
 & (Q \oplus \mu_+)(u) \le (Q \oplus \iota_1)(u) \vee (Q \oplus\iota_2)(u), \\
 &  (Q \oplus \mu_\times)(u) = (Q \oplus\iota_1)(u) \cdot (Q \oplus\iota_2)(u) \big\}.
\end{align*}

The representing object of $\overline{\OPAI}_R$, if it exists, is called the \emph{quantic spectrum}\index{spectrum!quantic|textbf} of $R$.
In this case, the localic spectrum also exists and is given by the localic reflection of the quantic spectrum. %
\end{definition}

\begin{remark}
 This definition even works when $R$ is a general commutative semiring object in the opposite of the category of quantales, but we will restrict our attention to localic semirings. %
\end{remark}

\subsection{The quantale of ideals} \label{section:quantale_of_overt_weakly_closed_ideals}

At this point we change tack and ask directly what the opens should be in a good notion of a spectrum. In the classical examples, we are often interested in finding the region where every element of a set of ring
elements (viewed as functions on the spectrum) vanishes. This should describe a closed set. If we wish to work with open sets, we can instead ask for the region where \emph{some} element of the set of functions is
\emph{cozero}. In our setting it makes little sense to consider a set of ring elements, but we can replace this with a sublocale of the localic semiring $R$. However, at least constructively, not every
sublocale is appropriate.\index{quantale of ideals|(textbf} We need to be able to ask when `some element of the sublocale is cozero' (interpreted in an appropriately pointfree way).
For this to be possible it seems reasonable to ask that our sublocale admits existential quantification of `open' predicates.
This is precisely what the concept of overtness allows us to do and so we should restrict to overt sublocales.

Of course, not every overt sublocale will give a unique open. For example, adding the zero function to a sublocale will not change the open it describes. Furthermore, if we `close a sublocale under addition', this should
not change the open it describes, since if $f + g$ is cozero somewhere, then one of $f$ or $g$ must be cozero there too. The open is also unaffected if we `multiply the sublocale by the whole locale', since if $fg$
is cozero somewhere, then $f$ is also cozero there. Finally, at least modulo nilpotents, if $f$ is cozero somewhere, then so is $f^2$. So in some sense, the same open is induced by a sublocale and the
`radical ideal generated by that sublocale'. We now make this precise.

The (constructive) closed subgroup theorem (see \cite{JohnstoneClosedSubgroup}) states that every overt subgroup of a localic group is weakly closed.
Consequently, overt ideals in localic rings are always weakly closed.
In terms of existential quantification an overt sublocale and its weak closure (which is still overt) behave identically, so it is reasonable to restrict to weakly closed overt sublocales even for semirings
where the closed subgroup theorem is unavailable.

Recall that the assignment to each locale of its suplattice of overt weakly closed sublocales gives a functor $\SubOW\colon \Loc \to \Sup$,
which factors as (the opposite of) the forgetful functor $U\colon \Frm \to \Sup$ followed by the internal hom functor $\hom(-,\Omega)$ in $\Sup$.
When $\Frm$ is equipped with the cocartesian symmetric monoidal structure, $U$ becomes a strong symmetric monoidal functor.
Furthermore, the internal hom in a symmetric monoidal closed category $\Cvar$ is a lax symmetric monoidal functor $\hom\colon\Cvar\op \times \Cvar \to \Cvar$ and the inclusion from $\Cvar\op$ to $\Cvar\op \times \Cvar$ given by pairing
with the monoidal unit is strong symmetric monoidal. Hence $\hom(-,\Omega)\colon \Sup\op \to \Sup$ is lax symmetric monoidal. Thus, the composite $\SubOW$ is also lax symmetric monoidal.
It follows that $\SubOW$ preserves commutative monoids. %

So if $R$ is a localic semiring, $\SubOW(R)$ is a quantale in two different ways. Explicitly, the multiplicative structure of $R$ gives a unit $1_\times = \epsilon_1$ and multiplication
$fg = \nabla (f \otimes g) \mu_\times$, where $\nabla\colon \Omega\otimes\Omega \cong \Omega$\glsadd{nabla} is the meet in $\Omega$.
Similarly, the additive structure gives an additive identity $0_+ = \epsilon_0$ and an addition $f + g = \nabla (f \otimes g) \mu_+$. Note that $0_+$ is not the same as $0$,
the least element of the suplattice.

We wish to restrict to those overt weakly closed sublocales that are closed under addition --- that is, the elements $a \in \SubOW(R)$ such that $0_+ \le a$ and $a + a \le a$.
It is easy to these elements are closed under arbitrary meets and so they form a sub-inf\/lattice of $\SubOW(R)$. But since we are in the category of suplattices, it is better to take the
adjoint of the inclusion and instead view it as a quotient suplattice. The associated closure operator is given by $\nu\colon a \mapsto \bigvee_{n \in \N} n\cdot a$, where we write $n\cdot a$\glsadd{cdot}
to denote repeated addition. %

\begin{lemma}
 The closure operator $\nu$ is a nucleus for the multiplicative quantale structure on $\SubOW(R)$.
\end{lemma}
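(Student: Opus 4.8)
The statement to prove is that $\nu\colon a \mapsto \bigvee_{n \in \N} n\cdot a$ is a nucleus for the multiplicative quantale structure on $\SubOW(R)$. We already know $\nu$ is a closure operator (it is the closure operator corresponding to the quotient onto the sub-inf-lattice of additively closed overt weakly closed sublocales). So the only thing left to establish is the quantale-nucleus inequality $\nu(a)\cdot\nu(b) \le \nu(a\cdot b)$, where $\cdot$ is the multiplicative structure $fg = \nabla(f\otimes g)\mu_\times$ on $\SubOW(R)$. The plan is to reduce this to the distributive law connecting repeated addition and multiplication in the semiring $R$, transported through $\SubOW$.

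First I would unwind what $n\cdot a$ and the product mean concretely using the bijection between overt weakly closed sublocales and suplattice homomorphisms into $\Omega$: an element $a \in \SubOW(R)$ is the map $\exists_a\colon \O R \to \Omega$, the product $ab$ corresponds to $c \mapsto \bigvee\{\exists_a(x)\wedge\exists_b(y) \mid c \le \mu_\times^*(x\oplus y)\}$ (equivalently, $ab = \nabla(\exists_a\otimes\exists_b)\mu_\times$ read off on generators $x\oplus y$ of $\O R \oplus \O R$), and $a+b$ similarly with $\mu_+$ in place of $\mu_\times$. Since $\SubOW$ is lax symmetric monoidal and thus preserves commutative monoids, the additive and multiplicative operations on $\SubOW(R)$ satisfy whatever equational consequences the localic semiring axioms have; in particular bilinearity of $\cdot$ over $+$ holds: $a\cdot(b+c) = a\cdot b + a\cdot c$ and $a \cdot 0_+ = 0_+$. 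I would verify/recall this bilinearity, since it is the crux. Given bilinearity, for fixed $n,m \in \N$ we get $(n\cdot a)\cdot(m\cdot b) = (nm)\cdot(a\cdot b)$ by iterated distribution (a routine induction), and hence $(n\cdot a)(m\cdot b) \le \nu(ab)$ for all $n,m$.

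Then I would push this through the joins. Multiplication in $\SubOW(R)$ is a suplattice bilinear map (being the composite of suplattice homomorphisms $\mu_\times$, $\exists_a\otimes\exists_b$, and $\nabla$), so it preserves arbitrary joins in each variable; therefore
\[
\nu(a)\cdot\nu(b) = \Bigl(\bigvee_{n}n\cdot a\Bigr)\cdot\Bigl(\bigvee_{m}m\cdot b\Bigr) = \bigvee_{n,m}(n\cdot a)\cdot(m\cdot b) = \bigvee_{n,m}(nm)\cdot(a\cdot b) \le \bigvee_{k}k\cdot(a\cdot b) = \nu(a\cdot b),
\]
which is exactly the required inequality. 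Combined with $\nu$ being a closure operator, this makes $\nu$ a nucleus.

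\textbf{Expected main obstacle.} The one genuinely substantive point is justifying that the additive and multiplicative structures on $\SubOW(R)$ interact by bilinearity — i.e.\ that $\SubOW$ really does carry the two-sortedness of a localic semiring to a genuine ``semiring-like'' interaction and not just two unrelated commutative monoid structures. The cleanest route is to observe that the lax symmetric monoidal functor $\SubOW$ sends the localic semiring object $R$ (a commutative monoid with a compatible second commutative monoid structure, the compatibility being bilinearity) to an object with the analogous compatible structures, so bilinearity is automatic from functoriality; but one should be a little careful because $\SubOW$ is only \emph{lax} monoidal, so I would check that the relevant comparison maps are still enough to transport the distributivity diagrams (they are, because distributivity is expressed by morphisms $\O R \to \O R \oplus \O R$ and the laxator is natural and compatible with the symmetry). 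Everything else — the inductions on $n$, $m$, preservation of joins by $\cdot$ — is genuinely routine and I would not belabour it.
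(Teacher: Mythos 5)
Your overall architecture --- reduce the nucleus condition to a distributivity law in $\SubOW(R)$, induct to obtain $(n\cdot a)(m\cdot b) \le nm\cdot(ab)$, then use that the product preserves joins in each variable --- is exactly the paper's. But the step you yourself flag as the crux is where the gap sits, and your proposed justification for it is wrong. Distributivity is \emph{not} automatic from lax monoidal functoriality: the distributivity axiom for a semiring object in $\Loc$ is a diagram involving the diagonal $\Delta_R\colon R \to R\times R$, a comonoid-type structure map, and while a lax monoidal functor transports the monoid data ($\mu_\times$, $\mu_+$ and the units), its structure map $\SubOW(X)\otimes\SubOW(Y)\to\SubOW(X\times Y)$ points the wrong way to turn $\SubOW(\Delta_R)$ into a comultiplication on $\SubOW(R)$. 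Concretely, an element $z\in\SubOW(R)$ is a suplattice homomorphism $\O R\to\Omega$, the frame map of $\Delta_R$ sends $u\oplus v$ to $u\wedge v$, and $z(u\wedge v)\le z(u)\wedge z(v)=\nabla(z\otimes z)(u\oplus v)$ is only an inequality because $z$ need not preserve meets; geometrically, the image of $V$ under the diagonal is the diagonal copy of $V$ inside $V\times V$, not $V\times V$ itself. Already for a discrete ring, with $X=Y=\{1\}$ and $Z=\{1,2\}$ in $\Z$, one has $(X+Y)Z=\{2,4\}$ but $XZ+YZ=\{2,3,4\}$. So the equalities $a\cdot(b+c)=a\cdot b+a\cdot c$ and $a\cdot 0_+=0_+$ that you assert are false in general; only the inequalities $(x+y)z\le xz+yz$ and $0_+x\le 0_+$ hold.

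Fortunately this does not sink the proof, because the inequalities are all that is needed: they give $(n\cdot a)(m\cdot b)\le nm\cdot(ab)$ by your induction, and the rest of your argument ($\nu(a)\nu(b)=\bigvee_{n,m}(n\cdot a)(m\cdot b)\le\bigvee_k k\cdot(ab)=\nu(ab)$ by join preservation) goes through verbatim. This is precisely what the paper does; it establishes the two inequalities by an explicit computation transporting the absorption and distributivity diagrams of $R$ through $\SubOW$, with the inequality entering exactly at the diagonal as above. Replace your claimed equalities by inequalities and supply that computation, and the proof is complete.
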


\begin{proof}
 We must show that $\nu(a)\nu(b) \le \nu(ab)$. Expanding the definition of $\nu$ we see it is enough to show $(n\cdot a)(m \cdot b) \le nm\cdot ab$ for all $n,m \in \N$
 and this in turn follows from $0_+ x \le 0_+$ and $(x+y)z \le xz + yz$ by induction. We will prove these inequalities.
 
 One of the axioms of a semiring (that zero is absorbing) is given by the following diagram in $\Loc$ (where we write $\epsilon_0$ and $\mu_\times$ for their corresponding locale maps).
 \begin{center}
  \begin{tikzpicture}[node distance=2.5cm, auto]
   \node (RI)  {$1 \times R$};
   \node (RR) [right of=RI, xshift=0.8cm] {$R \times R$};
   \node (I) [below of=RI] {$1$};
   \node (R) [below of=RR] {$R$};
   \draw[->] (RI) to node {$\epsilon_0 \times R$} (RR);
   \draw[->] (RI) to node [swap] {$!$} (I);
   \draw[->] (RR) to node {$\mu_\times$} (R);
   \draw[->] (I) to node [swap] {$\epsilon_0$} (R);
  \end{tikzpicture}
 \end{center}
 So in $\Frm$ we have $(\epsilon_0 \oplus \O R)\mu_\times = {!}\circ\epsilon_0$. Then applying the forgetful functor $U$ and composing with $\Omega \otimes x$ where $x\colon \O R \to \Omega$, we get
 $(\epsilon_0 \otimes x)\mu_\times = \epsilon_0 \otimes x$. Hence, $0_+ x = \nabla (\epsilon_0 \otimes x)\mu_\times = \nabla (\epsilon_0 \otimes x) = \epsilon_0 \wedge x \le \epsilon_0 = 0_+$.
 
 Another axiom of semirings (distributivity) is given by
 \begin{center}
  \begin{tikzpicture}[node distance=2.5cm, auto]
   \node (RRR)  {$R^3$};
   \node (RR) [below of=RRR] {$R^2$};
   \node (RRRR) [right of=RRR, xshift=0.5cm] {$R^4$};
   \node (RRRR') [right of=RRRR, xshift=-1.0cm] {$R^4$};
   \node (RR') [right of=RRRR'] {$R^2$};
   \node (R) [below of=RR'] {$R$};
   \draw[->] (RRR) to node [swap] {$\mu_+ \times R$} (RR);
   \draw[->] (RRR) to node {$R^2 \times \Delta_R$} (RRRR);
   \draw[->] (RRRR) to node {$\tau$} (RRRR');
   \draw[->] (RRRR') to node {$\mu_\times \times \mu_\times$} (RR');
   \draw[->] (RR) to node [swap] {$\mu_\times$} (R);
   \draw[->] (RR') to node {$\mu_+$} (R);
  \end{tikzpicture}
 \end{center}
 where $\Delta_R\colon R \to R\times R$ is the diagonal map and $\tau\colon R^4 \to R^4$ interchanges the middle two factors of the product.
 Thus, if $x,y,z\colon \O R \to \Omega$, proceeding as before we have $((x \otimes y) \mu_+ \otimes z) \mu_\times = (x \otimes y \otimes z \Delta_R) \tau (\mu_\times \otimes \mu_\times) \mu_+$.
 But $z \Delta_R(u \otimes v) = z(u\wedge v) \le z(u) \wedge z(v) = \nabla (z\otimes z) (u \otimes v)$ and so $z \Delta_R \le \nabla (z \otimes z)$.
 Hence, we obtain
 $(x+y)\cdot z = \nabla(\nabla \otimes \Omega)((x \otimes y) \mu_+ \otimes z) \mu_\times \le \nabla(\nabla \otimes \nabla)((x \otimes z)\mu_\times \otimes (y \otimes z)\mu_\times) \mu_+ = x\cdot z + y\cdot z$,
 as required.
\end{proof}

Thus, the quotient is a quantale under multiplication.
\begin{definition}
 This quantale obtained as above from the nucleus $\nu$ on $\SubOW(R)$ will be called the \emph{quantale of additive submonoids of $R$}
 and denoted by $\AddSub(R)$.
\end{definition}

On the other hand, $\nu$ is not quite a nucleus for the additive structure. But we \emph{do} have $\nu(a) + \nu(b) = \nu(a+b)$ when $a,b \ge 0_+$ and so, in some sense, the failure to preserve the
additive structure is entirely due to the fact that $0_+ \ne 0$. We might think of this failure as being rather small. In any case, sums in $\SubOW(R)$ become joins in the quotient and $\AddSub(R)$
is the universal such quantale. Compare the `quantic spectrum' of \cite{QuantalesAndHyperstructures}. %

So finite joins in the $\AddSub(R)$ can be computed as sums in $\SubOW(R)$. The following result shows how to compute directed joins.
\begin{lemma}
 Directed joins in $\AddSub(R)$ are computed as in $\SubOW(R)$.
\end{lemma}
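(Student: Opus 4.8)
The plan is to unwind the definition of the quotient and use the description of coequalisers in $\Sup$ as sub-inf\/lattices. Recall that $\AddSub(R)$ is obtained from $\SubOW(R)$ by the nucleus $\nu\colon a \mapsto \bigvee_{n\in\N} n\cdot a$, so as a suplattice $\AddSub(R)$ is the image of $\nu$, i.e.\ the sub-inf\/lattice of $\SubOW(R)$ consisting of those $a$ with $0_+\le a$ and $a+a\le a$ (equivalently $\nu(a)=a$). The inclusion $j\colon\AddSub(R)\hookrightarrow\SubOW(R)$ is right adjoint to the quotient map $q\colon\SubOW(R)\twoheadrightarrow\AddSub(R)$, so joins in $\AddSub(R)$ are computed by $\bigvee^{\AddSub} S = q\bigl(\bigvee^{\SubOW} j(S)\bigr) = \nu\bigl(\bigvee^{\SubOW} S\bigr)$ for $S\subseteq\AddSub(R)$. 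Hence the claim is exactly that for a \emph{directed} family $D\subseteq\AddSub(R)$ we have $\nu\bigl(\bigvee^{\SubOW} D\bigr) = \bigvee^{\SubOW} D$, i.e.\ that the join in $\SubOW(R)$ of a directed family of elements each fixed by $\nu$ is again fixed by $\nu$.

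First I would reduce this to showing $\nu\bigl(\dirsup D\bigr)\le\dirsup D$, the reverse inequality being automatic since $\nu$ is a closure operator and $\SubOW(R)$ is a frame-quotient target. Writing $a_0 = \dirsup D$ (join taken in $\SubOW(R)$), we must show $0_+\le a_0$ and $a_0 + a_0 \le a_0$. The first is immediate: $D$ is directed hence inhabited, pick any $d\in D$, then $0_+\le d\le a_0$. For the second, the key observation is that addition $+$ on $\SubOW(R)$, being $\nabla(f\otimes g)\mu_+$, is a suplattice bilinear map — it preserves arbitrary joins in each variable separately — because it is built from the suplattice homomorphisms $\mu_+$ and $\nabla$ and the tensor. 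Therefore $a_0 + a_0 = (\dirsup D) + (\dirsup D) = \bigvee_{d,d'\in D} (d + d')$. Now for each pair $d,d'\in D$, directedness gives some $e\in D$ with $d,d'\le e$, whence $d + d' \le e + e \le e \le a_0$, using $e+e\le e$ since $e\in\AddSub(R)$ and monotonicity of $+$. So every term in the join is $\le a_0$, giving $a_0 + a_0\le a_0$ as required.

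Having established $0_+\le a_0$ and $a_0+a_0\le a_0$, an easy induction (exactly as in the proof that $\nu$ is well-defined) gives $n\cdot a_0\le a_0$ for all $n\in\N$, hence $\nu(a_0)=\bigvee_n n\cdot a_0 = a_0$, so $a_0$ lies in the image of $\nu$ and is therefore the join of $D$ computed in $\AddSub(R)$ as well. I expect the only subtlety — and hence the one point worth spelling out carefully rather than asserting — is the interchange $(\dirsup D)+(\dirsup D) = \bigvee_{d,d'} (d+d')$, which relies on $+$ being bilinear with respect to \emph{all} joins in $\SubOW(R)$; this is precisely what the construction $f+g = \nabla(f\otimes g)\mu_+$ buys us, so no separate lemma is needed. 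Everything else is routine order-theoretic bookkeeping about adjoint pairs and closure operators.
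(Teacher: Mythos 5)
Your proof is correct and follows essentially the same route as the paper: check that the directed join (in $\SubOW(R)$) of additive submonoids contains $0_+$ by inhabitedness and is closed under $+$ by interchanging the join with the bilinear addition. The only cosmetic difference is that you expand $(\dirsup D)+(\dirsup D)$ as a double join and re-directify by hand, whereas the paper invokes the background lemma that a map bilinear with respect to directed joins is linear with respect to them; these are the same computation.
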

\begin{proof}
 The elements $a$ of $\SubOW(R)$ corresponding to additive submonoids are those satisfying $0_+ \le a$ and $a + a \le a$.
 Suppose $b = \dirsup[\alpha] a_\alpha$ is a directed join of additive submonoids in $\SubOW(R)$.
 Then $0_+ \le b$, since directed joins are inhabited. To see that $b + b \le a$
 we use the fact that whenever a function is bilinear with respect to directed joins, it is also linear with respect to directed joins
 and so $b + b = \dirsup[\alpha] a_\alpha + \dirsup[\alpha] a_\alpha = \dirsup[\alpha] a_\alpha + a_\alpha \le \dirsup[\alpha] a_\alpha = b$.
 Thus $b$ is also an additive submonoid.
\end{proof}

We can take the two-sided and localic reflections of this quantale to obtain the desired quantales of ideals\index{ideal (localic algebraic)!overt weakly closed|textbf} and radical ideals.
\begin{definition}
 The \emph{quantale of ideals of $R$}\index{quantale of ideals|)} is the two-sided reflection of the quantale $\AddSub(R)$ and is denoted by $\Idl(R)$\glsadd[format=(]{Idl}.
 The \emph{frame of radical ideals of $R$}\index{frame of radical ideals|textbf} is then given by the localic quotient of $\Idl(R)$ and denoted by $\Rad(R)$\glsadd[format=(]{Rad}.
\end{definition}

\begin{lemma}
 Joins in $\Idl(R)$ are computed as in $\AddSub(R)$.
\end{lemma}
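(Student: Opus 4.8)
The plan is to use the description, recalled earlier in the thesis, of the two-sided reflection of a quantale as the quotient by the nucleus $a \mapsto a\top$, together with the standard fact that joins in a suplattice quotient are computed by taking the join in the ambient suplattice and then applying the associated closure operator. So the whole argument reduces to checking that the relevant closure operator fixes arbitrary joins of elements it already fixes.

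First I would recall that $\Idl(R)$ is the quotient of the quantale $\AddSub(R)$ by the nucleus $\nu\colon a \mapsto a\top$, where $\top$ is the top element of $\AddSub(R)$ and the multiplication is the multiplicative quantale structure on $\AddSub(R)$. Its underlying suplattice is therefore the poset of fixed points of $\nu$, and, following the usual convention, I would identify $\Idl(R)$ with this sub-poset of $\AddSub(R)$ via the right adjoint of the quotient map. With this identification, for any family $(a_\alpha)$ of elements of $\Idl(R)$ the join $\bigvee_\alpha a_\alpha$ computed in $\Idl(R)$ equals $\nu(\bigvee_\alpha a_\alpha)$, where the inner join is taken in $\AddSub(R)$.

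It thus suffices to show that the join in $\AddSub(R)$ of a family of fixed points of $\nu$ is again a fixed point, i.e. that $\bigvee_\alpha a_\alpha$ is two-sided whenever every $a_\alpha$ is. This is immediate: the multiplication on the quantale $\AddSub(R)$ is bilinear, so $(-)\cdot\top$ is a suplattice homomorphism and preserves arbitrary joins; hence $(\bigvee_\alpha a_\alpha)\top = \bigvee_\alpha (a_\alpha\top) = \bigvee_\alpha a_\alpha$. Therefore $\nu(\bigvee_\alpha a_\alpha) = \bigvee_\alpha a_\alpha$, and the join in $\Idl(R)$ coincides with the join in $\AddSub(R)$.

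There is no genuine obstacle here; the only points requiring care are keeping the convention straight — that $\Idl(R)$ is being treated as the fixed-point sub-suplattice of $\AddSub(R)$ rather than as a quotient set — and observing that this lemma, combined with the two preceding ones, yields that finite joins in $\Idl(R)$ are computed as sums in $\SubOW(R)$ and directed joins as joins in $\SubOW(R)$.
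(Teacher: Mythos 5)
Your argument is correct and is essentially the paper's own proof, which simply observes that the nucleus $x \mapsto x\cdot\top$ for the two-sided reflection preserves arbitrary joins; you have just unpacked why that observation suffices. No gaps.
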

\begin{proof}
 Simply observe that the nucleus $x \mapsto x \cdot \top$ corresponding to the two-sided quotient preserves arbitrary joins.
\end{proof}

It will also be useful to consider the two-sided reflection of $\SubOW(R)$.
\begin{definition}
 The \emph{quantale of monoid ideals of $R$}\index{quantale of monoid ideals|textbf} is given by the two-sided reflection of $\SubOW(R)$ (with its multiplicative structure)
 and is denoted by $\MonIdl R$\glsadd{MonIdl}.
\end{definition}

By the universal property of the two-sided reflection, the quotient $\SubOW(R) \twoheadrightarrow \Idl(R)$
factors through $\MonIdl R$ so that $\Idl(R)$ is also a quotient of $\MonIdl R$.
It can be shown that the corresponding nucleus on $\MonIdl R$ has the same form as $\nu$ above. %

\begin{definition}
 We write $\addquot$\glsadd{addquot} for this induced surjection from $\MonIdl R$ to $\Idl(R)$.
\end{definition}

The above definitions of $\Idl(R)$ and $\Rad(R)$ are corroborated by the constructive theories of the Zariski and Gelfand spectra.
If $R$ is a discrete ring, then the overt weakly closed sublocales are precisely the open sublocales and so the
overt weakly closed localic radical ideals coincide with the set-theoretic radical ideals in the usual constructive approach. Henry shows in \cite{ConstructiveGelfandNonunital} that if $R$ is a localic C*-algebra,
then the underlying frame of the Gelfand spectrum of $R$ is order isomorphic to $\Idl(R)$.\footnote{In particular, %
note that when $R$ is a commutative localic C*-algebra, the underlying lattice of $\Idl(R)$ is a frame. However, it remains to show that the multiplication in $\Idl(R)$ is given by meet
and hence $\Rad(R) \cong \Idl(R)$. We will prove this in \cref{prop:gelfand_expected_spectrum}.}

We conclude by noting that $\Idl$ and $\Rad$ are functorial with an action on morphisms inherited from $\SubOW$.\glsadd[format=)]{Idl}\glsadd[format=)]{Rad} %
To see this it is enough to note that if $f\colon R \to R'$ is a morphism of localic semirings, then $\SubOW(f)$
preserves the additive and multiplicative quantale structures. From this it is immediate that $\SubOW(f)$ sends
additive submonoids to additive submonoids. On the other hand, ideals and radical ideals are not preserved by $\SubOW(f)$,
but we obtain the desired morphism by composing with the unit of the relevant reflection.

\subsection{When does the spectrum exist?}\label{section:sufficient_conditions_for_spectrum}

Suppose $R$ is a localic semiring. We want to know if the quantale $\Idl(R)$ of overt weakly closed ideals constructed in \cref{section:quantale_of_overt_weakly_closed_ideals} satisfies the
universal property of the quantic spectrum. To prove this we need to equip $\Idl(R)$ with a candidate universal open prime anti-ideal fibred over it, which we call $\upsilon$\glsadd{upsilon}.
Since $\Idl(R)$ is defined in terms of overt sublocales, it seems reasonable to restrict to the case that $R$ is overt.

Before we define $\upsilon$, it will be useful to have a good understanding of \emph{saturated opens}\index{saturated!open|(textbf} in $R$.
Recall that a subset $S$ of a discrete semiring is called \emph{saturated} if $xy \in S \implies (x \in S) \land (y \in S)$.

\begin{definition}
 We say an element $s \in \O R$ is \emph{saturated} if $\mu_\times(s) \le s \oplus s$.
\end{definition}
Note that $s \oplus s = \iota_1(s) \wedge \iota_2(s)$, where $\iota_{1,2}\colon \O R \to \O R \oplus \O R$ are the coproduct injections,
and so this is simply saying that $xy \in s \vdash_{x,y} (x \in s) \land (y \in s)$ holds in the internal logic.

Since $\mu_\times$ is commutative, it is enough to require $\mu_\times(s) \le \iota_1(s)$. From this it is easy to see that the saturated elements form a subframe of $\O R$.
The topological meaning of this frame will be discussed in \cref{section:spectrum_via_duals}.
\begin{definition}
 Let $R$ be a localic semiring. We write $\Sats R$\glsadd{Sats} for the frame of saturated elements\index{frame of saturated opens|textbf} of $\O R$.
\end{definition}

Since $R$ is overt, $\iota_1$ has a left adjoint and
so an open $a$ is saturated if and only if $(\iota_1)_!\mu_\times(a) \le a$. Here $(\iota_1)_!\mu_\times(a)$ is called the \emph{(open) saturation}\index{saturation|(textbf} of $a$.
In the internal logic, it can be expressed as $\{x \colon R \mid \exists y\colon R.\ xy \in a\}$.

\begin{lemma}\label{prop:saturation_operator}
 Let $R$ be an overt localic semiring. The map $(\iota_1)_! \mu_\times\colon \O R \to \O R$ is a closure operator whose fixed points are precisely the saturated elements\index{saturated!open|)}.
\end{lemma}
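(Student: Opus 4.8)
The plan is to verify directly that $c \coloneqq (\iota_1)_!\mu_\times$ is a closure operator and then identify its fixed points. Recall that since $R$ is overt, $\iota_1 \colon \O R \to \O R \oplus \O R$ is an open frame map (it is a coproduct injection, and openness of coproduct injections for overt locales follows from \cref{prop:overt_projection} applied to the projection $R \times R \to R$, of which $\iota_1$ is essentially the frame-side incarnation), so the left adjoint $(\iota_1)_!$ exists and is monotone, as is $\mu_\times$; hence $c$ is monotone. So it remains to check $a \le c(a)$ (extensivity) and $c(c(a)) \le c(a)$ (idempotence), and then that $c(a) \le a \iff \mu_\times(a) \le \iota_1(a) \iff a$ is saturated.

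First I would prove extensivity. Using the unit of the adjunction $(\iota_1)_! \dashv \iota_1$ we have $\mu_\times(a) \le \iota_1 (\iota_1)_! \mu_\times(a) = \iota_1(c(a))$, and applying $\epsilon_1 \oplus \O R$ (or more simply using that $\mu_\times$ is the frame map of the multiplication and the multiplicative unit law $\mu_\times \circ (\text{insert }1) = \id$) we can recover $a$ from $\mu_\times(a)$; concretely, composing with the frame map $(\O R \oplus \epsilon_1)$ and using the unit axiom $(\O R \oplus \epsilon_1)\mu_\times = \id_{\O R}$ gives $a = (\O R \oplus \epsilon_1)\mu_\times(a) \le (\O R \oplus \epsilon_1)\iota_1(c(a)) = c(a)$, since $(\O R \oplus \epsilon_1)\iota_1 = \id$. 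That yields $a \le c(a)$. For idempotence, the cleanest route is to first establish the fixed-point characterisation and then note extensivity plus monotonicity forces $c(c(a)) = c(a)$ once we know $c(a)$ is a fixed point — so I would prove the fixed-point statement next and derive idempotence from it.

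For the fixed points: by the adjunction, $c(a) = (\iota_1)_!\mu_\times(a) \le a$ if and only if $\mu_\times(a) \le \iota_1(a)$, which (as observed in the text just before the lemma, using commutativity of $\mu_\times$ to drop the $\iota_2$ component) is exactly the condition that $a$ is saturated. Combined with extensivity $a \le c(a)$, we get that $a$ is saturated iff $c(a) = a$. To finish idempotence, I would show $c(a)$ is always saturated, i.e. $c(c(a)) \le c(a)$: by the adjunction this is $\mu_\times(c(a)) \le \iota_1(c(a))$, and here the associativity axiom of the semiring multiplication is the key input — it says roughly that the saturation of a set is already saturated because $(xy)z \in a$ implies $x(yz) \in a$, so a witness $y$ for $x \cdot(\text{something})$ gives a witness for membership of $x$ in the saturation again. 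Formally I would translate the associativity square for $\mu_\times$ through $(\iota_1)_!$ and use the Beck--Chevalley / Frobenius-type identities for the open maps involved (the relevant instance being \cref{prop:open_pullback_stable} or its consequence that $(\iota_1)_!$ commutes appropriately with $\mu_\times \oplus \O R$). This associativity-plus-Beck--Chevalley manipulation is the main obstacle: the extensivity and the fixed-point identification are short adjunction arguments, but showing that the saturation operator lands in saturated opens requires carefully chasing the associativity diagram in $\Frm$ and pushing it through the left adjoint, which is where one must be careful that $R$ being overt gives exactly the Beck--Chevalley condition needed to commute $(\iota_1)_!$ past the multiplication. Doing the whole thing in the internal logic of the hyperdoctrine of open sublocales of overt locales — where the statement is just ``$\{x \mid \exists y.\ xy \in a\}$ is saturated'' and follows from associativity by a one-line first-order argument — would be the most transparent presentation, and I would likely write it that way, invoking \cref{prop:overt_projection} and \cref{prop:open_map_module_adjoint} to justify that this logic is available.
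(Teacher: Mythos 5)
Your proposal is correct and follows essentially the same route as the paper: the fixed-point characterisation via the adjunction $(\iota_1)_! \dashv \iota_1$, extensivity from the unit axiom (your diagrammatic computation with $(\O R \oplus \epsilon_1)$ is just the pointfree form of the paper's ``take $y=1$'' step), and idempotence from associativity carried out in the internal logic of the hyperdoctrine of open sublocales, which is exactly how the paper writes it (with an explicit proof tree for the sequent $\exists z.\,\exists y'.\,(xz)y' \in a \vdash_x \exists y.\,xy \in a$).
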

\begin{proof}
 We have explained above that $(\iota_1)_!\mu_\times(a) \le a \iff a \text{ is saturated}$. It remains to show that $(\iota_1)_!\mu_\times$ is a closure operator.
 
 It is clearly monotone. To show it is inflationary we need $a \le (\iota_1)_! \mu_\times(a)$, which in the internal logic means $x \in a \vdash_x \exists y\colon R.\ xy \in a$.
 But this holds for $y = 1$, since $x \cdot 1 =_{x} x$ by the axioms of a semiring. The desired result follows by \cref{prop:right_existential_rule}.
 
 Finally, we prove idempotence. We require $(\iota_1)_! \mu_\times (\iota_1)_! \mu_\times(a) \le (\iota_1)_! \mu_\times(a)$.
 Let us translate this into the internal logic.
 The open $(\iota_1)_! \mu_\times(a)$ can be described by the formula $\exists y'.\ x'y' \in a$ in the context $x' \colon R$.
 Then $\mu_\times (\iota_1)_! \mu_\times(a)$ corresponds to substituting $xz$ for $x'$ to give the formula $\exists y'.\ (xz)y' \in a$ in the context $x\colon R, z\colon R$.
 Finally, $(\iota_1)_! \mu_\times (\iota_1)_! \mu_\times(a)$ is obtained by existentially quantifying over $z$ to give the formula $\exists z.\ \exists y'.\ (xz)y' \in a$.
 Thus, we must prove the sequent $\exists z.\ \exists y'.\ (xz)y' \in a \vdash_x \exists y.\ xy \in a$. This is proved by taking $y = zy'$ and using associativity.
 An explicit proof tree (using \cref{prop:sub_out_equal_terms}) is given below.
 
 \begin{prooftree}
  \AxiomC{$(xz)y' =_{x,y,z} x(zy')$}
  \AxiomC{}
  \UnaryInfC{$x(zy') \in a \vdash_{x,y',z} x(zy') \in a$}
  \UnaryInfC{$x(zy') \in a \vdash_{x,y',z} \exists y.\ xy \in a$}
  \UnaryInfC{$\exists z.\ \exists y'.\ x(zy') \in a \vdash_x \exists y.\ xy \in a$}
  \BinaryInfC{$\exists z.\ \exists y'.\ (xz)y' \in a \vdash_x \exists y.\ xy \in a$}
 \end{prooftree}
 Thus, $(\iota_1)_!\mu_\times$ is idempotent as required.
\end{proof}

\begin{corollary}
 The saturated opens in an overt localic semiring are closed under arbitrary meets and joins. %
\end{corollary}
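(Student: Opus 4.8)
The plan is to derive the corollary entirely from \cref{prop:saturation_operator}, which tells us that the saturated opens of $R$ are exactly the fixed points of the closure operator $j \coloneqq (\iota_1)_!\mu_\times$ on $\O R$; equivalently, since $j$ is inflationary, $s$ is saturated if and only if $j(s) \le s$.

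For arbitrary meets I would invoke the general fact that the fixed points of any closure operator are closed under meets. Given saturated opens $(s_\alpha)_\alpha$, monotonicity of $j$ gives $j\!\left(\bigwedge_\alpha s_\alpha\right) \le j(s_\beta) = s_\beta$ for every $\beta$, hence $j\!\left(\bigwedge_\alpha s_\alpha\right) \le \bigwedge_\alpha s_\alpha$; combined with inflationarity this forces equality, so $\bigwedge_\alpha s_\alpha$ is saturated. It is worth noting that the naive direct argument — intersecting the inequalities $\mu_\times(s_\alpha) \le \iota_1(s_\alpha) \wedge \iota_2(s_\alpha)$ — does not obviously go through, because $\iota_1$ and $\iota_2$, being frame homomorphisms, need not preserve infinite meets; passing through the closure operator sidesteps this.

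For arbitrary joins the key observation is that, although fixed points of a general closure operator need not be join-closed, this particular $j$ preserves all joins: $(\iota_1)_!$ is a left adjoint (it exists precisely because $R$ is overt) and hence preserves arbitrary joins, and $\mu_\times$ is a frame homomorphism and so also preserves arbitrary joins, whence so does the composite. Thus for saturated opens $(s_\alpha)_\alpha$ we have $j\!\left(\bigvee_\alpha s_\alpha\right) = \bigvee_\alpha j(s_\alpha) = \bigvee_\alpha s_\alpha$, so $\bigvee_\alpha s_\alpha$ is saturated.

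I do not anticipate any genuine obstacle. The only things to be careful about are invoking overtness to know that $(\iota_1)_!$ exists (already needed to state \cref{prop:saturation_operator}) and resisting the temptation to prove meet-closure by a direct manipulation of the coproduct injections.
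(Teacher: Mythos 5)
Your proof is correct and is exactly the intended derivation from \cref{prop:saturation_operator}: arbitrary meets because fixed points of any closure operator are meet-closed, and arbitrary joins because this particular closure operator is a composite of join-preserving maps. (The join half could alternatively be read off from the earlier remark that the condition $\mu_\times(s)\le\iota_1(s)$ makes the saturated elements a subframe, which does not need overtness, but your route through $(\iota_1)_!\mu_\times$ is equally valid.)
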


Saturated opens are also related to overt weakly closed monoid ideals.

\begin{lemma}\label{prop:overt_meeting_saturated}
 Suppose $R$ is an overt localic semiring. Take $V \in \SubOW(R)$ and $a \in \O R$. Then $V \cdot \top \between a \iff V \between (\iota_1)_!\mu_\times(a)$.
\end{lemma}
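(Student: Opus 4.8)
The plan is to unwind both sides of the biconditional into the internal logic of the hyperdoctrine of open sublocales and show they express the same predicate on the point "is cozero somewhere". First I would recall the correspondence from the preceding material: for an overt weakly closed sublocale $V$, $\exists_V$ is a suplattice homomorphism $\O R \to \Omega$, and for any $b \in \O R$ we have $V \between b \iff \exists_V(b) = \top$. Also $V \cdot \top$ denotes the product in $\MonIdl R$ (equivalently in $\SubOW(R)$ under its multiplicative quantale structure), whose associated suplattice map $\exists_{V\cdot\top}$ is computed from $\exists_V$ via the multiplication $fg = \nabla(f\otimes g)\mu_\times$, here with the second factor being the map $\O R \to \Omega$ associated with $\top \in \SubOW(R)$, namely $x \mapsto \top$ (or more precisely the unique frame map $! = \exists_\top$, since $\top$ corresponds to all of $R$). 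So $\exists_{V\cdot\top} = \nabla(\exists_V \otimes {!})\mu_\times$, which applied to $a$ gives $\exists_{V\cdot\top}(a) = \nabla(\exists_V \otimes {!})\mu_\times(a)$.

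Next I would identify the right-hand side. By \cref{prop:saturation_operator} the open saturation of $a$ is $(\iota_1)_!\mu_\times(a)$, and $V \between (\iota_1)_!\mu_\times(a) \iff \exists_V((\iota_1)_!\mu_\times(a)) = \top$. Since $V$ is overt weakly closed, $\exists_V$ is a suplattice homomorphism and in particular preserves joins, so $\exists_V$ composed with $(\iota_1)_!$ makes sense and we want to relate $\exists_V (\iota_1)_! \mu_\times$ to $\nabla(\exists_V \otimes {!})\mu_\times$. The key observation is that $\exists_V (\iota_1)_! = \nabla(\exists_V \otimes {!})$ as suplattice homomorphisms $\O R \oplus \O R \to \Omega$: both are determined by their action on basic elements $\ell \oplus m = \iota_1(\ell)\wedge\iota_2(m)$, where $(\iota_1)_!(\ell \oplus m) = \ell \cdot \exists(m)$ (the positivity of $m$ coming from $R$ overt — this is a Beck–Chevalley/Frobenius computation for the product projection, cf.\ \cref{prop:open_map_module_adjoint,prop:overt_projection}), and $\nabla(\exists_V\otimes{!})(\ell \oplus m) = \exists_V(\ell)\wedge {!}(\,\exists(m)\,)$ since $!$ factors through the truth value $\exists(m)$; applying $\exists_V$ to the former and using that $\exists_V(\ell \cdot p) = \exists_V(\ell) \wedge p$ for $p \in \Omega$ (the $\Omega$-module/Frobenius property of $\exists_V$) gives the same thing. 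Precomposing with $\mu_\times$ then yields $\exists_V(\iota_1)_!\mu_\times = \nabla(\exists_V \otimes {!})\mu_\times = \exists_{V\cdot\top}$, and evaluating at $a$ and asking when it equals $\top$ gives exactly the claimed equivalence.

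The cleanest route, which I would actually write up, is to stay in the internal logic throughout rather than manipulate tensor products explicitly. The statement $V \cdot \top \between a$ unwinds to: the predicate "$a$, pulled back along $\mu_\times$ and then existentially quantified over the second coordinate (which ranges over all of $R$, since the second factor is $\top$), meets $V$"; i.e.\ $\exists_V$ applied to $\{x : R \mid \exists y : R.\ xy \in a\}$ is $\top$. But $\{x : R \mid \exists y : R.\ xy \in a\}$ is precisely $(\iota_1)_!\mu_\times(a)$, the open saturation of $a$, by the description given just before \cref{prop:saturation_operator}. Hence $V\cdot\top \between a \iff \exists_V((\iota_1)_!\mu_\times(a)) = \top \iff V \between (\iota_1)_!\mu_\times(a)$, which is the result. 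The main obstacle — and the only real content — is verifying that the multiplicative product $V \cdot \top$ in $\SubOW(R)$ corresponds under $\exists_{(-)}$ to existential quantification of the saturation predicate, i.e.\ that the formula $fg = \nabla(f\otimes g)\mu_\times$ with $g$ the top element genuinely computes the image/saturation operator; this is where one must carefully use overtness (so that $\iota_1$ has a left adjoint and the relevant Beck–Chevalley condition holds) together with the Frobenius reciprocity property of $\exists_V$ into $\Omega$. Everything else is bookkeeping with the already-established $\between$/$\exists_{(-)}$ dictionary.
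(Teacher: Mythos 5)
Your proof is correct and follows essentially the same route as the paper's: both reduce the claim to the identity $\exists_{V\cdot\top} = \exists_V(\iota_1)_!\mu_\times$, the paper by observing that $\iota_1 = \O R \oplus {!}$ up to isomorphism and taking left adjoints, you by checking that the two sides agree on basic elements $\ell \oplus m$ via the Frobenius computation $(\iota_1)_!(\ell\oplus m) = \exists(m)\cdot\ell$. One notational slip worth fixing: the suplattice homomorphism associated with $\top \in \SubOW(R)$ is $\exists_\top = \exists$, the \emph{left adjoint} of the frame map ${!}\colon\Omega\to\O R$ (i.e.\ the positivity predicate $x\mapsto\llbracket x>0\rrbracket$), not ${!}$ itself and not the constant-$\top$ map as your parenthetical suggests; your subsequent computation uses the correct map $\exists$, so nothing downstream breaks.
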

\begin{proof}
 By definition we have $\exists_{V \cdot \top} = \nabla (\exists_V \otimes \exists_\top) \mu_\times$. The map $\exists_\top$ is simply $\exists$, the left adjoint to ${!}\colon \Omega \to \O R$.
 Note the coproduct injection $\iota_1$ is equal to $\O R \oplus {!}$ up to composition with the isomorphism $\O R \cong \O R \oplus \Omega$.
 Taking left adjoints in $\Sup$ then shows $(\iota_1)_! = \O R \otimes \exists$, %
 so that $\exists_{V \cdot \top} = \nabla (\exists_V \otimes \Omega) (\O R \otimes \exists) \mu_\times = \exists_V (\iota_1)_! \mu_\times$.
\end{proof}

Now we are in a position to discuss the universal element.
The element $\upsilon \in \Idl(R) \oplus \O R$ should be thought of informally as the region containing the pairs $(x,f)$ such that $f(x)$ is cozero.
(Here we are using spatial language and viewing points of $R$ as functions on the spectrum.)

From the intuition behind our definition of $\Rad(R)$, the (overt weakly closed) ideal $(f)$ generated by $f$ should be thought of as the region on which $f$ is cozero.
(We do not take the radical ideal in order to be sensitive to the additional infinitesimal information captured in $\Idl(R)$.)
We might attempt to construct $\upsilon$ as a join $\bigcup_{f \in \abs{R}} (f) \times \{f\}$. Of course, we should not rely on the points $f \in R$.
Moreover, $\{f\}$ is not open in general and we would like to express $\upsilon$ as a join of `basic open rectangles' in the product.

Now notice that if a function $f(x)$ is cozero and $f = gh$, then $(gh)(x) = g(x)h(x)$ is also cozero. %
So if $(f,x) \in \upsilon$, then we also have $(g,x) \in \upsilon$ for every function $g$ in the set-theoretic saturation $\sat(f)$. %
This suggests building $\upsilon$ as $\bigcup_{f \in \abs{R}} (f) \times \sat(f)$. Our two previous objections still apply, but the larger set $\sat(f)$ gives us some more room to manoeuvre.
While $\sat(f)$ need not be open itself, the saturation map $(\iota_1)_!\mu_\times$ of \cref{prop:saturation_operator} ensures we have a wealth of saturated opens at our disposal.

To remove the dependence on points, we replace the functions $f$ with more general overt sublocales $V$, thought of here as subsets of $R$.
The ideal generated by a subset $V$ intuitively represents the region on which \emph{some} element of $V$ is cozero. Since we have potentially enlarged
the first factor of $(f) \times \sat(f)$ to $(V)$, we should similarly reduce the second factor to the \emph{intersection} $\bigcap_{f \in V} \sat(f)$,
for we cannot know beforehand which of the functions are cozero at a given point in the region in question, so we cut down to what the different sets $\sat(f)$ have in common.
We are left with a description for $\upsilon$ as $\bigcup_{V \subseteq \abs{R}} (V) \times \sat(V)$, writing $\sat(V)$ for the intersection in question.

The above expression for $\upsilon$ could easily be made pointfree were it not for the fact that $\sat(V)$ is now defined in terms of points.
Furthermore, we have not yet addressed the fact that $\sat(V)$ is not necessarily open.
But note that $\sat(V)$ can be equivalently defined as $\bigcap\{ S \text{ saturated} \mid \exists f \in V \cap S\}$.
The two issues can now be addressed at once with the following definition.

\begin{definition}
 We define the function $\stabpos\colon \SubOW(R) \to \O R$\glsadd{stabpos} by \[\stabpos(V) = \bigwedge\left\{ t \in \Sats R \mid V \between t \right\}.\]
 Note that, at least when $R$ is overt, $\stabpos(V)$ is saturated, but there is no guarantee it meets $V$.
\end{definition}

This definition gives a pointfree replacement for $\sat(V)$, where we have replaced the saturated sets with saturated opens\index{saturation|)}. Also note that the intersection is now taken in the lattice of opens,
so it is more like the interior of the corresponding set-theoretic intersection.

This suggests setting $\upsilon$ to be $\bigvee_{V \in \SubOW(R)} \addquot(V) \oplus \stabpos(V)$.
At this point we might worry that the passage from general saturated sets to saturated opens might lead to this join between smaller than necessary.
Indeed, since it appears that spectra might fail to exist (see \cref{ex:no_spectrum}),
it stands to reason that we would need some additional assumption that will ensure that the saturated opens $s$ satisfying $I \oplus s \le \upsilon$ can be well approximated by elements of the form $\stabpos(V)$.

\begin{definition}
 We say a localic semiring $R$ is \emph{approximable}\index{approximable|(textbf}\index{semiring!approximable localic|(textbf} if
 \[s = \bigvee_{V \between s} \stabpos(V)\]
 for every saturated element $s \in \Sats R$.
\end{definition}
This is the pointfree version of the equality $S = \bigcup_{f \in S} \sat(f)$, which holds for discrete semirings.
Note that the `$\ge$' inequality always holds, so to show a localic semiring is approximable\index{approximable|)}\index{semiring!approximable localic|)} we need only show $s \le \bigvee_{V \between s} \stabpos(V)$.
Furthermore, we need only check this on a base of saturated elements.

\begin{lemma}\label{prop:saturation_from_join_over_stabpos}
 If $R$ is overt and approximable, $(\iota_1)_!\mu_\times(a) = \bigvee_{V \between a} \stabpos(V)$ for all $a \in \O R$.
\end{lemma}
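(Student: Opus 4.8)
The plan is to reduce the claim to approximability applied to a well-chosen saturated open. First I would observe that $(\iota_1)_!\mu_\times(a)$ is saturated for every $a \in \O R$: this is exactly the content of \cref{prop:saturation_operator}, which says $(\iota_1)_!\mu_\times$ is a closure operator whose fixed points are the saturated opens, so its image lands in $\Sats R$. Hence I may apply the approximability hypothesis to the saturated element $s = (\iota_1)_!\mu_\times(a)$, obtaining
\[
(\iota_1)_!\mu_\times(a) = \bigvee_{V \between (\iota_1)_!\mu_\times(a)} \stabpos(V).
\]
So it remains only to show that the indexing conditions $V \between (\iota_1)_!\mu_\times(a)$ and $V \between a$ pick out the same set of $V \in \SubOW(R)$ — or at least that the two joins agree.

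The key step is precisely \cref{prop:overt_meeting_saturated}, which (for $R$ overt) states $V\cdot\top \between a \iff V \between (\iota_1)_!\mu_\times(a)$. This is almost what I want but phrased with $V \cdot \top$ on one side; I would combine it with the easy observation that $V \between b \iff V\cdot\top \between b$ fails in general, so instead I would argue directly at the level of the joins. Concretely: if $V \between a$ then $V \between (\iota_1)_!\mu_\times(a)$ since $a \le (\iota_1)_!\mu_\times(a)$ (the operator is inflationary) and $\exists_V$ is monotone; this gives the inclusion of index sets in one direction, hence $\bigvee_{V \between a}\stabpos(V) \le \bigvee_{V \between (\iota_1)_!\mu_\times(a)}\stabpos(V) = (\iota_1)_!\mu_\times(a)$. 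For the reverse inequality, suppose $V \between (\iota_1)_!\mu_\times(a)$. By \cref{prop:overt_meeting_saturated} this is equivalent to $V\cdot\top \between a$, and $V \cdot \top \in \SubOW(R)$ (it is the image of $V$ under the multiplicative quantale structure on $\SubOW(R)$, which lands in overt weakly closed sublocales). So $V\cdot\top$ is a legitimate index in the join $\bigvee_{W \between a}\stabpos(W)$. It remains to check $\stabpos(V) \le \stabpos(V\cdot\top)$; this follows because $\stabpos$ is antitone in the meeting relation — $\stabpos(W) = \bigwedge\{t \in \Sats R \mid W \between t\}$, and since $V \le V\cdot\top$ (as $1 \le \top$ and multiplication is monotone, using $V = V\cdot 1 \le V\cdot\top$), every $t$ meeting $V$ also meets $V\cdot\top$, so the meet defining $\stabpos(V)$ is over a \emph{larger} set and hence $\stabpos(V) \ge \stabpos(V \cdot \top)$ — wait, that is the wrong direction, so I must be careful here.

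Let me restructure that last point, since it is the genuine obstacle. The issue is the direction of monotonicity of $\stabpos$. We have $V \le V\cdot\top$, so $\{t \in \Sats R \mid V \between t\} \subseteq \{t \in \Sats R \mid V\cdot\top \between t\}$ is \emph{false}: a smaller sublocale meets \emph{fewer} opens, so in fact $\{t \mid V\cdot\top\between t\} \subseteq \{t \mid V \between t\}$, giving $\stabpos(V) \le \stabpos(V\cdot\top)$. Good — so the inequality does go the right way, and I had it backwards momentarily. Thus from $V \between (\iota_1)_!\mu_\times(a)$ we get $V\cdot\top\between a$ and $\stabpos(V) \le \stabpos(V\cdot\top) \le \bigvee_{W\between a}\stabpos(W)$; taking the join over all such $V$ yields $(\iota_1)_!\mu_\times(a) = \bigvee_{V\between(\iota_1)_!\mu_\times(a)}\stabpos(V) \le \bigvee_{W\between a}\stabpos(W)$. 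Combined with the first direction this gives equality. The main thing to get right, and the step I would be most careful about, is exactly this monotonicity bookkeeping for $\stabpos$ together with the verification that $V\cdot\top$ is again an overt weakly closed sublocale so that it is an admissible index; both are short but easy to get backwards.
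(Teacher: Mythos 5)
Your overall strategy is the paper's: apply approximability to the saturated element $(\iota_1)_!\mu_\times(a)$ and then transfer between the index sets $\{V \mid V \between a\}$ and $\{V \mid V \between (\iota_1)_!\mu_\times(a)\}$ via \cref{prop:overt_meeting_saturated}. The first direction of your comparison is fine. But the step you yourself flag as the obstacle — $\stabpos(V) \le \stabpos(V\cdot\top)$ — is justified incorrectly, and the argument you give would actually prove the reverse inequality. Since $V \le V\cdot\top$ and $\between$ is monotone in the sublocale variable, a \emph{larger} sublocale meets \emph{more} opens, so the true inclusion is $\{t \in \Sats R \mid V \between t\} \subseteq \{t \in \Sats R \mid V\cdot\top \between t\}$ (exactly the one you declare false), and taking meets over a larger set gives $\stabpos(V) \ge \stabpos(V\cdot\top)$. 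In other words, $\stabpos$ is antitone, and antitonicity alone can only yield the wrong direction.

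The inequality you need is nonetheless true — indeed $\stabpos(V) = \stabpos(V\cdot\top)$ — but the reason is not monotonicity; it is \cref{prop:overt_meeting_saturated} applied a second time, now with the open being a saturated $t$: for $t \in \Sats R$ we have $V\cdot\top \between t \iff V \between (\iota_1)_!\mu_\times(t) = t$, so the two sets of saturated opens being intersected in the definition of $\stabpos$ coincide exactly. This is the one-line observation the paper's proof makes (``it follows that $\stabpos(V) = \stabpos(V\cdot\top)$''), after which the join may be restricted to monoid ideals and the lemma follows. With that substitution for your monotonicity argument, your proof is complete and agrees with the paper's.
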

\begin{proof}
 Recall that $V \between (\iota_1)_!\mu_\times(a) \iff V\cdot\top \between a$ by \cref{prop:overt_meeting_saturated}. It follows that $\stabpos(V) = \stabpos(V \cdot \top)$ and hence
 the join $\bigvee_{V \between s} \stabpos(V)$ may be restricted to monoid ideals.
 Applying \cref{prop:overt_meeting_saturated} again then yields the desired result.
\end{proof}

It will be useful to understand $\stabpos$ and approximability in more detail.
We first observe that the way in which the approximability condition has an open being approximated from below is vaguely reminiscent of continuity or supercontinuity.
We can make the analogy stronger by defining a relation $\blacktriangleleft$\glsadd{blacktriangleleft} on $\Sats R$ by
\begin{align*}
 t \blacktriangleleft s &\iff \exists K \in \SubOW(R).\ K \between s \text{ and } t \le \stabpos(K) \\
                        &\iff \exists K \between s.\ \forall u \text{ saturated}.\ K \between u \implies t \le u. %
\end{align*}
The following reformulation of approximability is then immediate.
\begin{lemma}
 A localic semiring $R$ is approximable if and only if \[s = \bigvee_{t \blacktriangleleft s} t\] for all $s \in \Sats R$.
\end{lemma}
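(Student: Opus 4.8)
The plan is to show that, for each fixed saturated element $s \in \Sats R$, the two joins $\bigvee_{t \blacktriangleleft s} t$ and $\bigvee_{V \between s} \stabpos(V)$ coincide; granting this, the stated equivalence is nothing more than a rewriting of the defining condition of approximability. Throughout this part of the section one works with overt $R$, as in the surrounding discussion, and I will use that the saturated opens are then closed under arbitrary meets (the corollary following \cref{prop:saturation_operator}), so that $\stabpos(V) \in \Sats R$ for every $V \in \SubOW(R)$; this is the only point where overtness is needed. Recall also that, by the definition of $\stabpos$ as the meet of all saturated $u$ with $V \between u$, the condition $t \le \stabpos(K)$ in the first description of $\blacktriangleleft$ is the same as the condition in the second description.

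First I would prove $\bigvee_{t \blacktriangleleft s} t \le \bigvee_{V \between s} \stabpos(V)$. If $t \blacktriangleleft s$, then by definition there is some $K \in \SubOW(R)$ with $K \between s$ and $t \le \stabpos(K)$, so $t \le \stabpos(K) \le \bigvee_{V \between s} \stabpos(V)$; taking the join over all such $t$ yields the inequality.

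Next I would prove the reverse inequality $\bigvee_{V \between s} \stabpos(V) \le \bigvee_{t \blacktriangleleft s} t$. Given $V$ with $V \between s$, I claim $\stabpos(V) \blacktriangleleft s$: indeed $\stabpos(V) \in \Sats R$, and taking $K = V$ in the definition of $\blacktriangleleft$ gives $K \between s$ together with the trivial inequality $\stabpos(V) \le \stabpos(K)$. Hence $\stabpos(V) \le \bigvee_{t \blacktriangleleft s} t$, and joining over all $V$ with $V \between s$ finishes the argument.

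Combining the two inequalities, $\bigvee_{t \blacktriangleleft s} t = \bigvee_{V \between s} \stabpos(V)$ for every $s \in \Sats R$, so $s = \bigvee_{t \blacktriangleleft s} t$ holds for all saturated $s$ exactly when $s = \bigvee_{V \between s} \stabpos(V)$ holds for all saturated $s$, which is precisely approximability. There is essentially no obstacle here; the only thing to be slightly careful about is that $\stabpos(V)$ is a legitimate witness on the left side of $\blacktriangleleft$, i.e.\ that it is genuinely saturated, which is exactly where overtness of $R$ enters.
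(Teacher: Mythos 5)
Your proof is correct and matches the paper's intent: the paper treats this lemma as ``immediate'' from the definitions, and your argument is precisely the omitted verification that $\bigvee_{t \blacktriangleleft s} t = \bigvee_{V \between s} \stabpos(V)$, with the witness $K = V$ giving $\stabpos(V) \blacktriangleleft s$. You are also right to flag that the only substantive point is the saturatedness of $\stabpos(V)$, which holds under the standing overtness assumption of this section.
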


It will sometimes be useful to extend the $\blacktriangleleft$ relation to general elements of $\O R$.
We set $b \blacktriangleleft a \iff b \le a \text{ and } (\iota_1)_!\mu_\times(b) \blacktriangleleft (\iota_1)_!\mu_\times(a)$.
Equivalently, $b \blacktriangleleft a \iff b \le a \text{ and } \exists K \in \MonIdl R.\ K \between a \text{ and } b \le \stabpos(K)$.
Then if $R$ is approximable, we have $a = \bigvee_{b \blacktriangleleft a} b$ for all $a \in \O R$.

\begin{proposition} \label{prop:approximable_supercontinuous}
 An overt localic semiring $R$ is approximable\index{approximable|textbf}\index{semiring!approximable localic|textbf} if and only if its frame $\Sats R$ of saturated elements\index{frame of saturated opens} is supercontinuous.
 Furthermore, under these conditions $t \blacktriangleleft s \iff t \lll s$ for all $s, t \in \Sats R$.
\end{proposition}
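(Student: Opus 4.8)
I would separate the claim into the two inclusions of relations and a dual-basis computation. The plan is: (i) prove $\blacktriangleleft\subseteq\lll$ on $\Sats R$ \emph{unconditionally}; (ii) show that $R$ approximable forces $\Sats R$ to be supercontinuous by exhibiting an explicit dual basis; (iii) show that $\Sats R$ supercontinuous forces $\lll\subseteq\blacktriangleleft$ by manufacturing, for each pair $t\lll s$, an overt weakly closed sublocale witnessing $t\blacktriangleleft s$. Combining (i) and (iii) with supercontinuity ($s=\bigvee_{t\lll s}t$) gives approximability, so the two implications are equivalent, and then the ``furthermore'' equality $\blacktriangleleft=\lll$ is immediate from (i) and (iii).

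\textbf{The inclusion $\blacktriangleleft\subseteq\lll$.} Suppose $t\blacktriangleleft s$ with $s,t\in\Sats R$, witnessed by $V\in\SubOW(R)$ with $V\between s$ and $t\le\stabpos(V)=\bigwedge\{u\in\Sats R\mid V\between u\}$. Let $S\subseteq\Sats R$ with $s\le\bigvee S$; since $\Sats R$ is a subframe of $\O R$ this join is the same computed in $\O R$. Monotonicity of $\exists_V$ gives $\exists_V(\bigvee S)=\top$, and as $\exists_V$ preserves joins, $\bigvee_{d\in S}\exists_V(d)=\top$ in $\Omega$; being a join over a set, this is the truth value of $\exists d\in S.\ V\between d$, so there is some $d\in S$ with $V\between d$. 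Then $d$ occurs in the meet defining $\stabpos(V)$, whence $t\le\stabpos(V)\le d$. This is exactly the condition $t\lll s$ in $\Sats R$.

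\textbf{Approximable $\Rightarrow$ supercontinuous.} I would build a dual basis for $\Sats R$ indexed by $\SubOW(R)$ (equivalently $\MonIdl R$), taking $r_K=\stabpos(K)\in\Sats R$ and $\sigma_K=\exists_K|_{\Sats R}\colon\Sats R\to\Omega$. The latter is a suplattice homomorphism since $\exists_K$ preserves joins and joins in $\Sats R$ are computed in $\O R$. For $s\in\Sats R$ one has $\sigma_K(s)\cdot r_K=\bigvee\{\stabpos(K)\mid K\between s\}$, so $\bigvee_K\sigma_K(s)\cdot r_K=\bigvee_{K\between s}\stabpos(K)$, which equals $s$ \emph{precisely} because $R$ is approximable. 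Hence $\Sats R$ admits a dual basis, so by \cref{prop:dual_basis} it is dualisable, and by \cref{prop:supercontinuous_vs_dualisable} it is supercontinuous.

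\textbf{Supercontinuous $\Rightarrow$ approximable, and $\lll\subseteq\blacktriangleleft$.} Assume $\Sats R$ is supercontinuous and take $t\lll s$ with $s,t\in\Sats R$. Consider the composite $\O R\xrightarrow{(\iota_1)_!\mu_\times}\Sats R\xrightarrow{\llbracket t\lll(-)\rrbracket}\Omega$: the first map preserves joins, since $(\iota_1)_!$ is a left adjoint, $\mu_\times$ is a frame homomorphism, and its image lies in $\Sats R$ by \cref{prop:saturation_operator}; the second preserves joins by the corollary to supercontinuity that $\llbracket b\lll(-)\rrbracket$ is join-preserving. So this composite is a suplattice homomorphism, hence equals $\exists_V$ for some $V\in\SubOW(R)$, with $V\between a\iff t\lll(\iota_1)_!\mu_\times(a)$. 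As $s$ is saturated, $(\iota_1)_!\mu_\times(s)=s$ by \cref{prop:saturation_operator}, so $V\between s\iff t\lll s$, which holds; and for any $u\in\Sats R$, $V\between u$ gives $t\lll u$, hence $t\le u$, so $t\le\stabpos(V)$. Thus $t\blacktriangleleft s$. Consequently $s=\bigvee_{t\lll s}t\le\bigvee_{t\blacktriangleleft s}t\le\bigvee_{V\between s}\stabpos(V)\le s$, the last inequality being the always-valid one, so $R$ is approximable; and under either (now equivalent) hypothesis $\blacktriangleleft\subseteq\lll$ by the second paragraph and $\lll\subseteq\blacktriangleleft$ by this one, giving the stated equality. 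The main obstacle is the construction in this last paragraph: recognising that the \emph{join-preserving} open-saturation operator composed with $\llbracket t\lll(-)\rrbracket$ is the correct suplattice homomorphism, and that saturatedness of $s$ and $u$ is exactly what makes the saturation operator disappear where needed; the forward direction is essentially the observation that approximability \emph{is} the dual-basis identity.
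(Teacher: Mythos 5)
Your proof is correct and follows essentially the same route as the paper: the unconditional inclusion $\blacktriangleleft\subseteq\lll$ on $\Sats R$, and the construction of the witnessing overt weakly closed sublocale as the suplattice homomorphism $\llbracket t\lll(-)\rrbracket\circ(\iota_1)_!\mu_\times$, are exactly the paper's two key steps. The only difference is that for approximable $\Rightarrow$ supercontinuous you detour through an explicit dual basis and \cref{prop:supercontinuous_vs_dualisable}, whereas the paper gets it directly from $\blacktriangleleft\subseteq\lll$ together with the reformulation $s=\bigvee_{t\blacktriangleleft s}t$; both are valid and the dual-basis observation is in any case the theme of \cref{section:spectrum_via_duals}.
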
 %
\begin{proof}
 Let us first show $t \blacktriangleleft s \implies t \lll s$. Suppose $t \blacktriangleleft s$ and $\bigvee A \ge s$ for some $A \subseteq \Sats R$.
 Then there is a $K \in \SubOW(R)$ such that $t \le \stabpos(K)$ and $K \between s \le \bigvee A$. So there is an $a \in A$ such that $K \between a$.
 Thus, $t \le \stabpos(K) \le a$ and we have shown $t \lll s$.
 Consequently, if $R$ is approximable, then $\Sats R$ is supercontinuous.
 
 Conversely, suppose $\Sats R$ is supercontinuous and $t \lll s$. Then $\llbracket t \lll (-) \rrbracket \colon \Sats R \to \Omega$ is a suplattice homomorphism
 and hence the composite $\llbracket t \lll (-) \rrbracket \circ (\iota_1)_! \mu_\times$ corresponds to an overt weakly closed sublocale $K \in \SubOW(R)$.
 Note that if $u$ is saturated, then $K \between u \iff t \lll u$.
 Since, $t \lll s$, we have $K \between s$. If  $u \in \Sats R$ and $K \between u$, then $t \lll u$ and in particular, $t \le u$.
 This means that $t \blacktriangleleft s$. Therefore, $R$ is approximable.
\end{proof}

Notice that $\stabpos$ sends joins to meets and thus forms part of an order-reversing Galois connection.
We show that under a mild assumption on $R$ the adjoint of $\stabpos$ can be described as follows.

\begin{definition}
 Define $\jointcoz\colon \O R \to \SubOW(R)$\glsadd{jointcoz} by \[\jointcoz(a) = \bigwedge\left\{ J \in \MonIdl R \mid J \between a \right\}.\]
 Observe that $\jointcoz(a)$ is a monoid ideal, though it need not meet $a$. %
 Furthermore, note that $\jointcoz = \jointcoz(\iota_1)_!\mu_\times$ by \cref{prop:overt_meeting_saturated}.
\end{definition}

\begin{definition}
 We say \emph{overt sublocales distinguish saturated opens} in a localic semiring $R$ if for all $s, t \in \Sats R$,
 we have $s \le t$ whenever $V \between s \implies V \between t$ for all $V \in \SubOW(R)$. (By \cref{prop:overt_meeting_saturated}, it will be enough to check this for all monoid ideals $V$.)
\end{definition}
This is a very weak condition. It is implied by approximability and by spatiality and under the assumption of excluded middle it holds for all localic semirings.
This condition is very similar to what is called \emph{strong overtness} in \cite{CirauloStronglyOvert}, but where we restrict $s$ and $t$ to be saturated opens, the authors require the claim to hold for all opens.

\begin{lemma}\label{prop:jointcoz_stabpos_Galois_connection}
 If $R$ is an overt localic semiring in which overt sublocales distinguish saturated opens, then
 $\stabpos$ and $\jointcoz$ form an order-reversing Galois connection.
\end{lemma}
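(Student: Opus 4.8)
The plan is to prove the single biconditional $a \le \stabpos(V) \iff V \le \jointcoz(a)$ for all $V \in \SubOW(R)$ and $a \in \O R$; by the definition of an order-reversing Galois connection (a monotone one between $\SubOW(R)$ and $(\O R)\op$ with left adjoint $\stabpos$), this is exactly the assertion to be proved, and it automatically forces both maps to be antitone, so nothing else needs checking. The proof uses three standing facts. First, by \cref{prop:saturation_operator} the map $(\iota_1)_!\mu_\times$ is a closure operator whose fixed points are precisely the saturated opens, so $a \le s \iff (\iota_1)_!\mu_\times(a) \le s$ whenever $s \in \Sats R$. Second, by \cref{prop:overt_meeting_saturated} a monoid ideal $J$ satisfies $J \between b \iff J \between (\iota_1)_!\mu_\times(b)$ for every $b \in \O R$ (taking $V = J$ and using $J = J\cdot\top$). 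Third, since $\SubOW(R) = \hom(\O R,\Omega)$ with the pointwise order and $W \between b$ means $\exists_W(b) = \top$, for overt weakly closed sublocales $W, W'$ we have $W \le W'$ iff $W \between b \implies W' \between b$ for all $b \in \O R$.

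For the forward implication, suppose $a \le \stabpos(V)$ and fix $J \in \MonIdl R$ with $J \between a$; we show $V \le J$. For every $t \in \Sats R$ with $V \between t$ we have $\stabpos(V) \le t$ (as $\stabpos(V)$ is a meet over exactly such $t$), hence $a \le t$, hence $J \between t$; thus $\exists_V(t) \le \exists_J(t)$ for all saturated $t$. To extend this to arbitrary $b \in \O R$, use monotonicity of $\exists_V$, the saturated case, and the second standing fact: $\exists_V(b) \le \exists_V((\iota_1)_!\mu_\times(b)) \le \exists_J((\iota_1)_!\mu_\times(b)) = \exists_J(b)$. Hence $\exists_V \le \exists_J$, i.e. $V \le J$. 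As $J$ was an arbitrary monoid ideal meeting $a$, we get $V \le \bigwedge\{J \in \MonIdl R \mid J \between a\} = \jointcoz(a)$. (This direction does not use the hypothesis on $R$.)

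For the reverse implication, suppose $V \le \jointcoz(a)$; we show $a \le \stabpos(V)$, i.e. $a \le t$ for every $t \in \Sats R$ with $V \between t$. Fix such a $t$. Since $t$ is saturated it is enough to prove $(\iota_1)_!\mu_\times(a) \le t$, and now both sides are saturated opens; so by the hypothesis that overt sublocales distinguish saturated opens (and the remark that it suffices to test against monoid ideals), it is enough to show that $W \between (\iota_1)_!\mu_\times(a)$ implies $W \between t$ for every $W \in \MonIdl R$. Given such a $W$, the second standing fact gives $W \between a$, so $W$ appears in the meet defining $\jointcoz(a)$ and hence $\jointcoz(a) \le W$; therefore $V \le \jointcoz(a) \le W$, so $\exists_V \le \exists_W$, and from $V \between t$ we obtain $\exists_W(t) \ge \exists_V(t) = \top$, i.e. $W \between t$. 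This completes both directions. The one place requiring care — and the only place the hypothesis on $R$ is used — is this reverse direction: one must argue saturated-open by saturated-open rather than attempting to compare $(\iota_1)_!\mu_\times(a)$ with $\stabpos(V)$ directly (which would fail, since $V$ need not meet $\stabpos(V)$), invoking the distinguishing hypothesis only for a genuine saturated open $t$ and shuttling between $t$ and $(\iota_1)_!\mu_\times(a)$ through the monoid ideals meeting them via \cref{prop:overt_meeting_saturated}.
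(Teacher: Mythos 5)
Your proof is correct and follows essentially the same route as the paper's: the paper establishes the two unit/counit inequalities $V \le \jointcoz(\stabpos(V))$ and $a \le \stabpos(\jointcoz(a))$, whereas you prove the equivalent adjunction biconditional directly, but the substance — testing against monoid ideals, shuttling through $(\iota_1)_!\mu_\times$ via \cref{prop:overt_meeting_saturated}, and invoking the distinguishing hypothesis only in the direction $V \le \jointcoz(a) \Rightarrow a \le \stabpos(V)$ — is identical. Your closing remark about where the hypothesis is genuinely needed matches the structure of the paper's argument exactly.
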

\begin{proof}
 We first show $V \le \jointcoz(\stabpos(V))$.
 Suppose $J \between \stabpos(V)$ for a monoid ideal $J$. If $t$ is a saturated element such that $V \between t$, then $\stabpos(V) \le t$ and so $J \between t$.
 Thus, $\exists_V(t) \le \exists_J(t)$ for all saturated $t$ and hence for all $t$, yielding $V \le J$. Since this holds for all such $J$, we can conclude that $V \le \jointcoz(\stabpos(V))$.
 
 Now we show $a \le \stabpos(\jointcoz(a))$. Since $a \le (\iota_1)_! \mu_\times(a)$ and $\jointcoz(a) = \jointcoz (\iota_1)_! \mu_\times(a)$, we may assume $a$ is saturated without loss of generality.
 Suppose $t$ is a saturated element satisfying $\jointcoz(a) \between t$. If $J$ is a monoid ideal such that $J \between a$, then $\jointcoz(a) \le J$
 and so $J \between t$. Thus, $J \between a \implies J \between t$ for all monoidal ideals $J$ and therefore $a \le t$, since overt sublocales distinguish saturated opens. The claim follows.
\end{proof}

\begin{corollary}\label{prop:blacktriangle_vs_jointcoz}
 Under the same assumptions, $t \blacktriangleleft s \iff \jointcoz(t) \between s$.
\end{corollary}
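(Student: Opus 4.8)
The plan is to unfold the definition of $\blacktriangleleft$ and use the Galois connection of \cref{prop:jointcoz_stabpos_Galois_connection} to trade $\stabpos$ for $\jointcoz$. Recall that for $s, t \in \Sats R$ we have $t \blacktriangleleft s$ if and only if there exists $K \in \SubOW(R)$ with $K \between s$ and $t \le \stabpos(K)$; moreover by the remark in the definition of $\jointcoz$ (or by \cref{prop:overt_meeting_saturated}) this existential may be restricted to monoid ideals $K$, since $K \between s \iff K \cdot \top \between s$ for saturated $s$ and $\stabpos(K) = \stabpos(K \cdot \top)$. So it suffices to show: there exists a monoid ideal $K$ with $K \between s$ and $t \le \stabpos(K)$ if and only if $\jointcoz(t) \between s$.

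First I would prove the backward direction. Suppose $\jointcoz(t) \between s$. Then $K \coloneqq \jointcoz(t)$ is a monoid ideal (as noted in the definition of $\jointcoz$) which meets $s$, and by the unit of the order-reversing Galois connection we have $t \le \stabpos(\jointcoz(t)) = \stabpos(K)$. So $t \blacktriangleleft s$ directly from the first formulation of $\blacktriangleleft$.

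For the forward direction, suppose $t \blacktriangleleft s$, witnessed by a monoid ideal $K$ with $K \between s$ and $t \le \stabpos(K)$. Applying $\jointcoz$, which is order-reversing, to $t \le \stabpos(K)$ gives $\jointcoz(\stabpos(K)) \le \jointcoz(t)$. By the other unit of the Galois connection, $K \le \jointcoz(\stabpos(K))$, so $K \le \jointcoz(t)$. Since $K \between s$ and $\exists_{(-)}(s)$ is monotone in the sublocale, $K \le \jointcoz(t)$ forces $\jointcoz(t) \between s$. This closes the argument.

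There is really no hard part here: the whole statement is a formal consequence of the two unit inequalities of the Galois connection from \cref{prop:jointcoz_stabpos_Galois_connection} together with the observation that $\jointcoz(t)$ is itself a monoid ideal, which means it is always an admissible witness for the existential in the definition of $\blacktriangleleft$. The only point requiring a moment's care is the reduction of the witnessing sublocale $K$ to a monoid ideal, but this is exactly the content of the parenthetical remarks attached to the definitions of $\blacktriangleleft$ and $\jointcoz$, which in turn rest on \cref{prop:overt_meeting_saturated}; I would simply cite these rather than re-derive them.
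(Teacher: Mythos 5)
Your proposal is correct and follows essentially the same route as the paper: both arguments reduce the existential in the definition of $\blacktriangleleft$ to the adjunction $t \le \stabpos(K) \iff K \le \jointcoz(t)$ from \cref{prop:jointcoz_stabpos_Galois_connection}, with $\jointcoz(t)$ itself serving as the canonical witness in one direction and monotonicity of $\between$ closing the other. The only cosmetic difference is that you re-derive the adjunction from its two unit inequalities rather than citing it directly, and your restriction of the witness to monoid ideals is harmless but unnecessary, since the unit inequality $K \le \jointcoz(\stabpos(K))$ already holds for arbitrary $K \in \SubOW(R)$.
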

\begin{proof}
 Recall that $t \blacktriangleleft s \iff \exists K \in \SubOW(R).\ K \between s \text{ and } t \le \stabpos(K)$.
 But by the result above $t \le \stabpos(K) \iff K \le \jointcoz(t)$. Thus, if any overt weakly closed sublocale meets $s$, so will $\jointcoz(t)$
 and the result follows.
\end{proof}

The approximability condition implies a dual result for the suplattice of overt weakly closed sublocales.

\begin{lemma}\label{prop:approximable_implies_ideal_approximating}
 Suppose $R$ is an overt approximable localic semiring. Then \[V \cdot \top = \bigvee_{a \between V} \jointcoz(a)\] for all $V \in \SubOW(R)$.
\end{lemma}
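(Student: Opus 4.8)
The plan is to establish the two inequalities in $V \cdot \top = \bigvee_{a \between V} \jointcoz(a)$ separately, with the `$\le$' direction being the substantive one that uses approximability. First I would dispense with the easy direction `$\ge$': for each open $a$ with $a \between V$, I claim $\jointcoz(a) \le V \cdot \top$. Indeed $V \cdot \top$ is a monoid ideal (it is the image of $V$ under the two-sided reflection, and two-sided quantale elements are exactly the monoid ideals by the construction of $\MonIdl R$), and by \cref{prop:overt_meeting_saturated} applied with the saturated open $(\iota_1)_!\mu_\times(a)$ we get $V\cdot\top \between (\iota_1)_!\mu_\times(a)$, i.e. $V \cdot \top$ is among the monoid ideals $J$ with $J \between a$ (using $\jointcoz(a) = \jointcoz(\iota_1)_!\mu_\times(a)$ and $V \cdot \top \between a \iff V \cdot\top \between (\iota_1)_!\mu_\times(a)$, both consequences of \cref{prop:overt_meeting_saturated}). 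Hence $\jointcoz(a) = \bigwedge\{J \in \MonIdl R \mid J \between a\} \le V \cdot \top$, and taking the join over all such $a$ gives `$\ge$'.

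For the `$\le$' direction, the key is that both sides are monoid ideals (the right side is a join of monoid ideals, and directed joins of these are monoid ideals by the lemma computing directed joins in $\MonIdl R$ — more carefully, the join in $\MonIdl R$ of monoid ideals is again a monoid ideal), so to prove $V \cdot \top \le \bigvee_{a \between V}\jointcoz(a)$ in $\MonIdl R$ it suffices, by the correspondence between overt weakly closed sublocales and suplattice homomorphisms into $\Omega$ together with the fact that saturated opens are enough to separate (we may invoke \cref{prop:overt_meeting_saturated}), to show: for every saturated open $s$, if $\bigl(\bigvee_{a \between V}\jointcoz(a)\bigr) \between s$ fails then $V \cdot \top \between s$ fails. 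Equivalently, assuming $V \cdot \top \between s$ for a saturated $s$, I want some $a$ with $a \between V$ and $\jointcoz(a) \between s$. Now apply approximability to the saturated open $s$: by \cref{prop:approximable_supercontinuous} the frame $\Sats R$ is supercontinuous, so $s = \bigvee_{t \lll s} t$, and $V \cdot \top \between s$ means $\exists_{V\cdot\top}(s) = \top$; since $\exists_{V \cdot \top}$ preserves joins and $s$ is a join of elements totally below it, we get $V \cdot \top \between t$ for some $t \lll s$. Take $a = t$: then $a \between V \cdot \top$, hence $a \between V$ after adjusting — wait, here I must be careful: $a \between V\cdot\top$ gives $\exists_{V\cdot\top}(a) = \top$, which by \cref{prop:overt_meeting_saturated} is $V \between (\iota_1)_!\mu_\times(a)$, so it is cleaner to replace $a$ by $(\iota_1)_!\mu_\times(a)$ throughout, or simply to note $\stabpos(V\cdot\top) = \stabpos(V)$ and work with $\blacktriangleleft$. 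Let me restructure: from $V\cdot\top \between s$ and $s = \bigvee_{t\blacktriangleleft s} t$ (the reformulation of approximability), obtain $t \blacktriangleleft s$ with $V\cdot\top \between t$; unwinding $t \blacktriangleleft s$ there is $K \in \SubOW(R)$ with $K \between s$ and $t \le \stabpos(K)$.

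So the real statement to extract is: there is an open meeting $V$ whose $\jointcoz$ meets $s$. Given $V\cdot\top \between t$ and $t \blacktriangleleft s$, I use \cref{prop:overt_meeting_saturated} to convert $V\cdot\top \between t$ into $V \between (\iota_1)_!\mu_\times(t)$ — call this open $a := (\iota_1)_!\mu_\times(t)$, so $a \between V$. It remains to check $\jointcoz(a) \between s$. Since $\jointcoz(a) = \jointcoz(\iota_1)_!\mu_\times(t) = \jointcoz(t)$ — hmm, actually $a = (\iota_1)_!\mu_\times(t)$ already, and $\jointcoz = \jointcoz(\iota_1)_!\mu_\times$, so $\jointcoz(a) = \jointcoz(t)$. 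Now $t \blacktriangleleft s$ gives $K \between s$ with $t \le \stabpos(K)$, and the characterisation $t \le \stabpos(K) \iff K \le \jointcoz(t)$ (from \cref{prop:jointcoz_stabpos_Galois_connection}, valid since approximability implies overt sublocales distinguish saturated opens) yields $K \le \jointcoz(t) = \jointcoz(a)$; since $K \between s$ and $K \le \jointcoz(a)$, monotonicity of $\exists_{(-)}(s)$ gives $\jointcoz(a) \between s$. This is exactly what I needed. The main obstacle I anticipate is bookkeeping around the distinction between $V$ and $V \cdot \top$ and the repeated passages through $(\iota_1)_!\mu_\times$ — getting all the $\between$-conversions via \cref{prop:overt_meeting_saturated} pointed in the right direction — rather than any deep new idea; the conceptual content is entirely supplied by supercontinuity of $\Sats R$ (\cref{prop:approximable_supercontinuous}) and the Galois connection of \cref{prop:jointcoz_stabpos_Galois_connection}, with \cref{prop:blacktriangle_vs_jointcoz} available to shortcut the final computation if desired.
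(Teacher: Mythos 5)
Your proof is correct and follows essentially the same route as the paper's: reduce to monoid ideals, use the approximability decomposition $s = \bigvee_{t \blacktriangleleft s} t$ to extract a $t$ with $V \between t$ and $\jointcoz(t) \between s$, and conclude because monoid ideals are determined by the saturated opens they meet. The only differences are cosmetic — you spell out the easy `$\ge$' direction that the paper merely asserts, you re-derive \cref{prop:blacktriangle_vs_jointcoz} inline from the Galois connection rather than citing it, and your initial contrapositive phrasing of the separation step should simply be replaced by the positive implication you immediately give.
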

\begin{proof}
 The `$\ge$' inequality always holds. For the `$\le$' direction, we may assume $V$ is a monoid ideal by familiar arguments using $\jointcoz(a) = \jointcoz(\iota_1)_!\mu_\times(a)$
 and \cref{prop:overt_meeting_saturated}.
 
 Let $s$ be a saturated element and consider the join $s = \bigvee_{t \blacktriangleleft s} t$.
 Then $V \between s$ implies $V \between t$ for some $t \blacktriangleleft s$. In other words, $V \between t$ and $\jointcoz(t) \between s$.
 But then $\left( \bigvee_{t \between V} \jointcoz(t) \right) \between s$ and since monoid ideals are determined by the saturated elements
 they meet, we may conclude $V \le \bigvee_{a \between V} \jointcoz(a)$ as required.
\end{proof}

As above, we can equivalently express this condition on ideals in terms of a relation $\vartriangleleft$\glsadd{triangleleft} on $\MonIdl R$ defined by
\begin{align*}
 W \vartriangleleft V &\iff \exists a \in \O R.\ V \between a \text{ and } W \le \jointcoz(a) \\
                        &\iff \exists a \between V.\ \forall J \in \MonIdl R.\ J \between a \implies W \le J.
\end{align*}
Moreover, under the conditions of \cref{prop:jointcoz_stabpos_Galois_connection}, we have $W \!\vartriangleleft V$
if and only if $V \between \stabpos(W)$.

The conclusion of \cref{prop:approximable_implies_ideal_approximating} is equivalent to the claim that
\[V = \bigvee_{W \vartriangleleft V} W\] for all monoid ideals $V$.

In fact, we can now prove a partial converse to \cref{prop:approximable_implies_ideal_approximating}.
\begin{lemma}
 Suppose $R$ is an overt localic semiring in which overt sublocales distinguish saturated opens. Furthermore, assume that
 $V \le \bigvee_{a \between V} \jointcoz(a)$ for all $V \in \MonIdl R$.
 Then $R$ is approximable.
\end{lemma}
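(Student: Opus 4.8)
The plan is to verify the characterisation of approximability in terms of the relation $\blacktriangleleft$ on $\Sats R$ recorded just before \cref{prop:approximable_supercontinuous}: it suffices to show $s = \bigvee_{t \blacktriangleleft s} t$ for every $s \in \Sats R$. The inequality $\bigvee_{t \blacktriangleleft s} t \le s$ always holds, since if $t \blacktriangleleft s$ then $t \le \stabpos(K)$ for some $K \in \SubOW(R)$ meeting $s$, and then $s$ being a saturated open which meets $K$ forces $\stabpos(K) \le s$, hence $t \le s$. So the work is all in the reverse inequality $s \le \bigvee_{t \blacktriangleleft s} t$.

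Write $\sigma = \bigvee_{t \blacktriangleleft s} t$. Since saturated opens are closed under arbitrary joins, $\sigma \in \Sats R$; and since overt sublocales distinguish saturated opens in $R$ by hypothesis, it is enough to prove that every monoid ideal $V$ with $V \between s$ also satisfies $V \between \sigma$. Using the identification of $\SubOW(R)$ with $\hom(\O R,\Omega)$, which turns joins of overt weakly closed sublocales into pointwise joins of suplattice homomorphisms, together with the fact that joins in $\Omega$ compute existential quantifications, the statement $V \between \sigma$ is equivalent to the existence of some $t \blacktriangleleft s$ with $V \between t$. That witness is what we must produce.

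So let $V \in \MonIdl R$ with $V \between s$. By the standing hypothesis $V \le \bigvee_{a \between V} \jointcoz(a)$, so applying $\exists_V$ to $s$ gives $\top = \exists_V(s) \le \bigvee_{a \between V} \exists_{\jointcoz(a)}(s)$, where one invokes \cref{prop:overt_meeting_saturated} and the fact that $s$ is saturated to absorb any two-sided closure arising in the interpretation of the join. Hence there is an $a \in \O R$ with $a \between V$ and $\jointcoz(a) \between s$. Put $t = (\iota_1)_! \mu_\times(a) \in \Sats R$, the open saturation of $a$ from \cref{prop:saturation_operator}. Then $\jointcoz(t) = \jointcoz(a) \between s$, so $t \blacktriangleleft s$ by \cref{prop:blacktriangle_vs_jointcoz}; and $V \between a$ together with $a \le t$ yields $V \between t$. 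This is exactly the required witness, so $V \between \sigma$. As $V$ was an arbitrary monoid ideal meeting $s$, we conclude $s \le \sigma$, and therefore $R$ is approximable.

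The argument is essentially bookkeeping built on the order-reversing Galois connection between $\stabpos$ and $\jointcoz$ (\cref{prop:jointcoz_stabpos_Galois_connection}) and the two transitive relations $\blacktriangleleft$ and $\vartriangleleft$; I do not expect a serious obstacle. The one point deserving care is the reduction, via the identification $\SubOW(R) \cong \hom(\O R,\Omega)$, of ``$V$ meets a join of overt weakly closed sublocales'' to ``$V$ meets one of the summands'' — this is where the fact that $\bigvee$ in $\Omega$ is existential quantification does the real work, and it is also where the hypothesis that overt sublocales distinguish saturated opens is used, to pass back from $\sigma$ being met by the right monoid ideals to $s \le \sigma$.
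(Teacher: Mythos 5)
Your proof is correct and is essentially the paper's own argument: both reduce, via the hypothesis that overt sublocales distinguish saturated opens, to showing that any monoid ideal $V$ meeting $s$ must meet the approximating join, and both extract the required witness by expanding $V$ using the assumed ideal-approximation property. The only difference is cosmetic — you phrase the witness through $\blacktriangleleft$ and $\jointcoz$ where the paper uses the equivalent $\vartriangleleft$/$\stabpos$ formulation.
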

\begin{proof}
 Suppose $s$ is a saturated element. We must show $s \le \bigvee_{W \between s} \stabpos(W)$.
 Suppose $V$ is a monoid ideal such that $V \between s$ and write $V = \bigvee_{W \vartriangleleft V} W$.
 Then $W \between s$ for some $W$ and from $W \vartriangleleft V$, we know $V \between \stabpos(W)$.
 Thus, $V \between \bigvee_{W \between s} \stabpos(W)$ and the result follows since overt sublocales distinguish saturated opens.
\end{proof}

We can also use the $\vartriangleleft$ relation to give some insight into the structure of $\Idl(R)$.
\begin{lemma}
 If $R$ is an overt approximable localic semiring, then $\Idl(R)$ is a continuous lattice.
\end{lemma}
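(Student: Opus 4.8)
The plan is to show that $\Idl(R)$ is continuous by exhibiting an auxiliary relation that witnesses the continuity condition, namely the relation $\vartriangleleft$ on $\MonIdl R$ transported along the surjection $\addquot\colon \MonIdl R \twoheadrightarrow \Idl(R)$. Recall from \cref{prop:approximable_implies_ideal_approximating} and the discussion following it that under our hypotheses every monoid ideal $V$ satisfies $V = \bigvee_{W \vartriangleleft V} W$, and that $W \vartriangleleft V$ holds exactly when there is an open $a$ with $V \between a$ and $W \le \jointcoz(a)$. Since $\Idl(R)$ is a quotient of $\MonIdl R$ and joins in $\Idl(R)$ are computed as in $\AddSub(R)$ (hence, via \cref{prop:approximable_implies_ideal_approximating}'s arguments, compatibly with joins of monoid ideals), the image under $\addquot$ of this approximating family will express every element of $\Idl(R)$ as a join of `smaller' elements. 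The key point is to check that when $V \between a$ and $W \le \jointcoz(a)$, the element $\addquot(W)$ is in fact way below $\addquot(V)$ in $\Idl(R)$.

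First I would verify that the $\vartriangleleft$ relation descends appropriately: if $I, J \in \Idl(R)$ and we pick preimages, I want to show that $\addquot(W) \ll \addquot(V)$ whenever $W \vartriangleleft V$. So suppose $\addquot(V) \le \dirsup_\alpha I_\alpha$ for a directed family in $\Idl(R)$. Lifting to $\MonIdl R$ and using that directed joins in $\Idl(R)$ and in $\MonIdl R$ agree with those in the underlying suplattices (by the lemmas on directed joins in \cref{section:quantale_of_overt_weakly_closed_ideals}), and that $\addquot$ preserves directed joins, one gets $V \cdot \top \le \dirsup_\alpha V_\alpha$ for suitable preimages $V_\alpha$. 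Now from $W \vartriangleleft V$ we have an open $a$ with $V \between a$, and hence $V \cdot \top \between a$ as well; since the $V_\alpha$ are directed and their join meets $a$, the positivity predicate $\exists_{(-)}(a)$ being a suplattice homomorphism forces $V_\beta \between a$ for some $\beta$. But then $W \le \jointcoz(a) \le V_\beta$ by the definition of $\jointcoz$, so $\addquot(W) \le I_\beta$. This shows $\addquot(W) \ll \addquot(V)$.

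Next I would assemble the continuity statement. Given $I \in \Idl(R)$, choose a preimage $V \in \MonIdl R$ with $\addquot(V) = I$ (which exists since $\addquot$ is surjective and in fact $V$ can be taken to be $V\cdot\top$ for any $V$ mapping to $I$). Applying \cref{prop:approximable_implies_ideal_approximating} gives $V = \bigvee_{W \vartriangleleft V} W$, and applying $\addquot$ (which preserves all joins, being a left adjoint / quotient map) yields $I = \bigvee_{W \vartriangleleft V} \addquot(W)$. Each $\addquot(W)$ is way below $I$ by the previous paragraph, so $I \le \bigvee_{J \ll I} J$, and since the reverse inequality is automatic, $\Idl(R)$ is continuous. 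One should also note that $\Idl(R)$ is a complete lattice (it is a quotient suplattice of $\SubOW(R)$), so the way-below relation and continuity make sense and $\twoheaddownarrow I$ is automatically directed, as remarked after \cref{def:supercontinuous}.

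The main obstacle I anticipate is the bookkeeping around how directed joins and arbitrary joins transfer between $\SubOW(R)$, $\AddSub(R)$, $\MonIdl R$ and $\Idl(R)$ under the various reflections, and in particular making sure that the preimage $V$ chosen for $I$ genuinely satisfies the approximation identity $V = \bigvee_{W \vartriangleleft V} W$ rather than merely $V\cdot\top = \bigvee(\cdots)$ — this is why restricting attention to monoid ideals of the form $V \cdot \top$ (equivalently, to $\MonIdl R$ as the domain of $\addquot$) is important, and it is exactly the content that \cref{prop:approximable_implies_ideal_approximating} was set up to provide. Everything else is a routine application of the fact that positivity predicates are suplattice homomorphisms and that $\jointcoz(a)$ is the least monoid ideal meeting $a$.
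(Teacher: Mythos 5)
Your proof is correct and follows essentially the same route as the paper's: both rest on \cref{prop:approximable_implies_ideal_approximating} together with the observation that $\jointcoz(a)$ lies below every monoid ideal meeting $a$ while the predicate ``meets $a$'' preserves joins, so that $W \vartriangleleft V$ forces $W$ below some member of any family covering $V$. The only organisational difference is that the paper first proves $W \vartriangleleft V \Rightarrow W \lll V$ in $\MonIdl R$, concluding that $\MonIdl R$ is supercontinuous, and then descends to $\Idl(R)$ via the agreement of directed joins, whereas you verify $\addquot(W) \ll \addquot(V)$ directly in $\Idl(R)$ after lifting the directed family along $\addquot_*$ --- which makes the transfer step more explicit but establishes slightly less (continuity rather than supercontinuity of $\MonIdl R$).
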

\begin{proof}
 The proof proceeds as in \cref{prop:approximable_supercontinuous}. We show that if $V,W$ are monoid ideals, then $W \vartriangleleft V \implies W \lll V$ in $\MonIdl R$.
 Suppose $W \vartriangleleft V$ and $\bigvee A \ge V$. Then $V \between \stabpos(W)$ and so $J \between \stabpos(W)$ for some $J \in A$. %
 Thus, $W \vartriangleleft J$ and so $W \le J$. This proves the claim and so $\MonIdl R$ is supercontinuous.
 
 In particular, $\MonIdl R$ is continuous. But directed joins in $\Idl(R)$ are computed as in $\MonIdl R$ and so $\Idl(R)$ is also continuous.
\end{proof}
\begin{remark}
 I do not know if $\Rad(R)$ is always continuous, but the above lemma shows this is the case if the right adjoint to the localic quotient $\rad\colon \Idl(R) \twoheadrightarrow \Rad(R)$ preserves directed joins.
 This is true for our core examples.
\end{remark}

We now know enough to tackle the problem of constructing the universal (quantic) open prime anti-ideal.
We first note that under the assumption of approximability, a number of different definitions for $\upsilon$ coincide.

\begin{lemma}\label{prop:different_definitions_of_upsilon_agree}
 If $R$ is an overt approximable localic semiring, then %
 \[\smashoperator{\bigvee_{V \in \SubOW(R)}}\; \addquot(V) \oplus \stabpos(V) = \;\smashoperator{\bigvee_{a \in \O R}}\; \addquot\jointcoz(a) \oplus a =
 \bigvee\left\{ \addquot\jointcoz(a) \oplus \stabpos(V) \mid V \between a\right\}.\]
 We define $\upsilon \in \Idl(R) \oplus \O R$\glsadd[format=(]{upsilon} to be their common value.
\end{lemma}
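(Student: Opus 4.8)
The plan is to show that the three displayed joins — call them $A$, $B$ and $C$ in the order written — are pairwise equal by a short cycle of inequalities, using only that $\stabpos$ and $\jointcoz$ are order-reversing (immediate from their defining meets) together with the two approximation identities already proved in this section: $(\iota_1)_!\mu_\times(a)=\bigvee_{V\between a}\stabpos(V)$ from \cref{prop:saturation_from_join_over_stabpos} and $V\cdot\top=\bigvee_{a\between V}\jointcoz(a)$ from \cref{prop:approximable_implies_ideal_approximating}. Throughout I would freely use that the coproduct operation $(-)\oplus(-)$ on $\Idl(R)\oplus\O R$ preserves arbitrary joins in each argument, that the canonical quotient $\SubOW(R)\twoheadrightarrow\MonIdl R\xrightarrow{\addquot}\Idl(R)$ (also written $\addquot$) preserves joins and satisfies $\addquot(V)=\addquot(V\cdot\top)$, and that $\stabpos(V)=\stabpos(V\cdot\top)$ for all $V$ (as in the proof of \cref{prop:saturation_from_join_over_stabpos}, since $V\cdot\top\between t\iff V\between t$ for saturated $t$ by \cref{prop:overt_meeting_saturated} and \cref{prop:saturation_operator}). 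In particular the join $A$ may be computed over monoid ideals $V$.

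First I would prove $A=C$. For $C\le A$: a term of $C$ has the form $\addquot\jointcoz(a)\oplus\stabpos(V)$ with $V\between a$; since $V\cdot\top$ is a monoid ideal and $V\between a$ gives $V\cdot\top\between a$, we get $\jointcoz(a)\le V\cdot\top$, whence $\stabpos(V)=\stabpos(V\cdot\top)\le\stabpos(\jointcoz(a))$ by order-reversal, so this term is dominated by the $A$-term indexed by the monoid ideal $\jointcoz(a)$. For $A\le C$: given a monoid ideal $V$, \cref{prop:approximable_implies_ideal_approximating} gives $V=\bigvee_{a\between V}\jointcoz(a)$, hence
\[
\addquot(V)\oplus\stabpos(V)=\bigvee_{a\between V}\bigl(\addquot\jointcoz(a)\oplus\stabpos(V)\bigr),
\]
and each summand (with $V\between a$) is a term of $C$. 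Thus $A=C$.

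Next I would close the cycle with $C\le B\le C$. For $C\le B$: for a term $\addquot\jointcoz(a)\oplus\stabpos(V)$ of $C$ with $V\between a$, combine $\jointcoz(a)\le V\cdot\top$ (as above) with $V\cdot\top\le\jointcoz\stabpos(V)$ — which holds because $V\le\jointcoz\stabpos(V)$ by the Galois unit of \cref{prop:jointcoz_stabpos_Galois_connection} (available since approximability forces overt sublocales to distinguish saturated opens) and $\jointcoz\stabpos(V)$ is a monoid ideal — to get $\addquot\jointcoz(a)\le\addquot\jointcoz\stabpos(V)$, so the term is dominated by the $B$-term indexed by $b=\stabpos(V)$. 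For $B\le C$: a term $\addquot\jointcoz(a)\oplus a$ of $B$ satisfies $a\le(\iota_1)_!\mu_\times(a)=\bigvee_{V\between a}\stabpos(V)$ by \cref{prop:saturation_from_join_over_stabpos}, hence
\[
\addquot\jointcoz(a)\oplus a\le\bigvee_{V\between a}\bigl(\addquot\jointcoz(a)\oplus\stabpos(V)\bigr),
\]
a join of terms of $C$. Combining, $A=B=C$, and $\upsilon$ is defined as this common value.

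I do not anticipate a serious obstacle; the one thing that needs care is the bookkeeping around the two ``sandwich'' inequalities $\jointcoz(a)\le V\cdot\top\le\jointcoz\stabpos(V)$ that drive the steps $C\le A$ and $C\le B$, and the corresponding observation that the quotient map out of $\SubOW(R)$ absorbs multiplication by $\top$, so that $A$ genuinely reduces to a join over monoid ideals. Once those are in place, everything else is a mechanical application of the earlier approximation lemmas and the monotonicity (order-reversing) of $\stabpos$ and $\jointcoz$.
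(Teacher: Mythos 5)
Your proof is correct and rests on the same two ingredients as the paper's: expanding $\addquot(V)$ via \cref{prop:approximable_implies_ideal_approximating} to get the first join below the third, and expanding $a$ via \cref{prop:saturation_from_join_over_stabpos} to get the second join below the third. The paper obtains both equalities directly as join-substitutions (after reducing to monoid ideals and saturated opens), whereas you close the cycle with explicit reverse inequalities $C\le A$ and $C\le B$ by dominating individual terms through $\jointcoz(a)\le V\cdot\top\le\jointcoz\stabpos(V)$; this is the same argument with the bookkeeping made explicit rather than a genuinely different route.
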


\begin{proof}
 Expanding the first $V$ in $\bigvee_V \addquot(V) \oplus \stabpos(V)$ using \cref{prop:approximable_implies_ideal_approximating} and
 the second $a$ in $\bigvee_{a \in \O R} \addquot\jointcoz(a) \oplus a$ by the definition of approximability, we arrive at
 the third join
 $\bigvee\left\{ \addquot\jointcoz(a) \oplus \stabpos(V) \mid V \between a\right\}$ in both cases.
\end{proof}

We must show that $\upsilon$ so defined truly is an open prime anti-ideal.

\begin{proposition}
 If $R$ is an overt approximable localic semiring, then $\upsilon$ is an open prime anti-ideal\index{anti-ideal!open prime} fibred over $\Idl(R)$.
\end{proposition}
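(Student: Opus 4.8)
The plan is to verify that $\upsilon$ lies in $\overline{\OPAI}_R(\Idl(R))$ by checking its four defining conditions one at a time: $(\Idl(R)\oplus\epsilon_0)(\upsilon)=0$, $(\Idl(R)\oplus\epsilon_1)(\upsilon)=1$, $(\Idl(R)\oplus\mu_+)(\upsilon)\le(\Idl(R)\oplus\iota_1)(\upsilon)\vee(\Idl(R)\oplus\iota_2)(\upsilon)$, and $(\Idl(R)\oplus\mu_\times)(\upsilon)=(\Idl(R)\oplus\iota_1)(\upsilon)\cdot(\Idl(R)\oplus\iota_2)(\upsilon)$. For each I would use whichever of the three expressions for $\upsilon$ supplied by \cref{prop:different_definitions_of_upsilon_agree} is most convenient, usually $\upsilon=\bigvee_{a\in\O R}\addquot\jointcoz(a)\oplus a$, which — since $\jointcoz=\jointcoz(\iota_1)_!\mu_\times$ and $a\le(\iota_1)_!\mu_\times(a)$ — may be rewritten as $\bigvee_{s\in\Sats R}\addquot\jointcoz(s)\oplus s$. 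Throughout I use that $\Idl(R)\oplus\Omega\cong\Idl(R)$ sends $I\oplus p$ to $p\cdot I$, that in $\Idl(R)\oplus\O R\oplus\O R$ the quantale product is coordinatewise with $\iota_1(s)\wedge\iota_2(t)=s\oplus t$, and that $\addquot$ is a surjective quantale homomorphism, hence preserves $\top$ and arbitrary joins. The identities $\jointcoz=\jointcoz(\iota_1)_!\mu_\times$ and $\stabpos(V)=\stabpos(V\cdot\top)$ with \cref{prop:overt_meeting_saturated} let me pass freely between an open and its saturation and between an overt weakly closed sublocale and the monoid ideal it generates.

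The two unit conditions are comparatively direct. For $(\Idl(R)\oplus\epsilon_0)(\upsilon)=\bigvee_a\epsilon_0(a)\cdot\addquot\jointcoz(a)=0$ it suffices to note that the suplattice homomorphism $\epsilon_0\colon\O R\to\Omega$ is the positivity predicate of an overt weakly closed sublocale which, by the axiom that zero is absorbing (as in the proof that $\nu$ is a nucleus), is a monoid ideal, namely $0_+$, and which meets $a$ exactly when $\epsilon_0(a)=\top$; hence $\epsilon_0(a)\le\llbracket\jointcoz(a)\le 0_+\rrbracket$ and $\addquot(0_+)=0$, so every term vanishes. For $(\Idl(R)\oplus\epsilon_1)(\upsilon)=\addquot\big(\bigvee\{\jointcoz(a)\mid\epsilon_1(a)=\top\}\big)=1$: applying \cref{prop:approximable_implies_ideal_approximating} to the top of $\SubOW(R)$ gives $\top=\bigvee_{a\text{ positive}}\jointcoz(a)=\bigvee\{\jointcoz(s)\mid s\in\Sats R\text{ positive}\}$, and a positive saturated open contains $1$ — from $\exists x.\,x\in s$ and saturatedness applied to $x=x\cdot 1$ one deduces $1\in s$ in the internal logic — so the join on the right runs over a subfamily of $\{a\mid\epsilon_1(a)=\top\}$ and already equals $\top$; applying $\addquot$ finishes it.

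The additivity and multiplicativity conditions are the substantive part, and I expect the multiplicativity \emph{equality} to be the main obstacle, since its right-hand side uses the quantale product of $\Idl(R)$ rather than the meet and so must be matched exactly — this is precisely where working with $\Idl(R)$ rather than $\Rad(R)$ is forced. For $\mu_+$, writing $\upsilon=\bigvee_V\addquot(V)\oplus\stabpos(V)$, I would fix $V$, decompose $\mu_+(\stabpos(V))$ into basic rectangles $b\oplus c$, and read $b\oplus c\le\mu_+(\stabpos(V))$ as the internal-logic sequent $x\in b,\ y\in c\vdash x+y\in t$ holding for every saturated $t$ meeting $V$; combining this with the approximation $V=\bigvee_{W\vartriangleleft V}W$ and the distributivity axiom of $R$ (in the diagrammatic form used for $\nu$) shows that $\addquot(V)\oplus(b\oplus c)$ is dominated by $(\Idl(R)\oplus\iota_1)(\upsilon)\vee(\Idl(R)\oplus\iota_2)(\upsilon)$. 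For $\mu_\times$, the inequality $(\Idl(R)\oplus\mu_\times)(\upsilon)\le(\Idl(R)\oplus\iota_1)(\upsilon)\cdot(\Idl(R)\oplus\iota_2)(\upsilon)$ comes from saturatedness — $\mu_\times(\stabpos(V))\le\stabpos(V)\oplus\stabpos(V)$ — together with $I\ge I^2$ and the $\vartriangleleft$-approximation; for the reverse inequality I would, given saturated $s,t$, exhibit a saturated $r$ with $s\oplus t\le\mu_\times(r)$ and $\jointcoz(s)\cdot\jointcoz(t)\le\jointcoz(r)$, taking $r$ to be the saturation of the pointfree product of $s$ and $t$ and using that multiplication $R\times R\to R$ is a semiring homomorphism together with \cref{prop:overt_meeting_saturated}. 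No ingredient beyond approximability, \cref{prop:overt_meeting_saturated}, the saturation operator of \cref{prop:saturation_operator}, and the internal logic of overt locales should be required.
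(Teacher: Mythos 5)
Your overall plan---verifying the four axioms of $\overline{\OPAI}_R(\Idl(R))$ using the interchangeable forms of $\upsilon$ from \cref{prop:different_definitions_of_upsilon_agree}, together with \cref{prop:overt_meeting_saturated}, \cref{prop:approximable_implies_ideal_approximating} and the internal logic---is exactly the paper's, and your treatments of the zero and unit axioms are sound (your unit argument via $V=\top$ and ``positive saturated opens contain $1$'' is a harmless variant of the paper's application of \cref{prop:approximable_implies_ideal_approximating} to $V=1_\times$). But the two axioms you yourself flag as substantive are where the sketch goes wrong. For the additive axiom, the decomposition you need is not $V=\bigvee_{W\vartriangleleft V}W$ but the $\blacktriangleleft$-approximation of the \emph{open} coordinates: given a rectangle $b\oplus c\le\mu_+(\stabpos(V))$, write $b=\bigvee_{u\blacktriangleleft b}u$ and $c=\bigvee_{u'\blacktriangleleft c}u'$; each pair yields monoid ideals with $\jointcoz(u)\between b$ and $\jointcoz(u')\between c$, whence $\jointcoz(u)+\jointcoz(u')\between\stabpos(V)$ and so $V\le\jointcoz(\stabpos(V))\le(\jointcoz(u)+\jointcoz(u'))\cdot\top$; applying $\addquot$ turns the sum into a join and the rectangle then splits into the $\iota_1$- and $\iota_2$-terms. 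Decomposing $V$ by $\vartriangleleft$ does not produce the two ideals needed for those terms, and the relevant identity is $\exists_{I+J}=\nabla(\exists_I\otimes\exists_J)\mu_+$ rather than the distributivity axiom of $R$.

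For the multiplicative axiom both directions as sketched fail. In the ``$\le$'' direction, $\mu_\times\stabpos(V)\le\stabpos(V)\oplus\stabpos(V)$ combined with $I^2\le I$ points the wrong way: you must exhibit $\addquot(V)$ as lying \emph{below} a product of two ideals attached to the two coordinates of a rectangle $d\oplus d'\le\mu_\times\stabpos(V)$. The paper does this by taking $e\blacktriangleleft d$ and $e'\blacktriangleleft d'$ and proving $V\le\jointcoz(e)\jointcoz(e')$ by testing against saturated opens meeting $V$. In the ``$\ge$'' direction your witness $r$---``the saturation of the pointfree product of $s$ and $t$''---is not available: $\mu_\times$ is not assumed open, so the image of an open rectangle under multiplication need not be open, and no open $r$ with $s\oplus t\le\mu_\times(r)$ arises this way. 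The paper instead takes monoid ideals $I,J$ and $K\vartriangleleft IJ$ and uses the lemma, proved in the internal logic, that $\mu_\times(t)$ for saturated $t$ is a join of rectangles of \emph{saturated} opens, to conclude $\stabpos(I)\oplus\stabpos(J)\le\mu_\times\stabpos(K)$. Arguments of this shape are needed to close the proof.
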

\begin{proof}
 We must show that $\upsilon$ satisfies the four axioms of an element of $\overline{\OPAI}_R(\Idl(R))$.
 
 \emph{\underline{Zero axiom}\ }
 First we show $(\Idl(R) \oplus \epsilon_0)(\upsilon) = 0$. Expanding the second definition of $\upsilon$ we have
 $(\Idl(R) \oplus \epsilon_0)(\upsilon) = \bigvee_{a} \addquot\jointcoz(a) \oplus \epsilon_0(a) = \bigvee\{\addquot\jointcoz(a) \mid \epsilon_0(a) = 1\}$.
 So the result will follow if we assume $\epsilon_0(a) = 1$ and show $\addquot\jointcoz(a) = 0$.
 But $0_+ = \epsilon_0$ and so if $\epsilon_0(a) = 1$, then $\jointcoz(a) \le 0_+ \cdot \top = 0_+$ and hence $\addquot\jointcoz(a) \le \addquot(0_+) = 0$. %
 
 \emph{\underline{Unit axiom}\ }
 Next we show $(\Idl(R) \oplus \epsilon_1)(\upsilon) = 1$. By \cref{prop:approximable_implies_ideal_approximating} we have
 $1_\times \cdot \top = \bigvee_{a \between 1_\times} \jointcoz(a) = \bigvee_{a} \jointcoz(a) \cdot \epsilon_1(a)$. The unit axiom follows by applying $\addquot$ to both sides.
 
 \emph{\underline{Additive axiom}\ }
 To see that $(\Idl(R) \oplus \mu_+)(\upsilon) \le (\Idl(R) \oplus \iota_1)(\upsilon) \vee (\Idl(R) \oplus\iota_2)(\upsilon)$,
 take $a \in \O R$ and suppose $r \oplus r' \le \mu_+(a)$. We must show $\addquot\jointcoz(a) \oplus r \oplus r' \le \bigvee_{b \in \O R} \addquot\jointcoz(b) \oplus (b \oplus 1 \vee 1 \oplus b)$.
 
 Write $r = \bigvee_{u \blacktriangleleft r} u$ and $r' = \bigvee_{u' \blacktriangleleft r'} u'$.
 It is enough to show $\jointcoz(a) \le (\jointcoz(u) + \jointcoz(u'))\cdot \top$ so that %
 $\addquot\jointcoz(a) \oplus u \oplus u' \le (\addquot\jointcoz(u) \vee \addquot\jointcoz(u')) \oplus u \oplus u' \le \addquot\jointcoz(u) \oplus u \oplus 1 \vee \addquot\jointcoz(u') \oplus 1 \oplus u'
 \le \bigvee_{b \in \O R} \addquot\jointcoz(b) \oplus (b \oplus 1 \vee 1 \oplus b)$,
 which then implies the desired result by taking the join over $u$ and $u'$.

 Take $u \blacktriangleleft r$ and  $u' \blacktriangleleft r'$, so that $\jointcoz(u) \between r$ and $\jointcoz(u') \between r'$.
 Then $1 = \exists_{\jointcoz(u)}(r) \wedge \exists_{\jointcoz(u')}(r') = \nabla(\exists_{\jointcoz(u)} \otimes \exists_{\jointcoz(u')})(r \otimes r') \le
 \nabla(\exists_{\jointcoz(u)} \otimes \exists_{\jointcoz(u')})\mu_+(a)$.
 But $\nabla(\exists_{\jointcoz(u)} \otimes \exists_{\jointcoz(u')})\mu_+ = \exists_{\jointcoz(u) + \jointcoz(u')}$ and so $(\jointcoz(u) + \jointcoz(u')) \between a$.
 Hence, $(\jointcoz(u) + \jointcoz(u'))\cdot \top \between a$ and $\jointcoz(a) \le (\jointcoz(u) + \jointcoz(u'))\cdot \top$ as required.
 
 \emph{\underline{Multiplicative axiom}\ } %
 Now we show the remaining equality $(\Idl(R) \oplus \mu_\times)(\upsilon) = (\Idl(R) \oplus\iota_1)(\upsilon) \cdot (\Idl(R) \oplus\iota_2)(\upsilon)$.
 Let us start with the `$\le$' direction. We need $\addquot(V) \oplus \mu_\times \stabpos(V) \le \bigvee_{a,b \in \O R} \addquot(\jointcoz(a) \jointcoz(b)) \oplus a \oplus b$.
 Suppose $d \oplus d' \le \mu_\times \stabpos(I)$ and take $e \blacktriangleleft d$ and $e' \blacktriangleleft d'$.
 The result will follow if we can show $V \le \jointcoz(e)\jointcoz(e')$, since then
 $V \oplus e \oplus e' \le \jointcoz(e)\jointcoz(e') \oplus e \oplus e' \le \bigvee_{a, b} \jointcoz(a)\jointcoz(b) \oplus a \oplus b$.
 
 Let $s$ be a saturated element such that $V \between s$. We must show $\jointcoz(e)\jointcoz(e') \between s$.
 Note that $\jointcoz(e)\jointcoz(e') \between s \iff \nabla(\exists_{\jointcoz(e)} \otimes \exists_{\jointcoz(e')})\mu_\times(s) = 1$.
 Now $V \between s$ implies $\stabpos(V) \le s$ and hence $d \oplus d' \le \mu_\times \stabpos(V) \le \mu_\times(s)$.
 Therefore, $\nabla(\exists_{\jointcoz(e)} \otimes \exists_{\jointcoz(e')})\mu_\times(s) \ge \exists_{\jointcoz(e)}(d) \wedge \exists_{\jointcoz(e')}(d')$.
 But since $e \blacktriangleleft d$ and $e' \blacktriangleleft d'$, we know $\jointcoz(e) \between d$ and $\jointcoz(e') \between d'$.
 Thus, the right-hand side is $1$ and the result follows.
 
 To prove the other direction, $(\Idl(R) \oplus\iota_1)(\upsilon) \cdot (\Idl(R) \oplus\iota_2)(\upsilon) \le (\Idl(R) \oplus \mu_\times)(\upsilon)$,
 we will show $IJ \oplus \stabpos(I) \oplus \stabpos(J) \le \bigvee_{K \in \SubOW(R)} K \oplus \mu_\times \stabpos(K)$ for $I, J \in \MonIdl R$.
 Let $K \vartriangleleft IJ$. If we can show $\stabpos(I) \oplus \stabpos(J) \le \mu_\times \stabpos(K)$, then the desired inequality follows by applying $K \oplus (-)$ and taking the join of over all such $K$.

 Since $K \vartriangleleft IJ$, we have $IJ \between \stabpos(K)$. That is, $\nabla (\exists_I \otimes \exists_J)\mu_\times\stabpos(K) = 1$.
 
 As an aside, we note that if $t$ is saturated then $\mu_\times(t)$ can be expressed as a join $\bigvee_\alpha s_\alpha \oplus s'_\alpha$ where each $s_\alpha$ and $s'_\alpha$ is saturated.
 To see this we show that if $s \otimes s' \le \mu_\times(t)$, then
 $(\iota_1)_!\mu_\times(s) \otimes (\iota_1)_!\mu_\times(s') = \iota_1(\iota_1)_!\mu_\times(s) \wedge \iota_2(\iota_1)_!\mu_\times(s') \le \mu_\times(t)$.
 In the internal logic, we have $(x \in s) \land (x' \in s') \vdash_{x,x'} xx' \in t$ and want to show $\exists y.\ xy \in s \land \exists y'.\ x'y' \in s' \vdash_{x,x'} xx' \in t$. %
 Well if $\exists y.\ xy \in s \land \exists y'.\ x'y' \in s'$, then $(xy)(x'y') \in t$. But $(xy)(x'y') = (xx')(yy')$ using associativity and commutativity and so $(xx')(yy') \in t$.
 But this means $xx' \in t$, since $t$ is saturated.
 
 So we can write $\mu_\times\stabpos(K) = \bigvee_\alpha s_\alpha \oplus s'_\alpha$ with $s_\alpha, s'_\alpha$ saturated.
 So $1 = \nabla (\exists_I \otimes \exists_J)\mu_\times\stabpos(K) = \bigvee_\alpha \exists_I(s_\alpha) \wedge \exists_J(s'_\alpha)$
 and thus there is an $\alpha$ such that $I \between s_\alpha$ and $J \between s'_\alpha$. Consequently, $\stabpos(I) \le s_\alpha$ and $\stabpos(J) \le s'_\alpha$
 and therefore $\stabpos(I) \oplus \stabpos(J) \le s_\alpha \oplus s'_\alpha \le \mu_\times(K)$. This completes the proof.
\end{proof}

We can now prove our main result.
\begin{theorem}\label{prop:quantic_spectrum_of_approximable_semiring}
 If $R$ is an overt approximable\index{approximable}\index{semiring!approximable localic} localic semiring,
 then the quantic spectrum\index{spectrum!quantic} of $R$ is given by $\Idl(R)$ equipped with the open prime anti-ideal
 \[\upsilon = \bigvee\nolimits_{V} \addquot(V) \oplus \stabpos(V) = \bigvee\nolimits_{a} \addquot\jointcoz(a) \oplus a =
 \bigvee\left\{ \addquot\jointcoz(a) \oplus \stabpos(V) \mid V \between a\right\}.\glsadd[format=)]{upsilon}\]
\end{theorem}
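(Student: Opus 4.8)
The plan is to show that $\Idl(R)$, equipped with the element $\upsilon$, represents the functor $\overline{\OPAI}_R\colon \Quant \to \Set$; concretely, that for every two-sided quantale $Q$ the assignment $\phi \mapsto (\phi \oplus \O R)(\upsilon)$ is a bijection from $\Hom_{\Quant}(\Idl(R), Q)$ onto $\overline{\OPAI}_R(Q)$, with naturality in $Q$ then automatic. That this assignment lands in $\overline{\OPAI}_R(Q)$ is the preceding proposition together with functoriality of $\overline{\OPAI}_R$, so the work is bijectivity. Throughout I would identify $Q \oplus \O R$ with the suplattice tensor product $Q \otimes \O R$, and $Q \oplus \Omega$ with $Q$ (since $\Omega$ is the monoidal unit of $\Sup$), so that any suplattice homomorphism $h\colon \O R \to \Omega$ induces a suplattice homomorphism $Q \otimes h\colon Q \oplus \O R \to Q$ acting by $\bigvee_\alpha q_\alpha \otimes a_\alpha \mapsto \bigvee_\alpha q_\alpha \cdot h(a_\alpha)$.

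Two preliminary facts organise everything. First, $\Idl(R)$ is join-generated by $\{\addquot\jointcoz(a) \mid a \in \O R\}$: the composite of the surjections $\SubOW(R) \twoheadrightarrow \MonIdl R$ and $\addquot\colon \MonIdl R \twoheadrightarrow \Idl(R)$ is surjective, so every element of $\Idl(R)$ is $\addquot(V \cdot \top)$ for some $V \in \SubOW(R)$, and $V \cdot \top = \bigvee_{a \between V} \jointcoz(a)$ by \cref{prop:approximable_implies_ideal_approximating}. Second, every $u \in \overline{\OPAI}_R(Q)$ is automatically \emph{saturated}, i.e. $(Q \otimes (\iota_1)_!\mu_\times)(u) = u$: the multiplicative axiom and two-sidedness of $Q \oplus \O R \oplus \O R$ give $(Q \oplus \mu_\times)(u) = (Q \oplus \iota_1)(u) \cdot (Q \oplus \iota_2)(u) \le (Q \oplus \iota_1)(u)$, so applying the left adjoint $Q \otimes (\iota_1)_!$ of $Q \oplus \iota_1$ and the counit inequality $(\iota_1)_!\iota_1 \le \id$ yields $(Q \otimes (\iota_1)_!\mu_\times)(u) \le u$, while the reverse inequality holds since $(\iota_1)_!\mu_\times$ is inflationary (\cref{prop:saturation_operator}). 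Injectivity is then immediate: from the second description of $\upsilon$ in \cref{prop:different_definitions_of_upsilon_agree} and the fact that $\phi \oplus \O R$ is a quantale homomorphism, $(\phi \oplus \O R)(\upsilon) = \bigvee_a \phi(\addquot\jointcoz(a)) \oplus a$, and applying $Q \otimes \exists_W$ together with \cref{prop:approximable_implies_ideal_approximating} recovers $\phi(\addquot(W \cdot \top)) = \bigvee_{a \between W} \phi(\addquot\jointcoz(a))$; by the first preliminary fact this pins down $\phi$.

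Surjectivity is the crux. Given $u \in \overline{\OPAI}_R(Q)$, I would define $\Phi_u(W) = (Q \otimes \exists_W)(u)$ for $W \in \SubOW(R)$, a suplattice homomorphism, and show it descends along $\SubOW(R) \twoheadrightarrow \Idl(R)$ to a quantale homomorphism $\phi_u$ with $(\phi_u \oplus \O R)(\upsilon) = u$. Descent would follow from the identities $\Phi_u(0_+) = (Q \oplus \epsilon_0)(u) = 0$ (zero axiom), $\Phi_u(W \cdot \top) = \Phi_u(W)$ (using $\exists_{W\cdot\top} = \exists_W (\iota_1)_!\mu_\times$ from \cref{prop:overt_meeting_saturated} and saturation of $u$), and $\Phi_u(W_1 + W_2) \le \Phi_u(W_1) \vee \Phi_u(W_2)$ (apply $Q \otimes \nabla(\exists_{W_1} \otimes \exists_{W_2})$ to the additive axiom and simplify), which together force $\Phi_u$ through the quotient cutting out $\Idl(R)$, giving $\phi_u(\addquot\jointcoz(a)) = (Q \otimes \exists_{\jointcoz(a)})(u)$. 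Multiplicativity comes from the multiplicative axiom: writing $u = \bigvee_\alpha q_\alpha \otimes a_\alpha$ and using $\exists_{V \cdot W} = \nabla(\exists_V \otimes \exists_W)\mu_\times$ and $(Q \oplus \mu_\times)(u) = (Q \oplus \iota_1)(u) \cdot (Q \oplus \iota_2)(u)$, a direct computation gives $\Phi_u(V \cdot W) = \Phi_u(V) \cdot \Phi_u(W)$, and $\Phi_u(\epsilon_1) = (Q \oplus \epsilon_1)(u) = 1$ handles the unit; as $\SubOW(R) \to \Idl(R)$ is a quantale quotient, $\phi_u$ is then a quantale homomorphism. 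Finally $(\phi_u \oplus \O R)(\upsilon) = \bigvee_a (Q \otimes \exists_{\jointcoz(a)})(u) \oplus a$, and to identify this with $u$ I would reduce the outer join to saturated $a$ (using $\jointcoz = \jointcoz (\iota_1)_!\mu_\times$ and $a \le (\iota_1)_!\mu_\times(a)$), write $u = \bigvee_\alpha q_\alpha \otimes a_\alpha$ with the $a_\alpha$ saturated (legitimate since $u$ is saturated), and then invoke $\jointcoz(s) \between a_\alpha \iff s \blacktriangleleft a_\alpha$ (\cref{prop:blacktriangle_vs_jointcoz}; approximability implies overt sublocales distinguish saturated opens), the implication $s \blacktriangleleft a_\alpha \Rightarrow s \le a_\alpha$, and the approximability identity $a_\alpha = \bigvee_{s \blacktriangleleft a_\alpha} s$ (\cref{prop:approximable_supercontinuous}): the first two give $\le u$, the third gives $\ge u$.

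The hard part is exactly this surjectivity step — establishing $(\phi_u \oplus \O R)(\upsilon) = u$, and that $\Phi_u$ respects multiplication after descent. Both rest on the same two structural inputs: that every $u \in \overline{\OPAI}_R(Q)$ is forced to be saturated, and that approximability (through the relation $\blacktriangleleft$ and \cref{prop:approximable_supercontinuous,prop:approximable_implies_ideal_approximating}) lets a saturated open, and hence $u$ itself, be reconstructed from the monoid ideals $\jointcoz(a)$ lying $\blacktriangleleft$-below it; absent approximability there is no reason the join $\bigvee_a (Q \otimes \exists_{\jointcoz(a)})(u) \oplus a$ is large enough to recover $u$, which is precisely why spectra can fail to exist for general localic semirings. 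The localic spectrum statement then follows as a corollary, since by \cref{def:quantic_spectrum_as_OPAI} the localic spectrum is the localic reflection of the quantic spectrum, namely $\Rad(R)$.
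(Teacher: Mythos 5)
Your proposal is correct and follows essentially the same route as the paper's proof: the candidate homomorphism is forced to be $I \mapsto (Q \otimes \exists_I)(u)$, descent to $\Idl(R)$ is the same induction on $n \cdot W$ using the zero and additive axioms, multiplicativity uses the multiplicative axiom in the same way, and the final identification $(\phi_u \oplus \O R)(\upsilon) = u$ reduces to the saturation of $u$ exactly as in the paper (you use the second form of $\upsilon$ together with $\blacktriangleleft$ where the paper uses the first form together with \cref{prop:saturation_from_join_over_stabpos}, but these are interchangeable by \cref{prop:different_definitions_of_upsilon_agree}).
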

\begin{proof}
 Suppose $p \in Q \oplus \O R$ is an open prime anti-ideal fibred over $Q$. We must show that there is a unique $\overline{p}\colon \Idl(R) \to Q$ such that $p = (\overline{p} \oplus \O R)(\upsilon)$.
 
 Assume $f\colon \Idl(R) \to Q$ is any quantale homomorphism satisfying $p = (f \oplus \O R)(\upsilon)$. Take $I \in \Idl(R)$. By \cref{prop:approximable_implies_ideal_approximating} we have
 $I = (\Idl(R) \otimes \exists_I)(\upsilon)$. But then $f(I) = (f \otimes \Omega)(\Idl(R) \otimes \exists_I)(\upsilon) = (Q \otimes \exists_I)(f \otimes \O R)(\upsilon) = (Q \otimes \exists_I)(p)$.
 So the only choice for $\overline{p}$ is given by $\overline{p}(I) = (Q \otimes \exists_I)(p)$.
 This definition accords well with the intuition that a point lies in the open defined by an ideal when there exists a function in the ideal which is cozero at that point.
 
 It is not obvious that this definition gives a quantale homomorphism.
 Since $p \in \overline{\OPAI}_R(Q)$, we have $(Q \oplus \epsilon_0)(p) = 0$ and $(Q \oplus \epsilon_1)(p) = 1$.
 If $I$ is the zero ideal, then $\exists_I = \epsilon_0$ so that $\overline{p}(0) = Q \oplus \epsilon_0(p) = 0$ and $\overline{p}$ preserves $0$. %
 If $I$ is the unit ideal, then $\epsilon_1 \le \exists_I$ so that $\overline{p}(1) \ge Q \oplus \epsilon_1(p) = 1$ and $\overline{p}$ preserves $1$.
 
 Now we show that $\overline{p}$ preserves binary joins. It is obviously monotone and so it is enough to show $\overline{p}(I \vee J) \le \overline{p}(I) \vee \overline{p}(J)$.
 Recall that $I \vee J$ in $\Idl(R)$ is equal to $I + J$ in $\SubOW(R)$ %
 and so we need to show $(Q \otimes \exists_{I + J})(p) \le (Q \otimes \exists_I)(p) \vee (Q \otimes \exists_J)(p)$.
 
 We have $(Q \otimes \exists_{I + J})(p) = (Q \otimes \nabla(\exists_I \otimes \exists_J)\mu_+)(p) \le (Q \otimes \nabla(\exists_I \otimes \exists_J))((Q \oplus \iota_1)(p) \vee (Q \oplus \iota_2)(p))$,
 where the inequality follows from the additive condition on elements of $\overline{\OPAI}_R(Q)$. Now we can use the fact that $\nabla(\exists_I \otimes \exists_J)$ preserves joins
 (as well as $\exists_I(1) \ge \epsilon_0(1) = 1$) to conclude that $(Q \otimes \exists_{I + J})(p) \le (Q \otimes \exists_I)(p) \vee (Q \otimes \exists_J)(p)$ as required.
 
 We note that $\overline{p}$ also preserves directed joins, since directed joins in $\Idl(R)$ coincide with those in $\SubOW(R)$ and joins of suplattice homomorphisms are computed pointwise.
 Thus, $\overline{p}$ preserves arbitrary joins.
 
 Before we show $\overline{p}$ preserves multiplication, it will be helpful to prove an ancillary result.
 We show $(Q \otimes \exists_{W})(p) = (Q \otimes \exists_{\addquot(W)})(p)$ for all monoid ideals $W$.
 The `$\le$' direction is clear, since $W \le \addquot_*\addquot(W)$.
 For the other direction, we recall that $\addquot_*\addquot(W) = \bigvee_{n \in \N} n\cdot W$. %
 So we must show $(Q \otimes \exists_{n\cdot W})(p) \le (Q \otimes \exists_{W})(p)$. We prove this by induction.
 For $n = 0$, this holds because $0_+ = \epsilon_0$
 and $(Q \otimes \epsilon_0)(p) = 0$ by the zero axiom of $p$.
 Now suppose $n = k+1$ and $p = \bigvee_\alpha x_\alpha \oplus r_\alpha$. We apply the additive axiom of $p$ to obtain
 \begin{align*}
  (Q \otimes \exists_{n\cdot W})(p) &= (Q \otimes \exists_{k\cdot W + W})(p) \\
                               &= (Q \otimes \nabla (\exists_{k\cdot W} \otimes \exists_{W})\mu_+)(p) \\
                               &\le (Q \otimes \nabla (\exists_{k\cdot W} \otimes \exists_{W}))\!\left(\bigvee_\alpha x_\alpha \oplus r_\alpha \oplus 1 \vee \bigvee_\alpha x_\alpha \oplus 1 \oplus r_\alpha\right) \\
                               &\le \bigvee_\alpha x_\alpha \otimes \exists_{k\cdot W}(r_\alpha) \vee \bigvee_\alpha x_\alpha \otimes \exists_{W}(r_\alpha) \\ %
                               &= (Q \otimes \exists_{k\cdot W})(p) \vee (Q \otimes \exists_{W})(p) \\
                               &= (Q \otimes \exists_{W})(p),
 \end{align*}
 where the final equality follows from the inductive hypothesis.
 
 Finally, we show $\overline{p}$ preserves multiplication. Let $I\circ J$ denote the product of the ideals $I$ and $J$ as elements of $\MonIdl R$ (or equivalently $\SubOW(R)$).
 By the above, we have $(Q \otimes \exists_{IJ})(p) = (Q \otimes \exists_{I\circ J})(p)$ and so we may conclude that $\overline{p}$ preserves binary products once we show that
 $(Q \otimes \exists_{I \circ J})(p) = (Q \otimes \exists_I)(p) \cdot (Q \otimes \exists_J)(p)$.
 
 By definition, $(Q \otimes \exists_{I \circ J})(p) = (Q \otimes \nabla (\exists_I \otimes \exists_J) \mu_\times)(p)$. But
 $p \in \overline{\OPAI}_R(Q)$ and so $(Q \oplus \mu_\times)(p) = (Q \oplus \iota_1)(p) \cdot (Q \oplus \iota_2)(p)$. Thus, if $p = \bigvee_\alpha x_\alpha \oplus r_\alpha$, we have
 \begin{align*}
  (Q \otimes \exists_{I \circ J})(p) &= (Q \otimes \nabla (\exists_I \otimes \exists_J))((Q \oplus \iota_1)(p) \cdot (Q \oplus \iota_2)(p)) \\
                                     &= (Q \otimes \nabla(\exists_I \otimes \exists_J))\!\left(\bigvee_{\alpha,\beta} x_\alpha x_\beta \oplus r_\alpha \oplus r_\beta\right) \\
                                     &= \bigvee_{\alpha,\beta} x_\alpha x_\beta \otimes (\exists_I(r_\alpha) \wedge \exists_J(r_\beta)) \\
                                     &= (Q \otimes \exists_I)(p) \cdot (Q \otimes \exists_J)(p),
 \end{align*}
 as required. Therefore, $\overline{p}$ is a quantale homomorphism.
 
 It remains to demonstrate that $p = (\overline{p} \oplus \O R)(\upsilon)$.  If $p = \bigvee_\alpha x_\alpha \oplus r_\alpha$, we have
 \begin{align*} %
  (\overline{p} \oplus \O R)(\upsilon) &= \bigvee_V (Q \otimes \exists_{\addquot(V)})(p) \oplus \stabpos(V) \\
                                    &= \bigvee_V (Q \otimes \exists_{V})(p) \oplus \stabpos(V) \\
                                    &= \bigvee_V \bigvee_\alpha x_\alpha \oplus \exists_{V}(r_\alpha) \oplus \stabpos(V) \\
                                    &= \bigvee_\alpha x_\alpha \oplus \bigvee_{V} \exists_{V}(r_\alpha) \cdot \stabpos(V) \\
                                    &= \bigvee_\alpha x_\alpha \oplus (\iota_1)_! \mu_\times(r_\alpha) \\
                                    &= (Q \otimes (\iota_1)_! \mu_\times)(p),
 \end{align*}
 where $V$ ranges over all monoid ideals and the penultimate equality comes from \cref{prop:saturation_from_join_over_stabpos}.
 Clearly, $p \le (Q \otimes (\iota_1)_! \mu_\times)(p)$. For the other direction we use that $p$ is an
 open prime anti-ideal so that $(Q \oplus \mu_\times)(p) = (Q \oplus \iota_1)(p) \cdot (Q \oplus \iota_2)(p) \le (Q \oplus \iota_1)(p)$. Applying $Q \otimes (\iota_1)_!$ we then have
 $(Q \otimes (\iota_1)_! \mu_\times)(p) \le (Q \otimes (\iota_1)_! \iota_1)(p) \le p$, since $(\iota_1)_! \iota_1 \le \mathrm{id}_R$.
 Thus, $(\overline{p} \oplus \O R)(\upsilon) = p$ as required.
\end{proof}

\begin{corollary}\label{prop:localic_spectrum_of_approximable_semiring}
 If $R$ is an overt approximable localic semiring, then the (localic) spectrum\index{spectrum!localic} of $R$ is given by $\Rad(R)$ equipped with the open prime anti-ideal
 \[(\rad \oplus \O R)(\upsilon) = \;\;\smashoperator{\bigvee_{V \in \SubOW(R)}}\; \rad\addquot(V) \oplus \stabpos(V) = \;\smashoperator{\bigvee_{a \in \O R}}\; \rad\addquot\jointcoz(a) \oplus a =
 \bigvee\left\{ \rad\addquot\jointcoz(a) \oplus \stabpos(V) \mid V \between a \right\},\]
 where $\rad\colon \Idl(R) \to \Rad(R)$ is the universal localic quotient.
\end{corollary}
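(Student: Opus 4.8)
The plan is to obtain the statement as an essentially formal consequence of \cref{prop:quantic_spectrum_of_approximable_semiring} together with the relationship between the quantic and localic spectra recorded in \cref{def:quantic_spectrum_as_OPAI}. Recall that $\OPAI_R$ is the restriction of $\overline{\OPAI}_R$ along the full embedding $\Loc\op \hookrightarrow \Quant$ sending a locale $X$ to the two-sided quantale $\O X$ (a frame homomorphism is a quantale homomorphism, and for a frame $\O X$ the quantale multiplication is meet, so the defining conditions of $\overline{\OPAI}_R(\O X)$ are exactly those of $\OPAI_R(X)$), and that $\Frm$ is a reflective subcategory of $\Quant$ with reflector $\locrefl$ and reflection unit $\rad$. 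Since \cref{prop:quantic_spectrum_of_approximable_semiring} exhibits $\Idl(R)$, equipped with $\upsilon$, as a representing object of $\overline{\OPAI}_R\colon \Quant \to \Set$, we have natural isomorphisms $\overline{\OPAI}_R(Q) \cong \Hom_{\Quant}(\Idl(R), Q)$; restricting to a frame $\O X$ and using the reflection adjunction gives natural isomorphisms $\OPAI_R(X) \cong \Hom_{\Quant}(\Idl(R), \O X) \cong \Hom_{\Frm}(\locrefl\Idl(R), \O X) = \Hom_{\Frm}(\Rad(R), \O X) \cong \Hom_{\Loc}(X, \Rad(R))$ in $X$, the penultimate identity being the definition of $\Rad(R)$ as the localic reflection of $\Idl(R)$ from \cref{section:quantale_of_overt_weakly_closed_ideals}. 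Hence $\Rad(R)$ represents $\OPAI_R$, i.e.\ it is the localic spectrum of $R$.

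The next step is to identify the universal element. By the Yoneda lemma the universal element of $\OPAI_R$ at $\Rad(R)$ is the image of $\id_{\Rad(R)}$ under the above chain of bijections; unwinding them, $\id_{\Rad(R)}$ corresponds to the reflection unit $\rad\colon \Idl(R) \twoheadrightarrow \Rad(R)$, which under $\overline{\OPAI}_R(Q) \cong \Hom_{\Quant}(\Idl(R), Q)$ corresponds in turn to $\overline{\OPAI}_R(\rad)(\upsilon) = (\rad \oplus \O R)(\upsilon)$. Equivalently, functoriality of $\overline{\OPAI}_R$ immediately places $(\rad \oplus \O R)(\upsilon)$ in $\overline{\OPAI}_R(\Rad(R))$, and the universal property of $(\Idl(R),\upsilon)$ transfers to a universal property of $(\Rad(R), (\rad \oplus \O R)(\upsilon))$ with respect to $\OPAI_R$. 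So the universal open prime anti-ideal of the localic spectrum is $(\rad \oplus \O R)(\upsilon)$.

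Finally I would read off the three displayed expressions by applying $\rad \oplus \O R$ to the three expressions for $\upsilon$ in \cref{prop:quantic_spectrum_of_approximable_semiring}. Here $\rad \oplus \O R$ is the frame-coproduct morphism induced by the frame homomorphism $\rad$ (equivalently, the localic reflection of the quantale homomorphism $\rad \otimes \id_{\O R}$ on $\Idl(R) \oplus \O R$); as a composite of join-preserving maps it preserves arbitrary joins, and it sends $\addquot(V) \oplus \stabpos(V)$ to $\rad\addquot(V) \oplus \stabpos(V)$, sends $\addquot\jointcoz(a) \oplus a$ to $\rad\addquot\jointcoz(a) \oplus a$, and sends $\addquot\jointcoz(a) \oplus \stabpos(V)$ to $\rad\addquot\jointcoz(a) \oplus \stabpos(V)$. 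Applying it to the corresponding joins yields the three stated formulae, and since those three joins already agree by \cref{prop:different_definitions_of_upsilon_agree} so do their images.

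I do not expect a genuine obstacle: the whole argument is the standard behaviour of a reflective subcategory plus join-preservation. The only mildly delicate points are bookkeeping ones --- checking that $\OPAI_R$ is literally the restriction of $\overline{\OPAI}_R$ to frames, and being careful that the $\oplus$ in the statement of the localic spectrum is the frame coproduct, so that $(\rad \oplus \O R)(\upsilon)$ must be read via the localic reflection of the quantale coproduct $\Idl(R) \oplus \O R$ --- both of which are already implicit in \cref{def:quantic_spectrum_as_OPAI}. In a short write-up one could simply cite that definition together with \cref{prop:quantic_spectrum_of_approximable_semiring} and then perform the join computation of the previous paragraph.
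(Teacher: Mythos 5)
Your argument is correct and is exactly the route the paper intends: the corollary is left without proof in the text, resting on the remark in \cref{def:quantic_spectrum_as_OPAI} that the localic spectrum is the localic reflection of the quantic spectrum, and your write-up simply spells out that reflection argument (restriction of $\overline{\OPAI}_R$ along the full reflective inclusion $\Frm \hookrightarrow \Quant$, transport of the universal element along $\rad$, and join-preservation of $\rad \oplus \O R$). No gaps.
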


The above results give sufficient conditions for the localic and quantic spectra to exist. It is natural to ask if they are necessary.
I do not know if the hypotheses of \cref{prop:quantic_spectrum_of_approximable_semiring} may be weakened, nor if the spectrum might ever exist but not be given by $\Idl(R)$.
However, do note that the definition of the quantale homomorphism $\overline{p}\colon \Idl(R) \to Q$ in the proof of \cref{prop:quantic_spectrum_of_approximable_semiring} does not require any hypotheses at all.
This gives a natural transformation from $\OPAI_R(-)$ to $\Hom(\Idl(R),-)$ and hence a morphism from $\Idl(R)$ to the quantic spectrum whenever it exists. So it is possible that $\Idl(R)$ might satisfy the same
condition as `weak moduli spaces' in algebraic geometry. If this is the case, then the quantic spectrum will always be of this form. This in turn might make us more confident that the theorem has a converse
(at least for overt semirings). %

On the other hand, the localic spectrum of any locally compact semiring can be found directly from the generalised presentation. There does not appear to be a good reason to expect every locally compact semiring
to be approximable, though we have not attempted to construct a specific counterexample. I do not know if the frame of the localic spectrum would still be given by $\Rad(R)$ in such a situation. %

\subsection{Special cases of the spectrum construction}

We can now apply \cref{prop:quantic_spectrum_of_approximable_semiring,prop:localic_spectrum_of_approximable_semiring} to describe the spectra of a number classes of localic semirings.

\begin{definition}
 Suppose $R$ is a \emph{topological} semiring and take $f \in R$. We define the \emph{open saturation} of $f$ to be $\osat(f) = \interior(\bigcap\{T \text{ saturated open} \mid f \in T\})$\glsadd{osat}.
\end{definition}

\begin{lemma}
 Suppose $R$ is a spatial localic semiring. Then $R$ satisfies the conditions of \cref{prop:quantic_spectrum_of_approximable_semiring} whenever $\forall f \in S.\ \exists g \in S.\ f \in \osat(g)$
 for every saturated open set $S$.
\end{lemma}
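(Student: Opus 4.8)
The plan is to show that $R$ is overt and approximable, so that \cref{prop:quantic_spectrum_of_approximable_semiring} (and hence \cref{prop:localic_spectrum_of_approximable_semiring}) applies. Overtness is free, since spatial locales are overt. So the real work is approximability, i.e.\ the inequality $s \le \bigvee_{V \between s}\stabpos(V)$ for every $s \in \Sats R$ (the reverse inequality always holds). The idea is to use spatiality twice: first to reinterpret saturated elements of $\O R$ as honest saturated open subsets of the topological semiring $|R|$ underlying $R$, and then to test the desired inclusion of opens pointwise, producing for each point a suitable overt weakly closed sublocale out of the hypothesis.

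For the first step I would identify $\O R$ with the frame of open subsets of $|R|$ (a spatial localic semiring is a topological semiring, as in the monoid case). The localic multiplication $\mu_\times$ induces on points the continuous multiplication $m \colon |R|\times|R| \to |R|$; here one checks that the space of points of the product locale $R\times R$ is exactly $|R|\times|R|$ with the product topology, so that $m^{-1}(s)$ is open and $\iota_1,\iota_2$ become the two projections after taking points. Applying the canonical frame homomorphism $\O R \oplus \O R \to \O(|R|\times|R|)$ to the defining inequality $\mu_\times(s) \le s\oplus s$ of a saturated element sends $\mu_\times(s)$ to $m^{-1}(s)$ and $s\oplus s$ to $s\times s$, and so shows that any $s \in \Sats R$, viewed as a subset of $|R|$, satisfies $fg \in s \implies (f\in s)\wedge(g\in s)$. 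Only this direction is needed; the converse may fail, since $R\times R$ need not be spatial.

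Next, for a point $y \in |R|$, the map $a \mapsto \llbracket y \in a\rrbracket$ is a suplattice homomorphism $\O R \to \Omega$, hence corresponds to an overt weakly closed sublocale $W_y \in \SubOW(R)$ with $\exists_{W_y} = \llbracket y \in (-)\rrbracket$, so that $W_y \between t \iff y \in t$. Therefore $\stabpos(W_y) = \bigwedge\{t \in \Sats R \mid y \in t\}$, a meet of opens which is the interior of their intersection. By the first step each such $t$ is a saturated open neighbourhood of $y$, so this interior contains $\osat(y) = \interior(\bigcap\{T \text{ saturated open} \mid y \in T\})$; that is, $\osat(y) \le \stabpos(W_y)$. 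Now fix $s \in \Sats R$ and a point $x \in s$. By the first step $s$ is a saturated open set, so the hypothesis supplies some $y \in s$ with $x \in \osat(y)$, whence $x \in \osat(y) \le \stabpos(W_y)$ and $W_y \between s$ (because $y \in s$). Thus $x \in \bigvee_{V \between s}\stabpos(V)$; as this holds for every point of $s$ and $R$ is spatial, $s \le \bigvee_{V \between s}\stabpos(V)$, which is exactly approximability.

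The one genuinely delicate point is the elementary description of saturated elements in the spatial case: it forces one to reason through the possibly non-spatial locale $R\times R$, identify its point space and the product topology, and verify that applying the comparison map $\O R \oplus \O R \to \O(|R|\times|R|)$ retains the inequality $\mu_\times(s) \le s\oplus s$ in the one direction we use. Once that is in place, the remainder is a routine unwinding of the definitions of $\stabpos$, $\osat$ and $\between$, together with the fact that in a spatial frame inclusions of opens may be checked on points.
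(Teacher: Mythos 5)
Your proposal is correct and follows essentially the same route as the paper's (very terse) proof: overtness is immediate from spatiality, and approximability comes from the chain $s \subseteq \bigcup_{g\in s}\osat(g) \le \bigvee_{\widetilde g \between s}\stabpos(\widetilde g) \le \bigvee_{V\between s}\stabpos(V)$, with your $W_y$ being exactly the weak closure $\widetilde y$ of the point $y$ that the paper uses. The extra details you supply (localic saturation implies set-theoretic saturation via the comparison map, and $\osat(y)\le\stabpos(W_y)$) are precisely the facts the paper leaves implicit, so this is an expansion rather than a different argument.
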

\begin{proof}
 Note that $R$ is overt since it is spatial. It is approximable since \[S \subseteq \bigcup_{g \in S} \osat(g) = \bigvee_{\widetilde{g} \between S} \stabpos(\widetilde{g}) \le \bigvee_{V \between S} \stabpos(V),\]
 where $\widetilde{g}$ is the weak closure of $\{g\}$.
\end{proof}

\begin{remark}
 Note that even if $R$ is spatial, there does not in general seem to be any reason to expect that its overt weakly closed ideals can be described constructively in terms of
 certain set-theoretic ideals on the underlying set of points. Instead, this is the case when $R$ is \emph{reducible} in the sense of \cite{CirauloStronglyOvert}. %
\end{remark}

\begin{corollary}
 Every discrete semiring $R$ is approximable and its localic spectrum coincides with the Zariski spectrum\index{spectrum!Zariski}. %
\end{corollary}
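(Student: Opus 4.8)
The plan is to verify the hypotheses of \cref{prop:localic_spectrum_of_approximable_semiring} and then identify the frame it produces with the Zariski spectrum presented in \cref{section:frame_propositional_theory}. First, a discrete semiring really is a localic semiring: finite products of discrete locales are discrete, so the multiplication, addition and units lift automatically; and it is overt by \cref{prop:discete_is_overt}. It remains to show it is approximable, and for this I would invoke the lemma above on spatial localic semirings, so that it suffices to check that for every saturated open $S$ and every $f \in S$ there is some $g \in S$ with $f \in \osat(g)$. In a discrete semiring every subset is open, so $\osat(f) = \interior\bigl(\bigcap\{T \text{ saturated}\mid f \in T\}\bigr)$ is just that intersection, which contains $f$ by construction (concretely it is the downset of $f$ under divisibility, and $f \cdot 1 = f$). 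Hence the condition holds with $g = f$, and $R$ is approximable.

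Granting this, \cref{prop:localic_spectrum_of_approximable_semiring} identifies the localic spectrum of $R$ with $\Rad(R)$, the localic reflection of the quantale $\Idl(R)$ of overt weakly closed ideals, equipped with the displayed open prime anti-ideal. The substance of the argument is then to see that for discrete $R$ this agrees with the classical set-theoretic frame of radical ideals. I would argue that every sublocale of a discrete locale is weakly closed (\cref{prop:sublocales_of_discrete_weakly_closed}) and that an overt sublocale of a discrete locale is open, so $\SubOW(R) \cong \powerset{R}$ as a suplattice; unwinding $0_+$, $1_\times$ and $fg = \nabla(f \otimes g)\mu_\times$ through the correspondence between subsets and suplattice maps to $\Omega$, this isomorphism carries the multiplicative quantale structure to the elementwise product $VW = \{ab \mid a \in V,\ b \in W\}$ with unit $R$, and carries $0_+$ to $\{0\}$. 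The nucleus $\nu$ cuts this down to the additive submonoids of $R$, the two-sided reflection then imposes absorption under multiplication by $R$ to yield exactly the quantale $\Idl(R)$ of \cref{ex:quantale_of_ideals}, and the localic reflection selects the radical ideals. By \cref{prop:presentation_for_Idl_R} and its corollary this frame is precisely the one presented in \cref{section:frame_propositional_theory}; since for discrete $R$ the generalised presentation of $\Spec R$ specialises to that ordinary presentation, the universal open prime anti-ideal is carried along too, so the localic spectrum coincides with the Zariski spectrum.

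Most of this is bookkeeping; the one step needing genuine care is making the identification $\SubOW(R) \cong \powerset{R}$ respect \emph{all} of the structure at once — both the multiplicative quantale operation and its unit, the additive operation and $0_+$, and directed joins — so that the quotient tower $\SubOW(R) \twoheadrightarrow \AddSub(R) \twoheadrightarrow \Idl(R) \twoheadrightarrow \Rad(R)$ matches the discrete tower of subsets, additive submonoids, ideals and radical ideals stage by stage. I do not expect a real obstacle, since every construction in play is expressed by the same internal-logic formulae and these are interpreted in the powerset hyperdoctrine of a discrete locale in the naive, expected way.
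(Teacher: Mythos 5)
Your proposal is correct and follows essentially the same route as the paper: approximability via the spatial-semiring lemma with $g = f$ (since $f \in \osat(f)$), then \cref{prop:localic_spectrum_of_approximable_semiring} to get $\Rad(R)$, which is identified with the classical frame of radical ideals because overt weakly closed sublocales of a discrete locale are exactly subsets. The paper simply asserts this last identification in one line (having noted it earlier in \cref{section:quantale_of_overt_weakly_closed_ideals}), whereas you spell out the matching of the quotient tower; the extra care is harmless but not needed.
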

\begin{proof}
 It is clear that $R$ is approximable, since $f \in \osat(f)$. By \cref{prop:localic_spectrum_of_approximable_semiring}, the localic spectrum is given by $\Rad(R)$,
 which is the usual Zariski spectrum, since our definition of $\Rad(R)$ coincides with the usual one for discrete semirings.
\end{proof}

\begin{corollary}
 A continuous frame $L$ equipped with the Scott topology is an overt approximable localic semiring and its localic spectrum recovers the locale $X$ represented by $L$ (as in Hofmann--Lawson duality\index{spectrum!Hofmann-Lawson@Hofmann--Lawson}).
\end{corollary}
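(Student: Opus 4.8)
The plan is to verify the three hypotheses of \cref{prop:localic_spectrum_of_approximable_semiring}, namely that $R := L$ equipped with its Scott topology is an overt, approximable localic semiring, and then to identify $\Rad(R)$ with $L$ so that the stated conclusion follows. The genuinely hard inputs are all background facts, which I would quote rather than reprove.

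First I would assemble the basic structure. For a continuous frame $L$ the space $L_{\mathrm{Scott}}$ is sober, so we may treat it as a spatial locale, which is overt by the corollary that spatial locales are overt; and it is locally compact (its frame of opens is a continuous frame), which is the only nonformal ingredient (see \cref{section:local_compactness} and \cite{hofmann1978spectral}). Being locally compact, its square carries the Scott topology of the product dcpo $L \times L$, on which $\wedge$ and $\vee$ are continuous since they preserve directed joins, and the frame of that square is $\O L \oplus \O L$; hence the topological semiring structure (addition $\vee$, multiplication $\wedge$, identities $0$ and $1$) makes $R$ a localic semiring. To pin down $\Sats R$, observe that every open $s$ of $R$ is an upset, so the sequent $x\wedge y \in s \vdash_{x,y} x \in s$ holds trivially (as $x\wedge y \le x$), i.e.\ $\mu_\times(s) \le \iota_1(s)$; thus every open is saturated and $\Sats R = \O R$.

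For approximability I would compute $\stabpos$ on the principal sublocales. By \cref{prop:scott_closed_weakly_closed} the overt weakly closed sublocales of $R$ are in bijection with the Scott-closed subsets of $L$; write $V_x$ for the one corresponding to the principal downset ${\downarrow}x$, so that $V_x \between s \iff x \in s$ for any open $s$. Then $\stabpos(V_x) = \bigwedge\{s \in \O R \mid x \in s\} = \interior({\uparrow}x)$, and I claim this equals $\twoheaduparrow x$: the inclusion $\twoheaduparrow x \subseteq \interior({\uparrow}x)$ holds because $\twoheaduparrow x$ is Scott-open (\cref{prop:base_for_Scott_topology}), and conversely if $y \in \interior({\uparrow}x)$ and $D$ is directed with $\dirsup D \ge y$ then $\dirsup D \in \interior({\uparrow}x)$, so some $d \in D$ lies in $\interior({\uparrow}x) \subseteq {\uparrow}x$, i.e.\ $x \ll y$. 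Now for any open $s$, continuity of $L$ gives $s = \bigcup_{x \in s}\twoheaduparrow x$, so $s = \bigvee_{V_x \between s}\stabpos(V_x) \le \bigvee_{V \between s}\stabpos(V) \le s$, whence $R$ is approximable (and, incidentally, \cref{prop:approximable_supercontinuous} then shows $\O(L_{\mathrm{Scott}})$ is supercontinuous).

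Lastly I would compute $\Rad(R)$ by transporting the quantale structure on $\SubOW(R)$ along the bijection with Scott-closed subsets of $L$, using that a suplattice homomorphism into $\Omega$ is determined by its preimage of $\top$: one finds that the multiplication $V_S \circ V_T$ corresponds to $S \cap T$, while $0_+$ corresponds to $\{0\}$ and $1_\times$ to $L$. Consequently the additive submonoids of $\SubOW(R)$ --- the elements $a$ with $0_+ \le a$ and $a + a \le a$ --- are precisely the nonempty Scott-closed subsets closed under binary joins, that is, the principal downsets; so $\AddSub(R) \cong L$ as suplattices (joins matching, using that directed joins in $\AddSub(R)$ are computed as in $\SubOW(R)$), with $1_\times \leftrightarrow 1 = \top$ and multiplication $\leftrightarrow$ meet. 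Hence $\AddSub(R)$ is already two-sided and, its multiplication being idempotent, is a frame; therefore $\Idl(R) = \AddSub(R)$ and $\Rad(R) = \locrefl(\Idl(R)) = \Idl(R) \cong L$, and \cref{prop:localic_spectrum_of_approximable_semiring} gives that $\Spec R$ has frame of opens $\Rad(R) \cong L$, hence is (isomorphic to) the locale $X$ of Hofmann--Lawson duality. I expect the real work to lie in this last paragraph: carefully tracking how the successive quotients $\SubOW(R) \twoheadrightarrow \AddSub(R) \twoheadrightarrow \Idl(R) \twoheadrightarrow \Rad(R)$ act in the Scott-closed-subset picture --- in particular that the nucleus $\nu\colon a \mapsto \bigvee_{n} n\cdot a$ sends $V_S$ to the sublocale of ${\downarrow}(\bigvee S)$ --- together with the constructive soundness of the quoted facts about $L_{\mathrm{Scott}}$.
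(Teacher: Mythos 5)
Your proof is correct and follows essentially the same route as the paper's: overtness via spatiality, approximability via the observation that $\twoheaduparrow g$ is Scott-open, lies in every upset containing $g$, and covers each open by continuity (the paper packages this through its $\osat$ lemma for spatial semirings, you compute $\stabpos$ of the weak closure of a point directly, but the content is identical), and finally the identification of the overt weakly closed ideals with principal downsets, hence with $L$ itself, so that the quantale of ideals is already a frame. The only difference is that you spell out the computation of $\AddSub(R)\cong L$ in more detail than the paper does.
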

\begin{proof}
 Recall from \cref{section:local_compactness} that a continuous frame is exponentiable in $\Loc$ and $\O(\Srpnsk^X)$ is the frame of the Scott topology on $L$. Furthermore,
 $\Srpnsk^X$ is a spatial localic semiring. %
 In this proof we view $L$ both as the frame of opens of $X$ and as the spatial localic semiring $\Srpnsk^X$.
 
 Suppose $S \subseteq L$ is Scott-open and take $f \in S$. (Note that $S$ is an upset and hence automatically saturated.) Since $L$ is continuous, we have $f = \dirsup\{g \in L \mid g \ll f\}$.
 Now since $S$ is Scott-open, there is one such $g \in S$. Then $\twoheaduparrow g = \{h \in L \mid g \ll h\}$ is a Scott-open set containing $f$ and contained in every upset containing $g$ and hence $f \in \osat(g)$
 as required.
 
 The quantic spectrum is then given by the quantale of overt weakly closed ideals. By \cref{prop:scott_closed_weakly_closed} these are in bijection with the Scott-closed ideals,
 which are simply principal downsets and hence in turn in bijection with elements of $L$. This quantale is already a frame and so coincides with the localic spectrum.
\end{proof}

It remains to show that the Gelfand spectrum\index{spectrum!Gelfand|(} is a special case of our construction. For simplicity, we will show this for the locale of real-valued functions on a compact regular locale.
The complex case is essentially identical. It should also be possible to prove it directly from the axioms of a commutative localic C*-algebra\index{C* algebra@C*-algebra} as given in \cite{Henry2016}, but we content ourselves
with proving it for function spaces. This viewpoint has the advantage of being more amenable to generalisation.

The proof of this result is somewhat more involved than the other cases. In order to better exhibit the idea behind the proof, we will first prove it with classical axioms before providing the full
constructive result.

\begin{proposition}
 Suppose $X$ is a compact regular locale. Under the assumption of excluded middle and the axiom of choice, $A = \R^X$\index{ring of continuous functions|(} is an (overt) approximable localic ring.
\end{proposition}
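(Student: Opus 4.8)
The plan is to verify the two properties separately: overtness first (the easy part), then approximability (the real work). Since $X$ is compact regular, it is in particular spatial, hence overt by the corollary that spatial locales are overt; and $A = \R^X$ is a spatial localic ring (its points are the continuous functions $X \to \R$ with the appropriate topology). So overtness is immediate and I would dispatch it in a sentence.

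For approximability, by the lemma on spatial localic semirings it suffices to show that for every saturated open $S \subseteq A$ and every $f \in S$ there is some $g \in S$ with $f \in \osat(g)$. The natural candidate is to take a suitable power or scalar rescaling of $f$: intuitively, if $f$ is cozero (invertible) precisely where $g$ is, and $g \in S$ with $S$ saturated, then $f \in \osat(g)$ should follow. Concretely, I would argue as follows. A saturated open $S$ satisfies: if $gh \in S$ then $g \in S$. The open saturation $\osat(g) = \interior(\bigcap\{T \text{ saturated open} \mid g \in T\})$ is the largest saturated open contained in every saturated open containing $g$; equivalently it consists of those $f$ which lie in every saturated open that contains $g$ — and a key reformulation is that $f \in \osat(g)$ iff $f$ can be (locally, on a neighbourhood basis) expressed as $f = g \cdot h$ for continuous $h$, i.e.\ $g$ divides $f$ in a pointwise sense on an open set. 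So the task reduces to: given $f \in S$, find $g \in S$ such that $g$ pointwise-divides $f$ on a neighbourhood. The obvious move is $g = f$ itself, but we need the stronger fact that $f$ lies in the \emph{interior} of the intersection of saturated opens containing $f$, which is not automatic for non-discrete $A$ — this is exactly the gap the compact regular hypothesis must fill.

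The hard part will be producing this $g$. Here is where compactness and regularity enter: the idea is to use a \emph{smooth cutoff} or rather a division trick. Given $f \in \R^X$, consider $g = f^{1/2}$... no — $\R^X$ has no square roots in general. A better approach: the saturation of $f$ relates to the \emph{cozero set} $\coz(f) = \{x : f(x) \neq 0\}$, and $f \in \osat(g)$ should hold whenever $\coz(f) \subseteq \coz(g)$ together with a uniformity condition ensuring $g$ divides $f$ with bounded quotient near the boundary. Using regularity of $X$ one can, for an open $U$ and a point-cozero function, find an auxiliary function that is $1$ on a sub-neighbourhood and supported in $U$ (a localic Urysohn-type lemma, available since $X$ is compact regular hence normal-like in the pointfree sense). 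The plan is: given $f \in S$, let $g$ be $f$ multiplied by a function that tames the behaviour of $f$ where $|f|$ is small — for instance take $g$ to agree with $f$ where $|f| \geq \varepsilon$ and interpolate — arranging that $g = f \cdot h$ for a bounded continuous $h$, so $g \in S$ by saturation, while $f$ lies in the interior of $\{ \text{saturated opens} \ni g\}$ because the construction gives a whole neighbourhood of $f$ divisible by $g$. I would then push through the classical version using the axiom of choice to select such interpolating functions pointwise/locally and compactness to patch finitely many local choices, deferring the delicate constructive replacement (presumably via the localic reals and approximate reasoning, as the subsequent proposition must handle) to the constructive proof. The main obstacle is pinning down the precise characterisation of $\osat(g)$ in $\R^X$ and verifying the division/patching argument respects the interior operation — essentially a careful exercise in the theory of rings of continuous functions on compact regular spaces.
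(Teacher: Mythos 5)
Your overall strategy is the paper's: reduce via the spatial criterion to ``for every $f \in S$ there is $g \in S$ with $f \in \osat(g)$'', then build $g$ by flattening $f$ near its zero set and use pointwise division where $f$ is bounded away from zero. But you have the direction of divisibility backwards at both of the two places where it matters, and this is not a typo but a structural error. Since saturation says $xy \in S \implies x \in S$, membership in a saturated set passes from a product to its \emph{factors}. Hence (a) your claimed reformulation ``$f \in \osat(g)$ iff $f = g\cdot h$, i.e.\ $g$ divides $f$'' is wrong: the sufficient condition for $f$ to lie in every saturated open containing $g$ is that $f$ \emph{divides} $g$, i.e.\ $g = kf$; and (b) your step ``$g = f\cdot h$, so $g \in S$ by saturation'' is invalid, because saturation never puts \emph{multiples} of elements of $S$ into $S$. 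In the paper's proof these two steps are handled quite differently: $g \in S$ comes from a \emph{norm estimate} ($g = ([f-\tfrac{1}{2}\epsilon]\vee 0) + ([f+\tfrac{1}{2}\epsilon]\wedge 0)$ satisfies $\norm{f-g} \le \tfrac{1}{2}\epsilon$, and one has reduced to $u = B_\epsilon(f) \subseteq S$), while $f \in \osat(g)$ comes from showing that \emph{every} $h$ in the ball $B_{\epsilon/4}(f)$ divides $g$, so that $B_{\epsilon/4}(f)$ is contained in every saturated open containing $g$ and $f$ is an interior point of the intersection.

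The quantitative heart, which you gesture at but do not pin down, is the uniformity over the neighbourhood: one fixed $g$ must be divisible by all $h$ with $\norm{f-h} < \tfrac{1}{4}\epsilon$ simultaneously. This works because $g$ vanishes on $\{x \mid \abs{f(x)} < \tfrac{1}{2}\epsilon\}$, while on $\{x \mid \abs{f(x)} > \tfrac{1}{4}\epsilon\}$ the reverse triangle inequality gives $\abs{h(x)} \ge \abs{f(x)} - \abs{f(x)-h(x)} > 0$, so $k = g/h$ can be defined by gluing $0$ on the first open and $h^{-1}g$ on the second; these two opens cover $X$ and agree on the overlap because $g$ vanishes there. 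No Urysohn-type lemma and no compactness-based patching of finitely many local choices is needed — the cover has two elements and the gluing is explicit; compactness enters only in making $\R^X$ a Banach space under the sup norm so that the basic opens $B_\epsilon(f)$ exist and the estimates are global. As written, your argument does not establish either $g \in S$ or $f \in \osat(g)$, so the gap is genuine.
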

\begin{proof}
 Under these assumptions $X$ is spatial (see \cite{PicadoPultr}).
 It is well known that the function space $\R^X$ is metrisable and in fact has the structure of a Banach space with the sup norm $\norm{f} = \sup_{x \in X} \abs{f(x)}$.
 We must show that $\forall f\in u.\ \exists g \in u.\ f \in \osat(g)$ for every open set $u$ in $\R^X$.
 We may take $f \in A$ and assume $u$ is a basic open $B_{\epsilon}(f)$ for some $\epsilon > 0$.
 
 The absolute value map and the meet and join operations on $\R$ induce corresponding pointwise operations on $A$. These satisfy
 $h \vee 0 = \tfrac{1}{2}(h + \abs{h})$ and $h \wedge 0 = \tfrac{1}{2}(h - \abs{h})$. Note that $f = (f \vee 0) + (f \wedge 0)$.
 We define a function $g = ([f - \tfrac{1}{2} \epsilon] \vee 0) + ([f + \tfrac{1}{2} \epsilon] \wedge 0)$ where by $\tfrac{1}{2} \epsilon$ we mean $\tfrac{1}{2} \epsilon \cdot 1 \in A$.
 Intuitively, $g$ is obtained by `thickening' the zero set of $f$.
 An easy calculation shows $\norm{f - g} = \tfrac{1}{2}\norm{\abs{f-\tfrac{1}{2}\epsilon} - \abs{f+\tfrac{1}{2}\epsilon}} \le
 \tfrac{1}{2}\norm{(f-\tfrac{1}{2}\epsilon) - (f+\tfrac{1}{2}\epsilon)} = \norm{\tfrac{1}{2}\epsilon} = \tfrac{1}{2}\epsilon$,
 where the inequality is from the reverse triangle inequality. So $g \in B_\epsilon(f)$.
 
 We claim that every saturated open $t$ containing $g$ also contains $B_{\epsilon/4}(f)$.
 (This immediately gives $f \in B_{\epsilon/4}(f) \subseteq \osat(g)$, as required.)
 We can show this by finding a $k \in A$ for each $h \in B_{\epsilon/4}(f)$ such that $kh = g$.
 It would then follow that $kh \in t$ and hence $h \in t$ by saturation. 
 
 Let $F = B_{\epsilon/4}(f)$. With a view towards making this argument constructive, we will define a continuous map $\psi\colon F \to A$ sending each $h$ to an appropriate $k$.
 
 Note that the restriction of the evaluation map $\ev\colon A \times X \to \R$ to $F \times f^{-1}([-\tfrac{1}{4}\epsilon, \tfrac{1}{4}\epsilon]\comp)$ factors through $\R^* = (-\infty, 0) \cup (0, \infty)$. %
 To see this consider a pair $(h,x)$ in the subspace $F \times f^{-1}([-\tfrac{1}{4}\epsilon, \tfrac{1}{4}\epsilon]\comp)$.
 Here $h$ satisfies $\norm{f - h} < \tfrac{1}{4}\epsilon$ and $x$ satisfies $\tfrac{1}{4}\epsilon < \abs{f(x)}$.
 Thus, $\abs{f(x)-h(x)} < \tfrac{1}{4}\epsilon < \abs{f(x)}$, or equivalently, $0 < \abs{f(x)} - \abs{f(x) - h(x)}$.
 But then $0 < \abs{f(x)} - \abs{f(x) - h(x)} \le \abs{f(x) - (f(x) - h(x))} = \abs{h(x)}$ by the reverse triangle inequality. So we may conclude that $h(x)$ lies in $\R^*$ as required.
 
 We can now define the map $\psi\colon F \to \R^X$ as follows.
 We specify that the uncurried form $\psi^\flat\colon F \times X \to \R$
 restricts to give the maps $\psi^\flat\vert_{F \times f^{-1}((-\tfrac{1}{2}\epsilon, \tfrac{1}{2}\epsilon))}\colon (h,x) \mapsto 0$ and
 $\psi^\flat\vert_{F \times f^{-1}([-\tfrac{1}{4}\epsilon, \tfrac{1}{4}\epsilon]\comp)}\colon (h,x) \mapsto (h(x))^{-1} \times g(x)$.
 To see that these agree on the overlap, we show that $g$ is zero on $f^{-1}(-\tfrac{1}{2}\epsilon, \tfrac{1}{2}\epsilon)$.
 Recall that $g = ([f - \tfrac{1}{2} \epsilon] \vee 0) + ([f + \tfrac{1}{2} \epsilon] \wedge 0)$. If $x$ lies in $f^{-1}(-\tfrac{1}{2}\epsilon, \tfrac{1}{2}\epsilon)$, we have
 $f(x) < \tfrac{1}{2}\epsilon$ and $f(x) > -\tfrac{1}{2}\epsilon$. So $f(x) - \tfrac{1}{2}\epsilon < 0$ and $f(x) + \tfrac{1}{2}\epsilon > 0$. We then quickly see that $g(x) = 0$, as required.
 
 Now if $h \in F$, it is easy to see that $h \times \psi(h) = g$, since $g(x) = 0$ on $f^{-1}(-\tfrac{1}{2}\epsilon, \tfrac{1}{2}\epsilon)$.
\end{proof}

In order to make this result constructive, we will replace the points $f$ and $g$ with overt sublocales $F$ and $G$.
The function $\psi$ will also acquire an additional argument, intuitively so it can vary for different choices of $f$ in $F$.
We rely on a number of results from \cite{Henry2016}. %
\begin{proposition}\label{prop:gelfand_approximable}
 Suppose $X$ is a compact regular locale. Then $A = \R^X$ is an overt approximable localic ring.
\end{proposition}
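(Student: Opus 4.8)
The plan is to lift the classical argument above to the pointfree setting, replacing the chosen functions $f$ and $g$ by overt sublocales $F$ and $G$ and the chosen multiplier $k$ by a locale map carrying an extra parameter. Throughout I would lean on \cite{Henry2016}: for $X$ compact regular, $A = \R^X$ is a localic metric space under the sup-metric, hence is overt (so ``overt'' in the statement is automatic) and has a base of open balls. By the observations made just after the definition of approximability — that only the $\le$ inequality needs checking, and only on a base of saturated opens — it suffices to show that every open ball $B_\delta(F)$ lying well inside a given basic saturated open $s$ (with $F$ a formal ball, which is a positive overt sublocale of $A$) is contained in $\stabpos(G)$ for a suitable $G \in \SubOW(A)$ with $G \between s$.

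The main ingredient is the \emph{thickening map} $\theta_\epsilon \colon A \to A$, $h \mapsto ([h - \tfrac{1}{2}\epsilon\cdot 1] \vee 0) + ([h + \tfrac{1}{2}\epsilon\cdot 1] \wedge 0)$, a locale morphism built from $+$, the pointwise lattice operations and the constant $\tfrac{1}{2}\epsilon$, which contracts the sup-metric by less than $\tfrac{1}{2}\epsilon$ and whose value $\theta_\epsilon(f')$ vanishes on the open $\{x : \abs{f'(x)} < \tfrac{1}{2}\epsilon\}$. Given a basic saturated open $s$ and radii of order $\epsilon$ chosen so that $B_\epsilon(F) \le s$ as opens, I would set $G = \SubOW(\theta_\epsilon)(F)$ (the weak closure of the $\theta_\epsilon$-image of $F$, automatically in $\SubOW(A)$). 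Since $\theta_\epsilon$ contracts distances, $(\Sloc\theta_\epsilon)_!(F) \le B_\epsilon(F) \le s$, so $F \le \theta_\epsilon^*(s)$, whence $F \between \theta_\epsilon^*(s)$ because $F$ is positive, and therefore $G \between s$ by the identity $\SubOW(\theta_\epsilon)(F) \between s \iff F \between \theta_\epsilon^*(s)$. This settles the ``$G$ meets $s$'' half.

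The heart of the proof is to show $B_\delta(F) \le \stabpos(G)$, i.e.\ that every saturated open $t$ with $G \between t$ contains $B_\delta(F)$. I would mimic the classical multiplier $k = \theta_\epsilon(f')/h$: for $h$ suitably close to $f'$ it is defined by $0$ on $\{x : \abs{f'(x)} < \tfrac{1}{2}\epsilon\}$ (where $\theta_\epsilon(f')$ vanishes) and by the genuine quotient on $\{x : \abs{h(x)} > 0\}$; the metric estimates make these two opens cover $X$ and agree on the overlap, so they glue to a $k$ with $k \cdot h = \theta_\epsilon(f')$. To make this constructive I would assemble all these multipliers into one locale map $\psi \colon B_\delta(F) \times F \to A$ (the factor $F$ being the promised extra argument) by prescribing its uncurried form $\psi^\flat \colon B_\delta(F) \times F \times X \to \R$ piecewise on the above cover of $B_\delta(F) \times F \times X$, exactly as the classical $\psi^\flat$ was glued, so that ``$\psi(h,f')\cdot h = \theta_\epsilon(f')$'' becomes an equality of locale maps checked on the two pieces. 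Then, working in the internal logic of the hyperdoctrine of opens of overt locales: ``$G \between t$'' unfolds, via $\SubOW(\theta_\epsilon)(F) \between t \iff F \between \theta_\epsilon^*(t)$, to ``$\exists f' \in F.\ \theta_\epsilon(f') \in t$''; feeding in $\psi$ and the saturation of $t$ gives ``$\exists f' \in F.\ \forall h \in B_\delta(F).\ h \in t$'', and since the body no longer mentions $f'$ this yields $B_\delta(F) \le t$. As $\stabpos(G) = \bigwedge\{t \in \Sats A \mid G \between t\}$, we conclude $B_\delta(F) \le \stabpos(G)$; joining over all balls well inside $s$ (which cover $s$ since $A$ is a metric locale and $s$ is open) gives $s \le \bigvee_{V \between s}\stabpos(V)$, so $A$ is approximable.

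I expect the main obstacle to be exactly this third step: upgrading $\psi$ from a pointwise recipe to an honest locale map. Concretely one must (a) verify pointfree that $\{x : \abs{f'(x)} < \tfrac{1}{2}\epsilon\}$ and $\{x : \abs{h(x)} > 0\}$ cover $B_\delta(F) \times F \times X$ — a consequence of the reverse triangle inequality and the located order of $\R$, expressible internally since $\R$ has open inequality; (b) check that the two pieces of $\psi^\flat$ agree on the overlap (where $\theta_\epsilon(f')$ is provably $0$); and (c) transport the resulting identity of locale maps across the existential quantification over the overt sublocale $F$. The accompanying metric bookkeeping (the precise choice of $\delta$, the ball radii and the thickening width $\epsilon$ so that all the inequalities align) is routine and I would not dwell on it; the delicate part is the pointfree gluing and the passage through overtness of $F$.
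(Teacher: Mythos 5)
Your proposal is correct and follows essentially the same route as the paper: the thickening map (your $\theta_\epsilon$, the paper's $\zeta$), the sublocale $G$ taken as the (weak closure of the) image of $F$ under it, the glued multiplier $\psi$ with $\psi(h,f')\cdot h = \theta_\epsilon(f')$, and the final appeal to positivity of $F$ and saturation of $t$ are exactly the paper's ingredients. The only differences are cosmetic: the paper's $\psi$ is defined on $F \times F$ with the hypothesis $\diam(F) < \tfrac{1}{4}\epsilon$ playing the role of your $\delta$, and its covering pair uses $\abs{f(x)} > \tfrac{1}{4}\epsilon$ in place of $\abs{h(x)} > 0$.
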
 %
\begin{proof}
 It is shown in \cite{Henry2016} that $A$ is overt and furthermore a localic Banach algebra. 
 We must show that $u \le \bigvee_{V \between u} \stabpos(V)$ for all $u \in \O A$.
 
 Given an overt sublocale $F$ of $A$ and a rational $\epsilon > 0$, we define $B_\epsilon(F)$\glsadd{BepsilonF} by the expression $\{h \colon A \mid \exists f\colon F.\ \norm{h-f} < \epsilon\}$ in the internal logic
 of the coherent hyperdoctrine of open sublocales. %
 By \cite{Henry2016}, we may express $u$ as $u = \bigvee_i F_i$ where each $F_i$ is a positive open sublocale such that $B_{\epsilon_i}(F_i) \le u$ for some rational $\epsilon_i > 0$.
 Furthermore, each $F_i$ can then in turn be expressed as a join of the open sets contained in it of diameter less than $\tfrac{1}{4}\epsilon_i$.\footnote{A sublocale $F$ has diameter less than a
 positive rational $\delta$ if $\,\vdash_{x,y \colon F} \norm{x-y} < \delta$ in the internal logic.}
 Thus, we may assume $\diam(F_i) < \frac{1}{4}\epsilon_i$ without loss of generality. %
 
 Consider $F = F_i$ and $\epsilon = \epsilon_i$ for some $i$.
 The idea is to construct an overt sublocale $G$ such that $G \between B_\epsilon(F)$ and such that every saturated open $t$ which meets $G$ contains $F$.
 If we can do this, we could deduce $F \le \bigwedge\{t \in \Sats A \mid G \between t\} = \stabpos(G)$. And clearly $G \between u$,
 since $G \between B_\epsilon(F) \le u$.
 Therefore, $F \le \bigvee_{V \between u} \stabpos(V)$. The result would then follow by taking the join over all such $F$.

 We now describe how to construct $G$. The intuition behind our construction is that we want to find functions which are sufficiently close to those in $F$,
 but which have inflated zero sets. The absolute value map and the meet and join operations on $\R$ induce `pointwise' operations on $A$ which satisfy
 $h \vee 0 = \tfrac{1}{2}(h + \abs{h})$ and $h \wedge 0 = \tfrac{1}{2}(h - \abs{h})$ in the internal logic. Note that the map defined by $f \mapsto (f \vee 0) + (f \wedge 0)$ in the internal logic is simply the identity.
 We define a locale map $\zeta\colon F \to A$ by $f \mapsto ([f - \tfrac{1}{2} \epsilon] \vee 0) + ([f + \tfrac{1}{2} \epsilon] \wedge 0)$.
 We set $G = (\Sloc \zeta)_!(F)$. %
 
 Let us show that $G \between B_\epsilon(F)$. Note that $G = (\Sloc \zeta)_!(F) \between B_\epsilon(F) \iff F \between \zeta^*(B_\epsilon(F))$, or in the internal logic,
 $\exists f\colon F.\ \zeta(f) \in B_\epsilon(F)$. So by the definition of $B_\epsilon(F)$, we must show $\exists f\colon F.\ \exists f'\colon F.\ \norm{\zeta(f) - f'} < \epsilon$.
 A straightforward calculation in the internal logic gives $\norm{\zeta(f) - f} = \tfrac{1}{2}\norm{\abs{f+\tfrac{1}{2}\epsilon} - \abs{f-\tfrac{1}{2}\epsilon}} \le
 \tfrac{1}{2}\norm{(f+\tfrac{1}{2}\epsilon) - (f-\tfrac{1}{2}\epsilon)} = \tfrac{1}{2}\norm{\epsilon} = \tfrac{1}{2}\epsilon$,
 where the inequality is from the reverse triangle inequality. (Here the judgemental inequality relation can be defined from equality and $\wedge$ in the usual way.) %
 Thus, $\norm{\zeta(f) - f} < \epsilon$. Now since $F$ is positive, we have $\exists f\colon F.\ \top$. Combining these yields $\exists f\colon F.\ \norm{\zeta(f) - f} < \epsilon$ and the desired result follows. %
 
 Now we show that every saturated open $t$ which meets $G$, contains $F$.
 The plan is to construct a localic map $\psi\colon F\times F \to A$ such that $\zeta(f) = h \times \psi(f,h)$ in the internal logic.
 
 Given such a map $\psi$, we can conclude the result as follows. %
 Since $G \between t$, we know $\exists f \colon F.\ \zeta(f) \in t$. %
 Now since $\zeta(f) = h \times \psi(f,h)$, we have $\exists f \colon F.\ h \times \psi(f,h) \in t$ for $h\colon F$.
 But $t$ being saturated means $\exists y\colon A.\ xy \in t \vdash_{x \colon A} x \in t$.  Putting these together, %
 we may conclude $\vdash_{h \colon F} h \in t$, which gives $F \le t$ as desired.
 
 We define $\psi\colon F \times F \to \R^X$ by specifying its uncurried form $\psi^\flat\colon F \times F \times X \to \R$
 on a covering pair of opens: $W = \{(f, h, x) \colon F \times F \times X \mid \abs{f(x)} > \tfrac{1}{4}\epsilon \}$
 and $Z = \{(f, h, x) \colon F \times F \times X \mid \abs{f(x)} < \tfrac{1}{2}\epsilon \}$,
 where we write $f(x)$ for $\ev(f,x)$.
 
 Note that restricted evaluation map $\ev(\pi_2,\pi_3)\vert_W$ factors through $\R^* = (-\infty, 0) \vee (0, \infty)$.
 This can be seen using the internal logic: consider a triple $(f,h,x)\colon W$.
 Because $\diam(F) < \tfrac{1}{4}\epsilon$, we have $\norm{f - h} < \tfrac{1}{4}\epsilon$.
 By the definition of the norm, this gives $\abs{f(x)-h(x)} < \tfrac{1}{4}\epsilon$. %
 Putting this together with $\tfrac{1}{4}\epsilon < \abs{f(x)}$, we find $\abs{f(x) - h(x)} < \abs{f(x)}$, or equivalently, $0 < \abs{f(x)} - \abs{f(x) - h(x)}$.
 But then $0 < \abs{f(x)} - \abs{f(x) - h(x)} \le \abs{f(x) - (f(x) - h(x))} = \abs{h(x)}$ by the reverse triangle inequality. %
 We may conclude that $\ev(h,x)$ lies in $\R^*$ as required
 
 We define $\psi^\flat\vert_Z\colon (f,h,x) \mapsto 0$ and $\psi^\flat\vert_W\colon (f,h,x) \mapsto (h(x))^{-1} \times \zeta(f)(x)$ in the internal logic, %
 where $(-)^{-1}\colon \R^* \to \R^*$ is the reciprocal operation and we implicitly factor $\ev(\pi_2,\pi_3)\vert_W$ through $\R^*$.
 To see that these agree on the overlap $W \wedge Z$, we show that $\zeta^\flat(\pi_1,\pi_3)$ is zero on $Z$.
 Recall that $\zeta(f) = ([f - \tfrac{1}{2} \epsilon] \vee 0) + ([f + \tfrac{1}{2} \epsilon] \wedge 0)$ in the internal logic. Now if $(f,h,x)$ lies in $Z$, we have
 $(f(x) < \tfrac{1}{2}\epsilon) \land (f(x) > - \tfrac{1}{2}\epsilon)$. Then $f(x) - \tfrac{1}{2}\epsilon < 0$ and $f(x) + \tfrac{1}{2}\epsilon > 0$. We then quickly see that $\zeta(f)(x) = 0$, as required.

 Finally, if $i$ is the inclusion of $F$ into $A$, we show $\mu_\times (i \pi_2, \psi) = \zeta \pi_1$.
 To see this we uncurry each expression and consider the two restrictions in the internal logic. We must show $0 = \zeta(f)(x)$ on $Z$ and
 $h(x) \times ((h(x))^{-1} \times \zeta(f)(x)) = \zeta(f)(x)$ on $W$. But we have already shown the first equality above and the second one follows
 immediately from associativity of multiplication and properties of the reciprocal.
\end{proof}

\begin{remark}
 It should be possible to prove a similar result when $X$ is only locally compact regular. It appears that Henry's result that $\R^X$ is overt can likely be modified to the locally compact case.
 As for the proof of approximability, we can no longer use the simplifications afforded by metrisability and must instead work directly with the presentation of the (co)exponential,
 but this is unlikely to be an insurmountable obstacle. %
\end{remark}

\begin{proposition}\label{prop:gelfand_expected_spectrum}
 If $X$ is a compact regular locale, then the localic spectrum of $\R^X$ is isomorphic to $X$ (as in Gelfand duality).\index{spectrum!Gelfand|)}\index{ring of continuous functions|)}
\end{proposition}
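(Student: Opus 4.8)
The plan is to deduce the result from Henry's constructive Gelfand duality. By \cref{prop:gelfand_approximable} the localic ring $A = \R^X$ is overt and approximable, so \cref{prop:localic_spectrum_of_approximable_semiring} applies and the localic spectrum of $A$ has underlying frame $\Rad(A)$. It therefore suffices to produce a frame isomorphism $\Rad(A) \cong \O X$ and to observe that it is the Gelfand one. On the one hand, $\R^X$ is a commutative unital localic C*-algebra whose Gelfand spectrum is (isomorphic to) $X$, by the constructive Gelfand duality of \cite{Henry2016}. On the other hand, \cite{ConstructiveGelfandNonunital} shows that the underlying lattice of the Gelfand spectrum of a commutative localic C*-algebra $R$ is order-isomorphic to its lattice $\Idl(R)$ of overt weakly closed ideals. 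Combining these gives a lattice isomorphism $\O X \cong \Idl(A)$; in particular the complete lattice underlying $\Idl(A)$ is a frame. So the whole statement reduces to showing that the quantale multiplication on $\Idl(A)$ agrees with binary meet, for then $\Idl(A)$ is its own localic reflection and $\Rad(A) = \Idl(A) \cong \O X$.

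Thus the key step is the identity $IJ = I \wedge J$ for $I, J \in \Idl(A)$. The inequality $IJ \le I \wedge J$ holds in any two-sided quantale (as $IJ \le I \cdot \top = I$ and likewise $IJ \le J$), so the content is $I \wedge J \le IJ$. This is the pointfree counterpart of the classical C*-algebraic fact that $\overline{IJ} = I \cap J$ for closed ideals, whose proof uses functional calculus to write a positive element of $I \cap J$ as a square, hence as a product of an element of $I$ with an element of $J$. I would carry this out in the internal logic of the coherent hyperdoctrine of open sublocales of $A$, reusing the machinery of the proof of \cref{prop:gelfand_approximable}: since overt weakly closed monoid ideals are determined by the saturated opens they meet, it suffices to compare the positivity predicates of $I \wedge J$ and of $I \circ J$ on $\Sats A$, and for this one works with the positive open sublocales $F$ of small diameter provided by \cite{Henry2016}. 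For a single function one regularises, writing (up to an arbitrarily small sup-norm error) $h = g_1 g_2$ with $g_1 = h\cdot(\abs{h} + \delta)^{-1/2}$ and $g_2$ a continuous approximant to $\sqrt{\abs{h}}$, both lying in the overt weakly closed ideal generated by $h$; one then promotes $h$ to $F$ and produces locale maps realising this factorisation up to $\epsilon$, exactly as the maps $\zeta$ and $\psi$ were produced in \cref{prop:gelfand_approximable}. This yields $I \wedge J \le I \circ J = IJ$.

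Granting $IJ = I \wedge J$, the quantale $\Idl(A)$ is a frame, the localic reflection map $\rad\colon \Idl(A) \twoheadrightarrow \Rad(A)$ is an isomorphism, and $\Rad(A) \cong \Idl(A) \cong \O X$; since the second isomorphism is Henry's, the composite is the Gelfand isomorphism, so $\Spec \R^X \cong X$.

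The main obstacle is the inequality $I \wedge J \le IJ$: transporting the functional-calculus factorisation from individual continuous functions to overt sublocales --- constructing the relevant locale maps and checking the identities in the internal logic --- in the same spirit as, but somewhat more delicate than, the approximability argument of \cref{prop:gelfand_approximable}. In particular one must again use that the diameter bound on $F$ makes the regularisations of nearby functions agree closely enough to be factored simultaneously, and one should be careful that the $\delta \downarrow 0$ estimates are uniform so that they can be phrased through the basic opens $B_\epsilon(F)$.
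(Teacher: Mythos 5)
Your proposal is correct in outline and follows the paper's skeleton exactly up to the last step: both arguments combine Henry's Gelfand duality with the order-isomorphism $\Idl(\R^X)\cong\O X$ from the nonunital constructive Gelfand paper, invoke \cref{prop:quantic_spectrum_of_approximable_semiring,prop:gelfand_approximable} to identify the quantic spectrum with $\Idl(\R^X)$, and reduce everything to showing that the quantale multiplication on $\Idl(\R^X)$ is meet. Where you diverge is in how that last fact is verified. The paper does not prove $I\wedge J\le IJ$ directly; it passes to $\MonIdl(\R^X)$ and applies the idempotency criterion of \cref{prop:monoid_ideals_idempotent}, which reduces the whole problem to the single internal-logic sequent $f\in U\vdash_{f}\exists k.\ f^2k\in U$ for basic opens $U$. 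This is then discharged by recycling the maps $\zeta$ and $\psi$ from \cref{prop:gelfand_approximable} essentially verbatim, defining $\psi(f)(x)=(f(x)^2)^{-1}\zeta(f)(x)$ where $\abs{f(x)}>\tfrac14\epsilon$ and $0$ where $\abs{f(x)}<\tfrac12\epsilon$ --- no square roots, no $\delta\downarrow 0$ limit. Your route via $h=g_1g_2$ with $g_1=h(\abs h+\delta)^{-1/2}$ is the classical C*-algebraic factorisation and would work, but it buys nothing here and costs more: you need uniform control of the regularisation as $\delta\downarrow 0$, and, more seriously, your plan to ``compare the positivity predicates of $I\wedge J$ and $I\circ J$'' glosses over the fact that meets in $\SubOW$ and $\hom(\Sats A,\Omega)$ are \emph{not} computed pointwise, so the positivity predicate of $I\wedge J$ is not directly accessible. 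The cleanest repair of your argument is the paper's own observation: in a two-sided quantale, idempotency $K\le K^2$ for all $K$ already yields $I\wedge J\le(I\wedge J)^2\le IJ$, so one never needs to touch the meet explicitly --- which is precisely what \cref{prop:monoid_ideals_idempotent} packages.
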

\begin{proof}
 By the localic Gelfand duality of \cite{Henry2016}, we know that the localic Gelfand spectrum of the C*-algebra $\C^X$ is isomorphic to $X$
 and it is shown in \cite{ConstructiveGelfandNonunital} that the opens of the Gelfand spectrum are in turn in bijection with the overt weakly closed ideals of $\C^X$.
 The use of complex numbers here is not essential and similar results hold for the real algebra $\R^X$ giving $\Idl(\R^X) \cong \O X$. (This can also be deduced from
 the complex case by constructing an isomorphism $\Idl(\R^X) \cong \Idl(\C^X)$.) %
 
 Now by \cref{prop:quantic_spectrum_of_approximable_semiring,prop:gelfand_approximable} we have that the quantic spectrum of $\R^X$ is given by $\Idl(\R^X)$.
 It remains to show that $\Idl(\R^X)$ is a frame, from which it follows that the localic spectrum is given by $\Rad(\R^X) \cong \Idl(\R^X) \cong \O X$. %
 
 The quantale $\Idl(\R^X)$ is a quotient of the quantale of monoid ideals $\MonIdl(\R^X)$ and so it is enough to show the latter is a frame. Here it is helpful to use \cref{prop:monoid_ideals_idempotent}
 from \cref{section:coexponential} so that we only need to show $f \in U \vdash_{f\colon \R^X} \exists k\colon \R^X.\ f^2 k \in U$ for all (basic) opens $U$.
 
 To show this we can proceed in a very similar manner to the proof of \cref{prop:gelfand_approximable}.
 Consider $B_\epsilon(F)$ as in that proof and construct $\zeta$ and $G$ in the same way. We then define a map $\psi\colon F \to \R^X$
 such that $\zeta(f) = f^2 \times \psi(f)$ by setting $\psi(f)(x) = 0$ when $\abs{f(x)} < \tfrac{1}{2}\epsilon$ and $\psi(f)(x) = (f(x)^2)^{-1} \times \zeta(f)(x)$ when $\abs{f(x)} > \tfrac{1}{4}\epsilon$.
 Here $\psi(f)$ plays the role of the $k$ required to deduce the result.
\end{proof}

We have now shown that each of our classes of localic semirings is approximable and their usual notions of spectra are special cases of our construction.
It is interesting to consider what other classes of examples there might be. In particular, it would be nice to have another interesting class of examples for the quantic spectrum,
since in all of our examples aside from discrete rings, the quantic spectrum is already a frame.

One such example that would be particularly interesting is the ring of smooth functions on a differentiable manifold. (Working classically) Whitney defines a natural topology on such a ring in \cite{whitney1948ideals}
and characterises the closed ideals. This topological ring is completely metrisable %
and thus by \cite{Isbell1972} it is a localic ring. It would be very interesting to see if this localic ring is approximable.

Let us end with some unusual examples.
\begin{example}
 Consider the locale $\lowerRealsNonneg$ of nonnegative lower reals --- this is given by the theory of (possibly empty) lower Dedekind cuts on the nonnegative rationals. This is a localic semiring with the usual notions of addition and multiplication.
 It is not hard to show this is overt and approximable and the quantic spectrum is $\Omega$ --- a single point. %
 If a discrete semiring $R$ has a quantic spectrum of $\Omega$, then it is a `Heyting semi-field'
 --- that is, a `local semiring' in which an element is not invertible if and only if it is zero. %
 However, even classically $\lowerRealsNonneg$ is not a semi-field, since it contains the point $\infty$, which has no inverse. It is, however, the frame of opens of the non-sober topological space obtained by
 equipping $[0, \infty)$ with the topology of lower semicontinuity. This is a `paratopological semi-field' --- that is, the subspace $(0,\infty)$ of nonzero elements is a topological monoid under multiplication
 for which every element has an inverse.
\end{example}
\begin{example}
 Consider the Sierpiński locale $\Srpnsk$ with the \emph{reverse} of its usual distributive lattice structure. This is overt and approximable,
 but its frame of radical ideals is isomorphic to the trivial frame and so its spectrum is empty, despite the semiring being nontrivial.
 At the start of this chapter we mentioned the possibility of a finer semiring spectrum. Such a spectrum would likely give more information in this case.
\end{example}

\begin{example}\label{ex:no_spectrum}
 The forgetful functor from $\Frm$ to $\Dcpo$ has a left adjoint, which induces a comonad on $\Frm$ and hence, a monad $\doublePow$ on $\Loc$.
 In \cite{VickersTownsendDoublePowerlocale} it is shown that $\doublePow$ sends a locale $X$ to the double exponential $\Srpnsk^{\Srpnsk^X}$,
 even in the case that $X$ is not exponentiable, so long as the intermediate exponential is taken in $\Set^{\Loc\op}$. We have shown that if $X$ is an exponentiable locale, then $X \cong \Spec(\Srpnsk^X)$.
 This suggests that perhaps $\Spec(\doublePow X)$ might fail to exist in the case that $X$ is not locally compact.
 
 An examination of the definition of the functor $\OPAI_R$ in the case that $R = \doublePow X$ suggests that the elements of $\OPAI_{\doublePow X}(Y)$
 correspond to internal lattice homomorphisms in $\Set^{\Loc\op}$ from $\Srpnsk^{\Srpnsk^X}$ to $\Srpnsk^Y$. %
 We believe that these should in turn correspond to maps from $Y$ to $\Srpnsk^X$ so that $\OPAI_{\doublePow X}$ is representable if and only if $X$ is exponentiable.
 However, we will not attempt to provide a full proof here. %
 
 Let us describe the frame of (radical) ideals of $\doublePow X$.
 The localic distributive lattice structure on $\doublePow X$ is given explicitly in \cite{vickers2004double}.
 Writing ${\boxtimes} a$ for the generators of $\O \doublePow X$, we have $\mu_\times\colon {\boxtimes} a \mapsto {\boxtimes} a \oplus {\boxtimes} a$,
 $\mu_+\colon {\boxtimes} a \mapsto {\boxtimes} a \oplus 1 \vee 1 \oplus {\boxtimes} a$, $\epsilon_1\colon {\boxtimes} a \mapsto 1$ and $\epsilon_0\colon {\boxtimes} a \mapsto 0$.
 An explicit calculation shows that the overt weakly closed ideals correspond to \emph{frame} homomorphisms from $\O \doublePow X$ to $\Omega$. %
 These are then in bijection with dcpo morphisms from $\O X$ into $\Omega$, which in turn correspond to the Scott-open subsets of $\O X$. %
\end{example}

\section{Duality and coexponentials}\label{section:coexponential}

\subsection{The spectrum and dualisability}\label{section:spectrum_via_duals}

An alternative formulation of \cref{prop:approximable_supercontinuous} is that an overt localic semiring $R$ is approximable if and only if its suplattice $\Sats R$ of saturated elements is dualisable.
This suggests that we might be able to use duals in $\Sup$ to better understand some aspects of the spectrum construction.
Most of our discussion will hold for general commutative localic monoids $M$.

Before we proceed with this line of inquiry, it will be worthwhile to consider the topological meaning of $\Sats M$ in more detail.
Let us first consider the discrete case. Here a saturated open is a subset $S \subseteq M$ such that $xy \in S \implies x \in S$.
We now recall the monoid-theoretic version of the divisibility preorder of a ring.

\begin{definition}
 The \emph{natural preorder} on a commutative monoid $M$ is defined by \[x \le y \iff \exists z \in M.\ xz = y.\]
 When $\le$ is a partial order, $M$ is said to be \emph{naturally partially ordered\index{monoid!naturally partially ordered|see {holoid}}} or a \emph{holoid\index{holoid|textbf}}.
\end{definition}
Notice that saturated sets\index{saturated!set} are precisely the downsets in this preorder.
Furthermore, the downsets of a preorder form the open sets of a topology --- the \emph{Alexandroff topology} of the opposite preorder.
Since the natural preorder on $M$ is a submonoid of $M \times M$, the posetal reflection of $M$ inherits the structure of a holoid.
In this way, $\Sats M$ can be thought of the topological embodiment of this holoid quotient of $M$ (with the reverse order).

Categorically, we can express this holoid quotient $M/\sim$ as the following coinserter in the 2-category of posets.
\begin{center}
  \begin{tikzpicture}[node distance=2.5cm, auto]
    \node (A) {$M \times M$};
    \node (B) [right of=A] {$M$};
    \node (C) [right of=B] {$M/\sim$};
    \draw[transform canvas={yshift=0.6ex},->] (A) to node {$\mu_\times$} (B);
    \draw[transform canvas={yshift=-0.6ex},->] (A) to [swap] node {$\pi_1$} (B);
    \draw[->>] (B) to node {} (C);
  \end{tikzpicture}
\end{center}
This suggests that in general we can describe $\Sats M$\index{frame of saturated opens}\index{saturated!open} as an inserter in $\Frm$,
\begin{center}
  \begin{tikzpicture}[node distance=2.5cm, auto]
    \node (A) {$\Sats M$};
    \node (B) [right of=A] {$\O M$};
    \node (C) [right of=B] {$\O M \oplus \O M ,$};
    \draw[transform canvas={yshift=0.6ex},->] (B) to node {$\iota_1$} (C);
    \draw[transform canvas={yshift=-0.6ex},->] (B) to [swap] node {$\mu_\times$} (C);
    \draw[right hook->] (A) to node {$\inclSat$\glsadd[format=(]{inclSat}} (B);
  \end{tikzpicture}
\end{center}
and this is easily seen to be the case. %

\begin{lemma}
 We obtain a functor $\Sats$\glsadd[format=(]{Sats} from the category $\CComon(\Frm)$ of cocommutative comonoids in $\Frm$ to $\Frm$ itself %
 and a natural transformation $\inclSat$\glsadd[format=)]{inclSat} from $\Sats$ to the forgetful functor from $\CComon(\Frm)$ to $\Frm$.
\end{lemma}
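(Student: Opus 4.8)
The plan is to verify that the construction $\Sats M$ as an inserter is functorial and that $\inclSat$ is natural, which is largely a matter of unwinding the universal property of the inserter in $\Frm$. First I would recall that an inserter in the $2$-category $\Frm$ of a parallel pair $f, g\colon A \to B$ is an object $I$ together with a map $i\colon I \to A$ and a $2$-cell (i.e.\ an inequality) $f i \le g i$ which is universal: for any $h\colon C \to A$ with $f h \le g h$ there is a unique $\overline{h}\colon C \to I$ with $i\overline{h} = h$. In our case, for a cocommutative comonoid $M$ in $\Frm$ (equivalently a commutative localic monoid), the relevant pair is $\iota_1, \mu_\times\colon \O M \rightrightarrows \O M \oplus \O M$, and the inserter inclusion $\inclSat_M\colon \Sats M \hookrightarrow \O M$ picks out exactly the $s$ with $\mu_\times(s) \le \iota_1(s)$ — which we observed (just before the lemma) coincides with the saturated opens, using commutativity to reduce $\mu_\times(s) \le s \oplus s$ to $\mu_\times(s) \le \iota_1(s)$. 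Since $\Frm$ has all limits (it is monadic over $\Set$) and $2$-cells are just inequalities, the inserter exists and is computed as the sub-poset $\{s \in \O M \mid \mu_\times(s) \le \iota_1(s)\}$; the preceding corollary shows this is closed under arbitrary meets and joins, so it is a subframe and $\inclSat_M$ is a frame homomorphism.

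Next I would establish functoriality. Given a morphism $\phi\colon M \to N$ of cocommutative comonoids in $\Frm$ (i.e.\ a frame homomorphism commuting with the comultiplications $\mu_\times$ and counits $\epsilon_1$), I need a frame map $\Sats(\phi)\colon \Sats M \to \Sats N$ with $\inclSat_N \circ \Sats(\phi) = \phi \circ \inclSat_M$. By the universal property of the inserter defining $\Sats N$, it suffices to check that the composite $\phi \circ \inclSat_M\colon \Sats M \to \O N$ satisfies $\mu_\times^N (\phi \inclSat_M) \le \iota_1^N (\phi \inclSat_M)$. This follows from naturality of the comultiplication: $\mu_\times^N \circ \phi = (\phi \oplus \phi) \circ \mu_\times^M$ and $\iota_1^N \circ \phi = (\phi \oplus \phi)\circ \iota_1^M$ (the latter since coproduct injections are natural), so applying the monotone map $\phi \oplus \phi$ to the defining inequality $\mu_\times^M \inclSat_M \le \iota_1^M \inclSat_M$ of $\Sats M$ gives exactly what is wanted. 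Uniqueness in the universal property then forces $\Sats(\phi)$ to be well-defined, and the same uniqueness (applied to identities and composites) yields $\Sats(\id) = \id$ and $\Sats(\psi\phi) = \Sats(\psi)\Sats(\phi)$, so $\Sats$ is a functor $\CComon(\Frm) \to \Frm$.

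Finally, naturality of $\inclSat$ is immediate from the construction: the square
\begin{equation*}
\begin{tikzpicture}[node distance=2.5cm, auto, baseline=(current bounding box.center)]
  \node (A) {$\Sats M$};
  \node (B) [right of=A] {$\O M$};
  \node (C) [below of=A] {$\Sats N$};
  \node (D) [right of=C] {$\O N$};
  \draw[->] (A) to node {$\inclSat_M$} (B);
  \draw[->] (C) to node [swap] {$\inclSat_N$} (D);
  \draw[->] (A) to node [swap] {$\Sats(\phi)$} (C);
  \draw[->] (B) to node {$\phi$} (D);
\end{tikzpicture}
\end{equation*}
commutes by the very equation $\inclSat_N \circ \Sats(\phi) = \phi \circ \inclSat_M$ that characterised $\Sats(\phi)$, where the right-hand vertical map is the underlying frame homomorphism of $\phi$ under the forgetful functor $\CComon(\Frm) \to \Frm$. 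I do not expect any serious obstacle here; the only point requiring a little care is confirming that the inserter in $\Frm$ really is the naive sub-poset — i.e.\ that $\Sats M$ with the subspace order and the inherited operations genuinely has the universal property — but this is exactly what the preceding corollary about closure under meets and joins secures, together with the fact that frame homomorphisms into $\Sats M$ are the same as frame homomorphisms into $\O M$ landing inside the saturated elements.
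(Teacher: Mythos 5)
Your proof is correct and follows essentially the same route as the paper's: both arguments push the defining inequality $\mu_\times^M \inclSat_M \le \iota_1^M \inclSat_M$ through $\phi \oplus \phi$, use that $\phi$ commutes with the comultiplications and with the coproduct injections, and then invoke the universal property of the inserter defining $\Sats N$, with functoriality and naturality of $\inclSat$ falling out of uniqueness. (One minor point: the fact you need for the inserter in $\Frm$ to be the naive sub-poset is only that the saturated elements form a subframe — closed under arbitrary joins and finite meets, which holds with no overtness hypothesis — rather than the corollary about closure under arbitrary \emph{meets}, which is not required here.)
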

\begin{proof}
 Suppose $f\colon \O M \to \O M'$ is a morphism of cocommutative comonoids and consider the following diagram.
 \begin{center}
  \begin{tikzpicture}[node distance=2.5cm, auto]
    \node (A) {$\Sats M$};
    \node (B) [right of=A] {$\O M$};
    \node (C) [right of=B] {$\O M \oplus \O M$};
    \node (A') [below of=A] {$\Sats M'$};
    \node (B') [right of=A'] {$\O M'$};
    \node (C') [right of=B'] {$\O M' \oplus \O M'$};
    \draw[transform canvas={yshift=0.6ex},->] (B) to node {$\iota_1$} (C);
    \draw[transform canvas={yshift=-0.6ex},->] (B) to [swap] node {$\mu_\times$} (C);
    \draw[->] (A) to node {$\inclSat$} (B);
    \draw[transform canvas={yshift=0.6ex},->] (B') to node {$\iota'_1$} (C');
    \draw[transform canvas={yshift=-0.6ex},->] (B') to [swap] node {$\mu'_\times$} (C');
    \draw[->] (A') to node {$\inclSat'$} (B');
    \draw[->] (B) to [swap] node {$f$} (B');
    \draw[->] (C) to node {$f \oplus f$} (C');
    \draw[dashed,->] (A) to [swap] node {$\Sats f$} (A');
  \end{tikzpicture}
 \end{center}
 Since $f$ is a morphism of comonoids, both of the squares on the right-hand side commute.
 Thus $\mu'_\times f \inclSat = (f\oplus f) \mu_\times \inclSat \le (f\oplus f) \iota_1 \inclSat = \iota'_1 f \inclSat$ and so we obtain $\Sats f$ from the universal property of $\inclSat'$.
 A standard argument using uniqueness shows that $\Sats$ preserves composition and identity morphisms. The above diagram then also shows that the inserter maps $\inclSat$ form a natural transformation.
\end{proof}

It is not immediate that $\Sats M$ inherits a comonoid structure from $M$, but progress can be made when $M$ is overt. %

When $M$ is overt, both the coproduct injection $\iota_1$ and the inclusion $\inclSat\colon \Sats M \hookrightarrow \O M$ have left adjoints and we obtain the following diagram in $\Sup$.
\begin{center}
\begin{tikzpicture}[node distance=2.5cm, auto]
  \node (A) {$\Sats M$};
  \node (B) [right of=A] {$\O M$};
  \node (C) [right of=B] {$\O M \oplus \O M$};
  \draw[transform canvas={yshift=1.2ex},->] (B) to node {$\iota_1$} (C);
  \draw[transform canvas={yshift=0.0ex},<-] (B) to node {} (C);
  \draw[transform canvas={yshift=-1.2ex},->] (B) to [swap] node {$\mu_\times$} (C);
  \draw[transform canvas={yshift=0.75ex},right hook->] (A) to node {$\inclSat$} (B);
  \draw[transform canvas={yshift=-0.75ex},->>] (B) to node {} (A);
  \path (A) to node (incl) {} (B);
  \path (B) to node[yshift=0.6ex] (iota) {} (C);
  \node[scale=0.5] at ($(incl) - (0,4pt)$) {$\top$};
  \node[scale=0.5] at ($(iota) - (0,4pt)$) {$\top$};
\end{tikzpicture}
\end{center}
Here $\inclSat \inclSat_! = (\iota_1)_! \mu_\times$ and so this is like an order-enriched analogue of a split equaliser.
Indeed, the following lemma implies that this is an absolute $\Pos$-weighted limit in $\Sup$ --- that is, it is preserved by all order-enriched functors.

\begin{lemma} \label{prop:sat_inserter_absolute}
 Consider the following diagram in an order-enriched category $\Cvar$.
\begin{center}
\begin{tikzpicture}[node distance=2.5cm, auto]
  \node (A) {$S$};
  \node (B) [right of=A] {$X$};
  \node (C) [right of=B] {$Y$};
  \draw[transform canvas={yshift=1.2ex},->] (B) to node {$f$} (C);
  \draw[transform canvas={yshift=0.0ex},<-] (B) to node {} (C);
  \draw[transform canvas={yshift=-1.2ex},->] (B) to [swap] node {$g$} (C);
  \draw[transform canvas={yshift=0.75ex},->] (A) to node {$i$} (B);
  \draw[transform canvas={yshift=-0.75ex},->] (B) to node {} (A);
  \path (A) to node (incl) {} (B);
  \path (B) to node[yshift=0.6ex] (iota) {} (C);
  \node[scale=0.5] at ($(incl) - (0,4pt)$) {$\top$};
  \node[scale=0.5] at ($(iota) - (0,4pt)$) {$\top$};
\end{tikzpicture}
\end{center}
Suppose $gi \le fi$, $i_! i = \id_S$ and $ii_! \le f_! g$. Then $i\colon S \hookrightarrow X$ is an inserter of $g$ and $f$.
\end{lemma}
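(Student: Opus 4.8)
The plan is to verify directly that $i$ enjoys the universal property of the inserter of the pair $(g,f)$ — that is, of the object classifying morphisms $h$ into $X$ together with a $2$-cell $gh\le fh$ — using nothing but the four supplied (in)equalities $gi\le fi$, $i_!i=\id_S$, $ii_!\le f_!g$, and the unit/counit data of the two adjunctions $i_!\dashv i$ and $f_!\dashv f$ displayed in the diagram. This is the order-enriched analogue of the proof that a split fork is an absolute equaliser, so as a bonus it will follow that the inserter is absolute: an order-enriched functor sends the whole configuration to one of exactly the same shape.

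The comparison $2$-cell is the hypothesis $gi\le fi$ itself, so there is nothing to check there. For the one-dimensional universal property, suppose $h\colon Z\to X$ satisfies $gh\le fh$; I would take $h'\coloneqq i_!h\colon Z\to S$ as the claimed factorisation through $i$. To see $ih'=h$, note first that the unit of $i_!\dashv i$ gives $\id_X\le ii_!$, whence $h\le ih'$; conversely, whiskering $ii_!\le f_!g$ by $h$, then applying the monotone operation $f_!(-)$ to $gh\le fh$, and finally using the counit $f_!f\le\id_X$ of $f_!\dashv f$ yields
\[ ih' \;=\; ii_!h \;\le\; f_!gh \;\le\; f_!fh \;\le\; h . \]
Hence $ih'=h$. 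Uniqueness of the factorisation is immediate from $i_!i=\id_S$: if $ih''=h$ then $h''=i_!ih''=i_!h=h'$.

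For the two-dimensional part of the inserter's universal property I would use that in an order-enriched category $2$-cells are just inequalities, so the whiskering-compatibility conditions are automatic, and it suffices to show that $ia'\le ib'$ forces $a'\le b'$ for $a',b'\colon Z\to S$; this follows by applying the monotone map $i_!$ to $ia'\le ib'$ and invoking $i_!i=\id_S$ once more. Together these give the full $\mathbf{Pos}$-weighted universal property of the inserter.

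There is no genuinely hard step here: the only care needed is the bookkeeping of the two adjunctions so that the unit and counit inequalities point the right way in the displayed chain (the unit of $i_!\dashv i$ gives $\id_X\le ii_!$, the counit of $f_!\dashv f$ gives $f_!f\le\id_X$), together with the fact that pre- and post-composition are monotone. For the absoluteness claim advertised before the lemma, one simply observes that an order-enriched functor preserves each of $i_!i=\id$, $\id\le ii_!$, $ii_!\le f_!g$ and $f_!f\le\id$, and hence preserves the inserter constructed from them; applied to $\Cvar=\Sup$ and the $\Sats M$ diagram this shows that $\inclSat$ exhibits $\Sats M$ as an absolute inserter.
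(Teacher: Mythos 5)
Your proof is correct and follows essentially the same route as the paper: define the factorisation as $i_!$ applied to the given cone, then sandwich $ii_!h$ between $h$ and $f_!fh$ using the unit of $i_!\dashv i$, the hypothesis $ii_!\le f_!g$, and the counit of $f_!\dashv f$, with uniqueness from $i_!i=\id_S$. You additionally spell out the two-dimensional part of the universal property (that $i$ reflects the order) and the absoluteness observation, which the paper leaves implicit.
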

\begin{proof}
 Suppose $j\colon A \to X$ satisfies $gj \le fj$. We must find a unique map $k\colon A \to S$ such that $ik = j$.
 Such a map must be given by $k = i_! i k = i_! j$.
 
 Then we have $ik = ii_! j \ge j$. But also, $ik = ii_!j \le f_! g j \le f_! f j \le j$. Therefore, $ik = j$ as required.
\end{proof}

We can now show that when $M$ is overt, $\Sats M$ inherits the comonoid structure of $\O M$.
First note that since $\Sats M$ is a subframe of $\O M$, it is overt whenever $M$ is (by \cref{prop:weak_closure_overt}) and so
$\Sats$ restricts to a functor (which we will also call $\Sats$) from overt cocommutative comonoids to overt frames.
Since overtness is preserved by finite coproducts in $\Frm$ (by \cref{prop:product_of_overt_locales}), the former category is equivalent to the category $\CComon(\OLoc\op)$ of cocommutative comonoids in overt frames.

\begin{proposition}
 The functor $\Sats\colon \CComon(\OLoc\op) \to \OLoc\op$\glsadd[format=)]{Sats} preserves finite coproducts and consequently lifts to functor from $\CComon(\OLoc\op)$ to itself.
\end{proposition}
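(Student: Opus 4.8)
The plan is to first unpack what ``preserves finite coproducts'' means in this setting, then treat the nullary and binary cases, and finally read off the lift by pure formalities. Recall that $\CComon(\OLoc\op)$ is the category of commutative monoid objects in $\OLoc$, so its finite coproducts are the finite products of the underlying overt locales equipped with the pointwise structure: the nullary one is the terminal overt locale $1$ (with $\O 1 = \Omega$), which is actually a zero object, and the binary coproduct $M \oplus M'$ has frame $\O M \oplus \O M'$ with comultiplication and counit assembled from those of $M$ and $M'$ through the coproduct structure. Thus the content of the statement is that the canonical comparison maps $\Omega \to \Sats 1$ and $\Sats M \oplus \Sats M' \to \Sats(M \oplus M')$ are isomorphisms in $\OLoc\op$; the codomains cause no trouble, since subframes of overt frames are overt (\cref{prop:weak_closure_overt}) and $\oplus$ preserves overtness (\cref{prop:product_of_overt_locales}).

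The nullary case is immediate: in $\O 1 = \Omega$ the comultiplication is the identity $\Omega \to \Omega \oplus \Omega = \Omega$, as is $\iota_1$, so every element is saturated and $\Sats 1 = \Omega$, the initial frame.

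For the binary case I would work at the level of underlying suplattices and use absoluteness. The forgetful functor $U\colon \Frm \to \Sup$ is strong monoidal for the cocartesian structure on $\Frm$, carrying $\oplus$ to the tensor product $\otimes$ of $\Sup$; moreover, identifying the structure maps of $M \oplus M'$ with the $\oplus$ of those of $M$ and $M'$ via strong monoidality together with the coherence and symmetry isomorphisms of $\oplus$, it sends the defining inserter of $\Sats(M \oplus M')$ to the $\Sup$-inserter of $U\iota_1^M \otimes U\iota_1^{M'}$ and $U\mu_\times^M \otimes U\mu_\times^{M'}$ on $U\O M \otimes U\O M'$. By \cref{prop:saturation_operator} and \cref{prop:sat_inserter_absolute}, for each overt $M$ the $\Sup$-inserter presenting $U\Sats M$ (via $\inclSat$, its left adjoint, and the pair $(\iota_1,\mu_\times)$) is an absolute $\Pos$-weighted limit, hence preserved by every $\Pos$-enriched functor, in particular by $(-) \otimes N$ and so by $\otimes$ jointly. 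Applying $\otimes$ to the presentations for $M$ and $M'$ therefore exhibits $U\Sats M \otimes U\Sats M'$ as precisely that inserter, so $U(\Sats M \oplus \Sats M') = U\Sats M \otimes U\Sats M' \cong U\Sats(M \oplus M')$; since $U$ is faithful and reflects isomorphisms and this isomorphism is $U$ of the canonical frame comparison map, the latter is an isomorphism, naturally in $M$ and $M'$.

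Finally, for ``consequently'': commutativity of $M$ is exactly the statement that $\mu_\times\colon M \to M \oplus M$ and $\epsilon_1\colon M \to \Omega$ are morphisms of $\CComon(\OLoc\op)$, since the multiplication and unit of a commutative monoid object are homomorphisms. Applying $\Sats$ and using coproduct preservation to identify $\Sats(M \oplus M) \cong \Sats M \oplus \Sats M$ and $\Sats \Omega \cong \Omega$ yields frame homomorphisms $\Sats M \to \Sats M \oplus \Sats M$ and $\Sats M \to \Omega$; the comonoid axioms for this data are obtained by applying the functor $\Sats$ to the corresponding axioms for $(\O M, \mu_\times, \epsilon_1)$, which are equalities of morphisms of $\CComon(\OLoc\op)$ assembled from $\mu_\times$, $\epsilon_1$, coproduct injections and codiagonals, and the symmetry, all of which $\Sats$ now preserves. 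The same computation shows each morphism $f$ of $\CComon(\OLoc\op)$ is sent to a comonoid morphism $\Sats f$, so that $\widetilde{\Sats}\colon \CComon(\OLoc\op) \to \CComon(\OLoc\op)$ lifting $\Sats$ is a functor, and $\inclSat$ becomes a natural transformation into $\CComon(\OLoc\op)$. The main obstacle, and the only step needing genuine care, is the bookkeeping in the binary case: verifying that $U$ really identifies the structure maps of $M \oplus M'$ with the $\otimes$ of those of $M$ and $M'$ and that $\otimes$ transports the absolute inserter as claimed; everything downstream of that is formal.
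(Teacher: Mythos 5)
Your proof is correct and takes essentially the same route as the paper: both reduce the binary case to the absoluteness of the split inserter of \cref{prop:sat_inserter_absolute}, so that it is preserved by the tensor of underlying suplattices and the resulting isomorphism transfers back to $\Frm$ (the paper phrases this as the forgetful functor reflecting 2-limits, you as $U$ reflecting isomorphisms), and both obtain the lift from the formal fact that objects of a cocartesian category carry a unique cocommutative comonoid structure preserved by any finite-coproduct-preserving functor. The bookkeeping step you flag as the only delicate point is handled in the paper by the interchange isomorphism $\O M^{\oplus 2} \oplus \O M'^{\,\oplus 2} \cong (\O M \oplus \O M')^{\oplus 2}$ in its large diagram, exactly as you anticipate.
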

\begin{proof} %
 Let $(M, \mu_\times, \epsilon_1)$ and $(M', \mu'_\times, \epsilon'_1)$ be overt cocommutative comonoids in $\Frm$ and consider the following diagram in $\Frm$,
 where the centre row is the coproduct of the top row and the bottom row and the vertical morphisms are the appropriate coproduct injections.
 (Here we write $A^{\oplus 2}$ for $A \oplus A$.)
 \begin{center}
  \resizebox{\linewidth}{!}{%
  \begin{tikzpicture}[node distance=2.5cm, auto]
    \node (A) {$\Sats M$};
    \node (B) [right=2.6cm of A] {$\O M$};
    \node (C) [right=3.2cm of B] {$\O M^{\oplus 2}$};
    \node (A') [below of=A] {$\Sats M \oplus \Sats M'$};
    \node (B') [below of=B] {$\O M \oplus \O M'$};
    \node (C') [below of=C] {$\O M^{\oplus 2} \oplus \O M'^{\,\oplus 2}$};
    \node (D') [right=0.7cm of C'] {$(\O M \oplus \O M')^{\oplus 2}$};
    \node (Z') [left=0.7cm of A'] {$\Sats(\O M \oplus \O M')$};
    \node (A'') [below of=A'] {$\Sats M'$};
    \node (B'') [below of=B'] {$\O M'$};
    \node (C'') [below of=C'] {$\O M'^{\,\oplus 2}$};
    \draw[transform canvas={yshift=0.6ex},->] (B) to node {$\iota_1$} (C);
    \draw[transform canvas={yshift=-0.6ex},->] (B) to [swap] node {$\mu_\times$} (C);
    \draw[->] (A) to node {$\inclSat$} (B);
    \draw[transform canvas={yshift=0.6ex},->] (B') to node {$\iota_1 \oplus \iota'_1$} (C');
    \draw[transform canvas={yshift=-0.6ex},->] (B') to [swap] node {$\mu_\times \oplus \mu'_\times$} (C');
    \draw[->] (A') to node {$\inclSat \oplus \inclSat'$} (B');
    \draw[transform canvas={yshift=0.6ex},->] (B'') to node {$\iota'_1$} (C'');
    \draw[transform canvas={yshift=-0.6ex},->] (B'') to [swap] node {$\mu'_\times$} (C'');
    \draw[out=35,in=155,->] (B') to [pos=0.45] node {$\iota''_1$} ([xshift=17pt] D'.north west);
    \draw[out=-35,in=-155,->] (B') to [swap,pos=0.45] node {$\mu''_\times$} ([xshift=17pt] D'.south west);
    \draw[->] (A'') to node {$\inclSat'$} (B'');
    \draw[->] (B) to [swap] node {} (B');
    \draw[->] (C) to [swap] node {} (C');
    \draw[->] (A) to [swap] node {} (A');
    \draw[->] (B'') to [swap] node {} (B');
    \draw[->] (C'') to node {} (C');
    \draw[->] (A'') to node {} (A');
    \draw[->] (C') to node {$\sim$} (D');
    \draw[->] (C) to node {} (D');
    \draw[->] (C'') to [swap] node {} (D');
    \draw[->] (Z') to node{$\sim$} (A');
    \draw[<-] (Z') to node{} (A);
    \draw[<-] (Z') to [swap] node{} (A'');
  \end{tikzpicture}
 }
 \end{center}
 Note that coproducts of overt cocommutative comonoids are constructed from the coproducts of their underlying frames.
 Thus, by \cref{prop:sat_inserter_absolute} the central row in this diagram is an inserter in $\Sup$. %
 It is therefore also an inserter in $\Frm$, since the forgetful functor from $\Frm$ to $\Sup$ reflects 2-limits.
 Composing with the isomorphism on the right-hand side shows this is the inserter of $\mu''_\times$ and $\iota''_1$, which are the comultiplication on the coproduct comonoid $\O M \oplus \O M'$
 and a coproduct injection, respectively. But this is the defining property of $\Sats(\O M \oplus \O M')$ and we obtain the isomorphism on the left-hand side.
 
 It is clear that all of the squares and the right-hand triangles (all of which involve coproduct injections) commute. Thus the composite $\Sats M \to \Sats M \oplus \Sats M' \to \Sats(\O M \oplus \O M')$
 of the coproduct injection and the inverse of the isomorphism is the \emph{unique} map making the relevant diagram subcommute. This is precisely the definition of $\Sats$ applied to the coproduct injection
 $\O M \to \O M \oplus \O M'$. This together with a similar argument for the lower left-hand triangle then shows that $\Sats$ sends the colimiting cocone for the coproduct to a colimiting cocone.
 Together with an easy argument showing that $\Sats$ preserves the initial comonoid $\Omega$, this proves that $\Sats$ preserves finite coproducts as required.
 
 The result follows once we remember that $\CComon(\OLoc\op)$ has finite biproducts and so every object in $\CComon(\OLoc\op)$ has a unique internal cocommutative comonoid structure,
 which all morphisms in the category respect. This structure will then be preserved by any functor which preserves finite coproducts. %
\end{proof}
Explicitly, the counit and comultiplication on $\Sats M$ are given by the image of the counit and comultiplication on $\O M$.
By naturality, this gives that $\inclSat_M$ is a comonoid morphism. Thus $\inclSat$ becomes a natural transformation from the lifted functor to the identity functor.
This suggests that $\Sats$ and $\inclSat$ might give a coreflection.

\begin{definition}
 We call a commutative localic monoid $(M, \mu_\times, \epsilon_1)$ \emph{deflationary}\index{deflationary localic monoid|textbf}\index{monoid!deflationary localic|textbf} if $\mu_\times \le \iota_1$. %
\end{definition}

\begin{proposition}
 Overt deflationary comonoids in $\Frm$ form a coreflective subcategory of overt commutative comonoids where $\Sats$ is the coreflector and $\inclSat$ is the counit.
\end{proposition}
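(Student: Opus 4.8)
The plan is to realise the claimed coreflection by checking the universal property of the counit directly. Throughout I identify $\Sats M$ with the subframe of saturated opens of $\O M$, so that $\inclSat_M\colon\Sats M\hookrightarrow\O M$ is the (injective) inclusion; by the preceding propositions this is a comonoid morphism which is natural in $M$, and $\Sats M$ is overt whenever $M$ is, being a subframe of an overt frame (\cref{prop:weak_closure_overt}), so $\Sats M$ is a genuine object of $\CComon(\OLoc\op)$. The only nontrivial point is that $\Sats M$ lands in the subcategory, i.e.\ that it is deflationary; everything else is bookkeeping built on the previous proposition.

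First I would prove $\Sats M$ is deflationary, i.e.\ $\mu_\times^{\Sats M}\le\iota_1^{\Sats M}$ as maps $\Sats M\to\Sats M\oplus\Sats M$. Since $\Sats$ preserves finite coproducts, I identify $\Sats M\oplus\Sats M$ with the subframe $\Sats(\O M\oplus\O M)$ of $\O M\oplus\O M$; the comonoid structure transported onto $\Sats M$ is by construction the one for which $\inclSat$ is a comonoid morphism, so under this identification $\mu_\times^{\Sats M}$ and $\iota_1^{\Sats M}$ are simply the corestrictions of $\mu_\times$ and $\iota_1$ of $\O M$ along $\inclSat_M$. A subframe inclusion is an order-embedding, so it suffices to check $\mu_\times(s)\le\iota_1(s)$ in $\O M\oplus\O M$ for every saturated $s\in\O M$ — which is immediate from the definition of saturation, $\mu_\times(s)\le s\oplus s=\iota_1(s)\wedge\iota_2(s)$.

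Next I would record the elementary observation that an overt commutative comonoid $N$ is deflationary if and only if $\inclSat_N$ is an isomorphism: if $\mu_\times\le\iota_1$ then, precomposing with the symmetry and using cocommutativity, $\mu_\times\le\iota_2$ as well, so $\mu_\times(s)\le s\oplus s$ for every $s\in\O N$, i.e.\ every open of $N$ is saturated and $\Sats N=\O N$; the converse is trivial. With this in hand the adjunction assembles formally. Given $N$ overt deflationary and an arbitrary comonoid morphism $g\colon N\to M$, set $\bar g=\Sats(g)\circ\inclSat_N^{-1}\colon N\to\Sats M$ — a composite of comonoid morphisms, using functoriality of $\Sats$ and that the inverse of a comonoid isomorphism is a comonoid morphism. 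Naturality of $\inclSat$ gives $\inclSat_M\circ\bar g=g\circ\inclSat_N\circ\inclSat_N^{-1}=g$, and any comonoid morphism $h$ with $\inclSat_M\circ h=g$ equals $\bar g$ since $\inclSat_M$ is an injective frame homomorphism, hence monic. This is exactly the universal property of $\inclSat_M$, so the inclusion of the full subcategory of overt deflationary comonoids has $\Sats$ as right adjoint with counit $\inclSat$, which is the asserted coreflection.

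The argument is short, and the only step requiring genuine care is the deflationarity of $\Sats M$: one must be sure that the comonoid structure on $\Sats M$ really is the (co)restriction of that of $\O M$ along the order-embedding $\inclSat_M$ (this is where the coproduct-preservation of $\Sats$ from the previous proposition does the work), so that the one-line saturation inequality literally yields $\mu_\times^{\Sats M}\le\iota_1^{\Sats M}$ rather than merely a statement about images under $\inclSat_M\oplus\inclSat_M$. The remaining obligations — naturality, functoriality, and monicity of $\inclSat_M$ — are already available.
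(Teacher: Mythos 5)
Your proposal is correct and follows essentially the same route as the paper: first establish that $\Sats M$ is deflationary by reducing the inequality $\Sats\mu_\times \le \iota_1^{\Sats M}$ along the order-reflecting map into $\O M \oplus \O M$ (where it becomes the saturation condition), then observe that $\inclSat_N$ is an isomorphism for deflationary $N$ and obtain the universal factorisation as $(\Sats g)\inclSat_N^{-1}$, with uniqueness from monicity of $\inclSat_M$. The only cosmetic difference is that you phrase the first step via the identification $\Sats M \oplus \Sats M \cong \Sats(\O M \oplus \O M)$ as a subframe, whereas the paper invokes the inserter property of $\inclSat$ directly; both rest on the same facts from the preceding proposition.
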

\begin{proof}
 We first show that $\Sats M$ is deflationary. Consider the following diagram.
 \begin{center}
  \begin{tikzpicture}[node distance=2.5cm, auto]
    \node (A) {$\Sats M$};
    \node (B) [right=3.0cm of A] {$\O M$};
    \node (A') [below of=A] {$\Sats M \oplus \Sats M$};
    \node (B') [below of=B] {$\O M \oplus \O M$};
    \draw[right hook->] (A) to node {$\inclSat$} (B);
    \draw[right hook->] (A') to node {$\inclSat'$} (B');
    \draw[transform canvas={xshift=-0.6ex},->] (A) to [swap]  node {$\Sats \mu_\times$} (A');
    \draw[transform canvas={xshift=0.6ex},->] (A) to node {$\iota_1^{\Sats M}$} (A');
    \draw[transform canvas={xshift=-0.6ex},->] (B) to [swap] node {$\mu_\times$} (B');
    \draw[transform canvas={xshift=0.6ex},->] (B) to node {$\iota_1^M$} (B');
  \end{tikzpicture}
 \end{center}
 Since $\inclSat$ is the inserter $\mu_\times$ and $\iota_1^M$, we have $\inclSat' \Sats \mu_\times = \mu_\times \inclSat \le \iota_1^M \inclSat = \inclSat' \iota_1^{\Sats M}$.
 But $\inclSat'$ reflects order and so $\Sats \mu_\times \le \iota_1^{\Sats M}$ as required. %
 
 Now suppose $N$ is an overt deflationary monoid and $f\colon \O N \to \O M$. Since $N$ is deflationary, $\inclSat_N$ is an isomorphism and we have the following diagram.
 \begin{center}
  \begin{tikzpicture}[node distance=2.5cm, auto]
    \node (A) {$\Sats N$};
    \node (B) [right of=A] {$\O N$};
    \node (A') [below of=A] {$\Sats M$};
    \node (B') [right of=A'] {$\O M$};
    \draw[double equal sign distance] (A) to node {$\inclSat_N$} (B);
    \draw[right hook->] (A') to node {$\inclSat_M$} (B');
    \draw[->] (B) to node {$f$} (B');
    \draw[->] (A) to [swap] node {$\Sats f$} (A');
  \end{tikzpicture}
 \end{center}
 Thus $f$ factors through $\inclSat_M$ to give $(\Sats f)\inclSat_N^{-1}$. Moreover, this factorisation is unique since $\inclSat_M$ is monic.
 This is precisely the universal property required for $\Sats$ to be the desired coreflector.
\end{proof}

We can now discuss the relationship between $\Sats M$ and the quantale $\MonIdl M$ of monoid ideals.
\begin{proposition}\label{prop:dual_of_SatsR}
 If $M$ is an overt commutative localic monoid, then we have $\MonIdl M \cong \hom(\Sats M, \Omega)$.
\end{proposition}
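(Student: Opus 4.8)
The plan is to identify $\MonIdl M$ --- which by definition is the two-sided reflection of the multiplicative quantale $\SubOW(M) = \hom(\O M, \Omega)$ --- with $\hom(\Sats M, \Omega) = \SubOW(\Sats M)$ equipped with the quantale structure coming from the cocommutative comonoid structure on $\Sats M$. Recall that the two-sided reflection of a quantale is the quotient by the nucleus $f \mapsto f \cdot \top$. Since $M$ is overt, the top element of the suplattice $\hom(\O M, \Omega)$ is the positivity predicate $\exists = \exists_M$ (it corresponds under the bijection with overt weakly closed sublocales to the largest such sublocale of $\O M$, namely $M$ itself), so $\MonIdl M$ may be identified with the collection of fixed points $\{\, f \in \hom(\O M,\Omega) \mid f \cdot \exists = f \,\}$ of the nucleus $f \mapsto f \cdot \exists$.

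First I would show that this nucleus is precisely precomposition with the open saturation operator $\nu \coloneqq (\iota_1)_! \mu_\times \colon \O M \to \O M$ of \cref{prop:saturation_operator}. Indeed, writing $f \cdot \exists = \nabla(f \otimes \exists)\mu_\times$ and using the identity $(\iota_1)_! = \O M \otimes \exists$ from the proof of \cref{prop:overt_meeting_saturated} together with the isomorphism $\O M \otimes \Omega \cong \O M$ (under which $f$ corresponds to $\nabla(f \otimes \Omega)$), one computes $f \cdot \exists = \nabla(f \otimes \Omega)\,(\O M \otimes \exists)\,\mu_\times = f \circ (\iota_1)_!\mu_\times = f \circ \nu$. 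Now recall from \cref{section:spectrum_via_duals} that $\nu = \inclSat\,\inclSat_!$ while $\inclSat_!\,\inclSat = \id_{\Sats M}$, so a suplattice homomorphism $f \colon \O M \to \Omega$ satisfies $f \circ \nu = f$ if and only if it factors (necessarily uniquely, through $\inclSat_!$) as $f = g \circ \inclSat_!$ with $g = f \circ \inclSat \in \hom(\Sats M, \Omega)$. The assignments $f \mapsto f \circ \inclSat$ and $g \mapsto g \circ \inclSat_!$ are mutually inverse and order-preserving, giving an isomorphism of suplattices $\MonIdl M \cong \hom(\Sats M, \Omega)$.

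It then remains to check that this isomorphism is a quantale map. Here I would use that $\inclSat \colon \Sats M \to \O M$ is a morphism of cocommutative comonoids, as established just above: compatibility with the counits gives $\epsilon_1 \circ \inclSat = \epsilon_1^{\Sats M}$, matching the units, and compatibility with the comultiplications, $\mu_\times \circ \inclSat = (\inclSat \otimes \inclSat)\circ \mu_\times^{\Sats M}$, lets one rewrite the $\SubOW(M)$-product of two fixed points restricted along $\inclSat$ as the $\SubOW(\Sats M)$-product of their restrictions. A short diagram chase (also using $\nu\,\inclSat = \inclSat$) then shows both the unit $\epsilon_1$ and the multiplication $\nabla(-\otimes-)\mu_\times$ are preserved.

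The main obstacle is the identification of the two-sided-reflection nucleus $f \mapsto f \cdot \exists$ with precomposition $f \mapsto f \circ \nu$; once these two closure operators on $\hom(\O M,\Omega)$ are seen to coincide, everything else is formal bookkeeping with the self-duality of $\Sup$ and the comonoid structure of $\Sats M$. One should also keep track of where overtness of $M$ is used: it is needed both for the existence of $(\iota_1)_!$ (hence of $\nu$, and of the comonoid structure on $\Sats M$) and for the top element of $\hom(\O M,\Omega)$ to equal $\exists$.
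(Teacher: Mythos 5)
Your argument is correct and is essentially the paper's proof: both identify $\hom(\Sats M,\Omega)$ as the splitting of the idempotent given by precomposition with $(\iota_1)_!\mu_\times$, and both use \cref{prop:overt_meeting_saturated} to recognise that idempotent as the two-siding nucleus $f\mapsto f\cdot\top$ on $\SubOW(M)\cong\hom(\O M,\Omega)$. The only difference is presentational: you unwind the idempotent splitting by hand via $\inclSat$ and $\inclSat_!$ and check preservation of the quantale structure directly, whereas the paper applies $\hom(-,\Omega)$ to the split idempotent abstractly and defers the quantale-structure point to the remark following the proposition, where it is settled by uniqueness of the relevant (co)monoid structures.
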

\begin{proof} %
 If $M$ is overt, then $\Sats M$ can be obtained by splitting the idempotent $(\iota_1)_! \mu_\times$ in $\Sup$.
 Applying the functor $\hom(-, \Omega)$, we find that $\hom(\Sats M, \Omega)$ is given by splitting the idempotent $\hom((\iota_1)_! \mu_\times, \Omega)$. %
 But by \cref{prop:overt_meeting_saturated}, $\hom((\iota_1)_! \mu_\times, \Omega)$ corresponds to the nucleus $a \mapsto a \cdot \top$ on $\SubOW(M) \cong \hom(M,\Omega)$.
 The resulting quotient is then $\MonIdl M$ by definition.
\end{proof}

\begin{corollary}
 If $\Sats M$ is dualisable, then $\MonIdl M$\index{quantale of monoid ideals} is its dual.
\end{corollary}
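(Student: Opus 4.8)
The plan is to combine \cref{prop:dual_of_SatsR} with the general description of duals in a symmetric monoidal closed category recalled in \cref{section:dualisable}. Recall that $\Sup$ is symmetric monoidal closed with monoidal unit $\Omega$, and that for any object $A$ the functor $A \otimes (-)$ is left adjoint to $\hom(A,-)$; hence, by uniqueness of adjoints, if the dual $A^*$ exists then $A^* \cong A^* \otimes \Omega \cong \hom(A,\Omega)$.

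First I would apply this with $A = \Sats M$: assuming $\Sats M$ is dualisable, its dual is isomorphic to $\hom(\Sats M,\Omega)$. Then, since $M$ is an overt commutative localic monoid, \cref{prop:dual_of_SatsR} gives $\hom(\Sats M,\Omega) \cong \MonIdl M$. As the dual of a dualisable object is determined up to isomorphism, $\MonIdl M$ is a dual of $\Sats M$, as claimed.

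There is no real obstacle here: all the content lies in \cref{prop:dual_of_SatsR}, which identifies $\hom(\Sats M,\Omega)$ with $\MonIdl M$ by splitting the idempotent $(\iota_1)_!\mu_\times$ in $\Sup$ and using \cref{prop:overt_meeting_saturated} to recognise the transported idempotent as the nucleus $a \mapsto a\cdot\top$ on $\SubOW(M) \cong \hom(M,\Omega)$. If one wants a dual basis for $\MonIdl M$ explicitly, one transports a dual basis of $\Sats M$ (which exists by \cref{prop:dual_basis}) across this isomorphism; via \cref{prop:supercontinuous_vs_dualisable,prop:approximable_supercontinuous} this recovers, when $M$ is the multiplicative monoid of an overt localic semiring, exactly the approximability condition on that semiring.
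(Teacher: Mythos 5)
Your proof is correct and is precisely the argument the paper leaves implicit: the corollary follows by combining \cref{prop:dual_of_SatsR} with the observation from \cref{section:dualisable} that in a monoidal closed category a dualisable object $A$ has $A^* \cong \hom(A,I)$ by uniqueness of adjoints. Nothing further is needed.
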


Note that the isomorphism $\MonIdl M \cong \hom(\Sats M, \Omega)$ preserves the quantale structure, since the comonoid structure on $\Sats M$ inherited as a subobject of $\O M$
and the quantale structure on $\MonIdl M$ inherited as a quotient of $\SubOW(M)$ are both unique.

When $\Sats M$ is dualisable, we can visualise the multiplicative structure on $(\Sats M)^* \cong \MonIdl M$ using string diagrams.
We will assume $M$ is deflationary for simplicity so that $\Sats M \cong \O M$.
As a warm up, we first give diagrams of the comonoid structure of $\O M$.

\begin{center}
\vspace{-3pt}
\begin{minipage}{0.4\textwidth}
\centering
\begin{tikzpicture}[scale=0.75]
\path coordinate[label=above:$\phantom{M}$] (top) ++(0,-1) coordinate[dot, label=above:$\epsilon_1$] (e) ++(0,-1) coordinate[label=below:$M$] (b);
\draw (e) -- (b);
\fillbackground{$(b) + (-1,0)$}{$(top) + (1,0)$};
\end{tikzpicture}
\end{minipage}
\begin{minipage}{0.4\textwidth}
\centering
\begin{tikzpicture}[scale=0.75]
\path coordinate[dot, label=above:$\mu_\times$] (mu)
 +(0,-1) coordinate[label=below:$M$] (b)
 +(-1,1) coordinate[label=above:$M$] (tl)
 +(1,1) coordinate[label=above:$M$] (tr);
\draw (tl) to[out=270, in=180] (mu.west) -- (mu.east) to[out=0, in=270] (tr)
      (mu) -- (b);
\fillbackground{$(tl) + (-0.5,0)$}{$(b) + (1.5,0)$};
\end{tikzpicture}
\end{minipage}
\end{center}

We obtain the monoid structure for $M^*$ by applying $\hom(-,\Omega)$.
The action of the first component of the internal hom functor on a morphism $f\colon A \to B$ is given by
the mate of the natural transformation $(-) \otimes f$ under the tensor-hom adjunction.
As a string diagram in the category of endofunctors on $\Sup$ we have:
\begin{center}
\begin{tikzpicture}[scale=0.6,baseline={([yshift=-0.5ex]current bounding box.center)}]
\path coordinate (eta) ++(1.4,1.2) coordinate (a1) ++(0,1) coordinate[dot, label=left:\scalebox{0.9}{$(-) \otimes f$}] (f) ++(0,0.75) coordinate (a2) ++(1.4,1.2) coordinate (epsilon)
 ++(1.4,-1.2) coordinate (b) ++(0,-3.5) coordinate[label=below:${\hom(B, -)}$] (br)
 (eta) ++(-1.4,1.2) coordinate (c) ++(0,3.5) coordinate[label=above:${\hom(A, -)}$] (tl);
\draw (tl) -- (c) to[out=-90, in=180] (eta) to[out=0, in=-90] (a1) -- (a2) to[out=90, in=180] (epsilon) to[out=0, in=90] (b) -- (br);
\fillbackground{$(tl) + (-1.65,0)$}{$(br) + (1.65,0)$};
\end{tikzpicture}
\end{center}
Taking the $\Omega$ component gives the map $\hom(f,\Omega)$ and when $A$ and $B$ are dualisable, we may express this within the monoidal category $\Sup$.
Also note that the canonical natural transformation $\hom(B,\Omega) \otimes \hom(B',\Omega) \to \hom(B \otimes B',\Omega)$ making $\hom(-,\Omega)$ a lax monoidal functor
is an isomorphism when $B$ and $B'$ are dualisable. %

Hence we find the monoid structure on $M^*$ can be expressed as follows.
\begin{center}
\vspace{-3pt}
\begin{minipage}[t][][t]{0.4\textwidth}
\centering
\begin{tikzpicture}[scale=0.75]
\path coordinate (eta) ++(1,1) coordinate (a) ++(0,1) coordinate[dot, label=above:$\epsilon_1$] (e)
 (eta) ++(-1,1) coordinate (c) ++(0,2) coordinate[label=above:$M^*$] (tl)
 (eta) ++(1,-0.5) coordinate[label=below:$\phantom{M}$] (b);
\draw (tl) -- (c) to[out=-90, in=180] (eta) to[out=0, in=-90] (a) -- (e);
\fillbackground{$(b) + (0.5,0)$}{$(tl) + (-0.5,0)$};
\end{tikzpicture}
\end{minipage}
\begin{minipage}[t][][t]{0.4\textwidth}
\centering
\begin{tikzpicture}[scale=0.6]
\path coordinate[dot, label=above:$\mu_\times$] (mu)
 +(0,-1) coordinate (a)
 +(-1,1) coordinate (mtl)
 +(1,1) coordinate (mtr);
\path (a) ++(-1.0,-1) coordinate (eta) ++(-1.0,1) coordinate (c) ++(0,4.25) coordinate[label=above:$M^*$] (tl);
\path (mtr) ++(0.75,0.75) coordinate (epsilon) ++(0.75,-0.75) coordinate (d) ++(0,-3.5) coordinate[label=below:$M^*$] (br);
\path (mtl) ++(2.4,1.75) coordinate (epsilon2) ++(2.4,-1.75) coordinate (d2) ++(0,-3.5) coordinate[label=below:$M^*$] (br2);
\path (eta) ++(1.0,-0.5) coordinate (b);
\draw (mtl) to[out=270, in=180] (mu.west) -- (mu.east) to[out=0, in=270] (mtr)
      (mu) -- (a);
\draw (tl) -- (c) to[out=-90, in=180] (eta) to[out=0, in=-90] (a);
\draw (mtr) to[out=90, in=180] (epsilon) to[out=0, in=90] (d) -- (br);
\draw (mtl) to[out=90, in=180] (epsilon2) to[out=0, in=90] (d2) -- (br2);
\fillbackground{$(tl) + (-0.5,0)$}{$(br2) + (0.5,0)$};
\end{tikzpicture}
\end{minipage}
\end{center}

In \cref{section:spectrum_universal_property} we considered the functor $\overline{\OPAI}_R$ of open prime anti-ideals of a localic semiring $R$. %
We can define a similar functor $\overline{\OPMAI}_M\colon \Quant \to \Set$\glsadd[format=(]{OPMAIRoverline} which gives the open prime \emph{monoid} anti-ideals of a commutative localic monoid $M$.
\begin{definition}
The functor $\overline{\OPMAI}_M\colon \Quant \to \Set$\glsadd[format=)]{OPMAIRoverline} is defined on objects by
\begin{align*}
 \overline{\OPMAI}_M\colon Q \mapsto \big\{ u \in Q \oplus \O M \mid {}
 &  (Q \oplus \epsilon_1)(u) =  1, \\
 &  (Q \oplus \mu_\times)(u) = (Q \oplus\iota_1)(u) \cdot (Q \oplus\iota_2)(u) \big\}
\end{align*}
and acts on morphisms in the obvious way.
\end{definition}

Note that if $u \in \overline{\OPMAI}_M(Q)$, then in particular $(Q \oplus \mu_\times)(u) \le (Q \oplus\iota_1)(u)$.
When $M$ is overt, the inserter defining $\Sats M$ is preserved by $Q \oplus (-)$ and so at least in this case,
$\overline{\OPMAI}_M \cong \overline{\OPMAI}_{\Sats M}$.
Thus to understand this functor we can may restrict to the case $M$ is deflationary as above.

We wish to prove a result analogous to \cref{prop:quantic_spectrum_of_approximable_semiring} that $\overline{\OPMAI}_M$ is representable with
$\MonIdl M$ as the representing object whenever $M$ is overt and $\Sats M$ is a dualisable suplattice.
Based on that result, we expect the representing object to be given by $\upsilon = \bigvee_{a \in \O M} \jointcoz(a) \oplus a$.
Using \cref{prop:approximable_supercontinuous,prop:blacktriangle_vs_jointcoz} we can reformulate this as
$\upsilon = \bigvee_{s \in \Sats M} \llbracket s \lll (-) \rrbracket \oplus s$, which using the ideas of \cref{prop:dual_basis,prop:supercontinuous_vs_dualisable}
could then be written as $\upsilon = \bigvee_{x \in X} \sigma_x \otimes r_x$
for any dual basis for $\Sats M$ defined by $(r_x)_{x \in X}$ and $(\sigma_x)_{x \in X}$.
That is, the universal open prime anti-ideal should be given by $\eta(\top)$, where $\eta$ is the unit of the duality between $\Sats M$ and $\MonIdl M$.
This provides a further connection between the spectrum construction and dual suplattices.

\begin{theorem}\label{prop:spectrum_for_localic_monoids}
 Suppose $M$ is an overt commutative localic monoid and $\Sats M$ is a dualisable suplattice\index{dualisable!suplattice}\index{suplattice!dualisable}.
 Then $\overline{\OPMAI}_M$ is representable with representing object $(\Sats M)^*$ and universal element $((\Sats M)^* \otimes \inclSat)\eta(\top)$.
\end{theorem}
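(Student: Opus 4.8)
The plan is to adapt the proof of \cref{prop:quantic_spectrum_of_approximable_semiring} to the simpler monoidal setting, exploiting the dual basis machinery rather than working directly with the approximability relations. First I would reduce to the deflationary case: since $M$ is overt, the inserter defining $\Sats M$ is preserved by $Q \oplus (-)$, so $\overline{\OPMAI}_M \cong \overline{\OPMAI}_{\Sats M}$ and $\Sats M$ is itself an overt deflationary comonoid whose underlying suplattice is dualisable. So without loss of generality I may assume $M$ is deflationary, whence $\inclSat$ is an isomorphism, $\O M = \Sats M$ is supercontinuous (by \cref{prop:supercontinuous_vs_dualisable}), and $(\Sats M)^* \cong \MonIdl M$ (by \cref{prop:dual_of_SatsR}). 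Fix a dual basis $(r_x)_{x \in X}$, $(\sigma_x)_{x \in X}$ for $\Sats M$, so $\eta(\top) = \bigvee_{x} \sigma_x \otimes r_x \in \MonIdl M \oplus \O M$, and set $\upsilon = (( \Sats M)^* \otimes \inclSat)\eta(\top)$, which in the deflationary case is just $\eta(\top)$ viewed in $\MonIdl M \oplus \O M$. Using \cref{prop:approximable_supercontinuous} and \cref{prop:blacktriangle_vs_jointcoz} one checks $\upsilon = \bigvee_{a \in \O M} \jointcoz(a) \oplus a$, which is the form most convenient for verifying the axioms.

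Next I would verify that $\upsilon$ lies in $\overline{\OPMAI}_M(\MonIdl M)$, i.e.\ that it satisfies the unit and multiplicative axioms. The unit axiom $(\MonIdl M \oplus \epsilon_1)(\upsilon) = 1$ follows from \cref{prop:approximable_implies_ideal_approximating} applied to the top ideal (or rather its direct monoidal analogue, which holds since $\Sats M$ supercontinuous implies $\MonIdl M \cong \hom(\Sats M,\Omega)$ is supercontinuous and hence the approximating join for $1_\times\cdot\top$ is available): $1_\times \cdot \top = \bigvee_{a \between 1_\times} \jointcoz(a) = \bigvee_a \jointcoz(a)\cdot \epsilon_1(a)$. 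The multiplicative axiom $(\MonIdl M \oplus \mu_\times)(\upsilon) = (\MonIdl M \oplus \iota_1)(\upsilon)\cdot(\MonIdl M \oplus \iota_2)(\upsilon)$ is proved exactly as the corresponding part of \cref{prop:quantic_spectrum_of_approximable_semiring}: the `$\le$' direction uses that $\jointcoz(e) \between d$ whenever $e \blacktriangleleft d$ together with interpolation of $\blacktriangleleft$ (equivalently $\lll$), and the `$\ge$' direction uses the fact that $\mu_\times(t)$ for $t$ saturated decomposes as a join $\bigvee_\alpha s_\alpha \oplus s'_\alpha$ of products of saturated opens, which is precisely the internal-logic computation already carried out there. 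There is no zero or additive axiom to check, so this half is strictly shorter than the semiring case.

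The bulk of the work, and the main obstacle, is the universal property: given $p \in \overline{\OPMAI}_M(Q)$, I must produce a unique quantale homomorphism $\overline{p}\colon \MonIdl M \to Q$ with $p = (\overline{p} \oplus \O M)(\upsilon)$. Forced by the equation $V = (\MonIdl M \otimes \exists_V)(\upsilon)$ (which holds by \cref{prop:approximable_implies_ideal_approximating}, reading off from $\upsilon = \bigvee_a \jointcoz(a)\oplus a$ that applying $\exists_V$ recovers $\bigvee_{a\between V}\jointcoz(a) = V\cdot\top = V$ in $\MonIdl M$), the only candidate is $\overline{p}(V) = (Q \otimes \exists_V)(p)$. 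I would then check this preserves $1$, arbitrary joins, and products, following the $\Idl(R)$ argument verbatim but dropping everything that referred to $0_+$, addition, or the reflection $\addquot$ — in particular there is no need for the ancillary lemma that $(Q\otimes\exists_W)(p) = (Q\otimes\exists_{\addquot(W)})(p)$, since here $\MonIdl M$ already plays the role that $\Idl(R)$ played after that reduction. Preservation of products again uses the multiplicative axiom of $p$ to expand $(Q\oplus\mu_\times)(p)$ as $(Q\oplus\iota_1)(p)\cdot(Q\oplus\iota_2)(p)$ and then computes $(Q\otimes\nabla(\exists_I\otimes\exists_J))$ of that. Finally, to see $p = (\overline{p}\oplus\O M)(\upsilon)$, I compute $(\overline{p}\oplus\O M)(\upsilon) = \bigvee_V (Q\otimes\exists_V)(p)\oplus\stabpos(V) = (Q \otimes (\iota_1)_!\mu_\times)(p)$ using \cref{prop:saturation_from_join_over_stabpos}, and then $(Q\otimes(\iota_1)_!\mu_\times)(p) = p$: the inequality `$\ge$' is immediate since $(\iota_1)_!\mu_\times$ is inflationary, and `$\le$' uses that $p$ satisfies the monoid anti-ideal condition $(Q\oplus\mu_\times)(p) \le (Q\oplus\iota_1)(p)$, so applying $Q\otimes(\iota_1)_!$ gives $(Q\otimes(\iota_1)_!\mu_\times)(p) \le (Q\otimes(\iota_1)_!\iota_1)(p) \le p$. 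The only genuine subtlety I anticipate is making the deflationary reduction and the translation between $\Sats M$-language and $\O M$-language fully rigorous — in particular confirming that $\stabpos$, $\jointcoz$, $\blacktriangleleft$ and the key lemmas \cref{prop:overt_meeting_saturated,prop:saturation_from_join_over_stabpos,prop:approximable_implies_ideal_approximating} all transport cleanly to the monoid setting, which they should, since their proofs only used the multiplicative structure and overtness of $R$, never the addition.
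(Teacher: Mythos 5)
Your proposal is correct in outline, but it takes a genuinely different — and much heavier — route than the paper. You port the entire approximability apparatus ($\stabpos$, $\jointcoz$, $\blacktriangleleft$, \cref{prop:overt_meeting_saturated,prop:saturation_from_join_over_stabpos,prop:approximable_implies_ideal_approximating}) to the monoid setting, verify the two axioms for $\upsilon$ by hand, and then run the forced-candidate argument $\overline{p}(V) = (Q \otimes \exists_V)(p)$ exactly as in \cref{prop:quantic_spectrum_of_approximable_semiring}; this works, since (as you correctly observe) none of those lemmas ever used the additive structure, and join-preservation of $\overline{p}$ is in fact easier here because the nucleus $a \mapsto a\cdot\top$ preserves arbitrary joins, so no analogue of the additive axiom is needed. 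The paper instead exploits dualisability head-on: since $\Sats M \cong \O M$ is dualisable, $Q \oplus \O M \cong Q \otimes M$ is naturally isomorphic to $\Hom_{\Sup}(M^*, Q)$ via $f \mapsto (f \otimes M)\eta(\top)$, with $\eta(\top)$ corresponding to $\id_{M^*}$; one then checks by string-diagram manipulation (``bending the wires'') that under this bijection the unit and multiplicative conditions defining $\overline{\OPMAI}_M(Q)$ correspond precisely to $f$ preserving the unit and multiplication of $M^* \cong \MonIdl M$. Representability is then immediate, with no need to verify separately that $\upsilon$ is an open prime monoid anti-ideal or that $\overline{p}$ is a quantale homomorphism — both are absorbed into the single naturality/bijection statement. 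Your approach buys nothing extra here and costs several pages; its only advantage is that it does not require recognising the mate correspondence. If you do pursue your route, the one point to make fully explicit is the transport of \cref{prop:jointcoz_stabpos_Galois_connection,prop:blacktriangle_vs_jointcoz} (which need ``overt sublocales distinguish saturated opens''); this is fine because that condition is implied by approximability, i.e.\ by supercontinuity of $\Sats M$, which is your standing hypothesis via \cref{prop:supercontinuous_vs_dualisable}.
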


\begin{proof}
 We may assume $M$ is deflationary so that $\Sats M \cong \O M$.
 By duality, we have $\abs{Q \otimes M} \cong \Hom(\Omega, Q \otimes M) \cong \Hom(\Omega, M \otimes Q) \cong \Hom(\Omega \otimes M^*, Q) \cong \Hom(M^*, Q)$. %
 Moreover, the identity map $\id_{M^*} \in \Hom(M^*, M^*)$ corresponds to $\eta(\top)$ under this bijection.
 So to prove the desired result it is enough to show that if $(Q, m, e)$ is a two-sided quantale, the elements $u$ of $Q \otimes M$ satisfying
 $(Q \otimes \mu_\times)(u) = (Q \otimes\iota_1)(u) \cdot (Q \otimes\iota_2)(u)$ and $(Q \otimes \epsilon_1)(u) = 1$ correspond to the suplattice homomorphisms from $M^*$ to $Q$
 which respect the multiplicative structure.
 
 Consider a suplattice map $f\colon M^* \to Q$. The corresponding map $f^\sharp \in \Hom(\Omega, Q \otimes M)$ is shown in the following string diagram.
 \begin{center}
 \vspace{-3pt}
 \begin{tikzpicture}[scale=0.75]
  \path coordinate (eta) ++(1,1) coordinate (a) ++(0,1.5) coordinate[label=above:$M$] (tr)
  (eta) ++(-1,1) coordinate (c) ++ (0,0.5) coordinate[dot,label=left:$f$] (f) ++(0,1) coordinate[label={[text depth=0.2ex] above:$Q$}] (tl) %
  (eta) ++(1,-0.5) coordinate (b);
  \draw (tl) -- (c) to[out=-90, in=180] (eta) to[out=0, in=-90] (a) -- (tr);
  \fillbackground{$(b) + (0.5,0)$}{$(tl) + (-0.75,0)$};
 \end{tikzpicture}
 \end{center}
 
 Now a suplattice map $f\colon M^* \to Q$ preserves the multiplicative unit if
 \\ \vspace{-3pt}
 \begin{equation*}
 \begin{tikzpicture}[scale=0.75,baseline={([yshift=-0.5ex]current bounding box.center)}]
  \path coordinate (eta) ++(1,1) coordinate (a) ++(0,1) coordinate[dot, label=above:$\epsilon_1$] (e) %
  (eta) ++(-1,1) coordinate (c) ++ (0,1) coordinate[dot,label=left:$f$] (f) ++(0,1) coordinate[label={[text depth=0.2ex] above:$Q$}] (tl)
  (eta) ++(1,-0.5) coordinate[label=below:$\phantom{M}$] (b);
  \draw (tl) -- (c) to[out=-90, in=180] (eta) to[out=0, in=-90] (a) -- (e);
  \draw[dashed] ($(f)-(0.75,1)$) rectangle ($(f)+(0.75,1)$);
  \draw[dashed] ($(f)-(0.75,1)$) |- ($(b) + (0.5,0)$) |- ($(f)+(0.75,1)$);
  \fillbackground{$(b) + (0.5,0)$}{$(tl) + (-0.75,0)$};
 \end{tikzpicture}
 \enspace=\enspace
 \begin{tikzpicture}[scale=0.75,baseline={([yshift=-0.5ex]current bounding box.center)}]
  \path coordinate[dot, label=below:$e$] (e) ++(0,1.5) coordinate[label={[text depth=0.2ex] above:$Q$}] (t);
  \draw (e) -- (t);
  \fillbackground{$(t) + (-1,0)$}{$(e) + (1,-1.25)$};
 \end{tikzpicture}
 \end{equation*}
 while $(Q \otimes \epsilon_1)f^\sharp (\top) = 1$ if
 \\ \vspace{-3pt}
 \begin{equation*}
 \begin{tikzpicture}[scale=0.75,baseline={([yshift=-0.5ex]current bounding box.center)}]
  \path coordinate (eta) ++(1,1) coordinate (a) ++(0,1) coordinate[dot, label=above:$\epsilon_1$] (e)
  (eta) ++(-1,1) coordinate (c) ++ (0,1) coordinate[dot,label=left:$f$] (f) ++(0,1) coordinate[label={[text depth=0.2ex] above:$Q$}] (tl)
  (eta) ++(1,-0.5) coordinate[label=below:$\phantom{M}$] (b);
  \draw (tl) -- (c) to[out=-90, in=180] (eta) to[out=0, in=-90] (a) -- (e);
  \draw[dashed] ($(e)-(0.75,1)$) rectangle ($(e)+(0.5,1)$);
  \draw[dashed] ($(e)+(-0.75,1)$) -| ($(f)-(0.75,1)$) |- ($(b) + (0.5,0)$) -- ($(e)+(0.5,-1)$);
  \fillbackground{$(b) + (0.5,0)$}{$(tl) + (-0.75,0)$};
 \end{tikzpicture}
 \enspace=\enspace
 \begin{tikzpicture}[scale=0.75,baseline={([yshift=-0.5ex]current bounding box.center)}]
  \path coordinate[dot, label=below:$e$] (e) ++(0,1.5) coordinate[label={[text depth=0.2ex] above:$Q$}] (t);
  \draw (e) -- (t);
  \fillbackground{$(t) + (-1,0)$}{$(e) + (1,-1.25)$};
 \end{tikzpicture}
 \end{equation*}
 These are the same diagram and hence these two conditions are clearly equivalent.
 
 On the other hand, the map $f$ preserves the multiplication if
 \\ \vspace{-3pt}
 \begin{equation*} %
 \begin{tikzpicture}[scale=0.6,baseline={([yshift=-0.5ex]current bounding box.center)}] %
  \path coordinate[dot, label=above:$\mu_\times$] (mu)
  +(0,-1) coordinate (a)
  +(-1,1) coordinate (mtl)
  +(1,1) coordinate (mtr);
  \path (a) ++(-1.0,-1) coordinate (eta) ++(-1.0,1) coordinate (c) ++(0,2.25) coordinate[dot,label=left:$f$] (f) ++(0,2) coordinate[label={[text depth=0.2ex] above:$Q$}] (tl);
  \path (mtr) ++(0.75,0.75) coordinate (epsilon) ++(0.75,-0.75) coordinate (d) ++(0,-3.5) coordinate[label=below:$M^*$] (br);
  \path (mtl) ++(2.4,1.75) coordinate (epsilon2) ++(2.4,-1.75) coordinate (d2) ++(0,-3.5) coordinate[label=below:$M^*$] (br2);
  \path (eta) ++(1.0,-0.5) coordinate (b);
  \draw (mtl) to[out=270, in=180] (mu.west) -- (mu.east) to[out=0, in=270] (mtr)
        (mu) -- (a);
  \draw (tl) -- (c) to[out=-90, in=180] (eta) to[out=0, in=-90] (a);
  \draw (mtr) to[out=90, in=180] (epsilon) to[out=0, in=90] (d) -- (br);
  \draw (mtl) to[out=90, in=180] (epsilon2) to[out=0, in=90] (d2) -- (br2);
  \fillbackground{$(tl) + (-1.0,0)$}{$(br2) + (0.5,0)$};
 \end{tikzpicture}
 \,=\enspace %
 \begin{tikzpicture}[scale=0.75,baseline={([yshift=-0.5ex]current bounding box.center)}]
  \path coordinate[dot, label=below:$m$] (mu)
   +(0,1) coordinate[label={[text depth=0.2ex] above:$Q$}] (t)
   +(-1,-1) coordinate (mbl)
   +(1,-1) coordinate (mbr);
  \path (mbl) ++ (0,-0.5) coordinate[dot,label=left:$f$] (fl) ++ (0,-1) coordinate[label=below:$M^*$] (bl)
        (mbr) ++ (0,-0.5) coordinate[dot,label=right:$f$] (fr) ++ (0,-1) coordinate[label=below:$M^*$] (br);
  \draw (mbl) to[out=90, in=180] (mu.west) -- (mu.east) to[out=0, in=90] (mbr)
        (mbl) -- (fl) -- (bl)
        (mbr) -- (fr) -- (br)
        (mu) -- (t);
  \fillbackground{$(bl) + (-1.0,0)$}{$(t) + (2.0,0)$};
 \end{tikzpicture}
 \end{equation*}
 
 The condition $(Q \otimes \mu_\times) f^\sharp (\top) = (Q \otimes\iota_1) f^\sharp (\top) \cdot (Q \otimes\iota_2) f^\sharp (\top)$ is represented below.
 \\ \vspace{-3pt}
 \begin{equation*}
 \begin{tikzpicture}[scale=0.6,baseline={([yshift=-1.5ex]current bounding box.center)}] %
  \path coordinate[dot, label=above:$\mu_\times$] (mu)
  +(0,-1) coordinate (a)
  +(-1,1) coordinate (mtl)
  +(1,1) coordinate (mtr);
  \path (mtl) ++(0,1) coordinate[label=above:$M$] (tl)
        (mtr) ++(0,1) coordinate[label=above:$M$] (tr);
  \path (a) ++(-1.2,-1) coordinate (eta) ++(-1.2,1) coordinate (c) ++(0,2) coordinate[dot,label=left:$f$] (f) ++ (0,1) coordinate[label={[text depth=0.2ex] above:$Q$}] (tll);
  \path (eta) ++(1.2,-0.5) coordinate (b);
  \draw (tl) -- (mtl) to[out=270, in=180] (mu.west) -- (mu.east) to[out=0, in=270] (mtr) -- (tr)
        (mu) -- (a);
  \draw (tll) -- (c) to[out=-90, in=180] (eta) to[out=0, in=-90] (a);
  \fillbackground{$(tll) + (-1.0,0)$}{$(b) + (1.5,0)$};
 \end{tikzpicture}
 \,=\enspace %
 \begin{tikzpicture}[scale=0.6,baseline={([yshift=-1.5ex]current bounding box.center)}]
  \path coordinate[dot, label=below:$m$] (mu)
   +(0,1) coordinate[label={[text depth=0.2ex] above:$Q$}] (t)
   +(-1,-1) coordinate (mbl)
   +(1,-1) coordinate (mbr);
  \path (mbl) ++ (0,-0.5) coordinate[dot,label=left:$f$] (fl) ++ (0,-1) coordinate (a1)
        (mbr) ++ (0,-0.5) coordinate[dot,label=right:$f$] (fr) ++ (0,-1) coordinate (a2);
  \path (a2) ++(0.75,-0.75) coordinate (eta2) ++(0.75,0.75) coordinate (c2) ++ (0,3.5) coordinate[label=above:$M$] (tr2);
  \path (a1) ++(2.4,-1.75) coordinate (eta1) ++(2.4,1.75) coordinate (c1) ++ (0,3.5) coordinate[label=above:$M$] (tr1);
  \draw (mbl) to[out=90, in=180] (mu.west) -- (mu.east) to[out=0, in=90] (mbr)
        (mbl) -- (fl) -- (a1)
        (mbr) -- (fr) -- (a2)
        (mu) -- (t);
  \draw (tr2) -- (c2) to[out=-90, in=0] (eta2) to[out=180, in=-90] (a2);
  \draw (tr1) -- (c1) to[out=-90, in=0] (eta1) to[out=180, in=-90] (a1);
  \fillbackground{$(eta1) + (-3.5,-0.5)$}{$(tr1) + (0.5,0)$};
 \end{tikzpicture}
 \end{equation*}
 
 These two equalities can be turned into each other by `bending the wires' and using the duality identities to `pull them straight'.
 These are inverse operations and so the two conditions are equivalent, as required.
\end{proof} %

\begin{remark}
 In the above discussion, we never actually use that $M$ is a frame. In particular, the same results hold when $M$ is an `overt' commutative comonoid in $\Quant$.
\end{remark}

By using dual suplattices we have been able to give a significantly simpler proof for \cref{prop:spectrum_for_localic_monoids} than for the analogous result for $\OPAI_R$.
It is natural to ask if this approach can be modified to work in that case too. One complication is that the additive structure of $R$ does not restrict to $\Sats R$.
I believe this problem can be surmounted by using a weaker form of addition as in the theory of \emph{hyperrings}, but unfortunately we do not have time to explore this here. %

Let us conclude this section with a simple result, which can be useful for showing when the quantic and localic spectra of a semiring coincide.

\begin{lemma}\label{prop:monoid_ideals_idempotent}
 Let $M$ be an overt commutative localic monoid. Consider the following conditions:
 \begin{enumerate}
  \item $\MonIdl M$ is a frame (i.e.\ the quantale product is given by meet) %
  \item $\Sats M$ is a localic semilattice
  \item $\id \le \nabla_M \mu_\times (\iota_1)_!\mu_\times$, where $\nabla_M\colon \O M \to \O M \oplus \O M$ is the codiagonal map.
 \end{enumerate}
 Conditions (ii) and (iii) are equivalent and imply condition (i). Furthermore, all three conditions are equivalent under the assumption that $\Sats M$ is dualisable.
\end{lemma}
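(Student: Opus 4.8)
The plan is to prove the equivalence of (ii) and (iii) by a direct computation in the internal logic of the hyperdoctrine of open sublocales, then deduce (i) from (ii), and finally use dualisability to close the loop from (i) back to (iii). Throughout I would work under the standing assumption that $M$ is overt (so that $(\iota_1)_!$ exists and \cref{prop:saturation_operator} applies and $\Sats M$ can be obtained by splitting the idempotent $(\iota_1)_!\mu_\times$ in $\Sup$).

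First, (ii) $\iff$ (iii). A localic semilattice structure on $\Sats M$ amounts to a frame map $\O M \oplus \O M \supseteq \Sats M \oplus \Sats M \to \Sats M$ providing a binary operation compatible with the comonoid counit; since $\Sats M$ already carries the comultiplication $\Sats\mu_\times$, saying this is a semilattice operation is saying it is idempotent (the comonoid is always cocommutative and coassociative here, being inherited uniquely as discussed before \cref{prop:dual_of_SatsR}). Idempotence of the comultiplication on $\Sats M$ means precisely that the composite $\O M \xrightarrow{(\iota_1)_!\mu_\times} \Sats M \xrightarrow{\Sats\mu_\times} \Sats M \oplus \Sats M \xrightarrow{\nabla_{\Sats M}} \Sats M$ is below the identity (the reverse inequality being automatic). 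Translating this back along the inclusion $\inclSat$ and using that $\mu_\times$ restricted to saturated elements followed by the codiagonal agrees with $\nabla_M\mu_\times$ on the image of the saturation operator, the condition becomes exactly $\id \le \nabla_M \mu_\times (\iota_1)_!\mu_\times$. I would make this precise in the internal logic: $\nabla_M\mu_\times(\iota_1)_!\mu_\times$ applied to an open $a$ is $\{x \colon M \mid \exists y.\ (xx)y \in a\}$ (using that $\nabla_M$ corresponds to pulling back the diagonal, i.e.\ to the substitution $x \mapsto (x,x)$ into $\mu_\times$), and the claim $x \in a \vdash_x \exists y.\ x^2 y \in a$ — which is exactly what \cref{prop:gelfand_expected_spectrum} invoked as the idempotence criterion — is the content of both (ii) and (iii).

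Next, (iii) $\Rightarrow$ (i). The quantale $\MonIdl M$ is the two-sided reflection of $\SubOW(M)$, equivalently (by \cref{prop:dual_of_SatsR}) the quotient of $\hom(M,\Omega)$ by the nucleus dual to the idempotent $(\iota_1)_!\mu_\times$ on $\O M$; concretely its elements are overt weakly closed monoid ideals, with product $IJ$ the weak closure of the elementwise product. It is a frame iff this product equals meet, i.e.\ iff $I \cdot I = I$ for all $I$ (since two-sided quantales are always $\le$, and commutativity plus idempotence of the product force the product to be the meet). Dualising via \cref{prop:dual_of_SatsR}, $I \cdot I = I$ for all monoid ideals translates into a statement about $\Sats M$: it says the comultiplication $\Sats\mu_\times$ is split idempotent in the appropriate sense, which is again condition (iii). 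More directly, I would argue in the internal logic that $I \between a$ and $I \between b$ should entail $I \between s$ for the saturated open $s = \{x \mid x^2 \in (\text{sat of } a) \sqcap (\text{sat of }b)\}$, using \cref{prop:overt_meeting_saturated} and condition (iii) to replace $x$ by $x^2$; this shows the product $I \cdot J$ and the meet $I \wedge J$ meet the same saturated opens, hence are equal since monoid ideals are determined by the saturated opens they meet. This gives $\MonIdl M$ a meet-semilattice product, and since it is already a suplattice it is a frame.

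Finally, assume $\Sats M$ is dualisable and prove (i) $\Rightarrow$ (iii), completing the circle. By \cref{prop:supercontinuous_vs_dualisable} and \cref{prop:dual_of_SatsR}, when $\Sats M$ is dualisable we have $\Sats M \cong (\MonIdl M)^*$ and the comonoid/quantale structures match up under the duality. If $\MonIdl M$ is a frame, its multiplication is the meet, and transporting the defining string-diagram identities for the multiplication on the dual across the duality (exactly the manipulation at the end of the proof of \cref{prop:spectrum_for_localic_monoids}: bending wires and pulling them straight) turns the idempotence of the meet on $\MonIdl M$ into the idempotence $\Sats\mu_\times = \Sats\mu_\times$-composed-appropriately on $\Sats M$, which unwinds to $\id \le \nabla_M\mu_\times(\iota_1)_!\mu_\times$. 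The point where one must be careful — and what I expect to be the main obstacle — is precisely this last transport: the equivalence of (i) with (iii) genuinely needs dualisability because without it the quantale $\MonIdl M$ being a frame only constrains $\Sats M$ "one variable at a time", and one cannot in general recover a statement about the comultiplication of $\Sats M$ from a statement about finite meets in its would-be dual. For the unconditional equivalence (ii) $\iff$ (iii) and the implication to (i), no dualisability is needed and the work is a routine unwinding in the internal logic, essentially already carried out in \cref{prop:gelfand_expected_spectrum}.
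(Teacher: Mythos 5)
Your proposal is correct and follows essentially the same route as the paper's (very terse) proof: the equivalence of (ii) and (iii) by unwinding idempotence of the comultiplication on $\Sats M$ in terms of $\O M$ using that $\Sats M$ is deflationary, the implication to (i) via the structure-preserving isomorphism $\MonIdl M \cong \hom(\Sats M,\Omega)$, and the converse under dualisability via $\Sats M \cong (\MonIdl M)^*$. Your additional remarks --- that the reverse inequality in (iii) is automatic for saturated elements, and that dualisability is genuinely what lets one transport idempotence of the product back to the comultiplication --- are accurate and fill in details the paper leaves implicit.
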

\begin{proof}
 The equivalence of (ii) and (iii) comes from writing the condition for idempotence of the (co)multiplication on $\Sats M$ in terms of $\O M$ %
 and using the fact that $\Sats M$ is deflationary. %
 
 The implication (ii) $\Rightarrow$ (i) is immediate from the fact that $\MonIdl M \cong \hom(\Sats M,\Omega)$ respects the (co)monoid structures.
 When $\Sats M$ is dualisable, we obtain the reverse implication from $\Sats M \cong (\MonIdl M)^*$ in the same way.
\end{proof}
Observe that the third condition can be interpreted in the internal logic as saying $U(x) \vdash_{x\colon M} \exists y\colon M.\ U(x^2y)$ for all predicates $U$. %
In particular, this will hold for any (discrete) regular commutative monoid --- that is, any commutative monoid satisfying $\forall x\, \exists y.\ x = xyx$.

As a quotient of $\MonIdl R$, $\Idl(R)$ is also a frame under these conditions on the multiplicative monoid of a localic semiring $R$.
In particular, this explains the coincidence of the quantic and localic spectra for overt approximable localic distributive lattices.

\subsection{Presenting the coexponential}

In \cite{Niefield2016} Niefield shows that a quantale is coexponentiable if and only if its underlying suplattice is projective (or equivalently dualisable).
The argument is elegant, but extracting the precise form of the coexponential objects from it takes some work. We describe a presentation of the coexponential\index{coexponential!of quantales|(textbf} by generators and relations.

\begin{theorem}\label{prop:presentation_for_coexponential}
Consider quantales $Q$ and $A$. Suppose $A$ admits a dual basis given by $(r_x)_{x \in X}$ and $(\sigma_x)_{x \in X}$ and $Q$ has a presentation with generators $G$ and relations
$\bigvee S_j = t_j$ where $S_j \subseteq G, t_j \in G$ for $j \in J$,\; $a_k b_k = c_k$ where $a_k,b_k,c_k \in G$ for $k \in K$ and $e = 1$ for some $e \in G$.
Then the coexponential ${}^A Q$\glsadd{AQ} has a presentation with generators $(g \oslash \sigma_x)$\glsadd{oslash} for $g \in G, x \in X$ and the following relations:
\begin{enumerate}
 \item $g \oslash \sigma_x = \bigvee_{y \in X} \sigma_x(r_y) \cdot (g \oslash \sigma_y)$ for $g \in G, x \in X$, %
 \item $t_j \oslash \sigma_x = \bigvee_{s \in S_j}  (s \oslash \sigma_x)$ for $j \in J, x \in X$,
 \item $c_k \oslash \sigma_x = \bigvee_{y,y' \in X} \sigma_x(r_y r_{y'}) \cdot (a_k \oslash \sigma_y)(b_k \oslash \sigma_{y'})$ for $k \in K, x \in X$, %
 \item $e \oslash \sigma_x = \sigma_x(1) \cdot 1$ for $x \in X$.
\end{enumerate}
The co-evaluation map $\eta_Q\colon Q \to {}^A Q \oplus A$\glsadd{eta} is given by $\eta_Q(g_i) = \bigvee_{x \in X} (g_i \oslash \sigma_x) \oplus r_x$. \index{coexponential!of quantales|)} %
\end{theorem}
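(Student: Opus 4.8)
The plan is to verify the claimed presentation by exhibiting a natural isomorphism $\Hom_{\QuantGeneral}(P, {}^A Q) \cong \Hom_{\QuantGeneral}(P \oplus A, Q)$ for an arbitrary quantale $P$, where the left-hand side is computed using the given presentation. Since $\oplus$ is the coproduct of quantales, $\Hom_{\QuantGeneral}(P \oplus A, Q)$ is the set of pairs $(\phi\colon P \to Q, \psi\colon A \to Q)$ of quantale homomorphisms; but $A$ is fixed with a chosen dual basis, so really we want: for fixed $Q$, the functor $P \mapsto \{\phi \in \Hom(P \oplus A, Q)\}$ is represented by the presented quantale. Equivalently, I will show that giving a quantale map out of the presented object ${}^A Q$ is the same data as giving a quantale map $P \oplus A \to Q$. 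The coevaluation $\eta_Q$ then has to be the map corresponding to $\mathrm{id}_{{}^A Q}$, which forces the stated formula $\eta_Q(g_i) = \bigvee_x (g_i \oslash \sigma_x) \oplus r_x$.

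First I would set up the bijection on underlying suplattices before worrying about the multiplicative structure. Because $A$ is dualisable with dual basis $(r_x), (\sigma_x)$, a suplattice map $P \oplus A = P \otimes A \to Q$ is (by $\hom$-$\otimes$ adjunction and dualisability, $A^* \cong \hom(A,\Omega)$) the same as a suplattice map $P \to Q \otimes A^* \cong \hom(A, Q)$, hence determined by its values $h(p) \in \hom(A,Q)$, which by the dual basis are determined by the elements $h(p)(r_x) \in Q$ subject to the "dual basis coherence" $h(p) = \bigvee_x \sigma_x \cdot h(p)(r_x)$ — this is exactly relation (i) once we name $g \oslash \sigma_x$ to be the image of the generator $g$ of $P$ at $r_x$. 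So on the level of free structures, the presented generators $g \oslash \sigma_x$ with relation (i) exactly carve out the suplattice $\hom(\langle G\rangle, \hom(A, Q))$-worth of data, matching suplattice maps $\langle G \rangle \otimes A \to Q$. I would make this precise by checking that the free quantale on $\{g \oslash \sigma_x\}$ modulo relation (i) alone represents $P \mapsto \{$suplattice maps $\langle G\rangle \otimes A \to Q\}$ (ignoring for now that $P$ is presented, not free, and that maps must be quantale maps).

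Next I would handle the remaining relations by matching them against the requirement that the map $P \oplus A \to Q$ (i) kills the relations of $P$ and (ii) is a quantale homomorphism. Relation (ii) corresponds to the defining relations $\bigvee S_j = t_j$ of $P$: a suplattice/quantale map respects these iff $t_j \mapsto \bigvee_{s\in S_j} s$, which under the $r_x$-evaluation becomes precisely $t_j \oslash \sigma_x = \bigvee_{s\in S_j}(s\oslash\sigma_x)$. Relations (iii) and (iv) encode that the corresponding map $P \oplus A \to Q$ is multiplicative: the product in $P \oplus A$ of $\iota_1(a_k)$ and $\iota_1(b_k)$ must go to the product of the images, and expanding $r_y \cdot r_{y'}$ in $A$ via the dual basis (using that multiplication in $A$ is bilinear and $\sigma_x(r_y r_{y'})$ are the structure constants) yields (iii); similarly the unit relation $e = 1$ in $P$ together with preservation of the unit gives (iv) via $e \oslash \sigma_x = \sigma_x(1)\cdot 1$, since the unit of $A$ expands as $1_A = \bigvee_x \sigma_x(1_A)\cdot r_x$. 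I would carry this out by taking a quantale map $\theta\colon {}^A Q \to P'$ (for test quantale $P'$ — actually it's cleaner to just exhibit the bijection with $P' = {}^A Q$ and chase identities), defining $\hat\theta\colon P \oplus A \to Q$ on generators by $\hat\theta(\iota_1(g)) = \bigvee_x \theta(g\oslash\sigma_x)\cdot r_x$ evaluated appropriately and $\hat\theta|_A$ the canonical inclusion-composite, then verifying $\hat\theta$ is well-defined (respects $P$'s relations — this uses relations (ii),(iv) and that $(r_x),(\sigma_x)$ is a dual basis) and multiplicative (uses (iii)), and conversely recovering $\theta$ from any such $\hat\theta$ by $\theta(g\oslash\sigma_x) = (Q\otimes\sigma_x)(\hat\theta(\iota_1(g)\otimes 1))$ or similar, checking relation (i) is automatic and (ii)–(iv) follow from $\hat\theta$ being a well-defined quantale homomorphism. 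Naturality in $Q$ and the two round-trip identities then finish it.

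The main obstacle I expect is the bookkeeping in verifying that $\hat\theta$ is multiplicative using relation (iii): one must expand a product of two joins $\left(\bigvee_y \theta(a_k\oslash\sigma_y)\cdot r_y\right)\left(\bigvee_{y'}\theta(b_k\oslash\sigma_{y'})\cdot r_{y'}\right)$, use bilinearity of multiplication in both $Q$ and $A$ to pull the $\Omega$-scalars out, re-expand each product $r_y r_{y'} = \bigvee_x \sigma_x(r_y r_{y'})\cdot r_x$ via the dual basis, interchange the resulting joins, and recognise the coefficient of $r_x$ as exactly $\theta(c_k\oslash\sigma_x)$ by relation (iii) — and dually to check the reverse direction one needs that the $\sigma_x$ are genuinely suplattice homomorphisms and that $\sigma_x(r_yr_{y'})$ behaves like a structure-constant tensor. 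A secondary subtlety is being careful that everything respects the $\Omega$-algebra structure (the $t\cdot a = \bigvee\{a\mid t\}$ operation) uniformly, so that the scalars $\sigma_x(\cdots)\in\Omega$ can be moved freely across quantale homomorphisms; this is where I would lean on the remarks in \cref{section:dualisable} about dual bases and on the fact that all quantale homomorphisms preserve the canonical $\Omega$-algebra structure. Modulo this calculation, the result is essentially the statement that a presentation of $P$ and a dual basis of $A$ together present $P \oplus A$, transported across the $\hom$-$\otimes$ adjunction — conceptually clean, but the explicit relations require the care above.
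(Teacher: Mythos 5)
Your computational core is exactly the paper's proof: relation (i) as the dual-basis coherence, (ii)--(iv) as the transport of $Q$'s relations and of multiplicativity and unitality across the correspondence $\overline{f}(g\oslash\sigma_x)=(R\otimes\sigma_x)(f(g))$, the coevaluation as the element corresponding to the identity, and the ``main obstacle'' you flag (expanding a product of joins and re-expanding $r_yr_{y'}$ through the dual basis) is precisely the calculation the paper performs. However, the universal property you announce at the outset is the wrong one. You propose to show $\Hom(P,{}^AQ)\cong\Hom(P\oplus A,Q)$, which would make ${}^A(-)$ a \emph{right} adjoint to $(-)\oplus A$; since $\Hom(P\oplus A,Q)\cong\Hom(P,Q)\times\Hom(A,Q)$, that functor of $P$ does not see the interaction between $Q$ and $A$ at all and is certainly not represented by the object being presented. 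The coexponential is the exponential in the opposite category, so what must be verified is $\Hom({}^AQ,R)\cong\Hom(Q,R\oplus A)$, natural in the test quantale $R$ --- i.e.\ ${}^A(-)$ is the \emph{left} adjoint of $(-)\oplus A$, and the counit is $\eta_Q\colon Q\to{}^AQ\oplus A$. This confusion propagates through your outline: you declare $\hat\theta\colon P\oplus A\to Q$ and then write $\hat\theta(\iota_1(g))=\bigvee_x\theta(g\oslash\sigma_x)\cdot r_x$, whose right-hand side lives in $P'\oplus A$ rather than $Q$, and your suplattice reduction $\hom(P\otimes A,Q)\cong\hom(P,Q\otimes A^*)$ is likewise the adjunction pointed the wrong way (what is needed is $\hom(Q,R\otimes A)\cong\hom(Q\otimes A^*,R)$, which also holds since dualisable objects give biadjoint tensoring functors, but it is a different bijection).

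The repair is routine precisely because the formulas you manipulate are the ones belonging to the correct adjunction: given $f\colon Q\to R\oplus A$, uniqueness of $\overline{f}\colon{}^AQ\to R$ with $(\overline{f}\oplus A)\eta_Q=f$ follows by applying $R\otimes\sigma_y$ to $f(g)=\bigvee_x\overline{f}(g\oslash\sigma_x)\oplus r_x$ and invoking relation (i), and existence is the verification that $g\oslash\sigma_x\mapsto(R\otimes\sigma_x)(f(g))$ respects (i)--(iv), exactly as you describe. But as written the proposal does not establish the couniversal property of ${}^AQ$ until the variance is straightened out and every map is re-oriented. One further gap relative to the paper: you take the existence of $\eta_Q$ for granted, whereas one must first check that the assignment $g\mapsto\bigvee_x(g\oslash\sigma_x)\oplus r_x$ on generators kills the relations of $Q$ and hence defines a quantale homomorphism out of $Q$ at all; this uses relations (ii)--(iv) of the presented object together with the first triangle identity for the dual basis, and it is the step that makes the subsequent uniqueness argument meaningful.
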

\begin{proof}
 We first show that $\eta_Q$ is a well-defined quantale homomorphism. The definition on generators defines a map $\eta'$ from the free quantale on $G$ to ${}^A Q \oplus Q$.
 For this to factor through $Q$, it needs to send the relations $\bigvee S_j = t_j$, $c_k = a_k b_k$  and $e = 1$ to identities. We quickly see this is the case
 for the $\bigvee$-type relations and for $e = 1$. Now consider $\eta'(c_k) = \bigvee_{x \in X} (c_k \oslash \sigma_x) \oplus r_x$. This equals
 $\bigvee_{x,y,y' \in X} \sigma_x(r_y r_{y'}) \cdot (a_k \oslash \sigma_y)(b_k \oslash \sigma_{y'}) \oplus r_x$ by relation (iii) of ${}^A Q$.
 Applying the first of the dualisable object identities, we get $\bigvee_{y,y' \in X} (a_k \oslash \sigma_y)(b_k \oslash \sigma_{y'}) \oplus r_y r_{y'}$, which then factors to give
 $\eta'(a_k)\eta'(b_k)$, as required.
 
 Suppose $f\colon Q \to R \oplus A$ is a map of quantales. We must show there is a unique map $\overline{f}\colon {}^A Q \to R$ making the following diagram commute.
 \begin{center}
 \begin{tikzpicture}[node distance=3.2cm, auto]
  \node (DQpD) {${}^A Q \oplus A$};
  \node (Q) [below of=DQpD] {$Q$};
  \node (RpD) [right of=Q] {$R \oplus A$};
  \node (DQ) [right=1.2cm of DQpD] {${}^A Q$};
  \node (R) at ($(DQ) + 0.6*(RpD)-0.6*(DQpD)$) {$R$};
  \draw[->] (Q) to node [swap] {$f$} (RpD);
  \draw[->] (Q) to node {$\eta_Q$} (DQpD);
  \draw[->, dashed] (DQ) to node {$\overline{f}$} (R);
  \draw[->] (DQpD) to node {$\overline{f} \oplus A$} (RpD);
 \end{tikzpicture}
 \end{center}
 If such an $\overline{f}$ exists, it must satisfy $f(g) = \bigvee_{x \in X} \overline{f}(g \oslash \sigma_x) \oplus r_x$ by commutativity.
 Applying $R\otimes \sigma_y$ to both sides we have
 $(R\otimes \sigma_y)(f(g)) = \bigvee_{x \in X} \sigma_y(r_x) \cdot \overline{f}(g \oslash \sigma_x) = \overline{f}\left(\bigvee_{x \in X} \sigma_y(r_x) \cdot (g \oslash \sigma_x)\right)
 = \overline{f}(g \oslash \sigma_y)$ by relation (i). This specifies $\overline{f}$ on generators, so we now only need to check that this indeed gives a well-defined map.
 
 Let $\widehat{f}$ be the map from the free quantale on the generators $(g \oslash \sigma_x)_{g \in G, x \in X}$ to $R$ defined in the same way as $\overline{f}$.
 It is easy to see that $\widehat{f}$ respects the relations (i) and (ii) on ${}^A Q$ using preservation of joins and, in the case of (i), the second dualisable object identity. %
 That it sends $e \oslash \sigma_x$ to $\sigma_x(1) \cdot 1$ is immediate. For (iii) note that, as above, we have
 \[\bigvee_{y \in X} \widehat{f}(ab \oslash \sigma_y) \oplus r_y = f(ab) = f(a)f(b) = \bigvee_{y,y' \in X} \widehat{f}(a \oslash \sigma_y)\widehat{f}(b \oslash \sigma_{y'}) \oplus r_y r_{y'}.\]
 Once again we apply $R \otimes \sigma_x$, giving $\widehat{f}(ab \oslash \sigma_x) = \bigvee_{y,y' \in X} \sigma_x(r_y r_{y'}) \cdot (a \oslash \sigma_y)(b \oslash \sigma_{y'})$, as desired. %
\end{proof}

The above construction concerns coexponentials in the category of all (commutative, but not necessarily two-sided) quantales.
Though it is easy to see that ${}^A Q$ is two-sided whenever $Q$ is %
and hence the construction works equally well for computing these coexponentials in $\Quant$.
However, I do not know if there are any additional two-sided quantales that only become coexponentiable in the subcategory. %

\subsection{Tangent bundles of quantales} \label{section:tangent_bundle}

We again consider the two-sided quantale $\Dvar = \langle \epsilon \mid \epsilon^2 = 0\rangle$.\glsadd{Dvar}
We can construct $\Dvar$ in two steps. First we find the corresponding two-sided idempotent semiring for the presentation.
This is given by $\{0, \epsilon, 1\}$ with the obvious operations. Then we take order ideals giving $\Dvar \cong \I \{0, \epsilon, 1\}$.

Incidentally, the same construction can be used for $\O\Srpnsk = \langle m \mid m^2 = m\rangle$. The underlying join-semilattice of the associated idempotent semiring $\{0, m, 1\}$
is isomorphic to $\{0, \epsilon, 1\}$ and so the underlying suplattices of $\Dvar$ and $\O\Srpnsk$ are isomorphic.

This suplattice has a dual basis given by elements $m = {\downarrow} m$ and $1$ and corresponding maps $\sigma_m = \llbracket (-) \ge m \rrbracket$ and $\sigma_1 = \llbracket (-) = 1 \rrbracket$.
Consequently, both $\Dvar$ and $\O\Srpnsk$ are coexponentiable.

The coexponential ${}^\Srpnsk Q$ is isomorphic to the copower of $Q$ by the poset $\{0 \le 1\}$. %
We denote the smaller of the two natural inclusions $Q \to {}^\Srpnsk Q$ by $\iota_0$\glsadd{iota_01} and the larger by $\iota_1$.

As in the category of commutative rings or in synthetic differential geometry, we can interpret the coexponential ${}^\Dvar Q$ as a kind of \emph{tangent bundle}\index{tangent bundle|(textbf} of $Q$. %

\begin{definition}
 Suppose $Q$ and $Q'$ are two-sided quantales and $i\colon Q \to Q'$ is a quantale homomorphism.
 We call a suplattice homomorphism $d\colon Q \to Q'$ an \emph{$i$-derivation}\index{derivation|(textbf} if
 $d \ge i$ and $d(xy) = i(x) d(y) \vee i(y) d(x)$.
\end{definition} %
\begin{remark}
 Notice that in the presence of the second condition, the first condition for being a derivation is equivalent to requiring $d(1) = 1$. %
\end{remark}

\begin{lemma}
 We can express ${}^\Dvar Q$ explicitly as the free quantale on $Q \sqcup \{\d x \mid x \in Q\}$\glsadd{dx} subject to relations such that the obvious map $\iota\colon Q \to {}^\Dvar Q$\glsadd{iota} is a quantale homomorphism,
 the map $\d\colon Q \to {}^\Dvar Q$ is an $\iota$-derivation\index{derivation|)}. The co-evaluation map is given by $\eta_Q (x) = \iota x \oplus 1 \vee \d x \oplus \epsilon$.
\end{lemma}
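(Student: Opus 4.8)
The plan is to apply the general presentation of \cref{prop:presentation_for_coexponential} to the specific quantale $A = \Dvar$ and then simplify the resulting presentation, using the concrete dual basis for the underlying suplattice of $\Dvar$ that was described just above the statement. Recall that $\Dvar \cong \I\{0, \epsilon, 1\}$ has dual basis given by the elements $r_m = {\downarrow}\epsilon$ and $r_1 = 1$ together with the maps $\sigma_m = \llbracket (-) \ge \epsilon \rrbracket$ and $\sigma_1 = \llbracket (-) = 1 \rrbracket$. (Here I write $\epsilon$ rather than $m$ for the middle generator, matching the presentation $\langle \epsilon \mid \epsilon^2 = 0\rangle$.) So the index set is $X = \{m, 1\}$ and, for any generator $g$ of $Q$, the coexponential ${}^\Dvar Q$ has two generators, which I will call $\iota g \coloneqq g \oslash \sigma_1$ and $\d g \coloneqq g \oslash \sigma_m$.

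First I would compute the values $\sigma_x(r_y)$ and $\sigma_x(r_y r_{y'})$ needed for relations (i) and (iii). We have $\sigma_1(r_1) = \llbracket 1 = 1\rrbracket = 1$, $\sigma_1(r_m) = \llbracket {\downarrow}\epsilon = 1\rrbracket = 0$, $\sigma_m(r_1) = \llbracket 1 \ge \epsilon\rrbracket = 1$ and $\sigma_m(r_m) = \llbracket {\downarrow}\epsilon \ge \epsilon\rrbracket = 1$. For relation (iii) we need the products in $\Dvar$: $r_1 r_1 = 1$, $r_1 r_m = r_m r_1 = {\downarrow}\epsilon = r_m$, and $r_m r_m = {\downarrow}\epsilon^2 = {\downarrow}0 = 0$. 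Plugging these into the four families of relations from \cref{prop:presentation_for_coexponential}: relation (i) for $x = 1$ gives $\iota g = \iota g$ (trivial), and for $x = m$ gives $\d g = \iota g \vee \d g$, i.e.\ $\d g \ge \iota g$. Relation (ii), for a join relation $\bigvee S_j = t_j$ in $Q$, becomes $\iota t_j = \bigvee_{s \in S_j} \iota s$ and $\d t_j = \bigvee_{s \in S_j} \d s$, i.e.\ $\iota$ and $\d$ both preserve the joins in the presentation. Relation (iii), for a product relation $a_k b_k = c_k$ in $Q$, becomes $\iota c_k = \iota a_k \cdot \iota b_k$ for $x = 1$ (using only the $y = y' = 1$ term survives since $\sigma_1$ kills everything with an $r_m$ factor), and for $x = m$, summing the surviving terms $\sigma_m(r_1 r_m)$ and $\sigma_m(r_m r_1)$ (both $=1$) while $\sigma_m(r_m r_m) = 0$, gives $\d c_k = \iota a_k \cdot \d b_k \vee \d a_k \cdot \iota b_k$. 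Relation (iv) becomes $\iota e = \sigma_1(1)\cdot 1 = 1$ and $\d e = \sigma_m(1)\cdot 1 = 1$. Assembling: the conditions on $\iota$ say exactly that $\iota$ is a quantale homomorphism $Q \to {}^\Dvar Q$ (it preserves the presentation's joins, products and unit), and the conditions on $\d$ say $\d$ is a suplattice homomorphism with $\d \ge \iota$, $\d(1) = 1$ and $\d(xy) = \iota(x)\d(y) \vee \iota(y)\d(x)$ — that is, $\d$ is an $\iota$-derivation. So ${}^\Dvar Q$ is the free quantale on $Q \sqcup \{\d x \mid x \in Q\}$ subject to these relations, as claimed.

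Finally I would read off the co-evaluation map. \cref{prop:presentation_for_coexponential} gives $\eta_Q(g) = \bigvee_{x \in X}(g \oslash \sigma_x) \oplus r_x = (g \oslash \sigma_1) \oplus r_1 \vee (g \oslash \sigma_m) \oplus r_m = \iota g \oplus 1 \vee \d g \oplus {\downarrow}\epsilon$. Under the identification of elements $\ell \oplus m$ of the coproduct ${}^\Dvar Q \oplus \Dvar$ with $\ell \otimes m$, and writing $\epsilon$ for the generator ${\downarrow}\epsilon$ of $\Dvar$, this is exactly $\eta_Q(x) = \iota x \oplus 1 \vee \d x \oplus \epsilon$, matching the formula in the statement. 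One routine point to address carefully is that the presentation of $Q$ may genuinely require join relations with arbitrary (not just the $\bigvee S_j = t_j$ shape) right-hand sides, and relations of the form $e = 1$ for the unit; since \cref{prop:presentation_for_coexponential} is stated for presentations of that normalised shape, I would note that every quantale presentation can be put in this form (introducing fresh generators for composite terms if necessary) so there is no loss of generality. I do not expect any genuine obstacle here — the whole proof is a substitution into an already-proved theorem plus bookkeeping — the only mildly delicate step is keeping the two index labels $\{m,1\}$ straight and verifying that the cross terms in relation (iii) collapse correctly to give the Leibniz rule rather than something spurious.
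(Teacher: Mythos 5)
Your proposal is correct and is essentially the paper's own proof: the paper simply invokes \cref{prop:presentation_for_coexponential} with $\iota(a) = a \oslash \sigma_1$ and $\d a = a \oslash \sigma_\epsilon$, and you have carried out the same substitution with the bookkeeping made explicit. The only slip is that in relation (iii) for $x = m$ the term $\sigma_m(r_1 r_1)\cdot(\iota a_k)(\iota b_k)$ also survives, but it is absorbed by $\iota a_k \cdot \d b_k$ using $\d \ge \iota$ from relation (i), so your final Leibniz relation is still right.
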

\begin{proof}
 We just need to apply \cref{prop:presentation_for_coexponential}.
 If we set $\iota(a) = a \oslash \sigma_1$ and $\d a = a \oslash \sigma_\epsilon$, then the relations given by the theorem show that $\iota$ and $\d$
 satisfy the desired properties and ${}^\Dvar Q$ is the free quantale on the generators subject to these conditions.
\end{proof}

We claimed in the introduction that the quantale of ideals of a ring $R$ contains additional `differential' information over the frame of radical ideals.
One example of this is the fact that the quantale $\Idl(R)$ contains a great deal of information about singularities of $R$. In particular, it encodes whether $R$ is nonsingular.
However, the usual algebro-geometric approaches to nonsingularity do not generalise well to other quantales. Here we attempt to formulate a preliminary definition of nonsingular quantale
which might stand a better chance of being appropriate more generally.

We now consider some examples of tangent bundles of quantales. We will look at simple quantales that might be thought of as analogues of algebraic curves.
Since the quantales can be rather complicated, we will describe the localic reflection of the quantales involved.
The resulting frames are all coherent and we will depict them by drawing Hasse diagrams of their lattices of compact elements.
(Under the assumption of excluded middle these finite lattices will coincide with the frames themselves.)

\begin{figure}[H]\label{fig:affine_tangent_bundle}
\paragraph{Affine line}
Take $Q_1 = \langle x \rangle$ --- the free two-sided quantale on one generator. \\
Then $\locrefl({}^\Dvar Q_1) = \langle x, \d x \mid x \le \d x \rangle_\Frm$ and $\locrefl({}^\Srpnsk Q_1) = \langle x_0, x_1 \mid x_0 \le x_1 \rangle_\Frm$. %
\bigskip

\captionsetup[subfigure]{font=footnotesize}
\centering
\begin{subfigure}[b]{0.45\textwidth}
\centering
\begin{tikzpicture}[auto]
  \matrix (nodes) [matrix of nodes, nodes in empty cells, ampersand replacement=\&, column sep=0.25cm, row sep=0.5cm]{
    $1$ \\
    $\d x$ \\
    $x$ \\
    $0$ \\
  };
  \foreach \i in {1,...,3} {
    \pgfmathtruncatemacro{\iplusone}{\i + 1}
    \draw [thick] (nodes-\i-1) -- (nodes-\iplusone-1);
  }
\end{tikzpicture}
\caption*{$\locrefl({}^\Dvar Q_1)$}
\end{subfigure}
\begin{subfigure}[b]{0.45\textwidth}
\centering
\begin{tikzpicture}[auto]
  \matrix (nodes) [matrix of nodes, nodes in empty cells, ampersand replacement=\&, column sep=0.25cm, row sep=0.5cm]{
    $1$ \\
    $x_1$ \\
    $x_0$ \\
    $0$ \\
  };
  \foreach \i in {1,...,3} {
    \pgfmathtruncatemacro{\iplusone}{\i + 1}
    \draw [thick] (nodes-\i-1) -- (nodes-\iplusone-1);
  }
\end{tikzpicture}
\caption*{$\locrefl({}^\Srpnsk Q_1)$}
\end{subfigure}
\addtocounter{figure}{-1} %
\end{figure}

\begin{figure}[H]
\paragraph{Cuspidal cubic}
Take $Q_2 = \langle x, y \mid y^2 = x^3 \rangle$. \\
Then $\locrefl({}^\Dvar Q_2) = \langle x, \d x, \d y \mid x \le \d x \wedge \d y \rangle$ and $\locrefl({}^\Srpnsk Q_2) = \langle x_0, x_1 \mid x_0 \le x_1 \rangle$. %
\bigskip

\captionsetup[subfigure]{font=footnotesize}
\centering
\begin{subfigure}[b]{0.45\textwidth}
\centering
\begin{tikzpicture}[auto]
  \matrix (nodes) [matrix of nodes, nodes in empty cells, ampersand replacement=\&, column sep=0.25cm, row sep=0.5cm]{
    \& $1$ \& \\
    \& $\d x \vee \d y$ \& \\
    $\d x$ \& \& $\d y$ \\
    \& $\d x \wedge \d y$ \& \\
    \& $x$\rlap{ $(= y)$} \& \\ %
    \& $0$ \& \\
  };

  \newlength\Nodewidth
  \newlength\EmptyLength
  \setlength{\EmptyLength}{9pt} %
  \foreach \i in {1,...,5} {
    \pgfmathtruncatemacro{\iplusone}{\i + 1}
    \foreach \j in {1,...,3} {
      \ifnodedefined{nodes-\i-\j}{
      \getwidthofnode{\Nodewidth}{nodes-\i-\j}
      \ifdimless{\EmptyLength}{\Nodewidth}{
        \foreach \k in {1,...,3} {
          \ifnodedefined{nodes-\iplusone-\k}{
          \getwidthofnode{\Nodewidth}{nodes-\iplusone-\k}
          \ifdimless{\EmptyLength}{\Nodewidth}{
              \draw [thick] (nodes-\i-\j) -- (nodes-\iplusone-\k);
          }{}}{}
        }
       }{}}{}
    }
  }
\end{tikzpicture}
\caption*{$\locrefl({}^\Dvar Q_2)$}
\end{subfigure}
\begin{subfigure}[b]{0.45\textwidth}
\centering
\begin{tikzpicture}[auto]
  \matrix (nodes) [matrix of nodes, nodes in empty cells, ampersand replacement=\&, column sep=0.25cm, row sep=0.5cm]{
    $1$ \\
    $x_1$\rlap{ $(= y_1)$} \\
    $x_0$\rlap{ $(= y_0)$} \\
    $0$ \\
    \\
  };
  \foreach \i in {1,...,3} {
    \pgfmathtruncatemacro{\iplusone}{\i + 1}
    \draw [thick] (nodes-\i-1) -- (nodes-\iplusone-1);
  }
\end{tikzpicture}
\caption*{$\locrefl({}^\Srpnsk Q_2)$}
\end{subfigure}
\addtocounter{figure}{-1} %
\end{figure}

\begin{figure}[H]
\paragraph{Sierpiński space}
Take $Q_3 = \langle x \mid x = x^2 \rangle$. \\
Then $\locrefl({}^\Dvar Q_3) = \langle x \rangle$ and $\locrefl({}^\Srpnsk Q_3) = \langle x_0, x_1 \mid x_0 \le x_1 \rangle$.
\bigskip

\captionsetup[subfigure]{font=footnotesize}
\centering
\begin{subfigure}[b]{0.45\textwidth}
\centering
\begin{tikzpicture}[auto]
  \matrix (nodes) [matrix of nodes, nodes in empty cells, ampersand replacement=\&, column sep=0.25cm, row sep=0.5cm]{
    $1$ \\
    $x$\rlap{ $(= \d x)$} \\
    $0$ \\
  };
  \foreach \i in {1,...,2} {
    \pgfmathtruncatemacro{\iplusone}{\i + 1}
    \draw [thick] (nodes-\i-1) -- (nodes-\iplusone-1);
  }
  \node[] () [below=8pt  of nodes-3-1] {};
\end{tikzpicture}
\caption*{$\locrefl({}^\Dvar Q_3)$}
\end{subfigure}
\begin{subfigure}[b]{0.45\textwidth}
\centering
\begin{tikzpicture}[auto]
  \matrix (nodes) [matrix of nodes, nodes in empty cells, ampersand replacement=\&, column sep=0.25cm, row sep=0.5cm]{
    $1$ \\
    $x_1$ \\
    $x_0$ \\
    $0$ \\
  };
  \foreach \i in {1,...,3} {
    \pgfmathtruncatemacro{\iplusone}{\i + 1}
    \draw [thick] (nodes-\i-1) -- (nodes-\iplusone-1);
  }
\end{tikzpicture}
\caption*{$\locrefl({}^\Srpnsk Q_3)$}
\end{subfigure}
\addtocounter{figure}{-1} %
\end{figure}
\index{tangent bundle|)}

Notice how in the affine case $\locrefl({}^\Dvar Q_1)$ and $\locrefl({}^\Srpnsk Q_1)$ are isomorphic. In the frame case ${}^\Dvar Q_3$ is `smaller than expected' and in the singular case ${}^\Dvar Q_2$ is
`larger than expected'. This suggests that we might be able to define the notion of nonsingular quantale\index{nonsingular!quantale|(textbf}\index{quantale!nonsingular|(textbf} by looking how these two coexponentials are related.
Though it seems like isomorphism is too much to ask for in general, as the quantale of ideals of $k[x,y]$ shows. %

Algebraic geometry gives some suggestions on how we might proceed. Given an irreducible closed set containing a point, we can induce a subspace of the tangent space at that point.
For instance, curves through a point induce (subspaces generated by) tangent vectors in the usual way. In nonsingular\index{nonsingular!ring|(}\index{ring!nonsingular|(} cases it appears that every subspace can be generated in this way,
but if the point is singular this is no longer the case.
For example, no subvariety generates a 1-dimensional subspace of the nodal cubic curve at the singular point (shown below). This is made precise in the following lemma
(which requires classical logic and a choice principle to ensure enough points exist).

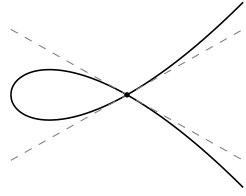
\begin{figure}[H]
\centering
\begin{tikzpicture}
\begin{axis}[hide axis,domain=-1:1,width=0.35\linewidth]
  \addplot[semithick,samples at={-1.0,-0.995,...,-0.65,-0.65,-0.6,...,1.0}] {+x*sqrt(x+1)};
  \addplot[semithick,samples at={-1.0,-0.995,...,-0.65,-0.65,-0.6,...,1.0}] {-x*sqrt(x+1)};
  \addplot[dashed,gray,samples=2] {x};
  \addplot[dashed,gray,samples=2] {-x};
  \fill[black] (0,0) circle(1pt);
\end{axis}
\end{tikzpicture}
\caption*{A nodal cubic curve with two tangent lines at the singularity.}
\end{figure}

\begin{proposition}\label{prop:nonsingular_schemes_via_quantic_tangent_bundles}
 Under the assumption of excluded middle\index{excluded middle} and the axiom of choice, %
 a Noetherian ring $R$ is nonsingular\index{nonsingular!ring|)}\index{ring!nonsingular|)} if and only if for every point $p$ in the spectrum $X$, every subspace $V \le T_p R$\index{tangent space} is induced by an irreducible closed set $Q \subseteq X$.
 Here the subspace induced by $Q$ is found by restricting to the stalk at $p$ to give a proper ideal $I$ in the local ring $(R_p, \m)$ and
 then taking the image of $I$ in $\m/\m^2$, quotienting and dualising. %
\end{proposition}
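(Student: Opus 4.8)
The plan is to reduce the statement to the classical characterisation of regular (= nonsingular in the sense of \cref{section:results_from_commutative_ring_theory}, i.e.\ all stalks regular local) Noetherian rings via the dimension of the cotangent space, and then to translate the geometric condition ``every subspace of $T_p R$ is induced by an irreducible closed set'' into the purely local-algebraic condition that every ideal $\m^2 \le \overline{J} \le \m$ of $R_p$ with $\overline{J}/\m^2$ of a \emph{prescribed} codimension arises as (the localisation of) a prime ideal of $R$. First I would make the dictionary precise: fix a point $p \in X = \Spec R$ corresponding to a prime ideal $P$; by \cref{section:results_from_commutative_ring_theory} the stalk is the local ring $(R_P, \m)$, the cotangent space is $\m/\m^2$ over $R/\m$, and $T_P R = \hom(\m/\m^2, R/\m)$. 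An irreducible closed set through $p$ corresponds to a prime ideal $Q \subseteq P$; restricting to the stalk gives the prime ideal $\overline{Q} = Q R_P \subsetneq \m$ (proper since $Q \subseteq P$), and ``the subspace induced by $Q$'' is, unwinding the prescription in the statement, the annihilator $(\overline{Q} + \m^2)/\m^2)^{\perp} \le T_P R$, i.e.\ the linear dual of the quotient $(\m/\m^2)/((\overline{Q}+\m^2)/\m^2)$. So the condition becomes: every subspace of $T_P R$ is of the form $W^\perp$ where $W = (\overline{Q}+\m^2)/\m^2$ for some prime $Q \subseteq P$; equivalently (dualising, which is an order-reversing bijection on subspaces of a finite-dimensional vector space over the residue field, using that $\m/\m^2$ is finite-dimensional by Noetherianity), every vector subspace $W \le \m/\m^2$ is of the form $(\overline{Q} + \m^2)/\m^2$ for some prime $\overline{Q} \subsetneq \m$ of $R_P$.

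For the forward direction, assume $R$ is nonsingular, so each $(R_P, \m)$ is a regular local ring of dimension $n = \dim_{R/\m} \m/\m^2$. Given a subspace $W \le \m/\m^2$, pick a basis of $\m/\m^2$ adapted to $W$ and lift it to a regular system of parameters $x_1, \dots, x_n$ for $\m$ (using \cref{prop:regular_sequence_generates_prime_ideal} and the cited \cite[Theorem 2.3]{Matsumura1989rings}); arranging that $x_{k+1}, \dots, x_n$ span $W$, the ideal $\overline{Q} = (x_1, \dots, x_k)$ is prime by \cref{prop:regular_sequence_generates_prime_ideal} and satisfies $(\overline{Q}+\m^2)/\m^2 = \langle [x_1],\dots,[x_k]\rangle$, the complement of $W$; adjusting the adapted basis so that it is $W$ itself that is spanned by $[x_1],\dots,[x_k]$ yields the required prime. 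Pulling $\overline{Q}$ back along the localisation map $R \to R_P$ (which by the material in \cref{section:results_from_commutative_ring_theory} induces a bijection between primes of $R_P$ and primes of $R$ contained in $P$) gives a prime $Q \subseteq P$ of $R$, hence an irreducible closed set through $p$ inducing $W$. For the converse, suppose the geometric condition holds at every point and fix $P$. Taking $W = \m/\m^2$ itself, the condition produces a prime $\overline{Q} \subsetneq \m$ of $R_P$ with $\overline{Q} + \m^2 = \m$; by Nakayama (using Noetherianity so $\m$ is finitely generated) this forces $\overline{Q} = \m$, a contradiction unless $\m/\m^2 = 0$ — so one must instead run the argument with $W$ of codimension one and build a descending chain of primes whose successive ``jumps'' in the cotangent space are one-dimensional, producing a chain of primes in $R_P$ of length $\dim_{R/\m}\m/\m^2$; hence $\dim R_P \ge \dim \m/\m^2$, and combined with Krull's height theorem (\cref{section:results_from_commutative_ring_theory}: $\dim R_P \le \dim \m/\m^2$) we get equality, i.e.\ $R_P$ is regular. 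Since $P$ was arbitrary, $R$ is nonsingular.

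The main obstacle I expect is the converse: showing that the existence of inducing prime ideals for \emph{all} subspaces forces regularity of each stalk. The clean way is to iterate the condition to build a chain of primes realising the full dimension of the cotangent space, which requires checking at each stage that the ``jump'' subspace can genuinely be cut by a single new prime contained in the previous one — this is where one must be careful that the inducing prime $Q$ for a codimension-one subspace of $\m/\m^2$ can be chosen strictly below the previously constructed prime, and that at each stage the quotient ring $R/Q$ is again Noetherian with the expected local structure at the image of $P$. I would handle this by working throughout in the local ring $R_P$, applying the condition to the quotients $R_P/\overline{Q_i}$ (noting the statement's condition is inherited by quotients because primes of $R/Q$ biject with primes of $R$ containing $Q$, and irreducible closed subsets correspond), and invoking Krull's height theorem only at the end to pin down the dimension. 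A secondary, more routine point to get right is the precise bookkeeping of the duality: that ``the subspace induced by $Q$'' in the statement (image of $I$ in $\m/\m^2$, quotient, then dualise) really is $W^\perp$ for $W$ the image of $\overline{Q}$ in $\m/\m^2$, so that ranging over all $Q$ and all subspaces match up correctly under the order-reversing bijection on subspaces of the finite-dimensional space $T_P R$.
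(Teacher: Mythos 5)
Your forward direction is correct and is essentially the paper's argument: dualise $V$ to a subspace of $\m/\m^2$, lift an adapted basis to a regular system of parameters, and invoke \cref{prop:regular_sequence_generates_prime_ideal} to see that an initial segment generates a prime inducing $V$. The overall shape of your converse --- build a chain of $d+1$ primes below $\m$, conclude $\dim R_P \ge d = \dim \m/\m^2$, and combine with Krull's height theorem --- is also the paper's. But the mechanism you propose for extending the chain is backwards. To find a prime strictly \emph{below} the previously constructed prime $Q_1$ you must apply the hypothesis at the point of $\Spec R$ corresponding to $Q_1$, i.e.\ in the localisation $R_{Q_1}$, whose primes are exactly the primes of $R$ contained in $Q_1$; this is what the paper does (``localise at $p_1$ and repeat''), and it needs no inheritance lemma because the hypothesis is already quantified over every point of the spectrum. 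Passing to the quotient $R_P/\overline{Q_1}$, as you propose, only sees primes \emph{containing} $Q_1$, which extend the chain upward toward $\m$ and gain nothing. Moreover, the claim that the condition is ``inherited by quotients'' is itself unjustified: for $R/Q$ one would need every relevant subspace to be induced by a prime containing $Q$, whereas the hypothesis for $R$ only supplies \emph{some} prime with the prescribed image in $\m/\m^2$.

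A smaller slip: your ``Nakayama contradiction'' for $W = \m/\m^2$ arises because your dictionary wrongly insists that $\overline{Q} \subsetneq \m$. The statement only requires $I$ to be a proper ideal of $R_p$, and $\m$ itself qualifies --- the closure of the point $p$ is an irreducible closed set and induces the zero subspace of $T_p R$ --- so the condition is simply vacuous for $V = 0$, and the substantive content starts with hyperplanes $W$, exactly where you restart. Once the iteration is run through successive localisations rather than quotients, tracking as the paper does that the cotangent dimension drops by at most one at each step, your argument goes through.
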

\begin{proof}
 We may assume $X$ is the spectrum of a local ring $(R_p, \m)$ and that $p = \m$, since any primes can be lifted from and restricted to the localisation as needed.

 Suppose that $R_p$ is a regular local ring of dimension $n$ and $V$ is a $d$-dimensional subspace of the tangent space. Dualising gives a quotient of $\m/\m^2$.
 A basis $\overline{x}_1,\dots,\overline{x}_d$ for the kernel of this quotient can then be extended to a basis $\overline{x}_1,\dots,\overline{x}_n$ of $\m/\m^2$.
 These lift to a regular system of parameters $x_1,\dots,x_n$ for $R_p$. Now $(x_1,\dots,x_d)$ is prime by \cref{prop:regular_sequence_generates_prime_ideal} and the subspace $V$ is
 induced by $(x_1,\dots,x_d)$ by construction.
 
 Conversely, suppose that every quotient, and hence, every subspace of $\m/\m^2$ is induced by a prime. Let $d = \dim \m/\m^2$. We will construct a chain of $d+1$ prime ideals
 $\m = p_0 > p_1 > \dots > p_d$. If $d \ne 0$, then $\m/\m^2$ has a maximal proper subspace, which is induced by a prime $q$ by assumption.
 By the Prime Ideal Theorem for rings there is a prime $p_1 \ne \m$ lying above $q \vee \m^2$. %
 Then $\dim p_1/p_1^2 \ge \dim p_1/\m^2 \ge d-1$. We can now localise at $p_1$ and repeat the process.
\end{proof}

Recall from the discussion after \cref{prop:map_to_D_and_primary_elements} that at least classically, quantale maps from $\Idl(R)$ to $\Dvar$ --- or equivalently, points of $\locrefl({}^\Dvar \Idl(R))$ ---
correspond to points of $\Rad(R)$ equipped with a subspace of the tangent space at that point.
So in \cref{prop:nonsingular_schemes_via_quantic_tangent_bundles} we have a way to map points of $\locrefl({}^\Srpnsk \Idl(R))$ to points of $\locrefl({}^\Dvar \Idl(R))$.
This suggests we should look for a frame homomorphism from $\locrefl({}^\Dvar Q)$ to $\locrefl({}^\Srpnsk Q)$.
The only maps from $\O\Srpnsk$ to $\Dvar$ factor through $\Omega$ and so the maps induced by these ignore
too much structure. The Yoneda lemma implies that any natural transformation from $\Sigma\locrefl({}^\Srpnsk(-))$ to $\Sigma\locrefl({}^\Dvar(-))$ must be induced in this way
and hence the map we construct is unlikely to be natural. %

To give a frame map $\dfrak\colon \locrefl({}^\Dvar Q) \to \locrefl({}^\Srpnsk Q)$\glsadd[format=(]{dfrak} is to give a quantale map $\overline{\dfrak}\colon {}^\Dvar Q \to \locrefl({}^\Srpnsk Q)$.
We can view both these locales as `bundles' over $\locrefl(Q)$ and our comparison map should respect this structure so that $\overline{\dfrak} \iota = \rad \iota_0$. %
Using the universal property of ${}^\Dvar Q$ we can uniquely specify $\overline{\dfrak}$ by a quantale homomorphism $i\colon Q \to \locrefl({}^\Srpnsk Q)$ and
an $i$-derivation $\partial\colon Q \to \locrefl({}^\Srpnsk Q)$. %
The condition $\overline{\dfrak}\iota = \rad\iota_0$ means that $i = \rad\iota_0$.
We can always take $\partial = i$, but this corresponds to the uninteresting map induced by the smallest map from $\O\Srpnsk$ to $\Dvar$.
The interest lies in how differentials $\d x$ map to nontrivial elements lying above $x_0$, while $\partial = i$ simply sends $\d x$ to $x_0$.
On the other hand, we do not want $\d x$ to be sent to anything too large either. We impose that $\partial \le \rad \iota_1$, which corresponds to the idea that the tangent vectors corresponding to
a subvariety should `lie inside it'. %
We take $\partial$ to be the largest map satisfying these conditions. The existence of such a map is guaranteed by the following lemma (and the fact that $\rad\iota_0$ is one map which satisfies the conditions).

\begin{lemma}
 Let $i\colon Q \to Q'$ be a quantale homomorphism between two-sided quantales. The set of $i$-derivations is closed under taking inhabited pointwise suprema.
\end{lemma}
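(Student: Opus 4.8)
The plan is to verify directly that a pointwise supremum of an inhabited family of $i$-derivations satisfies the two defining conditions of an $i$-derivation, namely $d \ge i$ (equivalently $d(1) = 1$, given the second condition, as noted in the remark) and the Leibniz rule $d(xy) = i(x)d(y) \vee i(y)d(x)$. So let $(d_\alpha)_{\alpha \in A}$ be a family of $i$-derivations with $A$ inhabited, and set $d = \bigvee_\alpha d_\alpha$, the pointwise join computed in the suplattice $\hom(Q, Q')$; this is a suplattice homomorphism automatically.

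First I would check $d \ge i$. Since $A$ is inhabited, pick $\alpha_0 \in A$; then $d = \bigvee_\alpha d_\alpha \ge d_{\alpha_0} \ge i$, using that each $d_\alpha$ is an $i$-derivation. (This is precisely where inhabitedness is needed: an empty join would give $d = 0$, which is not above $i$ unless $Q'$ is trivial.) Next I would verify the Leibniz rule. Fix $x, y \in Q$. For each $\alpha$ we have $d_\alpha(xy) = i(x)d_\alpha(y) \vee i(y)d_\alpha(x) \le i(x)d(y) \vee i(y)d(x)$, using monotonicity of multiplication by $i(x)$ and $i(y)$ in the quantale $Q'$. Taking the join over $\alpha$ gives $d(xy) \le i(x)d(y) \vee i(y)d(x)$. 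For the reverse inequality, note that multiplication in a quantale is bilinear, hence preserves joins in each variable, so $i(x)d(y) = i(x)\bigvee_\alpha d_\alpha(y) = \bigvee_\alpha i(x)d_\alpha(y)$ and similarly $i(y)d(x) = \bigvee_\alpha i(y)d_\alpha(x)$. Therefore
\[
 i(x)d(y) \vee i(y)d(x) = \bigvee_\alpha i(x)d_\alpha(y) \vee \bigvee_\alpha i(y)d_\alpha(x) = \bigvee_\alpha \bigl(i(x)d_\alpha(y) \vee i(y)d_\alpha(x)\bigr) = \bigvee_\alpha d_\alpha(xy) = d(xy),
\]
where the last two equalities use that each $d_\alpha$ satisfies Leibniz and that joins of joins collapse. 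Combining the two inequalities yields the Leibniz rule for $d$.

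This argument is essentially routine; there is no real obstacle. The only subtle point worth flagging explicitly is the role of the inhabitedness hypothesis in securing $d \ge i$ (or $d(1)=1$), and the use of bilinearity of quantale multiplication to push the supremum past the products $i(x)(-)$ and $i(y)(-)$ — both are immediate from definitions already recorded in the excerpt. I would present the proof in three short steps: (1) $d$ is a suplattice homomorphism; (2) $d \ge i$ via inhabitedness; (3) the Leibniz rule via the bilinearity computation above.
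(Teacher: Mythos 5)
Your proof is correct and follows essentially the same route as the paper's: both verify the Leibniz rule by pushing the pointwise join past the products $i(x)(-)$ and $i(y)(-)$ via bilinearity of quantale multiplication, and both invoke inhabitedness only to secure $d \ge i$ (equivalently $d(1)=1$). The only cosmetic difference is that you split the Leibniz verification into two inequalities where the paper writes a single chain of equalities.
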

\begin{proof}
 Let $(d_\alpha)_\alpha$ be a family of $i$-derivations and let $d = \bigvee_\alpha d_\alpha$. Certainly, $d$ preserves joins.
 Now observe that
 \begin{align*}
  d(xy) &= \bigvee_\alpha d_\alpha(xy) \\
        &= \bigvee_\alpha \left(i(x) d_\alpha(y) \vee i(y) d_\alpha(x)\right) \\
        &= \bigvee_\alpha i(x) d_\alpha(y) \vee \bigvee_\alpha i(y) d_\alpha(x) \\
        &= i(x) \bigvee_\alpha d_\alpha(y) \vee i(y) \bigvee_\alpha d_\alpha(x) \\
        &= i(x) d(y) \vee i(y) d(x),
 \end{align*}
 as required. Finally, we clearly have $d(1) = 1$ whenever the join is inhabited.
\end{proof}

\begin{definition}
 Let the comparison map $\dfrak_Q\colon \locrefl({}^\Dvar Q) \to \locrefl({}^\Srpnsk Q)$\glsadd[format=)]{dfrak} be the largest frame homomorphism such that $\overline{\dfrak}\iota = \rad\iota_0$ and $\overline{\dfrak}\d \le \rad\iota_1$.
\end{definition}

\begin{example}
It is easy to compute the map $\dfrak$ for the examples on page~\pageref{fig:affine_tangent_bundle}.
For the affine line analogue $Q_1$, we have $\partial(x) = x_1$; for the so-called cuspidal cubic $Q_2$, we find $\partial(x) = \partial(y) = x_1$;
and for Sierpiński space $Q_3$, we obtain $\partial(x) = x_0$.
\end{example}

We can now take inspiration from these examples and \cref{prop:nonsingular_schemes_via_quantic_tangent_bundles} to suggest a definition for a nonsingular quantale.
\begin{definition}
 A quantale $Q$ is \emph{nonsingular}\index{nonsingular!quantale|)}\index{quantale!nonsingular|)} if $\dfrak_Q\colon \locrefl({}^\Dvar Q) \to \locrefl({}^\Srpnsk Q)$ is an injection. %
\end{definition}

Note that, as expected, the quantales $Q_1$ and $Q_3$ are nonsingular according to this definition, but the cuspidal cubic analogue $Q_2$ is not.

In the case that $Q = \Idl(R)$ we expect the map $\dfrak_Q$ to correspond to the assignment of points to subspaces of the tangent space described in \cref{prop:nonsingular_schemes_via_quantic_tangent_bundles}.
It should then be possible to modify the proof to show that $R$ is singular if and only if $\Idl(R)$ is singular in the above sense.
Unfortunately, we have not been able to complete the proof at present and so this remains a conjecture.
\begin{conjecture}
 Let $R$ be a discrete Noetherian ring. Under the assumption of the axiom of choice, $\Idl(R)$ is nonsingular\index{nonsingular!quantale|textbf}\index{quantale!nonsingular|textbf} if and only if $R$ is.
\end{conjecture}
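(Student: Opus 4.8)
The plan is to reduce the conjecture to \cref{prop:nonsingular_schemes_via_quantic_tangent_bundles} by translating the injectivity of $\dfrak_{\Idl(R)}$ into a statement about points, so we work throughout under the axiom of choice. For Noetherian $R$ the quantale $\Idl(R)$ is coherent, and one first checks that the coexponentials $\locrefl({}^\Dvar\Idl(R))$ and $\locrefl({}^\Srpnsk\Idl(R))$ are again coherent frames (using the presentation of \cref{prop:presentation_for_coexponential} and the finite dual basis of $\Dvar$), hence spatial. One then sets up the point dictionary. By the discussion following \cref{prop:map_to_D_and_primary_elements}, a point of $\locrefl({}^\Dvar\Idl(R))$ is a pair $(P, V)$ with $P$ a prime of $R$ and $V \le T_P R$ a subspace of the tangent space at $P$; and, since $\Omega$ is the initial quantale, the coexponential adjunction identifies a point of $\locrefl({}^\Srpnsk\Idl(R))$ with a pair of nested primes of $R$, which — in line with the setup of \cref{prop:nonsingular_schemes_via_quantic_tangent_bundles} — we read as a prime $P$ together with an irreducible closed set $Z$ through $P$.

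The technical heart is the computation of the map on points induced by $\dfrak_{\Idl(R)}$. Recall that $\dfrak_Q$ corresponds to the quantale homomorphism $\overline{\dfrak}\colon {}^\Dvar Q \to \locrefl({}^\Srpnsk Q)$ determined by $\iota \mapsto \rad\iota_0$ together with the largest $\iota$-derivation $\partial$ with $\partial \le \rad\iota_1$. The $\iota$-part merely records the prime $P$ inside the $\iota_0$-copy, so all the content lies in $\partial$: one must show that, evaluated at the point $(P, Z)$, the extremal derivation $\partial$ returns exactly the subspace of $T_P R$ that $Z$ induces in the sense of \cref{prop:nonsingular_schemes_via_quantic_tangent_bundles} — restrict to the stalk $(R_P, \m)$, take the image of the defining ideal of $Z$ in $\m/\m^2$, then quotient and dualise. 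Informally, $\partial$ is forced to send a differential $\d f$ to the largest value compatible with the derivation identities and with $\rad\iota_0(f) \le \partial(f) \le \rad\iota_1(f)$, which is precisely the first-order data that `stays inside $Z$'; this can be sanity-checked against the computed values $\partial(x) = x_1$ for $Q_1$, $\partial(x) = \partial(y) = x_1$ for $Q_2$ and $\partial(x) = x_0$ for $Q_3$. Turning the order-theoretic maximality of $\partial$ into this linear-algebraic description, uniformly in $(P, Z)$, is the main obstacle, and it is essentially why the statement currently remains open.

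Granting the previous step, the conjecture follows. As a frame homomorphism between spatial coherent frames, $\dfrak_{\Idl(R)}$ is injective if and only if the points of $\locrefl({}^\Dvar\Idl(R))$ arising from points of $\locrefl({}^\Srpnsk\Idl(R))$ via the map just described already separate $\locrefl({}^\Dvar\Idl(R))$ — a patch-density condition which it suffices to test on compact elements. If $R$ is nonsingular then, by \cref{prop:nonsingular_schemes_via_quantic_tangent_bundles} (whose forward direction uses \cref{prop:regular_sequence_generates_prime_ideal}), this map is onto the pairs $(P, V)$, hence a fortiori separating, so $\dfrak_{\Idl(R)}$ is injective. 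Conversely, if some $V \le T_P R$ is not induced by any irreducible closed set through $P$, one uses that the induced subspaces at $P$ are closed under sums to obtain the largest induced subspace $V_0 < V$, and then a compact element of $\locrefl({}^\Dvar\Idl(R))$ separating $V$ from everything $\le V_0$; no image point distinguishes this element, so $\dfrak_{\Idl(R)}$ is not injective. By \cref{prop:nonsingular_schemes_via_quantic_tangent_bundles} this last condition holds at every prime exactly when $R$ is nonsingular. The forward implication can be made rigorous now; the converse rests on the two points flagged above — the identification of $\partial$ and the compact-element separation argument — which is why this remains a conjecture.
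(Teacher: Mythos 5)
This statement is a \emph{conjecture} in the paper: the author explicitly states that he expects $\dfrak_{\Idl(R)}$ to realise the point-level assignment of \cref{prop:nonsingular_schemes_via_quantic_tangent_bundles} but has ``not been able to complete the proof at present''. So there is no proof in the paper to compare yours against, and your proposal does not close the gap either --- it reproduces essentially the strategy the paper envisions (translate injectivity of $\dfrak_{\Idl(R)}$ into a statement about points, then invoke \cref{prop:nonsingular_schemes_via_quantic_tangent_bundles}) and then, to your credit, honestly flags that the central step is missing. That step is exactly the one the paper cannot do: showing that the \emph{largest} $\iota$-derivation $\partial \le \rad\iota_1$, evaluated at a point $(P,Z)$ of $\locrefl({}^\Srpnsk\Idl(R))$, computes precisely the subspace of $T_P R$ induced by $Z$ in the sense of \cref{prop:nonsingular_schemes_via_quantic_tangent_bundles}. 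Without that identification, nothing downstream goes through, so what you have written is a plausible programme rather than a proof.

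Two further points in your sketch would need real work even granting the identification of $\partial$. First, the spatiality/coherence of $\locrefl({}^\Dvar\Idl(R))$ and $\locrefl({}^\Srpnsk\Idl(R))$ is asserted but not checked; coherence of a coexponential is not automatic from \cref{prop:presentation_for_coexponential}, and your point-separation reformulation of injectivity depends on it. Second, in the converse direction you claim the subspaces of $T_P R$ induced by irreducible closed sets through $P$ are closed under sums, so that a non-induced $V$ admits a largest induced subspace $V_0 < V$; sums of subspaces correspond to intersections of images of ideals in $\m/\m^2$, and it is not clear these arise from a single prime, so the existence of $V_0$ and of a compact separating element both need justification. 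Since you have correctly diagnosed why the statement remains open, the honest conclusion is that your proposal matches the paper's intended approach but, like the paper, stops short of a proof.
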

It could also be interesting to explore if the map $\dfrak_{\Idl(R)}$ can be used to say anything about the nature of the singularities in the case that $R$ is singular.

\bibliographystyle{abbrv}
\bibliography{bibliography}

\begin{thebibliography}{10}

\bibitem{Abramsky1993quantales}
S.~Abramsky and S.~Vickers.
\newblock Quantales, observational logic and process semantics.
\newblock {\em Math. Structures Comput. Sci.}, 3(2):161--227, 1993.

\bibitem{Anderson1976}
D.~D. Anderson.
\newblock Abstract commutative ideal theory without chain condition.
\newblock {\em Algebra Universalis}, 6(1):131--145, 1976.

\bibitem{Anderson1977}
D.~D. Anderson.
\newblock Fake rings, fake modules, and duality.
\newblock {\em J. Algebra}, 47(2):425--432, 1977.

\bibitem{AndersonJohnson1996}
D.~D. Anderson and E.~W. Johnson.
\newblock Dilworth's principal elements.
\newblock {\em Algebra Universalis}, 36(3):392--404, 1996.

\bibitem{baez2010rosetta}
J.~Baez and M.~Stay.
\newblock Physics, topology, logic and computation: a {R}osetta {S}tone.
\newblock In B.~Coecke, editor, {\em New Structures for Physics}, volume 813 of
  {\em Lecture Notes in Physics}, pages 95--172. Springer, Berlin, 2010.

\bibitem{banaschewski2002maximal}
B.~Banaschewski.
\newblock Functorial maximal spectra.
\newblock {\em J. Pure Appl. Algebra}, 168(2-3):327--346, 2002.

\bibitem{blyth1972residuation}
T.~S. Blyth and M.~F. Janowitz.
\newblock {\em Residuation theory}, volume 102 of {\em International Series of
  Monographs in Pure and Applied Mathematics}.
\newblock Pergamon Press, Oxford, 1972.

\bibitem{BungeFunkLowerPowerlocale}
M.~Bunge and J.~Funk.
\newblock Constructive theory of the lower power locale.
\newblock {\em Math. Structures Comput. Sci.}, 6(1):69--83, 1996.

\bibitem{CirauloStronglyOvert}
F.~Ciraulo and G.~Sambin.
\newblock Reducibility, a constructive dual of spatiality.
\newblock {\em J. Log. Anal.}, 11, 2019.

\bibitem{Dilworth1962}
R.~P. Dilworth.
\newblock Abstract commutative ideal theory.
\newblock {\em Pacific J. Math.}, 12(2):481--498, 1962.

\bibitem{QuantalesAndHyperstructures}
A.~J. Dudzik.
\newblock {\em Quantales and hyperstructures}.
\newblock PhD thesis, University of California, Berkeley, 2016.

\bibitem{Fawcett1990completeDistributivity}
B.~Fawcett and R.~J. Wood.
\newblock Constructive complete distributivity {I}.
\newblock {\em Math. Proc. Cambridge Philos. Soc}, 107(1):81--89, 1990.

\bibitem{GelfandNormedSpectrum}
I.~Gelfand.
\newblock Normierte {R}inge.
\newblock {\em Mat. Sb.}, 9(1):3--24, 1941.

\bibitem{GelfandNaimark}
I.~Gelfand and M.~Neumark.
\newblock On the imbedding of normed rings into the ring of operators in
  {H}ilbert space.
\newblock {\em Mat. Sb.}, 12(2):197--217, 1943.

\bibitem{ConstructiveGelfandNonunital}
S.~Henry.
\newblock Constructive {G}elfand duality for non-unital commutative
  {C*}-algebras, 2015.
\newblock \href{https://arxiv.org/abs/1412.2009}{arXiv:1412.2009}.

\bibitem{Henry2016}
S.~Henry.
\newblock Localic metric spaces and the localic {G}elfand duality.
\newblock {\em Adv. Math.}, 294:634--688, 2016.

\bibitem{hofmann1978spectral}
K.~H. Hofmann and J.~D. Lawson.
\newblock The spectral theory of distributive continuous lattices.
\newblock {\em Trans. Amer. Math. Soc.}, 246:285--310, 1978.

\bibitem{hyland1981exponentials}
J.~M.~E. Hyland.
\newblock Function spaces in the category of locales.
\newblock In B.~Banaschewski and R.-E. Hoffmann, editors, {\em Continuous
  Lattices}, pages 264--281. Springer, 1981.

\bibitem{Isbell1972}
J.~Isbell.
\newblock Atomless parts of spaces.
\newblock {\em Math. Scand.}, 31:5--32, 1972.

\bibitem{JacobsonPrimitive}
N.~Jacobson.
\newblock A topology for the set of primitive ideals in an arbitrary ring.
\newblock {\em Proc. Natl. Acad. Sci. USA}, 31:333--338, 1945.

\bibitem{JibladzeFibrewiseClosed}
M.~Jibladze and P.~T. Johnstone.
\newblock The frame of fibrewise closed nuclei.
\newblock {\em Cah. Topol. G{\'e}om. Diff{\'e}r. Cat{\'e}g.}, 32(2):99--112,
  1991.

\bibitem{Johnstone1982stone}
P.~T. Johnstone.
\newblock {\em Stone spaces}, volume~3 of {\em Cambridge studies in advanced
  mathematics}.
\newblock Cambridge University Press, Cambridge, 1982.

\bibitem{JohnstoneClosedSubgroup}
P.~T. Johnstone.
\newblock A constructive `closed subgroup theorem' for localic groups and
  groupoids.
\newblock {\em Cah. Topol. G{\'e}om. Diff{\'e}r. Cat{\'e}g.}, 30(1):3--23,
  1989.

\bibitem{Elephant2}
P.~T. Johnstone.
\newblock {\em Sketches of an Elephant: A Topos Theory Compendium}, volume~2.
\newblock Oxford University Press, Oxford, 2002.

\bibitem{Joyal1975supportOfRing}
A.~Joyal.
\newblock Les th{\'e}or{\`e}mes de {C}hevalley-{T}arski et remarques sur
  l'alg{\`e}bre constructive.
\newblock {\em Cah. Topol. G{\'e}om. Diff{\'e}r. Cat{\'e}g.}, 16(3):256--258,
  1975.

\bibitem{galoisTheoryGrothendieck}
A.~Joyal and M.~Tierney.
\newblock An extension of the {G}alois theory of {G}rothendieck.
\newblock {\em Mem. Amer. Math. Soc.}, 51(309), 1984.

\bibitem{kerkhoff2012tropical}
S.~Kerkhoff and F.~M. Schneider.
\newblock A tropical version of the {G}elfand representation, 2012.
\newblock Unpublished preprint,
  \url{https://www.researchgate.net/publication/251881514}.

\bibitem{lack20102cats}
S.~Lack.
\newblock A 2-categories companion.
\newblock In J.~C. Baez and J.~P. May, editors, {\em Towards Higher
  Categories}, volume 152 of {\em The IMA Volumes in Mathematics and its
  Applications}, pages 105--191. Springer, New York, 2010.

\bibitem{LombardiQuitte}
H.~Lombardi and C.~Quitt\'{e}.
\newblock {\em Commutative algebra: Constructive methods}, volume~20 of {\em
  Algebra and Applications}.
\newblock Springer, Dordrecht, 2015.
\newblock \href{https://hal.archives-ouvertes.fr/hal-01682492}{hal-01682492}.

\bibitem{MacLane1998categories}
S.~Mac~Lane.
\newblock {\em Categories for the Working Mathematician}, volume~5 of {\em
  Graduate Texts in Mathematics}.
\newblock Springer, New York, 1998.

\bibitem{marsden2014category}
D.~Marsden.
\newblock Category theory using string diagrams, 2014.
\newblock \href{https://arxiv.org/abs/1401.7220}{arXiv:1401.7220}.

\bibitem{Matsumura1989rings}
H.~Matsumura.
\newblock {\em Commutative ring theory}, volume~8 of {\em Cambridge studies in
  advanced mathematics}.
\newblock Cambridge University Press, Cambridge, 1989.

\bibitem{McCarthy1971}
P.~J. McCarthy.
\newblock Principal elements of lattices of ideals.
\newblock {\em Proc. Amer. Math. Soc.}, 30(1):43--45, 1971.

\bibitem{MulveyGelfandRings}
C.~J. Mulvey.
\newblock A generalisation of {G}elfand duality.
\newblock {\em J. Algebra}, 56:499--505, 1979.

\bibitem{nai2014principalMappingsPosets}
Y.~T. Nai and D.~Zhao.
\newblock Principal mappings between posets.
\newblock {\em Int. J. Math. Math. Sci.}, 2014, 2014.

\bibitem{Niefield1982}
S.~B. Niefield.
\newblock Exactness and projectivity.
\newblock In K.~H. Kamps, D.~Pumpl{\"u}n, and W.~Tholen, editors, {\em Category
  Theory}, pages 221--227. Springer, 1982.

\bibitem{Niefield2016}
S.~B. Niefield.
\newblock Projectivity, continuity, and adjointness: quantales, {Q}-posets, and
  {Q}-modules.
\newblock {\em Theory Appl. Categ.}, 31(30):839--851, 2016.

\bibitem{PicadoPultr}
J.~Picado and A.~Pultr.
\newblock {\em Frames and Locales: Topology without Points}.
\newblock Frontiers in Mathematics. Springer, Basel, 2012.

\bibitem{PittsLogic}
A.~M. Pitts.
\newblock Notes on categorical logic, 1991.
\newblock Unpublished lecture notes,
  \url{https://www.cl.cam.ac.uk/~amp12/papers/notcl/notcl.pdf}.

\bibitem{RosenthalQuantales}
K.~I. Rosenthal.
\newblock {\em Quantales and their applications}, volume 234 of {\em Pitman
  Research Notes in Mathematics Series}.
\newblock Longman Scientific and Technical, Harlow, 1990.

\bibitem{selinger2010survey}
P.~Selinger.
\newblock A survey of graphical languages for monoidal categories.
\newblock In B.~Coecke, editor, {\em New Structures for Physics}, volume 813 of
  {\em Lecture Notes in Physics}, pages 289--355. Springer, Berlin, 2010.

\bibitem{shulman2010stack}
M.~A. Shulman.
\newblock Stack semantics and the comparison of material and structural set
  theories, 2010.
\newblock \href{https://arxiv.org/abs/1004.3802}{arXiv:1004.3802}.

\bibitem{StoneBoolAlgs}
M.~H. Stone.
\newblock Applications of the theory of {B}oolean rings to general topology.
\newblock {\em Trans. Amer. Math. Soc.}, 41(3):375--481, 1937.

\bibitem{StoneDistLattices}
M.~H. Stone.
\newblock Topological representations of distributive lattices and {B}rouwerian
  logics.
\newblock {\em {\v C}asopis P{\v e}st. Mat. Fys.}, 67(1):1--25, 1938.

\bibitem{Stone1940real}
M.~H. Stone.
\newblock A general theory of spectra {I}.
\newblock {\em Proc. Natl. Acad. Sci. USA}, 26(4):280--283, 1940.

\bibitem{VakilRisingSea}
R.~Vakil.
\newblock {T}he {R}ising {S}ea: {F}oundations of {A}lgebraic {G}eometry,
  November 2017.
\newblock Unpublished manuscript,
  \url{http://math.stanford.edu/~vakil/216blog/FOAGnov1817public.pdf}.

\bibitem{VanDalenIntuitionisticLogic}
D.~van Dalen.
\newblock Intuitionistic logic.
\newblock In L.~Goble, editor, {\em The Blackwell Guide to Philosophical
  Logic}. Blackwell, Oxford, 2001.

\bibitem{Vickers1989topology}
S.~Vickers.
\newblock {\em Topology via logic}.
\newblock Cambridge University Press, Cambridge, 1989.

\bibitem{Vickers1997PowerlocalePoints}
S.~Vickers.
\newblock Constructive points of powerlocales.
\newblock {\em Math. Proc. Cambridge Philos. Soc}, 122(2):207--222, 1997.

\bibitem{vickers2004double}
S.~Vickers.
\newblock The double powerlocale and exponentiation: a case study in geometric
  logic.
\newblock {\em Theory Appl. Categ.}, 12(13):372--422, 2004.

\bibitem{vickers2014geometric}
S.~Vickers.
\newblock Continuity and geometric logic.
\newblock {\em J. Appl. Log.}, 12(1):14--27, 2014.

\bibitem{VickersTownsendDoublePowerlocale}
S.~J. Vickers and C.~F. Townsend.
\newblock A universal characterization of the double powerlocale.
\newblock {\em Theoret. Comput. Sci.}, 316(1-3):297--321, 2004.

\bibitem{DilworthWard1939}
M.~Ward and R.~P. Dilworth.
\newblock Residuated lattices.
\newblock {\em Trans. Amer. Math. Soc.}, 45(3):335--354, 1939.

\bibitem{whitney1948ideals}
H.~Whitney.
\newblock On ideals of differentiable functions.
\newblock {\em Amer. J. Math.}, 70(3):635--658, 1948.

\end{thebibliography}

\let\cleardoublepage\clearpage %
\glsaddallunused
\printglossary[type=symbols,style=myglossarystyle,title={Index of notation}]

\index{inhabited|seealso {positive locale}}
\index{semiring!localic|seealso {monoid, localic}}
\index{dense|seealso {strongly dense}}
\index{overt|seealso {sublocale, overt weakly closed}}
\index{prime|see {\textit{under} anti-ideal; ideal (discrete algebraic); filter}}
\def\igobble#1 {}
\index{congruence on a frame!zzzzz@\igobble |seealso {sublocale}}
\index{frame!zzzzz@\igobble |seealso {locale}}
\index{locale!zzzzz@\igobble |seealso {frame}}

\printindex

\end{document}